\documentclass[10pt]{article}
\usepackage{graphicx} % Required for inserting images
\usepackage[utf8]{inputenc}

%\date

\usepackage{amsmath, mathtools, amsfonts, mathrsfs, hyperref}
\usepackage{amsthm, amssymb}
\usepackage{bbm}
\usepackage{tikz-cd}
\usepackage{tikz}
\usetikzlibrary{shapes.geometric, arrows}
\usetikzlibrary{positioning}

\addtolength{\oddsidemargin}{-.5in}
\addtolength{\evensidemargin}{-.5in}
\addtolength{\textwidth}{1.25in}
\addtolength{\topmargin}{-.25in}
\addtolength{\textheight}{1.0in}

\tikzstyle{process} = [rectangle, minimum width=3cm, minimum height=1cm, text centered, text width=3cm, draw=black, fill=green!30]
\tikzstyle{decision} = [diamond, minimum width=3cm, minimum height=1cm, text centered, text width = 3cm, draw=black, fill=orange!30]
\tikzstyle{arrow} = [thick,->,>=stealth]

\usepackage{soul}
\usepackage{comment}
\usepackage[labelfont=bf]{caption}
\usepackage{xcolor}
\usepackage{authblk}
\usepackage{stmaryrd}

\theoremstyle{definition}

\theoremstyle{plain}
\newtheorem{theorem}{Theorem}[section]
\newtheorem{lemma}[theorem]{Lemma}
\newtheorem{definition}[theorem]{Definition}
\newtheorem{Proposition}[theorem]{Proposition}
\newtheorem{remark}[theorem]{Remark}

\newcommand{\bq}{\mathbb{Q}}
\newcommand{\qp}{\mathbb{Q}_p}
\newcommand{\zp}{\mathbb{Z}_p}

\newcommand{\norm}[1]{\vert #1 \vert}

\newcommand{\br}[1]{\overline{#1}}

\newcommand{\co}{\mathcal{O}}

\newcommand{\sL}{\mathcal{L}}
\newcommand{\brqp}{\br{\mathbb{Q}}_p}
\newcommand{\brFp}{\br{\mathbb{F}}_p}

\newcommand{\rmss}{{\mathrm{ss}}}

\newcommand{\bZ}{\mathbb{Z}}
\newcommand{\bQ}{\mathbb{Q}}
\newcommand{\Zp}{\bZ_p}
\newcommand{\Qp}{\bQ_p}
\newcommand{\bP}{\mathbb{P}}
\newcommand{\cL}{\mathcal{L}}
\newcommand{\IZind}{\mathrm{ind}_{IZ}^G\>}

\newcommand{\Oe}{\mathcal{O}_E}
\newcommand{\Fq}{\mathbb{F}_q}
\newcommand{\Fp}{\mathbb{F}_p}
\newcommand{\id}{\mathrm{id}}
\newcommand{\GL}{\mathrm{GL}}
\newcommand{\SL}{\mathrm{SL}}
\newcommand{\im}{\mathrm{Im}\>}

\newcommand{\SymE}[1]{\underline{\mathrm{Sym}}^{#1}E^2}
\newcommand{\SymO}[1]{\underline{\mathrm{Sym}}^{#1}\Oe^2}
\newcommand{\SymF}[1]{\mathrm{Sym}^{#1}\Fq^2}
\newcommand{\St}{\mathrm{St}}
\newcommand{\matUp}{\beta\begin{pmatrix}
1 & \lambda \\
0 & 1
\end{pmatrix}w}
\newcommand{\tWp}{\tilde{W}_p}
\newcommand{\tUp}{\tilde{U}_p}
\newcommand{\Bind}{\mathrm{ind}_B^G \>}
\newcommand{\C}[1]{\mathscr{C}^{#1}}
\newcommand{\Bnorm}[2]{\vert\vert #1 \vert\vert_{#2}}
\newcommand{\tB}{\tilde{B}}
\newcommand{\logL}{\log_{\cL}}
\newcommand{\sm}{\mathrm{smooth}}
\newcommand{\lattice}[1]{\tilde{\Theta}(#1)}
\newcommand{\latticeL}[1]{\tilde{\Theta}(#1, \cL)}
\newcommand{\ind}{\mathrm{ind}\>}
\newcommand{\sC}{\mathscr{C}}
\newcommand{\mth}{\mathrm{th}}

\title{Reductions of semi-stable representations using the Iwahori mod $p$ Local Langlands Correspondence}
\author{Anand Chitrao}
%\affil{School of Mathematics, Tata Institute of Fundamental Research \\ Homi Bhabha Road, Mumbai - 400005, India.}
%\email{chitrao@math.tifr.res.in}

\author{Eknath Ghate}
\affil{School of Mathematics, Tata Institute of Fundamental Research \\ Homi Bhabha Road, Mumbai - 400005, India.}
%\email{eghate@math.tifr.res.in}

\begin{document}

\maketitle

\begin{abstract}
  We determine the mod $p$ reductions of all two-dimensional semi-stable representations $V_{k,\sL}$ of
  the Galois group of $\qp$ of weights $3 \leq k \leq p+1$ and $\sL$-invariants $\sL$
  for primes $p \geq 5$.
  In particular, we describe the constants
  appearing in the unramified characters completely. The proof involves computing the reduction
  of Breuil's $\mathrm{GL}_2(\qp)$-Banach space $\tB(k,\sL)$, by studying certain logarithmic
  functions using background  
  material developed by Colmez, and then applying an Iwahori theoretic version of the mod $p$ Local Langlands
  Correspondence.
\end{abstract}  
  
\section{Introduction}
Let $p \geq 5$ be a prime and $E$ be a finite extension of $\qp$ containing $\sqrt{p}$.
In this paper, we compute the semi-simplification of the reduction mod $p$ of
the irreducible two-dimensional semi-stable representation $V_{k, \cL}$ of $\mathrm{Gal}(\brqp/\qp)$
over $E$ with Hodge-Tate weights $(0,k-1)$ for $k \in [3, p + 1]$ and 
$\cL$-invariant $\cL \in E$. % \brqp$.
We use the compatibility
with respect to reduction mod $p$ between the $p$-adic Local Langlands Correspondence and an
Iwahori theoretic version of the mod $p$ Local Langlands Correspondence. To the best of our knowledge,
this is the first time that such
techniques have been used to compute the reduction mod $p$ of semi-stable representations. In principle, this
method allows one to treat arbitrarily large weights $k$, and in particular, allows us to go beyond weight $k = p-1$, a
natural boundary when solving the reduction problem using strongly divisible modules from
integral $p$-adic Hodge theory. 

Let $r = k-2$ so that $1 \leq r \leq p-1$ and let $G = \GL_2(\qp)$,
$I$ be the Iwahori subgroup of $G$ and $Z$ be the center of $G$.
We choose a lattice $\latticeL{k}$ inside the $p$-adic $G$-Banach space $\tB(k, \cL)$ over $E$
associated to $V_{k, \cL}$ by Breuil \cite{Bre04, Bre10}. 
Then, using some background material from Colmez \cite{Col10a}, by a delicate analysis of $E$-valued
logarithmic functions in the variable $z \in \qp$ of the form
    \[
        \sum \lambda_i(z - z_i)^n\logL(z - z_i),
    \]
    where $i$ varies through a finite indexing set, $\lambda_i \in E$, $z_i \in \Qp$, $r/2 < n \leq r$,
    $\logL$ is the branch of the $p$-adic logarithm taking $p$ to $\sL \in E$,
     we compute
    (the semi-simplification of) the reduction $\br{\latticeL{k}}$ in terms of twists of quotients of
    representations of $G$ compactly induced from $IZ$ by images of Iwahori-Hecke operators.
    The (semi-simplification of the) reduction $\br{V}_{k, \cL}$ of the Galois representation $V_{k, \cL}$ is then
    determined by its mod $p$ Langlands correspondent $\br{\latticeL{k}}$ using the Iwahori theoretic
    reformulation of the mod $p$ Local
    Langlands Correspondence worked out in \cite{Chi23}.

    To state our main result describing the shape of $\br{V}_{k, \cL}$ we need some notation.
    Let $v_-$ and $v_+$ be
    the largest and smallest integers, respectively, such that $v_- < r/2 < v_+$ for $r \geq 1$.
    For $n \geq 1$, let $H_n = \sum_{i = 1}^{n}\frac{1}{i}$ be the $n$-th partial harmonic sum.
    Set $H_0 = 0$ and $H_{\pm} = H_{v_{\pm}}$. Let $v_p$ be the $p$-adic valuation on $\brqp$
    normalized such that $v_p(p) = 1$.
    Let $$\nu = v_p(\sL - H_{-} - H_{+})$$
    be the $p$-adic valuation of $\sL$ shifted by the
    partial harmonic sums $H_{-}$ and $H_{+}$. 
    The parameter $\nu$ plays a central role throughout. Let $\omega$ be the fundamental character of
    $\mathrm{Gal}(\br{{\mathbb Q}}_p/\qp)$ of
    % the inertia subgroup $I_{\qp}$ of
    level $1$. % it has a canonical extension to $\mathrm{Gal}(\br{{\mathbb Q}}_p/\qp)$.
    Similarly, for an integer $c$ (with
    $p + 1 \nmid c$), let $\omega_2^c$ be an extension from the inertia subgroup $I_{\qp}$ to $\mathrm{Gal}(\brqp/\bq_{p^2})$
    of the $c$-th power of the
    fundamental character $\omega_2$ %of $\mathrm{Gal}(\brqp/\bq_{p^2})$ % $I_{\qp}$
    of level $2$ chosen 
    % we choose an extension of $\omega_2$ to $\mathrm{Gal}(\brqp/\bq_{p^2})$
    so that the (irreducible) representation $\ind (\omega_2^c)$ obtained
    by inducing this character from $\mathrm{Gal}(\brqp/\bq_{p^2})$ to $\mathrm{Gal}(\brqp/\qp)$ has
    determinant $\omega^c$. Let $\mu_{\lambda}$ be the unramified character of $\mathrm{Gal}(\brqp/\qp)$ % $\qp^*$
    sending geometric Frobenius to $\lambda \in \br{\mathbb{F}}_p^{*}$. % or $\brqp^*$ and $\zp^*$ to $1$.
    Our main theorem is the following result.
    %improvement of Theorem \ref{thm from conj} (see Theorem \ref{Main final theorem in the second part of my thesis}):

    \begin{theorem}\label{Main theorem in the second part of my thesis}
      For $k \in [3, p + 1]$ and for primes $p \geq 5$, the semi-simplification of the reduction mod $p$ of the
      semi-stable representation $V_{k, \cL}$ on $\mathrm{Gal}(\brqp/\qp)$ is given by an alternating
      sequence of irreducible and reducible representations:
    \[
    \br{V}_{k, \cL} \sim
    \begin{cases}
        \ind (\omega_2^{r+1 + (i-1)(p-1)}), & \text{ if $(i-1) - r/2 < \nu < i - r/2$} \\
        \mu_{\lambda_i}\omega^{r+1-i} \oplus \mu_{\lambda_i^{-1}}\omega^{i}, & \text{ if $\nu = i - r/2$},
    \end{cases}
\]
where $1 \leq i \leq (r+1)/2$ if $r$ is odd and $1 \leq i \leq (r+2)/2$ if $r$ is even\footnote{We adopt
  the following conventions:
   \begin{itemize}
    \item [-] The first interval $- r/2 < \nu < 1 - r/2$ should be interpreted as $\nu < 1 - r/2$.
    \item [-] If $r$ is odd, then the last case $\nu = 1/2$ should be interpreted as $\nu \geq 1/2$. If $r$ is even,
          then the interval $0 < \nu < 1$ should be interpreted as $\nu > 0$ and we drop the case $\nu = 1$.
    \end{itemize}},
  and the mod $p$ constants $\lambda_i$ are determined by the formulas:
    \begin{eqnarray*}
        \lambda_i & = & \br{(-1)^i \> i {r+1-i \choose i}p^{r/2-i}(\cL - H_{-} - H_{+})}, \quad \text{ if } 1 \leq i < \dfrac{r + 1}{2} \\
        \lambda_{i} + \lambda_i^{-1} & = & \br{(-1)^i \> i{r + 1 - i \choose i}p^{r/2 - i}(\cL - H_{-} - H_{+})}, \quad \text{ if } i = \dfrac{r + 1}{2} \text{ and } r \text{ is odd}.
    \end{eqnarray*}
\end{theorem}

Here are some features of the theorem:
\begin{enumerate}
\item It recovers results of Breuil-M\'ezard \cite[Theorem 1.2]{BM} and Guerberoff-Park \cite[Theorem 5.0.5]{GP} by giving a more or less uniform treatment for all weights $k \in [3,p-1]$. That the theorem matches with
  the results of these authors on the inertia subgroup $I_{\qp}$ was already checked in \cite[Section 1.4]{CGY21}.  

\item It computes the reduction $\br{V}_{k, \cL}$ on the entire Galois group $\mathrm{Gal}(\brqp/\qp)$ not just on $I_{\qp}$.
  
  For even $k$ in the interval $[3, p - 1]$, Breuil-M\'ezard \cite[Theorem 1.2]{BM} computed the reduction on
  $\mathrm{Gal}(\brqp/\qp)$. We check that Theorem \ref{Main theorem in the second part of my thesis} for even $r$ matches with their result.
  In view of the checks done in \cite[Section 1.4]{CGY21}, it suffices to check our unramified characters $\mu_{\lambda_i}$
      match with those in \cite[Theorem 1.2]{BM}. 
    \begin{itemize}
    \item First assume that $\nu < 0$ or equivalently $v_p(\cL) < 0$. We compare $\mu_{\lambda_i}$ for $1 \leq i < r/2$ in Theorem \ref{Main theorem in the second part of my thesis} with the character $\lambda\left(\br{b}^{-1}\br{\left(a_p/p^{\frac{k}{2} - 1}\right)}\right)$ appearing in \cite[Theorem 1.2 (iii)]{BM}, where 
    \[
        b = (-1)^{\frac{k}{2} - \nu}\left(\frac{k}{2} - \nu\right){\frac{k}{2} - 1 - \nu \choose -2\nu + 1}\left(\frac{-\cL}{p^{\nu}}\right)
    \]
    (and is denoted by $b(f)$ in \cite{BM}). Substituting $\nu = i - r/2$ for $1 \leq i < r/2$, we get
    \begin{eqnarray*}
        b & = & (-1)^{r - i}(r + 1 - i){r - i \choose r - 2i + 1}p^{r/2 - i}\cL \\
        % & = & (-1)^i \> i \> \frac{r + 1 - i}{i}{r - i \choose i - 1}p^{r/2 - i}\cL \\
        & = & (-1)^i \> i \> {r + 1 - i \choose i}p^{r/2 - i}\cL \\
        & \equiv & (-1)^i \> i \> {r + 1 - i \choose i}p^{r/2 - i}(\cL - H_{-} - H_{+}) \mod \pi \\
        & = & \lambda_i,
    \end{eqnarray*}
    where $\pi$ is a uniformizer of $E$.
    Assume for simplicity that the nebentypus character of the modular form in \cite[Theorem 1.2]{BM} is trivial
    (not just trivial at $p$). Then the $p$-th Fourier coefficient $a_p$ of the form satisfies $a_p^2 = p^{r}$.
    Assume that $a_p = + p^{r/2}$ as opposed to $a_p = -p^{r/2}$.
    Recall that our $\mu_{a}$ sends a geometric Frobenius to $a \in E$ whereas
    $\lambda(a)$ in \cite[Theorem 1.2]{BM} sends an arithmetic Frobenius to $a \in E$. Therefore our unramified characters $\mu_{\lambda_i}$ match with those in \cite[Theorem 1.2 (iii)]{BM} for $\nu < 0$. 
    
    \item Next assume $\nu = 0$. There are two subcases in \cite[Theorem 1.2]{BM}, namely, $v_p(\cL - 2H_{k/2 - 1}) < 1$ and $v_p(\cL - 2H_{k/2 - 1}) \geq 1$. Assume we are in the first case. We compare $\mu_{\lambda_{r/2}}$ in Theorem \ref{Main theorem in the second part of my thesis} with $\lambda\left(\br{a}^{-1}\br{\left(a_p/p^{k/2 - 1}\right)}\right)$ in \cite[Theorem 1.2 (i), part 1]{BM}. Note
    \begin{eqnarray*}
        a & = & (-1)^{\frac{k}{2}}\left(-1 + \frac{k}{2}\left(\frac{k}{2} - 1\right)(-\cL + 2H_{k/2 - 1})\right) \\
        & = & (-1)^{\frac{r}{2}}\frac{k}{2}\left(\frac{k}{2} - 1\right)\left(\frac{1}{\frac{k}{2}\left(\frac{k}{2} - 1\right)} - (-\cL + 2H_{k/2 - 1})\right) \\
        & = & (-1)^{\frac{r}{2}}\frac{r}{2}\left(\frac{r}{2} + 1\right)\left(\cL - H_{-} - H_{+}\right) \\
        & = & \lambda_{r/2}.
    \end{eqnarray*}
    Assuming $a_p = +p^{r/2}$ for simplicity, we see that our unramified character $\mu_{\lambda_{r/2}}$ matches with the one in \cite[Theorem 1.2 (i), part 1]{BM}. Now assume $v_p(\cL - 2H_{k/2 - 1}) \geq 1$ and $\nu = 0$. Then again making the simplifying assumption $a_p = +p^{r/2}$, we have to check that our $\mu_{\lambda_{r/2}}$ is equal to $\lambda((-1)^{r/2})$ in \cite[Theorem 1.2 (i), part 2]{BM}. Indeed, 
    the assumption $v_p(\cL - 2H_{k/2 - 1}) \geq 1$ implies that
    \begin{eqnarray*}
        \lambda_{r/2} & \equiv & (-1)^{r/2}\frac{r}{2}\left(\frac{r}{2} + 1\right)\left(2H_{r/2} - H_{-} - H_{+}\right) \\
        & = & (-1)^{r/2}\frac{r}{2}\left(\frac{r}{2} + 1\right)\left(\frac{1}{r/2} - \frac{1}{r/2 + 1}\right) \\
        & = & (-1)^{r/2} \mod \pi.
    \end{eqnarray*}
    \end{itemize}
    Note that if $a_p = -p^{r/2}$, then the local Galois representation associated to the modular form
    in \cite{BM} is $V_{k, \cL}\otimes\mu_{-1}$. Therefore to compare
    Theorem \ref{Main theorem in the second part of my thesis} with \cite[Theorem 1.2]{BM}, we have to
    twist the latter by $\mu_{-1}$. This is equivalent to checking these two results match when
    $a_p = +p^{r/2}$, which we have just done.

    We remark that some partial information on the unramified characters
    $\mu_{\lambda_i}$ in odd weights
    $k$ in the interval $[3,p-2]$ was recently
    obtained by Lee-Park \cite{LP22}.

  \item It determines the \emph{self-dual} constant $\lambda_{\frac{r + 1}{2}}$ for odd weights $k$
    in the interval $[3,p]$ appearing at the end of the theorem which is
    missing in \cite{LP22}.
    The computation of this constant
    is tricky and partly explains the added size of this paper. 

  \item It treats completely
    the weights $k = p$ and $p + 1$ which were not treated in \cite{GP, LP22, BM}.
    These weights were previously inaccessible using methods involving strongly divisible
    modules\footnote {Although in the case $k = p$ there is a possibility that this may yet be
      carried out using the work of Gao on unipotent strongly divisible modules \cite{Gao17},
      and for even weights $4 \leq k \leq p+1$ there is an alternative cohomological approach
      for $\nu \geq 0$ due to Breuil-M\'ezard \cite[Theorem 1.1]{BM10}.}
    We remark that for even weights $k$ in the interval $[3,p+1]$, while we need to work with
    Iwahori-Hecke operators in a non-commutative Hecke algebra when analyzing the behavior
    of the reduction $\br{\latticeL{k}}$ around the last point
    $\nu = 0$, these operators also appear when treating the behavior around the first point
    $\nu = 1 - r/2$ for the weight $k = p+1$. 

    We also remark that, in principle, the methods introduced in this paper to compute $\br{V}_{k,\sL}$ (by
    computing $\br{\latticeL{k}}$ using
      logarithmic functions and applying the Iwahori mod $p$ LLC) work for {\it all} weights $k$.
      The computations though naturally become more complex as $k$
      increases.\footnote{In this context, we remark that a generalization of the first case 
      of the conclusion of Theorem~\ref{Main theorem in the second part of my thesis}
      % Theorem~\ref{Main theorem in the second part of my thesis}
      (so $i = 1$ and $\nu < 1-r/2$)
      was recently proved for all weights $k \geq 4$ (and odd primes $p$) by
      Bergdall-Levin-Liu \cite[Theorem 1.1]{BLL22}
      using Breuil and Kisin modules.}
            % Note that the term $v_p((k-2)!)$ in their result vanishes in our setting.
      
    \item It allows the second author to extend his proof  of the zig-zag conjecture on $I_{\qp}$ 
      for slopes $\frac{1}{2} \leq v_p(a_p) \leq \frac{p - 3}{2}$ outlined in \cite{Gha22}
      to all of $\mathrm{Gal}(\brqp/\qp)$ and
      all slopes $\frac{1}{2} \leq v(a_p) \leq \frac{p - 1}{2}$.
      
    \item It complements the recent computation by Bergdall-Pollack \cite{BP24}
     of $\sL$-invariants attached to cuspforms
     with trivial character in the sense that the $\sL$-invariants they obtain for small primes $p$ and (even)    
     weights $k$ (not necessarily
     bounded by $p+1$) have mostly negative valuation,  whereas most of the action in 
     Theorem~\ref{Main theorem in the second part of my thesis} also happens for $\nu < 0$.
     Note that if  $k \leq p+1$, then $\nu = v_p(\sL)$ if either quantity is negative.
\end{enumerate}

\section{Notation}
\begin{itemize}
        \item $p \geq 5$ is a prime.
        \item $E$ is a finite extension of $\qp$ containing $\sqrt{p}$ and $\cL$.
              $\co_E$ is the ring of integers in $E$ with a uniformizer $\pi$ and residue field
$\Fq$. Note $\sqrt{p} \equiv 0 \mod \pi$. 
%        \item $\zetap$ is a fixed primitive $(p-1)$-th root of unity in $\qp^*$.
        \item $k$ denotes the weight of a semi-stable
representation and $r = k - 2$.
        \item $v_-$, $v_+$ are the largest, smallest integers,
respectively, such that $v_- < r/2 < v_+$ for $r \geq 1$.
        \item For $n \geq 1$, $H_n = \sum\limits_{i = 1}^{n}\frac{1}{i}$
and $H_0 = 0, H_{\pm} = H_{v_{\pm}}$.
        \item $v_p$ is the $p$-adic valuation normalized such that $v_p(p)
= 1$.
        \item $\nu = v_p(\sL - H_{-} - H_{+})$ is the valuation
of $\sL$ shifted by the partial
harmonic sums $H_{-}$ and $H_{+}$. 
%        \item $x^i : \qp^* \to \qp^*$ for integer $i$ denotes the
%character that sends an element to its $i$-th power.
%        \item $\chi : \qp^* \to \qp^*$ denotes the $p$-adic cyclotomic
%character which maps $p$ to $1$ and is the identity character on
%$\zp^*$.
%        \item Characters of the form $x^i\chi$ for $i \geq 0$ are called
%exceptional characters.
%        \item $\mu_{\lambda}$ is the character of $\qp^*$ sending $p$ to
%$\lambda \in \br{\mathbb{F}}_p^*$ or $\brqp^*$ and $\zp^*$ to $1$.
%        \item Normalize the map $\qp^* \rightarrow 
%\mathrm{Gal}(\brqp/\qp)^{\mathrm{ab}}$ of class field theory by
%sending $p$ to a geometric Frobenius.
%        We sometimes think of $\chi$ and $\mu_{\lambda}$ as characters of
%$\mathrm{Gal}(\br{{\mathbb Q}}_p/\qp)$.
%        \item Let $\Gamma = \mathrm{Gal}(\qp(\mu_{p^\infty})/\qp)$. We
%also think of $\chi$ as a character $\Gamma \xrightarrow{\sim} \zp^*$.
%        From Section~\ref{section G}, we fix a topological generator
%$\gamma$ of $\Gamma$ such that $\chi(\gamma) = \zeta_{p-1}^a  (1+p)$ for a fixed integer $a$.
        \item $I_{\qp}$ is the inertia subgroup of $\mathrm{Gal}(\brqp/\qp)$.
        \item $\omega$ is the fundamental character of $I_{\qp}$ of level
$1$;
        it has a canonical extension to $\mathrm{Gal}(\br{{\mathbb
Q}}_p/\qp)$.
        \item $\omega_2$ is the fundamental character of $I_{\qp}$ of
level $2$; for an integer $c$ with $p
+ 1 \nmid c$,
        choose an extension of $\omega_2^c$ to
$\mathrm{Gal}(\brqp/\bq_{p^2})$ so that the irreducible representation
        $\mathrm{ind}(\omega_2^c)$ obtained
        by inducing this extension from $\mathrm{Gal}(\brqp/\bq_{p^2})$ to
$\mathrm{Gal}(\brqp/\qp)$ has determinant $\omega^c$.
%        \item $\calR_A$ is the Robba ring with coefficients in $A$, for $A$
%        an affinoid algebra.
%        \item $\calR_A(\delta)$ for $\delta \in \sT(A)$ is the $(\varphi,
%\Gamma)$-module of rank $1$ over $\calR_A$ with action of
%$\varphi$ and $\Gamma$:
%        \[
%          \varphi f(T) = \delta(p)f((1 + T)^p - 1) \quad \text{ and }
%\quad  \gamma f(T) = \delta(\chi(\gamma)) f((1 + T)^{\chi(\gamma)} - 1) \text{ for } \gamma \in \Gamma. \]
%        \item  $\mathbb{P}(V)$ is the projectivization of a vector space $V$.
%        \item $\phi$ is Euler's totient function.
%        \item $\phi_n(T)$ for $n \geq 1$ is the $p^{n}$-th cyclotomic polynomial.
        \item $G$ is the group $\mathrm{GL}_2(\qp)$.
%        \item $K$ is the maximal compact subgroup $\mathrm{GL}_2(\zp)$.
        \item $I$ is the Iwahori subgroup of $G$ consisting of matrices
that are upper triangular $\!\!\!\!\mod p$.
        \item $B$ is the Borel subgroup of $G$ consisting of upper
triangular matrices.
        \item $\alpha = \begin{pmatrix}1 & 0 \\ 0 & p\end{pmatrix}$,
$\beta = \begin{pmatrix}0 & 1 \\ p & 0\end{pmatrix}$ and $w =
\begin{pmatrix}0 & 1 \\ 1 & 0\end{pmatrix}$. Note that $\beta =
\alpha w$.
        \item $I_n = \{[a_0] + [a_1]p + \cdots + [a_{n - 1}]p^{n - 1}
\> \vert \> a_i \in \Fp\}$ for $n \geq 1$, where $[\quad]$ denotes Teichm\"uller representative. $I_0 = \{0\}$.
        \item $V_{r} = \SymF{r}$ and %For an integer $k \geq 2$, define
        $\SymE{k - 2} := \norm{\det}^{\frac{k - 2}{2}} \otimes
\mathrm{Sym}^{k - 2}E^2$ for $k \geq 2$.
        \item $d^r$  denotes the character $IZ \to
\Fp^*$ sending $\left(\begin{smallmatrix}a & b \\ c & d\end{smallmatrix}\right) \in
I$ to $d^r \!\!\! \mod p$ and the scalar
matrix $p$ to $1$.
        \item For a representation $(\rho, V)$ of $IZ$ over $E$ or $\Fq$,
let $\IZind \rho$ be the vector space of functions $f : G \to V$
that are compactly supported modulo $IZ$ such that $f(hg) =
\rho(h)
        \cdot f(g)$, for all $h \in IZ$ and $g \in G$. The vector space
$\IZind \rho$ has a $G$ action:
        $g \cdot f(g') = f(g' g)$, for all $g, g' \in G$ and $f \in \IZind
\rho$. For $g \in G$ and $v \in V$, define the function $\llbracket g, v \rrbracket
\in \IZind \rho$ by
    \[
        \llbracket g, v\rrbracket(g') =
        \begin{cases}
            \rho(g'g) \cdot v, & \text{ if }g'g \in IZ \\
            0, & \text{ otherwise}.
        \end{cases}
    \]
%        \item If $(\tau, W)$ is a representation of $KZ$ over $E$ or
%$\Fq$, then define $\KZind \tau$ as the $G$-representation of
%functions $f : G \to W$ that are compactly supported modulo $KZ$
%such that $f(hg) = \tau(h) \cdot f(g)$, for all $h \in KZ$ and $g
%\in G$. For $0 \leq r \leq p - 1$, $\lambda \in \Fq$ and a smooth
%character $\eta : \Qp^* \to \Fq^*$, let
%    \[
%        \pi(r, \lambda, \eta) := \frac{\KZind V_r}{\im(T - \lambda)}
%\otimes (\eta \circ \det),
%    \]
%        where $T \in \mathrm{End}_G(\KZind \mathrm{Sym}^r\Fq^2)$ is the
%Hecke operator. Functions in this space will be denoted by $[g,
%v]$, where $[g, v]$ is defined similar to $[[g, v]]$. %We denote
%the representation $\SymF{r}$ by $V_{r}$.
        \item Let $(\Bind E)^{\sm}$ be the $E$-vector space of locally
constant functions from $G$ to $E$, with the action of $G$ given
by $g \cdot f(g') = f(g'g)$ for any $g, g' \in G$ and $f \in
(\Bind E)^{\sm}$.
        \item Let $\St$ be the Steinberg representation of $G$ over $E$,
i.e., the space of all locally constant functions
$H : \bP^1(\Qp) \to E$ modulo constant functions with $G$-action: $\left(\left(\begin{smallmatrix} a & b \\ c & d
\end{smallmatrix}\right)\cdot H\right)(z) = H(\frac{az + c}{bz +
d})$.
        \item $[a] \in \{0, \ldots, p - 2\}$ denotes the class of $a \!\!
\mod p - 1$.
        \item $\delta_{a, b} = 1$ if $a = b$ and is $0$ otherwise.
        \item $\br{e}$ denotes the edge $e$ of a graph with its orientation reversed.
        \item $o(e)$ and $t(e)$ are the origin and terminal vertices,
respectively, of an edge $e$.
\end{itemize}

\section{Overview}
%    \begin{center}
%    \includegraphics[scale=0.1]{Images/Map for thesis part 2.png}
%    \captionof{figure}{Leitfaden for the second part of the thesis.}
%    \end{center}
We sketch the proof of Theorem \ref{Main theorem in the second part
  of my thesis}
(Theorem \ref{Main final theorem in the second part of my thesis}) and describe
the organization of this paper. 
    
In Section~\ref{section Iwahori mod p LLC}, %\ref{BT tree section},
we recall the definitions of some Iwahori-Hecke operators on (functions on) the
Bruhat-Tits tree and reformulate
Breuil's (semi-simple) mod $p$ Local Langlands Correspondence Iwahori theoretically following
\cite{Chi23}.
In Section \ref{Definition of the Banach space}, we recall the definition of Breuil's Banach space $\tB(k, \cL)$ attached to $V_{k, \cL}$ by the $p$-adic Local Langlands Correspondence. 
In Section \ref{Section defining the lattices}, we introduce the standard lattice $\latticeL{k}$ in $\tB(k, \cL)$ and prove two results which allow us to recognize some elements in it (Lemma \ref{Integers in the lattice} and Lemma \ref{stronger bound for polynomials of large degree with varying radii}). In Section \ref{Section for log functions}, we collect together some elementary properties of the logarithmic functions mentioned in the
introduction. From the definition of the lattice $\latticeL{k}$ in $\tB(k, \cL)$, we get a surjection 
    \[
        \IZind \SymF{k - 2} \twoheadrightarrow \br{\latticeL{k}}
    \]
    to its reduction mod $p$ (more precisely, mod $\pi$). As an $IZ$-module, $\SymF{k - 2}$ decomposes as a successive extension of characters. We prove that most of the resulting subquotients of $\IZind \SymF{k - 2}$ map to $0$ under this surjection and that the remaining ones factor through linear (or possibly quadratic in the self-dual case for odd weights) expressions involving Iwahori-Hecke operators. Which subquotients die depends on the size of %the parameter 
    $\nu = v_p(\cL - H_{-} - H_{+})$. Since one or two (or possibly three in the case $\nu = 0$ for even weights) Jordan-H\"older factors of $\IZind \SymF{k - 2}$ remain on the right and $\br{\latticeL{k}}$ is in the image of the Iwahori
    mod $p$ Local Langlands Correspondence, we are able to deduce the structure of $\br{V}_{k, \cL}$. For instance, if only one Jordan-H\"older factor %of $\IZind \SymF{k - 2}$ 
    survives, then %the reduction 
    $\br{V}_{k, \cL}$ is irreducible.
    
    Here is a very rough sketch of the method used
    to show that the subquotients above map to $0$:

    \begin{enumerate}
    \item We first carefully choose a logarithmic
      function $g(z) = \sum_{i \in I}\lambda_i(z - z_i)^n\logL(z - z_i)$ with $\lambda_i \in E$, $z_i \in \Qp$ for $i$ in a finite indexing set $I$ and $r/2 < n \leq r = k - 2$ an integer, depending on the subquotient mentioned above. We require the vanishing of certain
      sums $\sum_i \lambda_i z_i^j$, which implies that
      $g(z)$ (lies in and) is equal to $0$ in $\tB(k, \cL)$.
      
        %For $i$ in a finite indexing set $I$, we choose $\lambda_i \in E$, $z_i \in \Qp$, and $r/2 < n \leq r$ to get a function $g(z) = \sum_{i \in I}\lambda_i(z - z_i)^n\logL(z - z_i)$, where $n$ is determined by the subquotient. By definition, $g(z)$ is equal to $0$ in $\tB(k, \cL)$. 

    \item We write $g(z) = g(z)\mathbbm{1}_{\Zp} + g(z)\mathbbm{1}_{\Qp \setminus \Zp}$ and
      approximate $g(z)\mathbbm{1}_{\Zp} = g_h(z)$ in $\br{\latticeL{k}}$ for large $h$
      with $g_h(z)$ a partial Taylor expansion of $g$ as in
      \cite{Col10a} 
      (see Proposition~\ref{Convergence proposition} and Lemma \ref{Congruence lemma}).

    \item Using a radius reduction argument, we prove inductively that
      $g_h(z) = g_2(z)$ in $\br{\latticeL{k}}$ (see Lemma \ref{telescoping lemma} and its variant Lemma \ref{Telescoping Lemma in self-dual}).

        \item Using a simpler argument, we prove $g(z)\mathbbm{1}_{\Qp \setminus \Zp} = 0$ in $\br{\latticeL{k}}$ (see Lemma \ref{qp-zp part is 0} and its variant Lemma \ref{Telescoping Lemma in self-dual for qp-zp}).

        \item Thus, $g_2(z) = g(z) = 0$ in $\br{\latticeL{k}}$. Using a `shallow' inductive step (Proposition \ref{inductive steps}), we write $g_2(z)$ as the image under the surjection above of a linear expression in an Iwahori-Hecke operator applied to a generator of the subquotient in $1.$ It follows that the image of this expression in this subquotient maps to $0$ in $\br{\latticeL{k}}$. If $\nu$ is in an appropriate infinite half-interval, then this image is full, showing that the entire subquotient maps to $0$ in $\br{\latticeL{k}}$.

          When $\nu$ takes values at the boundary of these infinite
          half-intervals,
then sometimes we can only conclude that the image of the subquotient above  
under the surjection above factors through a representation of the form 
$\pi(r,\lambda,\eta)$ defined in Section~\ref{section Iwahori mod p LLC} using Iwahori induction.
%in Section~\ref{section Iwahori mod p LLC}.         
\end{enumerate}

All of this is explained in more detail in Section \ref{Common section} which is
the heart of the paper. However, two tricky, important, missing cases, namely,
the self-dual case for odd weights
and a non-commutative Hecke algebra case for even weights, need extra work.
These occur around the last points
$\nu = \frac{1}{2}$ and $\nu = 0$ in the theorem, respectively, and are
treated in Section \ref{SpecOdd} and Section \ref{SpecEven}, respectively.
In particular, in the former case, $g_2(z)$ is sometimes replaced by $g_3(z)$, we require an additional `deep' inductive step (Proposition \ref{Refined hard inductive steps for self-dual}), and the linear expression above is sometimes replaced by a quadratic expression. Moreover, some tricky binomial identities 
involving harmonic sums in these two sections are proven in the Appendix.
          % And, in the latter case, we need to use the new results proved in Section \ref{Comparison theorem section}.
          Here is the leitfaden for these sections:

{%\large
    \vspace{0.1cm}
    \begin{center}
    \scalebox{0.8}{
    \begin{tikzpicture}[node distance=1cm]
    \node (Odd?)[decision]{Is the weight odd?};
    \node (ComHecAlg?)[decision, right = of Odd?]{Commutative Hecke algebra or $(i, r) = (1, p - 1)$?};
    \node (Non self-dual?)[decision, below = of Odd?]{Non self-dual case?};
    \node (SpecEven)[process, right = of ComHecAlg?]{Section \ref{SpecEven}};
    \node (Common)[process, below = of ComHecAlg?, yshift=-0.93cm]{Section \ref{Common section}};
    \node (SpecOdd)[process, below = of Non self-dual?]{Section \ref{SpecOdd}};
    \draw [arrow] (Odd?) -- node[anchor=south] {No} (ComHecAlg?);
    \draw [arrow] (Odd?) -- node[anchor=east] {Yes}(Non self-dual?);
    \draw [arrow] (ComHecAlg?) -- node[anchor=south] {No}(SpecEven);
    \draw [arrow] (ComHecAlg?) -- node[anchor=east] {Yes} (Common);
    \draw [arrow] (Non self-dual?) -- node[anchor=south] {Yes} (Common);
    \draw [arrow] (Non self-dual?) -- node[anchor=east] {No} (SpecOdd);
    \end{tikzpicture}
    }
  \end{center}
}

%    \end{enumerate}

\section{Iwahori  mod $p$ Local Langlands Correspondence}
  \label{section Iwahori mod p  LLC}
    
    In this section, we recall the definitions of some Iwahori-Hecke operators and
    reformulate Breuil's (semi-simple) mod $p$ Local Langlands Correspondence using
    Iwahori induction. For details, we refer to \cite{Chi23}.
    
    \subsection{Operators on the Bruhat-Tits tree of $\SL_2(\Qp)$}
    \label{BT tree section}
    %\fancyhead{}
    %\fancyhead[LO]{Reductions of semi-stable representations using the Iwahori mod $p$ LLC}
    %\fancyhead[RE]{The Bruhat-Tits tree of $\SL_2(\Qp)$}

    Let $E$ be a finite extension of $\qp$. In this section, we recall the definitions of some operators on
    compactly supported $E$-valued functions on oriented edges of the Bruhat-Tits tree of $\SL_2(\Qp)$.
    
    Let $\Delta$ be the Bruhat-Tits tree of $\SL_2(\Qp)$. Recall that vertices of $\Delta$ correspond to
    homothety classes of lattices in $\Qp^2$. Two vertices in $\Delta$ are joined by an edge in $\Delta$
    if they are represented by lattices $L$ and $L'$ such
    that $pL \subsetneq L' \subsetneq L$. We recall two elementary lemmas
    (cf. \cite[p. 6]{BL95}).
        
        \begin{lemma}
            The vertices of $\Delta$ are in a one-to-one correspondence with $G/KZ$.
          \end{lemma}
          \begin{comment}            
        \begin{proof}
          Note that if $L$ is a lattice in $\Qp^2$ and $g \in G$, then $gL$ is a lattice in $\Qp^2$.
          Therefore $G$ acts on the vertices of $\Delta$.
            
            Let $v_0$ be the vertex of $\Delta$ corresponding to the lattice $\Zp^2 \subset \Qp^2$ with basis vectors $b_0$ and $b_1$. Note that all the vertices of $\Delta$ are in the $G$-orbit of $v_0$. Indeed, if $v$ is a vertex in $\Delta$ represented by a lattice $L \subset \Qp^2$ with basis vectors $c_0$ and $c_1$, then $g v_0 = v$, where $g$ is the matrix in $\GL_2(\Qp)$ that takes $b_0$ to $c_0$ and $b_1$ to $c_1$.
            
            Therefore to prove that the vertices of $\Delta$ are in a one-to-one correspondence with $G/KZ$, it is enough to show that the stabilizer of $v_0$ in $G$ is equal to $KZ$. Clearly, $KZ$ stabilizes the vertex $v_0$. Conversely, suppose $g \in G$ stabilizes $v_0$. This means that the lattice $g\Zp^2$ is homothetic to $\Zp^2$. Therefore there exists a scalar $\lambda \in \Qp^*$ such that $(\lambda g)\Zp^2 = \Zp^2$. This means that $\lambda g \in K$. Thus $g \in KZ$.
          \end{proof}
\end{comment}

          % Now we prove the following lemma concerning the oriented edges of $\Delta$.
By an oriented edge of $\Delta$, we mean an ordered pair of adjacent vertices in $\Delta$.
If $e$ is an edge in $\Delta$, set $o(e)$ to be the origin of $e$ and $t(e)$ to be the target of $e$.
        \begin{lemma}
            The oriented edges of $\Delta$ are in a one-to-one correspondence with $G/IZ$.
          \end{lemma}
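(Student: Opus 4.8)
The plan is to mimic the (commented-out) proof of the preceding lemma: exhibit a transitive action of $G$ on the set of oriented edges of $\Delta$ and identify the stabilizer of a distinguished oriented edge with $IZ$. Since $G$ acts on the lattices in $\Qp^2$, and the relation $pL \subsetneq L' \subsetneq L$ is visibly preserved by applying any $g \in G$, the group $G$ acts on homothety classes of such nested pairs, i.e.\ on the oriented edges of $\Delta$, and this action carries origins to origins and targets to targets. I would fix the distinguished oriented edge $e_0$ to have origin $v_0$ = the class of $\Zp^2 = \Zp b_0 \oplus \Zp b_1$ and target $v_1$ = the class of $L_1 := \Zp b_0 \oplus p\Zp b_1$; since $pv_0 \subsetneq L_1 \subsetneq \Zp^2$ this is indeed an edge (in fact $v_1 = \alpha v_0$ for $\alpha = \left(\begin{smallmatrix}1&0\\0&p\end{smallmatrix}\right)$).

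For transitivity: given an oriented edge $e$ from $w_0$ to $w_1$, choose representative lattices $L_0 \supsetneq M \supsetneq pL_0$. Then $M/pL_0$ is a nonzero proper $\Fp$-subspace of $L_0/pL_0 \cong \Fp^2$, hence a line, so $[L_0 : M] = p$; by the elementary divisor theorem there is a $\Zp$-basis $c_0, c_1$ of $L_0$ with $M = \Zp c_0 \oplus p\Zp c_1$. The element $g \in G$ sending $b_0 \mapsto c_0$ and $b_1 \mapsto c_1$ then carries $v_0$ to $w_0$ and $v_1$ to $w_1$, so $g e_0 = e$.

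For the stabilizer: $g \in G$ fixes $e_0$ iff it fixes both $v_0$ and $v_1$. Fixing $v_0$ forces $g \in KZ$ by the previous lemma, say $g = \lambda k$ with $k \in K = \GL_2(\Zp)$ and $\lambda \in \Qp^*$; then $g$ also fixes $v_1$ iff $kL_1$ is homothetic to $L_1$. A short index count — both $L_1$ and $kL_1 \subseteq k\Zp^2 = \Zp^2$ have index $p$ in $\Zp^2$, which pins the homothety factor down to a unit — upgrades this to $kL_1 = L_1$. Writing $k = \left(\begin{smallmatrix}a&b\\c&d\end{smallmatrix}\right)$ and computing the image of $L_1 = \Zp b_0 \oplus p\Zp b_1$ under $k$ in terms of $b_0, b_1$, one checks that $kL_1 \subseteq L_1$ (hence, again by the index, $kL_1 = L_1$) holds precisely when $c \equiv 0 \bmod p$, i.e.\ when $k \in I$. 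Therefore the stabilizer of $e_0$ is $ZI = IZ$, and $gIZ \mapsto g e_0$ is the required bijection between $G/IZ$ and the oriented edges of $\Delta$.

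The only real care needed is in the stabilizer step: one must make sure that ``the homothety class of $L_1$ is fixed'' together with $g \in KZ$ genuinely forces $kL_1 = L_1$ on the nose rather than merely up to a scalar, which is exactly where the index comparison is invoked; the remaining reductions to bases of $\Zp$-lattices are routine linear algebra over $\Zp$, entirely parallel to the vertex case.
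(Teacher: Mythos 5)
Your proof is correct and follows essentially the same route as the paper's own (commented-out) argument: transitivity of the $G$-action via the elementary divisor theorem with base edge $(\Zp^2, \alpha\Zp^2)$, followed by identifying the stabilizer of that edge with $IZ$. The only cosmetic difference is in the stabilizer step, where you invoke the vertex lemma plus an index count to force $kL_1 = L_1$ and then read off the congruence $c \equiv 0 \bmod p$, whereas the paper manipulates two homothety scalars directly and quotes $K \cap \alpha K \alpha^{-1} = I$; these amount to the same computation.
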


          We recall the definitions of two operators on compactly supported functions on the oriented
          edges of the tree. If $F : \text{oriented edges of } \Delta \to E$ is a function with compact support,
     define     
    \[
        W_p(F)(a) = F(\br{a}), \quad  U_p(F)(a) = \sum_{\substack{o(a') = t(a) \\ a' \neq \br{a}}} F(a'),
    \]
    where $\br{a}$ is the edge opposite to the edge $a$.

Let $G = \GL_2(\qp)$, $I$ be the Iwahori subgroup of $G$ and $Z$ be the center of $G$.
We can identify the elements in the compactly induced representation $\IZind E$ with  functions on the
oriented edges of $\Delta$ with finite support: the element $\llbracket g, 1\rrbracket$ is identified with the function
that maps the edge $g (\zp^2, \alpha \zp^2)$ to $1$ and all other edges to $0$.
Using this identification, we may rewrite the definitions of
the two operators  $W_p$ and $U_p$ on $\IZind E$. We need the following lemmas.

    \begin{lemma}\label{Small operator in the non-commutative Hecke algebra}
        If $e$ is the edge corresponding to the function $\llbracket g, 1\rrbracket$, then $\br{e}$ is the edge corresponding to the function $\llbracket g\beta, 1\rrbracket$.
      \end{lemma}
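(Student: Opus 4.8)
The plan is to unwind the dictionary between elements of $\IZind E$ and compactly supported functions on oriented edges, and then to perform a direct lattice computation. By the identification recalled just above, $\llbracket g, 1\rrbracket$ corresponds to the oriented edge $g\cdot(\zp^2,\alpha\zp^2)$, i.e. the edge whose origin is the homothety class of $g\zp^2$ and whose target is the homothety class of $g\alpha\zp^2$; likewise $\llbracket g\beta,1\rrbracket$ corresponds to the edge with origin $g\beta\zp^2$ and target $g\beta\alpha\zp^2$. Since $\br e$ is defined to be $e$ with its origin and target interchanged, it suffices to verify the two homothety relations $g\beta\zp^2\sim g\alpha\zp^2$ and $g\beta\alpha\zp^2\sim g\zp^2$. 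As $G$ acts on lattices, we may cancel $g$ and reduce to checking $\beta\zp^2\sim\alpha\zp^2$ and $\beta\alpha\zp^2\sim\zp^2$.

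For the first relation I would simply compute $\beta\zp^2$: the matrix $\beta=\left(\begin{smallmatrix}0 & 1 \\ p & 0\end{smallmatrix}\right)$ sends the standard basis vectors to $(0,p)$ and $(1,0)$, so $\beta\zp^2$ is the $\zp$-span of $(1,0)$ and $(0,p)$, which is exactly $\alpha\zp^2$; thus in fact $\beta\zp^2=\alpha\zp^2$ on the nose. For the second relation, use the identity $\beta\alpha=\left(\begin{smallmatrix}0 & p \\ p & 0\end{smallmatrix}\right)=pw$ with $w=\left(\begin{smallmatrix}0 & 1 \\ 1 & 0\end{smallmatrix}\right)$; since $w$ permutes the standard basis it fixes $\zp^2$, so $\beta\alpha\zp^2=pw\zp^2=p\zp^2$, which is homothetic to $\zp^2$.

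Combining these, the edge attached to $\llbracket g\beta,1\rrbracket$ has origin $g\beta\zp^2=g\alpha\zp^2$ and target the homothety class of $g\beta\alpha\zp^2=gp\zp^2$, namely the class of $g\zp^2$; this is precisely the edge $e$ with origin and target swapped, i.e. $\br e$. There is no real obstacle beyond keeping the conventions for the $G$-action and the edge--function identification straight; the only point requiring a little care is the orientation, which is settled cleanly by the observation $\beta\alpha=pw$ (equivalently $\beta=\alpha w$, as already noted in the Notation section).
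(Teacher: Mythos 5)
Your proof is correct and follows essentially the same route as the paper's: reduce to $g=\mathrm{id}$ via the $G$-equivariance of the edge--function identification, then verify $\beta\zp^2=\alpha\zp^2$ and $\beta\alpha\zp^2=p\zp^2$ (the paper does this via $\beta=\alpha w$, you via the equivalent $\beta\alpha=pw$, which is the same trivial computation).
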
      
\begin{comment}
    \begin{proof}
        The edge $e$ corresponds to $[[g, 1]]$ means that $e = g e_{0, 1}$. Therefore to prove the lemma, it is enough to check that $\beta e_{0, 1} = e_{1, 0}$. Note that $\beta = \alpha w$. Therefore $\beta \Zp^2 = \alpha w \Zp^2 = \alpha \Zp^2$ and $\beta \alpha \Zp^2 = \alpha w \alpha \Zp^2 = p\Zp^2$. This proves that $\beta e_{0, 1} = e_{1, 0}$.
    \end{proof}
\end{comment}
  
    \begin{lemma}\label{Big operator in the non-commutative Hecke algebra}
        If $e$ is the edge corresponding to $\llbracket g, 1\rrbracket$, then the edges $e'$ with $o(e') = t(e)$ and $e' \neq \br{e}$ correspond to $\left\llbracket g \matUp, 1 \right\rrbracket$, where $\lambda \in I_1$.
      \end{lemma}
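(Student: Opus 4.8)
The plan is to reduce immediately to the case $g = \mathrm{id}$ using the $G$-equivariance of the identification of $\IZind E$ with finitely supported functions on the oriented edges of $\Delta$ recalled just before the statement: since the maps $e\mapsto o(e)$, $e\mapsto t(e)$ and $e\mapsto\br e$ are all $G$-equivariant, and $\llbracket g h, 1\rrbracket$ corresponds to the edge $g\cdot(h\,e_{0,1})$, it suffices to prove the assertion when $e = e_{0,1} = (\Zp^2,\alpha\Zp^2)$, whose target vertex is the homothety class $[\alpha\Zp^2]$; the general case then follows by translating everything by $g$.

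The first concrete step is the matrix identity
\[
\matUp = \left(\begin{smallmatrix}1 & 0 \\ p\lambda & p\end{smallmatrix}\right) =: h_\lambda .
\]
Next I would verify the two lattice statements that pin down the edge $h_\lambda e_{0,1}$. First, $h_\lambda\Zp^2 = \langle(1,p\lambda),(0,p)\rangle$ is contained in $\alpha\Zp^2 = \langle(1,0),(0,p)\rangle$ and has index $p$ in $\Zp^2$, hence equals $\alpha\Zp^2$; therefore $o(h_\lambda e_{0,1}) = [\alpha\Zp^2] = t(e)$ and $h_\lambda e_{0,1}$ corresponds to $\llbracket h_\lambda,1\rrbracket$. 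Second, $L'_\lambda := h_\lambda\alpha\Zp^2 = \langle(1,p\lambda),(0,p^2)\rangle$ satisfies $p\,\alpha\Zp^2 \subsetneq L'_\lambda \subsetneq \alpha\Zp^2$, each inclusion of index $p$ (an elementary index count, using that $(p,0) = p(1,p\lambda)-\lambda(0,p^2)\in L'_\lambda$), so $h_\lambda e_{0,1} = ([\alpha\Zp^2],[L'_\lambda])$ is a genuine edge of $\Delta$ with origin $t(e)$.

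It then remains to see that the $p$ edges $h_\lambda e_{0,1}$, as $\lambda$ ranges over $I_1$, are pairwise distinct and distinct from $\br e$, hence — since $\Delta$ is $(p+1)$-regular, so $t(e)$ has exactly $p$ outgoing edges other than $\br e$ — exhaust the set in the statement. I would do this by passing to the two-dimensional $\Fp$-vector space $V := \alpha\Zp^2/p\,\alpha\Zp^2$ with basis the images $\bar e_1,\bar e_2$ of $(1,0),(0,p)$: outgoing edges at $[\alpha\Zp^2]$ correspond bijectively to lines in $V$; the reverse edge $\br e$ corresponds to the line $\Fp\bar e_2$, as it is represented by the lattice $p\Zp^2$ (which lies between $p\,\alpha\Zp^2$ and $\alpha\Zp^2$ and reduces to $\Fp\bar e_2$ in $V$); and $L'_\lambda$ reduces to the line spanned by the image of $(1,p\lambda) = \bar e_1 + \bar\lambda\,\bar e_2$, where $\bar\lambda\in\Fp$ is the reduction of $\lambda$. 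Since reduction mod $p$ is a bijection $I_1 \xrightarrow{\sim} \Fp$, the lines $\Fp(\bar e_1 + \bar\lambda\,\bar e_2)$ are $p$ distinct lines, none equal to $\Fp\bar e_2$, which together with $\Fp\bar e_2$ account for all $p+1$ lines in $V$; translating back by $g$ gives the lemma. I do not expect a real obstacle here: the computational content is just the $2\times2$ matrix product and a handful of index computations, and the only points needing care are the $G$-equivariant reduction and correctly bookkeeping the homothety classes — in particular, identifying which line in $V$ corresponds to $\br e$.
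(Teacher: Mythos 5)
Your proof is correct and follows essentially the same route as the paper's: reduce to $g = \id$ by $G$-equivariance, compute $\matUp = \left(\begin{smallmatrix}1 & 0 \\ p\lambda & p\end{smallmatrix}\right)$, and check directly that $\matUp\Zp^2 = \alpha\Zp^2$ (so the origin is $t(e)$) while $\matUp e_{0,1} \neq \br{e}_{0,1}$. The only difference is that where the paper stops after checking that $\matUp\alpha\Zp^2$ is not homothetic to $\Zp^2$, you additionally verify, via the lines in $\alpha\Zp^2/p\,\alpha\Zp^2$, that the $p$ edges $\matUp e_{0,1}$ are pairwise distinct and exhaust the neighbours other than $\br{e}$ — a point the paper leaves implicit, and which your argument settles cleanly.
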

\begin{comment}        
    \begin{proof}
        Using a reduction similar to the one in the proof of the lemma above, we only need to prove this statement for $g = \id$. Firstly, note that $\matUp \Zp^2 = \alpha \Zp^2$. Therefore the origin of the edge $\matUp e_{0, 1}$ is equal to the target of $e_{0, 1}$. Secondly, $\matUp e_{0, 1} = \br{e_{0,1}}$ if and only if $\matUp \alpha \Zp^2$ is homothetic to $\Zp^2$. This happens if and only if $\begin{pmatrix}
        1 & 0 \\
        p\lambda & p^2
        \end{pmatrix} \in KZ$, which is not true. Therefore the edge $\matUp e_{0, 1}$ is not equal to $\br{e_{0, 1}}$.
    \end{proof}
\end{comment}
\noindent    Using these lemmas, we get the following  formulas for the operators $W_p$ and $U_p$
    on $\IZind E$:
    \begin{eqnarray}\label{Formulae for Up and Wp in char 0}
        W_p\llbracket g, 1\rrbracket = \llbracket g\beta, 1\rrbracket, \quad U_p\llbracket g, 1\rrbracket = \sum_{\lambda \in I_1} \left\llbracket g \matUp , 1\right\rrbracket.
    \end{eqnarray}
    The operators $W_p = T_{1,0}$ and $U_p = T_{1,2}$ are actually Iwahori-Hecke operators (if one replaces $E$ by
    $\Fq$) in the non-commutative Iwahori-Hecke algebra recalled below. 

    We now extend the definition of $W_p$ and $U_p$. We have
    \begin{lemma}\label{Projection formula}
    %For any integer $r \geq 0$, the map $\IZind \SymE{r} \to \SymE{r} \otimes \IZind E$ sending $[[g, P(X, Y)]]$ to $gP(X, Y) \otimes [[g, 1]]$ is an isomorphism of $G$-representations.
    For a representation $\sigma$ of $G$, the map $\IZind \sigma \to \sigma \otimes \IZind E$ sending $\llbracket g, v\rrbracket$ to $gv \otimes \llbracket g, 1\rrbracket$ is an isomorphism of $G$-representations.
\end{lemma}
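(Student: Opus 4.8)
The plan is to construct the map and its inverse explicitly and check they are $G$-equivariant inverses of each other. First I would define $\Phi \colon \IZind \sigma \to \sigma \otimes \IZind E$ on the generators $\llbracket g, v\rrbracket$ by $\Phi(\llbracket g, v\rrbracket) = gv \otimes \llbracket g, 1\rrbracket$ and check this is well-defined: since every element of $\IZind \sigma$ is a (finite) linear combination of the $\llbracket g, v\rrbracket$, with the only relations being $\llbracket hg, v\rrbracket = \llbracket g, \sigma(h) v\rrbracket$ for $h \in IZ$, I need to verify that $\Phi$ respects these relations, i.e.\ that $(hg) v \otimes \llbracket hg, 1\rrbracket = g(\sigma(h)v) \otimes \llbracket g, 1\rrbracket$. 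This is immediate because $\llbracket hg, 1\rrbracket = \llbracket g, 1\rrbracket$ for $h \in IZ$ (as $1$ is fixed by the trivial character) while $hgv = g(\sigma(h)v)$ as elements of $\sigma$ — here I am using that the action of $G$ on $\sigma$ is a genuine module action and $hg$ in $\sigma$ agrees with $g$ acting on $\sigma(h)v$. So $\Phi$ is a well-defined $E$-linear map.

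Next I would check $G$-equivariance: $g'$ acts on $\llbracket g, v\rrbracket$ by $g' \cdot \llbracket g, v\rrbracket = \llbracket g g'^{-1}, v \rrbracket$ — or with whichever convention the paper uses; from the definition $g \cdot f(g') = f(g'g)$ one computes $g_1 \cdot \llbracket g, v\rrbracket = \llbracket g g_1^{-1}, v\rrbracket$ (I would double-check the direction). Applying $\Phi$ and comparing with $g_1$ acting diagonally on $gv \otimes \llbracket g,1\rrbracket$, both sides give $g g_1^{-1} v$ in the $\sigma$-slot and $\llbracket g g_1^{-1}, 1\rrbracket$ in the $\IZind E$-slot, so the two agree. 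The key point is that the ``twist'' $gv$ in the first tensor factor exactly compensates for the fact that on $\IZind E$ the coefficient $1$ carries no information about $g$, so the diagonal $G$-action on the target matches the induced action on the source.

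For the inverse, I would define $\Psi \colon \sigma \otimes \IZind E \to \IZind \sigma$ on elementary tensors $v \otimes \llbracket g, 1\rrbracket$ by $\Psi(v \otimes \llbracket g, 1\rrbracket) = \llbracket g, g^{-1} v\rrbracket$, again checking well-definedness with respect to the relation $\llbracket hg, 1\rrbracket = \llbracket g, 1\rrbracket$ for $h \in IZ$: $\llbracket hg, (hg)^{-1}v\rrbracket = \llbracket hg, g^{-1}h^{-1}v\rrbracket = \llbracket g, \sigma(h)(g^{-1}h^{-1}v)\rrbracket = \llbracket g, g^{-1}v\rrbracket$ since $\sigma(h)g^{-1}h^{-1}v = g^{-1}v$ as elements of $\sigma$ (using again that this is honest module multiplication in the $G$-representation $\sigma$, so $h g^{-1} h^{-1} = $ the right thing after moving it back). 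Then $\Psi \circ \Phi (\llbracket g, v\rrbracket) = \Psi(gv \otimes \llbracket g,1\rrbracket) = \llbracket g, g^{-1}gv\rrbracket = \llbracket g, v\rrbracket$ and $\Phi \circ \Psi(v \otimes \llbracket g,1\rrbracket) = \Phi(\llbracket g, g^{-1}v\rrbracket) = g(g^{-1}v) \otimes \llbracket g,1\rrbracket = v \otimes \llbracket g,1\rrbracket$, so they are mutually inverse; a parallel (routine) check shows $\Psi$ is $G$-equivariant as well.

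The only genuinely delicate point — and the place I would be most careful — is bookkeeping the left-versus-right conventions consistently: the definition of $\IZind \sigma$ uses $f(hg) = \sigma(h) f(g)$ (left action of $IZ$) together with the $G$-action $g \cdot f(g') = f(g'g)$ (right translation), so one must track whether ``$gv$'' in the statement means $\sigma(g)v$ and whether $g_1 \cdot \llbracket g, v\rrbracket$ equals $\llbracket g g_1^{-1}, v\rrbracket$ or $\llbracket g_1 g, v\rrbracket$; getting a sign/direction wrong here would make the map fail to be equivariant even though it is still a linear isomorphism. Once the conventions are pinned down, every verification above is a one-line manipulation, so I expect no real obstacle beyond this bookkeeping. (This is the standard ``projection formula'' / tensor identity for induced representations, specialized from $G$ to the subgroup $IZ$ with the trivial one-dimensional coefficient.)
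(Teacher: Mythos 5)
Your overall strategy --- exhibit the explicit inverse $v\otimes\llbracket g,1\rrbracket \mapsto \llbracket g, g^{-1}v\rrbracket$ and check that both composites are the identity --- is sound, and it differs mildly from the paper's route: the paper notes surjectivity is clear, proves injectivity directly by evaluating a vanishing combination $\sum_i g_iv_i\otimes\llbracket g_i,1\rrbracket=0$ at the points $g_i^{-1}$, and checks $G$-equivariance in one line. Either route proves the lemma; yours trades the injectivity-by-evaluation step for a well-definedness check of the inverse.

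However, as written your verifications rest on identities that are false, exactly at the convention points you flagged. With the paper's definitions ($\llbracket g,v\rrbracket(g')=\sigma(g'g)v$ when $g'g\in IZ$, and $(g_1\cdot f)(g')=f(g'g_1)$), the $G$-action on generators is left translation in the first slot, $g_1\cdot\llbracket g,v\rrbracket=\llbracket g_1g,v\rrbracket$, not $\llbracket gg_1^{-1},v\rrbracket$; and the $IZ$-relation has $h$ on the right, $\llbracket gh,v\rrbracket=\llbracket g,\sigma(h)v\rrbracket$ for $h\in IZ$, not on the left (in particular $\llbracket gh,1\rrbracket=\llbracket g,1\rrbracket$, while $\llbracket hg,1\rrbracket\neq\llbracket g,1\rrbracket$ in general). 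Consequently the two identities you actually invoke --- $hgv=g(\sigma(h)v)$ in the well-definedness of $\Phi$, and $\sigma(h)(g^{-1}h^{-1}v)=g^{-1}v$ in the well-definedness of $\Psi$ --- are false for noncommuting $g,h$ (they amount to $hg=gh$ and $hg^{-1}h^{-1}=g^{-1}$); likewise your equivariance check places $gg_1^{-1}v$ in the $\sigma$-slot where the diagonal action produces $g_1gv$, so under your stated convention the map would in fact fail to be equivariant. None of this is fatal to the method: with the correct conventions every step becomes the one-liner you intended, namely $\Phi(\llbracket gh,v\rrbracket)=g(hv)\otimes\llbracket g,1\rrbracket=\Phi(\llbracket g,\sigma(h)v\rrbracket)$, $\Psi(v\otimes\llbracket gh,1\rrbracket)=\llbracket gh,h^{-1}g^{-1}v\rrbracket=\llbracket g,g^{-1}v\rrbracket$, and $\Phi(g_1\cdot\llbracket g,v\rrbracket)=g_1gv\otimes\llbracket g_1g,1\rrbracket=g_1\cdot\Phi(\llbracket g,v\rrbracket)$. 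But the proof as submitted does not verify the lemma until those conventions are corrected.
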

\begin{comment}
\begin{proof}
    Surjectivity is easy to check. We prove injectivity. Suppose $\sum_{i = 1}^n [[g_i, v_i]]$ maps to $0 \in \sigma \otimes \IZind E$. This means that $\sum_{i = 1}^n (g_iv_i \otimes [[g_i, 1]]) = 0$. Fix $i = 1, 2, \ldots n$. Evaluating the last expression at the matrices $g_i^{-1}$, we see that $v_i = 0$ for all $i = 1, 2, \ldots n$.
    
    This map is also $G$-equivariant. Indeed, given any $g' \in G$, the function $g'[[g, v]] = [[g'g, v]]$ maps to $g'gv\otimes[[g'g, 1]] = g'(gv \otimes [[g, 1]])$.
\end{proof}
\end{comment}
%    \begin{lemma}
%        For $r \geq 0$, the map 
%        \[
%        \begin{tikzcd}
%        \iota : \IZind \SymE{r} \arrow[r] &  \SymE{r} \otimes \IZind E \\
%        {[[g, P]]} \arrow[r, %mapsto] & gP \otimes [[g, 1]]
%        \end{tikzcd}
%        \]
%        is an isomorphism of $G$-representations.
%    \end{lemma}
\noindent Let $\SymE{r} = |\det|^\frac{r}{2} \otimes \mathrm{Sym}^r E^2$ for $r \geq 0$. 
Using this lemma, we can define $\tWp$ and $\tUp$ on $\IZind \SymE{r}$
as follows:
    \[
        \tWp\llbracket g, P\rrbracket = \llbracket g\beta, \beta^{-1}P\rrbracket, \>\> \tUp\llbracket g, P\rrbracket = \sum_{\lambda \in I_1} \left\llbracket g\matUp, \left(\matUp\right)^{-1}P\right\rrbracket.
    \]

    \subsection{Iwahori-Hecke operators}
    \label{Comparison theorem section}

    The Iwahori-Hecke algebra is by definition the endomorphism algebra $\mathrm{End}_G (\IZind d^r)$
    for $0 \leq r \leq p-1$. We will sometimes loosely refer to it as the Hecke algebra.
    In this section, we recall formulas for the generators of the Iwahori-Hecke algebra
    following \cite{BL94,BL95}.
    % We also discuss that $\KZind V_{r}$ is isomorphic to a quotient of $\IZind d^r$.

  \subsubsection{Commutative Hecke algebra}

  It is well known (cf. \cite{BL95}) that if $0 < r < p - 1$, then
  the Iwahori-Hecke algebra of the representation $\IZind d^r$ is \emph{commutative} and
  generated by two operators $T_{-1, 0}$ and $T_{1, 2}$ satisfying
    \[
        T_{-1, 0}T_{1, 2} = 0 = T_{1, 2}T_{-1, 0}.
    \]
    These operators are defined by the following formulas (cf. \cite{AB15})
    \begin{eqnarray}\label{Formulae for T-10 and T12 in the commutative case}
      T_{-1, 0}\llbracket \id, v\rrbracket = \sum_{\lambda \in I_1}\left\llbracket \begin{pmatrix}p & \lambda \\ 0 & 1\end{pmatrix}, v\right\rrbracket \text{ and } T_{1, 2}\llbracket \id, v\rrbracket = \sum_{\lambda \in I_1}\left\llbracket \begin{pmatrix}1 & 0 \\ p\lambda & p\end{pmatrix}, v\right\rrbracket,
    \end{eqnarray}
    for $v \in \Fq(d^r)$.
    The Hecke algebra does not change if we twist $\IZind d^r$ by a character of $G$ (cf. \cite{BL94}).
    Using an analog of Lemma \ref{Projection formula}, we see that the formulas for $T_{1, 2}$ and $T_{-1, 0}$ given above do not change if we replace $\IZind d^r$ by $\IZind (d^r\otimes \omega^s)$ for any $0 \leq s \leq p - 2$.

\subsubsection{Non-commutative Hecke algebra}
\label{subsubsection non-commutative}

If $r = 0$, $p - 1$, then the Iwahori-Hecke algebra of $\IZind d^r = \IZind 1$ is a \emph{non-commutative} algebra
generated by two operators $T_{1, 0}$ and $T_{1, 2}$ satisfying
    \begin{eqnarray}\label{Relations in the non-commutative Hecke algebra}
        T_{1, 0}^2 = 1 \text{ and } T_{1, 2}T_{1, 0}T_{1, 2} = -T_{1, 2}.
    \end{eqnarray}
    Using \cite[Lemma 6]{BL95} and analogs of Lemmas \ref{Small operator in the non-commutative Hecke algebra}, \ref{Big operator in the non-commutative Hecke algebra} the formulas for these operators are\footnote{Compare with \eqref{Formulae for Up and Wp in char 0}.}
    \begin{eqnarray*}
        T_{1, 0}\llbracket \id, v\rrbracket = \llbracket \beta, v\rrbracket \text{ and } T_{1, 2}\llbracket \id, v\rrbracket = \sum_{\lambda \in I_1}\left\llbracket \begin{pmatrix}1 & 0 \\ p\lambda & p\end{pmatrix}, v\right\rrbracket,
    \end{eqnarray*}
    for $v \in \Fq$. Using an analog of Lemma~\ref{Projection formula}, we see that the formula for $T_{1, 0}$
    changes slightly  if we replace $\IZind 1$ by $\IZind \omega^s$ for any $0 \leq s \leq p - 2$ but the formula for $T_{1, 2}$ remains the same. Indeed, we have
    \begin{eqnarray}\label{Formulae for T10 and T12 in the non-commutative case}
        T_{1, 0}\llbracket \id, v\rrbracket = \llbracket \beta, (-1)^{s}v\rrbracket \text{ and } T_{1, 2}\llbracket \id, v\rrbracket = \sum_{\lambda \in I_1}\left\llbracket \begin{pmatrix}1 & 0 \\ p\lambda & p\end{pmatrix}, v\right\rrbracket.
    \end{eqnarray}
    In this non-commutative Hecke algebra, we also have the operator $T_{-1, 0}$ given by 
    \begin{eqnarray}\label{T-10 in the non-commutative case}
        T_{-1, 0} = T_{1, 0}T_{1, 2}T_{1, 0}.
    \end{eqnarray}
    The formula for this operator continues to be given
    by \eqref{Formulae for T-10 and T12 in the commutative case}.

\subsection{Iwahori mod $p$ LLC}

For $0 \leq r \leq p-1$, $\lambda \in \brFp$ and $\eta: \Qp^* \to \brFp^*$ a smooth character, define
the smooth mod $p$ representation of $G$
as the following twisted quotient of compactly supported Iwahori induction:
\begin{eqnarray*}
    \label{def of pi}
    \pi(r,\lambda, \eta) & :=      & \frac{\IZind d^r}{(T_{-1, 0} + \delta_{r, p - 1}T_{1, 0})
                                       + (T_{1, 2} + \delta_{r, 0}T_{1, 0} - \lambda)} \otimes \eta,
\end{eqnarray*}
where $\delta_{a,b} = 1$ if $a = b$ and is $0$ otherwise.
  This representation coincides with the well-known representation with the same name involving spherical induction
  by the work of \cite{AB15} in the case $0 < r < p-1$ and \cite{Chi23} in the case $r = 0$, $p-1$. In particular,
  the above definition is really a theorem (cf. \cite{Chi23}). 
  Using it, one may reinterpret Breuil's (semi-simple) mod $p$ LLC \cite[Definition 4.2.4]{Bre03}
  Iwahori theoretically as was done in \cite[Theorem 2.2]{Chi23}:

  \begin{theorem}[Iwahori mod $p$ LLC]\label{Iwahori mod p LLC}
    For $r \in \{0, \ldots, p - 1\}$, $\lambda \in \brFp$ and a smooth
character $\eta: \Qp^* \to \brFp^*$, we have
\begin{itemize}
            \item If $\lambda = 0$:
                \[
                    (\ind \omega_2^{r + 1}) \otimes \eta \>\> 
                    \xleftrightarrow{\quad} \>\>  \pi(r,0,\eta) \quad \qquad \qquad \qquad \qquad \quad 
                    %\frac{\IZind d^r}{(T_{-1, 0} + {\delta_{r, p - 1}T_{1, 0}}) + (T_{1, 2} + \delta_{r, 0}T_{1, 0})} \otimes \eta 
                \]
            \item If $\lambda \neq 0$:
                \begin{eqnarray*}
                    \> \> (\mu_{\lambda}\omega^{r + 1} \oplus
\mu_{\lambda^{-1}})\otimes \eta &
\xleftrightarrow{\quad } &
% \left(\frac{\IZind d^r}{(T_{-1, 0} + \delta_{r, p - 1}T_{1, 0}) + (T_{1, 2} + \delta_{r, 0}T_{1, 0} - \lambda)} \otimes \eta\right)^{\!\!\rmss} \!\!\oplus
       \pi(r,\lambda,\eta)^{\rmss} \oplus     \pi([p-3-r],\lambda^{-1},\eta \omega^{r+1})^{\rmss},                             
\\
%                    && \left(\frac{\IZind d^{[p - 3 - r]}}{(T_{-1, 0} + \delta_{[p - 3 - r], p - 1}T_{1, 0}) + (T_{1, 2} + \delta_{[p - 3 - r], 0}T_{1, 0} - \lambda^{-1})}\otimes \eta\right)^{\!\!\rmss}\!\!\!.
            \end{eqnarray*}
where $[a] \in \{0,\ldots,p-2\}$ represents the class of $a$ modulo $p-1$.    
            \end{itemize}
\end{theorem}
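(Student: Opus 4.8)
The statement to be proved is Theorem~\ref{Iwahori mod p LLC}, the Iwahori-theoretic reformulation of Breuil's semi-simple mod $p$ Local Langlands Correspondence. Since the representation $\pi(r,\lambda,\eta)$ is defined (equivalently, after invoking the cited coincidence theorem, identified) with the usual representation of the same name built from spherical/compact induction from $KZ$, the strategy is to \emph{translate} Breuil's original correspondence — which is phrased in terms of the $KZ$-induced representations $\KZind \SymF{r}$ and their quotients — into the language of the $IZ$-induced representations $\IZind d^r$. First I would recall the precise form of Breuil's correspondence: for $0 \le r \le p-1$ and a smooth character $\eta$, the irreducible Galois representation $(\ind\omega_2^{r+1})\otimes\eta$ corresponds to the (irreducible) supersingular representation attached to $\KZind\SymF{r}$ with Hecke parameter $\lambda = 0$, while the reducible sum $(\mu_\lambda\omega^{r+1}\oplus\mu_{\lambda^{-1}})\otimes\eta$ corresponds to the semi-simplification of the principal series / its companion, which on the $KZ$ side is the pair built from $\SymF{r}$ with parameter $\lambda$ and from $\SymF{p-3-r}$ with parameter $\lambda^{-1}$ and a twist by $\omega^{r+1}$.

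The bridge between the two pictures is the identification $\pi(r,\lambda,\eta) \cong$ (the corresponding $KZ$-picture representation), which is exactly the content of \cite{AB15} for $0 < r < p-1$ and of \cite{Chi23} for the two boundary cases $r = 0$ and $r = p-1$; this is stated in the paragraph preceding the theorem. So the bulk of the work is already imported. What remains is to check that the Hecke parameters and twists match up on the two sides. Concretely: (i) when $\lambda = 0$, both $\pi(r,0,\eta)$ and the $KZ$-side supersingular representation are irreducible and admissible, so matching them amounts to matching central characters and Hecke eigenvalues, which pins down the bijection with $(\ind\omega_2^{r+1})\otimes\eta$ by Breuil's theorem; (ii) when $\lambda \neq 0$, one uses that $\pi(r,\lambda,\eta)^{\rmss}$ together with $\pi([p-3-r],\lambda^{-1},\eta\omega^{r+1})^{\rmss}$ reassembles, via the $KZ$-picture identification, into the two Jordan--H\"older constituents (a principal series and its dual/companion, or their semisimplifications) that Breuil attaches to $(\mu_\lambda\omega^{r+1}\oplus\mu_{\lambda^{-1}})\otimes\eta$. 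The combinatorial identity to verify here is the symmetry $r \leftrightarrow [p-3-r]$, $\lambda \leftrightarrow \lambda^{-1}$, $\eta \leftrightarrow \eta\omega^{r+1}$, which is precisely the well-known symmetry in Breuil's list and which accounts for the fact that a reducible (non-split, generically) representation of $G$ has two different realisations as a quotient of a $KZ$-induction.

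The one genuinely delicate point — and the place I expect to spend real effort — is the \emph{boundary cases} $r = 0$ and $r = p-1$, where the Iwahori--Hecke algebra is non-commutative (relations \eqref{Relations in the non-commutative Hecke algebra}) and the extra operators $\delta_{r,p-1}T_{1,0}$ and $\delta_{r,0}T_{1,0}$ appear in the definition of $\pi(r,\lambda,\eta)$. Here one must check that the operator $T_{-1,0} + \delta_{r,p-1}T_{1,0}$, respectively $T_{1,2} + \delta_{r,0}T_{1,0} - \lambda$, acting on $\IZind d^r = \IZind 1$ (twisted by $\eta$) really does cut out the same representation as in Breuil's list; this is exactly where \cite{Chi23} does the work via the relation \eqref{T-10 in the non-commutative case} $T_{-1,0} = T_{1,0}T_{1,2}T_{1,0}$ and the formulas \eqref{Formulae for T10 and T12 in the non-commutative case}. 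I would therefore structure the proof as: (1) quote Breuil's correspondence \cite[Definition 4.2.4]{Bre03} in the $KZ$ language; (2) quote the coincidence $\pi(r,\lambda,\eta) \cong (KZ\text{-picture})$ from \cite{AB15,Chi23}; (3) match central characters, Hecke parameters, and the $r\leftrightarrow[p-3-r]$ symmetry in the generic range $0<r<p-1$; (4) handle $r\in\{0,p-1\}$ separately, carefully tracking the extra $T_{1,0}$ terms and the twist by $(-1)^s$ in \eqref{Formulae for T10 and T12 in the non-commutative case}, invoking \cite[Theorem 2.2]{Chi23} for the final identification. The main obstacle is step (4); everything else is bookkeeping once the two cited black boxes are in hand.
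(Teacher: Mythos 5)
Your proposal is correct and follows essentially the same route as the paper, which simply defers to \cite[Theorem 2.2]{Chi23}: combine Breuil's semi-simple mod $p$ correspondence \cite[Definition 4.2.4]{Bre03} with the identification of the Iwahori-induced $\pi(r,\lambda,\eta)$ with its spherical counterpart (\cite{AB15} for $0<r<p-1$, \cite{Chi23} for $r=0,p-1$), the boundary cases with the extra $T_{1,0}$ terms being exactly where \cite{Chi23} does the work. Nothing further is needed.
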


The Banach space $\tilde{B}(k,\sL)$ recalled in the next section 
was defined by Breuil using Iwahori induction. We have therefore (mostly for fun)
decided to write this paper using Iwahori induction. No spherical Hecke operators or Hecke algebras
appear explicitly in this paper.
%though of course they appear implicitly.
In particular, we use the
above Iwahori theoretic avatar of the mod $p$ LLC above to
prove Theorem~\ref{Main theorem in the second part of my thesis} in
Section~\ref{Section containing the proof of the main theorem}.

However, sometimes we need to use the following alternative symmetric presentation of
$\pi$ which follows
from \cite[Theorem 5.1]{AB15} for $0 < r < p-1$ and \cite[Remark 3.11]{Chi23} for $r = 0,p-1$:
\begin{eqnarray}
    \label{alt def of pi}
    \pi(r,\lambda, \eta)  & \simeq  & \frac{\IZind a^r}{(T_{1, 2} + \delta_{r, p - 1}T_{1, 0})
                                       + (T_{-1, 0} + \delta_{r, 0}T_{1, 0} - \lambda)} \otimes \eta. 
\end{eqnarray}

A final remark. Let $i = \frac{r+1}{2}$ for $1 \leq r \leq p-2$ odd
and $c := \lambda_i + \lambda_i^{-1}$ be the quantity at the end of the
statement of Theorem~\ref{Main theorem in the second part of my thesis}. By the Chinese Remainder Theorem
for the ideal generated by the quadratic polynomial $T_{-1, 0}^2 - cT_{-1, 0} + 1 = (T_{-1, 0} - \lambda_i)(T_{-1, 0} - \lambda_i^{-1})$, we have
\begin{eqnarray}
  \label{pi quadratic}
     \frac{\IZind a^{\frac{r - 1}{2}}d^{\frac{r + 1}{2}}}{T_{1, 2} + (T_{-1, 0}^2 - cT_{-1, 0} + 1)}
  & \simeq & \left( \frac{\IZind a^{-1}}{T_{1,2} + (T_{-1, 0} - \lambda_i)} \oplus \frac{\IZind a^{-1}}{T_{1,2} + (T_{-1, 0} - \lambda_i^{-1})} \right) \otimes (ad)^{\frac{r + 1}{2}} \nonumber \\
  & \simeq & \pi(p - 2, \lambda_i, \omega^{\frac{r + 1}{2}}) \oplus \pi(p - 2, \lambda_i^{-1}, \omega^{\frac{r + 1}{2}}),
\end{eqnarray}
where the second isomorphism follows from  \eqref{alt def of pi} (with $r = p-2$).
If $\lambda_i = \pm 1$, then CRT no longer applies since the roots are repeated,
but, as is easily checked, the first isomorphism above still holds up to semi-simplification. 
  
\section{The Banach space $\tB(k, \cL)$}\label{Definition of the Banach space}

In \cite[Section 4.2]{Bre04}, Breuil has defined the Local Langlands correspondent $B(k, \cL)$ of the semi-stable representation $V_{k, \cL}$ for $k \geq 3$. In this section, we recall the presentation of $B(k, \cL)$ given in \cite[Corollary 3.3.4]{Bre10}. 

We begin by proving a few easy but tedious lemmas. Define $f_{\id} \in (\Bind E)^{\sm}$ by 
    \[
        f_{\id}(g) = \begin{cases}
            1 & \text{ if $g \in BIZ$} \\
            0 & \text{ otherwise.}
        \end{cases}
    \]
    To check that $f_{\id} \in (\Bind E)^{\sm}$, we note that for any $b \in B$ and $g \in G$ the condition $g \in BIZ$ is equivalent to $bg \in BIZ$ and that $IZ$ is an open subset of $G$. The following lemma makes
    an isomorphism from \cite[Section 4.6]{Bre04} explicit.

%We prove that $f_{\id} \in \Bind E$. Concretely, we need to check that for any $b \in B$ and $g \in G$, we have $f_{\id}(bg) = f_{\id}(g)$. This follows from the fact that $g \in BIZ$ is equivalent to $bg \in BIZ$.

\begin{lemma}\label{Steinberg as a quotient}
    The map
    \[
        \psi : \IZind E \to (\Bind E)^{\sm}
    \]
    sending $\llbracket \id, 1\rrbracket$ to $f_{\id}$ induces an isomorphism $\dfrac{\IZind E}{(W_p + 1, U_p - 1)} \to \St$.
\end{lemma}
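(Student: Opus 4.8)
The plan is to show that $\psi$ is well-defined, surjective, and has kernel exactly the $G$-submodule generated by $W_p\llbracket\id,1\rrbracket + \llbracket\id,1\rrbracket$ and $U_p\llbracket\id,1\rrbracket - \llbracket\id,1\rrbracket$, so that it factors through an isomorphism onto $\St$. First I would make $\psi$ explicit as a $G$-map: since $\IZind E$ is generated by $\llbracket\id,1\rrbracket$ over $G$, sending this generator to $f_{\id}$ determines $\psi$ provided $f_{\id}$ is fixed by $IZ$ (acting on the right), which is exactly the computation already noted before the lemma (the condition $g \in BIZ$ is stable under right multiplication by $IZ$, and the central element $p$ acts trivially). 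Concretely $\psi(\llbracket g, 1\rrbracket) = g \cdot f_{\id}$, the indicator of the coset set $BIZg^{-1}$... more usefully, $\psi(\llbracket g,1\rrbracket)(g') = f_{\id}(g'g) = \mathbbm{1}[g'g \in BIZ]$.

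Next I would identify the image. The functions $\llbracket g,1\rrbracket$ run over the cosets $G/IZ$, which by the second lemma of Section~\ref{BT tree section} are the oriented edges of the Bruhat--Tits tree; equivalently, via the Bruhat decomposition $G = BIZ \sqcup BwIZ$ refined by the Iwahori factorization, the cosets $G/IZ$ are indexed by $\mathbb{P}^1(\Qp)$ together with a choice made by $W_p$. The key point is that $(\Bind E)^{\sm}$ restricted to $K$ (or described via $\mathbb{P}^1(\Qp) = B\backslash G$) consists of locally constant functions on $\mathbb{P}^1(\Qp)$, and the span of the $G$-translates of $f_{\id}$ is all such functions — this is a standard fact, and surjectivity of $\psi$ onto $(\Bind E)^{\sm}$ (hence onto the quotient $\St$) follows because characteristic functions of the basic open sets (balls in $\mathbb{P}^1(\Qp)$) are obtained as $\psi$ of suitable $\llbracket g,1\rrbracket$, and these span. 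Composing with the quotient $(\Bind E)^{\sm} \twoheadrightarrow \St$ gives a surjection $\bar\psi: \IZind E \twoheadrightarrow \St$.

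It remains to compute $\Ker \bar\psi$. The easy inclusion is that $W_p\llbracket\id,1\rrbracket + \llbracket\id,1\rrbracket$ and $U_p\llbracket\id,1\rrbracket - \llbracket\id,1\rrbracket$ lie in the kernel: using the formulas \eqref{Formulae for Up and Wp in char 0}, $W_p\llbracket\id,1\rrbracket = \llbracket\beta,1\rrbracket$, and $\psi(\llbracket\beta,1\rrbracket)$ is the indicator of $\{g' : g'\beta \in BIZ\}$; a direct check ($\beta = \alpha w$, and $B\alpha w IZ = BwIZ$) shows $\psi(\llbracket\beta,1\rrbracket) + f_{\id} = \mathbbm{1}[g' \in BwIZ] + \mathbbm{1}[g'\in BIZ] = \mathbbm{1}_{G}$, the constant function $1$, which is $0$ in $\St$. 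Similarly $\psi(U_p\llbracket\id,1\rrbracket) = \sum_{\lambda\in I_1}\psi(\llbracket\left(\begin{smallmatrix}1&\lambda\\0&1\end{smallmatrix}\right)\beta w\cdot(\cdots),1\rrbracket)$ — expand $U_p\llbracket\id,1\rrbracket$ via its definition, push through $\psi$, and verify the resulting sum of indicator functions equals $f_{\id}$ plus a constant function (the $p$ balls covering $\mathbb{P}^1(\Qp)$ at the relevant vertex), hence $\equiv f_{\id}$ in $\St$. This gives $(W_p+1, U_p-1) \subseteq \Ker\bar\psi$. For the reverse inclusion, I would pass to the quotient $M := \IZind E / (W_p+1, U_p-1)$ and show $\bar\psi$ induces an \emph{iso} $M \xrightarrow{\sim}\St$ by exhibiting an inverse, or by a dimension/length count: the relations $W_p = -1$, $U_p = 1$ on the generator, together with the $G$-action, force $M$ to be spanned by the $\mathbb{P}^1(\Qp)$-family with the constant function killed, matching $\St$ exactly. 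The cleanest route is to recall that this is precisely the isomorphism $\St \cong (\IZind E)/(W_p+1,U_p-1)$ asserted in \cite[Section 4.6]{Bre04}, and that the content of the lemma is merely that \emph{this particular map $\psi$} realizes it; so after the kernel containment above, I would finish by noting both sides are admissible smooth representations and $\bar\psi$ is a surjection between them which is injective because any further relation would collapse $\St$, which is irreducible (for $p \geq 5$ — or at least has no proper quotient of the relevant size), so $\Ker\bar\psi$ is exactly $(W_p+1, U_p-1)$.

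The main obstacle is the reverse kernel inclusion: showing that $(W_p+1, U_p-1)$ is \emph{all} of $\Ker\bar\psi$ rather than just contained in it. I expect to handle this either by the irreducibility of $\St$ (so that the surjection from $M$ must be an iso once one checks $M \neq 0$ and $M$ has no $\St$ as a proper quotient), or — more robustly and in keeping with the tree picture — by directly constructing a $G$-equivariant splitting $\St \to M$ using the harmonic-cochain / edge-function description of $\St$, under which the relations $W_p = -1$ (orientation reversal) and $U_p = 1$ (sum over outgoing edges) are exactly the harmonicity conditions. Everything else (well-definedness, surjectivity, the two generators lying in the kernel) is a routine unwinding of the Bruhat decomposition and the formulas \eqref{Formulae for Up and Wp in char 0}.
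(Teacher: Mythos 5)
Your proposal follows the paper's proof step for step: well-definedness of $\psi$ via Frobenius reciprocity and the $IZ$-invariance of $f_{\id}$, surjectivity onto $\St$, the explicit check that $(W_p+1)\llbracket\id,1\rrbracket$ maps to the constant function $1$ and $(U_p-1)\llbracket\id,1\rrbracket$ to $0$ (in fact the latter is exactly $0$ in $(\Bind E)^{\sm}$, not merely $f_{\id}$ plus a constant), and injectivity quoted from \cite[Section 4.6]{Bre04}, which is precisely the citation the paper itself relies on. One caution: your fallback justifications for injectivity do not work as stated --- irreducibility of the target $\St$ alone never forces a surjection $\IZind E/(W_p+1,U_p-1)\twoheadrightarrow \St$ to be injective --- so the legitimate finish is the one you call the ``cleanest route,'' namely Breuil's identification of the quotient with $\St$ (making the source irreducible), exactly as in the paper.
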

\begin{proof}
  We remark that $\psi$ arises by Frobenius reciprocity from
  the $IZ$-map $E \rightarrow (\Bind E)^{\sm}$ sending $1$ to $f_{id}$, noting that    
    %Given any $h \in IZ$, note that
    % $[[h, 1]] = [[\id, 1]]$. Therefore we need to check that
    $hf_{\id} = f_{\id}$ for all $h \in IZ$.
    %Indeed, for any $g \in G$,
    %\[
    %    hf_{\id}(g) = f_{\id}(gh).
    %\]
    %Since $g \in BIZ$ is equivalent to $gh \in BIZ$, we see that $hf_{\id} = f_{\id}$.
    
    $\psi$ induces a surjection $\dfrac{\IZind E}{(W_p + 1, U_p - 1)} \twoheadrightarrow \St$:
    
    By definition, $\St = (\Bind E)^{\sm}/E$. Since $f_{\id}$ is not a constant function, $\psi$ followed by the projection $(\Bind E)^{\sm} \twoheadrightarrow (\Bind E)^{\sm}/E$ is not the zero map. Since $\St$ is irreducible, we see that $\psi$ induces a surjection $(\Bind E)^{\sm} \twoheadrightarrow \St$.
    
    Now we check that the images of $W_p + 1$ and $U_p - 1$ map to $0$ under $\IZind E \to \St$. Note that $(W_p + 1)\llbracket \id, 1\rrbracket = \llbracket \beta, 1\rrbracket + \llbracket \id, 1\rrbracket$. Therefore for $g \in G$, we have
    \[
        \psi((W_p + 1)\llbracket \id, 1\rrbracket)(g) = f_{\id}(g \beta) + f_{\id}(g).
    \]
    We claim that $f_{\id}(g\beta) + f_{\id}(g)$ is the constant function taking every $g \in G$ to $1 \in E$. Note that the matrices $w = \begin{pmatrix}0 & 1 \\ 1 & 0\end{pmatrix}$ and $g_{\mu} := \begin{pmatrix}1 & 0 \\ \mu & 1\end{pmatrix}$ for $\mu \in \Qp$ form a system of representatives for $B\backslash G$. So we only need to check that $f_{\id}(g \beta) + f_{\id}(g) = 1$ for these matrices. 
    
    Now $w$ belongs to $BIZ = BI$ if and only if there is a matrix $\begin{pmatrix}x & y \\ 0 & z\end{pmatrix} \in B$ such that
    \[
        \begin{pmatrix}x & y \\ 0 & z\end{pmatrix}
        w =  \begin{pmatrix}y & x \\ z  & 0\end{pmatrix} \in I.
    \]
    This last matrix does not belong to $I$ for any $x, y, z \in \Qp$. This proves that $f_{\id}(w) = 0$.
    Furthermore, $w\beta = \begin{pmatrix}p & 0 \\ 0 & 1\end{pmatrix}$ clearly belongs to $B \subset BIZ$. So $f_{\id}(w\beta) = 1$. This proves that $f_{\id}(w\beta) + f_{\id}(w) = 1$.
    
    Next, consider the matrix $g_{\mu}$ for $\mu \in p\Zp$. Then clearly $g_{\mu} \in I \subset BIZ$. So $f_{\id}(g_{\mu}) = 1$. Again, $g_{\mu}\beta = \begin{pmatrix}0 & 1 \\ p & \mu\end{pmatrix}$ belongs to $BIZ = BI$ if and only if there is a matrix $\begin{pmatrix}x & y \\ 0 & z\end{pmatrix} \in B$ such that
    \[
        \begin{pmatrix}x & y \\ 0 & z\end{pmatrix}g_{\mu}\beta = \begin{pmatrix}py & x + \mu y \\ pz & \mu z\end{pmatrix} \in I.
    \]
    This forces $\mu z \in \Zp^*$ and $pz \in p\Zp$, which is not possible as $\mu \in p\Zp$. So $f_{\id}(g_\mu \beta) = 0$. This proves that $f_{\id}(g_\mu) + f_{\id}(g_\mu \beta) = 1$ for $\mu \in p\Zp$.
    
    Finally, consider $g_\mu$ for $\mu \in \Qp \setminus p\Zp$. Then
    \[
        \begin{pmatrix}-\mu/p & 1/p \\ 0 & 1/\mu\end{pmatrix}g_\mu \beta =  \begin{pmatrix}1 & 0 \\ p/\mu & 1\end{pmatrix} \in I.
    \]
    So $f_\id(g_\mu \beta) = 1$. Moreover, $g_\mu$ belongs to $BIZ = BI$ if and only if there is a matrix $\begin{pmatrix}x & y \\ 0 & z\end{pmatrix} \in B$ such that
    \[
        \begin{pmatrix}x & y \\ 0 & z\end{pmatrix} g_\mu = \begin{pmatrix}x + \mu y & y \\ \mu z & z\end{pmatrix} \in I.
    \]
    This forces $\mu z \in p\Zp$ and $z \in \Zp^*$, which is not possible as $\mu \in \Qp \setminus p\Zp$. So $f_\id(g_\mu) = 0$. This proves that $f_\id(g_\mu) + f_\id(g_\mu \beta) = 1$ for $\mu \in \Qp \setminus p\Zp$.
    
    This computation shows that $(W_p + 1)\llbracket \id, 1\rrbracket$ maps to the constant function $1$ in $(\Bind E)^{\sm}$. A similar computation shows that $(U_p - 1)\llbracket \id, 1\rrbracket$ maps to the $0$ function in $(\Bind E)^{\sm}$.
    
    These two facts show that $\psi$ induces a map $\dfrac{\IZind E}{(W_p + 1, U_p - 1)} \twoheadrightarrow \St$.
    
    It turns out that $\psi : \dfrac{\IZind E}{(W_p + 1, U_p - 1)} \twoheadrightarrow \St$ is injective (\cite[Section 4.6]{Bre04}).
\end{proof}
Using the techniques used to prove the lemma above, we get the following lemma.
\begin{lemma}
    \label{H}
    Let $H : \Qp \to E$ be a function defined by $H(z) = f_\id\begin{pmatrix}1 & 0 \\ z & 1\end{pmatrix}$. Then $H(z) = \mathbbm{1}_{p\Zp}(z)$ for all $z \in \Qp$.
\end{lemma}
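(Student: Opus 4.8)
The plan is to unwind the definition of $H$ and reduce the claim to determining, for each $z \in \qp$, whether the matrix $g_z := \left(\begin{smallmatrix} 1 & 0 \\ z & 1 \end{smallmatrix}\right)$ lies in $BIZ$. Indeed, by definition of $f_{\id}$ we have $H(z) = f_{\id}(g_z) = 1$ if $g_z \in BIZ$ and $H(z) = 0$ otherwise, so it suffices to show $g_z \in BIZ$ exactly when $z \in p\zp$. In fact, this case analysis was already carried out inside the proof of Lemma~\ref{Steinberg as a quotient} (the computations there with $g_\mu$ for $\mu \in p\zp$ and for $\mu \in \qp \setminus p\zp$), so the lemma follows directly from that; below I simply isolate the relevant steps.

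First I would treat $z \in p\zp$. Then $g_z \in I$, since $g_z \in \GL_2(\zp)$ and its lower-left entry $z$ lies in $p\zp$, so $g_z$ is upper triangular mod $p$. Hence $g_z \in I \subseteq IZ \subseteq BIZ$ and $f_{\id}(g_z) = 1 = \mathbbm{1}_{p\zp}(z)$.

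Next, for $z \in \qp \setminus p\zp$, I would argue $g_z \notin BIZ = BI$ by contradiction: suppose there is $\left(\begin{smallmatrix} x & y \\ 0 & z' \end{smallmatrix}\right) \in B$ with
\[
    \begin{pmatrix} x & y \\ 0 & z' \end{pmatrix} g_z = \begin{pmatrix} x + zy & y \\ z'z & z' \end{pmatrix} \in I.
\]
The Iwahori condition forces this matrix to lie in $\GL_2(\zp)$ with lower-left entry in $p\zp$: so $y, z', z'z \in \zp$, $z'z \in p\zp$, and the determinant $xz' \in \zp^\times$. Using $z'z \in \zp$ together with $xz' \in \zp^\times$ and $x + zy \in \zp$, a short valuation check (the only point where one must be mildly careful, namely when $v_p(z) < 0$, is handled by combining $v_p(z') \geq -v_p(z)$ coming from $z'z \in \zp$ with the determinant being a unit) shows $z' \in \zp^\times$; then $z'z \in p\zp$ forces $z \in p\zp$, contradicting the assumption. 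Hence $f_{\id}(g_z) = 0 = \mathbbm{1}_{p\zp}(z)$, and combining the two cases gives $H(z) = \mathbbm{1}_{p\zp}(z)$ for all $z \in \qp$.

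The statement presents no real obstacle; it is a matrix-bookkeeping exercise, and the only thing requiring a moment's attention is ruling out $g_z \in BI$ when $v_p(z) < 0$, which is dispatched using that elements of $I$ have unit determinant.
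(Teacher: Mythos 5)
Your proposal is correct and follows essentially the same route as the paper: the paper gives no separate argument for this lemma but explicitly defers to the computations in the proof of Lemma \ref{Steinberg as a quotient}, where membership of $\left(\begin{smallmatrix}1 & 0 \\ \mu & 1\end{smallmatrix}\right)$ in $BIZ$ is settled exactly as you do (in $I$ for $\mu \in p\Zp$; excluded for $\mu \notin p\Zp$ because the Iwahori condition forces the lower-right entry of the product to be a unit while the lower-left lies in $p\Zp$). Your valuation check is a slightly more roundabout way of seeing that the diagonal entries of an Iwahori matrix are units, but it is sound, so nothing is missing.
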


In \cite[Section 4.2]{Bre04}, Breuil defines two Banach space $B(k)$ and $B(k,\sL)$ for $\sL \in E$.
We do not use the original definitions in this paper. However we note the following.
Using Lemma \ref{Projection formula} and Lemma \ref{Steinberg as a quotient}, we have the following composition of maps $\psi_k : \IZind \SymE{k - 2} \xrightarrow{\sim} \SymE{k - 2} \otimes \IZind E \twoheadrightarrow \SymE{k - 2} \otimes \St$. Since $\IZind \SymO{k - 2}$ is a lattice in the space $\IZind \SymE{k - 2}$, we see that $\psi_k(\IZind \SymO{k - 2})$ is a lattice in the space $\SymE{k - 2} \otimes \St$ \cite[Proposition 4.6.1]{Bre04}.
The completion of $\SymE{k - 2} \otimes \St$ with respect to this lattice is isomorphic to $B(k)$ \cite[Proposition 4.3.5(i)]{Bre04}. 

In \cite{Bre10}, the Banach space $B(k)$ is presented in a different way as a space of locally analytic functions from $\Qp$ to $E$. Moreover, it is proved that $B(k, \cL)$ is a quotient of $B(k)$. Let us recall these two facts.

For a real number $r \geq 0$, a continuous function $h : \Zp \to E$ belongs to the space $\C{r}(\Zp, E)$ if in its Mahler's expansion $h(z) = \sum_{n = 0}^{\infty}a_n(h){z \choose n}$, the coefficients $a_n(h)$ satisfy $n^r\norm{a_n(h)} \to 0$ as $n \to \infty$. The space $\C{r}(\Zp, E)$ is a Banach space with the norm $\Bnorm{h}{r} = \sup_n (n + 1)^r\norm{a_n(h)}$.

\begin{definition}
    Let $D(k)$ be the $E$-vector space of functions $h : \Qp \to E$ such that $h_1 : (z \mapsto h(z))\vert_{\Zp}$ belongs to $\C{\frac{k - 2}{2}}(\Zp, E)$ and $h_2 : (z \mapsto z^{k - 2}h(1/z))\vert_{\Zp \setminus \{0\}}$ extends to $\Zp$ as a function belonging to $\C{\frac{k - 2}{2}}(\Zp, E)$.
\end{definition}

This is a Banach space\footnote{We remark that we have used the definition of $h_1$ from \cite[Corollary 3.2.3]{Bre10} and not from where it is first defined in \cite[Section 3.2]{Bre10}, where there is an extra `$p$'.} for the norm $\Bnorm{h}{} = \max(\Bnorm{h_1}{\frac{k - 2}{2}}, \Bnorm{h_2}{\frac{k - 2}{2}})$. The action of $G$ on $D(k)$ is as follows\footnote{We have used the original action of $G$ defined in \cite{Bre04} (cf. \cite[Proposition 2.2.1]{Bre04}) as opposed to the one in \cite[Section 3.2]{Bre10}. This is done to ensure the $G$-equivariance of the map in Lemma \ref{Map from sym tensor st to D(k)} and the surjection \eqref{Main surjection on mod p representations} below, noting that in this paper we use the original standard action on $\SymE{k - 2}$ and $\mathrm{St}$.}:

\begin{eqnarray}
    \label{G-action}
  \left(\begin{pmatrix}a & b \\ c & d\end{pmatrix}h\right)(z) = \norm{ad - bc}^{\frac{k - 2}{2}}(bz + d)^{k - 2}h\left(\frac{az + c}{bz + d}\right).
\end{eqnarray}

Note that polynomials of degree less than or equal to $k - 2$ belong to $D(k)$. Indeed, if $h$ is a polynomial of degree less than or equal to $k - 2$, then both $h_1$ and $h_2$ are polynomials and therefore belong to $\C{\frac{k - 2}{2}}(\Zp, E)$.

\begin{definition}
    Set $\tB(k)$ to be the quotient of $D(k)$ by polynomials of degree less than or equal to $k - 2$.
\end{definition}
\noindent The Banach spaces $B(k)$ and $\tB(k)$ are isomorphic (cf. \cite[Corollary 3.2.3]{Bre10}).

Next we show that the space $\SymE{k - 2} \otimes \St$ is contained in $\tB(k)$. 

\begin{lemma}\label{Map from sym tensor st to D(k)}
    The map $\SymE{k - 2} \otimes (\Bind E)^{\sm} \to D(k)$ defined by sending $P(X, Y) \otimes f$ to $P(z, 1) \cdot f\left(\begin{pmatrix}1 & 0 \\ z & 1\end{pmatrix}\right)$ is an injective $G$-homomorphism.
\end{lemma}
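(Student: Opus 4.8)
The plan is to establish, in this order, that the formula lands in $D(k)$, that it is $G$-equivariant, and that it is injective. Throughout write $\Phi(P\otimes f)(z)=P(z,1)\,f(g_z)$ with $g_z=\left(\begin{smallmatrix}1&0\\ z&1\end{smallmatrix}\right)$, extended bilinearly to $\SymE{k-2}\otimes(\Bind E)^{\sm}$; recall also that $f\in(\Bind E)^{\sm}$ satisfies $f(bg)=f(g)$ for $b\in B$ (as does $f_{\id}$), so $f$ descends to a locally constant function on $B\backslash G\cong\bP^1(\Qp)$.

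\emph{Image in $D(k)$.} Since $z\mapsto g_z$ is continuous and $f$ is locally constant, $z\mapsto f(g_z)$ is locally constant on $\Zp$, hence lies in $\C{s}(\Zp,E)$ for every $s\geq 0$ (a standard property of locally constant — indeed locally analytic — functions; cf.\ \cite{Col10a}). Multiplication by a polynomial of degree $\leq k-2$ carries $\C{s}(\Zp,E)$ into $\C{s-(k-2)}(\Zp,E)$, which follows by iterating $z\binom{z}{n}=(n+1)\binom{z}{n+1}+n\binom{z}{n}$ on Mahler expansions, so $h_1=P(z,1)f(g_z)|_{\Zp}$ lies in $\C{(k-2)/2}(\Zp,E)$. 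For the condition on $h_2$, use the identity $g_{1/z}=\left(\begin{smallmatrix}z&1\\ 0&1/z\end{smallmatrix}\right)\left(\begin{smallmatrix}0&-1\\ 1&z\end{smallmatrix}\right)$, whose left factor lies in $B$: left $B$-invariance gives $f(g_{1/z})=f(\left(\begin{smallmatrix}0&-1\\ 1&z\end{smallmatrix}\right))$, a locally constant function of $z$ that extends across $z=0$. Since $z^{k-2}P(1/z,1)$ is again a polynomial of degree $\leq k-2$ in $z$, the same argument shows $z^{k-2}h(1/z)$ extends to an element of $\C{(k-2)/2}(\Zp,E)$; hence $h\in D(k)$.

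\emph{$G$-equivariance.} For $\gamma=\left(\begin{smallmatrix}a&b\\ c&d\end{smallmatrix}\right)$ and $w=\tfrac{az+c}{bz+d}$, one has, whenever $bz+d\neq 0$, the identity
\[
g_z\gamma=\begin{pmatrix}\tfrac{ad-bc}{bz+d}&b\\ 0&bz+d\end{pmatrix}g_w,
\]
and since the left factor lies in $B$, left $B$-invariance yields $(\gamma f)(g_z)=f(g_z\gamma)=f(g_w)$. Hence $\Phi\bigl((\gamma P)\otimes(\gamma f)\bigr)(z)=(\gamma P)(z,1)\,f(g_w)$, whereas by \eqref{G-action} $\bigl(\gamma\,\Phi(P\otimes f)\bigr)(z)=|\det\gamma|^{(k-2)/2}(bz+d)^{k-2}P(w,1)\,f(g_w)$. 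As $P$ is homogeneous of degree $k-2$, $(bz+d)^{k-2}P(w,1)=P(az+c,bz+d)$, so the two expressions agree precisely because the standard action on $\SymE{k-2}$ is $(\gamma P)(X,Y)=|\det\gamma|^{(k-2)/2}P(aX+cY,bX+dY)$ — which is exactly what the action \eqref{G-action} on $D(k)$ was set up to be compatible with (cf.\ the footnote there). The finitely many $z$ with $bz+d=0$ cause no trouble, both sides being continuous functions of $z$, and the case $b=0$ is immediate.

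\emph{Injectivity.} Fix a basis $e_1,\dots,e_{k-1}$ of $\mathrm{Sym}^{k-2}E^2$ and write a general element of $\SymE{k-2}\otimes(\Bind E)^{\sm}$ as $\xi=\sum_j e_j\otimes f_j$; then $\xi=0$ iff all $f_j=0$. Suppose $\Phi(\xi)=0$, i.e.\ $\sum_j e_j(z,1)f_j(g_z)=0$ for all $z\in\Qp$. Given $z_0\in\Qp$, each $z\mapsto f_j(g_z)$ is constant on a small ball $z_0+p^N\Zp$ with value $c_j$ say, so $\sum_j c_j e_j(z,1)$ vanishes on an infinite set and is the zero polynomial; as dehomogenization $P\mapsto P(X,1)$ is an isomorphism from $\mathrm{Sym}^{k-2}E^2$ onto polynomials of degree $\leq k-2$, the $e_j(X,1)$ are linearly independent, forcing every $c_j=0$. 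Thus $f_j(g_z)=0$ near $z_0$, and as $z_0$ was arbitrary, $f_j$ vanishes on all $g_z$, $z\in\Qp$; but $f_j$ factors through $\bP^1(\Qp)$, in which $\{[z:1]:z\in\Qp\}$ is dense, so $f_j=0$ by continuity. Hence $\Phi$ is injective.

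The only genuine obstacle here is bookkeeping: matching the $|\det|^{(k-2)/2}$-twist and the directions of the $G$-actions on $\SymE{k-2}$, $(\Bind E)^{\sm}$ and $D(k)$ in the equivariance step, together with the (standard, but worth recording) input that locally constant functions on $\Zp$ have Mahler coefficients decaying faster than any power of $n$.
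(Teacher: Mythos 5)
Your proposal is correct. The first two steps are in substance the same as the paper's (which is much terser): your factorization $g_{1/z}=\left(\begin{smallmatrix}z&1\\0&1/z\end{smallmatrix}\right)\left(\begin{smallmatrix}0&-1\\1&z\end{smallmatrix}\right)$ is exactly the one-line coset identity $f\left(\begin{smallmatrix}1&0\\1/z&1\end{smallmatrix}\right)=f\left(\begin{smallmatrix}0&1\\1&z\end{smallmatrix}\right)$ the paper uses for well-definedness, and your explicit check of equivariance (matching the $|\det|^{(k-2)/2}$-twist and $(bz+d)^{k-2}$ against \eqref{G-action}) is what the paper simply declares ``clear''; spelling out that a polynomial times a locally constant function lies in $\C{\frac{k-2}{2}}(\Zp,E)$ is more detail than the paper records, and your crude bound (multiplication by $z$ sends $\C{s}$ to $\C{s-1}$) suffices since the locally constant factor lies in $\C{s}$ for every $s$. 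The one genuinely different point is injectivity, where the two arguments are dual to each other: the paper normalizes the finitely many $f_i$ to have pairwise disjoint supports and concludes that each polynomial $P_i(z,1)$ vanishes on an infinite set, hence is zero, whereas you fix a basis of $\mathrm{Sym}^{k-2}E^2$ and conclude that each $f_j$ vanishes, at the cost of the extra (easy) observation that $f_j$ descends to a continuous function on $B\backslash G\cong\bP^1(\Qp)$ in which the affine chart $\{[z:1]\}$ is dense, so vanishing on all $g_z$ forces $f_j=0$. Both routes are valid; the paper's avoids any appeal to the point at infinity, while yours avoids the ``disjoint supports'' normalization.
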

\begin{proof}
    The map is well-defined since $f\left(\begin{pmatrix}1 & 0 \\ 1/z & 1\end{pmatrix}\right) = f\left(\begin{pmatrix}0 & 1 \\ 1 & z\end{pmatrix}\right)$. The $G$-equivariance of this map is clear. We prove injectivity. Suppose $\sum_{i = 1}^n P_i(X, Y) \otimes f_i$ maps to $0$. Since there are finitely many $f_i$, we may assume without loss of generality that $f_i$ have pairwise disjoint supports. This means that for every $z \in \Qp$, there is at most one $i = 1, 2, \ldots, n$ such that $f_i\left(\begin{pmatrix}1 & 0 \\ z & 1\end{pmatrix}\right) \neq 0$. Therefore restricting the function $\sum_{i = 1}^n P_i(z, 1) \cdot f\left(\begin{pmatrix}1 & 0 \\ z & 1\end{pmatrix}\right)$ to $z$ such that $\begin{pmatrix}1 & 0 \\ z & 1\end{pmatrix}$ belongs to the support of $f_i$, we see that the polynomial $P_i(z, 1)$ vanishes on infinitely many elements in $\Qp$. Therefore $P_i(X, Y) = 0$. This proves injectivity.
\end{proof}

Note that the image of $\SymE{k - 2} \otimes E \subset \SymE{k - 2} \otimes (\Bind E)^{\sm}$ under the map defined in the lemma above is the subspace of $D(k)$ consisting of polynomials of degree less than or equal to $k - 2$.
Therefore we see that $\SymE{k - 2} \otimes \St$ injects into $\tB(k)$.

Now we define $\tB(k, \cL)$. % as a quotient of $\tB(k)$.
Let $L(k, \cL)$ be the subspace of $D(k)$ generated by polynomials of degree less than or equal to $k - 2$ and functions of the form 
\[
    h(z) = \sum_{i \in I} \lambda_i(z - z_i)^{n_i} \logL(z - z_i),
\]
where $I$ is a finite set, $\lambda_i \in E$, $z_i \in \Qp$, $n_i \in \{\lfloor\frac{k - 2}{2}\rfloor + 1, \ldots, k - 2\}$ and $\deg \sum_{i \in I} \lambda_i (z - z_i)^{n_i} < \frac{k - 2}{2}$ (\cite[Lemme 3.3.2]{Bre10}).

\begin{definition}
  Define  $\tB(k, \cL)$ as the quotient of $D(k)$ by the closure of $L(k,\cL)$ in $D(k)$.
\end{definition}

In other words, $\tB(k, \cL)$ is the quotient of $\tB(k)$ by the closure of the subspace of $\tB(k)$ consisting of functions
    \[
    h(z) = \sum_{i \in I} \lambda_i(z - z_i)^{n_i} \logL(z - z_i),
\]
where $I$ is a finite set, $\lambda_i \in E$, $z_i \in \Qp$, $n_i \in \{\lfloor\frac{k - 2}{2}\rfloor + 1, \ldots, k - 2\}$ and $\deg \sum_{i \in I} \lambda_i (z - z_i)^{n_i} < \frac{k - 2}{2}$.
Moreover, $\tB(k, \cL)$ is isomorphic to $B(k, \cL)$, the Local Langlands correspondent of $V_{k, \cL}$
(cf. \cite[Corollaire 3.3.4]{Bre10}). This shows that  $B(k, \cL)$ is a quotient of $B(k)$.

In this
paper, we shall only work with $\tB(k)$ and especially $\tB(k,\cL)$ (which have explicit presentations)
rather than $B(k)$ and $B(k,\sL)$ (which are defined somewhat abstractly). 

\section{The lattices $\lattice{k}$ and $\latticeL{k}$}\label{Section defining the lattices}
    %\fancyhead{}
    %\fancyhead[LO]{Reductions of semi-stable representations using the Iwahori mod $p$ LLC}
    %\fancyhead[RE]{The lattices $\lattice{k}$ and $\latticeL{k}$}

In this section, we recall the definitions of two important lattices $\lattice{k}$ and $\latticeL{k}$
in the Banach spaces $\tB(k)$ and $\tB(k,\cL)$ introduced in the previous section. We also describe
some explicit elements of these lattices.
%the lattice $\latticeL{k}$.
    
Recall that there is a surjection $\IZind \SymE{k - 2} \twoheadrightarrow \SymE{k - 2}\otimes \St$. Breuil defines $\Theta(k)$ to be the image of $\IZind \SymO{k - 2}$ under this surjection \cite[Section 4.6]{Bre04}. Since $B(k)$ is the completion of $\SymE{k - 2} \otimes \St$ with respect to the lattice $\Theta(k)$, the closure $\hat{\Theta}(k)$ of the lattice $\Theta(k)$ in $B(k)$ is a lattice in $B(k)$. Note that by definition, $\Theta(k)$ is dense in $\hat{\Theta}(k)$. Therefore they have the same reduction mod $p$ (more precisely mod $\pi$).  Let $\tilde{\Theta}(k)$ be the image of $\hat{\Theta}(k)$ under the identification $B(k) \xrightarrow{\sim} \tB(k)$.
% So finally, we see that
Since the image of $\IZind \SymO{k - 2}$ under $\IZind \SymE{k - 2} \twoheadrightarrow \SymE{k - 2} \otimes \St \subset \tB(k)$ is dense in $\tilde{\Theta}(k)$,
% \twoheadrightarrow \tB(k, \cL)$ is dense in $\tilde{\Theta}(k, \cL)$.
the reduction mod $p$ of the image of $\IZind \SymO{k - 2}$ in $\tB(k)$ is equal to the reduction
mod $p$ of $\tilde{\Theta}(k)$.

Let $\Fq = \co_E/\pi$ be the residue field of $E$.
Now the map $\IZind \SymO{k - 2} \to \lattice{k}$ %between the lattices 
described in the paragraph above gives us a surjection %between their mod $p$ reductions 
\begin{eqnarray}\label{Main surjection on mod p representations}
    \IZind \SymF{k - 2} \twoheadrightarrow \br{\lattice{k}}.
\end{eqnarray}
We prove the following lemmas about %the latter 
the map \eqref{Main surjection on mod p representations}.

\begin{lemma}\label{Bottom half factors die}
  The surjection \eqref{Main surjection on mod p representations}
  % $\IZind \SymF{k - 2} \twoheadrightarrow \br{\lattice{k}}$
  factors as
    $
        \IZind \SymF{k - 2} \twoheadrightarrow \IZind \frac{\SymF{k - 2}}{\oplus_{j > \frac{k - 2}{2}}\Fq X^jY^{k - 2 - j}} \twoheadrightarrow \br{\lattice{k}}.
    $
\end{lemma}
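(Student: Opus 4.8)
The goal is to show that the surjection $\IZind \SymF{k-2} \twoheadrightarrow \br{\lattice{k}}$ kills the submodule generated by the ``bottom half'' monomials $X^j Y^{k-2-j}$ with $j > \frac{k-2}{2}$, i.e. that it factors through the quotient $\IZind \frac{\SymF{k-2}}{\oplus_{j > (k-2)/2}\Fq X^jY^{k-2-j}}$. First I would unwind the definition of the surjection \eqref{Main surjection on mod p representations}: an element $\llbracket \id, P\rrbracket$ with $P \in \SymO{k-2}$ maps, via $\psi_k$ composed with the reduction, to the class in $\br{\lattice{k}} \subset \tB(k)$ of the function $z \mapsto P(z,1)\cdot \mathbbm{1}_{p\Zp}(z)$ (using Lemma~\ref{H} to identify $f_{\id}\left(\begin{smallmatrix}1&0\\z&1\end{smallmatrix}\right) = \mathbbm{1}_{p\Zp}(z)$ and Lemma~\ref{Map from sym tensor st to D(k)} for the embedding of $\SymE{k-2}\otimes\St$ into $\tB(k)$). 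Since $\IZind \SymF{k-2}$ is generated as a $G$-module by the elements $\llbracket \id, X^jY^{k-2-j}\rrbracket$, $0 \le j \le k-2$, and the surjection is $G$-equivariant, it suffices to prove that for each $j > \frac{k-2}{2}$ the element $\llbracket \id, X^jY^{k-2-j}\rrbracket$ maps to $0$ in $\br{\lattice{k}}$; equivalently that the function $z^j\mathbbm{1}_{p\Zp}(z)$ lies in the image of $\IZind\SymO{k-2}$ scaled by $\pi$, i.e. that its class in $\br{\lattice{k}}$ vanishes.

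The key point is that $z^j\mathbbm{1}_{p\Zp}(z)$, for $j > \frac{k-2}{2}$, is already a polynomial of degree $\le k-2$ on $p\Zp$, and one should be able to recognize it as an element of the lattice $\lattice{k}$ that reduces to $0$ — the natural tool here is a result like Lemma~\ref{Integers in the lattice} (stated but not yet seen in the excerpt), which gives a valuation criterion for when a function built from $z^j\mathbbm{1}_{p\Zp}$ lies in the lattice. The heuristic is that conjugating $\llbracket \id, X^jY^{k-2-j}\rrbracket$ by a diagonal matrix $\begin{smallmatrix}1&0\\0&p^m\end{smallmatrix}$ rescales $z \mapsto p^m z$ and introduces a factor $p^{mj}$ against the normalization $|\det|^{(k-2)/2} = p^{-m(k-2)/2}$, producing a net factor $p^{m(j - (k-2)/2)}$ which is a positive power of $p$ precisely when $j > \frac{k-2}{2}$; iterating pushes the element into $\pi\cdot\lattice{k}$, hence to $0$ mod $\pi$. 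So the plan is: (1) reduce to the single generators $\llbracket\id,X^jY^{k-2-j}\rrbracket$ by $G$-equivariance and the description of generators of $\IZind\SymF{k-2}$; (2) translate into the function-space picture via Lemmas~\ref{H} and~\ref{Map from sym tensor st to D(k)}; (3) apply the lattice-membership lemma (Lemma~\ref{Integers in the lattice}) together with the $\det$-normalization to see that the relevant function has the right valuation to be a $\pi$-multiple of a lattice element.

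The main obstacle will be step (3): getting the bookkeeping of the $|\det|^{(k-2)/2}$-twist exactly right so that the strict inequality $j > \frac{k-2}{2}$ (as opposed to $j \ge \frac{k-2}{2}$) is precisely what makes the argument work, and making sure one is quoting the lattice-recognition lemma in the correct form — in particular that it applies to the function $z^j\mathbbm{1}_{p\Zp}(z)$ and controls not just membership in $\lattice{k}$ but membership in $\pi\lattice{k}$ after the rescaling. One subtlety to check is that the ``bottom half'' submodule $\bigoplus_{j>(k-2)/2}\Fq X^jY^{k-2-j}$ is genuinely an $IZ$-submodule of $\SymF{k-2}$ (so that the quotient $\IZind$ in the statement makes sense): this follows since $IZ$ acts through upper-triangular matrices mod $p$, which send $X^jY^{k-2-j}$ to $X^jY^{k-2-j}$ plus terms of higher $Y$-degree, preserving the span of the high-$X$-degree monomials. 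With that in hand the factorization claim is the assertion that each generator $\llbracket\id,X^jY^{k-2-j}\rrbracket$ in this submodule dies, which is exactly what steps (1)--(3) establish.
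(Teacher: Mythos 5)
Your steps (1) and (2) — reducing to the generators $\llbracket \id, X^jY^{k-2-j}\rrbracket$ with $j>\frac{k-2}{2}$, translating them into the functions $z^j\mathbbm{1}_{p\Zp}$ via Lemmas \ref{Projection formula}, \ref{Steinberg as a quotient}, \ref{Map from sym tensor st to D(k)} and \ref{H}, and checking that the bottom half is an $IZ$-submodule — agree with the paper. The gap is in step (3), which is where all the content sits, and the rescaling heuristic you propose does not work. By \eqref{G-action} the diagonal matrix $\left(\begin{smallmatrix}p^m&0\\0&1\end{smallmatrix}\right)$ sends $z^j\mathbbm{1}_{p\Zp}$ to $p^{m(j-\frac{k-2}{2})}\,z^j\mathbbm{1}_{p^{1-m}\Zp}$, and $\left(\begin{smallmatrix}1&0\\0&p^m\end{smallmatrix}\right)$ sends it to $p^{m(\frac{k-2}{2}-j)}\,z^j\mathbbm{1}_{p^{m+1}\Zp}$: in either direction the support changes, so the positive power of $p$ is attached to a \emph{different} element (the restriction of $z^j$ to a larger or smaller ball), and no amount of iterating returns you to $z^j\mathbbm{1}_{p\Zp}$ itself multiplied by a power of $p$. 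Since $p\cdot\mathrm{Id}$ acts trivially (this is what the $\norm{\det}^{\frac{k-2}{2}}$-twist arranges), no pure group translate can exhibit $z^j\mathbbm{1}_{p\Zp}$ as $\pi$ times a lattice element. Moreover the lemma you cite, Lemma~\ref{Integers in the lattice}, applied to this function ($h=1$, $z_0=0$) has exponent $(h-1)(\tfrac{k-2}{2}-j)=0$ and only gives $z^j\mathbbm{1}_{p\Zp}\in\lattice{k}$, with no gain of $\pi$. So the vanishing is not established.

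The ingredient the paper uses, and which your plan omits, is that one works in $\tB(k)$ where polynomials of degree $\leq k-2$ are zero, so $z^j\mathbbm{1}_{p\Zp}=-z^j\mathbbm{1}_{\Qp\setminus p\Zp}$, combined with the inversion $\beta=\left(\begin{smallmatrix}0&1\\p&0\end{smallmatrix}\right)$, which acts through $z\mapsto p/z$ and exchanges $p\Zp$ with its complement. Concretely,
\[
  \beta\Bigl(p^{\,j-\frac{k-2}{2}}z^{k-2-j}\mathbbm{1}_{p\Zp}\Bigr)
  = z^j\,\mathbbm{1}_{p\Zp}\!\left(\tfrac{p}{z}\right)
  = z^j\mathbbm{1}_{\Qp\setminus p\Zp}
  = -\,z^j\mathbbm{1}_{p\Zp} \quad\text{in } \tB(k),
\]
so $z^j\mathbbm{1}_{p\Zp}=-p^{\,j-\frac{k-2}{2}}\,\beta\bigl(z^{k-2-j}\mathbbm{1}_{p\Zp}\bigr)$ is $p^{\,j-\frac{k-2}{2}}$ times an element of the $G$-stable lattice, hence lies in $\pi\lattice{k}$ because $j>\frac{k-2}{2}$ and $\sqrt p\equiv 0\bmod\pi$. (Equivalently one could quote Lemma~\ref{stronger bound for polynomials of large degree with varying radii} with $h=1$, but its proof is exactly this inversion-plus-vanishing-of-polynomials trick, so it cannot be replaced by Lemma~\ref{Integers in the lattice}.) If you want to avoid $\beta$, your appeal to Lemma~\ref{Integers in the lattice} can be repaired differently: write $z^j\mathbbm{1}_{p\Zp}=\sum_{\alpha=0}^{p-1}z^j\mathbbm{1}_{\alpha p+p^2\Zp}$, expand $z^j=\bigl((z-\alpha p)+\alpha p\bigr)^j$ binomially, and apply that lemma with $h=2$, $z_0=\alpha p$; each resulting term lies in $p^{\,j-\frac{k-2}{2}}\lattice{k}\subseteq\pi\lattice{k}$. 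Either way, some input beyond pure dilation is indispensable, and supplying it is the actual proof of the lemma.
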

\begin{proof}
  The subspace $\IZind \oplus_{j > \frac{k - 2}{2}}\Fq X^jY^{k - 2 - j} \subseteq \IZind \SymF{k - 2}$ is generated by the functions $\llbracket \id, X^jY^{k - 2 - j}\rrbracket$ for $\dfrac{k - 2}{2} < j \leq k - 2$. Therefore to prove the lemma, we have to prove that for $j > \frac{k - 2}{2}$, the function $\llbracket \id, X^jY^{k - 2 - j}\rrbracket$ maps to $0$ under $\IZind \SymF{k - 2} \twoheadrightarrow \br{\lattice{k}}$. Under this map, the function $\llbracket \id, X^jY^{k - 2- j}\rrbracket$ goes to $z^j\mathbbm{1}_{p\Zp} \mod \pi \lattice{k}$, by Lemmas~\ref{Projection formula}, \ref{Steinberg as a quotient}, \ref{Map from sym tensor st to D(k)} and \ref{H}.
  
    By \eqref{G-action}, we have the following identity in $\lattice{k}$
    \[
        -\beta p^{j - \frac{k - 2}{2}} z^{k - 2 - j}\mathbbm{1}_{p\Zp}(z) = -\frac{1}{p^{\frac{k - 2}{2}}} z^{k - 2} p^{j - \frac{k - 2}{2}}\left(\frac{p}{z}\right)^{k - 2 - j}\mathbbm{1}_{p\Zp}\left(\frac{p}{z}\right) = -z^{j}\mathbbm{1}_{p\Zp}\left(\frac{p}{z}\right).
    \]
    Now $v_p\left(\dfrac{p}{z}\right) \geq 1 \iff v_p(z) \leq 0$. Therefore $\mathbbm{1}_{p\Zp}\left(\dfrac{p}{z}\right) = \mathbbm{1}_{\Qp \setminus p\Zp}(z)$. Moreover, $z^j\mathbbm{1}_{\Qp \setminus p\Zp} = -z^j\mathbbm{1}_{p\Zp}$ in $\lattice{k}$ since $z^j$ is a polynomial of degree $\leq k-2$. Therefore we see that the last term in the display above is equal to $z^j\mathbbm{1}_{p\Zp}$. Since $j > \frac{k - 2}{2}$, we see that the leftmost term in the display above belongs to $\pi\lattice{k}$. Therefore $z^j\mathbbm{1}_{p\Zp} \equiv 0 \mod \pi\lattice{k}$.
\end{proof}

\begin{lemma}\label{Socle of the top slice dies}
  Under the surjection $\IZind d^{k - 2} \cong \IZind \dfrac{\SymF{k - 2}}{\oplus_{j < k - 2}\Fq X^jY^{k - 2 - j}} \twoheadrightarrow \dfrac{\br{\lattice{k}}}{S}$,
  where $S$ is the image of $\IZind \oplus_{j < k - 2}\Fq X^jY^{k - 2 - j}$ under \eqref{Main surjection on mod p representations},
  % $\IZind \SymF{k - 2} \twoheadrightarrow \br{\lattice{k}}$, 
  the subspace $\mathrm{Im}\>T_{1, 2} \subseteq \IZind d^{k - 2}$ maps to $0$.
\end{lemma}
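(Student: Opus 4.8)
The plan is to reduce the statement to a single generator and then run a short computation in $\tB(k)$, entirely parallel to the one in the proof of Lemma~\ref{Bottom half factors die}. Since $T_{1,2}$ is a $G$-endomorphism of $\IZind d^{k-2}$ and the surjection $\IZind d^{k-2}\twoheadrightarrow\br{\lattice{k}}/S$ is $G$-equivariant, the $G$-submodule $\im T_{1,2}$ is generated by the single element $T_{1,2}\llbracket\id,v\rrbracket$, where $v$ is the canonical generator of $\Fq(d^{k-2})$; so it suffices to show that $T_{1,2}\llbracket\id,v\rrbracket$ dies in $\br{\lattice{k}}/S$. The formula for $T_{1,2}$ is $\sum_{\lambda\in I_1}\llbracket\matUp,v\rrbracket$ with $\matUp=\begin{pmatrix}1&0\\p\lambda&p\end{pmatrix}$ — the same in the commutative case \eqref{Formulae for T-10 and T12 in the commutative case} and the non-commutative case \eqref{Formulae for T10 and T12 in the non-commutative case} — and $\llbracket\matUp,v\rrbracket=\matUp\cdot\llbracket\id,v\rrbracket$, so by $G$-equivariance it is enough to understand $\sum_{\lambda\in I_1}\matUp\cdot\xi$, where $\xi\in\br{\lattice{k}}/S$ is the image of $\llbracket\id,v\rrbracket$.

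First I would pin down $\xi$. Lifting $v$ to the monomial $Y^{k-2}\in\SymF{k-2}$, which represents the generator of the quotient $\SymF{k-2}/\oplus_{j<k-2}\Fq X^jY^{k-2-j}\cong d^{k-2}$ in the normalisation fixed by Lemma~\ref{Map from sym tensor st to D(k)}, the case $j=0$ of the computation in the proof of Lemma~\ref{Bottom half factors die} — carried out through Lemmas~\ref{Projection formula}, \ref{Steinberg as a quotient}, \ref{Map from sym tensor st to D(k)} and \ref{H} — shows that $\llbracket\id,Y^{k-2}\rrbracket$ maps under \eqref{Main surjection on mod p representations} to the class of the locally constant function $\mathbbm{1}_{p\Zp}$; hence $\xi$ is the class of $\mathbbm{1}_{p\Zp}$. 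Next I would compute the $\matUp$-translates of this function in $\tB(k)$: feeding $\matUp=\begin{pmatrix}1&0\\p\lambda&p\end{pmatrix}$ into \eqref{G-action} gives $\bigl(\matUp\cdot\mathbbm{1}_{p\Zp}\bigr)(z)=p^{\frac{k-2}{2}}\,\mathbbm{1}_{p\Zp}\!\bigl(\tfrac{z+p\lambda}{p}\bigr)=p^{\frac{k-2}{2}}\,\mathbbm{1}_{p^{2}\Zp-p\lambda}(z)$, and since the cosets $p^{2}\Zp-p\lambda$ ($\lambda\in I_1$) partition $p\Zp$, summing over $\lambda$ yields $\sum_{\lambda\in I_1}\matUp\cdot\mathbbm{1}_{p\Zp}=p^{\frac{k-2}{2}}\,\mathbbm{1}_{p\Zp}$.

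Finally, $p^{\frac{k-2}{2}}=(\sqrt p)^{k-2}$ lies in $\pi\co_E$ because $k\geq 3$ and $\sqrt p\equiv 0\bmod\pi$; hence $p^{\frac{k-2}{2}}\mathbbm{1}_{p\Zp}\in\pi\lattice{k}$ and its class in $\br{\lattice{k}}$ is $0$. Thus $T_{1,2}\llbracket\id,v\rrbracket$, and therefore all of $\im T_{1,2}$, maps to $0$ already in $\br{\lattice{k}}$, a fortiori in $\br{\lattice{k}}/S$, which proves the lemma. I do not anticipate a genuine obstacle here: the one point that deserves care is that the normalisation in \eqref{G-action} produces exactly the factor $p^{\frac{k-2}{2}}$ — precisely the power to which the vanishing $\sqrt p\equiv 0\bmod\pi$ applies — together with the routine bookkeeping (choice of monomial lift of $v$ and the support computation for $\matUp\cdot\mathbbm{1}_{p\Zp}$), which is the same as in the proof of Lemma~\ref{Bottom half factors die}. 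If instead the normalisation represents $v$ by a monomial of $X$-degree $>\frac{k-2}{2}$, the conclusion is immediate from Lemma~\ref{Bottom half factors die} and no computation is needed.
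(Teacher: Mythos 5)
Your proof is correct and follows essentially the same route as the paper's: the same generator (lift $Y^{k-2}$, whose image under \eqref{Main surjection on mod p representations} is $\mathbbm{1}_{p\Zp}$), the same formula for $T_{1,2}$, and the same computation via \eqref{G-action} that $\sum_{\lambda\in I_1}\begin{pmatrix}1 & 0\\ p\lambda & p\end{pmatrix}\cdot\mathbbm{1}_{p\Zp}=p^{\frac{k-2}{2}}\mathbbm{1}_{p\Zp}\in\pi\lattice{k}$ since $k-2\geq 1$ and $\sqrt{p}\equiv 0\bmod\pi$. Your closing remark also correctly handles the indexing wobble in the statement (the $d^{k-2}$-quotient is spanned by the class of $Y^{k-2}$, the monomial the paper itself uses), so no gap remains.
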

\begin{proof}
    Under the surjection above, the function
    $
      T_{1,2}\llbracket \id, Y^{k - 2}\rrbracket
       =  \sum_{\lambda \in I_1}\left\llbracket \begin{pmatrix}1 & 0 \\ -p\lambda & p\end{pmatrix}, Y^{k-2}\right\rrbracket$        maps to 
  \begin{eqnarray*}
    \sum_{\lambda \in I_1} \begin{pmatrix}1 & 0 \\ -p\lambda & p\end{pmatrix} \mathbbm{1}_{p\Zp}
    =    \sum_{\lambda \in I_1} \frac{1}{p^{r/2}} p^r  \mathbbm{1}_{p\lambda + p^2\Zp}                                                  =    p^{r/2} \mathbbm{1}_{p\Zp}      
  \end{eqnarray*}
  which vanishes mod $\pi\lattice{k}$ since $r > 0$.
\end{proof}

We prove two key lemmas which identify some (integral) elements in the lattice $\lattice{k}$. %$\latticeL{k}$.
%    \begin{lemma}\label{g.characteristic functions}
%        For $\lambda \in I_1$, we have
%        \[
%        \mathbbm{1}_{p\Zp}\left(\frac{pz}{\lambda z + 1}\right) =
%            \begin{cases}
%                    \mathbbm{1}_{\Zp}(z) & \text{if $\lambda = 0$} \\
%                    1 - \mathbbm{1}_{-\lambda^{-1} + p\Zp}(z) & \text{if $\lambda \neq 0$}.
%            \end{cases}
%        \]
%    \end{lemma}
%    \begin{proof}
%        Assume $\lambda = 0$. Then $\mathbbm{1}_{p\Zp}\left(\dfrac{pz}{\lambda z + 1}\right) = \mathbbm{1}_{p\Zp}(pz)$. Note that $pz \in p\Zp$ if and only if $z \in \Zp$. Therefore $\mathbbm{1}_{p\Zp}\left(\dfrac{pz}{\lambda z + 1}\right) = \mathbbm{1}_{\Zp}(z)$. This proves the lemma for $\lambda = 0$.
        
%        Next assume $\lambda \neq 0$. We prove that $\dfrac{z}{\lambda z + 1} \in \Zp$ if and only if $z \not \in -\lambda^{-1} + p\Zp$. Indeed, $\dfrac{z}{\lambda z + 1} = \dfrac{1}{\lambda}\left(1 - \dfrac{\lambda^{-1}}{z + \lambda^{-1}}\right)$. Therefore $\dfrac{z}{\lambda z + 1} \in \Zp$ if and only if $\dfrac{1}{z + \lambda^{-1}} \in \Zp$. This proves the lemma for $\lambda \neq 0$.
%    \end{proof}
    
    \begin{lemma}\label{Integers in the lattice}
        For $r \geq 1$, $0 \leq j \leq r$, $h \in \bZ$ and $z_0 \in \Qp$, the elements 
        \[
            p^{(h - 1)\left(r/2 - j \right)}(z - z_0)^j\mathbbm{1}_{z_0 + p^h\Zp}\in \lattice{k}.
        \]
    \end{lemma}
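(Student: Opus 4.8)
The plan is to realize each asserted element as the image, under the $G$-action \eqref{G-action} on $\tB(k)$, of one of the basic generators $z^{j}\mathbbm{1}_{p\Zp}$ of the lattice $\lattice{k}$, so that the proof reduces to (a) identifying these generators inside $\lattice{k}$ and (b) choosing the right translating matrix and computing.

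For (a), I would record two facts already implicit above. First, for $0\le j\le r$ the class in $\tB(k)$ of $z^{j}\mathbbm{1}_{p\Zp}$ lies in $\lattice{k}$: it is precisely the image of the integral generator $\llbracket\id,X^{j}Y^{r-j}\rrbracket\in\IZind\SymO{k-2}$ under the surjection $\IZind\SymO{k-2}\twoheadrightarrow\lattice{k}$, via the chain of maps in Lemmas~\ref{Projection formula}, \ref{Steinberg as a quotient}, \ref{Map from sym tensor st to D(k)} and \ref{H} (this is the same identification used at the start of the proof of Lemma~\ref{Bottom half factors die}). Second, $\lattice{k}$ is a $G$-stable $\co_E$-lattice in $\tB(k)$, being the closure of the image of the $G$-stable lattice $\IZind\SymO{k-2}$ under a $G$-equivariant map.

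For (b), I would apply to $z^{j}\mathbbm{1}_{p\Zp}$ the matrix $\gamma=\begin{pmatrix}p^{1-h}&0\\-p^{1-h}z_{0}&1\end{pmatrix}\in G$, whose determinant is $p^{1-h}$ and whose M\"obius substitution in the convention of \eqref{G-action} is $z\mapsto p^{1-h}(z-z_{0})$. A direct computation with \eqref{G-action}, using $\mathbbm{1}_{p\Zp}\bigl(p^{1-h}(z-z_{0})\bigr)=\mathbbm{1}_{z_{0}+p^{h}\Zp}(z)$ together with $\norm{p^{1-h}}^{r/2}\,p^{(1-h)j}=p^{(h-1)(r/2-j)}$, yields
\[
\gamma\cdot\bigl(z^{j}\mathbbm{1}_{p\Zp}\bigr)=p^{(h-1)(r/2-j)}\,(z-z_{0})^{j}\,\mathbbm{1}_{z_{0}+p^{h}\Zp}(z).
\]
By the $G$-stability recorded in (a), the right-hand side lies in $\lattice{k}$, which is exactly the assertion.

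I do not expect a genuine obstacle here: once the translating matrix $\gamma$ is pinned down, the argument is essentially a one-line computation. The only points needing slight care are the bookkeeping of the $\norm{\det}^{r/2}$ twist in \eqref{G-action} (the same bookkeeping already performed in the proof of Lemma~\ref{Socle of the top slice dies}) and noting that $\gamma$ does the intended job for \emph{all} $h\in\bZ$, including $h\le 1$, which is immediate since $\gamma\in\GL_{2}(\Qp)$ irrespective of the sign of $1-h$.
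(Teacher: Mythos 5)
Your proof is correct and is essentially the paper's argument: the paper also realizes the element as a $G$-translate of $z^j\mathbbm{1}_{p\Zp}\in\lattice{k}$, using the matrix $\begin{pmatrix}1 & 0\\ -z_0 & p^{h-1}\end{pmatrix}$, which differs from your $\gamma$ only by the central scalar $p^{1-h}$ that acts trivially under \eqref{G-action}. Your step (a) simply spells out the identification of $z^j\mathbbm{1}_{p\Zp}$ with the image of $\llbracket\id,X^jY^{r-j}\rrbracket$ and the $G$-stability of $\lattice{k}$, which the paper leaves implicit.
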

    \begin{proof}
        Note that
        \[
          \begin{pmatrix} 1 & 0 \\ -z_0 & p^{h - 1}\end{pmatrix} z^j\mathbbm{1}_{p\Zp} =
          p^{(h - 1)\left(r/2 - j \right)}(z - z_0)^j\mathbbm{1}_{z_0 + p^h\Zp}.
        \]
        The lemma follows since $z^j\mathbbm{1}_{p\Zp} \in \lattice{k}$.
    \end{proof}
    
    %\begin{lemma}\label{Polynomials with large degrees die}
    %    The elements $p^{(h - 1)\left(\frac{k - 2}{2} - j \right)}(z - \lambda)^j\mathbbm{1}_{\lambda + p^h\Zp}$ belong to $\pi\lattice{k}$ for $k \geq 2$, $\dfrac{k - 2}{2} < j \leq k - 2$ and $\lambda \in \Zp$.
    %\end{lemma}
    %\begin{proof}
    %    In Lemma \ref{Integers in the lattice}, we have proved that $p^{(h - 1)\left(\frac{k - 2}{2} - j \right)}(z - \lambda)^j\mathbbm{1}_{\lambda + p^h\Zp}$ is a translate of the element $z^j\mathbbm{1}_{p\Zp}$. Moreover, under the surjection $\IZind \SymF{k - 2} \twoheadrightarrow \br{\lattice{k}}$, the function $[[\id, X^jY^{k - 2 - j}]]$ maps to $z^j\mathbbm{1}_{p\Zp} \mod \pi \lattice{k}$. In Lemma \ref{Bottom half factors die}, we have proved that for $j > \dfrac{k - 2}{2}$, the functions $[[\id, X^jY^{k - 2 - j}]]$ map to $0$ under $\IZind \SymF{k - 2} \twoheadrightarrow \br{\lattice{k}}$. Therefore $z^j\mathbbm{1}_{p\Zp} \equiv 0 \mod \pi \lattice{k}$.
    %\end{proof}

    %    \begin{lemma}\label{stronger bound for polynomials of large degree}
    %        The elements $p^{\frac{k - 2}{2} - j}(z - \lambda)^j\mathbbm{1}_{\lambda + p\Zp}$ belong to $\lattice{k}$ for $k \geq 2, \> \frac{k - 2}{2} \leq j \leq k - 2$.
    %    \end{lemma}
    %    \begin{proof}
    %        Note that
    %        \[
    %            -\begin{pmatrix}0 & 1 \\ p & -\lambda\end{pmatrix}z^{k - 2 - j}\mathbbm{1}_{p\Zp} = p^{\frac{k - 2}{2} - j}(z - \lambda)^{j}\mathbbm{1}_{\lambda + p\Zp}.
    %        \]
    %    \end{proof}
    \begin{lemma}\label{stronger bound for polynomials of large degree with varying radii}
        For $r \geq 1$, $r/2 \leq j \leq r$, $h \in \bZ$ and $z_0 \in \Qp$, the elements 
        \[
            p^{h\left(r/2 - j\right)}(z - z_0)^j\mathbbm{1}_{z_0 + p^h\Zp} \in \lattice{k}.
        \]
    \end{lemma}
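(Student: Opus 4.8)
The plan is to deduce Lemma~\ref{stronger bound for polynomials of large degree with varying radii} from the two preceding lemmas, exploiting the extra vanishing that occurs when the degree $j$ is at least $r/2$. The key observation is that for $r/2 \le j \le r$ the exponent of $p$ we are asked to produce, namely $h(r/2-j)$, is \emph{larger} (more negative, so a weaker integrality claim) than the exponent $(h-1)(r/2-j)$ furnished by Lemma~\ref{Integers in the lattice} only when $h \ge 1$; for $h \le 0$ the roles reverse, and for $h=1$ the two statements coincide. So the content is really about producing, for $h \ge 2$, an element with a smaller power of $p$ in front than the naive translate gives, and this is exactly where the condition $j \ge r/2$ (rather than $j \ge 0$) is used: it forces $r/2 - j \le 0$, so that replacing $p^{(h-1)(r/2-j)}$ by $p^{h(r/2-j)}$ is multiplication by a negative power of $p$, i.e. it is the stronger assertion.

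First I would reduce to the case $z_0 \in p^{1-h}\Zp$ by a translation/scaling argument: applying the matrix $\left(\begin{smallmatrix} 1 & 0 \\ z_1 & 1\end{smallmatrix}\right)$ and diagonal scalings moves $\mathbbm{1}_{z_0+p^h\Zp}$ around among cosets of the same radius while only changing the polynomial $(z-z_0)^j$ into another polynomial of the same degree in $z$ with coefficients of controlled valuation, so it suffices to treat a single representative coset of radius $p^h$, e.g. $p^h\Zp$ itself, together with bookkeeping of how the leading coefficient transforms under \eqref{G-action}. Concretely, $\left(\begin{smallmatrix} 1 & 0 \\ 0 & p^{h}\end{smallmatrix}\right)$ applied to $z^j \mathbbm{1}_{\Zp}$ (times $p^{-jh}$ coming from the action, after accounting for the $|\det|^{r/2}$ twist) produces $p^{h(r/2-j)} z^j \mathbbm{1}_{p^h\Zp}$, so the whole problem collapses to showing $z^j\mathbbm{1}_{\Zp}\in\lattice{k}$ for $r/2 \le j \le r$ — i.e. exactly the $h=0$ instance on the coset $\Zp$ — which is where the analytic input lives.

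That reduced statement I would obtain directly from the definition of $\lattice{k}$ as the reduction mod $\pi$ of the image of $\IZind\SymO{k-2}$, together with the norm estimates on $\C{(k-2)/2}(\Zp,E)$ recalled in Section~\ref{Definition of the Banach space}: the function $z^j\mathbbm{1}_{\Zp}$, viewed in $D(k)$, has $h_1(z)=z^j$ a polynomial, while $h_2(z)=z^{k-2}(1/z)^j\mathbbm{1}_{\Zp\setminus\{0\}} = z^{k-2-j}$ (extended by $0$, but since $k-2-j \ge 0$ this extends continuously by its polynomial value — note this is precisely where $j \le r = k-2$ is needed), so both restrictions are polynomials of degree $\le k-2$, hence lie in $\C{(k-2)/2}(\Zp,E)$; one then checks the Mahler coefficients have norm $\le 1$ after the appropriate integral normalization, using that a monomial $z^j$ with $j\le k-2$ has $\Bnorm{\cdot}{(k-2)/2}$-norm bounded by a constant depending only on $k$, which is absorbed once we pass to $\SymO{k-2}$ and reduce mod $\pi$. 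In fact the cleanest route is to observe that $z^j\mathbbm{1}_{\Zp}$ is, up to $\mathrm{GL}_2(\Zp)$-translation, the image of $\llbracket \id, X^j Y^{k-2-j}\rrbracket \in \IZind\SymO{k-2}$ under $\psi_k$ composed with the map of Lemma~\ref{Map from sym tensor st to D(k)} and Lemma~\ref{H}, exactly as in the proof of Lemma~\ref{Bottom half factors die} — so it lies in $\lattice{k}$ essentially by construction.

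The main obstacle, and the step I would be most careful about, is the transformation of the leading coefficient under \eqref{G-action} when I move between cosets: the formula involves a factor $(bz+d)^{k-2}$ and rescaling by $|ad-bc|^{(k-2)/2}$, and I need the resulting polynomial's top-degree coefficient to pick up exactly the power $p^{h(r/2-j)}$ and nothing worse, while the lower-degree terms (which have the form $(z-z_0)^i$ with $i<j$, $i$ possibly $<r/2$) must be handled by Lemma~\ref{Integers in the lattice} rather than by the lemma being proved — so one should expand $(z-z_0)^j$ only \emph{after} the scaling, not before, and check that each term $\binom{j}{i}(\text{const})^{j-i}(z-z_0)^i$ with the inherited power of $p$ is integral: for $i \ge r/2$ by induction on $j$, and for $i < r/2$ by Lemma~\ref{Integers in the lattice} together with the fact that the power of $p$ produced is then at least $(h-1)(r/2-i)$, which is what that lemma supplies. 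Organising this induction on $j$ (base case $j = \lceil r/2\rceil$ or the explicit polynomial case, inductive step peeling off the top monomial) is the only genuinely fiddly part; everything else is the scaling bookkeeping already present in the proofs of Lemmas~\ref{Bottom half factors die} and \ref{Integers in the lattice}.
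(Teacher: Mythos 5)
Your route is genuinely different from the paper's, and its skeleton is sound: under \eqref{G-action} the single matrix $\left(\begin{smallmatrix}1 & 0\\ -z_0 & p^h\end{smallmatrix}\right)$ sends $z^j\mathbbm{1}_{\Zp}$ exactly to $p^{h(r/2-j)}(z-z_0)^j\mathbbm{1}_{z_0+p^h\Zp}$, with no lower-order terms, so the ``main obstacle'' of your last paragraph (tracking lower-degree terms, inducting on $j$) never arises --- the lemma reduces in one step to the statement $z^j\mathbbm{1}_{\Zp}\in\lattice{k}$. (Incidentally, your opening exponent comparison is off: since $r/2-j\le 0$ one has $h(r/2-j)\le (h-1)(r/2-j)$ for \emph{every} $h\in\bZ$, so the present lemma strengthens Lemma~\ref{Integers in the lattice} uniformly in $h$, not only for $h\ge 1$.) The paper argues differently: it applies $\left(\begin{smallmatrix}0 & 1\\ p^h & -z_0\end{smallmatrix}\right)$ to $z^{r-j}\mathbbm{1}_{p\Zp}$, obtaining $p^{h(r/2-j)}(z-z_0)^j$ times the indicator of the \emph{complement} of $z_0+p^h\Zp$, and then flips the indicator using that polynomials of degree $\le r$ vanish in $\tB(k)$; there the extra saving of $p^{j-r/2}$ comes from the inversion exchanging $j\leftrightarrow r-j$, whereas in your version it comes from starting with the unit ball $\Zp$ instead of $p\Zp$.

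The one genuine weak point is your justification of the reduced statement $z^j\mathbbm{1}_{\Zp}\in\lattice{k}$. First, verifying that $h_1,h_2$ lie in $\C{r/2}(\Zp,E)$ with small Mahler coefficients only bounds the $D(k)$-norm; $\lattice{k}$ is the image of $\IZind\SymO{k-2}$, not the unit ball of $D(k)$, and the two lattices are merely commensurable, so a norm bound does not give membership (also $h_2$ here is $z^{r-j}\mathbbm{1}_{\Zp\setminus p\Zp}$, not the polynomial $z^{r-j}$). Second, $z^j\mathbbm{1}_{\Zp}$ is \emph{not} a single $\GL_2(\Zp)$-translate of the by-construction element $z^j\mathbbm{1}_{p\Zp}$: elements of $\GL_2(\Zp)$ permute the $p+1$ standard residue discs of $\bP^1(\Qp)$, and $\Zp$ is a union of $p$ of them, so ``essentially by construction'' does not stand as written. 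The fix is short and you half-gesture at it: write $\mathbbm{1}_{\Zp}=\sum_{a=0}^{p-1}\mathbbm{1}_{a+p\Zp}$, expand $z^j=\sum_{i}{j \choose i}a^{j-i}(z-a)^i$ with coefficients in $\Zp$, and use that $(z-a)^i\mathbbm{1}_{a+p\Zp}=\left(\begin{smallmatrix}1 & 0\\ -a & 1\end{smallmatrix}\right)\cdot\bigl(z^i\mathbbm{1}_{p\Zp}\bigr)$ lies in the ($G$-stable, $\co_E$-module) lattice for all $0\le i\le r$, i.e.\ Lemma~\ref{Integers in the lattice} with $h=1$; alternatively, note $z^j\mathbbm{1}_{\Zp}=-w\cdot\bigl(z^{r-j}\mathbbm{1}_{p\Zp}\bigr)$ in $\tB(k)$ because $z^j$ is itself a polynomial of degree $\le r$, which is exactly the paper's trick specialized to $h=0$, $z_0=0$. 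With either patch your argument is complete.
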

    \begin{proof}
        Note that
        \[
          -\begin{pmatrix}0 & 1 \\ p^h & -z_0\end{pmatrix}z^{r - j}\mathbbm{1}_{p\Zp} =
          -p^{h\left(r/2 - j\right)}(z - z_0)^j\mathbbm{1}_{p\Zp}\left(\frac{p^h}{z - z_0}\right).
        \]
        Now
        \[
          \mathbbm{1}_{p\Zp}\left(\frac{p^h}{z - z_0}\right) = 1 \iff \frac{p^h}{z - z_0} \in p\Zp
          \iff v_p(z - z_0) < h \iff \mathbbm{1}_{\Qp \setminus \left(z_0 + p^h\Zp\right)}(z) = 1.
        \]
        Using the fact that polynomials of degree less than or equal to $r$ are equal to $0$ in $\tB(k)$, we see that
        \[
            -(z - z_0)^j\mathbbm{1}_{p\Zp}\left(\frac{p^h}{z - z_0}\right) = -(z - z_0)^j\left(1 - \mathbbm{1}_{z_0 + p^h\Zp}(z)\right) = (z - z_0)^j\mathbbm{1}_{z_0 + p^h\Zp}.
        \]
        The lemma then follows since $z^{r - j}\mathbbm{1}_{p\Zp} \in \lattice{k}$.
      \end{proof}

      Since $\tB(k)$ surjects onto $\tB(k, \cL)$, the image of $\tilde{\Theta}(k)$
      under $\tB(k) \twoheadrightarrow \tB(k, \cL)$ is a lattice, say  $\tilde{\Theta}(k, \cL)$,
      in $\tB(k, \cL)$. As mentioned in the introduction, by the Iwahori mod $p$ LLC
      (Theorem~\ref{Iwahori mod p LLC}), to determine
      the reduction $\br{V}_{k,\cL}$, it suffices to determine the reduction
      $\br{\tilde{\Theta}(k, \cL)}$ of $\tilde{\Theta}(k, \cL)$.

      Due to the surjection $\lattice{k} \twoheadrightarrow \latticeL{k}$,
      we may replace $\lattice{k}$ in the four lemmas above by $\latticeL{k}$.
      % Let $\tilde{\Theta}(k, \cL)$ denote this lattice in $\tB(k, \cL)$.
      The module $\IZind \frac{\SymF{k - 2}}{\oplus_{j > \frac{k - 2}{2}}\Fq X^jY^{k - 2 - j}}$
      has a filtration where the successive subquotients are given by
      $\IZind d^r, \IZind ad^{r-1}, \ldots, \IZind a^{\lfloor r/2 \rfloor}d^{\lceil r/2 \rceil}$.
      Using Lemma~\ref{Bottom half factors die}, this induces a filtration on $\latticeL{k}$.
      We denote the subquotients of this filtration on
      $\latticeL{k}$ by $$F_{0, 1}, \> F_{2, 3}, \> \ldots, \> F_{2\lfloor r/2 \rfloor, \>  2\lfloor r/2 \rfloor + 1}.$$
      We therefore have surjections
  \[
    \IZind a^ld^{r-l} \twoheadrightarrow F_{2l,\> 2l + 1},
  \]
  for $0 \leq l \leq \lfloor r/2 \rfloor$. Theorem~\ref{Main theorem in the second part of my thesis}
  is proved by a detailed analysis of the subquotients $F_{2l,\> 2l + 1}$ of $\latticeL{k}$.

\section{Logarithmic functions}\label{Section for log functions}
    %\fancyhead{}
    %\fancyhead[LO]{Reductions of semi-stable representations using the Iwahori mod $p$ LLC}
    %\fancyhead[RE]{Logarithmic functions}
    In this section, we write down important classical properties of certain logarithmic functions.
    % that we will need. These properties are classical.
    The logarithmic functions that we are interested in are $E$-valued functions of $z \in \qp$
    of the form
    \[
        g(z) = \sum_{i \in I}\lambda_i (z - z_i)^{n}\logL(z - z_i),
    \]
    where $I$ is a finite set, $\lambda_i \in E, z_i \in \Zp, n \geq 1$. We require
    $\sum_{i \in I}\lambda_iz_i^j = 0$ for $0 \leq j \leq n$. However, in one case
    we will only have this identity for $0 \leq j \leq n - 1$.

    Let us first define the function $\logL : \Qp^* \to E$. Given an element $z \in \Qp^*$, write $z = p^{v_{p}(z)}\zeta u$ for some $(p - 1)^{\mathrm{th}}$ root of unity $\zeta$ and some $u \in 1 + p\Zp$. Set $\logL(z) = v_p(z)\cL + \log(u)$, where $\log(u)$ is given by the usual expansion for $\log(1 + T)$.

    Note that $\logL$ is a continuous function on $\Qp^*$ since $\Qp^*$ can be covered by pairwise disjoint open sets $U_i = \{z \in \Qp^* \mid v_p(z) = i\}$ for $i \in \bZ$ on which $\logL(z)$ is given by $i\cL + \log(z/p^i)$. The function $\logL$ is also infinitely differentiable on $\qp^*$.
    Moreover, $\logL$ does not extend to a continuous function on $\Qp$. Indeed, the limits of $\logL(p^{1 + p^n})$ and $\logL(p^{p^n})$ are $\cL$ and $0$, respectively.

    However, the function $z^n\logL(z)$ extends to a continuous function on $\Qp$ for any integer $n \geq 1$. Indeed, for any $z \in \Qp^*$, the valuation of $\logL(z)$ is bounded below by $\min\{v_p(\cL), 1\}$. Therefore the limit of $z^n\logL(z)$ as $z \to 0$ is equal to $0$. So $z^n\logL(z)$ can be extended to a continuous function on $\Qp$ if we set its value at $0$ to be $0$.

    % A natural question to ask next is
    Next we check if $z^n\logL(z)$ is differentiable. Let us check this for $n = 1$.
    For $z \neq 0$, the function $z\logL(z)$ is clearly differentiable with derivative
    $\logL(z) + 1$. To check its differentiability at $0$, we have to evaluate the limit
    \[
        \lim_{z \to 0}\frac{z^n\logL(z) - 0}{z}.
    \]
    We have seen above that this limit does not exist for $n = 1$. Therefore $z\logL(z)$ is only differentiable for $z \neq 0$. For $n = 2$ however, one sees that the derivative limit exists at $0$. Therefore, the function $z^2\logL(z)$ is differentiable everywhere with derivative given by $2z\logL(z) + z$. Since the derivative involves the function $z\logL(z)$, we see that $z^2\logL(z)$ is differentiable everywhere only once. Extending
    this, we see that the function $z^n\logL(z)$ is differentiable everywhere only $n - 1$ times. Moreover, its $(n - 1)^{\mathrm{th}}$ derivative is continuous.

    The derivatives of the functions $z^n\logL(z)$ on $\qp^*$
    are connected with the $n^{\mathrm{th}}$ harmonic sum $H_n$. Indeed, for $z \neq 0$, we have
    % Indeed, one sees that for $z \neq 0$ the first three derivatives of $z^n\logL(z)$ (for $n \geq 3$) are given by $nz^{n - 1}\logL(z) + z^{n - 1}$, $(n)(n - 1)z^{n - 2}\logL(z) + [n + (n - 1)]z^{n - 2}$ and $n(n - 1)(n - 2)z^{n - 3}\logL(z) + [n(n - 1) + n(n - 2) + (n - 1)(n - 2)]z^{n - 3}$. Therefore %one immediately sees that
    \begin{eqnarray}\label{Small derivative formula for polynomial times logs}
        \frac{d^j}{dz^j}(z^n\logL(z)) = \frac{n!}{(n - j)!}\left[z^{n - j}\logL(z) + (H_n - H_{n - j})z^{n - j}\right], 
    \end{eqnarray}
    for $0 \leq j \leq n$. We remind the reader of the convention that $H_0 = 0$.

    Next we prove that for $n \geq 1$, the function $g(z) = z^{n}\logL(z)$ belongs to $\sC^s(\Zp, E)$
    for any $0 \leq s < n$ in the sense of \cite[Section I.5]{Col10a}. To show this we need to check
    that for
    \[
        \epsilon_s(x,y) = g(x + y) - \sum_{j = 0}^{\lfloor s \rfloor}g^{(j)}(x)\frac{y^j}{j!},
    \]
    we have
    \[
            \inf_{x \in \Zp, \> y \in p^h\Zp} v_p(\epsilon_s(x,y)) - sh \to \infty.
    \]
    as $h \to \infty$.
    
    Fix $h$. Let $x \in \Zp$. First assume that $h > v_p(x)$. % This condition implies $x \neq 0$.
    Therefore $x \neq 0$. For $y \in p^h\Zp$, we have the Taylor expansion for $\logL(1 + y/x)$. So
    \begin{eqnarray*}
        \epsilon_s(x,y) & = & \sum_{j = \lfloor s \rfloor + 1}^{\infty} g^{(j)}(x)\frac{y^j}{j!}.
    \end{eqnarray*}
    If $j \leq n$, then by \eqref{Small derivative formula for polynomial times logs},
    $g^{(j)}(x) = \frac{n!}{(n - j)!}x^{n - j}\logL(x) + t_jx^{n - j}$ for some $t_j \in \Zp$. We have seen above that the valuation of $\logL(x)$ is bounded below by $\min\{v_p(\cL), 1\}$. Therefore the valuation of the $j^{\mathrm{th}}$ summand above is bounded below by $\min\{v_p(\cL), 0\} + jh - v_p(j!)$. Using the formula
    \[
        v_p(j!) = \frac{j - \sigma_p(j)}{p - 1},
    \]
    where $\sigma_p(j)$ is the sum of the $p$-adic digits of $j$, we see that
    \[
        v_p\left(g^{(j)}(x)\frac{y^j}{j!}\right) \geq \min\{v_p(\cL), 0\} + jh - \frac{j - \sigma_p(j)}{p - 1} \geq \min\{v_p(\cL), 0\} + j\left(h - \frac{1}{p - 1}\right).
    \]
    Next if $j > n$, then 
    \[
        g^{(j)}(x) = n!\frac{(-1)^{j - n - 1}(j - n - 1)!}{x^{j - n}}.
    \]
    So similarly
    \[
        v_p\left(g^{(j)}(x)\frac{y^j}{j!}\right) \geq -(j - n)v_p(x) + jh - \frac{j - \sigma_p(j)}{p - 1} \geq n\left(h - \frac{1}{p - 1}\right) + (j - n)\left(h - \frac{1}{p - 1} - v_p(x)\right).
    \]
    Since $h > v_p(x)$, this is greater than or equal to $n\left(h - \frac{1}{p - 1}\right)$.
    Putting these cases together, we see that the valuation of the $j^{\mathrm{th}}$ term in
    $\epsilon_s(x,y)$ is greater than or equal to
    \[
        \min\{v_p(\cL), 0\} + (\lfloor s \rfloor + 1)\left(h - \frac{1}{p - 1}\right).
    \]

    Now assume that $h \leq v_p(x)$. Then for $y \in p^h \zp$, then by a similar computation
    one checks that the valuation of $g(x + y)$, $g(x)$, $g^{(1)}(x)y, \ldots, g^{(\lfloor s \rfloor)}(x)\frac{y^{\lfloor s \rfloor}}{\lfloor s \rfloor !}$, and therefore that of $\epsilon_s(x,y)$, is greater than or
    equal to
    \[
      % nh + \min\{v_p(\cL), 0\} - v_p(j!) \geq
      \min\{v_p(\cL), 0\} + nh - v_p(\lfloor s \rfloor!).
    \]

    Both of these estimates then show that
    \[
        \inf_{x \in \Zp, y \in p^h\Zp} v_p(\epsilon_s(x,y)) - sh \to \infty
    \]
    as $h \to \infty$.
    Therefore, we have proved that the function $z^n\logL(z)$ belongs to $\sC^s(\Zp, E)$
    for $0 \leq s < n$.
    
    Next, we make a remark on the behavior of $g(z) = z^n\logL(z)$ near $\infty$. For any $r > n$,
    the function $z^rg(1/z)$ defined over $\Zp \setminus \{0\}$ extends to a function in $\sC^{s}(\Zp, E)$,
    for any $0 \leq s < r-n$. Indeed,
    \[
        z^rg\left(\frac{1}{z}\right) = -z^{r-n}\logL(z).
    \]

    In this paper, we are interested in functions of the form
    \begin{eqnarray}\label{polynomial log functions}
        g(z) = \sum_{i \in I} \lambda_i(z - z_i)^n\logL(z - z_i),
    \end{eqnarray}
    where $\lambda_i \in E$, $z_i \in \Zp$, $n \geq 1$ is an integer.
    Computations similar to the ones done above or \cite[Proposition I.5.6]{Col10a} (which
    shows that composition of functions in $\sC^{s}(\Zp, E)$ is in $\sC^{s}(\Zp, E)$)
    show that $g(z) \in \sC^{s}(\Zp, E)$ for any $0 \leq s < n$.
    More precisely, we shall only analyze functions of the form
    \eqref{polynomial log functions} 
    with
        \begin{enumerate}
            \item $I = \{0, \> 1\> , \ldots, \> p - 1\}$ and $z_i = [i]$ (here $[\>  \>]$ denotes Teichm\"uller lift)
            \item $I = \{0, \> 1, \> \ldots, n, \> p\}$  and $z_i = i$
            \item $I = \{0, \> 1, \ldots, \> p - 1\}$ and $z_i = i$  (this case is rarely
                used in this paper), 
        \end{enumerate}
     and $r/2 < n \leq r \leq p - 1$.

     We shall additionally impose the condition that
     \begin{eqnarray}
       \label{vanishing sums}
       \sum_{i \in I}\lambda_iz_i^j = 0 \text{ for }   0 \leq j \leq n
     \end{eqnarray}
     (in one exceptional case, namely the function in Proposition~\ref{nu geq}, we only have the
     vanishing for $0 \leq j \leq n - 1$ when $n = r = p-1$).
    One of the consequences of the condition $\sum_{i \in I}\lambda_iz_i^j = 0$ for $0 \leq j \leq n-1$
    is that the derivatives of $g(z)$ have a simple form. Indeed, for any $0 \leq j \leq n - 1$,
    we have (as opposed to formula \eqref{Small derivative formula for polynomial times logs})
    \begin{eqnarray}\label{Simple derivative formula}
        g^{(j)}(z) = \sum_{i \in I}\frac{n!}{(n - j)!}\lambda_i(z - z_i)^{n - j}\logL(z - z_i).
    \end{eqnarray}
    The other consequence of these conditions is that  $\deg \sum_{i \in I} \lambda_i(z-z_i)^{n} \leq 0 <
    \frac{k-2}{2}$, so that $g(z) \in L(k,\cL) \subset D(k)$  by \cite[Lemme 3.3.2]{Bre10}.
    In fact, this follows even if $\deg \sum_{i \in I} \lambda_i(z-z_i)^{n} < \frac{r}{2}$. Indeed 
    $$z^r g(1/z) = \sum_{i \in I} \lambda_i z^{r-n}  (1 - zz_i)^n \logL (1 - zz_i)  
                      - (\sum_{i \in I} \lambda_i z^{r-n} (1-zz_i)^n) \logL (z).$$
    (Each of the summands in) the first term is in $\sC^{r/2}(\Zp, E)$ since $z^n \logL (z) \in
    \sC^{r/2}(\Zp, E)$ and we may use \cite[Proposition I.5.6]{Col10a}, and one checks
    that the degree condition 
    implies that the second
    term is of the form $z^{\lfloor r/2 \rfloor +1} \logL(z)$ times a polynomial so is also
    in $\sC^{r/2}(\Zp, E)$.
    
    For each of the three cases above, we may pick $\lambda_i$  by the following lemma
    so that the condition \eqref{vanishing sums} holds.
        \begin{lemma}\label{Main coefficient identites}
            If
            \begin{enumerate}
                \item $I = \{0, \> 1\> , \ldots, \> p - 1\}$ and $z_i = [i]$ 
                \item $I = \{0, \> 1, \> \ldots, n, \> p\}$ for some $r/2 < n \leq r \leq p - 1$, and $z_i = i$
                \item $I = \{0, \> 1, \ldots, \> p - 1\}$ and $z_i = i$,
            \end{enumerate}
            then there are $\lambda_i \in \zp$ not all zero such that $\sum\limits_{i \in I}\lambda_i z_i^j = 0$ for $0 \leq j \leq \vert I \vert - 2$. Moreover, these $\lambda_i$ satisfy
            \begin{enumerate}
                \item $\lambda_0 = 1 - p$ and $\lambda_i = 1$ for all $1 \leq i \leq p - 1$
                \item $\lambda_0 = 1 \!\! \mod p$, $\lambda_p = -1$ and $\lambda_i = 0 \!\! \mod p$ for $1 \leq i \leq n$
                \item $\lambda_0 = 1$ and $\lambda_i = 1 \!\! \mod p$ for all $1 \leq i \leq p - 1$.
            \end{enumerate}
        \end{lemma}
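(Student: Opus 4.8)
The plan is to recognize each of the three node configurations $\{z_i : i \in I\}$ as the full zero set of a monic polynomial $P(X) = \prod_{i \in I}(X - z_i)$, and to produce the coefficients as suitably normalized reciprocal derivatives, $\lambda_i = c/P'(z_i)$ for a single constant $c$ chosen separately in each case. All the arithmetic content of the lemma is then in picking $c$ so that the $\lambda_i$ are $p$-integral and have the stated values.

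First I would record the abstract mechanism. Write $m = |I| - 1$. The conditions $\sum_i \lambda_i z_i^j = 0$ for $0 \le j \le m - 1$ are $m$ linear equations in the $m + 1$ unknowns $\lambda_i$, and since the $z_i$ are pairwise distinct the coefficient matrix $(z_i^j)$ contains an invertible Vandermonde block; hence its kernel is exactly one-dimensional, which already gives existence and pins down the $\lambda_i$ up to a scalar. By Lagrange interpolation, the coefficient of $X^m$ in the expansion of a polynomial $f$ of degree $\le m$ equals $\sum_i f(z_i)/\prod_{i' \neq i}(z_i - z_{i'}) = \sum_i f(z_i)/P'(z_i)$; applying this to $f(X) = X^j$ with $j < m$ shows that $\lambda_i := c/P'(z_i)$ lies in the required kernel for every $c$.

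Then I would run the three computations. In Case (1) the nodes $[0], [1], \ldots, [p-1]$ are precisely the roots of $P(X) = X^p - X$, so $P'(X) = pX^{p-1} - 1$ gives $P'([0]) = -1$ and $P'([i]) = p - 1$ for $i \ge 1$; taking $c = p - 1$ forces $\lambda_0 = 1 - p$ and $\lambda_i = 1$. In Case (3), $P(X) = \prod_{i=0}^{p-1}(X - i)$ and $P'(i) = (-1)^{p-1-i}\, i!\,(p-1-i)!$; taking $c = (p-1)!$ gives $\lambda_0 = 1$ and $\lambda_i = (-1)^{p-1-i}\binom{p-1}{i} = (-1)^i\binom{p-1}{i}$ (as $p - 1$ is even), which is $\equiv 1 \pmod p$ by $\binom{p-1}{i} \equiv (-1)^i \pmod p$. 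In Case (2), $P(X) = (X - p)\prod_{i=0}^{n}(X - i)$; one computes $P'(p) = p(p-1)\cdots(p-n)$, $P'(0) = (-1)^{n+1} p\, n!$, and $P'(i) = (-1)^{n-i}\, i!\,(n-i)!\,(i - p)$ for $1 \le i \le n$, and taking $c = -P'(p)$ yields $\lambda_p = -1$, $\lambda_0 = (-1)^n\binom{p-1}{n} \equiv 1 \pmod p$, while the remaining $\lambda_i$ lie in $p\zp$; in particular the $\lambda_i$ are not all zero.

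Finally I would check integrality, which amounts to verifying $v_p(c) \ge v_p(P'(z_i))$ for every $i$. In Cases (1) and (3) this is automatic since $v_p(c) = 0$ and each $P'(z_i)$ is a $p$-adic unit — for $P'(0) = (p-1)!$ in Case (3) this is Wilson's theorem. The one place needing genuine care, and the step I expect to be the main obstacle, is Case (2): the extra node at $p$ forces $v_p(c) = 1$, and one must track that this factor of $p$ is exactly absorbed at the nodes $0$ and $p$ but survives at the interior nodes $1 \le i \le n$ (where $v_p(P'(i)) = 0$), which is precisely what produces the asserted divisibility $\lambda_i \in p\zp$ there. Beyond this valuation bookkeeping and the sign identity $(-1)^{p-1-i} = (-1)^i$, the argument uses only Wilson's theorem and the congruence $\binom{p-1}{i} \equiv (-1)^i \pmod p$.
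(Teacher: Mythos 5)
Your proposal is correct, and it takes a genuinely different route from the paper. The paper handles case (1) by direct verification using the power-sum identity $\sum_{i=0}^{p-1}[i]^j = 0$ for $p-1 \nmid j$, and handles cases (2) and (3) indirectly: it normalizes ($\lambda_p = -1$, resp.\ $\lambda_0 = 1$), invokes the Vandermonde structure to get a unique $\Zp$-solution of the linear system, and then reduces the whole system mod $p$ to read off the congruences, using uniqueness of the mod-$p$ solution (and the roots-of-unity identity again in case (3)). You instead give a uniform closed-form construction $\lambda_i = c/P'(z_i)$ with $P(X)=\prod_{i\in I}(X - z_i)$, justified by the Lagrange leading-coefficient identity, which immediately yields the vanishing $\sum_i \lambda_i z_i^j = 0$ for $j \le |I|-2$; since your normalizations agree with the paper's ($\lambda_i = 1$ for $i \ge 1$ in case (1), $\lambda_p = -1$ in case (2), $\lambda_0 = 1$ in case (3)) and the kernel is one-dimensional, your coefficients coincide exactly with the paper's. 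What your approach buys is explicit formulas — e.g.\ $\lambda_i = (-1)^i\binom{p-1}{i}$ in case (3) and $\lambda_0 = (-1)^n\binom{p-1}{n}$, $v_p(\lambda_i)=1$ for $1 \le i \le n$ in case (2) — which are strictly finer than the mod-$p$ statements in the lemma, and a transparent valuation bookkeeping for integrality; what the paper's argument buys is that it avoids the interpolation identity altogether and runs on nothing but linear algebra over $\Zp$ and reduction mod $p$. Two small remarks: Wilson's theorem is not actually needed to see $(p-1)!$ is a unit (all its factors are prime to $p$), and in case (2) one should note $i - p \equiv i \not\equiv 0 \pmod p$ for $1 \le i \le n \le p-1$, which you implicitly use; both points are cosmetic.
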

        \begin{proof}
            We prove this lemma in three cases.
            \begin{enumerate}
                \item  Recall the standard fact:
    \begin{eqnarray}\label{sum of powers of roots of unity}
            \sum_{i = 0}^{p - 1}[i]^j = 
            \begin{cases}
                0 & \text{ if } p - 1 \nmid j \\
                p - 1 & \text{ if } p - 1 \mid j,
                \end{cases}
                & \text{ for $j \geq 1$.}
    \end{eqnarray}
    Using \eqref{sum of powers of roots of unity}, we see that if $\lambda_0 = 1 - p$ and $\lambda_i = 1$ for $1 \leq i \leq p - 1$ then $\sum_{i \in I}\lambda_i [i]^j = 0$ for all $0 \leq j \leq p - 2$.

  \item Taking $\lambda_p = -1$ and separating the $i = p$ summand from
    the equations $\sum\limits_{i \in I}\lambda_i i^j = 0$, we see that for $0 \leq j \leq n$, we have
                \[
                    \sum_{i = 0}^{n}\lambda_i i^j = p^j.
                \]
                Since $i \not \equiv i' \!\! \mod p$ for $0 \leq i, \> i' \leq n \leq p - 1$, there is a unique solution to the system of equations above with $\lambda_i \in \Zp$. Moreover, reducing the equations in the display above modulo $p$, we get
                \[
                    \sum_{i = 0}^{n}\br{\lambda_i}\> \br{i}^j = \begin{cases}
                    1 & \text{ if } j = 0 \\
                    0 & \text{ if } 1 \leq j \leq n.
                    \end{cases}
                \]
                Since $\br{\lambda_0} = 1$ and $\br{\lambda_i} = 0$ for $1 \leq j \leq n$ is the unique solution to the mod $p$ system of equations in the display above, we obtain the lemma.

                \item Since $i \not \equiv i' \!\! \mod p$ for distinct $0 \leq i, \> i' \leq p - 1$, we see that there is a unique integral solution to the system of equations
                \[
                    \lambda_0 = 1 \text{ and } \sum_{i = 0}^{p - 1}\lambda_i i^j = 0
                \]
                for $0 \leq j \leq p - 2$. Reducing these equations modulo $p$, we get
                \[
                    \br{\lambda_0} = 1 \text{ and } \sum_{i = 0}^{p - 1}\br{\lambda_i} \> \br{i}^j = 0.
                \]
                Since $i \equiv [i] \!\! \mod p$, by \eqref{sum of powers of roots of unity}
                and uniqueness, $\br{\lambda_i} = 1$ for $1 \leq i \leq p - 1$. \qedhere
            \end{enumerate}
        \end{proof}

\begin{remark}
        \label{r=1,2}
        % For an integer $n \geq 0$, the statement $\sum_{i \in I}\lambda_i z_i^j = 0$ for all $0 \leq j \leq n$ is equivalent to $\sum_{i \in I}\lambda_i(z - z_i)^m = 0$ for any $0 \leq m \leq n$. We will use this ubiquitously.
        % For $r = 1, \> 2$,
        We will also sometimes need $\sum_{i \in I}\lambda_iz_i^{n + 1} = 0$
        (see the proofs of Proposition~\ref{Final theorem for the hardest case in self-dual} and
        Proposition~\ref{nu in between -0.5 and 0.5}, which both use
        Lemma~\ref{Telescoping Lemma in self-dual for qp-zp} (2)).
        %and the proof of Proposition~\ref{nu geq 0} which uses Lemma~\ref{qp-zp part is 0}).
        The vanishing in these cases will follow from the
        hypothesis $p \geq 5$, noting that in the above propositions $n = \dfrac{r+1}{2}$ for
        $1 \leq r \leq p-2$ odd, $\lambda_i$ and $z_i$ are chosen as in Lemma~\ref{Main coefficient identites}
        (3) and $\dfrac{r+1}{2} + 1 \leq p-2$. 
\end{remark}

While analyzing the functions \eqref{polynomial log functions},
we will come across the following sums of $l^{\mathrm{th}}$ powers
    \begin{eqnarray}\label{Definition of S_l}
        S_l := \sum_{i = 0}^{p - 1} i^l, \text{ for } 0 \leq l \leq p - 1.
    \end{eqnarray}
    We adopt the convention that $0^0 = 1$. One interesting property of these functions is that
    \begin{eqnarray}\label{S_l is divisible by p}
        \frac{S_l}{p} \in \Zp, \text{ for } 0 \leq l \leq p - 2 \text{ and } \frac{S_{p - 1} + 1}{p} \in \Zp.
    \end{eqnarray}
    Indeed for $0 \leq i \leq p - 1$, we have $i^j \equiv [i]^j \mod p$ for every $0 \leq j \leq p - 1$. Therefore equation \eqref{S_l is divisible by p} follows from equation \eqref{sum of powers of roots of unity}. The first few values of these sums are
    \begin{eqnarray}\label{First few values of S_l}
        \frac{S_0}{p} = 1, \frac{S_1}{p} \equiv -\frac{1}{2} \!\!\!\!\mod p, \frac{S_2}{p} \equiv \frac{1}{6} \!\!\!\! \mod p.
    \end{eqnarray}

\section{Some results concerning functions in $\sC^{r/2}(\Zp, E)$}
    %\fancyhead{}
    %\fancyhead[LO]{Reductions of semi-stable representations using the Iwahori mod $p$ LLC}
    %\fancyhead[RE]{Some results concerning $\sC^{s}(\Zp, E)$ functions}
The aim of this section is to state and prove some generalities on functions in the space $\sC^{s}(\Zp, E)$ that will be needed in the sections that follow.

One of the tools that we will use to prove various congruences in the lattice $\latticeL{k}$ is \cite[Proposition I.5.13]{Col10a}:
\begin{Proposition}\label{Convergence proposition}
    Let $s \geq 0$. If $g \in \sC^{s}(\Zp, E)$, then $\tilde{g}_h \to g$ as $h \to \infty$ where
    \[
        \tilde{g}_h(z) := \sum_{m = 0}^{p^h - 1} \left[\sum_{j = 0}^{\lfloor s \rfloor}\frac{g^{(j)}(m)}{j!}(z - m)^j\right]\mathbbm{1}_{m + p^h\Zp}.
    \]
\end{Proposition}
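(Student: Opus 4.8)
The plan is to deduce this from the defining property of $\sC^s(\Zp,E)$ recorded in Section~\ref{Section for log functions}. Writing $\epsilon_s(x,y) = g(x+y) - \sum_{j=0}^{\lfloor s\rfloor}\frac{g^{(j)}(x)}{j!}y^j$ for $x\in\Zp$, $y\in\Zp$ (so that $g\in\sC^s$ means exactly $c_h := \inf_{x\in\Zp,\, y\in p^h\Zp}\bigl(v_p(\epsilon_s(x,y))-sh\bigr)\to+\infty$), the strategy is: first show $\tilde g_h\to g$ pointwise; then show $(\tilde g_h)_h$ is a Cauchy sequence for the Banach norm $\Bnorm{\cdot}{\sC^s}$; since norm convergence implies pointwise convergence, the $\sC^s$-limit of $(\tilde g_h)$ is then forced to equal $g$. (As the discussion preceding the statement notes, this is \cite[Proposition~I.5.13]{Col10a}, so one may of course simply cite it.)

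First I would record the identity underlying everything. For $z\in\Zp$ let $m_h(z)\in\{0,\dots,p^h-1\}$ be the representative of $z\bmod p^h$. By the very definition of $\tilde g_h$, on the ball $m_h(z)+p^h\Zp$ it is the Taylor polynomial of $g$ at $m_h(z)$ of degree $\lfloor s\rfloor$, so
\[
  \tilde g_h(z)-g(z) \;=\; -\,\epsilon_s\bigl(m_h(z),\, z-m_h(z)\bigr), \qquad v_p\bigl(\tilde g_h(z)-g(z)\bigr)\ \ge\ sh+c_h,
\]
which gives $\tilde g_h\to g$ uniformly, hence pointwise. Subtracting this at levels $h$ and $h+1$ gives $(\tilde g_{h+1}-\tilde g_h)(z)=\epsilon_s(m_h(z),z-m_h(z))-\epsilon_s(m_{h+1}(z),z-m_{h+1}(z))$, whence the uniform bound $v_p\bigl((\tilde g_{h+1}-\tilde g_h)(z)\bigr)\ge sh+\min(c_h,c_{h+1})$. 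The extra structural fact I would use is that $\tilde g_{h+1}-\tilde g_h$ is \emph{locally polynomial of degree $\le\lfloor s\rfloor$ on the balls of radius $p^{-(h+1)}$} (each $\tilde g_{h'}$ being so on balls of radius $p^{-h'}$).

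The step I expect to be the real obstacle is converting this sup-norm bound into a bound on $\Bnorm{\tilde g_{h+1}-\tilde g_h}{\sC^s}$. For this I would prove (or extract from \cite[\S I.5]{Col10a}, which develops the algebra/module properties of $\sC^s$) the estimate: if $\phi\in\sC^s(\Zp,E)$ is locally polynomial of degree $\le\lfloor s\rfloor$ on the balls of radius $p^{-N}$, then $\Bnorm{\phi}{\sC^s}\le C_s\, p^{sN}\,\Bnorm{\phi}{\infty}$ with $C_s$ independent of $N$ and $\phi$. The idea is: on a ball $a+p^N\Zp$ write $\phi(a+y)=\sum_{j\le\lfloor s\rfloor}b_{a,j}y^j$; recover the $b_{a,j}$ from the values $\phi(a),\phi(a+p^N),\dots,\phi(a+\lfloor s\rfloor p^N)$ by inverting a Vandermonde matrix that is invertible over $\Zp$ since $\lfloor s\rfloor<p$ (automatic in our applications, where $s=r/2\le(p-1)/2$), getting $|b_{a,j}|\le p^{jN}\Bnorm{\phi}{\infty}$; then bound $\Bnorm{\mathbbm{1}_{a+p^N\Zp}(z)(z-a)^j}{\sC^s}$ by $C\,p^{(s-j)N}$ using $\Bnorm{\mathbbm{1}_{a+p^N\Zp}}{\sC^s}\le C\,p^{sN}$, the bound $|(z-a)^j|\le p^{-jN}$ on the ball, the Leibniz-type inequality for products in $\sC^s$, and $j\le\lfloor s\rfloor\le s$; finally sum ultrametrically over the disjoint balls. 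Granting this, apply it with $N=h+1$ to $\phi=\tilde g_{h+1}-\tilde g_h$ and insert the sup-norm bound: $\Bnorm{\tilde g_{h+1}-\tilde g_h}{\sC^s}\le C_s\, p^{s}\, p^{-\min(c_h,c_{h+1})}\to 0$. Hence $(\tilde g_h)$ is Cauchy, converges in $\sC^s(\Zp,E)$, and the limit is $g$ by the pointwise convergence already noted.

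If one prefers to avoid this local-polynomial estimate, the alternative route is to use the Mahler-coefficient description of the norm on $\sC^s(\Zp,E)$ (as for $\C{r}(\Zp,E)$) and estimate the Mahler coefficients of $g-\tilde g_h$ directly against the same quantity $\inf v_p(\epsilon_s)-sh$; either way the content is \cite[Proposition~I.5.13]{Col10a}.
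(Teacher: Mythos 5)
The paper does not prove this statement at all: it is quoted verbatim from Colmez, \cite[Proposition I.5.13]{Col10a}, and used as a black box, so there is no internal argument to compare yours against. Your sketch is a correct reconstruction of the standard proof of that result, and its two pillars are the right ones: the identity $\tilde g_h(z)-g(z)=-\epsilon_s\bigl(m_h(z),z-m_h(z)\bigr)$, giving the uniform bound $v_p(\tilde g_h-g)\ge sh+c_h$, and the inverse estimate $\Bnorm{\phi}{\sC^s}\le C_s\,p^{sN}\Bnorm{\phi}{\infty}$ for $\phi$ locally polynomial of degree $\le\lfloor s\rfloor$ on balls of radius $p^{-N}$, which converts the sup-norm decay of $\tilde g_{h+1}-\tilde g_h$ into Cauchyness in $\sC^s(\Zp,E)$; this is exactly how Colmez's \S I.5 proceeds (his computation of $\Bnorm{\mathbbm{1}_{a+p^N\Zp}(z-a)^j}{\sC^s}\asymp p^{N(s-j)}$ for $j\le s$ replaces your Leibniz step and is the cleanest way to finish). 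Two small points to tighten: state explicitly that the convergence asserted is convergence in the $\sC^s$-norm (that is what the applications to $\latticeL{k}$ require, since the lattice norm on the $\Zp$-part is the $\sC^{r/2}$-norm), not merely uniform convergence; and the Vandermonde inversion does not really need $\lfloor s\rfloor<p$ — if the determinant $\prod_{0\le i<j\le\lfloor s\rfloor}(j-i)$ has positive valuation, that valuation is absorbed into the constant $C_s$, which is allowed to depend on $s$ but not on $N$, so the estimate and the proof go through for all $s\ge 0$.
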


Let now $g(z) \in D(k)$. Then $g(z) \mathbbm{1}_{\zp} \in D(k)$. Indeed, the function
$$z^r g(1/z) \mathbbm{1}_{\zp}(1/z) = z^rg(1/z) \mathbbm{1}_{\zp \setminus p\zp}(z) = z^rg(1/z) - z^rg(1/z) \mathbbm{1}_{p\zp}(z)$$ on $\zp \setminus 0$ extends to a function in $\sC^{r/2}(\Zp, E)$, since
$z^rg(1/z)$ does by definition of $D(k)$, and an easy further check shows that multiplication by $\mathbbm{1}_{p\zp}$
preserves the space $\sC^{r/2}(\Zp, E)$. It follows that $g(z) \mathbbm{1}_{\qp \setminus \zp} \in D(k)$.

By the above proposition we see that for large $h$, we have
$g(z) \mathbbm{1}_{\zp}  \equiv \tilde{g}_h(z) \mod \pi \latticeL{k}$.
We now modify $\tilde{g}_h$ slightly using the following lemma.

    \begin{lemma}\label{Congruence lemma}
        Suppose $g \in \sC^{r/2}(\Zp, E)$ is a function that has continuous derivatives of order $\lfloor r/2 \rfloor + 1, \ldots, t$ for an integer $r/2 < t \leq r$. Define
        \[
            g_h(z) := \sum_{m = 0}^{p^h - 1}\left[\sum_{j = 0}^{t}\frac{g^{(j)}(m)}{j!}(z - m)^j\right]\mathbbm{1}_{m + p^h\Zp}.
        \]
        Then for large $h$, we have
        \[
            \tilde{g}_h(z) \equiv g_h(z) \mod \pi \latticeL{k}.
        \]
    \end{lemma}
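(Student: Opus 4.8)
The plan is to compare $\tilde g_h$ and $g_h$ term by term on each residue disk $m + p^h \Zp$ and show that the difference $g_h(z) - \tilde g_h(z)$, which is a sum over $m$ of polynomial terms supported on $m + p^h\Zp$, lies in $\pi \latticeL{k}$ for $h$ large. Concretely, for each $m \in \{0, \ldots, p^h-1\}$,
\[
    g_h(z)\mathbbm{1}_{m + p^h\Zp} - \tilde g_h(z)\mathbbm{1}_{m + p^h\Zp} = \sum_{j = \lfloor r/2 \rfloor + 1}^{t} \frac{g^{(j)}(m)}{j!}(z - m)^j \mathbbm{1}_{m + p^h\Zp},
\]
so it suffices to show that each summand $\frac{g^{(j)}(m)}{j!}(z-m)^j\mathbbm{1}_{m + p^h\Zp}$ lies in $\pi\latticeL{k}$ for all $\lfloor r/2 \rfloor + 1 \leq j \leq t$ and all $m$, once $h$ is large enough (uniformly in $m$).

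The key input is Lemma~\ref{stronger bound for polynomials of large degree with varying radii} (with $\latticeL{k}$ in place of $\lattice{k}$, as noted in the excerpt): for $r/2 \leq j \leq r$ and $z_0 = m \in \Qp$, the element $p^{h(r/2 - j)}(z - m)^j \mathbbm{1}_{m + p^h\Zp}$ lies in $\latticeL{k}$. Hence $(z-m)^j\mathbbm{1}_{m+p^h\Zp} = p^{h(j - r/2)} \cdot \left[p^{h(r/2-j)}(z-m)^j\mathbbm{1}_{m+p^h\Zp}\right]$ lies in $p^{h(j-r/2)}\latticeL{k}$. Therefore
\[
    v_\pi\!\left(\frac{g^{(j)}(m)}{j!}(z-m)^j\mathbbm{1}_{m+p^h\Zp}\right) \;\geq\; e \cdot \left( v_p\!\left(\tfrac{g^{(j)}(m)}{j!}\right) + h\big(j - \tfrac{r}{2}\big)\right),
\]
where $e$ is the ramification index of $E/\Qp$. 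Since $j > r/2$, the factor $h(j - r/2)$ grows without bound as $h \to \infty$, and the quantity $v_p(g^{(j)}(m)/j!)$ is bounded below uniformly in $m \in \Zp$: indeed $g$ and its derivatives $g^{(1)}, \ldots, g^{(t)}$ are continuous on the compact set $\Zp$, hence bounded, so $v_p(g^{(j)}(m))$ is bounded below, and $v_p(j!)$ is a fixed constant for each $j$ in the finite range. Taking $h$ large enough that $h(j - r/2) + \inf_{m}v_p(g^{(j)}(m)/j!) \geq 1/e$ for every $j$ in the (finite) range $\lfloor r/2\rfloor + 1 \leq j \leq t$ — which is possible since $j - r/2 \geq 1/2 > 0$ — forces every such summand into $\pi\latticeL{k}$, and summing over the finitely many (for fixed $h$) values of $m$ gives $g_h(z) \equiv \tilde g_h(z) \bmod \pi\latticeL{k}$.

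The main obstacle, such as it is, is purely bookkeeping: ensuring the lower bound on $v_p(g^{(j)}(m))$ is genuinely uniform in $m$ as $h \to \infty$ (the set of relevant $m$ grows with $h$, but they all lie in the fixed compact $\Zp$, so boundedness of the continuous functions $g^{(j)}$ on $\Zp$ settles it), and checking that the terms with $j$ exactly equal to $\lfloor r/2 \rfloor + 1$ are covered — they are, since $\lfloor r/2\rfloor + 1 > r/2$ still gives $j - r/2 > 0$, and when $r$ is even $j = r/2 + 1$ while when $r$ is odd $j = (r+1)/2 > r/2$. One should also note that the hypothesis that $g$ has continuous derivatives up to order $t$ is exactly what is needed for $g^{(j)}(m)$ to make sense and be bounded for $j$ in the stated range; no further regularity is required.
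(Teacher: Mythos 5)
Your proof is correct and follows essentially the same route as the paper: the difference $g_h - \tilde g_h$ consists exactly of the terms with $\lfloor r/2\rfloor+1 \leq j \leq t$, and each such term dies modulo $\pi\latticeL{k}$ because $v_p\big(g^{(j)}(m)/j!\big)$ is bounded below uniformly in $m$ (continuity of the derivatives on the compact set $\Zp$), while the lattice cost of $(z-m)^j\mathbbm{1}_{m+p^h\Zp}$ grows like a positive multiple of $h(j-r/2)\to\infty$. The only cosmetic difference is that you invoke Lemma~\ref{stronger bound for polynomials of large degree with varying radii} (exponent $h(r/2-j)$) where the paper uses Lemma~\ref{Integers in the lattice} (exponent $(h-1)(r/2-j)$); both yield the same conclusion here.
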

    \begin{proof}
      % we have to show that for large $h$, $\tilde{g}_h(z) \equiv g_h(z) \mod \pi \latticeL{k}$.
      Note that for large $h$, $\lfloor r/2 \rfloor + 1 \leq j \leq t$ and $0 \leq m \leq p^h - 1$, we have
        \begin{eqnarray}\label{Excessive derivatives are 0}
            \frac{g^{(j)}(m)}{j!}(z - m)^j \mathbbm{1}_{m + p^h\Zp} \equiv 0 \mod \pi \latticeL{k}.
        \end{eqnarray}
        Indeed, by the continuity of the derivatives we see that the valuation of $\frac{g^{(j)}(m)}{j!}$ is bounded below by some finite rational number $M$ for all $m \in \Zp$ and all $\lfloor r/2 \rfloor + 1 \leq j \leq t$. We choose $h$ large enough so that $M > (h - 1)(r/2 - j)$, noting $r/2-j$ is negative.
        Then by Lemma \ref{Integers in the lattice}, we see that \eqref{Excessive derivatives are 0} is true.
    \end{proof}

We give an outline of the general strategy used in this paper to establish some congruences. We choose a
function $g(z)$ as in equation \eqref{polynomial log functions}. Using Lemma \ref{Congruence lemma}, we see that $g(z) \mathbbm{1}_{\Zp} \equiv g_h(z) \mod \pi\latticeL{k}$ for large $h$. Then we use Lemma~\ref{Truncated Taylor expansion} and Lemma~\ref{telescoping lemma} below to descend from large $h$ to $h = 2$
(or $h = 3$, see the variants Lemma \ref{strong truncated Taylor expansion for self-dual} and Lemma \ref{Telescoping Lemma in self-dual}). We then prove that\footnote{In an exceptional case, we prove that $g(z)\mathbbm{1}_{\Qp \setminus \Zp} \equiv -p^{-1}\cL\mathbbm{1}_{\Zp} \mod \pi \latticeL{k}$.} $g(z)\mathbbm{1}_{\Qp \setminus \Zp} \equiv 0 \mod \pi \latticeL{k}$ using Lemma \ref{qp-zp part is 0} (and its variant Lemma \ref{Telescoping Lemma in self-dual for qp-zp}). Since $g(z)$ is equal to $0$ in $\tB(k, \cL)$, we get a congruence $g_2(z) \equiv 0 \mod \pi\latticeL{k}$ (or $g_3(z) \equiv 0 \mod \pi \latticeL{k}$).

\begin{lemma}\label{Truncated Taylor expansion}
    Let $k/2 \leq n \leq r \leq p - 1$ and $z_0 \in \Zp$. Fix $x \in \bQ$ such that $x \geq - 1$ and $x + v_p(\cL) \geq r/2 - n$. Define
    \begin{enumerate}
        \item $g(z) = p^x(z - z_0)^n\logL(z - z_0)\mathbbm{1}_{\Zp}$
        \item $g(z) = p^xz^r\logL(z)\mathbbm{1}_{p\Zp}$
        \item $g(z) = p^x z^{r - n} (1 - zz_0)^n\logL(1 - zz_0)\mathbbm{1}_{p\Zp}.$
    \end{enumerate}
    Then for $h \geq 3$, $0 \leq a \leq p^{h - 1} - 1$, $0 \leq \alpha \leq p - 1$ and $0 \leq j \leq n - 1$, we have
    \[
        g^{(j)}(a + \alpha p^{h - 1}) \equiv g^{(j)}(a) + \alpha p^{h - 1}g^{(j + 1)}(a) + \cdots + \frac{(\alpha p^{h - 1})^{n - 1 - j}}{(n - 1 - j)!}g^{(n - 1)}(a) \mod (p^{h - 1})^{r/2 - j}\pi.
    \]
    Moreover, we have
    \begin{eqnarray*}
        && g^{(j)}(a + \alpha p^{h - 1})(z - a - \alpha p^{h - 1})^j\mathbbm{1}_{a + \alpha p^{h - 1} + p^h\Zp} \\
        && \equiv \left[g^{(j)}(a) + \alpha p^{h - 1}g^{(j + 1)}(a) + \cdots + \frac{(\alpha p^{h - 1})^{n - 1 - j}}{(n - 1 - j)!}g^{(n - 1)}(a)\right](z - a - \alpha p^{h - 1})^{j}\mathbbm{1}_{a + \alpha p^{h - 1} + p^h \Zp}
    \end{eqnarray*}
    modulo $\pi \latticeL{k}$.
\end{lemma}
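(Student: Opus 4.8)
The plan is to establish the displayed numerical congruence for $g^{(j)}$ first, and then to obtain the ``Moreover'' congruence inside $\latticeL{k}$ from it by dividing out the power $p^{(h-1)(r/2-j)}$ and applying the $\latticeL{k}$-version of Lemma~\ref{Integers in the lattice}. A few preliminaries. If $\alpha=0$ both assertions are trivial, so assume $\alpha\neq0$, so that $v_p(\alpha p^{h-1})=h-1$; note also that $k/2\le n\le r$ forces $r\ge2$ and $n\ge r/2+1$. In each case $g$ agrees, on its support (which is $\Zp$ or $p\Zp$, hence open), with $p^x Q^{N}\logL(Q)$, where $Q=z-z_0$ and $N=n$ in case~(1), $Q=z$ and $N=r$ in case~(2), and $Q=1-zz_0$ in case~(3); the only zero of $Q$ in the support is $z=z_0$ (case~(1)) or $z=0$ (case~(2)), near which $g$ and its derivatives of order $\le n-1$ extend by continuity (with value $0$), while in case~(3) one has $Q=1-zz_0\in1+p\Zp$ on $p\Zp$, so $\logL(Q)=\log(Q)$, $g$ is a single convergent power series there, $\cL$ never appears, and this case is a strictly easier instance of what follows. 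In cases~(1),(2), applying \eqref{Small derivative formula for polynomial times logs} after the affine substitution $z\mapsto Q(z)$ (with $Q'=1$), the $j$-th derivative of $g$ for $0\le j\le N$ is $p^x\frac{N!}{(N-j)!}\bigl[Q^{N-j}\logL(Q)+(H_N-H_{N-j})Q^{N-j}\bigr]$, and the derivative of order $N+l$ with $l\ge1$ is $p^x N!(-1)^{l-1}(l-1)!\,Q^{-l}$; recall $n\le N\le r\le p-1$, so $N\ge r/2+1$. We use throughout that $v_p(H_m)\ge0$ and $v_p(m!)=0$ for $0\le m\le p-1$, and that $v_p(\logL(w))\ge\min\{v_p(\cL),0\}$ whenever $v_p(w)\ge0$ (as $\logL(w)=v_p(w)\cL+\log(1\text{-unit})$).

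\emph{Step 1: the numerical congruence.} Fix $0\le j\le n-1$, put $a'=a+\alpha p^{h-1}$, and let $w_0,w$ be the values of $Q$ at $a,a'$, so $w-w_0=c\,\alpha p^{h-1}$ with $c\in\Zp$ and hence $v_p(w-w_0)\ge h-1$; we may assume $a'$, hence $a$, lies in the support of $g$. If $v_p(w_0)\le h-2$, then $v_p(\alpha p^{h-1})>v_p(w_0)$, so $g^{(j)}$ equals its convergent Taylor series near $w_0$ and
\[
g^{(j)}(a')-\sum_{m=0}^{n-1-j}\frac{g^{(j+m)}(a)}{m!}(\alpha p^{h-1})^m=\sum_{m\ge n-j}\frac{g^{(j+m)}(w_0)}{m!}(\alpha p^{h-1})^m.
\]
The plan is to bound every tail term from below by $(h-1)(r/2-j)+v_p(\pi)$, splitting according to whether $j+m\le N$ or not. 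Plugging in the explicit forms above and using $h-1-v_p(w_0)\ge1$, $N\ge r/2+1$, the sharp estimate $v_p(m!)\le(m-1)/(p-1)$ together with $N-j\le p-1$, and the hypotheses $x\ge-1$, $x+v_p(\cL)\ge r/2-n$, $h\ge3$, one gets valuation $\ge x+\min\{v_p(\cL),0\}+(n-j)(h-1)$ for the finitely many terms with $j+m\le N$ and valuation $\ge x+(N-j)(h-1)$ for the terms with $j+m>N$, and both bounds exceed $(h-1)(r/2-j)+v_p(\pi)$ (separating $v_p(\cL)\ge0$ from $v_p(\cL)<0$ for the first). If instead $v_p(w_0)\ge h-1$ (in particular if $w_0=0$), then $v_p(w)\ge h-1$ too, and the explicit forms show that \emph{every} summand on both sides already has valuation $\ge x+(n-j)(h-1)+\min\{v_p(\cL),0\}\ge(h-1)(r/2-j)+v_p(\pi)$, so the congruence holds term by term. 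In either case the left-hand side above lies in $p^{(h-1)(r/2-j)}\pi\Oe$, which is the assertion.

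\emph{Step 2: passage to $\latticeL{k}$.} Let $D$ denote the difference displayed in Step~1, so $v_p(D)\ge(h-1)(r/2-j)+v_p(\pi)$. The difference of the two sides of the ``Moreover'' congruence is
\[
D\,(z-a')^{j}\mathbbm{1}_{a'+p^h\Zp}=\frac{D}{p^{(h-1)(r/2-j)}}\Bigl(p^{(h-1)(r/2-j)}(z-a')^{j}\mathbbm{1}_{a'+p^h\Zp}\Bigr);
\]
the parenthesized element lies in $\latticeL{k}$ by the $\latticeL{k}$-version of Lemma~\ref{Integers in the lattice} (applicable as $a'\in\Zp$ and $0\le j\le n-1<r$), and $D/p^{(h-1)(r/2-j)}\in\pi\Oe$, so the product lies in $\pi\latticeL{k}$, i.e.\ the two sides are congruent mod $\pi\latticeL{k}$. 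This is the form in which the lemma feeds into the descent of Lemma~\ref{telescoping lemma}.

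\emph{Main obstacle.} The only genuinely delicate point is the tail estimate in the case $v_p(w_0)\le h-2$: the Taylor remainder is an \emph{infinite} series and has to be controlled uniformly in $a$, i.e.\ uniformly over $v_p(w_0)\in\{0,1,\dots,h-2\}$, against the single target exponent $(h-1)(r/2-j)+v_p(\pi)$. It works because $h\ge3$ forces $h-1-v_p(w_0)\ge1$ over this whole range, so each pole $w_0^{-l}$ is more than compensated by the $l$-th power of $\alpha p^{h-1}$ even after paying the $v_p(m!)$ price (where $N-j\le p-1$ is exactly what is needed), while $x+v_p(\cL)\ge r/2-n$ provides the slack in the ``polynomial$\,\times\,\log$'' terms when $v_p(\cL)<0$. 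Cases~(1)--(3) differ only cosmetically: in case~(2) the coset of $z=0$ is absorbed into the $v_p(w_0)\ge h-1$ branch, and in case~(3), with no singularity and no $\cL$, the whole estimate is immediate from convergence of the defining power series.
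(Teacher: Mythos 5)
Your argument is correct and is essentially the paper's own proof: you Taylor-expand $g^{(j)}$ at $a$, split according to whether $v_p(a-z_0)$ (resp.\ $v_p(a)$) is $<h-1$ or $\ge h-1$, bound the boundary term using $x+v_p(\cL)\ge r/2-n$ and $x\ge-1$, bound the infinite tail using $h\ge3$ together with $n-j\le p-1$ (your $v_p(m!)\le (m-1)/(p-1)$ bookkeeping is interchangeable with the paper's Kummer estimate for $v_p\binom{n-j+l}{l}$), and then pass to $\pi\latticeL{k}$ exactly as the paper does via Lemma~\ref{Integers in the lattice}. The only point where you are terser is case (3), which the paper treats by writing out the product-rule derivative formula and checking $x+(h-1)(n-1-j+l)-v_p((n-1-j+l)!)>(h-1)(r/2-j)$; your remark that this case is ``immediate from convergence'' is a gloss of precisely that short estimate (the derivatives at points of $p\Zp$ are $p^x$ times integral elements), so nothing essential is missing.
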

\begin{proof}
  We prove the first congruence in the conclusion of the lemma.
  First assume that $g(z) = p^x(z - z_0)^n\logL(z - z_0)\mathbbm{1}_{\Zp}$.
  The proof in this case
  is similar to the proof that $g(z) \in \sC^{n - 1}(\Zp, E)$. The difference is that we need to keep track of the estimates. We consider two cases.
    \begin{enumerate}
        \item $v_p(a - z_0) < h - 1$:
Since $g$ is analytic in the neighborhood $a + p^{h - 1}\Zp$, using \eqref{Small derivative formula for polynomial times logs}, we write
\begin{eqnarray}\label{Far away from z0 in Truncated Taylor expansion}
    g^{(j)}(a + \alpha p^{h - 1}) & = & g^{(j)}(a) + \cdots + \frac{(\alpha p^{h - 1})^{n - 1 - j}}{(n - 1 - j)!}g^{(n - 1)}(a) \nonumber\\
    & & + \frac{(\alpha p^{h - 1})^{n - j}}{(n - j)!}n!p^{x}[\logL(a - z_0) + H_n] \nonumber \\
    & & + \sum_{l \geq 1}\frac{(\alpha p^{h - 1})^{n - j + l}}{(n - j + l)!}n! p^{x} \frac{(-1)^{l - 1}}{(a - z_0)^l}(l - 1)!.
\end{eqnarray}
Since $x + v_p(\cL) \geq r/2 - n$ and $x \geq -1 \geq r/2 - n$, we see that the valuation of the term in the second line above is at least $(n - j)(h - 1) + r/2 - n$. Now
\begin{equation}\label{h greater than 2}
    (n - j)(h - 1) + r/2 - n > (h - 1)(r/2 - j) \iff h > 2.
\end{equation}
Therefore for $h \geq 3$, the term in the second line in equation \eqref{Far away from z0 in Truncated Taylor expansion} is $0$ modulo $(p^{h - 1})^{r/2 - j}\pi$. Next, write the general term in the last sum in equation \eqref{Far away from z0 in Truncated Taylor expansion} as
\[
    p^{x}(\alpha p^{h - 1})^{n - j}\frac{n!}{(n - j)!}\left(\frac{\alpha p^{h - 1}}{a - z_0}\right)^{l}(-1)^{l - 1}\frac{1}{l{n - j + l \choose l}}.
\]
Using the estimate $v_p{n - j + l \choose l} \leq \lfloor \log_p(n - j + l)\rfloor - v_p(l)$ obtained using Kummer's theorem, the valuation of this term is greater than or equal to
\[
    -1 + (n - j)(h - 1) + l - \log_p(n - j + l).
\]
Moreover,
\begin{eqnarray*}
    &&-1 + (n - j)(h - 1) + l - \log_p(n - j + l) > (r/2 - j)(h - 1) \\
    %&& \iff (n - r/2)(h - 1) - 1 + l > \log_p(n - j + l) \\
    && \qquad \stackrel{h \geq 3}{\iff} 2n - r - 1 + l > \log_p(n - j + l),
\end{eqnarray*}
which is true for all $l \geq 1$. Therefore we have proved the lemma for the first function when $v_p(a - z_0) < h - 1$.

\item $v_p(a - z_0) \geq h - 1$: Recall the derivative formula \eqref{Small derivative formula for polynomial times logs}:
\begin{eqnarray}\label{Derivative formula for g(z)}
    g^{(j)}(z) = \left[\frac{n!}{(n - j)!}p^x(z - z_0)^{n - j}\logL(z - z_0) + p^x t_j (z - z_0)^{n - j}\right]\mathbbm{1}_{\Zp},
\end{eqnarray}
where $t_j \in \Zp$. The valuations of the terms $g^{(j)}(a + \alpha p^{h - 1}), \> g^{(j)}(a), \> \alpha p^{h - 1}g^{(j+1)}(a), \> \ldots,$ $(\alpha p^{h - 1})^{n - 1 - j}g^{(n - 1)}(a)$ are greater than or equal to $(n - j)(h - 1) + r/2 - n$. So equation \eqref{h greater than 2} implies that these terms are congruent to $0$ modulo $(p^{h - 1})^{r/2 - j}\pi$. This proves the lemma for the first function when $v_p(a - z_0) \geq h - 1$.
\end{enumerate}

The proof of the first congruence in the conclusion of the lemma for the second function $g(z)$ is similar to that for the first function. The difference is that it holds vacuously
if $p \nmid a$ since both sides vanish.

Finally assume that $g(z) = p^x z^{r - n} (1 - zz_0)^n\logL(1 - zz_0)\mathbbm{1}_{p\Zp}$. For $j \geq 0$, its $j^{\mathrm{th}}$ derivative is given by the product rule
\begin{eqnarray}\label{Derivative formula for the qp-zp part}
    g^{(j)}(z) & = & p^x\sum_{m = 0}^{j}{j \choose m}\left.\begin{cases} \dfrac{(r - n)!z^{r - n - j + m}}{(r - n - j + m)!} & \!\!\!\! \text{ if } m \geq j - r + n \\ 0 & \!\!\!\!\text{ if } m < j - r + n\end{cases}\right\} \> \cdot \>  (-z_0)^m  \\
    && \left.\begin{cases}\dfrac{n!}{(n - m)!}\left[(1 - zz_0)^{n - m}\logL(1 - zz_0) + *(1 - zz_0)^{n - m}\right] & \!\!\!\! \text{ if } m \leq n \\ \dfrac{n!(-1)^{m - n - 1}(m - n - 1)!}{(1 - zz_0)^{m - n}} & \!\!\!\! \text{ if } m > n\end{cases}\right\}\mathbbm{1}_{p\Zp}(z), \nonumber
\end{eqnarray}
for some $* \in \Zp$. Since $g^{(j)}(z)$ has $\mathbbm{1}_{p\Zp}$ as a factor, the lemma is vacuously true if $p \nmid a$. So assume that $p \mid a$. We expand
\begin{eqnarray*}
    g^{(j)}(a + \alpha p^{h - 1}) & = & g^{(j)}(a) + \cdots + \frac{(\alpha p^{h - 1})^{n - 1 - j}}{(n - 1 - j)!}g^{(n - 1)}(a) \\
    && + \sum_{l \geq 1}\frac{(\alpha p^{h - 1})^{n - 1 - j + l}}{(n - 1 - j + l)!}g^{(n - 1 + l)}(a).
\end{eqnarray*}
Now using \eqref{Derivative formula for the qp-zp part}, we see that the valuation of the $l^{\mathrm{th}}$ summand on the second line of the right side of the equation above is greater than or equal to $x + (h - 1)(n - 1 - j + l) - v_p((n - 1 - j + l)!)$. This is greater than $(h - 1)(r/2 - j)$. Indeed, this is clear
for $l =1$, and follows for $l > 1$ using $v_p(t!) \leq \frac{t}{p - 1}$.

The second congruence in the conclusion of the lemma follows from the first
using Lemma~\ref{Integers in the lattice}.
\end{proof}

\begin{lemma}\label{telescoping lemma}
    Let $k/2 \leq n \leq r \leq p - 1$ and $z_0 \in \Zp$. Fix $x \in \bQ$ such that $x \geq - 1$ and $x + v_p(\cL) \geq r/2 - n$. Then for 
    \begin{enumerate}
        \item $g(z) = p^x(z - z_0)^n\logL(z - z_0)\mathbbm{1}_{\Zp}$
        \item $g(z) = p^xz^r\logL(z)\mathbbm{1}_{p\Zp}$
        \item $g(z) = p^x z^{r - n} (1 - zz_0)^n\logL(1 - zz_0)\mathbbm{1}_{p\Zp},$
    \end{enumerate}
    we have $g(z) \equiv g_2(z) \mod \pi\latticeL{k}$.
\end{lemma}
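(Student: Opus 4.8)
The plan is to prove the congruence by a telescoping (radius-reduction) argument descending from large $h$ down to $h = 2$, with Lemma~\ref{Truncated Taylor expansion} supplying the engine. Each of the three functions $g$, restricted to $\Zp$, lies in $\sC^{r/2}(\Zp, E)$ (for type (1) because $r/2 < n$ and a translate/scalar multiple of $z^n\logL(z)$ lies in $\sC^s(\Zp,E)$ for $s < n$, and for types (2), (3) because multiplication by $\mathbbm{1}_{p\Zp}$ preserves $\sC^{r/2}(\Zp,E)$ while the remaining factor is either $z^r\logL(z)$ or a polynomial times a function analytic on $p\Zp$) and has continuous derivatives up to order $n - 1 \geq \lfloor r/2 \rfloor$. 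Hence, by Proposition~\ref{Convergence proposition} with $s = r/2$ combined with Lemma~\ref{Congruence lemma} applied with $t = n - 1$ (or, in the degenerate case $n - 1 = \lfloor r/2 \rfloor$, directly by Proposition~\ref{Convergence proposition}, since then $g_h = \tilde g_h$), one gets $g(z) \equiv g_h(z) \bmod \pi\latticeL{k}$ for all sufficiently large $h$. It is therefore enough to prove the single-step congruence $g_h(z) \equiv g_{h-1}(z) \bmod \pi\latticeL{k}$ for every $h \geq 3$ and then chain these together.

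For the single step I would refine the partition $\{m + p^h\Zp\}_{0 \leq m < p^h}$ by writing each residue as $m = a + \alpha p^{h-1}$ with $0 \leq a < p^{h-1}$, $0 \leq \alpha < p$, and setting $c = a + \alpha p^{h-1}$, so that $a + p^{h-1}\Zp = \bigsqcup_{\alpha = 0}^{p-1}(c + p^h\Zp)$. On the cell $c + p^h\Zp$, the contribution of $g_{h-1}$ is $\sum_{j=0}^{n-1}\frac{g^{(j)}(a)}{j!}(z-a)^j\mathbbm{1}_{c + p^h\Zp}$; re-expanding $(z-a)^j = \bigl((z-c) + \alpha p^{h-1}\bigr)^j$ by the binomial theorem and collecting powers of $(z-c)$ — which only yields exponents $\leq j \leq n-1$, so nothing is lost by the truncation at $n-1$ — turns this into
\[
  \sum_{j=0}^{n-1}\frac{1}{j!}\Bigl[g^{(j)}(a) + \alpha p^{h-1}g^{(j+1)}(a) + \cdots + \tfrac{(\alpha p^{h-1})^{n-1-j}}{(n-1-j)!}g^{(n-1)}(a)\Bigr](z-c)^j\mathbbm{1}_{c + p^h\Zp},
\]
while the contribution of $g_h$ to the same cell is $\sum_{j=0}^{n-1}\frac{g^{(j)}(c)}{j!}(z-c)^j\mathbbm{1}_{c + p^h\Zp}$. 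The ``moreover'' clause of Lemma~\ref{Truncated Taylor expansion}, applied to whichever of the three types $g$ is, says exactly that for each $j$ the term $g^{(j)}(c)(z-c)^j\mathbbm{1}_{c + p^h\Zp}$ is congruent modulo $\pi\latticeL{k}$ to the bracketed expression above times $(z-c)^j\mathbbm{1}_{c + p^h\Zp}$; dividing by $j!$, a $p$-adic unit since $0 \leq j \leq n-1 \leq p - 2$, and summing over $j$, the contributions of $g_h$ and $g_{h-1}$ agree modulo $\pi\latticeL{k}$ on the cell $c + p^h\Zp$.

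Summing over the finitely many pairs $(a, \alpha)$, and using that $\pi\latticeL{k}$ is closed under finite sums, this gives $g_h(z) \equiv g_{h-1}(z) \bmod \pi\latticeL{k}$ for all $h \geq 3$. Chaining these congruences from a large $h$ through $h - 1, h - 2, \ldots, 3$ down to $h = 2$, and combining with $g(z) \equiv g_h(z) \bmod \pi\latticeL{k}$ for large $h$, yields $g(z) \equiv g_2(z) \bmod \pi\latticeL{k}$, as required.

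The substantive analytic estimates are all packaged inside Lemma~\ref{Truncated Taylor expansion}, so I do not expect a genuine obstacle here; the main thing to get right is the combinatorial bookkeeping of the binomial re-expansion and the (easy but essential) observation that it produces no terms of degree $\geq n$, so that the truncated Taylor polynomials of $g_h$ and $g_{h-1}$ can be compared coefficient by coefficient. The closest thing to a snag is making sure the degenerate boundary case $n - 1 = \lfloor r/2 \rfloor$ (which occurs for even $r$ with $n = k/2$) is handled, as indicated above.
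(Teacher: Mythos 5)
Your proposal is correct and follows essentially the same route as the paper: reduce to $g \equiv g_h$ for large $h$ via Proposition~\ref{Convergence proposition} and Lemma~\ref{Congruence lemma}, then telescope down to $h=2$ using Lemma~\ref{Truncated Taylor expansion} together with the binomial recollection of powers of $(z-a)$ versus $(z-a-\alpha p^{h-1})$. The only cosmetic difference is that you run the single-step comparison from $g_{h-1}$ toward $g_h$ (exact expansion first, then the lemma) whereas the paper goes from $g_h$ toward $g_{h-1}$, which is the same computation; your explicit treatment of the boundary case $n-1=\lfloor r/2\rfloor$ is also consistent with the paper's setup.
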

\begin{proof}
    An easy check shows that each of the functions $g(z)$ above is in $D(k)$.
    Using Lemma \ref{Congruence lemma}, we see that $g(z) \equiv g_h(z) \mod \pi\latticeL{k}$ for large $h$. In particular, assume that $h \geq 3$. Making the substitution $m = a + \alpha p^{h - 1}$, we write
    \[
        g_h(z) = \sum_{a = 0}^{p^{h - 1} - 1}\sum_{\alpha = 0}^{p - 1}\left[ \sum_{j = 0}^{n - 1} \frac{g^{(j)}(a + \alpha p^{h - 1})}{j!}(z - a - \alpha p^{h - 1})^j\right]\mathbbm{1}_{a + \alpha p^{h - 1} + p^h \Zp}.
    \]
    %Now, note that if $h \geq 3$ and $\lfloor r/2 \rfloor < j \leq n - 1$, then 
    %\[
    %    \frac{g^{(j)}(a + \alpha p^{h - 1})}{j!}(z - a - \alpha p^{h - 1})^j\mathbbm{1}_{a + \alpha p^{h - 1} + p^h\Zp} \in \pi\latticeL{k}.
    %\]
    %Indeed, note that the valuation of $g^{(j)}(a + \alpha p^{h - 1})$ is greater than or equal to $r/2 - n + (n - j)(h - 1)$. Now use Lemma \ref{Polynomials with large degrees die}.
    %
    %Therefore we can write
    %\[
    %g_h(z) \equiv \sum_{a = 0}^{p^{h - 1} - 1}\sum_{\alpha = 0}^{p - 1}\left[\sum_{j = 0}^{n - 1}\frac{g^{(j)}(a + \alpha p^{h - 1})}{j!}(z - a - \alpha p^{h - 1})^j\right]\mathbbm{1}_{a + \alpha p^{h - 1} + p^h\Zp} \mod \pi\latticeL{k}.
    %\]
By Lemma \ref{Truncated Taylor expansion}
    \begin{eqnarray*}
        g_h(z) \equiv \sum_{a = 0}^{p^{h - 1} - 1}\sum_{\alpha = 0}^{p - 1} && \!\!\!\! \!\!\!\! \left\{\left[\frac{1}{0! \> 0!}g(a) + \frac{\alpha p^{h - 1}}{0! \> 1!} g^{(1)}(a) + \cdots + \frac{(\alpha p^{h - 1})^{n - 2}}{0! \> (n - 2)!}g^{(n - 2)}(a) + \frac{(\alpha p^{h - 1})^{n - 1}}{0! \> (n - 1)!}g^{(n - 1)}(a)\right] \right.\\
        && \!\!\!\!\!\!\!\! + \left[\frac{1}{1! \> 0!}g^{(1)}(a) + \frac{\alpha p^{h - 1}}{1! \> 1!} g^{(2)}(a) + \cdots + \frac{(\alpha p^{h - 1})^{n - 2}}{1! \> (n - 2)!}g^{(n - 1)}(a)\right](z - a - \alpha p^{h - 1}) \\ 
        && \quad \quad \vdots \qquad \qquad \qquad \vdots \\
        && \!\!\!\!\!\!\!\! + \left.\left[\frac{1}{(n - 1)! \> 0!}g^{(n - 1)}(a)\right](z - a - \alpha p^{h - 1})^{n - 1}\right\} \mathbbm{1}_{a + \alpha p^{h - 1} + p^h\Zp} \mod \pi\latticeL{k}.
    \end{eqnarray*}
    Expanding $[(z - a) - \alpha p^{h - 1}]^j$ for $0 \leq j \leq n - 1$ and collecting like terms of $(z - a)$ diagonally, the terms involving $\alpha$ disappear
    \begin{eqnarray*}
        g_h(z) & \equiv & \sum_{a = 0}^{p^{h - 1} - 1}\sum_{\alpha = 0}^{p - 1}\left[\sum_{j = 0}^{n - 1}\frac{g^{(j)}(a)}{j!}(z - a)^{j}\right]\mathbbm{1}_{a + \alpha p^{h - 1} + p^h\Zp}  \\
        & \equiv & \sum_{a = 0}^{p^{h - 1} - 1}\left[\sum_{j = 0}^{n - 1} \frac{g^{(j)}(a)}{j!}(z - a)^j\right]\mathbbm{1}_{a + p^{h - 1}\Zp} \equiv g_{h - 1}(z) \mod \pi \latticeL{k}.
    \end{eqnarray*}
    Iterating this process, we get $g_h(z) \equiv g_2(z) \mod \pi \latticeL{k}$. 
    %Note that in the definition of $g_2$, the derivatives only go up to $\lfloor r/2 \rfloor$. However, the $g_2$ obtained using this iterative process involves derivatives going up to $n - 1$. 
    Therefore the claim follows.
\end{proof}

    \begin{lemma}\label{qp-zp part is 0}
      Let $k/2 \leq n \leq r \leq p - 1$. For $i$ in a finite indexing set $I$, let $\lambda_i \in E$ and $z_i \in \Zp$ be such that $\sum_{i \in I}\lambda_i z_i^j = 0$ for all $0 \leq j \leq n$.
      %If $r = 2$, then we also assume that $\sum_{i \in I}\lambda_i z_i^{n + 1} = 0$ (cf. Remark~\ref{r=1,2}).
      Fix $x \in \bQ$ such that $x \geq -1$. Then for
        \[
            f(z) = \sum_{i \in I}p^x\lambda_i z^{r - n}(1 - zz_i)^n \logL(1 - zz_i)\mathbbm{1}_{p\Zp},
        \]
        we have $f(z) \equiv 0 \mod \pi \latticeL{k}$.
    \end{lemma}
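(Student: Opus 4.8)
The plan is to use Lemma~\ref{telescoping lemma}(3) to replace $f$ by its level-two Taylor truncation, and then to check that this truncation lies in $\pi\latticeL{k}$ term by term, the point being that the vanishing hypotheses $\sum_{i}\lambda_i z_i^{j}=0$ for $0\le j\le n$ supply exactly the surplus powers of $p$ needed by the lattice estimates of Lemmas~\ref{Integers in the lattice} and~\ref{stronger bound for polynomials of large degree with varying radii}. First I would reduce to $\lambda_i\in\co_E$ (the only case used, by Lemma~\ref{Main coefficient identites}) and observe that for $z\in p\Zp$ and $z_i\in\Zp$ one has $1-zz_i\in1+p\Zp$, so $\logL(1-zz_i)=\log(1-zz_i)$: thus $f$ does not involve $\cL$, each summand $p^{x}z^{r-n}(1-zz_i)^{n}\logL(1-zz_i)\mathbbm{1}_{p\Zp}$ is a function of the type treated in Lemma~\ref{telescoping lemma}(3) (whose proof for that function never invokes the part of the hypothesis involving $v_p(\cL)$), and each summand, hence $f$, lies in $D(k)$. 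Applying Lemma~\ref{telescoping lemma}(3) to each summand and adding (using $\lambda_i\in\co_E$) would then give $f(z)\equiv f_2(z)\bmod\pi\latticeL{k}$, where
\[
  f_2(z)=\sum_{b=1}^{p-1}\left[\sum_{j=0}^{n-1}\frac{f^{(j)}(pb)}{j!}(z-pb)^{j}\right]\mathbbm{1}_{pb+p^{2}\Zp};
\]
only the cosets $pb+p^{2}\Zp\subseteq p\Zp$ contribute (as $f$ is supported on $p\Zp$), and the $b=0$ coset drops out because $f$ vanishes to order $\ge r+1$ at $0$, as I explain next.

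Next I would expand $(1-w)^{n}\log(1-w)=\sum_{N\ge1}e_{N}w^{N}$, noting $e_{N}\in\bZ[\tfrac{1}{N!}]$ with $v_p(e_{N})\ge-\lfloor\log_{p}N\rfloor$, and sum over $i$; the hypotheses $\sum_i\lambda_i z_i^{N}=0$ for $0\le N\le n$ collapse this to
\[
  f(z)=p^{x}\sum_{N\ge n+1}e_{N}\Big(\sum_{i\in I}\lambda_i z_i^{N}\Big)z^{\,r-n+N}\qquad(z\in p\Zp),
\]
which makes the order-$(r+1)$ vanishing at $0$ visible. Differentiating term by term, evaluating at $pb$ (so $v_p(pb)=1$ for $1\le b\le p-1$) and using that $N\mapsto N-\lfloor\log_{p}N\rfloor$ is non-decreasing, I would get for $0\le j\le n-1$ the bound
\[
  v_p\!\big(f^{(j)}(pb)\big)\ge x+(r+1-j)-\delta,\qquad\delta:=\lfloor\log_{p}(n+1)\rfloor,
\]
where $\delta=1$ precisely in the weight $k=p+1$ case $n=r=p-1$, and $\delta=0$ otherwise.

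Finally, for fixed $1\le b\le p-1$ and $0\le j\le n-1$ (so $v_p(j!)=0$ since $j\le r-1<p$), Lemma~\ref{Integers in the lattice} (with $h=2$, $z_0=pb$) gives $(z-pb)^{j}\mathbbm{1}_{pb+p^{2}\Zp}\in p^{\,j-r/2}\latticeL{k}$ for $j<r/2$, and Lemma~\ref{stronger bound for polynomials of large degree with varying radii} gives $(z-pb)^{j}\mathbbm{1}_{pb+p^{2}\Zp}\in p^{\,2j-r}\latticeL{k}$ for $r/2\le j\le r$; combining with the bound above and $x\ge-1$, the $(b,j)$-term of $f_2$ lands in $p^{c}\latticeL{k}$ with $c\ge\tfrac{r}{2}-\delta\ge1$ (using $r\ge2$, which $n\ge k/2=\tfrac{r}{2}+1$ forces, and $r=p-1\ge4$ when $\delta=1$). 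Since $v_p(\pi)\le\tfrac12<1\le c$, each term is in $\pi\latticeL{k}$, so $f_2\in\pi\latticeL{k}$ and $f\equiv0\bmod\pi\latticeL{k}$. I expect the main obstacle to be exactly this last valuation count in the borderline weight $k=p+1$: there $\delta=1$ eats a unit of valuation and one has to fall back on $r=p-1\ge4$; away from that, the estimates are a routine unwinding of the derivative formulas along the lines of Lemma~\ref{telescoping lemma}.
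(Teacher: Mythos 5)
Your proof is correct and follows the paper's overall strategy — reduce to $f_2$ via Lemma~\ref{telescoping lemma}(3), note that only the cosets inside $p\Zp$ contribute, bound $v_p\bigl(f^{(j)}(\alpha p)\bigr)$ using the vanishing identities, and conclude with the lattice lemmas — but your middle estimate is packaged differently. The paper keeps the $z_i$ separate, uses the product-rule derivative formula \eqref{Derivative formula for the qp-zp part}, expands only the logarithm factor in powers of $\alpha p z_i$, and lets $\sum_i\lambda_i z_i^j=0$ kill the terms with $l\le n-m$, ending with the bound $\min_{l\ge n-m+1}\{-1+r-n-j+m+l-\lfloor\log_p l\rfloor\}>r/2-j$; you instead sum over $i$ first and expand the whole restriction $f\vert_{p\Zp}$ as a single power series, so the hypotheses show it begins in degree $r+1$ and you get the closed-form bound $v_p\bigl(f^{(j)}(pb)\bigr)\ge x+r+1-j-\lfloor\log_p(n+1)\rfloor$ — essentially the same estimate with cleaner bookkeeping, and with the borderline case $n=r=p-1$ (where the defect $\delta=1$ appears, compensated by $r=p-1\ge 4$) isolated explicitly, whereas the paper's ``short computation'' absorbs it. Your appeal to Lemma~\ref{stronger bound for polynomials of large degree with varying radii} for $j\ge r/2$ is harmless but unnecessary: Lemma~\ref{Integers in the lattice} with $h=2$ only asks for valuation exceeding $r/2-j\le 0$ in that range. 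Two of your side remarks are worth keeping: the observation that case (3) of Lemma~\ref{telescoping lemma} never uses the hypothesis $x+v_p(\cL)\ge r/2-n$ (which is why it may legitimately be invoked here with only $x\ge -1$), and the restriction to $\lambda_i\in\co_E$, which is implicit in the paper's valuation count and is satisfied in every application via Lemma~\ref{Main coefficient identites}.
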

    \begin{proof}
        First note that $f(z) \equiv f_2(z) \mod \pi \latticeL{k}$ by Lemma \ref{telescoping lemma} (3). Next, we show that $f_2(z) \equiv 0 \mod \pi \latticeL{k}$.
        Recall that
        \[
            f_2(z) = \sum_{a = 0}^{p^2 - 1}\sum_{j = 0}^{n - 1}\frac{f^{(j)}(a)}{j!}(z - a)^j\mathbbm{1}_{a + p^2\Zp}.
        \]
        Since $f(z)$ has $\mathbbm{1}_{p\Zp}$ as a factor, we see that the $p \nmid a$ terms in the expression above are $0$. Therefore we can write
        \[
            f_2(z) = \sum_{\alpha = 0}^{p - 1}\sum_{j = 0}^{n - 1}\frac{f^{(j)}(\alpha p)}{j!}(z - \alpha p)^j\mathbbm{1}_{\alpha p + p^2\Zp}.
        \]
        Since $j \leq n - 1$, the $j^{\mathrm{th}}$ derivative of $f$ is given by (cf. \eqref{Derivative formula for the qp-zp part})
        \begin{eqnarray*}
            f^{(j)}(z) = \sum_{i \in I}p^x\lambda_i \sum_{m = 0}^{j}{j \choose m}\left.\begin{cases}\dfrac{(r - n)!z^{r - n - j + m}}{(r - n - j + m)!} & \text{ if } m \geq j - r + n \\ 0 & \text{ if }m < j - r + n\end{cases}\right\} \cdot (-z_i)^m \\
            \frac{n!}{(n - m)!}[(1 - zz_i)^{n - m}\logL(1 - zz_i) + *(1 - zz_i)^{n - m}]\mathbbm{1}_{p\Zp}(z),
        \end{eqnarray*}
        where $* \in \Zp$. We substitute $z = \alpha p$ in the equation above. Expanding
        $$(1 - \alpha pz_i)^{n - m}\logL(1 - \alpha pz_i) = \sum_{l \geq 1} c_l (\alpha pz_i)^l$$
        for some $c_l$, we have $v_p(c_l(\alpha pz_i)^l) \geq l - \lfloor \log_p(l)\rfloor$. Using the identities $\sum_{i \in I}\lambda_i z_i^j = 0$, we see that
        \[
            v_p\left(\frac{f^{(j)}(\alpha p)}{j!}\right) \geq \min_{l}\{-1 + r - n - j + m + l - \lfloor \log_p(l) \rfloor \},
        \]
        where the minimum is taken over $l \geq n - m + 1$. % if $r > 2$ or $l \geq n - m + 2$ if $r = 2$.
        A short computation shows that\footnote{This is false for odd primes in
        the base case $l = n-m+1$ when $n = r= 2$, $m = 0$
        and $p = 3$ but this does not concern us because of our blanket assumption $p \geq 5$.}
        $$-1 + r - n - j + m + l - \log_p(l) > r/2 - j$$ for $l \geq n - m + 1$.
        % if $r > 2$ or for $l \geq n - m + 2$ if $r = 2$. Therefore using Lemma \ref{Integers in the lattice}, we see that $f_2(z) \equiv 0 \mod \pi \latticeL{k}$.
        %where the minimum is taken over $l \geq n - m + 1$ if $r > 2$ or $l \geq n - m + 2$ if $r = 2$.
        %A short computation shows that $-1 + r - n - j + m + l - \log_p(l) > r/2 - j$ for $l \geq n - m + 1$ if $r > 2$ or for $l \geq n - m + 2$ if $r = 2$.
        Therefore using Lemma \ref{Integers in the lattice}, we see that $f_2(z) \equiv 0 \mod \pi \latticeL{k}$.
    \end{proof}

    We remark that $r = 1$ is not considered in the lemma because of the condition $k/2 \leq r$.
    
\section{Analysis of $\br{\latticeL{k}}$ around all points but the last}\label{Common section}
    %\fancyhead{}
    %\fancyhead[LO]{Reductions of semi-stable representations using the Iwahori mod $p$ LLC}
    %\fancyhead[RE]{Analysis of $\br{\latticeL{k}}$ around all points but the last}
This section is the heart of the paper. We use the relation $g_2(z) \equiv 0 \mod \pi\latticeL{k}$
proved in the last section for certain $E$-valued functions $g$ on $\qp$ as in
\eqref{polynomial log functions} to show that most subquotients of $\br{\latticeL{k}}$ are
quotients of cokernels of linear expressions involving Iwahori-Hecke operators acting on
compactly induced representations from
$IZ$ to $G$. We give a uniform treatment for all the points appearing in
Theorem~\ref{Main theorem in the second part of my thesis} except for the last point.

\subsection{The $\nu \geq i - r/2$ cases}
    In this section, we prove that if $\nu \geq i - r/2$ for $i = 1, 2, \ldots ,\lceil r/2 \rceil - 1$, then the map $\IZind a^{i - 1}d^{r - i + 1} \twoheadrightarrow F_{2i-2, \> 2i - 1}$ factors as
    \[
            \IZind a^{i - 1}d^{r- i + 1} \twoheadrightarrow \left.\begin{cases}\dfrac{\IZind a^{i - 1}d^{r - i + 1}}{\im(\lambda_iT_{-1, 0} - 1)} & \!\!\!\! \text{ if } (i, r) \neq (1, p - 1) \\ \dfrac{\IZind a^{i - 1}d^{r - i + 1}}{\im(\lambda_i(T_{-1, 0} + T_{1, 0}) - 1)} & \!\!\!\!\text{ if } (i, r) = (1, p - 1)\end{cases}\right\} \twoheadrightarrow F_{2i - 2, \> 2i - 1},
        \]
    where $\lambda_i = (-1)^i\> i {r - i + 1\choose i}p^{r/2 - i}\cL$. Moreover for $\nu = i - r/2$, the second map in the display above induces a surjection
    \[
        \pi([2i - 2 - r], \lambda_i^{-1}, \omega^{r - i + 1}) \twoheadrightarrow F_{2i - 2, \> 2i - 1},
    \]
    where $[2i - 2 - r] = 0, 1, \ldots, \> p - 2$ represents the class of $2i - 2 - r$ modulo $p - 1$.

    We need the following preparatory lemma.
\begin{lemma}\label{Sum of the i neq a terms}
    Let $0 \leq j \leq r/2$ and $k/2 < n \leq p - 1$. For $0 \leq a, i \leq p - 1$, let $a_i \in \Zp$ be such that $a - [i] = [a - i] + pa_i$. For $\lambda_0 = 1 - p$ and $\lambda_i = 1$ when $1 \leq i \leq p - 1$, we have
    \[
        \sum_{\substack{i = 0 \\ i \neq a}}^{p - 1}p^{-1}\lambda_i[a - i]^{n - j - 1}pa_i \equiv \frac{a^{n - j}}{n - j} \mod \pi.
    \]
\end{lemma}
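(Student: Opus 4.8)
The plan is to turn the left-hand side into a finite difference of a power-sum and then apply the standard Bernoulli-sum estimate \eqref{S_l is divisible by p}, \eqref{First few values of S_l}. First I would rewrite the summand. Since $a - [i] = [a-i] + p a_i$ with $[a-i]$ the Teichmüller lift of the residue $a - i \bmod p$, we have $p a_i = (a - [i]) - [a-i]$, so $p^{-1}\lambda_i [a-i]^{n-j-1} \, p a_i = \lambda_i [a-i]^{n-j-1}\big((a - [i]) - [a-i]\big)$. As $i$ runs over $\{0,\dots,p-1\}\setminus\{a\}$, the residue $a - i \bmod p$ runs over $\{1,\dots,p-1\}$, i.e.\ over the nonzero residues. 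Writing $t = a - i$ and $[t]$ for its Teichmüller lift, and using $\lambda_i = 1$ for $i\neq 0$ together with the correction $\lambda_0 = 1-p$ (which only shifts the $i=0$, i.e.\ $t \equiv a$, term by $-p\,[a]^{n-j-1}((a-[0]) - [a])$, a quantity divisible by $p\cdot p = p^2$ hence $\equiv 0 \bmod \pi$ after the overall $p^{-1}$ is already accounted for — I will need to check this is actually $\equiv 0 \bmod \pi$, not just integral), the sum becomes, modulo $\pi$,
\[
    \sum_{t=1}^{p-1} [t]^{n-j-1}\big((a - [t]) - [t]\big) \;=\; a\sum_{t=1}^{p-1}[t]^{n-j-1} \;-\; \sum_{t=1}^{p-1}[t]^{n-j}.
\]

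Now I would invoke \eqref{sum of powers of roots of unity}: since $k/2 < n \leq p-1$ and $0 \leq j \leq r/2$, the exponents $n-j-1$ and $n-j$ lie in a range where $p-1 \nmid (n-j-1)$ and $p-1 \nmid (n-j)$ except possibly at the endpoints, which one checks do not occur under the stated hypotheses (here $1 \le n-j \le p-1$, and one must rule out $n - j - 1 = 0$ and $n-j = p-1$ separately — the first would make $\sum [t]^0 = p-1 \equiv -1$, contributing a nonzero term, so I expect the hypothesis $n > k/2 = r/2 + 1 \ge j + 1$ is exactly what forces $n - j - 1 \ge 1$; the second, $n-j = p-1$, would need separate attention and I suspect is excluded by $j \ge 0$ combined with $n \le p-1$ only in a boundary case). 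Granting $p-1 \nmid n-j-1$ and $p - 1\nmid n - j$, both sums $\sum_{t=1}^{p-1}[t]^{\bullet} = \sum_{t=0}^{p-1}[t]^\bullet$ vanish in $\Zp$, so the displayed expression is literally $0$ — which is \emph{not} what we want. This tells me the subtraction above is too lossy: the terms $(a - [t]) - [t]$ must be handled more carefully, keeping a $p$-adic expansion to one more order.

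So the real computation is: $a_i = p^{-1}(a - [i] - [a-i])$ and I should expand $[a-i]$ itself. The cleanest route is to note $[a-i] \equiv a - i \pmod p$ and more precisely work with the expansion of $a_i$ in terms of $a$ and $i$ modulo $p$: writing everything modulo $p$, $a_i \equiv$ (the "carry'' in $a - [i] = [a-i] + p a_i$). A standard identity gives $\overline{a_i} \equiv \frac{(a - i \bmod p) - (a - i)}{-p}$ rewritten via binomial coefficients, or more usefully one uses the telescoping/derivative trick: $\sum_{i\neq a}^{} [a-i]^{n-j-1} a_i$ is a Riemann-sum-type expression whose reduction is $\frac{a^{n-j}}{n-j}$ because it is (a discrete approximation to) $\int_0^a t^{n-j-1}\,dt$. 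Concretely, I would substitute $\ell = n - j - 1$ and prove the cleaner statement
\[
    \sum_{i=0,\; i\neq a}^{p-1} [a-i]^{\ell}\, a_i \;\equiv\; \frac{a^{\ell+1}}{\ell+1} \pmod p \qquad (0 \le \ell \le p-2,\ \ell \ge r/2)
\]
by induction on $a$: for $a=0$ it is $0 \equiv 0$; passing from $a$ to $a+1$, the difference of the two sides on the right is $\frac{(a+1)^{\ell+1} - a^{\ell+1}}{\ell+1}$, and on the left one tracks how the residues $[a-i]$ and carries $a_i$ change, which after reindexing should produce $\sum_{m=0}^{\ell}\binom{\ell}{m}\frac{a^m}{\ell+1}\cdot(\text{something})$ collapsing to the same thing. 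The incorporation of $\lambda_0 = 1-p$ is a cosmetic final step: the extra $-p$ multiplies the $i=0$ summand $p^{-1}[a]^{n-j-1}p a_0$, which is $\equiv 0 \bmod p$ already, so modulo $\pi$ (and even modulo $p$) it drops out.

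\textbf{Main obstacle.} The genuinely delicate point is the second-order $p$-adic bookkeeping: the naive Teichmüller replacement kills the whole sum, so one must retain the carry term $a_i$ to first order and show it reconstructs the "integral'' $a^{n-j}/(n-j)$. I expect the induction on $a$ to work but to require a careful reindexing lemma relating the multiset $\{([a+1-i], a'_i) : i \neq a+1\}$ to $\{([a-i], a_i): i\neq a\}$, together with one invocation of \eqref{S_l is divisible by p}/\eqref{First few values of S_l} to evaluate the boundary contribution $S_{n-j-1}/p \bmod p$; the hypothesis $p \ge 5$ and $n > r/2 + 1$ are what keep all the exponents in the safe range where these power-sum estimates apply.
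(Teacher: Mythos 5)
Your reduction to the unweighted statement is fine: dropping the $i=a$ term and the weight $\lambda_0=1-p$ costs nothing modulo $\pi$, and the lemma is indeed equivalent to $\sum_{i\neq a}[a-i]^{\ell}a_i\equiv a^{\ell+1}/(\ell+1)\bmod p$ with $\ell=n-j-1\geq 1$. But the proof you then propose is not there: the induction on $a$ is exactly the step you leave to hope. Writing $[x]=x+p\psi(x)$ for $0\le x\le p-1$, one has $a_i\equiv-\psi(i)-\psi(a-i\bmod p)-\mathbbm{1}_{i>a}\bmod p$, so the increment of your sum from $a$ to $a+1$ involves the quantities $\sum_i(a-i)^m\psi(i)\bmod p$ for every $m<\ell$, i.e.\ power-weighted sums of Teichm\"uller carries (essentially Fermat quotients). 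These neither telescope nor vanish termwise; evaluating them is a problem of the same nature and difficulty as the lemma itself, and the only auxiliary input you cite, the values $S_l/p\bmod p$ from \eqref{S_l is divisible by p} and \eqref{First few values of S_l}, is available only for $l\le 2$ (for general $l$ this is a Bernoulli number, and one would then have to show all such contributions cancel). So ``a careful reindexing lemma \ldots collapsing to the same thing'' is the missing content, not a routine verification. (Your preliminary manipulation also has two slips --- the factor $p^{-1}$ disappears when you rewrite the summand, and $a-[i]$ gets replaced by $a-[t]$ with $t=a-i$ --- which is why your ``naive'' evaluation came out as $0$; but since you discard that attempt, this is not the main issue.)

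The idea the paper uses, and which your plan never finds, is to expand the single auxiliary quantity $\sum_{i=0}^{p-1}p^{-1}\lambda_i(a-[i])^{n-j}$ in two ways: binomially in $[i]$, where Lemma~\ref{Main coefficient identites}\,(1) makes every power sum $\sum_i\lambda_i[i]^m$ vanish \emph{exactly} in $\Zp$ for $m\le p-2$ (this is where the weight $\lambda_0=1-p$, harmless in the statement, earns its keep in the proof: the vanishing is exact, so the overall $p^{-1}$ causes no loss), and via the first-order expansion $(a-[i])^{n-j}=([a-i]+pa_i)^{n-j}\equiv[a-i]^{n-j}+(n-j)[a-i]^{n-j-1}pa_i\bmod p^2$. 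Comparing the two expansions and cancelling the common term $p^{-1}\sum_i[i]^{n-j}$ leaves $0\equiv-[a]^{n-j}+(n-j)\sum_i p^{-1}\lambda_i[a-i]^{n-j-1}pa_i\bmod\pi$, which is the lemma after dividing by the unit $n-j$; no Fermat-quotient or Bernoulli bookkeeping ever appears. Without this comparison trick, or an honest completion of your inductive step, the proposal has a genuine gap.
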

\begin{proof}
    Since $[0] = 0$ and $n - j - 1 > 0$, we see that
    \[
        \sum_{\substack{i = 0 \\ i \neq a}}^{p - 1}p^{-1}\lambda_i[a - i]^{n - j - 1}pa_i = \sum_{i = 0}^{p - 1}p^{-1}\lambda_i[a - i]^{n - j - 1}pa_i.
    \]
    Consider
    \[
        \sum_{i = 0}^{p - 1}p^{-1}\lambda_i(a - [i])^{n - j} \equiv \sum_{i = 0}^{p - 1}p^{-1}\lambda_i\left([a - i]^{n - j} + (n - j)[a - i]^{n - j - 1}pa_i\right) \mod \pi.
    \]
    Expanding $(a - [i])^{n - j}$ and using Lemma~\ref{Main coefficient identites} (1),
    % equation \eqref{sum of powers of roots of unity},
    we can rewrite the above equation as 
    \[
        \sum_{i = 0}^{p - 1}p^{-1}[i]^{n - j} \equiv \left(\sum_{i = 0}^{p - 1}p^{-1}[i]^{n - j}\right) - [a]^{n - j} + \sum_{i = 0}^{p - 1}p^{-1}\lambda_i(n - j)[a - i]^{n - j - 1}pa_i \mod \pi.
    \]
    Therefore
    \[
        \sum_{i = 0}^{p - 1}p^{-1}\lambda_i[a - i]^{n - j - 1}pa_i \equiv \frac{[a]^{n - j}}{n - j} \equiv \frac{a^{n - j}}{n - j} \mod \pi. \qedhere
    \]
\end{proof}

\begin{Proposition}\label{nu geq}
    Let $k/2 < n \leq k - 2 \leq p - 1$. For $\nu \geq 1 + r/2 - n$, set
\[
    g(z) = p^{-1} \sum_{i = 0}^{p - 1} \lambda_i (z - [i])^n\logL(z - [i]),
\]
where the $\lambda_i$ are given by Lemma~\ref{Main coefficient identites} (1).
% $\lambda_0 = 1-p, \> \lambda_i = 1$ for $1 \leq i \leq p - 1$ and $[i]$ is the Teichm\"uller lift of $i \!\! \mod p$.
Then, we have
    \begin{eqnarray*}
        g(z) & \equiv & \sum_{a = 0}^{p - 1} p^{-1}\cL(z - a)^n \mathbbm{1}_{a + p\Zp} - \delta_{n, \> p - 1}p^{-1}\cL\mathbbm{1}_{\Zp}\\
        & & + \sum_{a = 0}^{p - 1}\left[\sum_{j = 0}^{\lfloor r/2 \rfloor}{n \choose j} \frac{a^{n - j}}{n - j}(z - a)^j\right]\mathbbm{1}_{a + p\Zp} \\ 
        & & + \sum_{a = 0}^{p - 1} \left[\sum_{j = \lceil r/2 \rceil}^{n - 1} \cL{n \choose j}p^{-1}(a - [a])^{n - j}(z - a)^j\right]\mathbbm{1}_{a + p\Zp} \mod \pi\latticeL{k}.
    \end{eqnarray*}
\end{Proposition}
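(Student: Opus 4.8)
The plan is to follow the general template laid out after Lemma~\ref{Congruence lemma}: write $g = g\,\mathbbm{1}_{\Zp} + g\,\mathbbm{1}_{\Qp\setminus\Zp}$ and compute each piece modulo $\pi\latticeL{k}$. A few reductions come first. By Lemma~\ref{Main coefficient identites}(1) the $\lambda_i$ satisfy $\sum_i\lambda_i[i]^j=0$ for $0\le j\le p-2$, and by \eqref{sum of powers of roots of unity} one moreover has $\sum_i\lambda_i[i]^n=0$ when $n<p-1$ while $\sum_i\lambda_i[i]^{p-1}=p-1$; in particular the polynomial part of $g$ has degree $0<r/2$, so $g\in L(k,\cL)\subset D(k)$ and both $g\,\mathbbm{1}_{\Zp}$ and $g\,\mathbbm{1}_{\Qp\setminus\Zp}$ lie in $D(k)$. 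Also, $k/2<n$ forces $1+r/2-n<0$, so the hypothesis $\nu\ge 1+r/2-n$ together with $v_p(H_\pm)\ge 0$ yields $v_p(\cL)\ge 1+r/2-n$; hence each summand $g_i(z):=p^{-1}\lambda_i(z-[i])^n\logL(z-[i])\,\mathbbm{1}_{\Zp}$ of $g\,\mathbbm{1}_{\Zp}$ meets the hypotheses of Lemmas~\ref{Congruence lemma}, \ref{Truncated Taylor expansion} and \ref{telescoping lemma} with $x=-1$, $z_0=[i]$, and by \eqref{Simple derivative formula} the derivatives of $g$ have the simple shape used there.

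For the $\Zp$-part I would apply Proposition~\ref{Convergence proposition} and Lemma~\ref{Congruence lemma} termwise to get $g\,\mathbbm{1}_{\Zp}\equiv (g\,\mathbbm{1}_{\Zp})_h\bmod\pi\latticeL{k}$ for large $h$ (Taylor data to order $n-1$), then Lemma~\ref{telescoping lemma}(1), summed over $i$, to descend to $(g\,\mathbbm{1}_{\Zp})_2$. The step from $h=2$ to $h=1$ is not covered by Lemma~\ref{Truncated Taylor expansion} and must be done by hand: writing $m=a+\alpha p$ with $0\le a,\alpha\le p-1$ and expanding $g_i^{(j)}(a+\alpha p)$ as in \eqref{Far away from z0 in Truncated Taylor expansion}, the analytic part of the expansion (the terms through $g_i^{(n-1)}(a)$) telescopes exactly as in the proof of Lemma~\ref{telescoping lemma}, collapsing $\sum_\alpha\mathbbm{1}_{a+\alpha p+p^2\Zp}$ to $\mathbbm{1}_{a+p\Zp}$ and leaving $(g\,\mathbbm{1}_{\Zp})_1$, while the first singular term $\tfrac{(\alpha p)^{n-j}}{(n-j)!}n!\,p^{-1}\lambda_i[\logL(a-[i])+H_n]$ (for $i\ne a$, together with the $l\ge 1$ tail, and with the $i=a$ summand treated via the high-valuation case of Lemma~\ref{Truncated Taylor expansion}) yields a defect, which the identity $\sum_{j=0}^{n}\binom{n}{j}(\alpha p)^{n-j}(z-a-\alpha p)^j=(z-a)^n$ organizes into a $(z-a)^n$-term and lower-degree terms.

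It then remains to match $(g\,\mathbbm{1}_{\Zp})_1$ plus this defect with the $j\le n-1$ part of the claimed formula. Using \eqref{Simple derivative formula} and splitting the sum over $i$ into $i\ne a$ and $i=a$: for $i\ne a$, $\logL(a-[i])=\log(1+pa_i[a-i]^{-1})$ where $a-[i]=[a-i]+pa_i$, and the leading part of the coefficient of $(z-a)^j$ is computed by Lemma~\ref{Sum of the i neq a terms} to be $\binom{n}{j}\tfrac{a^{n-j}}{n-j}\bmod\pi$, producing $\sum_a\big[\sum_{j=0}^{\lfloor r/2\rfloor}\binom{n}{j}\tfrac{a^{n-j}}{n-j}(z-a)^j\big]\mathbbm{1}_{a+p\Zp}$; for $i=a$ the factor $\logL(a-[a])$ has a genuine $\cL$ (as $v_p(a-[a])\ge 1$), producing $\sum_a p^{-1}\cL(z-a)^n\mathbbm{1}_{a+p\Zp}$ and $\sum_a\big[\sum_{j=\lceil r/2\rceil}^{n-1}\cL\binom{n}{j}p^{-1}(a-[a])^{n-j}(z-a)^j\big]\mathbbm{1}_{a+p\Zp}$. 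Everything left over — the subharmonic-sum and $\log$-of-a-unit corrections, the $i=a$ terms with $j\le\lfloor r/2\rfloor$, the higher tail terms, and all $(z-a)^j$-terms with $j>\lfloor r/2\rfloor$ and integral coefficient — is $\equiv 0\bmod\pi\latticeL{k}$ by Lemmas~\ref{Integers in the lattice} and~\ref{stronger bound for polynomials of large degree with varying radii} (which control the $\latticeL{k}$-depth of $(z-a)^j\mathbbm{1}_{a+p\Zp}$), the inequality $\nu\ge 1+r/2-n$ being exactly what is needed at the borderline.

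For the $\Qp\setminus\Zp$-part I would conjugate by $w$. Since $(w\cdot h)(z)=z^r h(1/z)$ by \eqref{G-action} and $\mathbbm{1}_{\Qp\setminus\Zp}(1/z)=\mathbbm{1}_{p\Zp}(z)$, one gets
\[
w\cdot(g\,\mathbbm{1}_{\Qp\setminus\Zp})=p^{-1}\sum_i\lambda_i z^{r-n}(1-z[i])^n\logL(1-z[i])\,\mathbbm{1}_{p\Zp}-p^{-1}(-1)^n\Big(\sum_i\lambda_i[i]^n\Big)z^r\logL(z)\,\mathbbm{1}_{p\Zp},
\]
where the polynomial $\sum_i\lambda_i(1-z[i])^n$ collapses to $(-1)^n z^n\sum_i\lambda_i[i]^n$ by \eqref{vanishing sums}. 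The first term is of the form treated in Lemma~\ref{qp-zp part is 0} (with $x=-1$, $z_i=[i]$), hence $\equiv 0\bmod\pi\latticeL{k}$ when $n<p-1$; when $n=r=p-1$ one reruns the estimate of that lemma, keeping the single extra contribution caused by the failure of $\sum_i\lambda_i[i]^{n}=0$. The second term vanishes unless $n=p-1$, in which case $\sum_i\lambda_i[i]^{p-1}=p-1$, and a direct computation of $(z^r\logL(z)\mathbbm{1}_{p\Zp})_2$ via Lemma~\ref{telescoping lemma}(2) followed by applying $w$ once more identifies it with $-p^{-1}\cL\,\mathbbm{1}_{\Zp}$ modulo $\pi\latticeL{k}$; this is the $-\delta_{n,p-1}p^{-1}\cL\,\mathbbm{1}_{\Zp}$ term. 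Combining the $\Zp$- and $\Qp\setminus\Zp$-contributions gives the asserted congruence. The main obstacle is the bookkeeping in the $h=2\to h=1$ step — one genuinely acquires a defect, and the work is to show, via Lemma~\ref{Sum of the i neq a terms} and the sharp lattice bounds, that it assembles into exactly the three displayed sums — together with the delicate exceptional analysis of the $\Qp\setminus\Zp$-part at $n=r=p-1$ that produces the $\delta$-correction.
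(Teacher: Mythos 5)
Your proposal is correct and follows essentially the same route as the paper's proof: the same splitting $g = g\mathbbm{1}_{\Zp}+g\mathbbm{1}_{\Qp\setminus\Zp}$, the descent to $g_2$ via Lemmas~\ref{Congruence lemma} and~\ref{telescoping lemma}, a by-hand $h=2\to 1$ (i.e.\ $g_2-g_1$) analysis in which only the $i=a$ contributions survive and resum binomially to $\sum_a p^{-1}\cL(z-a)^n\mathbbm{1}_{a+p\Zp}$, the evaluation of $g_1$ via Lemma~\ref{Sum of the i neq a terms} together with Lemmas~\ref{Integers in the lattice} and~\ref{stronger bound for polynomials of large degree with varying radii}, and the $w$-conjugated treatment of the unbounded part in which the failure of the $j=n$ vanishing for $n=p-1$ is isolated into the $z^{r}\logL(z)$ term and handled by Lemma~\ref{telescoping lemma}(2), yielding $-\delta_{n,p-1}p^{-1}\cL\,\mathbbm{1}_{\Zp}$ (the paper reproves the Lemma~\ref{qp-zp part is 0} estimate with the weaker vanishing hypothesis rather than citing it, but the content is the same, and no extra contribution comes from that sum). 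The only blemish is at the borderline index $j=r/2$ for even $r$: your ``everything left over'' list declares the $i=a$ terms with $j\le\lfloor r/2\rfloor$ negligible, yet for even $r$ the $i=a$ term at $j=r/2=\lceil r/2\rceil$ is borderline (valuation exactly $r/2-j$, not larger) and is precisely the surviving first term of the third displayed sum, as in the paper's equation~\eqref{j equal to r/2 in g1 in geq}, so it must be kept rather than discarded.
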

\begin{proof}
  Write $g(z) = g(z)\mathbbm{1}_{\Zp} + g(z)\mathbbm{1}_{\Qp \setminus \Zp}$. Using
  the fact that the sum $\sum_{i=0}^{p-1} \lambda_i [i]^j$ vanishes for $0 \leq j \leq p-2$ and
  is equal to $p-1$ for $j = p-1$ (thus, in the exceptional case $n = r = p-1$ we have only
  have vanishing for $0 \leq j \leq n-1$), we have
    \[
        w\cdot g(z)\mathbbm{1}_{\Qp \setminus \Zp} = \left[\sum_{i = 0}^{p - 1}p^{-1}\lambda_i z^{r - n}(1 - z[i])^n\logL(1 - z[i]) - \delta_{n, \> p - 1}p^{-1}(p - 1)z^r\logL(z)\right]\mathbbm{1}_{p\Zp}.
    \]
    We claim that this is congruent to $\delta_{n, \> p - 1}p^{-1}\cL z^r\mathbbm{1}_{p\Zp}$ modulo $\pi \latticeL{k}$. First set
    \[
        f(z) = \sum_{i = 0}^{p - 1}p^{-1}\lambda_iz^{r - n}(1 - z[i])^n\logL(1 - z[i])\mathbbm{1}_{p\Zp}(z).
    \]
    Using Lemma \ref{telescoping lemma} (3), we see that $f(z) \equiv f_2(z) \mod \pi \latticeL{k}$. For $0 \leq j \leq n - 1$, recall that
    \[
        f_2(z) = \sum_{a = 0}^{p^2 - 1}\sum_{j = 0}^{n - 1}\frac{f^{(j)}(a)}{j!}(z - a)^j\mathbbm{1}_{a + p^2\Zp},
    \]
    and
    \begin{eqnarray*}
        f^{(j)}(a) & = & \sum_{i = 0}^{p - 1}p^{-1}\lambda_i\sum_{m = 0}^{j}{j \choose m}\left.\begin{cases}\dfrac{(r - n)!a^{r - n - j + m}}{(r - n - j + m)!} & \!\!\!\! \text{ if } m \geq j - r + n \\ 0 & \!\!\!\! \text{ if } m < j - r + n\end{cases}\right\} \cdot (-[i])^m \\
        && \frac{n!}{(n - m)!}\left[(1 - a[i])^{n - m}\logL(1 - a[i]) + *(1 - a[i])^{n - m}\right]\mathbbm{1}_{p\Zp}(a),
    \end{eqnarray*}
    where $* \in \Zp$. Since $f^{(j)}(z)$ has $\mathbbm{1}_{p\Zp}$ as a factor, we see that $f^{(j)}(a) = 0$ if $p \nmid a$. So we assume that $p \mid a$. We write the terms inside the square brackets as $\sum_{l \geq 0} c_l(a[i])^l$ and kill the $l < n - m$ terms using
    Lemma~\ref{Main coefficient identites} (1),
   %the identity  \eqref{sum of powers of roots of unity}. Moreover, using $v_p(c_l(a[i])^l) \geq l - \lfloor \log_p(l) \rfloor$, we get
    \[
        v_p\left(\frac{f^{(j)}(a)}{j!}\right) \geq \min_{l \geq n - m}\{- 1 + r - n - j + m + l - \lfloor \log_p(l) \rfloor\}.
    \]
    Since $k/2 < r$, we see that $r > 2$. A short computation involving two cases: $l = n - m$ and $l \geq n - m + 1$ shows that $-1 + r - n - j + m + l - \lfloor \log_p(l) \rfloor > r/2 - j$. Lemma \ref{Integers in the lattice} then implies that $f_2(z) \equiv 0 \mod \pi \latticeL{k}$.

    Next, set $f(z) = p^{-1}z^r\logL(z)\mathbbm{1}_{p\Zp}$. By Lemma~\ref{telescoping lemma} (2),
    we see that $f(z) \equiv f_2(z) \!\! \mod \pi \latticeL{k}$. Now
    \[
        f_2(z) = \sum_{a = 0}^{p^2 - 1}\sum_{j = 0}^{r - 1}\frac{f^{(j)}(a)}{j!}(z - a)^j\mathbbm{1}_{a + p^2\Zp},
    \]
    and
    \[
        f^{(j)}(a) = p^{-1}\frac{r!}{(r - j)!}\left[z^{r - j}\logL(z) + t_jz^{r - j}\right]\mathbbm{1}_{p\Zp}(a),
    \]
    where $t_j \in \Zp$. Since $\mathbbm{1}_{p\Zp}(a) = 0$ if $p \nmid a$, we see that
    \[
        f_2(z) = \sum_{\alpha = 0}^{p - 1}\sum_{j = 0}^{r - 1}p^{-1}{r \choose j}[(\alpha p)^{r - j}\logL(\alpha p) + t_j(\alpha p)^{r - j}](z - \alpha p)^j\mathbbm{1}_{\alpha p + p^2\Zp}.
    \]
    Since $r > 2$, we see that $-1 + r - j > r/2 - j$. Therefore we delete the second term in the brackets above. Moreover, note that $p^{-1}(\alpha p)^{r - j}\logL(\alpha p) \equiv p^{-1}(\alpha p)^{r - j}\cL \mod p^{r/2 - j}\pi$. Indeed, this is clear if $\alpha = 0$ and if $\alpha \neq 0$, then this is true because $p \mid \logL(\alpha)$. Therefore
    \begin{eqnarray*}
        f_2(z) & \equiv & \sum_{\alpha = 0}^{p - 1}\sum_{j = 0}^{r - 1}p^{-1}{r \choose j}(\alpha p)^{r - j}\cL(z - \alpha p)^{j}\mathbbm{1}_{\alpha p + p^2\Zp} \\
        & = & \sum_{\alpha = 0}^{p - 1}p^{-1}\cL[z^r - (z - \alpha p)^r]\mathbbm{1}_{\alpha p + p^2\Zp} \mod \pi \latticeL{k}.
    \end{eqnarray*}
    Using Lemma \ref{stronger bound for polynomials of large degree with varying radii}, we see that the second term in the brackets above is $0$ modulo $\pi\latticeL{k}$. Therefore we get
    \[
        f_2(z) \equiv p^{-1}\cL z^r\mathbbm{1}_{p\Zp} \mod \pi \latticeL{k}.
    \]
    These computations show that
    \[
        w \cdot g(z)\mathbbm{1}_{\Qp \setminus \Zp} \equiv \delta_{n, \> p - 1}p^{-1}\cL z^r\mathbbm{1}_{p\Zp} \mod \pi \latticeL{k}.
    \]
    Applying $w$ again, we get
    \begin{eqnarray}\label{qp-zp part in geq}
        g(z)\mathbbm{1}_{\Qp \setminus \Zp} \equiv -\delta_{n, \> p - 1}p^{-1}\cL \mathbbm{1}_{\Zp} \mod \pi \latticeL{k}.
    \end{eqnarray}
    This is the second term on the right side of the statement of the proposition.\footnote{This is the first and only time when $g(z)\mathbbm{1}_{\Qp \setminus \Zp}$ contributes.}

    Next, using Lemma \ref{telescoping lemma} (1), we see that $g(z)\mathbbm{1}_{\Zp} \equiv g_2(z) \mod \pi\latticeL{k}$. We analyze $g_2(z) - g_1(z)$ and $g_1(z)$ separately. Consider
    \begin{eqnarray*}
        g_2(z) - g_1(z) & = & \sum_{a = 0}^{p - 1}\sum_{\alpha = 0}^{p - 1}\left[\sum_{j = 0}^{n - 1} \frac{g^{(j)}(a + \alpha p)}{j!}(z - a - \alpha p)^j\right]\mathbbm{1}_{a + \alpha p + p^2\Zp} \\
        && - \sum_{a = 0}^{p - 1}\left[\sum_{j = 0}^{n - 1}\frac{g^{(j)}(a)}{j!}(z - a)^j\right]\mathbbm{1}_{a + p\Zp}.
    \end{eqnarray*}
    Expanding $(z - a)^j\mathbbm{1}_{a + p\Zp} = \sum_{\alpha = 0}^{p - 1}[(z - a - \alpha p) + \alpha p]^j \mathbbm{1}_{a + \alpha p + p^2\Zp}$ and collecting like powers of of $(z - a - \alpha p)$, we get
    \begin{eqnarray*}
        g_2(z) - g_1(z) \!\!\! & = & \! \!\! \sum_{a = 0}^{p - 1}\sum_{\alpha = 0}^{p - 1}\sum_{j = 0}^{n - 1}\left[g^{(j)}(a + \alpha p) - g^{(j)}(a) - \alpha p g^{(j + 1)}(a) - \cdots - \frac{(\alpha p)^{(n - 1 - j)}}{(n - 1 - j)!}g^{(n - 1)}(a)\right] \\
        & & \!\!\! \qquad \qquad \qquad \qquad \frac{(z - a - \alpha p)^j}{j!}\mathbbm{1}_{a + \alpha p + p^2 \Zp}.
    \end{eqnarray*}
    Since $\sum_{i = 0}^{p - 1} \lambda_i (z - [i])^{n - 1} = 0$ by Lemma~\ref{Main coefficient identites} (1), we may use equation \eqref{Simple derivative formula} to see that the coefficient of $(z - a - \alpha p)^j\mathbbm{1}_{a + \alpha p + p^2\Zp}$ in the equation above is
    \begin{eqnarray}\label{jth summand in g2 - g1 in geq}
        && \!\!\!\!\!\!\! {n \choose j}\sum_{i = 0}^{p - 1} p^{-1}\lambda_i\Big[(a + \alpha p - [i])^{n - j}\logL(a + \alpha p - [i]) -
        (a - [i])^{n - j}\logL(a - [i]) \\
        && \!\!\!\!\!\!\! - \left. {n - j \choose 1}\alpha p (a - [i])^{n - j - 1}\logL(a - [i]) - \ldots - {n - j \choose n - j - 1}(\alpha p)^{n - j - 1}(a - [i])\logL(a - [i])\right]. \nonumber
    \end{eqnarray}

    First assume that $i \neq a$. Writing $\logL(a + \alpha p - [i]) = \logL(1 + (a - [i])^{-1}\alpha p) + \logL(a - [i])$, we see that the $i^{\mathrm{th}}$ summand in the display above is
    \[
        p^{-1}\lambda_i\left[(a - [i] + \alpha p)^{n - j}\logL(1 + (a - [i])^{-1}\alpha p) + (\alpha p)^{n - j}\logL(a - [i])\right].
    \]
    Since $0 \leq a, i \leq p - 1$, the assumption $a \neq i$ implies that $a \not \equiv i \mod p$. Therefore the valuation of $\logL(a - [i])$ is greater than or equal to $1$. So the second term in the display above is congruent to $0$ modulo $p^{r/2 - j}\pi$. Thus, expanding, the $i \neq a$ summand
    in expression \eqref{jth summand in g2 - g1 in geq} is
    \begin{eqnarray}\label{i not equal to a in geq}
        && \!\!\!\! p^{-1}\lambda_i[(a - [i])^{n - j - 1}\alpha p + c_{n - j - 2}(a - [i])^{n - j - 2}(\alpha p)^2 + \cdots + c_1(a - [i])(\alpha p)^{n - j - 1}] \\
        && \quad \mod p^{r/2 - j}\pi, \nonumber
    \end{eqnarray}
    where $c_1, \ldots, c_{n - j - 2} \in \Zp$.

    Next assume that $i = a$. 
    We claim that for any $m \in \Zp$ with $p \mid m$ and $0 \leq l \leq n - j - 1$, we have
    \begin{eqnarray}\label{Replace logs by L}
        p^{-1}p^l m^{n - j - l}\logL(m) \equiv p^{-1}p^l m^{n - j - l}\cL \mod p^{r/2 - j}\pi.
    \end{eqnarray}
    Indeed, if $v_p(m) = 1,$ then writing $m = pu$ for some unit $u$, we have
    $\logL(m) = \cL + \log(u)$. The valuation of $p^{-1}p^lm^{n - j - l}\log(u)$ is greater than or equal to $n - j > r/2 - j$. Therefore we have proved the claim when $v_p(m) = 1$. If $v_p(m) \geq 2$, then using $v_p(\cL) \geq 1 + r/2 - n$ we see that the valuations of both the terms in the claim are greater than or equal to $l + 2(n - j - l) + r/2 - n > r/2 - j$. Therefore if $v_p(m) \geq 2$,
    then both sides of \eqref{Replace logs by L} vanish modulo $p^{r/2 - j}\pi$.
   % \[
   %     p^{-1}p^l m^{n - j - l}\logL(m) \equiv 0 \equiv p^{-1}p^l m^{n - j - 1}\cL \mod p^{r/2 - j}\pi.
   % \]    
    This means that the logarithms in expression \eqref{jth summand in g2 - g1 in geq} can be replaced by $\cL$ modulo $p^{r/2 - j}\pi$. So the $i = a$ summand in expression \eqref{jth summand in g2 - g1 in geq} becomes
    \begin{eqnarray}\label{i equal to a in geq}
        p^{-1}\lambda_a(\alpha p)^{n - j}\cL \mod p^{r/2 - j}\pi.
    \end{eqnarray}

    Putting expressions \eqref{i not equal to a in geq} and \eqref{i equal to a in geq} in expression \eqref{jth summand in g2 - g1 in geq} and using the identity $\sum_{i = 0}^{p - 1}\lambda_i(z - [i])^{n - 1} = 0$, we get
    \begin{eqnarray*}
        g_2(z) - g_1(z) & \equiv & \sum_{a = 0}^{p - 1}\sum_{\alpha = 0}^{p - 1}\left[\sum_{j = 0}^{n - 1}p^{-1}\lambda_a {n \choose j}(\alpha p)^{n - j}\cL(z - a - \alpha p)^j\right]\mathbbm{1}_{a + \alpha p + p^2 \Zp} \mod \pi\latticeL{k} \\
        & \equiv & \sum_{a = 0}^{p - 1}\sum_{\alpha = 0}^{p - 1}p^{-1}\cL\lambda_a\left[(z - a)^n - (z - a - \alpha p)^n\right]\mathbbm{1}_{a + \alpha p + p^2 \Zp} \mod \pi\latticeL{k}.
    \end{eqnarray*}
    Now recall that $\lambda_i \equiv 1 \mod p$ for all $0 \leq i \leq p - 1$. Also using Lemma \ref{stronger bound for polynomials of large degree with varying radii}, we see that the second term in the second line above belongs to $\pi \latticeL{k}$. So
    \begin{eqnarray}\label{g2 - g1 in geq}
        g_2(z) - g_1(z) \equiv \sum_{a = 0}^{p - 1}p^{-1}\cL(z - a)^n \mathbbm{1}_{a + p\Zp} \mod \pi\latticeL{k}.
    \end{eqnarray}

    Next
    \begin{eqnarray}\label{Expression for g1}
        g_1(z) = \sum_{a = 0}^{p - 1}\left[\sum_{j = 0}^{n - 1} \frac{g^{(j)}(a)}{j!}(z - a)^j\right]\mathbbm{1}_{a + p\Zp}.
    \end{eqnarray}
    Note that
    \begin{eqnarray}\label{jth summand in g1 in geq}
        \frac{g^{(j)}(a)}{j!} = \sum_{i = 0}^{p - 1}p^{-1}{n \choose j}\lambda_i(a - [i])^{n - j}\logL(a - [i]).
    \end{eqnarray}

    First assume that $j > r/2$. For $i \neq a$, the $i^{\mathrm{th}}$ summand on the right side of equation \eqref{jth summand in g1 in geq} is integral. Indeed, $0 \leq a, i \leq p - 1$ and $i \neq a$ implies that $a - [i]$ is a $p$-adic unit. Therefore $p^{-1}\logL(a - [i])$ is integral. Using Lemma \ref{stronger bound for polynomials of large degree with varying radii}, we see that the $j > r/2$ and $i \neq a$ terms do not contribute to $g_1(z)$. Working as in the $g_2(z) - g_1(z)$ case we see that for $i = a$, the $i^{\mathrm{th}}$ summand in equation \eqref{jth summand in g1 in geq} is 
    \[
        p^{-1}{n \choose j}\lambda_a(a - [a])^{n - j}\cL \equiv p^{-1}{n \choose j}(a - [a])^{n - j}\cL \mod p^{r/2 - j}\pi.
    \]
    Therefore if $j > r/2$, then
    \begin{eqnarray}\label{j greater than r/2 in g1 and geq}
        \frac{g^{j}(a)}{j!}(z - a)^j\mathbbm{1}_{a + p\Zp} \equiv p^{-1}{n \choose j} (a - [a])^{n - j}\cL (z - a)^j\mathbbm{1}_{a + p\Zp} \mod \pi\latticeL{k}.
    \end{eqnarray}

    Next assume that $j < r/2$. For $i \neq a$, %the $i^{\mathrm{th}}$ summand on the right side of equation \eqref{jth summand in g1 in geq} is
   % \begin{eqnarray*}
   %     p^{-1}{n \choose j}\lambda_i(a - [i])^{n - j}\logL(a - [i]).
   % \end{eqnarray*}
    write $a - [i] = [a - i] + pa_i$ for some $a_i \in \Zp$. Then $\logL(a - [i]) = \logL(1 + [a - i]^{-1}pa_i)$ since $[a - i]$ is a root of unity. Therefore the $i^{\mathrm{th}}$ summand on the right side of equation \eqref{jth summand in g1 in geq} is
    \begin{eqnarray*}
        p^{-1}{n \choose j}\lambda_i(a - [i])^{n - j}\logL(a - [i]) % & = & p^{-1}{n \choose j}\lambda_i([a - i] + pa_i)^{n - j}\logL(1 + [a - i]^{-1}pa_i) \\
        & \equiv & p^{-1}{n \choose j}\lambda_i[a - i]^{n - j - 1}pa_i \mod \pi.
    \end{eqnarray*}
    Next, the $i = a$ term on the right side of equation \eqref{jth summand in g1 in geq} is congruent to $0$ modulo $\pi$. Indeed, the valuation of $\logL(a - [a])$ is greater than or equal to $\min\{v_p(\cL), 1\} \geq 1 + r/2 - n$, the valuation of $(a - [a])^{n - j}$ is greater than or equal to $n - j$ and $j < r/2$.

    Finally assume that $j = r/2$. Working as in the $j > r/2, \> i = a$ case and $j < r/2, \> i \neq a$ case, we see that the $i^{\mathrm{th}}$ summand on the right side of equation \eqref{jth summand in g1 in geq} is
    \begin{eqnarray}\label{j equal to r/2 in g1 in geq}
        \begin{cases}
            p^{-1}{n \choose j}(a - [a])^{n - r/2}\cL \mod \pi & \text{ if } i = a, \\
            p^{-1}{n \choose j}\lambda_i[a - i]^{n - r/2 - 1}pa_i \mod \pi & \text{ if } i \neq a.
        \end{cases}
    \end{eqnarray}

    For $0 \leq j \leq r/2$, Lemma \ref{Sum of the i neq a terms} gives us
    \begin{eqnarray}\label{j leq r/2 in g1 and geq}
        \sum_{\substack{i = 0 \\ i \neq a}}^{p - 1}p^{-1}{n \choose j}\lambda_i [a - i]^{n - j - 1}pa_i \equiv {n \choose j}\frac{a^{n - j}}{n - j} \mod \pi.
    \end{eqnarray}

    Putting equations \eqref{j greater than r/2 in g1 and geq}, \eqref{j equal to r/2 in g1 in geq} and \eqref{j leq r/2 in g1 and geq} in equation \eqref{Expression for g1}, and using
    Lemma~\ref{Integers in the lattice}, we get
    \begin{eqnarray}\label{g1 in geq}
        g_1(z) & \equiv & \sum_{a = 0}^{p - 1}\left[\sum_{j = 0}^{\lfloor r/2 \rfloor} {n \choose j}\frac{a^{n - j}}{n - j}(z - a)^j\right]\mathbbm{1}_{a + p\Zp} \nonumber\\
        & & + \sum_{a = 0}^{p - 1}\left[\sum_{j = \lceil r/2 \rceil}^{n - 1}\cL{n \choose j}p^{-1}(a - [a])^{n - j}(z - a)^j\right]\mathbbm{1}_{a + p\Zp} \mod \pi \latticeL{k}.
    \end{eqnarray}

    Equations \eqref{qp-zp part in geq}, \eqref{g2 - g1 in geq} and \eqref{g1 in geq} immediately prove the proposition.
\end{proof}

\begin{remark}
    The term $\delta_{n, p - 1}p^{-1}\cL\mathbbm{1}_{\Zp}$ is present only when $r$ takes the boundary value $p - 1$ and $n = r$ is the highest possible value of $n$.
\end{remark}
    
    To prove the claim made at the beginning of this section concerning $F_{2i - 2, \> 2i - 1}$, we need certain inductive steps coming from Proposition \ref{nu geq} for $\nu > 1 - r/2, 2 - r/2, \ldots, (i - 1) - r/2$. We state and prove these (shallow) inductive steps now.

    \begin{Proposition}\label{inductive steps}
        For $k/2 < n \leq r$, assume that $\nu > 1 + r/2 - n$. Then
        \[
            z^{r - n}\mathbbm{1}_{p\Zp} \equiv \sum_{b = 1}^{p - 1}\sum_{l = 1}^{\lfloor r/2 \rfloor}c(n, l)b^{r - l - n}(z - b)^l\mathbbm{1}_{b + p\Zp} \mod \pi \latticeL{k},
        \]
        where $c(n, l) \in \Zp$ and
        \[
            c(n, l) = 
            \begin{cases}
                \dfrac{(-1)^l(r - n)!n}{(r - l)\ldots(n - l)} & \text{ if } l \geq r - n + 1,\\
                -1 + \dfrac{(-1)^l(r - n)!n}{(r - l)\ldots(n - l)} & \text{ if } l = r - n.
            \end{cases}
        \]
    \end{Proposition}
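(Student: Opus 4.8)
The plan is to read off the congruence of Proposition~\ref{nu geq} and reshape it into the asserted form. Because the $\lambda_i$ of Lemma~\ref{Main coefficient identites}~(1) satisfy $\sum_i \lambda_i [i]^j = 0$ for $0 \leq j \leq p - 2$, the function $g$ of Proposition~\ref{nu geq} meets the conditions of \cite[Lemme 3.3.2]{Bre10} (in the exceptional case $n = r = p - 1$ one uses that $\sum_i \lambda_i [i]^{p-1} = p - 1$ is still a constant, so the degree condition for $L(k, \sL)$ holds), so $g \in L(k, \sL)$ and $g = 0$ in $\tB(k, \sL)$. Hence the entire right-hand side of Proposition~\ref{nu geq} is $\equiv 0 \bmod \pi \latticeL{k}$. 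Now $k/2 < n$ gives $1 + r/2 - n < 0$, so $\nu > 1 + r/2 - n$ forces $v_p(\sL) \geq \min(\nu, 0) > 1 + r/2 - n$, and with this every term carrying a factor of $\sL$ is killed by Lemma~\ref{stronger bound for polynomials of large degree with varying radii} together with $\sqrt p \equiv 0 \bmod \pi$: the first group since $n > r/2$; the $\delta_{n, p-1}$-term using $\mathbbm{1}_{\Zp} \in p^{r/2} \latticeL{k}$ (valid because $w \cdot \mathbbm{1}_{\Zp} = - z^r \mathbbm{1}_{p\Zp}$ in $\tB(k)$); and the third group because the factor $v_p(a - [a]) \geq 1$ absorbs the $p^{-1}$. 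The $a = 0$ summand of the middle group also vanishes, since there $a^{n-j} = 0$. What remains is the relation
\[
  \sum_{a = 1}^{p - 1}\sum_{j = 0}^{\lfloor r/2 \rfloor} \binom{n}{j} \frac{a^{n - j}}{n - j} (z - a)^j \mathbbm{1}_{a + p\Zp} \equiv 0 \bmod \pi \latticeL{k},
\]
which we call $(\star)$.

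The key step is to apply the operator $w$, which preserves $\latticeL{k}$, in order to turn the exponents $a^{n-j}$ into the exponents $c^{r - n - l}$ appearing in the statement. Writing $a^{-1}$ for the $p$-adic inverse of $a$ and $c \in \{1, \dots, p-1\}$ for its residue, the $G$-action \eqref{G-action} gives
\[
  w \cdot \bigl[ (z - a)^j \mathbbm{1}_{a + p\Zp} \bigr] = (-1)^j \sum_{m = 0}^{r - j} \binom{r - j}{m} a^{2j - r + m} (z - a^{-1})^{j + m} \mathbbm{1}_{a^{-1} + p\Zp};
\]
re-expanding $(z - a^{-1})^{j+m}$ around $c$ (the corrections lying in $p \latticeL{k}$) and noting that the summands with $j + m > r/2$ lie in $\pi \latticeL{k}$ by Lemma~\ref{stronger bound for polynomials of large degree with varying radii} and $\sqrt p \equiv 0 \bmod \pi$, one applies $w$ to $(\star)$, combines $a^{n-j} \cdot a^{2j - r + m} = a^{n + j + m - r}$, relabels $c = a^{-1}$ so that this becomes $c^{r - n - (j + m)}$, and collects like powers with $l = j + m$, obtaining
\[
  \sum_{c = 1}^{p - 1} \sum_{l = 0}^{\lfloor r/2 \rfloor} d(n, l)\, c^{r - n - l} (z - c)^l \mathbbm{1}_{c + p\Zp} \equiv 0 \bmod \pi \latticeL{k}, \qquad d(n, l) := \sum_{j = 0}^{l} \frac{(-1)^j \binom{n}{j} \binom{r - j}{l - j}}{n - j}.
\]

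Separately, $z^{r - n}$ is a polynomial of degree $\leq r$, hence zero in $\tB(k)$, and $z^{r - n} \mathbbm{1}_{\Qp \setminus \Zp} \equiv 0 \bmod \pi \latticeL{k}$ (applying $w$ sends it to $z^n \mathbbm{1}_{p\Zp} \in p^{n - r/2} \latticeL{k}$). Hence $z^{r - n} \mathbbm{1}_{p\Zp} \equiv -\sum_{c = 1}^{p - 1} z^{r - n} \mathbbm{1}_{c + p\Zp} = -\sum_{c = 1}^{p - 1} \sum_{l = 0}^{r - n} \binom{r - n}{l} c^{r - n - l} (z - c)^l \mathbbm{1}_{c + p\Zp} \bmod \pi \latticeL{k}$. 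Since $d(n, 0) = 1/n$, adding $n$ times the preceding display cancels the $l = 0$ contribution and yields the asserted congruence, with the sum running from $l = 1$ and with $c(n, l) = n\, d(n, l)$ for $l > r - n$ and $c(n, l) = n\, d(n, l) - \binom{r - n}{l}$ for $1 \leq l \leq r - n$; each $c(n, l) \in \Zp$ because every denominator $n - l + m$ with $0 \leq m \leq l \leq \lfloor r/2 \rfloor$ lies strictly between $0$ and $p$. The closed forms then follow from the reindexing $d(n, l) = (-1)^l \sum_{m = 0}^{l} (-1)^m \binom{r - n}{m} / (n - l + m)$ together with, for $l \geq r - n$, the partial-fraction identity $\sum_{m = 0}^{N} (-1)^m \binom{N}{m} / (x + m) = N! / (x (x + 1) \cdots (x + N))$ with $N = r - n$ and $x = n - l$: this gives $d(n, l) = (-1)^l (r - n)! / ((n - l)(n - l + 1) \cdots (r - l))$, hence the stated formula for $c(n, l)$, the extra $-1$ in the boundary case $l = r - n$ being the $-\binom{r - n}{r - n}$ term.

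I expect the $w$-transformation to be the main obstacle: one must keep track of the translated supports $a^{-1} + p\Zp$, pin down exactly which high-degree summands fall into $\pi \latticeL{k}$, and correctly recombine the powers of $a$ after the relabelling. The concluding binomial bookkeeping is routine once the right classical identity is in hand.
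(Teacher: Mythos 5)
Your proposal is correct and follows essentially the same route as the paper: both start from Proposition~\ref{nu geq} with the $\cL$-carrying terms killed under $\nu > 1 + r/2 - n$, cancel the $j=0$ (resp. $l=0$) contribution against the binomial expansion of $z^n\mathbbm{1}_{\Zp}$ (you merely apply $w$ before combining rather than after), use Lemma~\ref{Integers in the lattice} and Lemma~\ref{stronger bound for polynomials of large degree with varying radii} to drop the high-degree terms and replace $a^{-1}$ by its residue, and finish with the same partial-fraction evaluation of the coefficients. The only caveat is that the identity you call a ``reindexing,'' namely $\sum_{j=0}^{l}\frac{(-1)^j\binom{n}{j}\binom{r-j}{l-j}}{n-j} = (-1)^l\sum_{m=0}^{l}\frac{(-1)^m\binom{r-n}{m}}{n-l+m}$, is not a reindexing but a genuine (true and easily proved) Vandermonde-plus-partial-fractions identity — exactly the binomial algebra the paper carries out when it expands the numerator in powers of $n-j$ and invokes the vanishing of alternating sums.
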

    \begin{proof}
      If $\nu > 1 + r/2 - n$, then using Proposition \ref{nu geq}, Lemma~\ref{Integers in the lattice} with $h = 0$, Lemma~\ref{stronger bound for polynomials of large degree with varying radii} with
      $h = 1$ and noting that $g(z) = 0$ in $\latticeL{k}$, we get
      \begin{eqnarray}
            \label{hard inductive step}
            0 \equiv \sum_{a = 0}^{p- 1}\left[\sum_{j = 0}^{\lfloor r/2 \rfloor} {n \choose j}\frac{a^{n - j}}{n - j}(z - a)^j\right]\mathbbm{1}_{a + p\Zp} \mod \pi \latticeL{k}.
        \end{eqnarray}
        %This equation shows that $g(z)$ is integral, therefore $g(z) = 0$ in $\latticeL{k}$. 
        %Since $g(z) = 0$ in $\latticeL{k}$, we get
        %\begin{eqnarray}\label{hard inductive step}
        %    0 \equiv \sum_{a = 0}^{p- 1}\left[\sum_{j = 0}^{\lfloor r/2 \rfloor} {n \choose j}\frac{a^{n - j}}{n - j}(z - a)^j\right]\mathbbm{1}_{a + p\Zp} \mod \pi \latticeL{k}.
        %\end{eqnarray}
        Similarly, expanding binomially $z^{n} = [(z - a) + a]^{n}$ and using
        Lemma \ref{stronger bound for polynomials of large degree with varying radii} with $h = 1$
        gives
        \begin{eqnarray}\label{Easy inductive step}
            z^{n}\mathbbm{1}_{\Zp} \equiv \sum_{a = 0}^{p - 1}\left[\sum_{j = 0}^{\lfloor r/2 \rfloor}{n \choose j}a^{n - j}(z - a)^j\right]\mathbbm{1}_{a + p\Zp} \mod \pi \latticeL{k}.
        \end{eqnarray}
        % Note that we have used Lemma \ref{stronger bound for polynomials of large degree with varying radii} to truncate the binomial expansion.
        Subtracting $n$ times equation \eqref{hard inductive step} from equation \eqref{Easy inductive step}, we get
        \[
            z^n\mathbbm{1}_{\Zp} \equiv \sum_{a = 0}^{p - 1}\left[\sum_{j = 1}^{\lfloor r/2 \rfloor}-\frac{j}{n - j}{n \choose j}a^{n - j}(z - a)^j\right]\mathbbm{1}_{a + p\Zp} \mod \pi \latticeL{k}.
          \]
        We have dropped the $j = 0$ term since the factor $\dfrac{j}{n - j}$ is $0$.  
        Dropping the $a = 0$ summand (since $j < n$)
        % implies each term on the right side of the equation above has a positive power of $a$. We therefore drop the $a = 0$ summand
        and applying $-w$ to both sides,
        % = -\begin{pmatrix}0 & 1 \\ 1 & 0\end{pmatrix}$,
        we get
        \[
            z^{r-n}\mathbbm{1}_{p\Zp} \equiv \sum_{a = 1}^{p - 1}\left[\sum_{j = 1}^{\lfloor r/2 \rfloor}\frac{j}{n - j}{n \choose j}a^{n - j}z^{r - j}(1 - az)^j\right]\mathbbm{1}_{a^{-1} + p\Zp} \mod \pi \latticeL{k}.
        \]
        Pulling $-a$ out of $(1 - az)^j$, we get
        \[
            z^{r-n}\mathbbm{1}_{p\Zp} \equiv \sum_{a = 1}^{p - 1}\left[\sum_{j = 1}^{\lfloor r/2 \rfloor}\frac{(-1)^jj}{n - j}{n \choose j}a^nz^{r - j}(z - a^{-1})^j\right]\mathbbm{1}_{a^{-1} + p\Zp} \mod \pi \latticeL{k}.
        \]
        Expanding $z^{r - j} = \sum_{l = 0}^{r - j}{r - j \choose l}a^{-l}(z - a^{-1})^{r - j - l}$, using
        Lemma~\ref{Integers in the lattice} to replace $a^{-1}$ by $b$, and dropping
        the $0 \leq l < \lceil r/2 \rceil$ terms by Lemma \ref{stronger bound for polynomials of large degree with varying radii}, we get
        %\[
        %    z^{r - n}\mathbbm{1}_{p\Zp} \equiv \sum_{b = 1}^{p - 1}\left[\sum_{j = 1}^{\lfloor r/2 \rfloor}\sum_{l = 0}^{r - j}\frac{(-1)^j j}{n - j}{n \choose j}{r - j \choose l}b^{l - n}(z - b)^{r - l}\right]\mathbbm{1}_{b + p\Zp} \mod \pi \latticeL{k}.
        %  \]
        %By Lemma \ref{stronger bound for polynomials of large degree with varying radii}, we drop the $0 \leq l < \lceil r/2 \rceil$ terms to get
        \begin{eqnarray}
        \label{intermediate inductive step}
            z^{r - n}\mathbbm{1}_{p\Zp} \equiv \sum_{b = 1}^{p - 1}\left[\sum_{j = 1}^{\lfloor r/2 \rfloor}\sum_{l = \lceil r/2 \rceil}^{r - j}\frac{(-1)^j j}{n - j}{n \choose j}{r - j \choose l}b^{l - n}(z - b)^{r - l}\right]\mathbbm{1}_{b + p\Zp} \!\! \mod \pi \latticeL{k}.
        \end{eqnarray}
        Exchanging the order of the sums inside the brackets and replacing $l$ by $r - l$, we get
        %\[
        %    z^{r - n}\mathbbm{1}_{p\Zp} \equiv \sum_{b = 1}^{p - 1}\left[\sum_{l = \lceil r/2 \rceil}^{r - 1}\sum_{j = 1}^{r - l}\frac{(-1)^j j}{n - j}{n \choose j}{r - j \choose l}b^{l - n}(z - b)^{r - l}\right]\mathbbm{1}_{b + p\Zp} \mod \pi \latticeL{k}.
        %\]
        %Replacing $l$ by $r - l$, we get
        \[
            z^{r - n}\mathbbm{1}_{p\Zp} \equiv \sum_{b = 1}^{p - 1}\left[\sum_{l = 1}^{\lfloor r/2 \rfloor}\left(\sum_{j = 1}^{l}\frac{(-1)^j j}{n - j}{n \choose j}{r - j \choose r - l}\right)b^{r - l - n}(z - b)^{l}\right]\mathbbm{1}_{b + p\Zp} \mod \pi \latticeL{k}.
        \]
        % Therefore to prove the proposition, we have to prove that
        We see that 
        \begin{eqnarray*}
          c(n, l) & :=  &\sum_{j = 1}^{l}\frac{(-1)^j j}{n - j}{n \choose j}{r - j \choose r - l}  \quad \in \zp\\
            %=
            %\begin{cases}
            %    \dfrac{(-1)^l(r - n)! n}{(r - l)\ldots (n - l)} & \text{ if } l \geq r - n + 1 \\
            %   -1 + \dfrac{(-1)^l(r - n)!n}{(r - l)\ldots (n - l)} & \text{ if } l = r - n.
            %\end{cases}
        %\]
        %Expanding the binomial coefficients, we get
        %    \begin{eqnarray}
               %                c(n, l)
                & = & \frac{n!}{(r - l)!}\sum_{j = 1}^{l} (-1)^j \frac{(r - j)(r - j - 1) \cdots (n - j + 1)}{(j - 1)!(n - j)(l - j)!} \nonumber \\
                & = & \frac{n!}{(r - l)!}\sum_{j = 1}^{l} \frac{(-1)^j}{(j - 1)!(l - j)!}\left[(n - j)^{r - n - 1} + s_1 (n - j)^{r - n - 2} + \cdots + s_{r-n-1} + \frac{(r - n)!}{n - j}\right], \nonumber
            \end{eqnarray*}
            where $s_1, s_2, \ldots, s_{r - n - 1} \in \Zp$.
            Using the fact that the alternating sum of binomial coefficients is equal to $0$, we see that if $l \geq r - n + 1$, then for any $0 \leq i \leq r - n - 1$, we have
            \[
                \sum_{j = 1}^{l}\frac{(-1)^j(n - j)^{i}}{(j - 1)!(l - j)!} = 0.
            \]
            If $l = r - n$, then writing $l  = (r-1)-n+1$, we see that the above
            sums vanish for $0 \leq i \leq r-n-2$. However, if $l = r-n$, then  
            \[
                \sum_{j = 1}^{l}\frac{(-1)^j(n - j)^{l - 1}}{(j - 1)!(l - j)!} = -1.
            \]
            Therefore
            \begin{eqnarray}\label{Penultimate c(n, l)}
                c(n, l) = 
                \begin{cases}
                \dfrac{n!}{(r - l)!}\sum \limits_{j = 1}^{l} \dfrac{(-1)^j (r - n)!}{(j - 1)!(l - j)!(n - j)} & \text{ if } l \geq r - n + 1 \\
                \dfrac{n!}{(r - l)!}\left[-1 + \sum\limits_{j = 1}^{l}\dfrac{(-1)^j(r - n)!}{(j - 1)!(l - j)!(n - j)}\right] & \text{ if } l = r - n.
                \end{cases}
            \end{eqnarray}
            By partial fraction decomposition, we have
            \[
                \frac{1}{(n - 1)(n - 2)\cdots(n - l)} = \sum_{j = 1}^{l}\frac{a_j}{n - j}
            %\]
            %Comparing residues, we get
            \text{ with }   
            %\[
                a_j = \frac{(-1)^{l - j}}{(j - 1)!(l - j)!}.
            \]
            Substituting this in equation \eqref{Penultimate c(n, l)}, we get
            \[
                c(n, l) = 
                \begin{cases}
                    \dfrac{(-1)^{l}(r - n)! n}{(r - l)\cdots(n - l)} & \text{ if } l \geq r - n + 1 \\
                    -1 + \dfrac{(-1)^l(r - n)! n}{(r - l) \cdots (n - l)} & \text{ if } l = r - n.
                 \end{cases}  
                    \qedhere       
            \]
    \end{proof}

    We now prove the claim made at the beginning of this section.
    \begin{theorem}\label{Final theorem for geq}
        Let $i = 1, 2, \ldots, \lceil r/2 \rceil - 1$. If $\nu \geq i - r/2$, then the map $\IZind a^{i - 1}d^{r - i + 1} \twoheadrightarrow F_{2i - 2, \> 2i - 1}$ factors as
        \[
            \IZind a^{i - 1}d^{r- i + 1} \twoheadrightarrow \left.\begin{cases}\dfrac{\IZind a^{i - 1}d^{r - i + 1}}{\im(\lambda_iT_{-1, 0} - 1)} & \!\!\!\! \text{ if } (i, r) \neq (1, p - 1) \\ \dfrac{\IZind a^{i - 1}d^{r - i + 1}}{\im(\lambda_i(T_{-1, 0} + T_{1, 0}) - 1)} & \!\!\!\!\text{ if } (i, r) = (1, p - 1)\end{cases}\right\} \twoheadrightarrow F_{2i - 2, \> 2i - 1},
        \]
        where
        \[
            \lambda_i = (-1)^i i {r - i + 1 \choose i}p^{r/2 - i}\cL.
        \]
        Moreover for $\nu = i - r/2$, the second map above induces a surjection
            \[
                \pi([2i - 2 - r], \lambda_i^{-1}, \omega^{r - i + 1}) \twoheadrightarrow F_{2i - 2, \> 2i - 1},
            \]
            where $[2i - 2 - r] = 0, 1, \ldots p - 2$ represents the class of $2i - 2 - r$ modulo $p - 1$.
    \end{theorem}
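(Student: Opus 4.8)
The plan is to trace everything back to the vanishing $g(z)=0$ in $\tB(k,\cL)$ provided by Proposition~\ref{nu geq} with $n=r-i+1$ (note $r/2<n\leq r$ and $\nu\geq i-r/2=1+r/2-n$, so the proposition applies), combined with the inductive steps already available for larger values of the parameter. Write $M_{\geq l}$ for the $G$-submodule of $\br{\latticeL{k}}$ generated by the images $z^{l}\mathbbm{1}_{p\Zp},\ldots,z^{\lfloor r/2\rfloor}\mathbbm{1}_{p\Zp}$ of $\llbracket\id,X^{l}Y^{r-l}\rrbracket,\ldots,\llbracket\id,X^{\lfloor r/2\rfloor}Y^{\lceil r/2\rceil}\rrbracket$, so that $F_{2l,\>2l+1}=M_{\geq l}/M_{\geq l+1}$ and the surjection $\IZind a^{l}d^{r-l}\twoheadrightarrow F_{2l,\>2l+1}$ carries $\llbracket\id,\cdot\rrbracket$ to the class of $z^{l}\mathbbm{1}_{p\Zp}$; this uses Lemmas~\ref{Projection formula}, \ref{Steinberg as a quotient}, \ref{Map from sym tensor st to D(k)}, \ref{H}, \ref{Bottom half factors die} exactly as in the proof of Lemma~\ref{Bottom half factors die}. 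Since $\IZind a^{i-1}d^{r-i+1}$ is generated over $G$ by $\llbracket\id,\cdot\rrbracket$ and all maps are $G$-equivariant, it suffices to prove that $(\lambda_i T_{-1,0}-1)(z^{i-1}\mathbbm{1}_{p\Zp})$, resp.\ $(\lambda_i(T_{-1,0}+T_{1,0})-1)(z^{i-1}\mathbbm{1}_{p\Zp})$ when $(i,r)=(1,p-1)$, lies in $M_{\geq i}+\pi\latticeL{k}$; for the ``moreover'' part we additionally need that $T_{1,2}(z^{i-1}\mathbbm{1}_{p\Zp})\in M_{\geq i}+\pi\latticeL{k}$. (As a sanity check, when $\nu>i-r/2$ one has $v_p(\lambda_i)>0$, so $\lambda_i\equiv0\bmod\pi$ and the claimed factoring just says $z^{i-1}\mathbbm{1}_{p\Zp}$ dies, i.e.\ $F_{2i-2,\>2i-1}=0$.)

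First I would compute the two Iwahori--Hecke operators on the generator using formulas~\eqref{Formulae for T-10 and T12 in the commutative case}, \eqref{T-10 in the non-commutative case} for $T_{-1,0}$ and \eqref{Formulae for T-10 and T12 in the commutative case} for $T_{1,2}$, translated to $\br{\latticeL{k}}$ via the $G$-action~\eqref{G-action}. For $T_{1,2}$ one gets $T_{1,2}(z^{i-1}\mathbbm{1}_{p\Zp})=\sum_{\lambda\in I_1}p^{r/2-i+1}(z-p\lambda)^{i-1}\mathbbm{1}_{p\lambda+p^{2}\Zp}$; each summand lies in $\latticeL{k}$ by Lemma~\ref{Integers in the lattice} (with $h=2$), and $p^{r/2-i+1}\in\pi\Oe$ since $r/2-i+1\geq 3/2>0$ (using $\sqrt p\equiv0\bmod\pi$), so this is $\equiv0\bmod\pi\latticeL{k}$ — the evident analogue of Lemma~\ref{Socle of the top slice dies}. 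For $T_{-1,0}$ I would split the sum over $\lambda\in I_1$ according to whether $\bar\lambda=0$ or not; using that polynomials of degree $\leq r$ vanish in $\tB(k)$, Lemmas~\ref{Integers in the lattice} and~\ref{stronger bound for polynomials of large degree with varying radii}, and the identity~\eqref{sum of powers of roots of unity} to evaluate the ensuing sums of powers of Teichm\"uller lifts, one expresses $T_{-1,0}(z^{i-1}\mathbbm{1}_{p\Zp})$ in $\br{\latticeL{k}}$ as an explicit combination of the lattice elements $p^{r/2-n}z^{i-1}\mathbbm{1}_{\Zp}$ (from $\bar\lambda=0$) and $p^{r/2-n}(z-a)^{n}\mathbbm{1}_{a+p\Zp}$ for nonzero residues $a$ (from $\bar\lambda\neq0$), with $n=r-i+1$; here $p^{r/2-n}(z-a)^{n}\mathbbm{1}_{a+p\Zp}=-\left(\begin{smallmatrix}0&1\\ p&-a\end{smallmatrix}\right)(z^{i-1}\mathbbm{1}_{p\Zp})$ by Lemma~\ref{stronger bound for polynomials of large degree with varying radii}. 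In the boundary case $n=r=p-1$, that is $(i,r)=(1,p-1)$, an extra $\mathbbm{1}_{\Zp}$ contribution appears, which is exactly the term picked up by the modified operator $T_{1,0}$ of~\eqref{Formulae for T10 and T12 in the non-commutative case}.

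Next I would substitute Proposition~\ref{nu geq} with $n=r-i+1$: since $g(z)=0$ in $\tB(k,\cL)$, its right-hand side is $\equiv0\bmod\pi\latticeL{k}$, a relation among the $\cL$-term $\sum_{a}p^{-1}\cL(z-a)^{n}\mathbbm{1}_{a+p\Zp}$, the term $\delta_{n,p-1}p^{-1}\cL\mathbbm{1}_{\Zp}$, and the two ``higher-degree-at-radius-$p$'' terms (c), (d) of loc.\ cit. The $j\geq i$ parts of (c) and (d) lie in $M_{\geq i}$, because $(z-a)^{j}\mathbbm{1}_{a+p\Zp}=\left(\begin{smallmatrix}1&0\\ -a&1\end{smallmatrix}\right)(z^{j}\mathbbm{1}_{p\Zp})\in M_{\geq j}\subseteq M_{\geq i}$ by Lemma~\ref{Integers in the lattice} (with $h=1$) and $G$-stability of $M_{\geq j}$. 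For the $j<i$ parts of (c) I would run precisely the telescoping/re-indexing manipulation from the proof of Proposition~\ref{inductive steps}, but now also feeding in, as input, the inductive steps themselves: Proposition~\ref{inductive steps} applied for each $n'\in\{r-i+2,\ldots,r\}$ (legitimate since then $1+r/2-n'=(r-n')-r/2+1<i-r/2\leq\nu$ and $r-n'\leq i-2<r/2$) expresses $z^{j}\mathbbm{1}_{p\Zp}$ for $0\leq j\leq i-2$ as an explicit $\Zp$-combination of the $(z-b)^{l}\mathbbm{1}_{b+p\Zp}$ with $b\neq0$. Carrying these through, together with the $p^{-1}\cL$-normalization $p^{-1}\cL p^{n-r/2}=p^{r/2-i}\cL$ and the identification $\sum_{a}\left(\begin{smallmatrix}0&1\\ p&-a\end{smallmatrix}\right)(z^{i-1}\mathbbm{1}_{p\Zp})=w\cdot T_{-1,0}(z^{i-1}\mathbbm{1}_{p\Zp})$, the relation collapses to $\lambda_i T_{-1,0}(z^{i-1}\mathbbm{1}_{p\Zp})\equiv z^{i-1}\mathbbm{1}_{p\Zp}\bmod M_{\geq i}+\pi\latticeL{k}$ (resp.\ $\lambda_i(T_{-1,0}+T_{1,0})(z^{i-1}\mathbbm{1}_{p\Zp})\equiv z^{i-1}\mathbbm{1}_{p\Zp}$ when $(i,r)=(1,p-1)$, the $T_{1,0}$ coming from the $\delta_{n,p-1}\mathbbm{1}_{\Zp}$-term). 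The combinatorial identities produced are of the same type as those pinning down the coefficients $c(n,l)$ in Proposition~\ref{inductive steps}, and it is exactly this binomial-coefficient bookkeeping that outputs the stated constant $\lambda_i=(-1)^{i}\,i\binom{r-i+1}{i}p^{r/2-i}\cL$. This paragraph is where the genuine work sits; the rest is formal.

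Finally, for the ``moreover'' statement, when $\nu=i-r/2$ one has $v_p(\lambda_i)=r/2-i+v_p(\cL)=0$ (and $i\binom{r-i+1}{i}$ is a unit since $1\leq i,\ r-i+1<p$), so $\lambda_i$ is a unit and $\im(\lambda_i T_{-1,0}-1)=\im(T_{-1,0}-\lambda_i^{-1})$. Combining the two factorings just established, $\IZind a^{i-1}d^{r-i+1}\twoheadrightarrow F_{2i-2,\>2i-1}$ kills both $\im T_{1,2}$ and $\im(T_{-1,0}-\lambda_i^{-1})$, hence factors through $\IZind a^{i-1}d^{r-i+1}/(\im T_{1,2}+\im(T_{-1,0}-\lambda_i^{-1}))$. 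Writing $a^{i-1}d^{r-i+1}=a^{[2i-2-r]}\otimes\omega^{r-i+1}$ (using $ad=\omega\circ\det$, so $\IZind a^{i-1}d^{r-i+1}=(\IZind a^{[2i-2-r]})\otimes\omega^{r-i+1}$), and noting that $[2i-2-r]\neq p-1$ always and $[2i-2-r]=0$ exactly when $(i,r)=(1,p-1)$, the symmetric presentation~\eqref{alt def of pi} identifies this quotient with $\pi([2i-2-r],\lambda_i^{-1},\omega^{r-i+1})$ in the generic case; in the exceptional case $[2i-2-r]=0$ forces a $\delta_{0,0}T_{1,0}$ into~\eqref{alt def of pi}, so the relevant quotient is $\IZind a^{0}d^{p-1}/(\im T_{1,2}+\im(T_{-1,0}+T_{1,0}-\lambda_1^{-1}))=\pi(0,\lambda_1^{-1},\omega^{p-1})$, which is precisely the quotient through which the map factors by the $T_{1,0}$-version above. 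Either way we obtain the desired surjection $\pi([2i-2-r],\lambda_i^{-1},\omega^{r-i+1})\twoheadrightarrow F_{2i-2,\>2i-1}$. The main obstacle throughout is the computation and bookkeeping in the third paragraph — correctly matching the $\cL$-term of Proposition~\ref{nu geq} to $\lambda_i T_{-1,0}$ on the generator through the telescoping and the inductive substitutions, and extracting $\lambda_i$ with the right sign and binomial factor — while the separate appearance of $T_{1,0}$ at $(i,r)=(1,p-1)$ must be handled with care.
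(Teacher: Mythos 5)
Your first part is essentially the paper's own route: Proposition~\ref{nu geq} with $n=r-i+1$, the shallow inductive steps of Proposition~\ref{inductive steps} for $n=r,\ldots,r-i+2$, and then binomial bookkeeping to recognize the resulting relation as the image of $-w(\lambda_i T_{-1,0}-1)\llbracket\id,X^{i-1}Y^{r-i+1}\rrbracket$, with the extra $T_{1,0}$/$\mathbbm{1}_{\Zp}$ contribution at $(i,r)=(1,p-1)$. You do defer the one step that actually produces the constant $\lambda_i$ (in the paper this is the matrix equation~\eqref{Matrix equation in geq}, solved by column reduction and Cramer's rule to give $x_i=(-1)^{i-1}/\bigl(i\binom{r-i+1}{i}\bigr)$), but the plan is the same and that computation does go through.

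The genuine gap is your treatment of $T_{1,2}$ in the ``moreover'' step. For $i\geq 2$, the claim that each summand $p^{r/2-i+1}(z-p\lambda)^{i-1}\mathbbm{1}_{p\lambda+p^2\Zp}$ of $T_{1,2}(z^{i-1}\mathbbm{1}_{p\Zp})$ lies in $\pi\latticeL{k}$ does not follow from Lemma~\ref{Integers in the lattice}: that lemma says exactly that the element \emph{with} the factor $p^{(h-1)(r/2-j)}=p^{r/2-i+1}$ is integral, so this power of $p$ is consumed by integrality and leaves no spare factor of $\pi$; the bare $(z-p\lambda)^{i-1}\mathbbm{1}_{p\lambda+p^2\Zp}$ (degree $i-1<r/2$, radius $p^{-2}$) is not a lattice element. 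Worse, your argument is insensitive to $\cL$, while the conclusion cannot be: by Theorem~\ref{Final theorem for leq} applied with index $i-1$, at $\nu=(i-1)-r/2$ the image of $T_{1,2}\llbracket\id,X^{i-1}Y^{r-i+1}\rrbracket$ on this very slice equals the unit $\lambda_{i-1}$ times the image of the generator modulo $M_{\geq i}+\pi\latticeL{k}$, which is nonzero whenever $F_{2i-2,\,2i-1}\neq 0$ (as happens for such $\cL$). (For $i=1$ the degree is $0$, the indicators sum to $\mathbbm{1}_{p\Zp}$, and your argument is just Lemma~\ref{Socle of the top slice dies}, so that case is fine.) Fortunately the statement you need is true and has a one-line fix in the commutative case: once the original surjection $\theta$ kills $\im(\lambda_iT_{-1,0}-1)$, it kills $\im T_{1,2}$ automatically, since $\theta(T_{1,2}u)=\lambda_i\,\theta(T_{-1,0}T_{1,2}u)=0$ by the relation $T_{-1,0}T_{1,2}=0$. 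The paper instead never shows the original map kills $\im T_{1,2}$: it uses the diagram~\eqref{rectangle} to see the restriction to $\im T_{-1,0}$ is already surjective, precomposes with $T_{-1,0}$, and invokes \cite[Proposition 3.1]{AB15} to obtain a surjection from $\IZind a^{i-1}d^{r-i+1}/\im T_{1,2}$, which then factors and is identified via~\eqref{alt def of pi}. With your $T_{1,2}$ step repaired by either route (and the $(i,r)=(1,p-1)$ case handled via Lemma~\ref{Socle of the top slice dies} and $\im(T_{1,2}T_{1,0})=\im T_{1,2}$ as in the paper), the rest of your argument is sound.
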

    \begin{proof}
      All congruences in this proof are in the space $\br{\latticeL{k}}$ modulo the image of the subspace $\IZind \oplus_{j < r - i + 1} \Fq X^{r - j}Y^j$ under $\IZind \SymF{k - 2} \twoheadrightarrow \br{\latticeL{k}}$. In particular, we work modulo the $G$-translates of the functions
      $z^i \mathbbm{1}_{p\zp}$, $z^{i+1} \mathbbm{1}_{p\zp}, \ldots,
      z^{\lfloor r/2 \rfloor} \mathbbm{1}_{p\zp} \in \latticeL{k}$.
    
        Fix an $i = 1, 2, \ldots, \lceil r/2 \rceil - 1$. First assume that $(i, r) \neq (1, p - 1)$. Applying Proposition \ref{nu geq} with $n = r - i + 1$, we get
        \begin{eqnarray}\label{Main equation in geq}
            0  \equiv  \sum_{a = 0}^{p - 1}p^{-1}\cL(z - a)^{r - i + 1}\mathbbm{1}_{a + p\Zp} % \\
            % & &
                  + \sum_{a = 0}^{p - 1}\left[\sum_{j = 0}^{i - 1}{r - i + 1 \choose j}{\frac{a^{r - i + 1 - j}}{r - i + 1 - j}}(z - a)^j\right]\mathbbm{1}_{a + p\Zp}. % \nonumber
        \end{eqnarray}
        The second function on the right side of the congruence in the statement of the proposition
        dies since $n = p-1$ if and only if $(i,r)= (1,p-1)$. The last function there
                  %                   on the right side of the congruence in the statement of the proposition
        dies
        since the $(a,j)$-th summand there is an integer multiple of
        $\begin{pmatrix}0 & 1 \\ p & -a \end{pmatrix} z^{r-j} \mathbbm{1}_{p\Zp}$ for
        $\lceil r/2 \rceil \leq j \leq r-i$. 
        
        Next we note that $\nu \geq i - r/2$ implies that $\nu > 1 - r/2, \> 2 - r/2, \> \ldots, \> (i - 1) - r/2$. Therefore using Proposition \ref{inductive steps} with $n = r, \> r - 1, \> \ldots, \> r - i + 2$, we get
        \begin{eqnarray*}
            z^{r - n}\mathbbm{1}_{p\Zp} \equiv \sum_{b = 1}^{p - 1}\sum_{l = 1}^{i - 1} c(n, l)b^{r - l - n}(z - b)^l\mathbbm{1}_{b + p\Zp}.
        \end{eqnarray*}
        Applying $\sum\limits_{a = 0}^{p - 1}a^{n - i + 1}\begin{pmatrix}1 & 0 \\ -a & 1\end{pmatrix}$, we get
        \[
            \sum_{a = 0}^{p - 1}a^{n - i + 1}(z - a)^{r - n}\mathbbm{1}_{a + p\Zp} \equiv \sum_{a = 0}^{p - 1}\sum_{b = 1}^{p - 1}\sum_{l = 1}^{i - 1}a^{n - i + 1}c(n, l)b^{r - l - n}(z - (a + b))^l\mathbbm{1}_{a + b + p\Zp}.
        \]
        Making the substitution $a + b = \lambda$ and using 
        \begin{eqnarray}\label{sum over b identity}
            \sum_{b = 1}^{p - 1}(\lambda - b)^{n - i + 1}b^{r - l - n} \equiv 
            \begin{cases}
                (-1)^{n - r + l + 1}{n - i + 1 \choose n - r + l}\lambda^{r - l - i + 1} & \text{ if } l \geq r - n \\
                0 & \text{ if } l < r - n 
            \end{cases} \mod p
        \end{eqnarray}
        and Lemma~\ref{Integers in the lattice}, we get
        \[
            \sum_{a = 0}^{p - 1}a^{n - i + 1}(z - a)^{r - n}\mathbbm{1}_{a + p\Zp} \equiv \sum_{\lambda = 0}^{p - 1}\sum_{l = r - n}^{i - 1}(-1)^{n - r + l + 1}\lambda^{r - l - i + 1}{n - i + 1 \choose n - r + l}c(n, l)(z - \lambda)^l\mathbbm{1}_{\lambda + p\Zp}.
        \]
        Renaming $\lambda$ as $a$ on the right side of the equation above and plugging the value of $c(n, l)$ obtained in Proposition \ref{inductive steps}, we get one equation:
        \begin{eqnarray}\label{Refined inductive steps in geq}
            0 \equiv \sum_{a = 0}^{p - 1}\sum_{l = r - n}^{i - 1} {n - i + 1 \choose n - r + l}\frac{(r - n)! n}{(r - l) \cdots (n - l)}a^{r - l - i + 1}(z - a)^{l}\mathbbm{1}_{a + p\Zp},
        \end{eqnarray}
        for each $n = r, \> r - 1, \> \ldots, \> r - i + 2$. We note that the extra
        $-1$ part of $c(n, l)$ for $l = r - n$ cancels with the left side.

        Finally using the binomial expansion, we have
        \begin{eqnarray}\label{Trivial equation in geq}
            z^{r - i + 1}\mathbbm{1}_{\Zp} \equiv \sum_{a = 0}^{p - 1}\sum_{l = 0}^{i - 1}{r - i + 1 \choose l}a^{r - i + 1 - l}(z - a)^l\mathbbm{1}_{a + p\Zp}.
        \end{eqnarray}
        Using equations \eqref{Refined inductive steps in geq} and \ref{Trivial equation in geq}, we wish to modify the expression in the second term on the right side of equation \eqref{Main equation in geq} to a multiple of $z^{r - i + 1}\mathbbm{1}_{\Zp}$. To this end, we consider the matrix equation
        \begin{eqnarray}\label{Matrix equation in geq}
            \!\!\!\!{\tiny\begin{pmatrix}
                {r - i + 1 \choose 0}\frac{r}{r} & 0 & 0 & \cdots & {r - i + 1 \choose 0} \\
                {r - i + 1 \choose 1}\frac{r}{r - 1} & {r - i \choose 0}\frac{r - 1}{(r - 1)(r - 2)} & 0 & \cdots & {r - i + 1 \choose 1} \\
                {r - i + 1 \choose 2}\frac{r}{r - 2} & {r - i \choose 1}\frac{r - 1}{(r - 2)(r - 3)} & {r - i - 1 \choose 0}\frac{2!(r - 2)}{(r - 2)(r - 3)(r - 4)} & \cdots & {r - i + 1 \choose 2} \\
                \vdots & & & & \vdots \\
                {r - i + 1 \choose i - 1}\frac{r}{r - i + 1} & {r - i \choose i - 2}\frac{r - 1}{(r - i + 1)(r - i)} & {r - i - 1 \choose i - 3}\frac{2!(r - 2)}{(r - i + 1)(r - i)(r - i - 1)} & \cdots & {r - i + 1 \choose i - 1}
            \end{pmatrix}
            \!\!\!\begin{pmatrix}
                x_1 \\ x_2 \\ x_3 \\ \vdots \\ x_{i - 1} \\ x_i
            \end{pmatrix} \!\!\!
            = \!\!\!\begin{pmatrix}
                {r - i + 1 \choose 0}\frac{1}{r - i + 1} \\ {r - i + 1 \choose 1}\frac{1}{r - i} \\ {r - i + 1 \choose 2}\frac{1}{r - i - 1} \\ \vdots \\ {r - i + 1 \choose i - 2}\frac{1}{r - 2i + 3} \\ {r - i + 1 \choose i - 1}\frac{1}{r - 2i + 2}
            \end{pmatrix}\!\!.}
        \end{eqnarray}
        Once we have the solution to this equation, we multiply the $n = r, \> r - 1, \> r - 2, \> \ldots, \> r - i + 2$ instances of \eqref{Refined inductive steps in geq} by $x_1, \> x_2, \> x_3, \ldots, \> x_{i - 1}$, respectively, and add them to $x_i$ times equation \eqref{Trivial equation in geq} to get
        \[
             x_i z^{r - i + 1}\mathbbm{1}_{\Zp} \equiv \sum_{a = 0}^{p - 1}\left[\sum_{l = 0}^{i - 1}{r - i + 1 \choose l}\frac{a^{r-i+1 - j}}{r-i+1 - l}(z - a)^l\right]\mathbbm{1}_{a + p\Zp}.
        \]
        Substituting this in equation \eqref{Main equation in geq}, we get
        \begin{eqnarray}\label{Crude final congruence in geq}
            0 \equiv \left[\sum_{a = 0}^{p - 1}p^{-1}\cL(z - a)^{r - i + 1}\mathbbm{1}_{a + p\Zp}\right] + x_i z^{r - i + 1}\mathbbm{1}_{\Zp}.
        \end{eqnarray}
        We first find the value of $x_i$. This is done using Cramer's rule. Let $A$ be the $i \times i$ matrix on the left side of equation \eqref{Matrix equation in geq}. Let $A_i$ be the same $i \times i$ matrix, but with the last column replaced by the the column vector on the right side of equation \eqref{Matrix equation in geq}. Then 
        \[
            x_i = \frac{\det A_i}{\det A}.
        \]

        First consider the matrix $A$. We number the rows and columns as $0, 1, \ldots, i - 1$. Perform the operation $C_{i - 1} \to C_{i - 1} - C_0$ to make the $(0, i - 1)^{\mathrm{th}}$ entry $0$. Then for $j \geq 1$, the $(j, i - 1)^{\mathrm{th}}$ entry in the resulting matrix is
        \[
            {r - i + 1 \choose j} - {r - i + 1 \choose j}\frac{r}{r - j} = -{r - i \choose j - 1}\frac{r - i + 1}{r - j}.
        \]
        Next, perform the operation $C_{i - 1} \to C_{i - 1} + {r - 2 \choose 1}\left(\frac{r - i + 1}{r - 1}\right)C_1$ to make the $(1, i - 1)^{\mathrm{th}}$ entry $0$. Then for $j \geq 2$, the $(j, i - 1)^{\mathrm{th}}$ entry in the resulting matrix is
        \[
            -{r - i \choose j - 1}\frac{r - i + 1}{r - j} + {r - 2 \choose 1}\frac{r - i + 1}{r - 1}{r - i \choose j - 1}\frac{r - 1}{(r - j){r - j - 1 \choose 1}} = {r - i - 1 \choose j - 2}\frac{(r - i + 1)(r - i)}{(r - j)(r - j - 1)}.
        \]
        Continuing this process, we see that after making $(i - 1)$ entries in the last column $0$, the $(j, i - 1)^{\mathrm{th}}$ entry for $j \geq i - 1$ is
        \[
            (-1)^{i - 1}{r - 2i + 2 \choose j - (i - 1)}\frac{(r - i + 1)(r - i) \cdots (r - 2i + 3)}{(r - j)(r - j - 1) \cdots (r - j - (i - 2))}.
        \]
        Therefore after reducing $A$ to a lower triangular matrix, we see that the $(i - 1, i - 1)^{\mathrm{th}}$ entry is $(-1)^{i - 1}$.

        Next consider the matrix $A_i$. Perform the operation $C_{i - 1} \to C_{i - 1} - \frac{1}{r - i + 1}C_0$ to make the $(0, i - 1)^{\mathrm{th}}$ entry $0$. Then for $j \geq 1$, the $(j, i - 1)^{\mathrm{th}}$ entry is
        \[
            {r - i + 1 \choose j}\frac{1}{r - i + 1 - j} - \frac{1}{r - i + 1}{r - i + 1 \choose j}\frac{r}{r - j} = {r - i \choose j - 1}\frac{i - 1}{(r - i + 1 - j)(r - j)}.
        \]
        To make the $(1, i - 1)^{\mathrm{th}}$ entry $0$, perform $C_{i - 1} \to C_{i - 1} - \frac{(r - 2)(i - 1)}{(r - i)(r - 1)}C_1$. Then for $j \geq 2$, the $(j, i - 1)^{\mathrm{th}}$ entry is
        \begin{eqnarray*}
            && {r - i \choose j - 1}\frac{i - 1}{(r - i + 1 - j)(r - j)} - \frac{(r - 2)(i - 1)}{(r - i)(r - 1)}{r - i \choose j - 1}\frac{r - 1}{(r - j)(r - j - 1)} \\
            && = {r - i - 1 \choose j - 2}\frac{(i - 1)(i - 2)}{(r - j)(r - j - 1)(r - i + 1 - j)}.
        \end{eqnarray*}
        Continuing this process, we see that after making the $(i - 1)$ entries in the last column $0$, the $(j, i - 1)^{\mathrm{th}}$ entry for $j \geq i - 1$ is
        \[
            {r - 2i + 2 \choose j - (i - 1)}\frac{(i - 1)!}{(r - j)(r - j - 1) \cdots (r - j - (i - 2))(r - i + 1 - j)}.
        \]
        Therefore the $(i - 1, i - 1)^{\mathrm{th}}$ entry is
        \[
            \frac{(i - 1)!}{(r - i + 1)(r - i)\cdots (r - 2i + 2)}.
        \]

        Putting these determinant computations together in Cramer's formula, we get
        \begin{equation}\label{value for c in geq}
            x_i = (-1)^{i - 1}\frac{(i - 1)!}{(r - i + 1)(r - i)\cdots (r - 2i + 2)} = (-1)^{i - 1} \frac{1}{i{r - i + 1 \choose i}}.
        \end{equation}
        Substituting this value of $x_i$ in equation \eqref{Crude final congruence in geq}, we get
        \[
            0 \equiv (-1)^{i}i {r - i + 1 \choose i}\left[\sum_{a = 0}^{p - 1}p^{-1}\cL(z - a)^{r - i + 1}\mathbbm{1}_{a + p\Zp}\right] - z^{r - i + 1}\mathbbm{1}_{\Zp}.
        \]
        We show that this equation is the image of $-w(\lambda_i T_{-1, 0} - 1)\llbracket \id, X^{i - 1}Y^{r - i + 1}\rrbracket$ under the surjection $\IZind a^{i - 1}d^{r - i + 1} \twoheadrightarrow F_{2i - 2, \> 2i - 1}$.
        Clearly, $w\cdot z^{i - 1}\mathbbm{1}_{p\Zp} = -z^{r - i + 1}\mathbbm{1}_{\Zp}$. Next, using
        \eqref{Formulae for T-10 and T12 in the commutative case} we have        
        \begin{eqnarray*}
            -wT_{-1, 0}\llbracket \id, X^{i - 1}Y^{r - i + 1}\rrbracket & = & -\sum_{\lambda \in I_1}\left\llbracket \begin{pmatrix}0 & 1 \\ 1 & 0\end{pmatrix}\begin{pmatrix}p & -\lambda \\ 0 & 1\end{pmatrix}, X^{i - 1}Y^{r - i + 1}\right\rrbracket \\
            & \mapsto & -\sum_{\lambda \in I_1}\begin{pmatrix}0 & 1 \\ p & -\lambda\end{pmatrix}z^{i - 1}\mathbbm{1}_{p\Zp} \\
            & = & -\sum_{\lambda \in I_1}p^{i - 1 - r/2}(z - \lambda)^{r - i + 1}\mathbbm{1}_{p\Zp}\left(\frac{p}{z - \lambda}\right) \\
            & = & \sum_{\lambda \in I_1} p^{i - r/2 - 1}(z - \lambda)^{r - i + 1}\mathbbm{1}_{\lambda + p\Zp}.
        \end{eqnarray*}
        Therefore we have proved that the surjection $\IZind a^{i - 1}d^{r - i + 1} \twoheadrightarrow F_{2i - 2, \> 2i - 1}$ factors as
        \[
            \IZind a^{i - 1}d^{r - i + 1} \twoheadrightarrow \frac{\IZind a^{i - 1}d^{r - i + 1}}{\im(\lambda_iT_{-1, 0} - 1)} \twoheadrightarrow F_{2i - 2, \> 2i - 1}.
        \]

        Next assume that $\nu = i - r/2$. Consider the following exact sequences
        \begin{eqnarray}
          \label{rectangle}
          \begin{tikzcd}
                0 \arrow[r] & \im T_{-1, 0} \arrow[r] \arrow[d, two heads] & \IZind a^{i - 1}d^{r - i + 1} \arrow[r]\arrow[d, two heads] & \dfrac{\IZind a^{i - 1}d^{r - i + 1}}{\im T_{-1, 0}} \arrow[r]\arrow[d, two heads] & 0 \\
                0 \arrow[r] & S \arrow[r] & F_{2i - 2, \> 2i - 1} \arrow[r] & Q \arrow[r] & 0,
          \end{tikzcd}
        \end{eqnarray}
        where $S$ (for submodule) is the image of $\im T_{-1, 0}$ under the surjection $\IZind a^{i - 1}d^{r - i + 1} \twoheadrightarrow F_{2i - 2, \> 2i - 1}$ and $Q$ (for quotient) is $F_{2i - 2, \> 2i - 1}/S$. We know that the subspace $\im (\lambda_i T_{-1, 0} - 1)$ maps to $0$ under the middle vertical map in the display above. Since $\im (\lambda_i T_{-1, 0} - 1)$ maps surjectively onto $\dfrac{\IZind a^{i - 1}d^{r - i + 1}}{\im T_{-1, 0}}$ the right vertical map is $0$. Therefore we get a surjection $\im T_{-1, 0} \twoheadrightarrow F_{2i - 2, \> 2i - 1}$. First assume that $a^{i - 1}d^{r - i + 1}$ does not factor through $\omega$. For $1 \leq r \leq p - 1$ and $1 \leq i \leq \lceil r/2 \rceil - 1$, the character $a^{i - 1}d^{r - i + 1}$ factors through $\omega$ only when $i = 1$ and $r = p - 1$. Using the first isomorphism theorem and \cite[Proposition 3.1]{AB15}, we get a surjection
        \[
            \frac{\IZind a^{i - 1}d^{r - i + 1}}{\im T_{1, 2}} \twoheadrightarrow F_{2i - 2, \> 2i - 1}
          \]
          obtained by pre-composing the first surjection above with $T_{-1,0}$. Since
          $T_{-1,0} ( \im (\lambda_i T_{-1, 0} - 1) ) \subset   \im (\lambda_i T_{-1, 0} - 1)$        
        the last surjection factors through
        \[
           \frac{\IZind a^{i - 1}d^{r - i + 1}}{\im T_{1, 2} + \im (\lambda_i T_{-1, 0} - 1)} \twoheadrightarrow F_{2i - 2, \> 2i - 1}.
         \]
         Since $\lambda_i$ is a unit, the space on the left is the same
         as
         \[
           \frac{\IZind a^{i - 1}d^{r - i + 1}}{\im T_{1, 2} + \im (T_{-1, 0} - \lambda_i^{-1})}
               \simeq \pi([2i - 2 - r], \lambda_i^{-1}, \omega^{r - i + 1}), 
         \]
        where $[2i - 2 - r] = 0, 1, \ldots, p - 2$ represents the class of $2i - 2 - r$ modulo $p - 1$, by
        \eqref{alt def of pi}, noting that
        $[2i - 2 - r] \neq 0$ since $(i,r) \neq (1,p-1)$.
        
        Now assume that $(i, r) = (1, p - 1)$. We remark that in this case $k = p+1$ and so this case was not considered in \cite{BM}. Even in our approach involving the
        Iwahori mod $p$ LLC a new complication arises: we need to use the Hecke operator $T_{1,0}$ in the non-commutative Iwahori-Hecke algebra to analyze the subquotient $F_{0,1}$
        Applying Proposition $\ref{nu geq}$ with $n = r = p - 1$, we get
        \begin{eqnarray*}
            0 & \equiv & \sum_{a = 0}^{p - 1}p^{-1}\cL(z - a)^r\mathbbm{1}_{a + p\Zp} - p^{-1}\cL \mathbbm{1}_{\Zp} + \sum_{a = 0}^{p - 1} \frac{a^r}{r}\mathbbm{1}_{a + p\Zp}.
        \end{eqnarray*}
        Here the third function on the right side of the congruence in the proposition simplifies to the
        last term above by the remarks made in the first paragraph of the proof.
        Multiplying this equation by $-r$ and using the definition of $\lambda_i$ in the statement of the
        theorem, 
        %and using
        %Lemma~\ref{stronger bound for polynomials of large degree with varying radii} (noting $p \geq 5$),
        we get
        \begin{eqnarray}\label{Crude final congruence for i,r = 1, p-1}
            0 \equiv \lambda_i\left[\sum_{a = 0}^{p - 1}p^{-r/2}(z - a)^r\mathbbm{1}_{a + p\Zp} - p^{-r/2}\mathbbm{1}_{\Zp}\right] - \sum_{a = 0}^{p - 1}a^r\mathbbm{1}_{a + p\Zp}.
        \end{eqnarray}
        By the remarks at the beginning of the proof
        \[
            z^r\mathbbm{1}_{\Zp} = \sum_{a = 0}^{p - 1}\sum_{j = 0}^{r}{r \choose j}a^j(z - a)^{r - j}\mathbbm{1}_{a + p\Zp} \equiv \sum_{a = 0}^{p - 1}\left[a^r + (z - a)^r\right]\mathbbm{1}_{a + p\Zp} \equiv \sum_{a = 0}^{p - 1}a^r\mathbbm{1}_{a + p\Zp}.
        \]
        For the final congruence above, we have used Lemma \ref{stronger bound for polynomials of large degree with varying radii}. Substituting this in equation \eqref{Crude final congruence for i,r = 1, p-1}, we get
        \[
            0 \equiv \lambda_i\left[\sum_{a = 0}^{p - 1}p^{-r/2}(z - a)^r\mathbbm{1}_{a + p\Zp} - p^{-r/2}\mathbbm{1}_{\Zp}\right] - z^r\mathbbm{1}_{\Zp}.
        \]
        Using \eqref{Formulae for T10 and T12 in the non-commutative case}, we see exactly as before that the right side of the equation above is the image of the function $-w[\lambda_i(T_{-1, 0} + T_{1, 0}) - 1]\llbracket \id, Y^{r}\rrbracket$ under the surjection $\IZind d^{p - 1} \twoheadrightarrow F_{0, 1}$. Therefore the surjection $\IZind 1 \twoheadrightarrow F_{0, 1}$ factors as
        \[
            \IZind 1 \twoheadrightarrow \frac{\IZind 1}{\im(\lambda_i(T_{-1, 0} + T_{1, 0}) - 1)} \twoheadrightarrow F_{0, 1}.
        \]
        Now assume that $\nu = 1 - r/2$. Using Lemma \ref{Socle of the top slice dies}, we see that this surjection further gives us
        \[
            \frac{\IZind 1}{\im(T_{1, 2}T_{1, 0}) + \im ((T_{-1, 0} + T_{1, 0}) - \lambda_{i}^{-1})} \twoheadrightarrow F_{0, 1}.
        \]
        %Finally, using Theorem \ref{Comparison theorem in the non-commutative Hecke algebra case} recall that
        By \eqref{alt def of pi}, and noting $T_{1,0}$ is an automorphism, the space on the left is 
        \[
                         %                          \frac{\IZind 1}{\im(T_{1, 2}T_{1, 0}) + \im ((T_{-1, 0} + T_{1, 0}) - \lambda_i^{-1})} \simeq
          \pi(0, \lambda_{i}^{-1}, 1) = \pi([2i - 2 - r], \lambda_i^{-1}, \omega^{r - i + 1}).
        \]
        %Next, assume that $i = 1$ and $r = p - 1$ so that $a^{i - 1}d^{r - i + 1}$ is the trivial character. Using the first isomorphism theorem, we see that
        %\[
        %    \frac{\IZind 1}{\ker T_{-1, 0}} \xrightarrow{T_{-1, 0}} \im T_{-1, 0} \twoheadrightarrow F_{2i - 2, \> 2i - 1}.
        %\]
        %Note that by \eqref{T-10 in the non-commutative case}, we see that $\ker T_{-1, 0} = \ker T_{1, 2}T_{1, 0}$ since $T_{1, 0}$ is an isomorphism. Moreover, by Proposition \ref{Comparison theorem in the irregular case}, we see that $\ker T_{1, 2}T_{1, 0} = \im (1 + T_{1, 2}T_{1, 0})$. This shows that $T_{-1, 0}$ followed by the canonical surjection $\IZind 1 \twoheadrightarrow F_{2i - 2, \> 2i - 1}$ induces a surjection
        %    \[
        %        \frac{\IZind 1}{\im (1 + T_{1, 2}T_{1, 0})} \twoheadrightarrow F_{2i - 2, \> 2i - 1}.
        %    \]
        %Under this map, the subspace $(\lambda_iT_{-1, 0} - 1)$ clearly maps to $0$ since $(\lambda_i T_{-1, 0} - 1)$ maps to $0$ under the canonical surjection $\IZind 1 \twoheadrightarrow F_{2i - 2, \> 2i - 1}$. We therefore get a surjection
        %\[
        %    \frac{\IZind 1}{(1 + T_{1, 2}T_{1, 0}) + (T_{-1, 0} - \lambda_i^{-1})} = \frac{\IZind 1}{(1 + T_{1, 2}T_{1, 0}) + (\lambda_iT_{-1, 0} - 1)} \twoheadrightarrow F_{2i - 2, \> 2i - 1}.
        %\]
        %Therefore using Proposition \ref{Comparison theorem in the irregular case}, we see that $\pi(p - 1, \lambda^{-1}, 1) \twoheadrightarrow F_{2i - 2, \> 2i - 1}$.        
        Therefore we have proved the theorem. 
    \end{proof}

\subsection{The $\nu \leq i - r/2$ cases}
    In this section, we prove that if $\nu \leq i - r/2$ for $i = 1, \> 2, \> \ldots, \> \lceil r/2 \rceil - 1$, then the map $\IZind a^i d^{r - i} \twoheadrightarrow F_{2i, \> 2i + 1}$ factors as
    \[
        \IZind a^i d^{r - i} \twoheadrightarrow \frac{\IZind a^i d^{r - i}}{\im(\lambda_i^{-1} T_{1, \> 2} - 1)} \twoheadrightarrow F_{2i, \> 2i + 1}, 
    \]
    where $\lambda_i = (-1)^i i {r - i + 1 \choose i}p^{r/2 - i}\cL$. Moreover for $\nu = i - r/2$, the second map in the display above induces a surjection 
    \[
        \pi(r - 2i, \lambda_i, \omega^i) \twoheadrightarrow F_{2i, \> 2i + 1}.
    \]

    \begin{Proposition}\label{nu leq}
        Let $k/2 < n \leq k - 2 \leq p - 1$. For $\nu \leq 1 + r/2 - n$, set
        \[
            g(z) = p^x\left[\sum_{i \in I} \lambda_i (z - i)^n\logL(z - i)\right],
        \]
        where $x \in \bQ$ with $x + \nu = r/2 - n$, $I = \{0, 1, \ldots, n, p\}$ and the $\lambda_i \in \zp$
        are as in Lemma~\ref{Main coefficient identites} (2).
        %are uniquely determined by the conditions $\sum\limits_{i = 0}^{n}\lambda_i i^j = p^j$, for $0 \leq j \leq n$ with $\lambda_p = -1$.
        Then, we have
        \[
            g(z) \equiv p^{1 + x}\sum_{a = 1}^{p - 1}a^{-1}z^n\mathbbm{1}_{a + p\Zp} + \sum_{j = \lceil r/2 \rceil}^{n - 1}(-1)^{n - j + 1}{n \choose j}p^{x + n - j}\cL z^j \mathbbm{1}_{p\Zp} \mod \pi \latticeL{k}.
        \]
    \end{Proposition}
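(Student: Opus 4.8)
The plan is to follow the general strategy recorded after Lemma~\ref{Congruence lemma}: split $g(z) = g(z)\mathbbm{1}_{\Zp} + g(z)\mathbbm{1}_{\Qp\setminus\Zp}$, compute each summand modulo $\pi\latticeL{k}$, and add. First one records the numerics. Since $n > k/2 = r/2 + 1$ we have $1 + r/2 - n < 0$, so the hypothesis $\nu \le 1 + r/2 - n$ forces $\nu < 0$; as $H_{\pm} \in \zp$ this gives $\nu = v_p(\cL)$, hence $x + v_p(\cL) = x + \nu = r/2 - n$ and $x = r/2 - n - \nu \ge -1$. By Lemma~\ref{Main coefficient identites}~(2) the $\lambda_i$ satisfy $\sum_{i\in I}\lambda_i i^j = 0$ for \emph{all} $0 \le j \le n$, so $g(z) \in L(k,\cL) \subset D(k)$ and the tools of the previous two sections apply, the single-log-term Lemmas~\ref{telescoping lemma} and~\ref{Truncated Taylor expansion} being used term by term in $i$ (each $z_i = i \in \zp$, each $\lambda_i \in \zp$), while \eqref{Simple derivative formula} and Lemma~\ref{qp-zp part is 0} apply to $g$ directly.

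For the $\Qp\setminus\Zp$ piece one conjugates by $w$: the action \eqref{G-action} gives $w\cdot(h\,\mathbbm{1}_{\Qp\setminus\Zp}) = z^r h(1/z)\mathbbm{1}_{p\Zp}$, and $\logL\bigl(\tfrac{1-iz}{z}\bigr) = \logL(1-iz) - \logL(z)$ yields
\[
    w\cdot\bigl(g(z)\mathbbm{1}_{\Qp\setminus\Zp}\bigr) = p^x\sum_{i\in I}\lambda_i z^{r-n}(1-iz)^n\logL(1-iz)\mathbbm{1}_{p\Zp} - p^x\Bigl(\sum_{i\in I}\lambda_i z^{r-n}(1-iz)^n\Bigr)\logL(z)\mathbbm{1}_{p\Zp}.
\]
Expanding $(1-iz)^n$ binomially and using $\sum_{i\in I}\lambda_i i^m = 0$ for $0 \le m \le n$ shows the second summand vanishes identically; this extra vanishing (unavailable in Proposition~\ref{nu geq}, where $|I| = p$ and at the extreme $n = r = p-1$ one only has $\sum\lambda_i i^m = 0$ for $m \le n-1$) is exactly what removes an exceptional term here. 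The first summand has the shape treated by Lemma~\ref{qp-zp part is 0} (with $z_i = i$ and $x \ge -1$), hence is $\equiv 0 \bmod \pi\latticeL{k}$; applying $w$ once more gives $g(z)\mathbbm{1}_{\Qp\setminus\Zp} \equiv 0 \bmod \pi\latticeL{k}$.

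For the $\Zp$ piece, Lemma~\ref{Congruence lemma} followed by Lemma~\ref{telescoping lemma}~(1) (applied to each $p^x(z-i)^n\logL(z-i)\mathbbm{1}_{\Zp}$ and summed against $\lambda_i$, by linearity of $h \mapsto h_h$) gives $g(z)\mathbbm{1}_{\Zp} \equiv g_2(z) \bmod \pi\latticeL{k}$, where $g^{(j)}(a) = p^x\tfrac{n!}{(n-j)!}\sum_{i\in I}\lambda_i(a-i)^{n-j}\logL(a-i)$ for $0 \le j \le n-1$ by \eqref{Simple derivative formula}. As in Proposition~\ref{nu geq} one then passes to $g_1$ and organizes the sum over $a$ by the residue $\bar a = a \bmod p$. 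The crucial feature is that $\lambda_0 \equiv 1$, $\lambda_p = -1$, $\lambda_i \equiv 0 \bmod p$ for $1 \le i \le n$, and the two nodes $z_0 = 0$ and $z_p = p$ collapse modulo $p$; so for a unit $\bar a$ the surviving part of $g^{(j)}(a)$ is $p^x\tfrac{n!}{(n-j)!}\bigl(a^{n-j}\logL(a) - (a-p)^{n-j}\logL(a-p)\bigr)$, and $\logL(a) - \logL(a-p) = -\log(1 - p/a) \equiv p/a \bmod p^2$ produces, after resumming $\sum_{j}\binom{n}{j}a^{n-j}(z-a)^j$ and discarding the missing top term $(z-a)^n\mathbbm{1}_{a+p\Zp}$ via Lemma~\ref{stronger bound for polynomials of large degree with varying radii}, the contribution $p^{1+x}\sum_{a=1}^{p-1}a^{-1}z^n\mathbbm{1}_{a+p\Zp}$. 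For $\bar a = 0$ the surviving part comes from $z_p = p$ via $\logL(p) = \cL$, giving $\tfrac{g^{(j)}(0)}{j!} \equiv (-1)^{n-j+1}\binom{n}{j}p^{x+n-j}\cL$, whence $\sum_{j=\lceil r/2\rceil}^{n-1}(-1)^{n-j+1}\binom{n}{j}p^{x+n-j}\cL z^j\mathbbm{1}_{p\Zp}$; the range $j \ge \lceil r/2\rceil$ is forced because $v_p(p^{x+n-j}\cL) = r/2 - j$, so for $j < \lceil r/2\rceil$ the term is divisible enough against the lattice element $z^j\mathbbm{1}_{p\Zp} \in \latticeL{k}$ to die mod $\pi$ (recall $\sqrt p \equiv 0 \bmod \pi$). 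The remaining contributions — the $O(p^2)$ corrections just mentioned, the terms with $i \in \{1,\dots,n\}$ (which carry a factor $\lambda_i \equiv 0 \bmod p$), the coincidence term $i = \bar a$ when $\bar a \in \{1,\dots,n\}$, and all of $g_2 - g_1$ — have coefficient valuation at least $1$ and land in $\pi\latticeL{k}$ by Lemma~\ref{Integers in the lattice} and Lemma~\ref{stronger bound for polynomials of large degree with varying radii}, exactly as in Proposition~\ref{nu geq}. Adding the two pieces yields the stated congruence.

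The main obstacle is this last step: the bookkeeping of which Taylor coefficients $\tfrac{g^{(j)}(a)}{j!}$ survive modulo $\pi\latticeL{k}$ and with which reductions. One has to disentangle the two collapsing nodes $0$ and $p$ — the cancellation between their contributions is what generates the extra factor $p$ in $p^{1+x}$ — keep track of the valuation of $\logL$ on units (where it is $\ge 1$) versus on $p\zp$ (where it contributes $\cL$, of valuation $\nu < 0$), and confirm that, unlike in Proposition~\ref{nu geq} where $g_2 - g_1$ contributed a genuine term, here the descent $g_2 \to g_1$ adds nothing, the factor $\lambda_i \equiv 0 \bmod p$ making those estimates slack.
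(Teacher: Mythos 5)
Your route is the same as the paper's: the same $g$, the same split $g=g\mathbbm{1}_{\Zp}+g\mathbbm{1}_{\Qp\setminus\Zp}$, the death of the $\logL(z)$ term via $\sum_i\lambda_i i^{n}=0$ followed by Lemma~\ref{qp-zp part is 0}, the descent $g\mathbbm{1}_{\Zp}\equiv g_2$ via Lemma~\ref{telescoping lemma}~(1), and your analysis of $g_1$ (the collapse of the nodes $0$ and $p$ producing $p^{1+x}a^{-1}$ for unit $a$, the $\lambda_p=-1$ node producing the $\cL$-term at $a\equiv 0$, and the truncations by Lemmas~\ref{Integers in the lattice} and~\ref{stronger bound for polynomials of large degree with varying radii}) matches the paper's computation.

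The genuine gap is your treatment of $g_2-g_1\equiv 0\bmod\pi\latticeL{k}$, which you justify by saying the remaining contributions ``have coefficient valuation at least $1$'' and that $\lambda_i\equiv 0\bmod p$ makes the estimates slack, ``exactly as in Proposition~\ref{nu geq}''. Neither reason works. The coefficient of $(z-a-\alpha p)^j\mathbbm{1}_{a+\alpha p+p^2\Zp}$ has to be killed modulo $p^{r/2-j}\pi$ (Lemma~\ref{Integers in the lattice} with $h=2$), not modulo $p$, and one cannot appeal to Lemma~\ref{Truncated Taylor expansion}, which requires $h\ge 3$; $h=2$ is exactly the borderline in \eqref{h greater than 2}, which is why this step needs its own argument. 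Moreover the dangerous contributions come from the nodes $i=0$ and $i=p$, where $\lambda_i\equiv\pm1$, not $0$: for $a\not\equiv 0$ each of their Taylor remainders has valuation only about $x+1$ (possibly $0$), and they disappear only after summing over all $i\in I$ and using $\sum_{i\in I}\lambda_i(a-i)^m=0$ for $1\le m\le n$ (the coincidence term $i=a$, which is where $\lambda_a\equiv 0$ enters, is a side issue). For $a\equiv 0$ the $i=0$ term $p^x\lambda_0(\alpha p)^{n-j}\logL(\alpha p)$ has valuation exactly $x+n-j+\nu=r/2-j$, so no estimate kills it; it must cancel exactly against the matching $\cL$-part of the $i=p$ term, after which the leftover $\lambda_p$-terms die only because $x+n-j-1>r/2-j$, i.e.\ because $n>k/2$. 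Your comparison also points the wrong way: in Proposition~\ref{nu geq} the difference $g_2-g_1$ does not vanish but produces the main $\cL$-term. So while the conclusion of this step is correct, the mechanism you give would not establish it; the paper's case analysis ($a\ne 0$ versus $a=0$, $i\ne a$ versus $i=a$) exists precisely to carry out these cancellations.
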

    \begin{proof}
        %First note that the identities $\sum\limits_{i = 0}^n\lambda_i i^j = p^j$ for $0 \leq j \leq n$ and $\lambda_p = -1$ imply that
        %\begin{eqnarray}\label{Identities concerning the coefficients in leq}
        %    \sum_{i \in I}\lambda_i i^{j} = 0, \text{ for } 0 \leq j \leq n,  \> \lambda_0 \equiv 1 \!\!\!\! \mod p \text{ and } \lambda_i \equiv 0 \!\!\!\! \mod p \text{ for } 1 \leq i \leq n.
        %\end{eqnarray}
        %The existence of these $\lambda_i$ is proved in Lemma \ref{Main coefficient identites} Part $2$.

      Write $g(z) = g(z)\mathbbm{1}_{\Zp} + g(z)\mathbbm{1}_{\Qp \setminus \Zp}$. By Lemma~\ref{Main coefficient identites} (2), we have
      \begin{eqnarray}\label{Identities concerning the coefficients in leq}
        \sum_{i \in I}\lambda_i i^{j} = 0, \text{ for } 0 \leq j \leq n,
        \> \lambda_0 \equiv 1 \!\! \mod p, \> \lambda_i \equiv 0 \!\! \mod p \text{ for }
                1 \leq i \leq n  \text{ and } \lambda_p = -1. 
      \end{eqnarray}
      Since the summation identity in \eqref{Identities concerning the coefficients in leq} is also true
      for $j = n$, the $\logL(z)$ term dies and so
        \[
            w\cdot g(z)\mathbbm{1}_{\Qp \setminus \Zp} = \sum_{i \in I}p^x\lambda_iz^{r - n}(1 - zi)^n\logL(1 - zi)\mathbbm{1}_{p\Zp}.
        \]
        Lemma \ref{qp-zp part is 0} implies that $w\cdot g(z) \mathbbm{1}_{\Qp \setminus \Zp} \equiv 0 \mod \pi \latticeL{k}$.
    
        Next, using Lemma \ref{telescoping lemma} (1) we see that $g(z)\mathbbm{1}_{\Zp} \equiv g_2(z) \mod \pi \latticeL{k}$. We first claim that $g_2(z) - g_1(z) \equiv 0 \mod \pi \latticeL{k}$. Working as in the proof of Proposition \ref{nu geq}, we see that the coefficient of $(z - a - \alpha p)^j \mathbbm{1}_{a + \alpha p + p^2 \Zp}$ in $g_2(z) - g_1(z)$ is
        \begin{eqnarray}\label{jth summand in g2 - g1 in leq}
          && {n \choose j}
             \sum_{i \in I}p^x \lambda_i  \Big[(a + \alpha p - i)^{n - j}\logL(a + \alpha p - i) - (a - i)^{n - j}\logL(a - i) \\
            && - {n - j \choose 1}(\alpha p)(a - i)^{n - j - 1}\logL(a - i) - \cdots - {n - j \choose n - j - 1}(\alpha p)^{n - j - 1}(a - i)\logL(a -  i)\Big]. \nonumber
        \end{eqnarray}
        By Lemma~\ref{Integers in the lattice}, to prove the claim it suffices to show
        that this coefficient is congruent to $0$ modulo $p^{r/2 - j}\pi$.

        First assume that $a \neq 0$. For $i \neq a$, the $i^{\mathrm{th}}$ summand in \eqref{jth summand in g2 - g1 in leq} 
        can be written as
        \[
          % {n \choose j}
          p^x\lambda_i\big[(a - i + \alpha p)^{n - j}\logL(1 + (a - i)^{-1}\alpha p) + (\alpha p)^{n - j}\logL(a - i)\big].
        \]
        The condition $\nu \leq 1 + r/2 - n$ and the equation $x + \nu = r/2 - n$ implies that $x \geq - 1$. Therefore the valuation of the last term in the expression above is greater than or equal to $n - j > r/2 - j$. So we drop the last term modulo $p^{r/2 - j}\pi$. Moreover, expanding $(a - i + \alpha p)^{n - j}$ and the logarithm, we get
        \begin{eqnarray}\label{a neq 0 and zi neq a term}
          % {n \choose j}
          p^x\lambda_i\big[(a - i)^{n - j - 1}\alpha p + c_{n - j - 2}(a - i)^{n - j - 2}(\alpha p)^2 + \cdots + c_1(a - i)(\alpha p)^{n - j - 1}\big] 
            \!\!\!\! \mod p^{r/2 - j}\pi,
        \end{eqnarray}
        for some $c_1,\ldots, c_{n - j - 2} \in \Zp$.
        Next assume that $i = a$. Then the $i^{\mathrm{th}}$ summand
        in expression \eqref{jth summand in g2 - g1 in leq} can be written as
        \[
          % {n \choose j}
          p^x\lambda_i(\alpha p)^{n - j}\logL(\alpha p).
        \]
        The valuation of this term is greater than or equal to $1 + r/2 - j > r/2 - j$ since $p \mid \lambda_a$ by \eqref{Identities concerning the coefficients in leq}. Therefore the $i=a$ summand
        in expression \eqref{jth summand in g2 - g1 in leq} is congruent to $0$ modulo $p^{r/2 - j}\pi$.

        Using the summation identities in \eqref{Identities concerning the coefficients in leq}
        and expression \eqref{a neq 0 and zi neq a term}, we see that the coefficient of $(z - a - \alpha p)^j\mathbbm{1}_{a + \alpha p + p^2 \Zp}$ in $g_2(z) - g_1(z)$ is $0$ modulo $p^{r/2 - j}\pi$ if $a \neq 0$.

        Now assume that $a = 0$. For $i \neq 0, \> p$, the $i^{\mathrm{th}}$ summand in expression \eqref{jth summand in g2 - g1 in leq} is also given by \eqref{a neq 0 and zi neq a term}. Making a simplification similar to the one made in claim \eqref{Replace logs by L}, we see that the sum of the $i = 0$ and $i = p$ summands
        in expression \eqref{jth summand in g2 - g1 in leq} is
        \begin{eqnarray*}
          % {n \choose j}
          p^x\left\{\left[(\alpha p)^{n - j}\cL\right] - \left[(\alpha p - p)^{n - j}\cL - (-p)^{n - j}\cL - {n - j \choose 1}(\alpha p)(-p)^{n - j - 1}\cL \right.\right.\\
            \left.\left. - \cdots - {n - j \choose n - j - 1}(\alpha p)^{n - j - 1}(-p)\cL\right]\right\} \mod p^{r/2 - j}\pi.
        \end{eqnarray*}
        This expression is clearly $0$. Now using the identity concerning the sum in \eqref{Identities concerning the coefficients in leq}, we see that the coefficient of $(z - a - \alpha p)^j\mathbbm{1}_{a + \alpha p + p^2 \Zp}$ in $g_2(z) - g_1(z)$ is
        \[
            -{n \choose j}p^x\lambda_p\left[(-p)^{n - j - 1}\alpha p + c_{n - j - 2}(-p)^{n - j - 2}(\alpha p)^2 + \cdots + c_1(-p)(\alpha p)^{n - j - 1}\right] \mod p^{r/2 - j}\pi.
        \]
        Each term in the expression above has valuation greater than or equal to $n - j - 1 > r/2 - j$. Therefore we have proved that the coefficient of $(z - a - \alpha p)^j\mathbbm{1}_{a + \alpha p + p^2\Zp}$ in $g_2(z) - g_1(z)$ is $0$ modulo $p^{r/2 - j}\pi$ if $a = 0$.

        This shows that $g(z) \equiv g_1(z) \!\! \mod \pi \latticeL{k}$. Next, we simplify $g_1(z)$. Recall that
        \begin{eqnarray}\label{g1 in leq}
            g_1(z) = \sum_{a = 0}^{p - 1}\left[\sum_{j = 0}^{n - 1}\frac{g^{(j)}(a)}{j!}(z - a)^j\right]\mathbbm{1}_{a + p\Zp},
        \end{eqnarray}
        where
        \begin{eqnarray}\label{jth summand in g1 in leq}
            \frac{g^{j}(a)}{j!} = \sum_{i \in I}{n \choose j}p^x\lambda_i(a - i)^{n - j}\logL(a - i).
        \end{eqnarray}
        First assume that $a \neq 0$. The $i \neq 0, a, p$ summands on the right side of equation \eqref{jth summand in g1 in leq} are congruent to $0$ modulo $\pi$ since $a \not \equiv i \!\! \mod p$ and $p \mid \lambda_i$ for $i \neq 0, p$. The $i = a$ term is equal to $0$. The sum of the $i = 0$ and $i = p$ terms
        in \eqref{jth summand in g1 in leq}
        % is
        %\[
        %    {n \choose j}p^x \left[\lambda_0a^{n - j}\logL(a) + \lambda_p(a - p)^{n - j}\logL(a - p)\right].
        %\]
        %Recall from \eqref{Identities concerning the coefficients in leq} that $\lambda_0 = 1 \!\! \mod p$
        %and $\lambda_p = -1$. Therefore the display above
        is congruent to 
        \[
            {n \choose j}p^x\left[a^{n - j}\logL(a) - (a - p)^{n - j}\logL(a - p)\right] \mod \pi
          \]
        since, by \eqref{Identities concerning the coefficients in leq}, $\lambda_0 = 1 \!\! \mod p$
        and $\lambda_p = -1$.  
        Expanding $(a - p)^{n - j}$ and dropping the terms divisible by $p$ since $x \geq -1$ and $p \> \vert \> \logL(a - p)$, this equals
        \[
            {n \choose j}p^x(-a^{n - j})\logL(1 - a^{-1}p) \mod \pi.
        \]
        Expanding $\logL(1 - a^{-1}p)$ using the usual Taylor series and dropping the terms that are congruent to $0$ mod $\pi$, we see that the sum of the $i = 0$ and $p$ summands in \eqref{jth summand in g1 in leq} is
        \[
            {n \choose j}p^{1 + x}a^{n - j - 1} \mod \pi.
        \]
        Therefore for $a \neq 0$, we get
        \begin{eqnarray}\label{jth term in a neq 0 in g1 in leq}
            \frac{g^{(j)}(a)}{j!} \equiv {n \choose j}p^{1 + x}a^{n - j - 1} \mod \pi.
        \end{eqnarray}

        Next, assume that $a = 0$. Then again the $i \neq 0, \> p$ summands in
        \eqref{jth summand in g1 in leq} are congruent to $0$ modulo $\pi$ by the same reasoning as in the $a \neq 0$ case above. The $i = 0$ summand is $0$. The $i = p$ summand is
        \[
            -{n \choose j}p^x(-p)^{n - j}\cL.
        \]
        Therefore for $a = 0$, we have
        \begin{eqnarray}\label{jth term in a = 0 in g1 in leq}
            \frac{g^{(j)}(0)}{j!} \equiv (-1)^{n - j + 1}{n \choose j}p^{x + n - j}\cL \mod \pi.
        \end{eqnarray}
        Putting equations \eqref{jth term in a neq 0 in g1 in leq} and \eqref{jth term in a = 0 in g1 in leq} in equation \eqref{g1 in leq} and using Lemma \ref{Integers in the lattice}, we get
        \begin{eqnarray*}
            g_1(z) & \equiv & \sum_{a = 1}^{p - 1}\left[\sum_{j = 0}^{n - 1}{n \choose j}p^{1 + x}a^{n - j - 1}(z - a)^j\right]\mathbbm{1}_{a + p\Zp} + \left[\sum_{j = 0}^{n - 1}(-1)^{n - j + 1}{n \choose j}p^{x + n - j}\cL z^j\right]\mathbbm{1}_{p\Zp} \\
            & \equiv & \sum_{a = 1}^{p - 1}p^{1 + x}a^{-1}\left[z^n - (z - a)^n\right]\mathbbm{1}_{a + p\Zp} + \left[\sum_{j = 0}^{n - 1}(-1)^{n - j + 1}{n \choose j}p^{x + n - j}\cL z^j\right]\mathbbm{1}_{p\Zp} \\
            && \qquad \qquad \mod \pi \latticeL{k}.
        \end{eqnarray*}
        Using Lemma~\ref{stronger bound for polynomials of large degree with varying radii} for the first sum and Lemma \ref{Integers in the lattice} for the second sum above, we get
        \[
            g_1(z) \equiv \sum_{a = 1}^{p - 1}p^{1 + x}a^{-1}z^n\mathbbm{1}_{a + p\Zp} + \left[\sum_{j = \lceil r/2 \rceil}^{n - 1}(-1)^{n - j + 1}{n \choose j}p^{x + n - j}\cL z^j\right]\mathbbm{1}_{p\Zp} \mod \pi \latticeL{k}.
        \]
        This proves the proposition since $g(z) \equiv g_1(z) \!\! \mod \pi \latticeL{k}.$
    \end{proof}
    
    \begin{theorem}\label{Final theorem for leq}
        Let $i = 1, \> 2, \> \ldots, \> \lceil r/2 \rceil - 1$. If $\nu \leq i - r/2$, then the map $\IZind a^i d^{r - i} \twoheadrightarrow F_{2i, \> 2i + 1}$ factors as
        \[
            \IZind a^i d^{r - i} \twoheadrightarrow \frac{\IZind a^i d^{r - i}}{\im(\lambda_i^{-1} T_{1, 2} - 1)} \twoheadrightarrow F_{2i, \> 2i + 1},
        \]
        where $\lambda_i = (-1)^i i {r - i + 1 \choose i}p^{r/2 - i}\cL$. Moreover for $\nu = i - r/2$, the second map in the display above induces a surjection 
    \[
        \pi(r - 2i, \lambda_i, \omega^i) \twoheadrightarrow F_{2i, \> 2i + 1}.
    \]
    \end{theorem}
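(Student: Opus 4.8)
The strategy is to run, in the $T_{1,2}$-world, the exact mirror of the proof of Theorem~\ref{Final theorem for geq}: produce a vanishing relation in $\br{\latticeL k}$ from a carefully chosen logarithmic function via Proposition~\ref{nu leq}, and then recognise that relation as the image, under $\IZind a^i d^{r-i}\twoheadrightarrow F_{2i,\>2i+1}$, of a linear expression in $T_{1,2}$ applied to the generator $\llbracket\id,X^iY^{r-i}\rrbracket$.

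First I would apply Proposition~\ref{nu leq} with $n=r-i+1$ (legitimate since $i\le\lceil r/2\rceil-1$ forces $n>r/2$) and with $x=(r/2-n)-\nu=i-r/2-1-\nu$; the hypothesis $\nu\le i-r/2$ is precisely $x\ge-1$, so the proposition applies, and since the function $g$ there equals $0$ in $\tB(k,\cL)$ it yields a congruence in $\br{\latticeL k}$ of the form
\[
0\ \equiv\ p^{1+x}\sum_{a=1}^{p-1}a^{-1}z^{r-i+1}\mathbbm{1}_{a+p\Zp}\ +\ \sum_{j=\lceil r/2\rceil}^{r-i}(-1)^{r-i-j}{r-i+1\choose j}p^{\,x+r-i+1-j}\cL\,z^{j}\mathbbm{1}_{p\Zp}\pmod{\pi\latticeL k},
\]
where $1+x=i-r/2-\nu$. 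As in the proof of Theorem~\ref{Final theorem for geq}, all congruences are read modulo the image of $\IZind\oplus_{j<r-i}\Fq X^{r-j}Y^{j}$, i.e. modulo the $G$-translates of $z^{i+1}\mathbbm{1}_{p\Zp},\dots,z^{\lfloor r/2\rfloor}\mathbbm{1}_{p\Zp}$, and the generator $\llbracket\id,X^iY^{r-i}\rrbracket$ maps to $z^i\mathbbm{1}_{p\Zp}$. Next I would kill the second sum: since $i\le\lceil r/2\rceil-1$ forces $\nu\le i-r/2<0$ (so $v_p(\cL)=\nu$), every index $j$ there satisfies $j\ge\lceil r/2\rceil\ge i+1$, and such $z^j\mathbbm{1}_{p\Zp}$ either lies in the subspace we are quotienting by (when $j\le\lfloor r/2\rfloor$) or vanishes in $\br{\latticeL k}$ by Lemma~\ref{Bottom half factors die} (when $j>r/2$). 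This leaves $0\equiv p^{1+x}\sum_{a=1}^{p-1}a^{-1}z^{r-i+1}\mathbbm{1}_{a+p\Zp}$. Because $v_p(\cL)=\nu$, the coefficient $p^{1+x}=p^{\,i-r/2-\nu}$ equals $\lambda_i^{-1}$ times a unit, where $\lambda_i=(-1)^i\,i{r-i+1\choose i}p^{r/2-i}\cL$. A direct computation from \eqref{Formulae for T-10 and T12 in the commutative case} and the action \eqref{G-action} — carried out exactly as the closing computation of $-wT_{-1,0}\llbracket\id,X^{i-1}Y^{r-i+1}\rrbracket$ in the proof of Theorem~\ref{Final theorem for geq}, but with $T_{1,2}$ in place of $T_{-1,0}$ — then identifies this relation with the image of a scalar multiple of $(\lambda_i^{-1}T_{1,2}-1)\llbracket\id,X^iY^{r-i}\rrbracket$, after expanding $z^{r-i+1}$ about each $a$ and discarding the terms $(z-a)^t\mathbbm{1}_{a+p\Zp}$ with $t>i$ (which are either zero or lie in the lower part). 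Hence $\im(\lambda_i^{-1}T_{1,2}-1)$ maps to $0$ in $F_{2i,\>2i+1}$, giving the first factorisation. Note that, unlike in Theorem~\ref{Final theorem for geq}, no ``shallow inductive steps'' (Proposition~\ref{inductive steps}) or matrix inversion are needed, because Proposition~\ref{nu leq} already produces a single $z^{r-i+1}$-term; this is why the $T_{1,2}$-side is cleaner.

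For the boundary point $\nu=i-r/2$, $v_p(\cL)=\nu$ makes $\lambda_i$ a unit, so $\im(\lambda_i^{-1}T_{1,2}-1)=\im(T_{1,2}-\lambda_i)$; moreover $i\le\lceil r/2\rceil-1$ forces $0<r-2i<p-1$, so we are in the commutative Iwahori--Hecke algebra with $T_{1,2}T_{-1,0}=0$. Then for any $v$ one has $(T_{1,2}-\lambda_i)T_{-1,0}v=-\lambda_iT_{-1,0}v$, whence $\im T_{-1,0}\subseteq\im(T_{1,2}-\lambda_i)$, so
\[
\frac{\IZind a^id^{r-i}}{\im(T_{1,2}-\lambda_i)}=\frac{\IZind a^id^{r-i}}{\im T_{-1,0}+\im(T_{1,2}-\lambda_i)}\simeq\pi(r-2i,\lambda_i,\omega^i)
\]
by the presentation \eqref{def of pi}; composing with the surjection above yields $\pi(r-2i,\lambda_i,\omega^i)\twoheadrightarrow F_{2i,\>2i+1}$.

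I expect the main obstacle to be the Hecke identification in the second paragraph: one must keep precise track of the powers of $p$ (using $v_p(\cL)=\nu<0$, so $\lambda_i^{-1}$ is genuinely a unit exactly when $\nu=i-r/2$ and has strictly positive valuation otherwise — in which latter case the statement just asserts $F_{2i,\>2i+1}=0$) and of exactly which binomial-expansion terms $(z-a)^t\mathbbm{1}_{a+p\Zp}$ survive modulo the lower subquotients. This bookkeeping is the computational heart of the argument, dual to the $T_{-1,0}$-computation in Theorem~\ref{Final theorem for geq}.
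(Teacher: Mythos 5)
Your overall skeleton (Proposition~\ref{nu leq} with $n=r-i+1$, working modulo the lower subquotients, then an identification with a Hecke relation) is the right one, but the core computation has a genuine gap. First, you discard the whole second sum of Proposition~\ref{nu leq}, in particular the $j=r-i$ term $(r-i+1)p^{x+1}\cL\,z^{r-i}\mathbbm{1}_{p\Zp}$, on the grounds that $z^{j}\mathbbm{1}_{p\Zp}\equiv 0$ for $j>r/2$ by Lemma~\ref{Bottom half factors die}. That lemma applies to the bare function, not to the function scaled by a coefficient of \emph{negative} valuation: here $v_p(p^{x+n-j}\cL)=r/2-j<0$ exactly compensates the depth of vanishing, and indeed $p^{x+1}\cL\,z^{r-i}\mathbbm{1}_{p\Zp}$ is a $p$-adic \emph{unit} multiple of $\beta\cdot(z^{i}\mathbbm{1}_{p\Zp})$, i.e.\ of the image of the generator $\llbracket\id,X^iY^{r-i}\rrbracket$ (the monomial $X^iY^{r-i}$ is \emph{not} in the subspace $\oplus_{j<r-i}\Fq X^{r-j}Y^j$, so this term is not killed by the quotient either). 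This term is precisely the ``$-1$'' in $\lambda_i^{-1}T_{1,2}-1$; throwing it away destroys the relation, and it also breaks your case $\nu<i-r/2$: there the surviving sum dies by integrality, and it is the discarded term, after applying $\beta$, that forces $z^i\mathbbm{1}_{p\Zp}\equiv 0$ and hence $F_{2i,\,2i+1}=0$. (The intermediate terms $\lceil r/2\rceil\le j\le r-i-1$ do die, but for the reason that they are unit multiples of $\beta$-translates of $z^{r-j}\mathbbm{1}_{p\Zp}$ with $i<r-j\le\lfloor r/2\rfloor$, hence lie in the $G$-stable image of the subspace, not by Lemma~\ref{Bottom half factors die}.)

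Second, in the boundary case $\nu=i-r/2$ ($x=-1$) the remaining sum $\sum_{a=1}^{p-1}a^{-1}z^{r-i+1}\mathbbm{1}_{a+p\Zp}$ cannot be matched with the image of $\lambda_i^{-1}T_{1,2}-1$ by ``expanding $z^{r-i+1}$ about each $a$ and discarding high powers'': the image of $T_{1,2}\llbracket\id,X^iY^{r-i}\rrbracket$ is $\sum_{\lambda}p^{r/2-i}(z-\lambda p)^i\mathbbm{1}_{\lambda p+p^2\Zp}$, supported on radius-$p^2$ classes inside $p\Zp$, while your sum lives on the classes $a+p\Zp$ with $a\not\equiv 0$, and after truncation it still carries all powers $(z-a)^t$, $0\le t\le i$, with unit coefficients. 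The paper converts it by applying $\beta$ and then invoking the shallow inductive steps of Proposition~\ref{inductive steps} for $n=r,\dots,r-i+2$ together with the $(i+1)\times(i+1)$ matrix equation \eqref{Matrix equation in leq} solved by Cramer's rule; this is exactly where the $T_{1,2}$-shape and the constant $\lambda_i=(-1)^i\,i\binom{r-i+1}{i}p^{r/2-i}\cL$ come from, so your assertion that no inductive steps or matrix inversion are needed is the key missing idea (except in the degenerate case $i=1$). By contrast, your final step is fine and in fact slicker than the paper's: since $0<r-2i<p-1$ the Hecke algebra is commutative with $T_{1,2}T_{-1,0}=0$, so for $\lambda_i$ a unit one has $\im T_{-1,0}\subseteq\im(T_{1,2}-\lambda_i)$ and the quotient by $\im(\lambda_i^{-1}T_{1,2}-1)$ is already $\pi(r-2i,\lambda_i,\omega^i)$, avoiding the paper's diagram chase through \cite[Proposition 3.1]{AB15} --- but this only helps once the factorization itself has been established correctly.
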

    \begin{proof}
        All congruences in this proof are in the space $\br{\latticeL{k}}$ modulo the image of the subspace $\IZind \oplus_{j < r - i}\Fq X^{r - j}Y^j$ under $\IZind \SymF{k - 2} \twoheadrightarrow \br{\latticeL{k}}$.

        Fix an $i = 1, 2, \ldots, \lceil r/2 \rceil - 1$. Applying Proposition~\ref{nu leq} with $n = r - i + 1$ and the remark above, we get
        \begin{eqnarray}\label{Main equation in leq}
            0 \equiv p^{1 + x}\sum_{a = 1}^{p - 1}a^{-1}z^{r - i + 1}\mathbbm{1}_{a + p\Zp} + (r - i + 1)p^{x + 1}\cL z^{r - i}\mathbbm{1}_{p\Zp}.
        \end{eqnarray}
        First assume that $\nu < i - r/2$. Then $x > -1$. So write
        \[
            p^{1 + x}\sum_{a = 1}^{p - 1}a^{-1}z^{r - i + 1}\mathbbm{1}_{a + p\Zp} = \sum_{a = 1}^{p - 1}\sum_{j = 0}^{r - i + 1}p^{1 + x}a^{-1}{r - i + 1 \choose j}a^{r - i + 1 - j}(z - a)^{j}\mathbbm{1}_{a + p\Zp}.
        \]
        Using Lemma \ref{Integers in the lattice}, we see that the terms on the right side of the equation above are congruent to $0$ modulo $\pi \latticeL{k}$. Therefore we get
        \[
            0 \equiv (r - i + 1)p^{1 + x}\cL z^{r - i}\mathbbm{1}_{p\Zp}.
        \]
        Applying $\beta = \begin{pmatrix}0 & 1 \\ p & 0\end{pmatrix}$, we get
        \[
            0 \equiv -(r - i + 1)p^{1 + x + r/2 - i}\cL z^i \mathbbm{1}_{p\Zp}.
        \]
        Since $x + \nu = r/2 - n = i - 1 - r/2$ and $1 \leq r - i + 1 \leq p - 1$, we see that the coefficient $-(r - i+1)p^{1 + x + r/2 - i}\cL$ is a $p$-adic unit. Cancelling this coefficient, we get
        \begin{eqnarray}\label{nu strictly less than i - r/2}
            0 \equiv z^i\mathbbm{1}_{p\Zp}.
        \end{eqnarray}
        This shows that if $\nu < i - r/2$, then the map $\IZind a^i d^{r - i} \twoheadrightarrow F_{2i, \> 2i + 1}$ factors as
        \[
            \IZind a^i d^{r - i} \twoheadrightarrow \frac{\IZind a^i d^{r - i}}{1} \twoheadrightarrow F_{2i,\> 2i+1}.
        \]
        Therefore to prove the theorem, we must show that $\lambda_i^{-1} \equiv 0 \mod \pi$. This is immediate since the valuation of $\cL$ is strictly less than $i - r/2$.

        Now assume that $\nu = i - r/2$. Therefore $x = -1$.
        %So equation \eqref{Main equation in leq} says that
        %\[
        %    0 \equiv \sum_{a = 1}^{p - 1}a^{-1}z^{r - i + 1}\mathbbm{1}_{a + p\Zp} + (r - i + 1)\cL z^{r - i}\mathbbm{1}_{p \Zp}.
        %\]
        Applying $\beta$
        %                   = \begin{pmatrix}0 & 1 \\ p & 0\end{pmatrix}$
        to \eqref{Main equation in leq}, we get
        \[
            0 \equiv \sum_{a = 1}^{p - 1}p^{r/2 - i + 1}a^{-1}z^{i - 1}\mathbbm{1}_{a^{-1}p + p^2\Zp} - (r - i + 1)p^{r/2 - i}\cL z^i \mathbbm{1}_{p\Zp}.
        \]
        Expanding $z^{i - 1} = (z - a^{-1}p + a^{-1}p)^{i - 1}$ and replacing $a^{-1}$ by $a$, we get
        \[
            0 \equiv \sum_{a = 1}^{p - 1}\sum_{j = 0}^{i - 1} {i - 1 \choose j}p^{r/2 - i}(ap)^{i - j}(z - ap)^{j} \mathbbm{1}_{ap + p^2\Zp} - (r - i + 1)p^{r/2 - i}\cL z^i\mathbbm{1}_{p\Zp}.
        \]
        Since $i - j > 0$, we may also include the $a = 0$ term in the first sum. So write
        \begin{eqnarray}\label{Main congruence in leq in the equality case}
            0 \equiv \sum_{a = 0}^{p - 1}\sum_{j = 0}^{i - 1} {i - 1 \choose j}p^{r/2 - i}(ap)^{i - j}(z - ap)^{j} \mathbbm{1}_{ap + p^2\Zp} - (r - i + 1)p^{r/2 - i}\cL z^i\mathbbm{1}_{p\Zp}.
        \end{eqnarray}

        Next we note that $\nu = i - r/2$ implies that $\nu > 1 - r/2, \> 2 - r/2, \> \ldots, \> (i - 1) - r/2$. Therefore using Proposition \ref{inductive steps} and the remark in the first paragraph of the proof, we get
        \[
            z^{r - n}\mathbbm{1}_{p\Zp} \equiv \sum_{b = 1}^{p - 1}\sum_{l = 1}^{i}c(n, l)b^{r - l - n}(z - b)^{l} \mathbbm{1}_{b + p\Zp}.
        \]
        Applying $\sum\limits_{a = 0}^{p - 1}a^{i - r + n}\begin{pmatrix}1 & 0 \\ -pa & p\end{pmatrix}$, we get
        \begin{eqnarray*}
            && \sum_{a = 0}^{p - 1}p^{n - r/2}a^{i - r + n}(z - ap)^{r - n}\mathbbm{1}_{ap + p^2 \Zp} \\
            && \equiv \sum_{a = 0}^{p - 1}\sum_{b = 1}^{p - 1}\sum_{l = 1}^{i} p^{r/2 - l}c(n, l)a^{i - r + n}b^{r - l - n}(z - (a + b)p)^l\mathbbm{1}_{(a + b)p + p^2\Zp}.
        \end{eqnarray*}
        Making the substitution $a + b = \lambda$ and using 
        \[
            \sum_{b = 1}^{p - 1}(\lambda - b)^{i - r + n}b^{r - l - n} \equiv 
            \begin{cases}
                (-1)^{l - r + n + 1}{i - r + n \choose l - r + n}\lambda^{i - l} & \text{ if } l \geq r - n, \\
                0 & \text{ if } l < r - n 
            \end{cases} \mod p
        \]
        and Lemma~\ref{Integers in the lattice}, we get
        \begin{eqnarray*}
            && \sum_{a = 0}^{p - 1}p^{n - r/2}a^{i - r + n}(z - ap)^{r - n}\mathbbm{1}_{ap + p^2\Zp} \\
            && \equiv \sum_{\lambda = 0}^{p - 1}\sum_{l = r - n}^{i}p^{r/2 - l}(-1)^{l - r + n + 1}\lambda^{i - l}{i - r + n \choose l - r + n}c(n, l)(z - \lambda p)^l\mathbbm{1}_{\lambda p + p^2\Zp}.
        \end{eqnarray*}
        Renaming $\lambda$ as $a$ and plugging the value of $c(n, l)$ obtained in Proposition \ref{inductive steps}, we get one equation
        \begin{eqnarray}\label{Refined inductive steps in leq}
            0 \equiv \sum_{a = 0}^{p - 1}\sum_{l = r - n}^{i}{i - r + n \choose l - r + n}\frac{(r - n)! n}{(r - l) \cdots (n - l)}p^{r/2 - l}a^{i - l}(z - ap)^{l}\mathbbm{1}_{ap + p^2\Zp},
        \end{eqnarray}
        for each $n = r, \> r - 1, \> \ldots, \> r - i + 2$.
        
        Finally note that $p^{r/2 - i}z^i\mathbbm{1}_{p\Zp} \equiv 0$ since $r/2 - i > 0$. Therefore expanding $z^i = (z - ap + ap)^i$, we get
        \begin{equation}\label{Trivial equation in leq}
            0 \equiv\sum_{j = 0}^{i}p^{r/2 - i}{i \choose j}(ap)^{i - j}(z - ap)^j\mathbbm{1}_{ap + p^2\Zp}.
        \end{equation}
        Using equations \eqref{Main congruence in leq in the equality case}, \eqref{Refined inductive steps in leq} and \eqref{Trivial equation in leq}, we wish to obtain terms in $(z - ap)$ involving only the $i^{\mathrm{th}}$ power. To this end, consider the matrix equation
        \begin{eqnarray}\label{Matrix equation in leq}
            \begin{pmatrix}
                {i \choose 0}\frac{r}{r} & 0 & 0 & \cdots & {i \choose 0} & {i - 1 \choose 0} \\
                {i \choose 1}\frac{r}{r - 1} & {i - 1 \choose 0}\frac{r - 1}{(r - 1)(r - 2)} & 0 & \cdots & {i \choose 1} & {i - 1 \choose 1} \\
                {i \choose 2}\frac{r}{r - 2} & {i - 1 \choose 1}\frac{r - 1}{(r - 2)(r - 3)} & {i - 2 \choose 0}\frac{2!(r - 2)}{(r - 2)(r - 3)(r - 4)} & \cdots & {i \choose 2} & {i - 1 \choose 2} \\
                \vdots & & & & \vdots & \vdots \\
                {i \choose i}\frac{r}{r - i} & {i - 1 \choose i - 1}\frac{r - 1}{(r - i)(r - i - 1)} & {i - 2 \choose i - 2}\frac{2!(r - 2)}{(r - i)(r - i - 1)(r - i - 2)} & \cdots & {i \choose i} & 0
            \end{pmatrix}
            \begin{pmatrix}x_1 \\ x_2 \\ x_3 \\ \vdots \\ x_i \\ x_{i + 1}\end{pmatrix} = \begin{pmatrix}0 \\ 0 \\ 0 \\ \vdots \\ 0 \\ 1 \end{pmatrix}.
        \end{eqnarray}
        Once we have the solution to this matrix equation, we multiply the $n = r, \> r - 1, \> r - 2, \> \ldots, \> r - i + 2$ instances of equation \eqref{Refined inductive steps in leq} by $x_1, \> x_2, \> x_3, \> \ldots, \> x_{i - 1}$, equation \eqref{Trivial equation in leq} by $x_i$, equation \eqref{Main congruence in leq in the equality case} by $x_{i + 1}$ and add them all to get
        \begin{eqnarray}\label{Crude final congruence in leq}
            0 \equiv \sum_{a = 0}^{p - 1}p^{r/2 - i}(z - ap)^{i}\mathbbm{1}_{ap + p^2\Zp} - x_{i + 1}(r - i + 1)p^{r/2 - i}\cL z^i\mathbbm{1}_{p\Zp}.
        \end{eqnarray}

        We first find the value of $x_{i + 1}$. We again use Cramer's rule. Let $A$ be the $(i + 1)\times (i + 1)$ matrix on the left side of equation \eqref{Matrix equation in leq}. Let $A_{i + 1}$ be the same matrix, but with the last column replaced by the column vector on the right side of equation \eqref{Main equation in leq}. Then
        \[
            x_{i + 1} = \frac{\det A_{i + 1}}{\det A}.
        \]

        First consider the matrix $A$. Recall that we number the rows and columns starting with $0$. Perform the operation $C_{i - 1} \to C_{i - 1} - C_0$ to make the $(0, i - 1)^{\mathrm{th}}$ entry $0$. Then for $j \geq 1$, the $(j, i - 1)^{\mathrm{th}}$ entry in the resulting matrix is
        \[
            {i \choose j} - {i \choose j}\frac{r}{r - j} = -{i - 1 \choose j - 1}\frac{i}{r - j}.
        \]
        Next, perform the operation $C_{i - 1} \to C_{i - 1} + (r - 2)\frac{i}{r - 1}C_1$ to make the $(1, i - 1)^{\mathrm{th}}$ entry $0$. Then for $j \geq 2$, the $(j, i - 1)^{\mathrm{th}}$ entry in the resulting matrix is
        \[
            -{i - 1 \choose j - 1}\frac{i}{r - j} + (r - 2)\frac{i}{r - 1}{i - 1 \choose j - 1}\frac{r - 1}{(r - j)(r - j - 1)} = {i - 2 \choose j - 2}\frac{i(i - 1)}{(r - j)(r - j - 1)}.
        \]
        Continuing this process, after making $(i - 1)$ entries in the second-to-last column $0$, the $(j, i - 1)^{\mathrm{th}}$ entry for $j \geq i - 1$ is
        \[
            (-1)^{i - 1}{1 \choose j - i + 1}\frac{i(i - 1)\cdots 2}{(r - j)(r - j - 1) \cdots (r - j - i + 2)}.
        \]

        Now we reduce the last column. Perform the operation $C_i \to C_i - C_0$ to make the $(0, i)^{\mathrm{th}}$ entry $0$. Then for $j \geq 1$, the $(j, i)^{\mathrm{th}}$ entry in the resulting matrix is
        \[
            {i - 1 \choose j} - {i \choose j}\frac{r}{r - j} = -{i - 1 \choose j - 1}\frac{r + i - j}{r - j}.
        \]
        Next, perform the operation $C_i \to C_i + \frac{(r - 2)(r + i - 1)}{r - 1}C_1$ to make the $(1, i)^{\mathrm{th}}$ entry $0$. Then for $j \geq 2$, the $(j, i)^{\mathrm{th}}$ entry in the resulting matrix is
        \begin{eqnarray*}
            && -{i - 1 \choose j - 1}\frac{r + i - j}{r - j} + \frac{(r - 2)(r + i - 1)}{r - 1}{i - 1 \choose j - 1}\frac{r - 1}{(r - j)(r - j - 1)} \\
            && = {i - 2 \choose j - 2}\frac{(i - 1)(2(r - 1) + i - j)}{(r - j)(r - j - 1)}.
        \end{eqnarray*}
        Continuing this process, after making $i - 1$ entries in the last column $0$, the $(j, i)^{\mathrm{th}}$ entry for $j \geq i - 1$ in the resulting matrix is
        \[
            (-1)^{i}{1 \choose j - i + 1}\frac{(i - 1)![(i - 1)(r - i + 2) + (i - j)]}{(r - j)(r - j - 1)\cdots (r - j - i + 2)}.
        \]

        Now, the reduction for the second-to-last column in the matrix $A_{i + 1}$ is the same as that in the matrix $A$. Moreover, the last column in the matrix $A_{i + 1}$ has $i$ zeros and $1$ in the last entry. Therefore the ratio
        \begin{eqnarray*}
            \frac{\det A}{\det A_{i + 1}} & = & (-1)^{i}\left[\frac{(i - 1)![(i - 1)(r - i + 2)]}{(r - i)\cdots (r - 2i + 2)} - \frac{(i - 1)![(i - 1)(r - i + 2) + 1]}{(r - i)\cdots (r - 2i + 2)}\right] \\
            & = & (-1)^{i}\frac{(i - 1)!}{(r - i)\cdots (r - 2i + 2)}. 
        \end{eqnarray*}
        Therefore
        \[
            x_{i + 1} = (-1)^{i}\frac{(r - i)\cdots (r - 2i + 2)}{(i - 1)!}.
        \]
        Substituting this in equation \eqref{Crude final congruence in leq}, we get
        \[
            0 \equiv \sum_{a = 0}^{p - 1}p^{r/2 - i}(z - ap)^i\mathbbm{1}_{ap + p^2\Zp} - (-1)^{i}i {r - i + 1 \choose i}p^{r/2 - i}\cL z^i\mathbbm{1}_{p\Zp}.
        \]

        This equation is the image of $(T_{1, 2} - \lambda_i)\llbracket \id, X^iY^{r - i}\rrbracket$ under $\IZind a^i d^{r - i} \twoheadrightarrow F_{2i, \> 2i + 1}$. Indeed, by
        \eqref{Formulae for T-10 and T12 in the commutative case},
        \[
            T_{1, 2}\llbracket \id, X^iY^{r - i}\rrbracket = \sum_{\lambda \in I_1}\left\llbracket \begin{pmatrix}1 & 0 \\ -p\lambda & p\end{pmatrix}, X^iY^{r - i}\right\rrbracket
            \mapsto \sum_{\lambda \in I_1}p^{r/2 - i}(z - \lambda p)^i\mathbbm{1}_{\lambda p + p^2 \Zp}.
        \]
        Therefore we have proved that the surjection $\IZind a^{i}d^{r - i} \twoheadrightarrow F_{2i, \> 2i + 1}$ factors as
        \[
            \IZind a^{i}d^{r - i} \twoheadrightarrow \frac{\IZind a^{i}d^{r - i}}{\im(\lambda_i^{-1}T_{1, 2} - 1)} \twoheadrightarrow F_{2i, \> 2i + 1}.
        \]
        %  Now assume that $\nu = i - r/2$.

        Consider the following exact sequences
            \[
                \begin{tikzcd}
                    0 \arrow[r] & \im T_{1, 2} \arrow[r] \arrow[d, two heads] & \IZind a^{i}d^{r - i} \arrow[r]\arrow[d, two heads] & \dfrac{\IZind a^{i}d^{r - i}}{\im T_{1, 2}} \arrow[r]\arrow[d, two heads] & 0 \\
                    0 \arrow[r] & S \arrow[r] & F_{2i, \> 2i + 1} \arrow[r] & Q \arrow[r] & 0,
                \end{tikzcd}
            \]
            where $S$ is the image of $\im T_{1, 2}$ under the surjection $\IZind a^id^{r - i} \twoheadrightarrow F_{2i, \> 2i + 1}$ and $Q = F_{2i, \> 2i + 1}/S$. Since $\im (\lambda_i^{-1}T_{1, 2} - 1)$ maps to $0$ under the middle vertical map in the diagram above, we see that the right vertical map is $0$. Therefore we have a surjection $\im T_{1, 2} \twoheadrightarrow F_{2i, \> 2i + 1}$. As before,
            by the first isomorphism theorem and \cite[Proposition 3.1]{AB15}, we get a surjection
            \[
                \frac{\IZind a^{i}d^{r - i}}{\im T_{-1, 0}} \twoheadrightarrow F_{2i, \> 2i + 1},
            \]
            which factors through
            \[
              \frac{\IZind a^{i}d^{r - i}}{\im T_{-1, 0} + \im (\lambda_i^{-1}T_{1, 2} - 1)}
              % =  \frac{\IZind a^{i}d^{r - i}}{\im T_{-1, 0} + \im (T_{1, 2} - \lambda_i)}
              \twoheadrightarrow F_{2i, \> 2i + 1}.
            \]
            The space on the left is  
            \[
                \frac{\IZind a^{i}d^{r - i}}{\im T_{-1, 0} + \im (T_{1, 2} - \lambda_i)} = \pi(r - 2i, \lambda_i, \omega^{i}).
            \]
            Therefore we have proved the theorem.
    \end{proof}

\section{Analysis of $\br{\latticeL{k}}$ around the last point for odd weights}\label{SpecOdd}
    %\fancyhead{}
    %\fancyhead[LO]{Reductions of semi-stable representations using the Iwahori mod $p$ LLC}
    %\fancyhead[RE]{Analysis of $\br{\latticeL{k}}$ around the last point for odd weights}

    In the previous section, we gave a uniform description of $\br{\latticeL{k}}$ around all points appearing in Theorem \ref{Main theorem in the second part of my thesis} but the last point, namely, $\nu = 0$ for even $k$ and $\nu = 0.5$ for odd $k$. These two points demand separate treatments.
    
    In this section, we assume that $k$ and $r$ are odd, and describe the behavior of $\br{\latticeL{k}}$ around the point $\nu = 0.5$. The self-dual case of the mod $p$ local Langlands correspondence arises, making the analysis trickier.
\subsection{The $\nu \geq 0.5$ case for odd weights}\label{geq section for odd weights}

%We assume that $k$ and $r$ are odd in this section. 
In this section, we prove that if $\nu \geq 0.5$, then the map $\IZind a^{\frac{r-1}{2}}d^{\frac{r+1}{2}} \twoheadrightarrow F_{r-1, \> r}$ factors as
\[
    \IZind a^{\frac{r-1}{2}}d^{\frac{r+1}{2}} \twoheadrightarrow \frac{\IZind a^{\frac{r-1}{2}}d^{\frac{r+1}{2}}}{\im(T_{-1, 0}^2 - c T_{-1, 0} + 1)} \twoheadrightarrow F_{r-1, \> r},
\]
where
\[
    c = (-1)^{\frac{r+1}{2}}\dfrac{r+1}{2}\left(\dfrac{\cL - H_{-} - H_{+}}{p^{0.5}}\right).
\]
Moreover, we claim that for $i = \dfrac{r+2}{2}$ there is a surjection
\[
    \pi(p - 2, \lambda_i, \omega^{\frac{r + 1}{2}}) \oplus \pi(p - 2, \lambda_i^{-1}, \omega^{\frac{r + 1}{2}}) \twoheadrightarrow F_{r - 1, \> r},
\]
where $\lambda_i + \lambda_i^{-1} = c.$

To prove this, we need inductive steps similar to equation \eqref{Refined inductive steps in leq}, but with an extra $\sqrt{p}$ in the denominator. This is provided by Proposition \ref{Refined hard inductive steps for self-dual}. Let us start by proving the following lemmas.  We remark that part 3.
of the next two lemmas will only be used in Section~\ref{leq section for odd weights}.

    \begin{lemma}\label{strong truncated Taylor expansion for self-dual}
    Let $\dfrac{r + 1}{2} \leq n \leq r \leq p - 2$ and $z_0 \in \Zp$. Fix $x \in \bQ$. Define
    \[
        g(z) = p^x(z - z_0)^n\logL(z - z_0)\mathbbm{1}_{\Zp}.
    \]
    For $0 \leq a \leq p^{h - 1} - 1$, $0 \leq \alpha \leq p - 1$ and $0 \leq j \leq n - 1$, if
    \begin{enumerate}
        \item $\nu > 0, \> \dfrac{r + 3}{2} \leq n \leq r$, $x \geq - 1.5$ and $h \geq 3$, or
        \item $ \nu > 0, \> n = \dfrac{r + 1}{2}$, $x \geq -1$ and $h \geq 4$, or
        \item $-0.5 < \nu < 0.5, \> n = \dfrac{r + 1}{2}$, $x + \nu = -0.5$ and $h \geq 3$,
    \end{enumerate}
    then we have
    \[
        g^{(j)}(a + \alpha p^{h - 1}) \equiv g^{(j)}(a) + \alpha p^{h - 1}g^{(j + 1)}(a) + \cdots + \frac{(\alpha p^{h - 1})^{n - 1 - j}}{(n - 1 - j)!}g^{(n - 1)}(a) \mod (p^{h - 1})^{r/2 - j}\pi.
    \]
    Moreover, we have
    \begin{eqnarray*}
        && g^{(j)}(a + \alpha p^{h - 1})(z - a - \alpha p^{h - 1})^j\mathbbm{1}_{a + \alpha p^{h - 1} + p^h\Zp} \\
        && \equiv \left[g^{(j)}(a) + \alpha p^{h - 1}g^{(j + 1)}(a) + \cdots + \frac{(\alpha p^{h - 1})^{n - 1 - j}}{(n - 1 - j)!}g^{(n - 1)}(a)\right](z - a - \alpha p^{h - 1})^{j}\mathbbm{1}_{a + \alpha p^{h - 1} + p^h \Zp}
    \end{eqnarray*}
    modulo $\pi \latticeL{k}$.
    \end{lemma}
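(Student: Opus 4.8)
The statement to prove is Lemma~\ref{strong truncated Taylor expansion for self-dual}, which is a refinement of Lemma~\ref{Truncated Taylor expansion} (1) with sharper bookkeeping: the conclusion is the same truncated-Taylor congruence modulo $(p^{h-1})^{r/2-j}\pi$, but the hypotheses allow $x$ as small as $-1.5$ (at the cost of requiring $n \geq \frac{r+3}{2}$, i.e.\ $n$ strictly bigger than $\frac{r+1}{2}$), or $x \geq -1$ with $n = \frac{r+1}{2}$ at the cost of pushing $h$ up to $4$, or the mixed condition $x+\nu=-0.5$ with $h \geq 3$. So the plan is to rerun the proof of Lemma~\ref{Truncated Taylor expansion} (1) for the single function $g(z) = p^x(z-z_0)^n\logL(z-z_0)\mathbbm{1}_{\Zp}$, tracking the dependence on $x$, $n$ and $h$ carefully, and then invoke Lemma~\ref{Integers in the lattice} for the ``Moreover'' part exactly as before.

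First I would split into the two cases $v_p(a-z_0) < h-1$ and $v_p(a-z_0) \geq h-1$, just as in the proof of Lemma~\ref{Truncated Taylor expansion}. In the first case I use the analyticity of $g$ near $a + p^{h-1}\Zp$ and the expansion \eqref{Far away from z0 in Truncated Taylor expansion}: the first $n-j$ terms are the desired truncated-Taylor terms, and I need to show the ``tail'' terms (the $\logL$-term of order $n-j$ in $\alpha p^{h-1}$, and the sum over $l \geq 1$) vanish modulo $(p^{h-1})^{r/2-j}\pi$. The valuation of the $\logL$-term is at least $x + \min\{v_p(\cL),0\} + (n-j)(h-1)$, and since we no longer assume $\nu \geq 1 + r/2 - n$ I must re-derive the needed inequality in each of the three regimes. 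For (1): with $x \geq -1.5$, $n - j \geq \frac{r+3}{2} - j = \frac{r}{2} - j + \tfrac{3}{2}$, and $h \geq 3$ one checks $x + (n-j)(h-1) \geq -1.5 + 2(n-j) > (r/2-j)(h-1)$ reduces (using $h-1 \geq 2$) to $2(n-j) - 1.5 > 2(r/2 - j)$, i.e.\ $n - j > \frac{r}{2} - j + \tfrac{3}{4}$, which holds since $n - j \geq \frac{r}{2} - j + \tfrac{3}{2}$; the $\min\{v_p(\cL),0\}$ can be absorbed because $\nu > 0$ forces $v_p(\cL) \geq 0$ when $k \leq p+1$ — actually more carefully, $\nu > 0$ and the definition of $\nu$ give a lower bound on $v_p(\cL)$ that I should state explicitly, namely $v_p(\cL) \geq \min\{0, \nu\} = 0$ when $\nu>0$ so the $\min$ term is $0$. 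For (2): $n = \frac{r+1}{2}$, $x \geq -1$, $h \geq 4$ so $h - 1 \geq 3$, and $n - j = \frac{r+1}{2} - j = \frac{r}{2} - j + \tfrac12$; then $x + (n-j)(h-1) \geq -1 + 3(n-j)$ and one needs $-1 + 3(n-j) > (r/2-j)(h-1)$; since $h - 1 \geq 3$ and $\frac{r}{2} - j = (n-j) - \tfrac12$ this is $-1 + 3(n-j) > 3(n-j) - \tfrac32$, i.e.\ $-1 > -\tfrac32$, true. For (3): $x + \nu = -0.5$ so $x \geq -1$ (as $\nu \leq 0.5$), $n = \frac{r+1}{2}$, $h \geq 3$; here the key point is that the $\logL$ valuation contributes $x + v_p(\cL) = x + \nu$-ish — more precisely $v_p(\logL(a-z_0)) \geq \min\{v_p(\cL),0\}$, but when $a \equiv z_0$ I get an extra factor of $p$, and when $a \not\equiv z_0$ the $\logL$ term is a unit times $\cL$ so valuation $v_p(\cL)$. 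I'd handle the subtlety that in regime (3) we may have $v_p(\cL) < 0$ (since $\nu < 0.5$ allows $\cL$ of negative valuation), so the bound becomes $x + v_p(\cL) + (n-j)(h-1)$; using $x + \nu = -0.5$ and that $v_p(\cL) \geq \nu$ when $\nu \leq 0$ (and $v_p(\cL) \geq 0 > \nu$... no) — I need the clean inequality $v_p(\cL) \geq \nu$, which holds since $\cL = (\cL - H_- - H_+) + (H_- + H_+)$ and $v_p(H_\pm) \geq 0$, so $v_p(\cL) \geq \min\{\nu, 0\}$; combining, $x + v_p(\cL) \geq x + \min\{\nu,0\}$. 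If $\nu \leq 0$ this is $x + \nu = -0.5$; if $0 < \nu \leq 0.5$ this is $x = -0.5 - \nu \geq -1$, and the relevant term $x + \min\{v_p(\cL),0\}$ is then $\geq -1$. Either way $x + \min\{v_p(\cL),0\} \geq -1$, and with $h - 1 \geq 2$ and $n - j = \frac{r}{2} - j + \tfrac12$ one gets $-1 + 2(n-j) > 2(r/2-j)$ iff $-1 + 2(n-j) > 2(n-j) - 1$, which is false by equality — so I actually need the sharper estimate and must be more careful: I'd use that the $\min$-term is $\min\{v_p(\cL),0\}$ only when $a \not\equiv z_0$, and in that subcase the $\logL(a-z_0)$ is $\cL$ plus something of valuation $\geq n-j > 0$, giving room; when $a \equiv z_0 \bmod p$ but $v_p(a-z_0) < h-1$ one picks up compensating powers of $p$. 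This is exactly the kind of case chase that makes this the main obstacle.

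Then the second main case $v_p(a - z_0) \geq h - 1$ is easier: here I use the derivative formula \eqref{Derivative formula for g(z)}, observe that each of $g^{(j)}(a+\alpha p^{h-1})$, $g^{(j)}(a)$, $\alpha p^{h-1}g^{(j+1)}(a), \dots$ has valuation $\geq x + \min\{v_p(\cL),0\} + (n-j)(h-1)$ (the factor $(a-z_0)^{n-j}$ or $(a+\alpha p^{h-1}-z_0)^{n-j}$ having valuation $\geq (n-j)(h-1)$), and the same numerical inequalities from each regime show this exceeds $(r/2-j)(h-1) + (\text{positive slack})$, so all these terms are already $\equiv 0 \bmod (p^{h-1})^{r/2-j}\pi$ and the congruence holds trivially. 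Finally, the ``Moreover'' statement follows from the first congruence by multiplying through by $(z - a - \alpha p^{h-1})^j \mathbbm{1}_{a+\alpha p^{h-1}+p^h\Zp}$ and applying Lemma~\ref{Integers in the lattice} (with the point $z_0$ replaced by $a + \alpha p^{h-1}$, the radius $p^h$, and exponent $j \leq n - 1$), exactly as in the last line of the proof of Lemma~\ref{Truncated Taylor expansion}: a term of valuation $\geq (p^{h-1})^{r/2-j}\pi$ times $(z - \text{center})^j\mathbbm{1}_{\text{ball of radius }p^h}$ lies in $\pi\latticeL{k}$ because $p^{(h-1)(r/2-j)}(z - c)^j \mathbbm{1}_{c + p^h\Zp} \in \latticeL{k}$.

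I expect the main obstacle to be regime (3), where the estimates are genuinely tight and the naive bound $x + \min\{v_p(\cL),0\} + (n-j)(h-1) > (r/2-j)(h-1)$ fails by exactly the borderline; there one must exploit the finer structure — that for $a \not\equiv z_0$ the logarithm is $v_p(z)\cL + \log(\text{unit})$ with the $\cL$ coefficient being $v_p(a - z_0) = 0$, so actually $v_p(\logL(a-z_0)) \geq 1$ rather than $\geq \min\{v_p(\cL),0\}$, while for $a \equiv z_0 \bmod p$ with $v_p(a-z_0) < h-1$ one gets $v_p((a-z_0)^{n-j}) \geq n - j$ to spare — and combine these to recover the strict inequality. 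A secondary, purely bookkeeping nuisance is verifying the estimate $v_p\binom{n-j+l}{l} \leq \lfloor \log_p(n-j+l)\rfloor - v_p(l)$ (Kummer) still suffices in the tail sum for the relaxed $x$, and checking the exceptional small cases $l = 1$ separately, but our blanket assumption $p \geq 5$ and $n \leq r \leq p - 2$ will absorb these just as in Lemma~\ref{Truncated Taylor expansion}.
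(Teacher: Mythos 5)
Your overall route is the paper's: the same split into $v_p(a-z_0)<h-1$ and $v_p(a-z_0)\geq h-1$, the same expansion \eqref{Far away from z0 in Truncated Taylor expansion}, Kummer's bound for the tail $\sum_{l\geq 1}$, and Lemma~\ref{Integers in the lattice} for the ``Moreover'' part; your regimes (1) and (2) are handled essentially as in the paper. The genuine gap is exactly where you flag it, in regime (3), and the repair you sketch does not work. Your description of $\logL(a-z_0)$ is backwards: when $a\not\equiv z_0 \bmod p$ the logarithm is $\log$ of a $1$-unit, so it has valuation at least $1$ and involves no $\cL$ at all (this sub-case is harmless), whereas it is when $a\equiv z_0\bmod p$ that $\logL(a-z_0)=v_p(a-z_0)\cL+\log(u)$ and one only gets valuation $\geq\min\{v_p(\cL),1\}$. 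Moreover the ``compensating powers of $p$'' you invoke for $a\equiv z_0$, $v_p(a-z_0)<h-1$ do not exist: the problematic order-$(n-j)$ term is $\frac{(\alpha p^{h-1})^{n-j}}{(n-j)!}\,n!\,p^{x}\bigl[\logL(a-z_0)+H_n\bigr]$, which contains no factor of $(a-z_0)$, and your ``$\cL$ plus something of valuation $\geq n-j$'' has no source.

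The correct fix, which is what the paper actually does, is simply not to discard the strictness you threw away when you wrote ``$x\geq -1$'' and ``$x+\min\{v_p(\cL),0\}\geq -1$'': in case (3) one has $x=-0.5-\nu$ with $\nu<0.5$ \emph{strict}, together with $v_p(\cL)\geq\min\{\nu,0\}$ (since $H_\pm\in\Zp$) and $v_p(\logL(a-z_0))\geq\min\{v_p(\cL),1\}$. Hence $x+v_p\bigl(\logL(a-z_0)+H_n\bigr)\geq x+\min\{\nu,0\}$, which equals $-0.5$ if $\nu\leq 0$ and equals $-0.5-\nu>-1$ if $0<\nu<0.5$; in either case it is strictly greater than $-1$. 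So the order-$(n-j)$ term has valuation strictly greater than $(n-j)(h-1)-1=(r/2-j)(h-1)+\frac{h-1}{2}-1\geq (r/2-j)(h-1)$ for $h\geq 3$ and $n=\frac{r+1}{2}$, and it dies modulo $(p^{h-1})^{r/2-j}\pi$; the same strict bound disposes of the case $v_p(a-z_0)\geq h-1$, where each term of the truncated expansion individually has valuation $\geq (n-j)(h-1)+x+\min\{\nu,0\}$. With this replacement (and your tail estimate, which is fine) your argument coincides with the paper's proof.
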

    \begin{proof}
        The proof of this lemma is analogous to the proof of Lemma \ref{Truncated Taylor expansion}. Consider two cases.
        \begin{enumerate}
            \item $v_p(a - z_0) < h - 1$: Recall equation \eqref{Far away from z0 in Truncated Taylor expansion} from Lemma \ref{Truncated Taylor expansion}:
            \begin{eqnarray}\label{Far away from z0 in Truncated Taylor expansion in self-dual}
                g^{(j)}(a + \alpha p^{h - 1}) & = & g^{(j)}(a) + \cdots + \frac{(\alpha p^{h - 1})^{n - 1 - j}}{(n - 1 - j)!}g^{(n - 1)}(a) \nonumber\\
                & & + \frac{(\alpha p^{h - 1})^{n - j}}{(n - j)!}n!p^{x}[\logL(a - z_0) + H_n] \nonumber\\
                & & + \sum_{l \geq 1}\frac{(\alpha p^{h - 1})^{n - j + l}}{(n - j + l)!}n! p^{x} \frac{(-1)^{l - 1}}{(a - z_0)^l}(l - 1)!.
            \end{eqnarray}
            Using $\nu > 0$, we see that the valuation of the term in the second line
            on the right side of the equation above is greater than or equal to $(n - j)(h - 1) + x$ and using $-0.5 < \nu < 0.5$ and $x + \nu = -0.5$, it is strictly greater than $(n - j)(h - 1) - 1$. The first estimate is strictly greater than $(h - 1)(r/2 - j)$ in the first two cases stated in this lemma. The second estimate is greater than or equal to $(h - 1)(r/2 - j)$ in the third case.

            Next, the valuation of a term in the third line
            on the right side of equation \eqref{Far away from z0 in Truncated Taylor expansion in self-dual} is greater than or equal to $x + (n - j)(h - 1) + l - \log_p(n - j + l)$. This estimate is again strictly greater than $(h - 1)(r/2 - j)$ in all the cases stated in the lemma.

            \item $v_p(a - z_0) \geq h - 1$: Using the derivative formula \eqref{Derivative formula for g(z)} for $g(z)$, we see that the valuations of $g^{(j)}(a + \alpha p^{h - 1}), \> g^{(j)}(a), \>\ldots, (\alpha p^{h - 1})^{n - 1 - j}g^{(n - 1)}(a)$ are greater than or equal to $(n - j)(h - 1) + x$ in the first two cases and strictly greater than $(n - j)(h - 1) - 1$ in the third case. The first estimate is strictly greater than $(h - 1)(r/2 - j)$ in the first two cases stated in the lemma as seen above. The second estimate is greater than or equal to $(h - 1)(r/2 - j)$ in the third case again as seen above.
        \end{enumerate}
        The final claim made in this lemma is a consequence of the first claim made in this lemma and Lemma \ref{Integers in the lattice}.
    \end{proof}

    \begin{lemma}\label{Telescoping Lemma in self-dual}
        Let $\dfrac{r + 1}{2} \leq n \leq r \leq p - 2$ and $z_0 \in \Zp$. Fix $x \in \bQ$. Then for
        \[
            g(z) = p^x(z - z_0)^n\logL(z - z_0)\mathbbm{1}_{\Zp},
        \]
        we have
        \begin{enumerate}
            \item $g(z) \equiv g_2(z) \mod \pi \latticeL{k}$ if $\nu > 0, \> \dfrac{r + 3}{2} \leq n \leq r$ and $x \geq -1.5$,
            \item $g(z) \equiv g_3(z) \mod \pi \latticeL{k}$ if $\nu > 0, \> n = \dfrac{r + 1}{2}$ and $x \geq -1$, or
            \item $g(z) \equiv g_2(z) \mod \pi \latticeL{k}$ if $-0.5 < \nu < 0.5$, $n = \dfrac{r + 1}{2}$ and $x + \nu = -0.5$.
        \end{enumerate}
    \end{lemma}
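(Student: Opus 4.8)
The plan is to run the same telescoping argument as in the proof of Lemma~\ref{telescoping lemma}, but replacing the input Lemma~\ref{Truncated Taylor expansion} by its sharpened self-dual version Lemma~\ref{strong truncated Taylor expansion for self-dual}, and being careful about which terminal level ($h=2$ or $h=3$) the induction can reach in each of the three regimes. First I would note, as in Lemma~\ref{telescoping lemma}, that $g(z)$ is manifestly in $D(k)$ (it is a single logarithmic summand of the allowed degree, and the $\mathbbm{1}_{\Zp}$ factor preserves $\sC^{r/2}(\Zp,E)$). Since $g$ has continuous derivatives up to order $n-1>r/2$, Lemma~\ref{Congruence lemma} gives $g(z)\equiv g_h(z)\bmod\pi\latticeL{k}$ for all large $h$, where
\[
    g_h(z) = \sum_{m=0}^{p^h-1}\left[\sum_{j=0}^{n-1}\frac{g^{(j)}(m)}{j!}(z-m)^j\right]\mathbbm{1}_{m+p^h\Zp}.
\]

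Next I would substitute $m=a+\alpha p^{h-1}$ with $0\le a\le p^{h-1}-1$, $0\le\alpha\le p-1$, and apply Lemma~\ref{strong truncated Taylor expansion for self-dual} to rewrite each Taylor coefficient $g^{(j)}(a+\alpha p^{h-1})$ modulo $\pi\latticeL{k}$ as the truncated Taylor polynomial $g^{(j)}(a)+\alpha p^{h-1}g^{(j+1)}(a)+\cdots+\tfrac{(\alpha p^{h-1})^{n-1-j}}{(n-1-j)!}g^{(n-1)}(a)$ times $(z-a-\alpha p^{h-1})^j\mathbbm{1}_{a+\alpha p^{h-1}+p^h\Zp}$. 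Expanding $[(z-a)-\alpha p^{h-1}]^j$ and collecting like powers of $(z-a)$ diagonally, exactly as in the display chain in the proof of Lemma~\ref{telescoping lemma}, the $\alpha$-dependent terms cancel and one gets $g_h(z)\equiv g_{h-1}(z)\bmod\pi\latticeL{k}$. The only subtlety is the hypothesis of Lemma~\ref{strong truncated Taylor expansion for self-dual}: in case (1) ($\nu>0$, $\tfrac{r+3}{2}\le n\le r$, $x\ge-1.5$) the hypothesis holds for every $h\ge3$, so the descent $g_h\equiv g_{h-1}$ is valid all the way down to $h=3$, giving $g_h\equiv g_2$; in case (3) ($-0.5<\nu<0.5$, $n=\tfrac{r+1}{2}$, $x+\nu=-0.5$) the hypothesis again holds for $h\ge3$, so again $g_h\equiv g_2$; but in case (2) ($\nu>0$, $n=\tfrac{r+1}{2}$, $x\ge-1$) Lemma~\ref{strong truncated Taylor expansion for self-dual}(2) requires $h\ge4$, so the descent only runs down to $h=4\to 3$, and we can conclude only $g_h\equiv g_3$. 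Iterating the one-step descent from large $h$ therefore yields $g(z)\equiv g_2(z)$ in cases (1) and (3) and $g(z)\equiv g_3(z)$ in case (2), which is precisely the statement.

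The main obstacle is purely bookkeeping: one must check that the valuation estimates recorded in the proof of Lemma~\ref{strong truncated Taylor expansion for self-dual} are exactly the ones needed to push the one-step descent all the way to the claimed terminal level, and in particular to see why case (2) genuinely stalls at $h=3$ rather than reaching $h=2$ --- the loss of half a unit of valuation (coming from $n=\tfrac{r+1}{2}$ being the smallest allowed exponent, so that $r/2-n=-\tfrac12$ instead of $\le-1$, together with $x\ge-1$ rather than $x\ge-\tfrac32$) means the term $\tfrac{(\alpha p^{h-1})^{n-j}}{(n-j)!}n!p^x[\logL(a-z_0)+H_n]$ in \eqref{Far away from z0 in Truncated Taylor expansion in self-dual} only beats the threshold $(h-1)(r/2-j)$ once $h\ge4$. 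Apart from this, the argument is a verbatim transcription of the proof of Lemma~\ref{telescoping lemma}, with the three new hypothesis boxes chosen precisely so that Lemma~\ref{strong truncated Taylor expansion for self-dual} applies at the relevant range of $h$.
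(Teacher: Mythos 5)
Your proposal is correct and follows exactly the route the paper takes: the paper's proof is literally the observation that one repeats the argument of Lemma~\ref{telescoping lemma} with Lemma~\ref{Truncated Taylor expansion} replaced by Lemma~\ref{strong truncated Taylor expansion for self-dual}, and your bookkeeping of the terminal levels (descent valid for $h\ge 3$ in cases (1) and (3), hence down to $g_2$, but only for $h\ge 4$ in case (2), hence stalling at $g_3$ because $n-r/2=1/2$ and $x\ge -1$ only give $(h-1)(n-j)+x>(h-1)(r/2-j)$ once $h\ge 4$) is precisely the content of that replacement.
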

    \begin{proof}
        The proof of this lemma is exactly the same as that of Lemma \ref{telescoping lemma} but with Lemma \ref{Truncated Taylor expansion} replaced by Lemma \ref{strong truncated Taylor expansion for self-dual}.
      \end{proof}

The following lemma is used in the proof of the next lemma.
      
\begin{lemma}\label{Strong truncated Taylor expansion for qp-zp for self-dual}
        Let $\dfrac{r + 1}{2} \leq n \leq r \leq p - 2$ and $z_0 \in \Zp$. Fix $x \in \bQ$. Define
        \[
            f(z) = p^xz^{r - n}(1 - zz_0)^n\logL(1 - zz_0)\mathbbm{1}_{p\Zp}.
        \]
        For $0 \leq a \leq p^{h - 1} - 1$, $0 \leq \alpha \leq p - 1$ and $0 \leq j \leq n - 1$, if
        \begin{enumerate}
            \item $\dfrac{r + 3}{2} \leq n \leq r$, $x \geq -1.5$, and $h \geq 3$, or
            \item $n = \dfrac{r + 1}{2}$, $x \geq - 1$ and $h \geq 4$,
        \end{enumerate}
        then we have
        \[
            f^{(j)}(a + \alpha p^{h - 1}) \equiv f^{(j)}(a) + \alpha p^{h - 1}f^{(j + 1)}(a) + \cdots + \frac{(\alpha p^{h - 1})}{(n - 1 - j)!}f^{(n - 1)}(a) \mod (p^{h - 1})^{r/2 - j}\pi.
        \]
        Moreover, we have
        \begin{eqnarray*}
            && f^{(j)}(a + \alpha p^{h - 1})(z - a - \alpha p^{h - 1})^j\mathbbm{1}_{a + \alpha p^{h - 1}+ p^h\Zp} \\
            && \equiv \left[f^{(j)}(a) + \alpha p^{h - 1}f^{(j + 1)}(a) + \cdots + \frac{(\alpha p^{h - 1})}{(n - 1 - j)!}f^{(n - 1)}(a)\right](z - a - \alpha p^{h - 1})^j\mathbbm{1}_{a + \alpha p^{h - 1} + p^h \Zp}
        \end{eqnarray*}
        modulo $\pi \latticeL{k}$.
    \end{lemma}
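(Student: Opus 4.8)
The plan is to mimic, essentially verbatim, the proof of the first congruence in Lemma~\ref{Truncated Taylor expansion} (the case $g(z) = p^x z^{r-n}(1-zz_0)^n\logL(1-zz_0)\mathbbm{1}_{p\Zp}$), but now tracking the estimates carefully in the two regimes allowed by the hypotheses. First I would note that since $f(z)$ carries $\mathbbm{1}_{p\Zp}$ as a factor, all $f^{(j)}$ do too (by the product-rule formula \eqref{Derivative formula for the qp-zp part}), so the asserted congruence is vacuously true whenever $p \nmid a$; hence we may assume $p \mid a$. Then I would write the Taylor expansion of $f^{(j)}$ around $a$ in the ball $a + p^{h-1}\Zp$ (legitimate since $f$ is analytic there once $p \mid a$, the argument $1 - z z_0$ staying a unit):
\[
    f^{(j)}(a + \alpha p^{h - 1}) = f^{(j)}(a) + \cdots + \frac{(\alpha p^{h - 1})^{n - 1 - j}}{(n - 1 - j)!}f^{(n - 1)}(a) + \sum_{l \geq 1}\frac{(\alpha p^{h - 1})^{n - 1 - j + l}}{(n - 1 - j + l)!}f^{(n - 1 + l)}(a),
\]
so the error is the tail sum, and it remains to bound the valuation of its $l$-th term from below by something exceeding $(h-1)(r/2 - j)$.

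Next I would invoke \eqref{Derivative formula for the qp-zp part} to see that $v_p(f^{(m)}(a)) \geq x$ for all $m$ (the binomial coefficients, the factorials $(r-n)!/(r-n-j+m)!$, the $(-z_0)^m$, and the bracketed factor are all integral, using $\logL$ has non-negative valuation on units and $p \mid a$ so $1 - az_0$ is a unit), whence the $l$-th tail term has valuation at least
\[
    x + (h - 1)(n - 1 - j + l) - v_p\bigl((n - 1 - j + l)!\bigr) \geq x + (h-1)(n-1-j+l) - \frac{n-1-j+l}{p-1},
\]
using $v_p(t!) \leq \frac{t}{p-1}$. The remaining arithmetic is a two-line check: in regime (1), $n \geq \frac{r+3}{2}$, $x \geq -1.5$, $h \geq 3$, the displayed lower bound for $l=1$ is at least $-1.5 + 2(n-j) - \frac{2(n-j)}{p-1} > (h-1)(r/2-j)$ since $n - j \geq \frac{r+3}{2} - j$ and $p \geq 5$; for $l \geq 2$ it only increases (each extra unit of $l$ adds $h-1 - \frac{1}{p-1} > 0$). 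In regime (2), $n = \frac{r+1}{2}$, $x \geq -1$, $h \geq 4$, the $l=1$ bound is $-1 + 3(n-j) - \frac{3(n-j)}{p-1} > 3(r/2 - j) = (h-1)(r/2-j)$ because $n - j = \frac{r+1}{2} - j > \frac{r}{2} - j$ and the $\frac{1}{p-1}$ loss is absorbed by $p \geq 5$ and the $+1$ in $\frac{r+1}{2}$; again $l \geq 2$ is easier. Finally, the ``moreover'' statement — passing from the congruence on the $f^{(j)}(a + \alpha p^{h-1})$ to the congruence on $f^{(j)}(a + \alpha p^{h-1})(z - a - \alpha p^{h-1})^j \mathbbm{1}_{a + \alpha p^{h-1} + p^h\Zp}$ modulo $\pi\latticeL{k}$ — follows immediately from Lemma~\ref{Integers in the lattice} applied with $h$ replaced by $h$, $j$ in the role of the exponent and $z_0 = a + \alpha p^{h-1}$: a coefficient of $p$-adic valuation at least $(h-1)(r/2 - j) + (\text{positive slack})$ times $(z - z_0)^j\mathbbm{1}_{z_0 + p^h\Zp}$ lands in $\pi\latticeL{k}$.

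The main obstacle — really the only place needing care — is getting the regime (2) inequality right, since there $n - j$ can be as large as $\frac{r+1}{2}$ but $j$ can be as large as $n - 1 = \frac{r-1}{2}$, so $n - j$ is as small as $1$ while $r/2 - j$ is then $r/2 - \frac{r-1}{2} = \frac{1}{2}$; one must check $x + (h-1)\cdot 1 \cdot l - v_p(l!) \geq -1 + 3l - \frac{l}{p-1} > 3 \cdot \frac{1}{2}$, i.e. $3l - \frac{l}{p-1} > \frac{5}{2}$, which holds for all $l \geq 1$ once $p \geq 5$ (for $l = 1$: $3 - \frac14 = \frac{11}{4} > \frac52$). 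This is exactly why $h \geq 4$ (not $h \geq 3$) is needed in case (2), just as in Lemma~\ref{strong truncated Taylor expansion for self-dual}(2); the worst case is $j = n-1$, $l = 1$. Everything else is bookkeeping parallel to Lemma~\ref{Truncated Taylor expansion}.
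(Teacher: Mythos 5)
Your argument is essentially the paper's own proof: reduce to $p \mid a$ using the $\mathbbm{1}_{p\Zp}$ factor, Taylor-expand $f^{(j)}$ on $a+p^{h-1}\Zp$, bound the tail term of index $l$ below by $x+(h-1)(n-1-j+l)-v_p((n-1-j+l)!)$ via the derivative formula \eqref{Derivative formula for the qp-zp part} and $v_p(t!)\le \tfrac{t}{p-1}$, observe that case (2) forces $h\ge 4$ because the $l=1$ term fails at $h=3$, and deduce the ``moreover'' statement from Lemma~\ref{Integers in the lattice}. The only blemish is the factorial estimate in your numerical checks, where you wrote losses of $\tfrac{2(n-j)}{p-1}$ and $\tfrac{3(n-j)}{p-1}$ instead of $\tfrac{n-j}{p-1}$ (the $h-1$ factor multiplies only the power of $p^{h-1}$, not the $v_p$ of the factorial); with the correct bound the required strict inequalities hold with room to spare, so the argument is sound.
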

    \begin{proof}
        Since $f(z)$ has $\mathbbm{1}_{p\Zp}$ as a factor, this lemma is vacuously true if $p \nmid a$. So assume that $p \mid a$. The $j^{\mathrm{th}}$ derivative of $f$ is given in \eqref{Derivative formula for the qp-zp part}. We expand $f^{(j)}(a + \alpha p^{h - 1})$ as a Taylor series:
        \begin{eqnarray*}
            f^{(j)}(a + \alpha p^{h - 1}) & = & f^{(j)}(a) + \cdots + \frac{(\alpha p^{h - 1})^{n - 1 - j}}{(n - 1 - j)}f^{(n - 1)}(a) \\
            && + \sum_{l \geq 1}\frac{(\alpha p^{h - 1})^{n - 1 - j + l}}{(n - 1 - j + l)!}f^{(n - 1 + l)}(a).
        \end{eqnarray*}
        Using the derivative formula \eqref{Derivative formula for the qp-zp part}, we see that the valuation of the $l^{\mathrm{th}}$ summand in the second line above is greater than or equal to $x + (h - 1)(n - 1 - j + l) - v_p((n - 1 - j + l)!)$.
        \begin{enumerate}
            \item Note that $-1.5 + (h - 1)(n - 1 - j + l) - v_p((n - 1 - j + l)!) > (h - 1)(r/2 - j)$ is true for any $h \geq 3$ if and only if it is true for $h = 3$. We check this inequality for $h = 3$. Indeed, the inequality is clearly true using the estimate $v_p(t!) \leq \frac{t}{p - 1}.$
            \item Note that $-1 + (h - 1)(n - 1 - j + l) - v_p((n - 1 - j + l)!)$ is not greater than $(h - 1)(r/2 - j)$ for $h = 3$ and $l = 1$. Therefore we have to take $h \geq 4$. We then check the inequality $-1 + (h - 1)(n - 1 - j + l) - v_p((n - 1 - j + l)!) > (h - 1)(r/2 - j)$ for $h = 4$. Indeed, this is clear for $l = 1$ and for $l \geq 2$, we see this again using the estimate $v_p(t!) \leq \frac{t}{p - 1}$.
        \end{enumerate}

        The second claim in this lemma follows from the first claim by using Lemma \ref{Integers in the lattice}.
    \end{proof}

    The following lemma is used to show that $g(z) \mathbbm{1}_{\qp \setminus \zp} \equiv 0 \mod \pi \latticeL{k}$ for the functions $g(z)$ appearing (twice in) Proposition~\ref{Refined hard inductive steps for self-dual}, Proposition~\ref{Final theorem for the hardest case in self-dual}, Proposition~\ref{nu in between -0.5 and 0.5}.
    
        \begin{lemma}\label{Telescoping Lemma in self-dual for qp-zp}
            Let $\dfrac{r + 1}{2} \leq n \leq r \leq p - 2$. For $i$ in a finite indexing set $I$, let $\lambda_i \in E$ and $z_i \in \Zp$ be such that $\sum_{i \in I}\lambda_i z_i^j = 0$ for all $0 \leq j \leq n$. If $n = \dfrac{r + 1}{2}$, then we also assume that $\sum_{i \in I}\lambda_i z_i^{n + 1} = 0$. Fix an $x \in \bQ$. Then for
            \[
                f(z) = \sum_{i \in I}p^x \lambda_i z^{r - n}(1 - zz_i)^n\logL(1 - zz_i)\mathbbm{1}_{p\Zp},
            \]
            we have
            \begin{enumerate}
                \item $f(z) \equiv 0 \mod \pi \latticeL{k}$ if $\dfrac{r + 3}{2} \leq n \leq r$ and $x \geq -1.5$.
                \item $f(z) \equiv 0 \mod \pi \latticeL{k}$ if $n = \dfrac{r + 1}{2}$ and $x \geq -1$.
            \end{enumerate}
        \end{lemma}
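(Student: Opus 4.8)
The plan is to mirror the proof of Lemma~\ref{qp-zp part is 0} in the two self-dual regimes: reduce $f(z)$ modulo $\pi\latticeL{k}$ to a single finite ``truncated Taylor polynomial'' and then show directly that this polynomial lies in $\pi\latticeL{k}$ via Lemma~\ref{Integers in the lattice}. The only genuine differences from Lemma~\ref{qp-zp part is 0} are that $x$ is now allowed to be as small as $-1.5$ (case 1) and that in case 2 the reduction stops at $f_3(z)$ rather than $f_2(z)$; both force a slightly more careful valuation count.

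First I would reduce $f$ to a finite Taylor approximation at high level. In both cases $f$ is smooth on $\zp$ — it is analytic on the clopen set $p\zp$, where $\logL$ coincides with the usual $\log$, and it vanishes off $p\zp$ — so $f\in\sC^{r/2}(\zp,E)$ and Proposition~\ref{Convergence proposition} gives $f(z)\equiv\tilde f_h(z)\bmod\pi\latticeL{k}$ for large $h$. In case 1 one has $n-1\geq\frac{r+1}{2}>r/2$, so Lemma~\ref{Congruence lemma} with $t=n-1$ upgrades this to $f(z)\equiv f_h(z)$; in case 2 one has $n-1=\lfloor r/2\rfloor$, so $f_h=\tilde f_h$ already. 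Then I would run the telescoping argument of Lemma~\ref{telescoping lemma} word for word, but with Lemma~\ref{Truncated Taylor expansion} replaced by Lemma~\ref{Strong truncated Taylor expansion for qp-zp for self-dual}: part~(1) of that lemma makes the passage from $f_h$ to $f_{h-1}$ legitimate for all $h\geq3$ in case 1, and part~(2) makes it legitimate for all $h\geq4$ in case 2. Descending as far as the hypotheses allow then gives $f(z)\equiv f_2(z)\bmod\pi\latticeL{k}$ in case 1 and $f(z)\equiv f_3(z)\bmod\pi\latticeL{k}$ in case 2.

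Next I would show that the surviving polynomial vanishes. Since $f$ carries the factor $\mathbbm{1}_{p\Zp}$, only the $p\mid a$ terms of $f_2(z)$ (resp.\ $f_3(z)$) contribute; writing $a=p\alpha$ and substituting $z=p\alpha$ into the derivative formula \eqref{Derivative formula for the qp-zp part}, I would expand each factor $(1-p\alpha z_i)^{n-m}\logL(1-p\alpha z_i)=\sum_{l\geq0}c_l(p\alpha z_i)^l$ with $v_p\bigl(c_l(p\alpha z_i)^l\bigr)\geq l-\lfloor\log_p l\rfloor$, and use the vanishing sums $\sum_i\lambda_i z_i^{\,j}=0$ for $0\leq j\leq n$ (resp.\ for $0\leq j\leq n+1$, invoking the extra hypothesis built into the statement when $n=\frac{r+1}{2}$) to kill every term with $m+l\leq n$ (resp.\ $m+l\leq n+1$). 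The index $m$ cancels between the degree $r-n-j+m$ of the polynomial factor and the index $l=n-m+1$ (resp.\ $n-m+2$) of the smallest surviving term, and every $\log_p$-argument that occurs is $<p$ because $r\leq p-2$; so one lands at
\[
  v_p\!\left(\frac{f^{(j)}(p\alpha)}{j!}\right)\ \geq\ x+r-j+1\ \ \text{(case 1)},\qquad
  v_p\!\left(\frac{f^{(j)}(p\alpha)}{j!}\right)\ \geq\ x+r-j+2\ \ \text{(case 2)}.
\]
Feeding this into Lemma~\ref{Integers in the lattice} — whose threshold for a term supported on $a+p^h\Zp$ is $(h-1)(r/2-j)$, with $h=2$ in case 1 and $h=3$ in case 2 — one checks $x+r-j+1\geq-1.5+r-j+1>r/2-j$ (using $r\geq3$, which holds since $\frac{r+3}{2}\leq r$) in case 1 and $x+r-j+2\geq-1+r-j+2>r-2j$ in case 2. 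Hence each summand of $f_2(z)$ (resp.\ $f_3(z)$) lies in $\pi\latticeL{k}$, and $f(z)\equiv0\bmod\pi\latticeL{k}$.

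The main obstacle is precisely this valuation count under the two tighter constraints. In case 2 the telescoping reaches only $f_3(z)$, so Lemma~\ref{Integers in the lattice} now demands the stronger estimate $v_p(f^{(j)}(p\alpha)/j!)>r-2j$; meeting it is exactly why the extra hypothesis $\sum_i\lambda_i z_i^{\,n+1}=0$ is imposed when $n=\frac{r+1}{2}$, since it buys the missing unit of valuation. As with the footnote to Lemma~\ref{qp-zp part is 0}, it is the blanket assumptions $p\geq5$ and $r\leq p-2$ that keep all the $\log_p$-corrections negligible and all these inequalities strict.
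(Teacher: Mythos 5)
Your proposal is correct and follows essentially the same route as the paper: telescope with Lemma~\ref{Strong truncated Taylor expansion for qp-zp for self-dual} (part (1), $h\geq 3$, down to $f_2$ in case 1; part (2), $h\geq 4$, down to $f_3$ in case 2), then bound $v_p\bigl(f^{(j)}(\alpha p)/j!\bigr)$ using \eqref{Derivative formula for the qp-zp part}, the vanishing sums up to degree $n$ (resp.\ $n+1$, which is exactly where the extra hypothesis for $n=\tfrac{r+1}{2}$ enters), and Lemma~\ref{Integers in the lattice} with $h=2$ (resp.\ $h=3$), with the same case distinction $r\geq 3$ in case 1. The only loose phrase is the claim that every $\log_p$-argument is $<p$ — the tail terms have $l\geq p$ — but since $l-\lfloor\log_p(l)\rfloor$ is non-decreasing the minimum is attained at the smallest index $n-m+1<p$ (resp.\ $n-m+2<p$, using $p>3$), so your stated valuation bounds hold and agree with the paper's ``short computation''.
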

        \begin{proof}
        We have
            \begin{enumerate}
                \item As in the proof of Lemma \ref{telescoping lemma} with Lemma \ref{Truncated Taylor expansion} replaced by Lemma \ref{Strong truncated Taylor expansion for qp-zp for self-dual}, we see that $f(z) \equiv f_2(z) \mod \pi \latticeL{k}$. We now prove that $f_2(z) \equiv 0 \mod \pi \latticeL{k}$.
                This case does not occur if $r = 1$ thanks to the inequality $\dfrac{r + 3}{2} \leq r$. So we assume that $r \geq 3$ (since $r$ is odd). Recall that
                \[
                    f_2(z) = \sum_{a = 0}^{p^2 - 1}\sum_{j = 0}^{n - 1}\frac{f^{(j)}(a)}{j!}(z - a)^j\mathbbm{1}_{a + p^2\Zp}.
                \]
                Working as in the proof of Lemma \ref{qp-zp part is 0}, we see that
                \[
                    v_p\left(\frac{f^{(j)}(\alpha p)}{j!}\right) \geq \min_{l \geq n - m + 1}\{-1.5 + r - n - j + m + l - \lfloor \log_p(l)\rfloor\}.
                \]
                A short computation shows that $-1.5 + r - n - j + m + l - \lfloor \log_p(l) \rfloor > r/2 - j$ for $l \geq n - m + 1$. Therefore Lemma \ref{Integers in the lattice} implies that $f_2(z) \equiv 0 \mod \pi \latticeL{k}$.
                \item In this case, $f(z) \equiv f_3(z) \mod \pi \latticeL{k}$ again using Lemma \ref{Strong truncated Taylor expansion for qp-zp for self-dual}. We show that $f_3(z) \equiv 0 \mod \pi \latticeL{k}$. Recall that
                \[
                    f_3(z) = \sum_{a = 0}^{p^3 - 1}\sum_{j = 0}^{n - 1}\frac{f^{(j)}(a)}{j!}(z - a)^j\mathbbm{1}_{a + p^3\Zp}.
                \]
                We similarly see that
                \[
                    v_p\left(\frac{f^{j}(\alpha p)}{j!}\right) \geq \min_{l \geq n - m + 2}\{-1 + r - n - j + m + l - \lfloor \log_p(l)\rfloor\},
                \]
                A short computation using $p > 3$ shows that $-1 + r - n - j + m + l -\lfloor \log_p(l) \rfloor > 2(r/2 - j)$ for all $l \geq n - m + 2$. So $f_3(z) \equiv 0 \mod \pi \latticeL{k}$ by Lemma \ref{Integers in the lattice}. \qedhere
            \end{enumerate}
        \end{proof}

\begin{lemma}\label{Sum of the i neq a terms in self-dual}
    Let $\dfrac{r + 3}{2} < n \leq r$ and $0 \leq j \leq \dfrac{r + 1}{2}$. For $0 \leq a, \> i \leq p - 1$, let $a_i \in \Zp$ be such that $a - [i] = [a - i] + pa_i$. Set $\lambda_0 = 1 - p$ and $\lambda_i = 1$ when $1 \leq i \leq p - 1$. Then we have
    \[
        \sum_{\substack{i = 0 \\ i \neq a}}\frac{\lambda_i}{p^{1.5}}[a - i]^{n - j - 1}pa_i = \frac{a^{n - j}}{p^{0.5}(n - j)} \mod \pi.
    \]
\end{lemma}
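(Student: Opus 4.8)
The plan is to deduce this from a mild sharpening of Lemma~\ref{Sum of the i neq a terms}. First I would observe that, with the same $\lambda_i$, the same $a_i$ and the same exponent $n-j-1$, both sides of the asserted congruence are obtained from the corresponding quantities in Lemma~\ref{Sum of the i neq a terms} by dividing through by $\sqrt p=p^{0.5}$. Hence it suffices to prove that the difference
\[
    \sum_{\substack{i = 0 \\ i \neq a}}^{p-1} p^{-1}\lambda_i\,[a-i]^{n-j-1}\,pa_i \; - \; \frac{a^{n-j}}{n-j}
\]
is divisible by $p$ (not merely $\equiv 0 \bmod \pi$): since $E$ contains $\sqrt p$ we have $v_p(\pi)\le 1/2$, so dividing a quantity of valuation $\ge 1$ by $\sqrt p$ leaves something of valuation $\ge 1/2\ge v_p(\pi)$, which vanishes mod $\pi$. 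Thus the whole content is to re-examine the proof of Lemma~\ref{Sum of the i neq a terms} and check that the error it incurs is actually a multiple of $p$.

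Concretely, I would re-run that argument under the present hypotheses. From $\tfrac{r+3}{2}<n\le r$ and $0\le j\le\tfrac{r+1}{2}$ one gets $n-j>1$, hence $n-j\ge 2$; and $1\le n-j\le r\le p-2$. Because $n-j\ge 2$, the $i=a$ summand vanishes (as $[a-a]=[0]=0$ and $n-j-1\ge 1$), so the restricted sum equals the full sum over $0\le i\le p-1$. By Lemma~\ref{Main coefficient identites}(1) we have $\sum_i\lambda_i[i]^m=0$ for $0\le m\le p-2$, so expanding $(a-[i])^{n-j}$ binomially in $[i]$ and using $n-j\le p-2$ gives $\sum_i\lambda_i(a-[i])^{n-j}=0$ \emph{exactly}. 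Now write $a-[i]=[a-i]+pa_i$ and expand $(a-[i])^{n-j}$ by the binomial theorem in powers of $pa_i$: the terms with $(pa_i)^m$, $m\ge 2$, contribute after the factor $p^{-1}$ an error of valuation $\ge 1$ (the coefficients $\binom{n-j}{m}$ are $p$-integral since $n-j<p$); the degree-one term produces $(n-j)\sum_i p^{-1}\lambda_i[a-i]^{n-j-1}pa_i$; and the degree-zero term produces $\sum_i p^{-1}\lambda_i[a-i]^{n-j}=-[a]^{n-j}$ (reindex $a-i\mapsto b$ and invoke \eqref{sum of powers of roots of unity}, using $1\le n-j\le p-2<p-1$, together with $\lambda_0=1-p$). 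Solving, and using that $n-j$ is a $p$-adic unit and that $[a]^{n-j}\equiv a^{n-j}\bmod p$ (since $[a]\equiv a\bmod p$), the displayed difference above is a multiple of $p$; dividing by $\sqrt p$ then yields the lemma.

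The main (and essentially only) obstacle is bookkeeping of valuations: one must verify that every discarded tail genuinely carries a factor of $p$ — in particular the $m\ge 2$ terms of the binomial expansion carry $p^{m-1}$, and $[a]-a$ carries $p$ — so that we can afford to divide by $\sqrt p$ at the end. No new structural input beyond Lemma~\ref{Sum of the i neq a terms} and Lemma~\ref{Main coefficient identites}(1) is needed.
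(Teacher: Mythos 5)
Your proof is correct and is essentially the paper's own argument: the paper proves this lemma by rerunning the proof of Lemma~\ref{Sum of the i neq a terms} with the extra factor $p^{-0.5}$, noting $\pi \mid p^{0.5}$, which is exactly your observation that the error terms there (the $m\ge 2$ binomial terms and $[a]^{n-j}-a^{n-j}$) carry a full factor of $p$ and hence survive division by $\sqrt{p}$ modulo $\pi$.
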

\begin{proof}
  The proof is similar to the proof of Lemma~\ref{Sum of the i neq a terms}, noting that
  $\pi \mid p^{0.5}$.
\begin{comment}
    Since $[0] = 0$ and $n - j - 1 > 0$, we see that
    \[
        \sum_{\substack{i = 0 \\ i \neq a}}\frac{\lambda_i}{p^{1.5}}[a - i]^{n - j - 1}pa_i = \sum_{i = 0}\frac{\lambda_i}{p^{1.5}}[a - i]^{n - j - 1}pa_i.
    \]
    Consider
    \[
        \sum_{i = 0}^{p - 1}\frac{\lambda_i}{p^{1.5}}(a - [i])^{n - j} \equiv \sum_{i = 0}^{p - 1}\frac{\lambda_i}{p^{1.5}}\left([a - i]^{n - j} + (n - j)[a - i]^{n - j - 1}pa_i\right) \mod \pi.
    \]
    Expanding the left side, plugging in the values of $\lambda_i$ and using equation \eqref{sum of powers of roots of unity}, we get
    \[
        0 \equiv \frac{-[a]^{n - j}}{p^{0.5}} + \sum_{i = 0}^{p - 1}\frac{\lambda_i}{p^{1.5}}(n - j)[a - i]^{n - j - 1}pa_i \mod \pi.
    \]
    Therefore
    \[
        \sum_{i = 0}^{p - 1}\frac{\lambda_i}{p^{1.5}}[a - i]^{n - j - 1}pa_i \equiv \frac{[a]^{n - j}}{p^{0.5}(n - j)} \equiv \frac{a^{n - j}}{p^{0.5}(n - j)} \mod \pi. \qedhere
      \]
\end{comment}
\end{proof}

We now state and prove the following (deep) inductive steps. The word `deep' is meant to signify that the power of $p$ appearing here is $p^{1/2}$ deeper than the power of $p$ appearing in the earlier inductive steps in Proposition \ref{inductive steps} (more precisely, equation \eqref{Refined inductive steps in leq}). 
\begin{Proposition}\label{Refined hard inductive steps for self-dual}
     Let $\nu > 0$. Then for each $0 \leq a \leq p - 1$ and $\dfrac{r + 3}{2} \leq n \leq r$, we have
    \[
        0 \equiv \sum_{\alpha = 0}^{p - 1}\sum_{l = r-n}^{{\frac{r+1}{2}}}p^{\frac{r-1}{2} - l}{n - \frac{r-1}{2} \choose n - r + l}\frac{(r-n)! n}{(r - l)\cdots (n - l)}\alpha^{\frac{r+1}{2} - l}(z - a - \alpha p)^l \mathbbm{1}_{a + \alpha p + p^2\Zp} \mod \pi \latticeL{k}.
    \]
\end{Proposition}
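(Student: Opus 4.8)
The plan is to follow the template laid out in the ``general strategy'' paragraph of Section~\ref{Common section}, but with the refined self-dual machinery. First I would fix $a$ and $n$ with $\frac{r+3}{2}\le n\le r$, and introduce the logarithmic function
\[
    g(z) = p^{-1}\sum_{i=0}^{p-1}\lambda_i\,(z-[i])^n\logL(z-[i]),
\]
where the $\lambda_i$ are the Teichm\"uller coefficients from Lemma~\ref{Main coefficient identites}~(1) (so $\lambda_0 = 1-p$, $\lambda_i = 1$ otherwise), which satisfy $\sum_i\lambda_i[i]^j = 0$ for $0\le j\le p-2$. Since $\frac{r+3}{2}\le n\le r\le p-2$ here, the exceptional case $n=r=p-1$ does not arise, so we genuinely have the vanishing sums for $0\le j\le n$, hence $g(z)=0$ in $\tB(k,\cL)$. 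I would then split $g = g\mathbbm{1}_{\Zp} + g\mathbbm{1}_{\Qp\setminus\Zp}$. For the $\Qp\setminus\Zp$ part, applying $w$ and using Lemma~\ref{Telescoping Lemma in self-dual for qp-zp}~(1) (legitimate because $\frac{r+3}{2}\le n$ and the relevant power of $p$ is $p^{-1}\ge p^{-1.5}$) shows $g(z)\mathbbm{1}_{\Qp\setminus\Zp}\equiv 0\bmod\pi\latticeL{k}$.

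For the $\Zp$ part, Lemma~\ref{Telescoping Lemma in self-dual}~(1) (with $x=-1\ge-1.5$, using $\nu>0$ and $\frac{r+3}{2}\le n\le r$) gives $g(z)\mathbbm{1}_{\Zp}\equiv g_2(z)\bmod\pi\latticeL{k}$. Now I would analyze $g_2(z)-g_1(z)$ and $g_1(z)$ exactly as in the proof of Proposition~\ref{nu geq}: the coefficient of $(z-a-\alpha p)^j\mathbbm{1}_{a+\alpha p+p^2\Zp}$ in $g_2(z)-g_1(z)$ is the expression \eqref{jth summand in g2 - g1 in geq}, and the $i\ne a$ / $i=a$ split together with the identities $\sum_i\lambda_i[i]^{n-1}=0$ yields (the analog of \eqref{g2 - g1 in geq}) $g_2(z)-g_1(z)\equiv\sum_{a}p^{-1}\cL(z-a)^n\mathbbm{1}_{a+p\Zp}\bmod\pi\latticeL{k}$. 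Since $n>\frac{r+1}{2}$ and $v_p(\cL)=\nu>0$, i.e.\ $v_p(p^{-1}\cL)=\nu-1\ge r/2-n$ is \emph{not} automatically enough, here is the crucial point: because $n\ge\frac{r+3}{2}$ we have $-1+\nu+\text{(degree }n\text{)}\ge -1+\nu+\frac{r+3}{2}>r/2$, so by Lemma~\ref{stronger bound for polynomials of large degree with varying radii} with $h=1$ this whole term dies. Likewise in $g_1(z)$ the terms with $j>r/2$ and $j<r/2$ are handled as in Proposition~\ref{nu geq} (using Lemma~\ref{stronger bound for polynomials of large degree with varying radii} and Lemma~\ref{Integers in the lattice}), while the $j=r/2$ term does not occur since $r$ is odd; and Lemma~\ref{Sum of the i neq a terms in self-dual} (in place of Lemma~\ref{Sum of the i neq a terms}, note the extra $p^{0.5}$) is what computes the surviving $j\le\frac{r-1}{2}$ contribution, with the key consequence that the resulting coefficients carry an extra factor $p^{-0.5}$ relative to the non-self-dual case. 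Putting everything together and using $g(z)=0$, I would obtain
\[
    0\equiv\sum_{a=0}^{p-1}\Big[\sum_{j=0}^{\frac{r-1}{2}}\binom{n}{j}\frac{a^{n-j}}{p^{0.5}(n-j)}(z-a)^j\Big]\mathbbm{1}_{a+p\Zp}\bmod\pi\latticeL{k},
\]
the self-dual analog of \eqref{hard inductive step}.

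From there the argument is a purely formal rerun of the manipulations in Proposition~\ref{inductive steps} and the derivation of \eqref{Refined inductive steps in leq}, carrying the harmless extra $p^{-0.5}$ along: subtract $n$ times the displayed relation from the binomial expansion of $z^n\mathbbm{1}_{\Zp}$, drop the $j=0$ and $a=0$ terms, apply $-w$, expand $z^{r-j}$, use Lemma~\ref{Integers in the lattice} to rename $a^{-1}\mapsto b$, kill the low-degree terms by Lemma~\ref{stronger bound for polynomials of large degree with varying radii}, reindex, and identify the inner $j$-sum with $c(n,l)$ via the partial-fraction/alternating-sum identities already established in Proposition~\ref{inductive steps}. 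Finally applying $\sum_{\alpha=0}^{p-1}\alpha^{n-\frac{r+1}{2}}\left(\begin{smallmatrix}1&0\\ \alpha p& 1\end{smallmatrix}\right)$-type translates (or rather, translating $z\mapsto z-a-\alpha p$ and summing as in the passage to \eqref{Refined inductive steps in leq}), together with the power-sum identity \eqref{sum over b identity}, collapses the double sum and produces exactly the claimed congruence, with the binomial coefficient appearing as $\binom{n-\frac{r-1}{2}}{n-r+l}$ because the ``$i$'' here is effectively $\frac{r+1}{2}$.

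The main obstacle I anticipate is bookkeeping of the valuations: every step that was ``strictly greater than $r/2-j$'' in the non-self-dual case now has to be shown ``strictly greater than $r/2-j$'' \emph{after} absorbing a $p^{-1/2}$, i.e.\ the slack must be genuinely positive and not just nonnegative. This is exactly why the hypothesis $\frac{r+3}{2}\le n$ (rather than $\frac{r+1}{2}\le n$) is imposed and why $\nu>0$ is strict; the case $n=\frac{r+1}{2}$ is deliberately excluded here and deferred to Proposition~\ref{Final theorem for the hardest case in self-dual}, where one must instead descend only to $g_3$ and invoke Lemma~\ref{Telescoping Lemma in self-dual}~(2). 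I would double-check the borderline inequalities $-1.5+(h-1)(n-1-j+l)-v_p((n-1-j+l)!)>(h-1)(r/2-j)$ and $-1.5+r-n-j+m+l-\lfloor\log_p l\rfloor>r/2-j$ (both already recorded in Lemmas~\ref{strong truncated Taylor expansion for self-dual} and~\ref{Telescoping Lemma in self-dual for qp-zp}) to be sure the $p\ge5$ hypothesis is what makes them go through, and otherwise the proof is a faithful transcription of the $\nu\ge i-r/2$ analysis of the preceding section.
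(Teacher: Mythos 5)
There is a genuine gap, and it comes in two places. First, your normalization is too shallow: you take $g(z)=p^{-1}\sum_i\lambda_i(z-[i])^n\logL(z-[i])$, but the whole content of this proposition is that it is ``$p^{1/2}$ deeper'' than Proposition~\ref{inductive steps}, and that depth has to be built into $g$ from the start, i.e.\ one must take the coefficient $\lambda_i/p^{1.5}$. With your $x=-1$ the computation of $g_1$ goes through Lemma~\ref{Sum of the i neq a terms} (no $p^{0.5}$), and you simply recover the shallow congruence \eqref{hard inductive step}; the extra factor $p^{-0.5}$ in your displayed intermediate congruence is unjustified, since Lemma~\ref{Sum of the i neq a terms in self-dual} applies to sums normalized by $p^{-1.5}$ and you cannot divide a congruence modulo $\pi\latticeL{k}$ by $p^{1/2}$ after the fact. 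Relatedly, with the correct normalization the surviving range in $g_1$ is $0\le j\le\frac{r+1}{2}$ (not $\frac{r-1}{2}$), and because of the deeper denominator Lemma~\ref{stronger bound for polynomials of large degree with varying radii} only lets you discard the terms with $l<\frac{r-1}{2}$ in the intermediate step \eqref{intermediate inductive step}; these two features are exactly what make the sum over $l$ in the statement run up to $\frac{r+1}{2}$, and your version (with $j\le\frac{r-1}{2}$) cannot produce that term. Note also that the refined self-dual Lemmas~\ref{strong truncated Taylor expansion for self-dual}, \ref{Telescoping Lemma in self-dual} and \ref{Telescoping Lemma in self-dual for qp-zp} exist precisely to allow $x\ge -1.5$; if $x=-1$ sufficed, they would be superfluous.

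Second, the statement includes $n=\frac{r+3}{2}$, and your uniform argument does not cover it: you only flag $n=\frac{r+1}{2}$ as deferred, but the real boundary case here is $n=\frac{r+3}{2}$, where (with the correct $p^{-1.5}$ normalization) the estimate $n-j-1.5>r/2-j$ fails, so the $i=a$ contributions no longer die and $g_2-g_1$ does not vanish. The paper handles this case by a separate, substantially longer argument: it switches to the nodes $z_i=i$ of Lemma~\ref{Main coefficient identites}~(3) to remove the $(a-[a])$-type terms, obtains $g_2-g_1$ with coefficients $\cL-H_{\frac{r+3}{2}-j}$, eliminates them via a matrix/Cramer computation whose key entry is $(-1)^{\frac{r-1}{2}}\frac{r+3}{4}\left(\cL-H_{-}-H_{+}\right)$ (killed only because $\nu>0$), and then processes $g_1$ using the power sums $S_l$ and the binomial identities from Gould invoked around \eqref{six term identity for inductive steps for n = r+3/2 in self-dual}. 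None of this machinery appears in your outline, so even after fixing the normalization your proof would establish the congruence only for $\frac{r+3}{2}<n\le r$, not for the full range claimed.
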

\begin{proof}
  We remark that $\dfrac{r + 3}{2} \leq r$ implies $r \neq 1$.
  % The proposition will not be needed later for $r = 1$.
  First assume that $\dfrac{r+3}{2} < n \leq r$. Set
    \[
        g(z) = \sum_{i = 0}^{p - 1}\frac{\lambda_i}{p^{1.5}}(z - [i])^n\logL(z - [i]),
    \]
    where $\lambda_0 = 1 - p$ and $\lambda_i = 1$ for $1 \leq i \leq p - 1$. Write $g(z) = g(z)\mathbbm{1}_{\Zp} + g(z)\mathbbm{1}_{\Qp \setminus \Zp}$. Using the identity $\sum_{i = 0}^{p - 1}\lambda_i [i]^j = 0$ for $0 \leq j \leq n \leq p - 2$ proved in Lemma~\ref{Main coefficient identites} (1), we see that
    \[
        w \cdot g(z)\mathbbm{1}_{\Qp \setminus \Zp} = \sum_{i = 0}^{p - 1}\frac{\lambda_i}{p^{1.5}}z^{r - n}(1 - z[i])^n\logL(1 - z[i])\mathbbm{1}_{p\Zp}.
    \]
    A priori, there is an extra term involving $\logL(z)$ on the right side of the equation above
    as in the proof of Proposition~\ref{nu geq}.
    However, due to the identity $\sum_{i = 0}^{p - 1}\lambda_i[i]^{n} = 0$, this term becomes $0$. Applying Lemma~\ref{Telescoping Lemma in self-dual for qp-zp} (1), we see that this function, hence also
    $g(z)\mathbbm{1}_{\qp \setminus\Zp}$,  is $0$ modulo $\pi \latticeL{k}$.
    % Therefore $g(z)\mathbbm{1}_{\Qp \setminus \Zp} \equiv 0 \mod \pi \latticeL{k}$.
    So $g(z) \equiv g(z)\mathbbm{1}_{\Zp} \mod \pi \latticeL{k}$.
    
    Lemma~\ref{Telescoping Lemma in self-dual} (1) implies that $g(z)\mathbbm{1}_{\Zp} \equiv g_2(z) \mod \pi \latticeL{k}$. Next, we analyze $g_2(z) - g_1(z)$ and $g_1(z)$ separately.

    For $0 \leq j \leq n - 1$, the coefficient of $(z - a - \alpha p)^j\mathbbm{1}_{a + \alpha p + p^2 \Zp}$ in $g_2(z) - g_1(z)$ is
    \begin{eqnarray}\label{jth summand in g2 - g1 in self-dual}
        && \!\!\!\!\!\!\!\! {n \choose j}\sum_{i = 0}^{p - 1} \frac{\lambda_i}{p^{1.5}}\Big[(a + \alpha p - [i])^{n - j}\logL(a + \alpha p - [i]) -
        (a - [i])^{n - j}\logL(a - [i]) \\
        && \!\!\!\!\!\!\!\! - \left. {n - j \choose 1}\alpha p (a - [i])^{n - j - 1}\logL(a - [i]) - \ldots - {n - j \choose n - j - 1}(\alpha p)^{n - j - 1}(a - [i])\logL(a - [i])\right]. \nonumber
    \end{eqnarray}

    First assume that $i \neq a$. Writing $\logL(a + \alpha p - [i]) = \logL(1 + (a - [i])^{-1}\alpha p) + \logL(a - [i])$, we see that the $i^{\mathrm{th}}$ summand in the display above is
    \[
        \frac{\lambda_i}{p^{1.5}}[(a - [i] + \alpha p)^{n - j}\logL(1 + (a - [i])^{-1}\alpha p) + (\alpha p)^{n - j}\logL(a - [i])].
    \]
    Since $0 \leq a \leq p - 1$, the condition $i \neq a$ implies that $i \not \equiv a \!\! \mod p$. Therefore the valuation of $\logL(a - [i])$ is greater than or equal to $1$. Since $n > \dfrac{r+3}{2} > \dfrac{r + 1}{2}$, the second term in the display above is congruent to $0$ modulo $p^{r/2 - j}\pi$. Moreover, expanding the first term, we see that the $i \neq a$ summand in expression \eqref{jth summand in g2 - g1 in self-dual} is
    \begin{eqnarray}\label{i not equal to a in self-dual}
        \frac{\lambda_i}{p^{1.5}}[(a - [i])^{n - j - 1}\alpha p + c_{n - j - 2}(a - [i])^{n - j - 2}(\alpha p)^2 + \cdots + c_0(\alpha p)^{n - j}] \mod p^{r/2 - j}\pi,
    \end{eqnarray}
    where $c_{n - j - 2}, \ldots, c_0 \in \Zp$.

    Next assume that $i = a$. Using $\nu > 0$, we see that each summand in expression \eqref{jth summand in g2 - g1 in self-dual} has valuation greater than or equal to $n - j - 1.5$. Since $n > \dfrac{r + 3}{2}$, we see that the $i = a$ summand in expression \eqref{jth summand in g2 - g1 in self-dual} is $0$ modulo $p^{r/2 - j}\pi$.
    %We claim that for any $m \in \Zp$ with $p \mid m$ and $0 \leq l \leq n - j - 1$, we have
    %\begin{eqnarray}\label{Replace logs with deep denominators by L}
    %    p^{-1.5}p^lm^{n - j - l}\logL(m) \equiv p^{-1.5}p^lm^{n - j - l}\cL \mod p^{r/2 - j}\pi.
    %\end{eqnarray}
    %The proof of this claim is similar to the proof of \eqref{Replace logs by L}. Using this claim, we replace the logarithms in expression \eqref{jth summand in g2 - g1 in self-dual} by $\cL$ modulo $p^{r/2 - j} \pi$. Therefore the $i = a$ summand in expression \eqref{jth summand in g2 - g1 in self-dual} becomes
    %\begin{eqnarray}\label{i equal to a in self-dual}
    %    \frac{\lambda_a}{p^{1.5}}(\alpha p)^{n - j}\cL \mod p^{r/2 - j}\pi.
    %\end{eqnarray}

    Using \eqref{i not equal to a in self-dual} and the identity $\sum_{i = 0}^{p - 1}\lambda_i(z - [i])^{n - 1} = 0$, we see that expression \eqref{jth summand in g2 - g1 in self-dual} becomes
    \begin{eqnarray}\label{Final expression for the jth term in g2 - g1 in the self-dual case}
        -\frac{\lambda_a}{p^{1.5}}[(a - [a])^{n - j - 1}(\alpha p) + \cdots + c_0(\alpha p)^{n - j}] \mod p^{r/2 - j}\pi.
    \end{eqnarray}
    Each term in \eqref{Final expression for the jth term in g2 - g1 in the self-dual case} has valuation greater than or equal to $n - j - 1.5$. Since $n > \dfrac{r + 3}{2}$, the display above is congruent to $0$ modulo $p^{r/2 - j}\pi$. Therefore
    %\[
     $  g_2(z) - g_1(z) \equiv 0 \mod \pi \latticeL{k}$.
    %\]

    Next we analyze
    \begin{eqnarray}\label{g1 in inductive steps in self-dual}
        g_1(z) = \sum_{a = 0}^{p - 1}\left[\sum_{j = 0}^{n - 1}\frac{g^{(j)}(a)}{j!}(z - a)^j\right]\mathbbm{1}_{a + p\Zp}.
    \end{eqnarray}
    Note that
    \begin{eqnarray}\label{jth summand in g1 in inductive steps in self-dual}
        \frac{g^{(j)}(a)}{j!} = {n \choose j}\sum_{i = 0}^{p - 1}\frac{\lambda_i}{p^{1.5}}(a - [i])^{n - j}\logL(a - [i]).
    \end{eqnarray}
    First assume that $j > \dfrac{r + 1}{2}$. For $i \neq a$, the $i^{\mathrm{th}}$ summand on the right side of equation \eqref{jth summand in g1 in inductive steps in self-dual} has valuation greater than or equal to $-0.5$. %Indeed, $0 \leq a, i \leq p - 1$ and $i \neq a$ implies that $a - [i]$ is a $p$-adic unit, which further implies that $p \mid \logL(a - [i])$.
    Using Lemma \ref{stronger bound for polynomials of large degree with varying radii}, we see that the $j > \dfrac{r + 1}{2}$ and $i \neq a$ terms do not contribute to $g_1(z)$. Next, the valuation of the $i = a$ summand in \eqref{jth summand in g1 in inductive steps in self-dual} is greater than or equal to $n - j - 1.5$. Since $n > \dfrac{r + 3}{2}$, Lemma \ref{stronger bound for polynomials of large degree with varying radii} implies that the $j > \dfrac{r + 1}{2}$ and $i = a$ terms do not contribute to $g_1(z)$ as well.
    
    Now assume that $j \leq \dfrac{r + 1}{2}$. The valuation of the $i = a$ summand on the right side of equation \eqref{jth summand in g1 in inductive steps in self-dual} is greater than or equal to $n - j - 1.5$. Since $n > \dfrac{r + 3}{2}$, we see that $n - j - 1.5 > 0$. Therefore using Lemma \ref{Integers in the lattice}, we see that the $i = a$ terms do not contribute to $g_1(z)$. Next, for $i \neq a$ write $a - [i] = [a - i] + pa_i$ for some $a_i \in \Zp$. Then $\logL(a - [i]) = \logL(1 + [a - i]^{-1}pa_i)$ since $[a - i]$ is a root of unity. Therefore the $i \neq a$ summand on the right side of \eqref{jth summand in g1 in inductive steps in self-dual} is
    \begin{eqnarray}\label{i not equal to a in g1 in self-dual}
        \frac{\lambda_i}{p^{1.5}}([a - i] + pa_i)^{n - j}\logL(1 + [a - i]^{-1}pa_i) & = & \frac{\lambda_i}{p^{1.5}}[a - i]^{n - j - 1}pa_i \mod \pi.
    \end{eqnarray}
    Plugging \eqref{i not equal to a in g1 in self-dual} into \eqref{jth summand in g1 in inductive steps in self-dual} and using Lemma \ref{Sum of the i neq a terms in self-dual}, we get
    \[
        \frac{g^{(j)}(a)}{j!} \equiv {n \choose j} \frac{a^{n - j}}{p^{0.5}(n - j)} \mod \pi.
    \]

    Therefore
    \[
        g_1(z) \equiv \sum_{a = 0}^{p - 1}\sum_{j = 0}^{\frac{r + 1}{2}}{n \choose j}\frac{a^{n - j}}{p^{0.5}(n - j)}(z - a)^j \mathbbm{1}_{a + p\Zp} \mod \pi \latticeL{k}.
    \]

    This shows that
    \begin{eqnarray*}\label{hard inductive steps for large n}
        0 \equiv \sum_{a = 0}^{p - 1} \left[ \sum_{j = 0}^{\frac{r + 1}{2}}{n \choose j}\frac{a^{n - j}}{p^{0.5}(n - j)}(z - a)^j \right] \mathbbm{1}_{a + p\Zp} \mod \pi \latticeL{k}.
    \end{eqnarray*}
    This is equation \eqref{hard inductive step} in the proof of the `shallow' inductive steps
    except that the sum over $j$ goes up to
    $\dfrac{r+1}{2}$ and there is an extra denominator $p^{0.5}$. 
    Proceeding exactly as in the proof of in Proposition~\ref{inductive steps}), we get
    \[
        \frac{1}{p^{0.5}}z^{r - n}\mathbbm{1}_{p\Zp} \equiv \sum_{b = 1}^{p - 1}\left[\sum_{l = 1}^{\frac{r + 1}{2}}\left(\sum_{j = 1}^{l} \frac{(-1)^j j}{n - j}{n \choose j}{r - j \choose r - l}\right)\frac{b^{r - l - n}}{p^{0.5}}(z - b)^{l}\right]\mathbbm{1}_{b + p\Zp} \mod \pi \latticeL{k}.
      \]
      %the only difference being that there is an extra $p^{0.5}$ in the denominator and the sum over
%      $l$ goes up to $\dfrac{r+1}{2}$.
      We remark that because of the slightly deeper denominator,
      Lemma~\ref{stronger bound for polynomials of large degree with varying radii} can
      only be used in the intermediate step \eqref{intermediate inductive step}
      to drop the $0 \leq l < \dfrac{r - 1}{2}$ terms there.

      The expression inside the parentheses is $c(n, l)$ from Proposition \ref{inductive steps}. Applying the operator $\sum\limits_{\alpha = 0}^{p - 1}\alpha^{n - \frac{r - 1}{2}}\begin{pmatrix}1 & 0 \\ -a - \alpha p & p\end{pmatrix}$, we get
    \begin{eqnarray*}
        && \sum_{\alpha = 0}^{p - 1}\alpha^{n - \frac{r - 1}{2}}p^{n - \frac{r + 1}{2}}(z - a - \alpha p)^{r - n}\mathbbm{1}_{a + \alpha p + p^2\Zp} \\
        && \equiv \sum_{\alpha = 0}^{p - 1}\sum_{b = 1}^{p - 1}\sum_{l = 1}^{\frac{r + 1}{2}}\alpha^{n - \frac{r - 1}{2}} p^{\frac{r - 1}{2} - l}c(n, l)b^{r - l - n}(z - a - (\alpha + b)p)^l\mathbbm{1}_{a + (\alpha + b)p + p^2\Zp} \mod \pi \latticeL{k}.
    \end{eqnarray*}
    Making the substitution $\alpha + b = \lambda$ and summing over $b$ using \eqref{sum over b identity} (but with $i = \lceil {r/2} \rceil$), we get
    \begin{eqnarray*}
        && \!\!\!\!\!\!\!\! \sum_{\lambda = 0}^{p - 1}\alpha^{n - \frac{r - 1}{2}}p^{n - \frac{r + 1}{2}}(z - a - \alpha p)^{r - n}\mathbbm{1}_{a + \alpha p + p^2\Zp} \\
        && \!\!\!\!\!\!\!\! \equiv \sum_{\lambda = 0}^{p - 1}\sum_{l = r - n}^{\frac{r + 1}{2}}p^{\frac{r - 1}{2} - l}\lambda^{\frac{r + 1}{2} - l}c(n, l)(-1)^{n - r + l + 1}{n - \frac{r - 1}{2} \choose n - r + l}(z - a - \lambda p)^l\mathbbm{1}_{a + \lambda p + p^2\Zp} \!\!\!\!\! \mod \pi \latticeL{k}.
    \end{eqnarray*}
    Renaming $\lambda$ as $\alpha$ and substituting the value of $c(n, l)$ obtained in
    Proposition~\ref{inductive steps},
    % Moreover, note that the extra $-1$ part of $c(n, l)$ for $l = r - n$ cancels with the left side.
    proves the proposition for $\dfrac{r + 3}{2} < n \leq r$.

    Next, assume that $n = \dfrac{r + 3}{2}$. Set
    \[
        g(z) = \sum_{i = 0}^{p - 1}\frac{\lambda_i}{p^{1.5}}(z - i)^{\frac{r + 3}{2}}\logL(z - i),
    \]
    where the $\lambda_i$ are as in Lemma~\ref{Main coefficient identites} (3), so that
    $\lambda_0 = 1$ and $\sum\limits_{i = 0}^{p - 1}\lambda_i i^j = 0$
    for $0 \leq j \leq p - 2$. Note that this function is slightly different from the one we chose for $\dfrac{r + 3}{2} < n \leq r$. This change is made to eliminate the terms analogous to the $(a-[a])$ terms in equation \eqref{Final expression for the jth term in g2 - g1 in the self-dual case}. We have to make this change since the bound $n - j - 1.5 > r/2 - j$ is no longer true for $n = \dfrac{r + 3}{2}$. 
    
    We write $g(z) = g(z)\mathbbm{1}_{\Zp} + g(z)\mathbbm{1}_{\Qp \setminus \Zp}$. Note that
    \[
        w \cdot g(z)\mathbbm{1}_{\Qp \setminus \Zp}  = \sum_{i = 0}^{p - 1}\frac{\lambda_i}{p^{1.5}}z^{\frac{r - 3}{2}}(1 - zi)^{\frac{r + 3}{2}}\logL(1 - zi)\mathbbm{1}_{p\Zp}.
    \]
    Applying Lemma~\ref{Telescoping Lemma in self-dual for qp-zp} (1), we see
    that $g(z)\mathbbm{1}_{\Qp \setminus \Zp} \equiv 0 \mod \pi \latticeL{k}$. Therefore as before, $g(z) \equiv g(z)\mathbbm{1}_{\Zp} \mod \pi \latticeL{k}$. Now Lemma~\ref{Telescoping Lemma in self-dual} (1) implies that $g(z)\mathbbm{1}_{\Zp} \equiv g_2(z) \mod \pi \latticeL{k}$. We analyze $g_2(z) - g_1(z)$ and $g_1(z)$ separately.

    The coefficient of $(z - a - \alpha p)^j\mathbbm{1}_{a + \alpha p + p^2\Zp}$ in $g_2(z) - g_1(z)$ is
    \begin{eqnarray}\label{jth summand in g2 - g1 in self-dual for n = r+3/2}
        && \!\!\!\!\!\!\!\!\!\!\!\! {\frac{r + 3}{2} \choose j}\sum_{i = 0}^{p - 1} \frac{\lambda_i}{p^{1.5}}\Big[(a + \alpha p - i)^{\frac{r + 3}{2} - j}\logL(a + \alpha p - i) -
        (a - i)^{\frac{r + 3}{2} - j}\logL(a - i) \\
        && \!\!\!\!\!\!\!\!\!\! - \!\!\left. {\frac{r + 3}{2} - j \choose 1}\alpha p (a - i)^{\frac{r + 3}{2} - j - 1}\!\logL(a - i) \! - \! \ldots - \! {\frac{r + 3}{2} - j \choose \frac{r + 3}{2} - j - 1}\!(\alpha p)^{\frac{r + 3}{2} - j - 1}(a - i)\logL(a - i)\right]. \nonumber
    \end{eqnarray}
    First assume $i \neq a$. Writing $\logL(a + \alpha p - i) = \logL(1 + (a - i)^{-1}\alpha p) + \logL(a - i)$, we see that the $i^{\mathrm{th}}$ summand in the display above is
    \[
        \frac{\lambda_i}{p^{1.5}}\left[(a - i + \alpha p)^{\frac{r + 3}{2} - j}\logL(1 + (a - i)^{-1}\alpha p) + (\alpha p)^{\frac{r + 3}{2} - j}\logL(a - i)\right].
    \]
    % Since $0 \leq a, i \leq p - 1$, the difference $a - i$ is a $p$-adic unit. Therefore
    As before $p \mid \logL(a - i)$. So the second summand above is $0$ modulo $p^{r/2 - j}\pi$. Moreover, expanding the first term, we see that the $i \neq a$ summand in expression \eqref{jth summand in g2 - g1 in self-dual for n = r+3/2} is
    \begin{eqnarray}\label{i neq a summand in g2 - g1 in self-dual for n = r+3/2}
        \frac{\lambda_i}{p^{1.5}}[(a - i)^{\frac{r + 1}{2} - j}\alpha p + c_{\frac{r - 1}{2} - j}(a - i)^{\frac{r - 1}{2} - j}(\alpha p)^2 + \cdots + c_0(\alpha p)^{\frac{r + 3}{2} - j}] \mod p^{r/2 - j}\pi,
    \end{eqnarray}
    where $c_{\frac{r - 1}{2}-j}, \ldots, c_0 \in \Zp$ with 
    %\sum\limits_{l = 1}^{\frac{r + 3}{2} - j}{\frac{r + 3}{2} - j \choose l}\frac{(-1)^{l - 1}}{l} = 
    %A short computation involving integrals shows that $c_0 =
    $c_0 = H_{\frac{r + 3}{2} - j}$, by \eqref{Small derivative formula for polynomial times logs}.
    
    Next, assume that $i = a$. Then the $i^{\mathrm{th}}$ summand in expression \eqref{jth summand in g2 - g1 in self-dual for n = r+3/2} is
    \[
        \frac{\lambda_a}{p^{1.5}}(\alpha p)^{\frac{r + 3}{2} - j}\logL(\alpha p) = \frac{\lambda_a}{p^{1.5}}(\alpha p)^{\frac{r + 3}{2} - j}\cL + \frac{\lambda_a}{p^{1.5}}(\alpha p)^{\frac{r + 3}{2} - j}\logL(\alpha).
    \]
    The second term in the display above is $0$ modulo $p^{r/2 - j}\pi$. This is clear if $\alpha = 0$. If $\alpha \neq 0$, then the valuation of the second term in the right side of the display above is greater than or equal to $(r + 2)/2 - j > r/2 - j$ since $\alpha$ is a $p$-adic unit. So the $i = a$ summand in \eqref{jth summand in g2 - g1 in self-dual for n = r+3/2} is
    \begin{eqnarray}\label{i = a summand in g2 - g1 in self-dual for n = r+3/2}
        \frac{\lambda_a}{p^{1.5}}(\alpha p)^{\frac{r + 3}{2} - j}\cL \mod p^{r/2 - j}\pi.
    \end{eqnarray}
    Putting \eqref{i neq a summand in g2 - g1 in self-dual for n = r+3/2} and \eqref{i = a summand in g2 - g1 in self-dual for n = r+3/2} in \eqref{jth summand in g2 - g1 in self-dual for n = r+3/2} and using the identity $\sum\limits_{i = 0}^{p - 1}\lambda_i(z - i)^{p - 2} = 0$, we see that the coefficient of $(z - a - \alpha p)^j\mathbbm{1}_{a + \alpha p + p^2\Zp}$ in $g_2(z) - g_1(z)$ is
    \[
        {\frac{r + 3}{2} \choose j}\frac{\lambda_a}{p^{1.5}}(\alpha p)^{\frac{r + 3}{2} - j}(\cL - H_{\frac{r + 3}{2} - j}) \mod p^{r/2 - j}\pi.
    \]
    Note $\lambda_a \equiv 1 \!\! \mod p$. Also, the
    valuation of the coefficient of $(z - a - \alpha p)^{\frac{r + 1}{2}}$ is greater than
    or equal to $0.5$, so  we may drop the $j = \dfrac{r + 1}{2}$ term, by
    Lemma \ref{stronger bound for polynomials of large degree with varying radii}. We get
    %\begin{eqnarray*}
    %    g_2(z) - g_1(z) & \equiv & \sum_{a = 0}^{p - 1}\sum_{\alpha = 0}^{p - 1}\left[\sum_{j = 0}^{\frac{r + 1}{2}}{\frac{r + 3}{2} \choose j}\frac{1}{p^{1.5}}(\alpha p)^{\frac{r + 3}{2} - j}\left(\cL - H_{\frac{r + 3}{2} - j}\right)(z - a - \alpha p)^j\right]\mathbbm{1}_{a + \alpha p + p^2\Zp} \\
    %    && \qquad \qquad \mod \pi \latticeL{k}.
    %\end{eqnarray*}
    %The valuation of the coefficient of $(z - a - \alpha p)^{\frac{r + 1}{2}}$ is greater than or equal to $0.5$. Using Lemma \ref{stronger bound for polynomials of large degree with varying radii}, we drop the $j = \dfrac{r + 1}{2}$ term to get
    \begin{eqnarray}\label{g2 - g1 in self-dual for n = r+3/2}
        g_2(z) - g_1(z) & \equiv & \sum_{a = 0}^{p - 1}\sum_{\alpha = 0}^{p - 1}\left[\sum_{j = 0}^{\frac{r - 1}{2}}{\frac{r + 3}{2} \choose j}\frac{1}{p^{1.5}}(\alpha p)^{\frac{r + 3}{2} - j}\left(\cL - H_{\frac{r + 3}{2} - j}\right)(z - a - \alpha p)^j\right]\mathbbm{1}_{a + \alpha p + p^2\Zp} \nonumber \\
        && \qquad \qquad \mod \pi \latticeL{k}.
    \end{eqnarray}

    We claim that $g_2(z) - g_1(z) \equiv 0 \!\!\mod \pi \latticeL{k}$. Since $\nu > 0$, Proposition \ref{inductive steps} gives us
    \[
        z^{r - n}\mathbbm{1}_{p\Zp} \equiv \sum_{b = 1}^{p - 1}\sum_{j = 1}^{\frac{r - 1}{2}}c(n, j)b^{r - j - n}(z - b)^j\mathbbm{1}_{b + p\Zp} \mod \pi \latticeL{k}
    \]
    for every $\dfrac{r + 3}{2} \leq n \leq r$. Applying $\sum\limits_{\alpha = 0}^{p - 1}\alpha^{n - \frac{r - 3}{2}}\begin{pmatrix}1 & 0 \\ -a - \alpha p & p\end{pmatrix}$ to both sides,
    %we get
    %\begin{eqnarray*}
    %    && \sum_{\alpha = 0}^{p - 1}\alpha^{n - \frac{r - 3}{2}}p^{n - r/2}(z - a - \alpha p)^{r - n}\mathbbm{1}_{a + \alpha p + p^2 \Zp} \\
    %    && \equiv \sum_{\alpha = 0}^{p - 1}\sum_{b = 1}^{p - 1}\sum_{j = 1}^{\frac{r - 1}{2}}\alpha^{n - \frac{r - 3}{2}}p^{r/2 - j}c(n, j)b^{r - j - n}(z - a - (\alpha + b)p)^j\mathbbm{1}_{a + (\alpha + b)p + p^2\Zp} \!\! \mod \pi \latticeL{k}.
    %\end{eqnarray*}
    making the substitution $\alpha + b = \lambda$ and summing over $b$ using \eqref{sum over b identity},
    %we get
    %\begin{eqnarray*}
    %    && \!\!\!\!\!\!\!\!\! \sum_{\alpha = 0}^{p - 1}\alpha^{n - \frac{r - 3}{2}}p^{n - r/2}(z - a - \alpha p)^{r - n}\mathbbm{1}_{a + \alpha p + p^2 \Zp} \\
    %    && \!\!\!\!\!\!\!\!\! \equiv \sum_{\lambda = 0}^{p - 1}\sum_{j = r - n}^{\frac{r - 1}{2}}\!\!\!p^{r/2 - j}\lambda^{\frac{r + 3}{2} - j}c(n, j)(-1)^{n - r + j + 1}\!{n - \frac{r - 3}{2} \choose n - r + j}\!(z - a - \lambda p)^j\mathbbm{1}_{a + \lambda p + p^2\Zp} \!\!\! \mod \pi \latticeL{k}.
    %\end{eqnarray*}
    renaming $\lambda$ as $\alpha$ and substituting the value of $c(n, j)$ from Proposition \ref{inductive steps}, %Moreover, note that the $-1$ part of $c(n, j)$ for $j = r - n$ cancels with the expression on the left side. We therefore
    as before,  we get %the following equation
    \begin{eqnarray}
      \label{Applied inductive steps in the hard inductive steps for g2-g1}
      0 & \equiv & \sum_{\alpha = 0}^{p - 1}\sum_{j = r - n}^{\frac{r - 1}{2}}\frac{1}{p^{1.5}}\frac{(r - n)! n}{(r - j)\cdots (n - j)}{n - \frac{r - 3}{2} \choose n - r + j}(\alpha p)^{\frac{r + 3}{2} - j}(z - a -\alpha p)^j \mathbbm{1}_{a + \alpha p + p^2\Zp} \\
      &  &  \qquad \qquad \mod \pi \latticeL{k} \nonumber
    \end{eqnarray}
    for each $\dfrac{r + 3}{2} \leq n \leq r$.

    We wish to write the $a^{\mathrm{th}}$ summand on the right side of equation \eqref{g2 - g1 in self-dual for n = r+3/2} as the sum of $x_n \in E$ times \eqref{Applied inductive steps in the hard inductive steps for g2-g1}  for  $\dfrac{r + 3}{2} \leq n \leq r$ and $x_{\frac{r+1}{2}} \in E$ times 
    \[
        \sum_{\alpha = 0}^{p - 1}\frac{1}{p^{1.5}}(\alpha p)^{2}(z - a - \alpha p)^{\frac{r - 1}{2}}\mathbbm{1}_{a + \alpha p + p^2\Zp}.
      \]
    To solve for $x_{\frac{r+1}{2}}$, we compare the coefficients of $\sum\limits_{\alpha = 0}^{p - 1} \dfrac{1}{p^{1.5}}(\alpha p)^{\frac{r+3}{2}-j} (z - a - \alpha p)^{j}\mathbbm{1}_{a + \alpha p + p^2\Zp}$
    %To solve for $x_{\frac{r+1}{2}}$, we compare the coefficients of the like terms in $\dfrac{1}{p^{1.5}}(\alpha p)^2(z - a - \alpha p)^{\frac{r - 1}{2}}\mathbbm{1}_{a + \alpha p + p^2\Zp}$
    for $0 \leq j \leq \dfrac{r-1}{2}$ to get the following matrix equation
    \[
        \! \begin{pmatrix}
                {\frac{r + 3}{2} \choose 0}\frac{r}{r} & 0 & 0 & \cdots & 0 \\
                {\frac{r + 3}{2} \choose 1}\frac{r}{r - 1} & {\frac{r + 1}{2} \choose 0}\frac{r - 1}{(r - 1)(r - 2)} & 0 & \cdots & 0 \\
                {\frac{r + 3}{2} \choose 2}\frac{r}{r - 2} & {\frac{r + 1}{2} \choose 1}\frac{r - 1}{(r - 2)(r - 3)} & {\frac{r - 1}{2} \choose 0}\frac{2!(r - 2)}{(r - 2)(r - 3)(r - 4)} & \cdots & 0 \\
                \vdots & \vdots & \vdots & & \vdots \\
                \binom{\frac{r + 3}{2}}{\frac{r - 1}{2}}\frac{r}{\frac{r + 1}{2}} & {\frac{r + 1}{2} \choose \frac{r - 3}{2}}\frac{r - 1}{(\frac{r + 1}{2})(\frac{r - 1}{2})} & {\frac{r - 1}{2} \choose \frac{r - 5}{2}}\frac{2!(r - 2)}{(\frac{r + 1}{2})(\frac{r - 1}{2})(\frac{r - 3}{2})} & \cdots & 1
            \end{pmatrix} \!\! 
            \begin{pmatrix}x_r \\ x_{r - 1} \\ x_{r - 2} \\ \vdots \\ x_{\frac{r+1}{2}} \end{pmatrix} \!\! = \!\! \begin{pmatrix}{\frac{r + 3}{2} \choose 0}\left(\cL - H_{\frac{r + 3}{2}}\right) \\ {\frac{r + 3}{2} \choose 1}\left(\cL - H_{\frac{r + 1}{2}}\right) \\ {\frac{r + 3}{2} \choose 2}\left(\cL - H_{\frac{r - 1}{2}}\right) \\ \vdots \\ {\frac{r + 3}{2} \choose \frac{r - 1}{2}}(\cL - H_2) \end{pmatrix}.
    \]
    It can be checked\footnote{\label{space-time 10}See Appendix~\ref{Footnote 10}.} that
      %To save time and space we have  checked this for small values of $r$ on a computer and extrapolated to all $r$.
    $x_{\frac{r+1}{2}} = (-1)^{\frac{r - 1}{2}}\dfrac{r + 3}{4}\left(\cL - H_{\frac{r - 1}{2}} - H_{\frac{r + 1}{2}}\right)$ (for $r > 1$). Therefore \eqref{g2 - g1 in self-dual for n = r+3/2} becomes
    \begin{eqnarray*}
        g_2(z) - g_1(z) \!\! & \!\! \equiv \!\! & \!\! \sum_{a = 0}^{p - 1}\sum_{\alpha = 0}^{p - 1}(-1)^{\frac{r - 1}{2}}\frac{r + 3}{4}\left(\cL - H_{\frac{r - 1}{2}} - H_{\frac{r + 1}{2}}\right)\frac{1}{p^{1.5}}(\alpha p)^2(z - a - \alpha p)^{\frac{r - 1}{2}}\mathbbm{1}_{a + \alpha p + p^2\Zp} \\
        && \qquad \qquad \mod \pi \latticeL{k}.
    \end{eqnarray*}
    Since $\nu > 0$, we see that the valuation of the coefficient of $(z - a - \alpha p)^{\frac{r - 1}{2}}\mathbbm{1}_{a + \alpha p + p^2\Zp}$ is strictly greater than $0.5$. Therefore Lemma \ref{Integers in the lattice} implies that
    \[
        g_2(z) - g_1(z) \equiv 0 \mod \pi \latticeL{k}.
    \]

    Next, consider
    \begin{eqnarray}\label{g1 in self-dual inductive step for n = r+3/2}
        g_1(z) = \sum_{a = 0}^{p - 1}\left[\sum_{j = 0}^{\frac{r + 1}{2}}\frac{g^{(j)}(a)}{j!}(z - a)^j\right]\mathbbm{1}_{a + p\Zp}.
    \end{eqnarray}
    Recall
    \[
        \frac{g^{(j)}(a)}{j!} = {\frac{r + 3}{2} \choose j}\sum_{i = 0}^{p - 1}\frac{\lambda_i}{p^{1.5}}(a - i)^{\frac{r + 3}{2} - j}\logL(a - i).
    \]
    The $i = a$ term is $0$. For $i \neq a$, write $a - i = [a - i] + pa_i$ for some $a_i \in \Zp$. Note that $\logL(a - i) = \logL(1 + [a - i]^{-1}pa_i)$ since $[a - i]$ is a root of unity. Expanding $\logL(1 + [a - i]^{-1}pa_i)$, we get
    \begin{eqnarray*}
        \frac{g^{(j)}(a)}{j!} & \equiv & {\frac{r + 3}{2} \choose j}\sum_{\substack{i = 0  \\ i \neq a}}^{p - 1}\frac{\lambda_i}{p^{1.5}}[a - i]^{\frac{r + 1}{2} - j}pa_i \mod \pi \\
        & \equiv & {\frac{r + 3}{2} \choose j}\sum_{\substack{i = 0 \\ i \neq a}}^{p - 1}\frac{\lambda_i}{p^{1.5}}\left(\frac{(a - i)^{\frac{r + 3}{2} - j} - [a - i]^{\frac{r + 3}{2} - j}}{\frac{r + 3}{2} - j}\right) \mod \pi.
    \end{eqnarray*}
    Recall that $\lambda_i = 1 \mod p$. Since the expression in the parentheses is divisible by $p$, we can replace $\lambda_i$ in the last line above by $1$. Moreover, using \eqref{sum of powers of roots of unity}, we get
    \[
        \frac{g^{(j)}(a)}{j!} \equiv {\frac{r + 3}{2} \choose j}\sum_{i = 0}^{p - 1}\frac{(a - i)^{\frac{r + 3}{2} - j}}{p^{1.5}\left(\frac{r + 3}{2} - j\right)} \mod \pi.
    \]
    Expanding $(a - i)^{\frac{r + 3}{2} - j} = \sum\limits_{l = 0}^{\frac{r + 3}{2} - j}{\frac{r + 3}{2} - j \choose l}(-1)^l i^la^{\frac{r + 3}{2} - j - l}$, we get
    \[
        \frac{g^{(j)}(a)}{j!} \equiv {\frac{r + 3}{2} \choose j}\sum_{l = 0}^{\frac{r + 3}{2} - j}\frac{(-1)^{l}}{\frac{r + 3}{2} - j}\frac{S_l}{p}{\frac{r + 3}{2} - j \choose l}\frac{a^{\frac{r + 3}{2} -j - l}}{p^{0.5}} \mod \pi,
    \]
    where $S_l$ is defined in equation \eqref{Definition of S_l}. Putting this equation in \eqref{g1 in self-dual inductive step for n = r+3/2}, we get
    \[
        g_1(z) \equiv \sum_{a = 0}^{p - 1}\sum_{j = 0}^{\frac{r + 1}{2}}\sum_{l = 0}^{\frac{r + 3}{2} - j}\frac{(-1)^l}{\frac{r + 3}{2} - j} \frac{S_l}{p} {\frac{r + 3}{2} \choose j}{\frac{r + 3}{2} - j \choose l}\frac{a^{\frac{r + 3}{2} - j - l}}{p^{0.5}}(z - a)^j\mathbbm{1}_{a + p\Zp} \mod \pi \latticeL{k}.
    \]

    Since we have shown that $g(z) \equiv g_1(z) \mod \pi \latticeL{k}$, we get
    \begin{eqnarray}\label{primitive hard inductive step for n = r+3/2}
        \!\!\!\!\!\!\!\! 0 \equiv \sum_{a = 0}^{p - 1}\sum_{j = 0}^{\frac{r + 1}{2}}\sum_{l = 0}^{\frac{r + 3}{2} - j}\frac{(-1)^l}{\frac{r + 3}{2} - j} \frac{S_l}{p} {\frac{r + 3}{2} \choose j}{\frac{r + 3}{2} - j \choose l}\frac{a^{\frac{r + 3}{2} - j - l}}{p^{0.5}}(z - a)^j\mathbbm{1}_{a + p\Zp} \!\!\!\! \mod \pi \latticeL{k}.
    \end{eqnarray}
    
    Next, for $l = 0, 1, 2, \ldots, \dfrac{r + 3}{2}$, using the binomial expansion we have
    \begin{eqnarray*}
      \frac{1}{p^{0.5}}z^{\frac{r + 3}{2}-l}\mathbbm{1}_{\Zp} & \equiv & \sum_{a = 0}^{p - 1}\sum_{j = 0}^{\frac{r + 3}{2}-l}{\frac{r + 3}{2} -l \choose j} \frac{1}{p^{0.5}}a^{\frac{r + 3}{2}-l- j}(z - a)^j\mathbbm{1}_{a + p\Zp}
   \end{eqnarray*}
\begin{comment}
     \begin{eqnarray*}
        \frac{1}{p^{0.5}}z^{\frac{r + 3}{2}}\mathbbm{1}_{\Zp} & \equiv & \sum_{a = 0}^{p - 1}\sum_{j = 0}^{\frac{r + 3}{2}}{\frac{r + 3}{2} \choose j} \frac{1}{p^{0.5}}a^{\frac{r + 3}{2}- j}(z - a)^j\mathbbm{1}_{a + p\Zp} \\
        \frac{1}{p^{0.5}}z^{\frac{r + 1}{2}}\mathbbm{1}_{\Zp} & \equiv & \sum_{a = 0}^{p - 1}\sum_{j = 0}^{\frac{r + 1}{2}} {\frac{r + 1}{2} \choose j}\frac{1}{p^{0.5}}a^{\frac{r + 1}{2} - j}(z - a)^j\mathbbm{1}_{a + p\Zp} \\
        \frac{1}{p^{0.5}}z^{\frac{r - 1}{2}}\mathbbm{1}_{\Zp} & \equiv & \sum_{a = 0}^{p - 1}\sum_{j = 0}^{\frac{r - 1}{2}}{\frac{r - 1}{2} \choose j}\frac{1}{p^{0.5}}a^{\frac{r - 1}{2} - j}(z - a)^{j}\mathbbm{1}_{a + p\Zp} \\
        & \vdots & \\
        \frac{1}{p^{0.5}}\mathbbm{1}_{\Zp} & \equiv & \sum_{a = 0}^{p - 1}\sum_{j = 0}^{0}{0 \choose j}\frac{1}{p^{0.5}}a^{-j}(z - a)^j\mathbbm{1}_{a + p\Zp}.
    \end{eqnarray*}
\end{comment}
% Label the equations above by $l = 0, \> 1, \> 2, \> \ldots, \dfrac{r + 3}{2}$, respectively.
Multiplying the $l^{\mathrm{th}}$ equation above by 
     $   \frac{(-1)^{l}}{\frac{r + 3}{2}}\frac{S_l}{p}{\frac{r + 3}{2} \choose l}, $
    adding all the equations, 
    %\begin{eqnarray*}
    %    \sum_{l = 0}^{\frac{r + 3}{2}}\frac{(-1)^l}{\frac{r + 3}{2}}\frac{S_l}{p}{\frac{r + 3}{2} \choose l}\frac{1}{p^{0.5}}z^{\frac{r + 3}{2} - l}\mathbbm{1}_{\Zp} & \equiv & \sum_{a = 0}^{p - 1}\sum_{l = 0}^{\frac{r + 3}{2}}\sum_{j = 0}^{\frac{r + 3}{2} - l}\frac{(-1)^l}{\frac{r + 3}{2}}\frac{S_l}{p}{\frac{r + 3}{2} \choose l}{\frac{r + 3}{2} - l \choose j}\frac{1}{p^{0.5}}a^{\frac{r + 3}{2} - l - j} \\
    %    && \qquad \qquad \qquad \qquad (z - a)^j\mathbbm{1}_{a + p\Zp} \mod \pi \latticeL{k}.
    % \end{eqnarray*}
    interchanging the sums over $l$ and $j$ and dropping the $j = \dfrac{r + 3}{2}$ term
    using Lemma~\ref{stronger bound for polynomials of large degree with varying radii}, we get
    \begin{eqnarray*}
        \sum_{l = 0}^{\frac{r + 3}{2}}\frac{(-1)^l}{\frac{r + 3}{2}}\frac{S_l}{p}{\frac{r + 3}{2} \choose l}\frac{1}{p^{0.5}}z^{\frac{r + 3}{2} - l}\mathbbm{1}_{\Zp} & \equiv & \sum_{a = 0}^{p - 1}\sum_{j = 0}^{\frac{r + 1}{2}}\sum_{l = 0}^{\frac{r + 3}{2} - j}\frac{(-1)^l}{\frac{r + 3}{2}}\frac{S_l}{p}{\frac{r + 3}{2} \choose l}{\frac{r + 3}{2} - l \choose j}\frac{a^{\frac{r + 3}{2} - l - j}}{p^{0.5}} \\
        && \qquad \qquad \qquad \qquad (z - a)^j\mathbbm{1}_{a + p\Zp} \mod \pi \latticeL{k}.
    \end{eqnarray*}
    Subtracting \eqref{primitive hard inductive step for n = r+3/2} from the
    equation above using 
    %\begin{eqnarray*}
    $    {\frac{r + 3}{2} \choose l}{\frac{r + 3}{2} - l \choose j} = {\frac{r + 3}{2} \choose j}{\frac{r + 3}{2} - j \choose l}$,    
    %\end{eqnarray*}
    %we get
    %\begin{eqnarray*}
    %    \sum_{l = 0}^{\frac{r + 3}{2}}\frac{(-1)^l}{\frac{r + 3}{2}}\frac{S_l}{p}{\frac{r + 3}{2} \choose l}\frac{1}{p^{0.5}}z^{\frac{r + 3}{2} - l}\mathbbm{1}_{\Zp} & \equiv & \sum_{a = 0}^{p - 1}\sum_{j = 0}^{\frac{r + 1}{2}}\sum_{l = 0}^{\frac{r + 3}{2} - j}(-1)^l\frac{-j}{\frac{r + 3}{2}\left(\frac{r + 3}{2} - j\right)}\frac{S_l}{p}{\frac{r + 3}{2} \choose l}{\frac{r + 3}{2} - l \choose j} \\
    %    && \qquad \qquad \qquad \frac{a^{\frac{r + 3}{2} - l - j}}{p^{0.5}}(z - a)^j\mathbbm{1}_{a + p\Zp} \mod \pi \latticeL{k}.
    %\end{eqnarray*}
    cancelling $\dfrac{r + 3}{2}$ on both sides, dropping the superfluous $j = 0$ summand on the
    right side, and bringing the $1 \leq l \leq \dfrac{r + 3}{2}$ terms from the left side
    to the right side, we get 
    \begin{eqnarray*}
        \frac{1}{p^{0.5}}z^{\frac{r + 3}{2}}\mathbbm{1}_{\Zp} & \equiv & \sum_{l = 1}^{\frac{r + 3}{2}}(-1)^{l + 1}{\frac{r + 3}{2} \choose l}\frac{S_l}{p}\frac{1}{p^{0.5}}z^{\frac{r + 3}{2} - l}\mathbbm{1}_{\Zp} \\
        && + \sum_{a = 0}^{p - 1}\sum_{j = 1}^{\frac{r + 1}{2}}\sum_{l = 0}^{\frac{r + 3}{2} - j}(-1)^{l + 1}\frac{S_l}{p}\frac{j}{\frac{r + 3}{2} - j}{\frac{r + 3}{2} \choose j}{\frac{r + 3}{2} - j \choose l} \frac{a^{\frac{r + 3}{2} - j - l}}{p^{0.5}}(z - a)^j\mathbbm{1}_{a + p\Zp} \\
        && \qquad \qquad \mod \pi \latticeL{k}.
    \end{eqnarray*}
    Separating the $a = 0$ and $a \neq 0$ terms from the second sum above,
    dropping the superfluous $a = 0$ terms
    %$a = 0$ part of the second sum contains no terms with $l < \dfrac{r + 3}{2} - j$ since the power of $a$ in the second sum is positive unless $l = \dfrac{r + 3}{2} - j$,
    %Therefore we get
    %\begin{eqnarray*}
    %    \frac{1}{p^{0.5}}z^{\frac{r + 3}{2}}\mathbbm{1}_{\Zp} & \equiv & \sum_{l = 1}^{\frac{r + 3}{2}}(-1)^{l + 1}{\frac{r + 3}{2} \choose l}\frac{S_l}{p}\frac{1}{p^{0.5}}z^{\frac{r + 3}{2} - l}\mathbbm{1}_{\Zp} \\
    %    && + \sum_{j = 1}^{\frac{r + 1}{2}}(-1)^{\frac{r + 3}{2} - j + 1}\frac{S_{\frac{r + 3}{2} - j}}{p}\frac{j}{\frac{r + 3}{2} - j}{\frac{r + 3}{2} \choose \frac{r + 3}{2} - j}\frac{1}{p^{0.5}}z^j\mathbbm{1}_{p\Zp} \\
    %    && + \sum_{a = 1}^{p - 1}\sum_{j = 1}^{\frac{r + 1}{2}}\sum_{l = 0}^{\frac{r + 3}{2} - j}(-1)^{l + 1}\frac{S_l}{p}\frac{j}{\frac{r + 3}{2} - j}{\frac{r + 3}{2} \choose j}{\frac{r + 3}{2} - j \choose l} \frac{a^{\frac{r + 3}{2} - j - l}}{p^{0.5}}(z - a)^j\mathbbm{1}_{a + p\Zp} \\
    %    && \qquad \qquad \mod \pi \latticeL{k}.
    %\end{eqnarray*}
    and applying $w$ to both sides,
    % = \begin{pmatrix}0 & 1 \\ 1 & 0\end{pmatrix}$,
    we get
    \begin{eqnarray*}
        -\frac{1}{p^{0.5}}z^{\frac{r - 3}{2}}\mathbbm{1}_{p\Zp} & \equiv & \sum_{l = 1}^{\frac{r + 3}{2}}(-1)^{l}{\frac{r + 3}{2} \choose l}\frac{S_l}{p}\frac{1}{p^{0.5}}z^{\frac{r - 3}{2} + l}\mathbbm{1}_{p\Zp} \\
        && + \sum_{j = 1}^{\frac{r + 1}{2}} (-1)^{\frac{r + 3}{2} - j}\frac{S_{\frac{r + 3}{2} - j}}{p}\frac{j}{\frac{r + 3}{2} - j}{\frac{r + 3}{2} \choose \frac{r + 3}{2} - j}\frac{1}{p^{0.5}}z^{r - j}\mathbbm{1}_{\Zp} \\
        && + \sum_{a = 1}^{p - 1}\sum_{j = 1}^{\frac{r + 1}{2}}\sum_{l = 0}^{\frac{r + 3}{2} - j}(-1)^{l + 1}\frac{S_l}{p}\frac{j}{\frac{r + 3}{2} - j}{\frac{r + 3}{2} \choose j}{\frac{r + 3}{2} - j \choose l}\frac{a^{\frac{r + 3}{2} - j - l}}{p^{0.5}} \\
        && \qquad \qquad \qquad \qquad \qquad z^{r - j}(1 - az)^j\mathbbm{1}_{a^{-1} + p\Zp} \mod \pi \latticeL{k}.
    \end{eqnarray*}
    Splitting $\mathbbm{1}_{\Zp}$ in the second summand as a sum of $\mathbbm{1}_{p\Zp}$ and $\mathbbm{1}_{a^{-1} + p\Zp}$ for $1 \leq a \leq p - 1$, we get
    \begin{eqnarray*}
        -\frac{1}{p^{0.5}}z^{\frac{r - 3}{2}}\mathbbm{1}_{p\Zp} & \equiv & \sum_{l = 1}^{\frac{r + 3}{2}}(-1)^{l}{\frac{r + 3}{2} \choose l}\frac{S_l}{p}\frac{1}{p^{0.5}}z^{\frac{r - 3}{2} + l}\mathbbm{1}_{p\Zp} \\
        && + \sum_{j = 1}^{\frac{r + 1}{2}} (-1)^{\frac{r + 3}{2} - j}\frac{S_{\frac{r + 3}{2} - j}}{p}\frac{j}{\frac{r + 3}{2} - j}{\frac{r + 3}{2} \choose \frac{r + 3}{2} - j}\frac{1}{p^{0.5}}z^{r - j}\mathbbm{1}_{p\Zp} \\
        && + \sum_{a = 1}^{p - 1}\sum_{j = 1}^{\frac{r + 1}{2}}(-1)^{\frac{r + 3}{2} - j}\frac{S_{\frac{r + 3}{2} - j}}{p}\frac{j}{\frac{r + 3}{2} - j}{\frac{r + 3}{2} \choose \frac{r + 3}{2} - j}\frac{1}{p^{0.5}}z^{r - j}\mathbbm{1}_{a^{-1} + p\Zp} \\
        && + \sum_{a = 1}^{p - 1}\sum_{j = 1}^{\frac{r + 1}{2}}\sum_{l = 0}^{\frac{r + 3}{2} - j}(-1)^{l + 1}\frac{S_l}{p}\frac{j}{\frac{r + 3}{2} - j}{\frac{r + 3}{2} \choose j}{\frac{r + 3}{2} - j \choose l}\frac{a^{\frac{r + 3}{2} - j - l}}{p^{0.5}} \\
        && \quad \quad \quad \quad \quad \quad \quad \quad \qquad z^{r - j}(1 - az)^j\mathbbm{1}_{a^{-1} + p\Zp}\mod \pi \latticeL{k}.
    \end{eqnarray*}
    Using Lemma \ref{stronger bound for polynomials of large degree with varying radii}, we drop the $l > 2$ terms and the $j < \dfrac{r - 1}{2}$ terms from the first and the second sum on the right side, respectively, and combine them into the first sum below. Moreover, we expand $z^{r - j} = [(z - a^{-1}) + a^{-1}]^{r - j}$ in the third and the fourth sum and replace $a^{-1}$ by $b$ to get
    \begin{eqnarray*}
        -\frac{1}{p^{0.5}}z^{\frac{r - 3}{2}}\mathbbm{1}_{p\Zp} & \equiv & \sum_{l = 1}^{2}(-1)^l\frac{S_l}{p}{\frac{r + 3}{2} \choose l}\frac{r + 3}{2l}\frac{1}{p^{0.5}}z^{\frac{r - 3}{2} + l}\mathbbm{1}_{p\Zp} \\
        && + \sum_{b = 1}^{p - 1}\sum_{j = 1}^{\frac{r + 1}{2}}\sum_{m = 0}^{r - j}(-1)^{\frac{r + 3}{2} - j}\frac{S_{\frac{r + 3}{2} - j}}{p}\frac{j}{\frac{r + 3}{2} - j}{\frac{r + 3}{2} \choose \frac{r + 3}{2} - j}\frac{1}{p^{0.5}}{r - j \choose m}b^{m} \\
        && \quad \quad \quad \quad \quad \quad \>\> (z - b)^{r - j - m}\mathbbm{1}_{b + p\Zp} \\
        && + \sum_{b = 1}^{p - 1}\sum_{j = 1}^{\frac{r + 1}{2}}\sum_{l = 0}^{\frac{r + 3}{2} - j}\sum_{m = \frac{r - 1}{2}}^{r - j}(-1)^{l + j + 1}\frac{S_l}{p}\frac{j}{\frac{r + 3}{2} - j}{\frac{r + 3}{2} \choose j}{\frac{r + 3}{2} - j \choose l}{r - j \choose m} \\
        && \quad \quad \quad \quad \quad \quad \quad \quad \quad \qquad \frac{b^{l + m - \frac{r + 3}{2}}}{p^{0.5}}(z - b)^{r - m}\mathbbm{1}_{b + p\Zp} \mod \pi \latticeL{k}.
    \end{eqnarray*}
    We have dropped the $m < \dfrac{r - 1}{2}$ terms in the third line on the right side of the equation above using Lemma \ref{stronger bound for polynomials of large degree with varying radii}.
    Applying the operator $\sum_{\alpha = 0}^{p - 1}\alpha^2\begin{pmatrix}1 & 0 \\ - a - \alpha p & p\end{pmatrix}$, we get
    %\begin{eqnarray}\label{Intermediate step in the hard inductive steps for n = r + 3/2}
    %    && -\sum_{\alpha = 0}^{p - 1}\alpha^2 p(z - a - \alpha p)^{\frac{r - 3}{2}}\mathbbm{1}_{a + \alpha p + p^2\Zp} \nonumber \\
    %    && \equiv \sum_{\alpha = 0}^{p - 1}\sum_{l = 1}^{2}\alpha^2p^{r/2}(-1)^l\frac{S_l}{p}{\frac{r + 3}{2}  \choose l}\frac{r + 3}{2l}\frac{1}{p^{0.5}}\left(\frac{z - a - \alpha p}{p}\right)^{\frac{r - 3}{2} + l}\mathbbm{1}_{a + \alpha p + p^2\Zp} \nonumber \\
    %    && + \sum_{\alpha = 0}^{p - 1}\sum_{b = 1}^{p - 1}\sum_{j = 1}^{\frac{r + 1}{2}}\sum_{m = 0}^{r - j}\alpha^2 p^{r/2}(-1)^{\frac{r + 3}{2} - j}\frac{S_{\frac{r + 3}{2} - j}}{p}\frac{j}{\frac{r + 3}{2} - j}{\frac{r + 3}{2} \choose \frac{r + 3}{2} - j}\frac{1}{p^{0.5}}{r - j \choose m}b^m \nonumber \\
    %    && \qquad\qquad\qquad\qquad \left(\frac{z - a - (\alpha + b)p}{p}\right)^{r - j - m}\mathbbm{1}_{a + (\alpha + b)p + p^2\Zp} \nonumber \\
    %    && + \sum_{\alpha = 0}^{p - 1}\sum_{b = 1}^{p - 1}\sum_{j = 1}^{\frac{r + 1}{2}}\sum_{l = 0}^{\frac{r + 3}{2} - j}\sum_{m = \frac{r - 1}{2}}^{r - j}\alpha^2 p^{r/2}(-1)^{l + j + 1}\frac{S_l}{p}\frac{j}{\frac{r + 3}{2} - j}{\frac{r + 3}{2} \choose j}{\frac{r + 3}{2} - j \choose l}{r - j \choose m}\frac{b^{l + m - \frac{r + 3}{2}}}{p^{0.5}} \nonumber \\
    %    && \qquad\qquad\qquad\qquad \left(\frac{z - a - (\alpha + b)p}{p}\right)^{r - m}\mathbbm{1}_{a + (\alpha + b)p + p^2\Zp} \mod \pi \latticeL{k}.
    %\end{eqnarray}

    \begin{eqnarray}\label{Intermediate step in the hard inductive steps for n = r + 3/2}
        & - & \sum_{\alpha = 0}^{p - 1}\alpha^2 p(z - a - \alpha p)^{\frac{r - 3}{2}}\mathbbm{1}_{a + \alpha p + p^2\Zp} \nonumber \\
        & \equiv & \sum_{\alpha = 0}^{p - 1}\sum_{l = 1}^{2}\alpha^2p^{r/2}(-1)^l\frac{S_l}{p}{\frac{r + 3}{2}  \choose l}\frac{r + 3}{2l}\frac{1}{p^{0.5}}\left(\frac{z - a - \alpha p}{p}\right)^{\frac{r - 3}{2} + l}\mathbbm{1}_{a + \alpha p + p^2\Zp} \nonumber \\
        && + \sum_{\alpha = 0}^{p - 1}\sum_{b = 1}^{p - 1}\sum_{j = 1}^{\frac{r + 1}{2}}\sum_{m = 0}^{r - j}\alpha^2 p^{r/2}(-1)^{\frac{r + 3}{2} - j}\frac{S_{\frac{r + 3}{2} - j}}{p}\frac{j}{\frac{r + 3}{2} - j}{\frac{r + 3}{2} \choose \frac{r + 3}{2} - j}\frac{1}{p^{0.5}}{r - j \choose m}b^m \nonumber \\
        && \qquad\qquad\qquad\qquad\qquad \left(\frac{z - a - (\alpha + b)p}{p}\right)^{r - j - m}\mathbbm{1}_{a + (\alpha + b)p + p^2\Zp} \nonumber \\
        && + \sum_{\alpha = 0}^{p - 1}\sum_{b = 1}^{p - 1}\sum_{j = 1}^{\frac{r + 1}{2}}\sum_{l = 0}^{\frac{r + 3}{2} - j}\sum_{m = \frac{r - 1}{2}}^{r - j}\alpha^2 p^{r/2}(-1)^{l + j + 1}\frac{S_l}{p}\frac{j}{\frac{r + 3}{2} - j}{\frac{r + 3}{2} \choose j}{\frac{r + 3}{2} - j \choose l}{r - j \choose m}\frac{b^{l + m - \frac{r + 3}{2}}}{p^{0.5}} \nonumber \\
        && \qquad\qquad\qquad\qquad\qquad\qquad \left(\frac{z - a - (\alpha + b)p}{p}\right)^{r - m}\mathbbm{1}_{a + (\alpha + b)p + p^2\Zp} \!\!\mod \pi \latticeL{k}.
    \end{eqnarray}

    %First note that if $m = r - 1$ and $r = p - 2$, then $\sum\limits_{b = 1}^{p - 1}(\lambda - b)^2b^m = \sum\limits_{b = 1}^{p - 1}(\lambda - b)^2b^{p - 3} \equiv -1 \mod p$. Therefore when $r = p - 2$, then the $m = r - 1$ summand of the second sum on the right side of equation \eqref{Intermediate step in the hard inductive steps for n = r + 3/2} is
    %\[
    %    -\sum_{\lambda = 0}^{p - 1} p^{\frac{r - 1}{2}}(-1)^{\frac{r + 1}{2}}\frac{S_{\frac{r + 1}{2}}}{p}\frac{2}{r + 1}\frac{r + 3}{2}\mathbbm{1}_{a + \lambda p + p^2\Zp} \mod \pi \latticeL{k}.
    %\]
    %The characteristic function is the only term involving $\lambda$. Therefore using Lemma \ref{Integers in the lattice}, we see that the expression in the display above becomes 
    %\[
    %    -p^{\frac{r - 1}{2}}(-1)^{\frac{r + 1}{2}}\frac{S_{\frac{r + 1}{2}}}{p}\frac{2}{r + 1}\frac{r + 3}{2}\mathbbm{1}_{a + p\Zp} \equiv 0 \mod \pi \latticeL{k}.
    %\]
    %The $r = p - 2$ and $m = r - 1$ part of the second sum in equation \eqref{Intermediate step in the hard inductive steps for n = r + 3/2} is the only exceptional part while evaluating the sum over $b$.
    The first term on the right side of equation \eqref{Intermediate step in the hard inductive steps for n = r + 3/2} is the same in equation \eqref{six term identity for inductive steps for n = r+3/2 in self-dual}. Next we evaluate the sums over $b$ in the remaining two sums on the right side of equation \eqref{Intermediate step in the hard inductive steps for n = r + 3/2} by substituting $\lambda = \alpha + b$ and using
    \begin{eqnarray}\label{Sum over b identities for alpha^2}
      \sum_{b = 1}^{p - 1}(\lambda - b)^2b^t =
      \begin{cases}
          0    & \text{ if }  1 - p < t < -2 \text{ or } 0 < t < p - 3\\
          (-1)^{1-t}{2 \choose -t}\lambda^{2 + t}  & \text{ if } -2 \leq t \leq 0 \\
          -1   & \text{ if } t = p - 3
      \end{cases} \mod p.
    \end{eqnarray}
    Note that it is legal to do this by Lemma~\ref{Integers in the lattice}. This lemma 
    also allows us to replace $\lambda$ by its first $p$-adic digit.
    
    First consider the summands in the second sum on the right for $0 \leq m \leq p - 3$. Note that the summands with $0 < m < p - 3$ are $0$ modulo $\pi\latticeL{k}$ by the first case in \eqref{Sum over b identities for alpha^2}. The third case in \eqref{Sum over b identities for alpha^2} implies that the $m = p - 3$ summand is
    \[
        -\sum_{\lambda = 0}^{p - 1} \! p^{\frac{r - 1}{2}}\!(-1)^{\frac{r + 1}{2}}\!\frac{S_{\frac{r + 1}{2}}}{p}\frac{2}{r + 1}\frac{r + 3}{2}\mathbbm{1}_{a + \lambda p + p^2\Zp} \! \equiv \! -p^{\frac{r - 1}{2}}\!(-1)^{\frac{r + 1}{2}}\!\frac{S_{\frac{r + 1}{2}}}{p}\frac{r + 3}{r + 1}\mathbbm{1}_{a + p\Zp}\!\!\!\mod \pi \latticeL{k},
    \]
    which is $0$ modulo $\pi\latticeL{k}$ by Lemma \ref{Integers in the lattice} noting $r > 1$.
    % \footnote{This congruence is only valid for $r > 1$. The condition $\dfrac{r + 3}{2} \leq r$ implies that $r \geq 3$. Moreover, Proposition \ref{Refined hard inductive steps for self-dual} is not needed for $r = 1$.}.
    Finally using the second case in \eqref{Sum over b identities for alpha^2}, we simplify the $m = 0$ summand and write it as the second line on the right side of equation \eqref{six term identity for inductive steps for n = r+3/2 in self-dual}.
    Next we analyze the summands in the third sum on the right. Note that the power of $b$ in this summand is in the range $[-2, p - 4]$. If $l > 2$, then it is in the range $[1, p - 4]$ so equation \eqref{Sum over b identities for alpha^2} implies that these terms are zero modulo $\pi \latticeL{k}$. We consider the $l = 1, \> 2$ and $l = 0$ cases separately since retrospectively the $l = 0$ terms will be the important ones (cf. \ref{l = 0 are the important summands}). First assume that $l = 1$ or $2$. The terms with the power of $b$ in the range $[1, p - 4]$ are $0$ modulo $\pi \latticeL{k}$ as seen above. The power of $b$ in the third sum on the right side of equation \eqref{Intermediate step in the hard inductive steps for n = r + 3/2} is $0$ only for the pairs $(l = 1, m = (r + 1)/2)$ and $(l = 2, m = (r - 1)/2)$ and is $-1$ only for $(l = 1, m = (r - 1)/2)$. The power of $b$ cannot be $-2$. Using equation \eqref{Sum over b identities for alpha^2}, we simplify these and write them as the third, fourth and fifth lines on the right below. Finally, using the first two cases of  \eqref{Sum over b identities for alpha^2}, we write the $l = 0$ summand of the third sum on the
    right side of equation \eqref{Intermediate step in the hard inductive steps for n = r + 3/2} as the last two lines in equation \eqref{six term identity for inductive steps for n = r+3/2 in self-dual}: 
    \begin{eqnarray}\label{six term identity for inductive steps for n = r+3/2 in self-dual}
        & - & \sum_{\alpha = 0}^{p - 1}\alpha^2 p (z - a - \alpha p)^{\frac{r - 3}{2}}\mathbbm{1}_{a + \alpha p + p^2\Zp} \nonumber \\
        & \equiv & \sum_{\alpha = 0}^{p - 1}\sum_{l = 1}^{2}\alpha^2p^{1 - l}(-1)^l\frac{S_l}{p}{\frac{r + 3}{2}  \choose l}\frac{r + 3}{2l}\left(z - a - \alpha p\right)^{\frac{r - 3}{2} + l}\mathbbm{1}_{a + \alpha p + p^2\Zp} \nonumber \\
        && - \sum_{\lambda = 0}^{p - 1}\sum_{j = 1}^{\frac{r + 1}{2}}p^{j - \frac{r + 1}{2}}(-1)^{\frac{r + 3}{2} - j}\frac{S_{\frac{r + 3}{2} - j}}{p}\frac{j}{\frac{r + 3}{2} - j}{\frac{r + 3}{2} \choose \frac{r + 3}{2} - j}\lambda^2(z - a - \lambda p)^{r - j}\mathbbm{1}_{a + \lambda p + p^2\Zp} \nonumber \\
        && - \sum_{\lambda = 0}^{p - 1}\sum_{j = 1}^{\frac{r - 1}{2}}\lambda^2 (-1)^j\frac{S_1}{p}j{\frac{r + 3}{2} \choose j}{r - j \choose \frac{r + 1}{2}}(z - a - \lambda p)^{\frac{r - 1}{2}}\mathbbm{1}_{a + \lambda p + p^2\Zp} \nonumber \\
        && - \sum_{\lambda = 0}^{p - 1}\sum_{j = 1}^{\frac{r - 1}{2}}\lambda^2 p^{-1} (-1)^{j + 1}\frac{S_2}{p}\frac{j\left(\frac{r + 1}{2} - j\right)}{2}{\frac{r + 3}{2} \choose j}{r - j \choose \frac{r - 1}{2}}(z - a - \lambda p)^{\frac{r + 1}{2}}\mathbbm{1}_{a + \lambda p + p^2\Zp} \nonumber \\
        && + \sum_{\lambda = 0}^{p - 1}\sum_{j = 1}^{\frac{r + 1}{2}}2\lambda p^{-1}(-1)^j \frac{S_1}{p}j{\frac{r + 3}{2} \choose j}{r - j \choose \frac{r - 1}{2}}(z - a - \lambda p)^{\frac{r + 1}{2}}\mathbbm{1}_{a + \lambda p + p^2\Zp} \nonumber \\
        && + \sum_{\lambda = 0}^{p - 1}\sum_{j = 1}^{\frac{r + 1}{2}}\sum_{m = \frac{r - 1}{2}}^{r - j}{2 \choose \frac{r + 3}{2} - m}\lambda^{m - \frac{r - 1}{2}}p^{m - \frac{r + 1}{2}}(-1)^{j + \frac{r + 3}{2} - m}\frac{S_0}{p}\frac{j}{\frac{r + 3}{2} - j}{\frac{r + 3}{2} \choose j}{r - j \choose m} \nonumber \\
        && \qquad \qquad \qquad \qquad\qquad (z - a - \lambda p)^{r - m}\mathbbm{1}_{a + \lambda p + p^2\Zp} \mod \pi \latticeL{k}.
    \end{eqnarray}
    %The first term on the right side here is a copy of the first term in equation \eqref{Intermediate step in the hard inductive steps for n = r + 3/2}. The $m > 0$ summands in the second line on the right side {\color{blue} of} equation \eqref{Intermediate step in the hard inductive steps for n = r + 3/2} are zero since the zeroth power of $b$ does not appear in $(\lambda - b)^2b^m$ and the highest power that appears is possibly $p - 1$ which has already been shown to be $0$ above. The remaining $m = 0$ summand has been simplified using \eqref{sum over b identity} and is the second term in \eqref{six term identity for inductive steps for n = r+3/2 in self-dual}. The third, fourth and fifth terms on the right side of equation \eqref{six term identity for inductive steps for n = r+3/2 in self-dual} are the ($l = 1, m = (r - 1)/2$), ($l = 1, m = (r + 1)/2$) and ($l = 2, m = (r - 1)/2$) terms of the third sum on the right side of equation \eqref{Intermediate step in the hard inductive steps for n = r + 3/2}. These are the only surviving terms from the last summand with $l > 0$. {\color{blue}Finally}, the last sum in equation \eqref{six term identity for inductive steps for n = r+3/2 in self-dual} is the $l = 0$ part of the third sum on the right side of equation \eqref{Intermediate step in the hard inductive steps for n = r + 3/2}.

        We replace $\lambda$ by $\alpha$ in the last five sums on the right side of the equation above. We now check that the first four sums on the right side of \eqref{six term identity for inductive steps for n = r+3/2 in self-dual} are $0$ modulo $\pi \latticeL{k}$. First, using Lemma \ref{stronger bound for polynomials of large degree with varying radii} we see that the terms with $j < \dfrac{r - 1}{2}$ in the second sum on the right side of \eqref{six term identity for inductive steps for n = r+3/2 in self-dual} are $0$ modulo $\pi\latticeL{k}$. Therefore the first four terms above only involve the $\dfrac{r - 1}{2}$ and $\dfrac{r + 1}{2}$ powers of $(z - a - \alpha p)$. Using equation \eqref{First few values of S_l}, we see that the sum of the coefficients of $\alpha^2(z - a - \alpha p)^{\frac{r - 1}{2}}\mathbbm{1}_{a + \alpha p + p^2\Zp}$ in these four terms is
        \begin{eqnarray*}
            && \frac{1}{2}\left(\frac{r + 3}{2}\right)^2 - \frac{1}{2}\left(\frac{r + 1}{2}\right)\left(\frac{r + 3}{2}\right) + \sum_{j = 1}^{\frac{r - 1}{2}}(-1)^j\left(\frac{1}{2}\right)j{\frac{r + 3}{2} \choose j}{r - j \choose \frac{r + 1}{2}} \\
            && = \frac{1}{2}\left(\frac{r + 3}{2}\right)\left[1 + \sum_{j = 1}^{\frac{r - 1}{2}}(-1)^j{\frac{r + 1}{2} \choose j - 1}{r - j \choose \frac{r + 1}{2}}\right] = 0,
        \end{eqnarray*}
        by \cite[Equation 1.83]{Gou10} with $k = \dfrac{r - 3}{2}$, $n = \dfrac{r + 1}{2}$ and $i = j - 1$ there. 
        Next, using equation \eqref{First few values of S_l}, the sum of coefficients of $\alpha^2 p^{-1}(z - a - \alpha p)^{\frac{r + 1}{2}}\mathbbm{1}_{a + \alpha p + p^2\Zp}$ in the same four terms is
        \begin{eqnarray*}
            && \left(\frac{1}{6}\right)\left(\frac{(r + 3)(r + 1)}{8}\right)\left(\frac{r + 3}{4}\right) - \left(\frac{1}{6}\right)\left(\frac{r - 1}{4}\right)\left(\frac{(r + 3)(r + 1)}{8}\right)  \\
            && + \sum_{j = 1}^{\frac{r - 1}{2}}(-1)^j\left(\frac{1}{6}\right)\frac{j\left(\frac{r + 1}{2} - j\right)}{2}{\frac{r + 3}{2} \choose j}{r - j \choose \frac{r - 1}{2}} \\
            && = \left(\frac{1}{6}\right)\left(\frac{(r + 3)(r + 1)}{8}\right) \left[1 + \sum_{j = 1}^{\frac{r - 1}{2}}(-1)^j\frac{\frac{r + 1}{2} - j}{\frac{r + 3}{2} - j}{\frac{r - 1}{2} \choose j - 1}{r - j \choose \frac{r - 1}{2}}\right] = 0.
        \end{eqnarray*}
Indeed, consider
        \begin{eqnarray*}
         &  & \sum_{j = 1}^{\frac{r - 1}{2}}(-1)^j\frac{\frac{r + 1}{2} - j}{\frac{r + 3}{2} - j}{\frac{r - 1}{2} \choose j - 1}{r - j \choose \frac{r - 1}{2}} \\
         %&  &  =  \sum_{j = 1}^{\frac{r - 1}{2}}(-1)^j{\frac{r - 1}{2} \choose j - 1}{r - j \choose \frac{r - 1}{2}} - \sum_{j = 1}^{\frac{r - 1}{2}}(-1)^{j}\frac{1}{\frac{r + 3}{2} - j}{\frac{r - 1}{2} \choose \frac{r + 1}{2} - j}{r - j \choose \frac{r - 1}{2}} \\
         &  & = \sum_{j = 1}^{\frac{r - 1}{2}}(-1)^j{\frac{r - 1}{2} \choose j - 1}{r - j \choose \frac{r - 1}{2}} - \frac{1}{\frac{r + 1}{2}}\sum_{j = 1}^{\frac{r - 1}{2}}(-1)^j{\frac{r + 1}{2} \choose j - 1}{r - j \choose \frac{r - 1}{2}}.
        \end{eqnarray*}
        Putting $k = \dfrac{r - 1}{2}, \> n = \dfrac{r - 1}{2}$, and $i = j - 1$ in \cite[Equation 1.83]{Gou10}, the first sum on the right side of the equation above is $(-1)^{\frac{r - 1}{2}} - 1$. Putting $k = j - 1, \> n = \dfrac{r - 1}{2}, \> x = -1$ and $y = \dfrac{r - 1}{2}$ in \cite[Equation 6.39]{Gou10}, the second sum is $-(-1)^{\frac{r + 1}{2}}\left(\dfrac{r + 1}{2}\right)$.
        %This shows that
        %\[
        %    1 + \sum_{j = 1}^{\frac{r - 1}{2}}(-1)^j\frac{\frac{r + 1}{2} - j}{\frac{r + 3}{2} - j}{\frac{r - 1}{2} \choose j - 1}{r - j \choose \frac{r - 1}{2}} = 0.
        %\]
        
        Next, the fifth term on the right side of equation \eqref{six term identity for inductive steps for n = r+3/2 in self-dual} vanishes because
        \[
            \sum_{j = 1}^{\frac{r + 1}{2}}(-1)^jj{\frac{r + 3}{2} \choose j}{r - j \choose \frac{r - 1}{2}} = \left(\frac{r + 3}{2}\right)\sum_{j = 1}^{\frac{r + 1}{2}}(-1)^j{\frac{r + 1}{2} \choose j - 1}{r - j \choose \frac{r - 1}{2}} = 0,
        \]
        by \cite[Equation 6.39]{Gou10} with $k = j - 1$, $n = \dfrac{r - 1}{2}$, $x = -1$ and $y = \dfrac{r - 1}{2}$.
    
    %It can be checked that the first four terms on the right side of equation \eqref{six term identity for inductive steps for n = r+3/2 in self-dual} are together $0$. Moreover, the fifth term is also $0$. 
    Therefore equation \eqref{six term identity for inductive steps for n = r+3/2 in self-dual} becomes
    \begin{eqnarray}\label{l = 0 are the important summands}
        &&-\sum_{\alpha = 0}^{p - 1}\alpha^2 p(z - a - \alpha p)^{\frac{r - 3}{2}}\mathbbm{1}_{a + \alpha p + p^2\Zp} \\
        && \equiv \sum_{\alpha = 0}^{p - 1}\sum_{j = 1}^{\frac{r + 1}{2}}\sum_{m = \frac{r - 1}{2}}^{r - j}{2 \choose \frac{r + 3}{2} - m}\alpha^{m - \frac{r - 1}{2}}p^{m - \frac{r + 1}{2}}(-1)^{j + \frac{r + 3}{2} - m}\frac{S_0}{p}\frac{j}{\frac{r + 3}{2} - j}{\frac{r + 3}{2} \choose j}{r - j \choose m} \nonumber \\
        && \qquad \qquad \qquad \qquad \qquad (z - a - \alpha p)^{r - m}\mathbbm{1}_{a + \alpha p + p^2\Zp} \mod \pi \latticeL{k}. \nonumber
    \end{eqnarray}
    Interchanging the $j$ and $m$ sums and replacing $m$ by $r - l$, we get
    \begin{eqnarray*}
        &&-\sum_{\alpha = 0}^{p - 1}\alpha^2 p(z - a - \alpha p)^{\frac{r - 3}{2}}\mathbbm{1}_{a + \alpha p + p^2\Zp} \\
        % && \equiv \sum_{\alpha = 0}^{p - 1}\sum_{l = 1}^{\frac{r + 1}{2}}\sum_{j = 1}^{l}{2 \choose l - \frac{r - 3}{2}}\alpha^{\frac{r + 1}{2} - l}p^{\frac{r - 1}{2} - l}(-1)^{j - \frac{r - 3}{2} + l}\frac{S_0}{p}\frac{j}{\frac{r + 3}{2} - j}{\frac{r + 3}{2} \choose j}{r - j \choose r - l} \nonumber \\
        % && \qquad \qquad \qquad \qquad(z - a - \alpha p)^{l}\mathbbm{1}_{a + \alpha p + p^2\Zp} \\
        && \equiv \sum_{\alpha = 0}^{p - 1}\sum_{l = 1}^{\frac{r + 1}{2}}{2 \choose l - \frac{r - 3}{2}}\alpha^{\frac{r + 1}{2} - l}p^{\frac{r - 1}{2} - l}(-1)^{l - \frac{r - 3}{2}}\left(\sum_{j = 1}^{l}(-1)^{j}\frac{j}{\frac{r + 3}{2} - j}{\frac{r + 3}{2} \choose j}{r - j \choose r - l}\right) \\
        && \qquad\qquad\qquad\qquad (z - a -\alpha p)^{l}\mathbbm{1}_{a + \alpha p + p^2\Zp} \mod \latticeL{k}. 
    \end{eqnarray*}
    The expression inside the parentheses is $c(n, l)$
    appearing in the proof of Proposition \ref{inductive steps}  with $n = \dfrac{r + 3}{2}$.
    Substitute its value obtained in that proposition. The $-1$ part of $c(n, l)$ with
    $l = r - n$ cancels with the term on the left side. Therefore we obtain the proposition for $n = \dfrac{r + 3}{2}$.
\end{proof}

The following lemma is used in the proof of Proposition \ref{Final theorem for the hardest case in self-dual}.

\begin{lemma}\label{T12 is 0 in self-dual} Let $1 \leq r \leq p-2$ be odd. 
    For $\nu > -0.5$, we have
    \[
        \sum_{a = 0}^{p - 1}(z - a)^{\frac{r - 1}{2}}\mathbbm{1}_{a + p\Zp} \equiv 0 \mod \pi \latticeL{k}.
    \]
\end{lemma}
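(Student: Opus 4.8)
The plan is to run the section's standard strategy: produce a logarithmic function of the shape \eqref{polynomial log functions} that vanishes in $\tB(k,\cL)$, push its ``$\mathbbm{1}_{\zp}$'' part down to $g_1$, and read off the asserted congruence from the resulting relation. I would take $n=\tfrac{r+1}{2}$, $z_i=i$, and choose $\lambda_i\in\zp$ as in Lemma~\ref{Main coefficient identites} (3), so that $\lambda_0=1$, $\lambda_i\equiv 1\bmod p$, and $\sum_{i=0}^{p-1}\lambda_iz_i^j=0$ for $0\le j\le p-2$; since $p\ge 5$ this covers all $j\le n+1$ (cf. Remark~\ref{r=1,2}), so the degree condition holds and $g(z):=p^{x}\sum_{i=0}^{p-1}\lambda_i(z-i)^n\logL(z-i)$ lies in $L(k,\cL)\subset D(k)$, hence equals $0$ in $\tB(k,\cL)$. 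Here I set $x=-1$ if $\nu\ge\tfrac12$ and $x=-\tfrac12-\nu$ if $-\tfrac12<\nu<\tfrac12$; in either case $x\ge-1$ and $x+\nu\ge\tfrac r2-n=-\tfrac12$.

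First I would split $g=g\mathbbm{1}_{\zp}+g\mathbbm{1}_{\qp\setminus\zp}$. Applying $w$ and using $\sum_i\lambda_i i^j=0$ for $0\le j\le n$ kills the stray $\logL(z)$ term, leaving $w\cdot(g\mathbbm{1}_{\qp\setminus\zp})=\sum_ip^{x}\lambda_iz^{r-n}(1-zi)^n\logL(1-zi)\mathbbm{1}_{p\zp}$; using in addition $\sum_i\lambda_iz_i^{n+1}=0$, Lemma~\ref{Telescoping Lemma in self-dual for qp-zp} (2) gives $w\cdot(g\mathbbm{1}_{\qp\setminus\zp})\equiv0\bmod\pi\latticeL{k}$, hence $g\mathbbm{1}_{\qp\setminus\zp}\equiv0$ and so $g\mathbbm{1}_{\zp}\equiv0\bmod\pi\latticeL{k}$. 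Next, by Lemma~\ref{Congruence lemma} and Lemma~\ref{Telescoping Lemma in self-dual} (2) (when $\nu\ge\tfrac12$) or (3) (when $-\tfrac12<\nu<\tfrac12$), followed by a telescoping descent as in the proof of Lemma~\ref{telescoping lemma} and a check that $g_2-g_1\equiv0$ (resp.\ $g_3-g_1\equiv0$) exactly as in Proposition~\ref{nu geq}, one gets $g\mathbbm{1}_{\zp}\equiv g_1(z)$, so $g_1(z)\equiv0\bmod\pi\latticeL{k}$, where by \eqref{Simple derivative formula}
\[
  g_1(z)=\sum_{a=0}^{p-1}\Bigl[\sum_{j=0}^{n-1}\binom{n}{j}p^{x}\Bigl(\sum_{i=0}^{p-1}\lambda_i(a-i)^{n-j}\logL(a-i)\Bigr)(z-a)^j\Bigr]\mathbbm{1}_{a+p\zp}.
\]

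The heart of the proof is then to evaluate the inner coefficients modulo $p^{r/2-j}\pi$. For $i\not\equiv a$ one has $\logL(a-i)=\log\bigl((a-i)/[a-i]\bigr)$ of valuation $\ge1$; expanding the logarithm, reducing mod $p$, and invoking the exact identities $\sum_i\lambda_iz_i^t=0$ ($t\le p-2$) together with \eqref{sum of powers of roots of unity}, one should find that every term of $(z-a)$-degree $<\tfrac{r-1}{2}$ drops out mod $\pi\latticeL{k}$ by Lemma~\ref{Integers in the lattice} (the factor $p^{x+1}$ providing the needed valuation), while the degree-$\tfrac{r-1}{2}$ term survives, for each $a$, with one and the same unit coefficient. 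Cancelling that unit — which is exactly where the hypothesis $\nu>-\tfrac12$ (equivalently $x\ge-1$) enters, to keep this coefficient of valuation $0$ — leaves $\sum_{a=0}^{p-1}(z-a)^{(r-1)/2}\mathbbm{1}_{a+p\zp}\equiv0\bmod\pi\latticeL{k}$.

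The main obstacle will be precisely this last evaluation: tracking the valuation of $\sum_i\lambda_i(a-i)^{n-j}\logL(a-i)$ accurately enough to see that the degree-$\tfrac{r-1}{2}$ coefficient is a unit \emph{independent of $a$} and that all lower-degree contributions vanish. As everywhere in Section~\ref{SpecOdd}, this reduces to careful bookkeeping with the harmonic-sum and power-of-root-of-unity identities of Lemma~\ref{Main coefficient identites}, \eqref{sum of powers of roots of unity} and \eqref{First few values of S_l}, and it must be organized so as to keep the two regimes $\nu\ge\tfrac12$ and $-\tfrac12<\nu<\tfrac12$ (with their different choices of $x$ and different telescoping variants) cleanly separated.
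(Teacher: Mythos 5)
There is a genuine gap: the function you choose does not produce the asserted relation, and the two computational claims your plan rests on are false. With $n=\tfrac{r+1}{2}$, $z_i=i$ and the coefficients of Lemma~\ref{Main coefficient identites}~(3), the analysis of $g$ is exactly the one carried out in Proposition~\ref{Final theorem for the hardest case in self-dual} (for $\nu\ge 0.5$, $x=-1$) and Proposition~\ref{nu in between -0.5 and 0.5} (for $-0.5<\nu<0.5$, $x+\nu=-0.5$), and it does \emph{not} behave as you describe. First, $g_2-g_1$ (and $g_3-g_2$) are not $\equiv 0$ ``as in Proposition~\ref{nu geq}'': since $n-j-1=\tfrac{r-1}{2}-j<\tfrac r2-j$, the coefficient $\binom{n}{j}p^{x}(\cL-H_{n-j})(\alpha p)^{n-j}$ survives mod $p^{r/2-j}\pi$, and these differences carry the main content (cf.\ \eqref{Final g2 - g1 in self-dual}, \eqref{wish to prove for g2 - g1}). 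Second, the structure you want for $g_1$ is wrong: for $-0.5<\nu<0.5$ with your $x$ one has $g_1\equiv 0$ outright (so nothing of degree $\tfrac{r-1}{2}$ survives in $g_1$), while for $\nu\ge 0.5$, $x=-1$ the coefficient of $(z-a)^{\frac{r-1}{2}}$ in $g_1$ is $\tfrac{r+1}{2}\,(a+\tfrac12)\bmod\pi$, which depends on $a$. What the relation $g\equiv 0$ ultimately encodes for this choice of $g$ is the vanishing of $T_{-1,0}^2-cT_{-1,0}+1$ (resp.\ of $T_{-1,0}$) on the generator, i.e.\ identities among degree-$\tfrac{r+1}{2}$ functions; there is no way to extract from it the degree-$\tfrac{r-1}{2}$ statement of the lemma. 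Worse, pushing your computation through in the regime $\nu\ge 0.5$ forces you to kill the $m=1$ harmonic term in $g_1$, which in the paper is done by invoking precisely Lemma~\ref{T12 is 0 in self-dual}; your route is therefore circular there.

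The paper's proof is different in an essential way: it takes $n=\tfrac{r+3}{2}$ (not $\tfrac{r+1}{2}$) and the index set $I=\{0,1,\ldots,\tfrac{r+3}{2},p\}$ with the type-(2) coefficients of Lemma~\ref{Main coefficient identites}, modelled on Proposition~\ref{nu leq}. With that choice the standard Lemmas~\ref{telescoping lemma} and \ref{qp-zp part is 0} apply, the $\cL$-terms die because $\nu>-0.5$, and one obtains $0\equiv\sum_{a=1}^{p-1}a^{-1}z^{\frac{r+3}{2}}\mathbbm{1}_{a+p\Zp}\bmod\pi\latticeL{k}$. Applying $\beta$, expanding binomially, and combining with the shallow inductive steps of Proposition~\ref{inductive steps} (available exactly because $\nu>-0.5=1+\tfrac r2-\tfrac{r+3}{2}$) via a Cramer's-rule argument with a unit determinant (here one uses that $r+3\neq p$), one descends to $0\equiv\sum_{a=0}^{p-1}p^{0.5}(z-ap)^{\frac{r-1}{2}}\mathbbm{1}_{ap+p^2\Zp}$, and applying $\left(\begin{smallmatrix}p&0\\0&1\end{smallmatrix}\right)$ gives the lemma; the case $r=1$ is immediate from Lemma~\ref{Integers in the lattice}. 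Your proposal would need to be rebuilt along these lines; as written, the key step fails.
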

\begin{proof}
  The statement follows from Lemma~\ref{Integers in the lattice} for $r = 1$.
  If $3 \leq r \leq p-2$, the proof is similar to the proof of Proposition \ref{nu leq}
  and is left as an exercise (or see the first author's thesis): one uses 
    \[
        g(z) = p^{-1}\sum_{i \in I} \lambda_i (z - i)^{\frac{r + 3}{2}}\logL(z - i),
    \]
    where $I = \left\{0, \> 1, \> \ldots, \> \dfrac{r + 3}{2}, \> p\right\}$ and $\lambda_i \in \zp$
    are as in Lemma~\ref{Main coefficient identites} (2).
\end{proof}

Now we turn to the claim made at the beginning of this section.

\begin{Proposition}
    \label{Final theorem for the hardest case in self-dual}
    Let $1 \leq r \leq p-2$ be odd. For $\nu \geq 0.5$, set
    \[
        g(z) = p^{-1}\sum_{i = 0}^{p - 1}\lambda_i (z - i)^{\frac{r + 1}{2}}\logL(z - i),
    \]
    where the $\lambda_i \in \zp$ are as in Lemma~\ref{Main coefficient identites} (3).
    Then, up to multiplication by $(-1)^{\frac{r-1}{2}} (\frac{r+1}{2})$, 
    \begin{eqnarray*}
        g(z) & \equiv & \sum_{a = 0}^{p^2 - 1}p^{-1}(z - a)^{\frac{r + 1}{2}}\mathbbm{1}_{z + p^2\Zp} \\
        && + \sum_{a = 0}^{p - 1}(-1)^{\frac{r - 1}{2}}\frac{r + 1}{2}\left(\frac{\cL - H_{-} - H_{+}}{p^{0.5}}\right)p^{-0.5}(z - a)^{\frac{r + 1}{2}}\mathbbm{1}_{a + p\Zp} \\
        && + z^{\frac{r + 1}{2}}\mathbbm{1}_{\Zp} \mod \pi \latticeL{k}.
    \end{eqnarray*}
\end{Proposition}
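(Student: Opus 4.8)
The plan is to follow the template of Section~\ref{Common section}, refined as in Proposition~\ref{Refined hard inductive steps for self-dual}: split $g$ over $\Zp$ and $\Qp\setminus\Zp$, kill the outer piece, telescope the inner piece, and then read off the three displayed terms as the contributions of the three consecutive $p$-adic levels $p^{0}$, $p$, $p^{2}$. Throughout, $n=\tfrac{r+1}{2}$, so that $n-1=\tfrac{r-1}{2}=\lfloor r/2\rfloor$, and since $\sum_{i=0}^{p-1}\lambda_i(z-i)^{n-1}=0$ by Lemma~\ref{Main coefficient identites}~(3) the simple derivative formula \eqref{Simple derivative formula} is available.

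First I would dispose of the $\Qp\setminus\Zp$ part. Writing $g(z)=g(z)\mathbbm{1}_{\Zp}+g(z)\mathbbm{1}_{\Qp\setminus\Zp}$ and applying $w$: because $n=\tfrac{r+1}{2}\leq p-2$, Lemma~\ref{Main coefficient identites}~(3) gives $\sum_i\lambda_i i^{j}=0$ for all $0\leq j\leq n$, so the polynomial $\sum_i\lambda_i(z-i)^{n}$ vanishes identically and no $\logL(z)$ term appears; thus
\[
    w\cdot g(z)\mathbbm{1}_{\Qp\setminus\Zp}=\sum_{i=0}^{p-1}p^{-1}\lambda_i\,z^{r-n}(1-zi)^{n}\logL(1-zi)\mathbbm{1}_{p\Zp}.
\]
We moreover have $\sum_i\lambda_i i^{n+1}=0$, since $n+1=\tfrac{r+3}{2}\leq p-2$ for $p\geq5$ (Remark~\ref{r=1,2}). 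Hence Lemma~\ref{Telescoping Lemma in self-dual for qp-zp}~(2) applies with $x=-1$, and this function is $\equiv0\bmod\pi\latticeL{k}$; applying $w$ once more, $g(z)\mathbbm{1}_{\Qp\setminus\Zp}\equiv0$, so $g(z)\equiv g(z)\mathbbm{1}_{\Zp}\bmod\pi\latticeL{k}$.

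Next, since $\nu\geq0.5>0$ and $x=-1\geq-1$, Lemma~\ref{Telescoping Lemma in self-dual}~(2) gives $g(z)\mathbbm{1}_{\Zp}\equiv g_{3}(z)\bmod\pi\latticeL{k}$ (the inner Taylor sum running only up to $j=\tfrac{r-1}{2}$), and I would then write $g_{3}=g_{1}+(g_{2}-g_{1})+(g_{3}-g_{2})$ and evaluate the three summands exactly as in the proofs of Proposition~\ref{nu geq} and Proposition~\ref{Refined hard inductive steps for self-dual}. (i) For $g_{1}$: writing $a-[i]=[a-i]+pa_i$ for $i\neq a$, expanding $\logL(1+[a-i]^{-1}pa_i)$, discarding the negligible $i=a$ term (using $\nu>0$), and summing over $i$ by the computation of Lemma~\ref{Sum of the i neq a terms in self-dual} (together with that of Lemma~\ref{Sum of the i neq a terms}, which agrees mod $\pi$), one obtains $\tfrac{g^{(j)}(a)}{j!}\equiv\binom{n}{j}\tfrac{a^{n-j}}{n-j}\bmod\pi$. (ii) For $g_{2}-g_{1}$: exactly as in Proposition~\ref{Refined hard inductive steps for self-dual} but with $p^{1.5}$ replaced by $p$, expanding over $\mathbbm{1}_{a+\alpha p+p^{2}\Zp}$ and recollecting, the coefficient of $(z-a-\alpha p)^{j}\mathbbm{1}_{a+\alpha p+p^{2}\Zp}$ is $\binom{n}{j}\tfrac{\lambda_a}{p}(\alpha p)^{n-j}(\cL-H_{n-j})\bmod p^{r/2-j}\pi$. (iii) For $g_{3}-g_{2}$: expanding over $\mathbbm{1}_{a+\alpha p^{2}+p^{3}\Zp}$ and recollecting, the coefficient of $(z-a-\alpha p^{2})^{j}\mathbbm{1}_{a+\alpha p^{2}+p^{3}\Zp}$ is $\binom{n}{j}\sum_i p^{-1}\lambda_i[\,\cdots\,]$; using $v_p(\logL(a-[i]))\geq1$ for $i\neq a$ and $\logL(\alpha p^{2})=2\cL+O(p)$ for $i=a$, together with Lemma~\ref{Integers in the lattice} and Lemma~\ref{stronger bound for polynomials of large degree with varying radii}, every contribution dies except those at the top power $j=\tfrac{r-1}{2}$, where the relevant valuation only \emph{matches} $2(r/2-j)$. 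Assembling (i)--(iii), invoking the shallow inductive steps \eqref{Refined inductive steps in leq}/Proposition~\ref{inductive steps} and --- in (iii) --- a second run of the deep inductive steps of Proposition~\ref{Refined hard inductive steps for self-dual} at level $p^{2}$, then solving the resulting triangular matrix systems by means of the binomial--harmonic identities of the Appendix (in the spirit of \cite{Gou10}, cf.\ footnote~\ref{space-time 10}) and re-collecting radii, the whole expression collapses to the sum of $z^{(r+1)/2}\mathbbm{1}_{\Zp}$, of $\sum_{a=0}^{p-1}p^{-0.5}\big(\tfrac{\cL-H_-\!-H_+}{p^{0.5}}\big)(z-a)^{(r+1)/2}\mathbbm{1}_{a+p\Zp}$, and of $\sum_{a=0}^{p^{2}-1}p^{-1}(z-a)^{(r+1)/2}\mathbbm{1}_{a+p^{2}\Zp}$, each up to the common unit $(-1)^{(r-1)/2}\tfrac{r+1}{2}$, which is the claimed congruence.

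The main obstacle is piece (iii): unlike the corresponding $g_{2}-g_{1}$ computation in Proposition~\ref{nu geq}, the passage from level $p$ to level $p^{2}$ does \emph{not} vanish --- at the boundary power $j=\tfrac{r-1}{2}$ the coefficient has valuation exactly $2(r/2-j)$ rather than strictly larger --- so one is forced to run Proposition~\ref{Refined hard inductive steps for self-dual} a second time, now at level $p^{2}$, and then re-collect the resulting radius-$p^{3}$ data back down to radius $p^{2}$. Keeping every $p$-adic valuation exact (in particular distinguishing $\geq$ from $>$ at this boundary power) and verifying the binomial--harmonic-sum identities that collapse the matrix systems of (ii) and (iii) to the single coefficient $(-1)^{(r-1)/2}\tfrac{r+1}{2}(\cL-H_-\!-H_+)$ --- the place where $H_-+H_+=H_{(r-1)/2}+H_{(r+1)/2}$ is actually produced --- is where essentially all of the work lies.
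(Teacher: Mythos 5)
Your overall skeleton (kill the $\Qp\setminus\Zp$ part via Lemma~\ref{Telescoping Lemma in self-dual for qp-zp}~(2), telescope to $g_3$ via Lemma~\ref{Telescoping Lemma in self-dual}~(2), then treat $g_1$, $g_2-g_1$, $g_3-g_2$) is the paper's, but two of your three blocks contain concrete errors. In (i) you claim $\tfrac{g^{(j)}(a)}{j!}\equiv{\frac{r+1}{2}\choose j}\tfrac{a^{\frac{r+1}{2}-j}}{\frac{r+1}{2}-j}\bmod\pi$ by citing Lemma~\ref{Sum of the i neq a terms} and Lemma~\ref{Sum of the i neq a terms in self-dual}. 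Those lemmas are for Teichm\"uller nodes $z_i=[i]$ with the coefficients of Lemma~\ref{Main coefficient identites}~(1); the present $g$ has integer nodes $z_i=i$ with the coefficients of part (3), and because of the prefactor $p^{-1}$ the discrepancy $i-[i]=O(p)$ contributes at unit level, so the two computations do \emph{not} agree mod $\pi$. The correct coefficient is ${\frac{r+1}{2}\choose j}\sum_{m=0}^{\frac{r+1}{2}-j}{\frac{r+1}{2}-j\choose m}(-1)^m\tfrac{S_m}{p}\tfrac{a^{\frac{r+1}{2}-j-m}}{\frac{r+1}{2}-j}$, and the $m\ge1$ terms (e.g. $S_1/p\equiv-\tfrac12$, $S_2/p\equiv\tfrac16$) are units; they vanish only in $\br{\latticeL{k}}$ after summing over $a$, by a further round of shallow inductive steps together with Lemma~\ref{T12 is 0 in self-dual} (for $m=1$) and a separate $c\!\left(\tfrac{r-1}{2}+m,\tfrac{r+1}{2}-m\right)$ argument for $m\ge2$. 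This is a substantial portion of the proof that your sketch asserts away on the basis of an inapplicable lemma.

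Your diagnosis of where the boundary difficulty lies is also inverted, and the route you propose for (iii) is both unnecessary and unavailable. Since $\nu\ge0.5$, in $g_3-g_2$ the critical coefficient at $j=\tfrac{r-1}{2}$ has valuation at least $1+\nu\ge1.5$, which \emph{strictly} exceeds the threshold $(h-1)(r/2-j)=1$ at $h=3$; nothing borderline happens at level $p^2$, only the shallow inductive steps of Proposition~\ref{inductive steps} transported by $\sum_\alpha\alpha^{n-\frac{r-1}{2}}\left(\begin{smallmatrix}1&0\\-a-\alpha p^2&p^2\end{smallmatrix}\right)$ are needed, and no ``deep steps at level $p^2$'' are proved in the paper or required. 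Moreover what survives from $g_3-g_2$ is not a top-power remnant but the $\cL$-part at \emph{all} powers $j$, which the binomial theorem reassembles into $\sum_{a=0}^{p^2-1}p^{-1}\cL(z-a)^{\frac{r+1}{2}}\mathbbm{1}_{a+p^2\Zp}$ — the source of the first displayed term. The genuinely borderline case, and the only place Proposition~\ref{Refined hard inductive steps for self-dual} is invoked, is $g_2-g_1$ at level $p$, where the coefficient has valuation exactly $\nu\ge0.5=(h-1)(r/2-j)$ at $h=2$: there one must solve the enlarged $\tfrac{r+3}{2}\times\tfrac{r+3}{2}$ system with the extra unknown $x_{\frac{r-1}{2}}$, whose radius-$p$ output produces the $(\cL-H_{-}-H_{+})/p^{0.5}$ term and whose radius-$p^2$ by-product $-\cL+(-1)^{\frac{r-1}{2}}\tfrac{2}{r+1}$ cancels the $\cL$ coming from $g_3-g_2$ in the final assembly. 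Without tracking this extra variable and the resulting cancellation, the collapse to the three displayed terms cannot be carried out, so as written the proposal has genuine gaps.
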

\begin{proof}
  %By Lemma~\ref{Main coefficient identites} (3),
  %$\lambda_0 = 1$ and $\lambda_i = 1 \mod p$ for $1 \leq i \leq p - 1$, and they satisfy the
  %equations $\sum\limits_{i = 0}^{p - 1}\lambda_i i^j = 0$ for $0 \leq j \leq p - 2$.
  
  Write $g(z) = g(z)\mathbbm{1}_{\Zp} + g(z)\mathbbm{1}_{\Qp \setminus \Zp}$. By Lemma \ref{Main coefficient identites} (3)  $\lambda_0 = 1$ and $\lambda_i = 1 \mod p$ for $1 \leq i \leq p - 1$. Moreover,
  the assumption $p \geq 5$ implies that $\sum_{i = 0}^{p - 1}\lambda_i i^j = 0$ for $0 \leq j \leq \dfrac{r + 1}{2} + 1 \leq p - 2$. As usual
    \[
        w \cdot g(z)\mathbbm{1}_{\Qp \setminus \Zp} = \sum_{i = 0}^{p - 1}p^{-1}\lambda_iz^{\frac{r - 1}{2}}(1 - zi)^{\frac{r + 1}{2}}\logL(1 - zi)\mathbbm{1}_{p\Zp}.
    \]
    Moreover, using Lemma \ref{Telescoping Lemma in self-dual for qp-zp} (2),
    this function vanishes modulo $\pi \latticeL{k}$. Therefore
    $g(z)\mathbbm{1}_{\Qp \setminus \Zp} \equiv 0 \mod \pi \latticeL{k}$ and 
    $g(z) \equiv g(z)\mathbbm{1}_{\Zp} \mod \pi \latticeL{k}$.
    
    Using Lemma \ref{Telescoping Lemma in self-dual} (2), we see that $g(z)\mathbbm{1}_{\Zp} \equiv g_3(z) \mod \pi \latticeL{k}$. We analyze $g_3(z) - g_2(z)$, $g_2(z) - g_1(z)$ and $g_1(z)$ separately. First consider
    \begin{eqnarray*}
        g_3(z) - g_2(z) & = & \sum_{a = 0}^{p^2 - 1}\sum_{\alpha = 0}^{p - 1}\left[\sum_{j = 0}^{\frac{r - 1}{2}}\frac{g^{(j)}(a + \alpha p^2)}{j!}(z - a - \alpha p^2)^j\right]\mathbbm{1}_{a + \alpha p^2 + p^3\Zp} \\
        && - \sum_{a = 0}^{p^2 - 1}\left[\sum_{j = 0}^{\frac{r - 1}{2}}\frac{g^{(j)}(a)}{j!}(z - a)^j\right]\mathbbm{1}_{a + p^2\Zp}.
    \end{eqnarray*}
    The usual argument shows that
    %Expanding $(z - a)^j\mathbbm{1}_{a + p^2\Zp} = \sum_{\alpha = 0}^{p - 1}[(z - a - \alpha p^2) + \alpha p^2]^j\mathbbm{1}_{a + \alpha p^2 + p^3\Zp}$ and collecting like powers of $(z - a - \alpha p^2)$, we get
    %\begin{eqnarray*}
    %    g_3(z) - g_2(z) \!\!\!\! & = & \!\!\!\!\!\! \sum_{a = 0}^{p^2 - 1}\sum_{\alpha = 0}^{p - 1}\sum_{j = 0}^{\frac{r - 1}{2}}\!\left[g^{(j)}(a + \alpha p^2) - g^{(j)}(a) - \alpha p^2 g^{(j + 1)}(a) - \cdots - \frac{(\alpha p^2)^{\frac{r - 1}{2} - j}}{\left(\frac{r - 1}{2} - j\right)!}g^{(n - 1)}(a)\right] \\
    %    && \qquad \qquad \qquad \qquad \frac{(z - a - \alpha p^2)^j}{j!}\mathbbm{1}_{a + \alpha p^2 + p^3\Zp}.
    %\end{eqnarray*}
    the coefficient of $(z - a - \alpha p^2)^j\mathbbm{1}_{a + \alpha p^2 + p^3\Zp}$ in $g_3(z) - g_2(z)$ is
    \begin{eqnarray}\label{jth summand in g3 - g2 in main self-dual}
        && \!\!\!\! {\frac{r + 1}{2} \choose j}\sum_{i = 0}^{p - 1}p^{-1}\lambda_i\Big[(a + \alpha p^2 - i)^{\frac{r + 1}{2} - j}\logL(a + \alpha p^2 - i) - (a - i)^{\frac{r + 1}{2} - j}\logL(a - i) \\
        && \!\!\!\! - {\frac{r + 1}{2} - j \choose 1}(\alpha p^2)(a - i)^{\frac{r - 1}{2} - j}\logL(a - i) - \cdots - {\frac{r + 1}{2} - j \choose \frac{r - 1}{2} - j}(\alpha p^2)^{\frac{r - 1}{2} - j}(a - i)\logL(a - i)\Big]\nonumber.
    \end{eqnarray}
    
    First assume that $a \neq i$. Writing $\logL(a - i + \alpha p^2) = \logL(1 + (a - i)^{-1}\alpha p^2) + \logL(a - i)$, we see that the $i \neq a$ summand in expression \eqref{jth summand in g3 - g2 in main self-dual} is
    \[
        p^{-1}\lambda_i\left[(a - i + \alpha p^2)^{\frac{r + 1}{2} - j}\logL(1 + (a - i)^{-1}\alpha p^2) + (\alpha p^2)^{\frac{r + 1}{2} - j}\logL(a - i)\right].
    \]
    Expanding the first term in the display above, we see that the $i \neq a$ summand in \eqref{jth summand in g3 - g2 in main self-dual} is
    \begin{eqnarray}\label{i neq a summands in g3 - g2 in self-dual}
        && p^{-1}\lambda_i\Big[(a - i)^{\frac{r - 1}{2} - j}\alpha p^2 + c_{\frac{r - 3}{2} - j}(a - i)^{\frac{r - 3}{2} - j}(\alpha p^2)^2 + \cdots + c_0(\alpha p^2)^{\frac{r + 1}{2} - j} \\
        && \qquad\qquad\qquad\qquad\qquad\qquad  + (\alpha p^2)^{\frac{r + 1}{2} - j}\logL(a - i)\Big] \mod p^{r - 2j}\pi, \nonumber
    \end{eqnarray}
    where $c_0, \> \ldots, \> c_{\frac{r - 3}{2} - j} \in \Zp$ and $c_0 = H_{\frac{r + 1}{2} - j}$.

    Next assume that $i = a$. Then the $i^{\mathrm{th}}$ summand in expression \eqref{jth summand in g3 - g2 in main self-dual} is
    \begin{eqnarray*}
        p^{-1}\lambda_a(\alpha p^2)^{\frac{r + 1}{2} - j}\logL(\alpha p^2) = p^{-1}\lambda_a(\alpha p^2)^{\frac{r + 1}{2} - j}(2\cL) + p^{-1}\lambda_a(\alpha p^2)^{\frac{r + 1}{2} - j}\logL(\alpha).
    \end{eqnarray*}
    The second term on the right side of the equation above is $0$ modulo $p^{r - 2j}\pi$. Indeed, this is clear if $\alpha = 0$ and if $\alpha \neq 0$, then the valuation of the second term on the right side of the display above is $r + 1 - 2j > r - 2j$. Therefore the $i = a$ summand in expression \eqref{jth summand in g3 - g2 in main self-dual} is
    \begin{eqnarray}\label{i = a summand in g3 - g2 in self-dual}
        p^{-1}\lambda_a(\alpha p^2)^{\frac{r + 1}{2} - j}(2\cL) \mod p^{r - 2j}\pi.
    \end{eqnarray}

    If $a$ is in the range $[0, p - 1]$, then both the cases, $i \neq a$ and $i = a$ arise. If $a$ is in the range $[p, p^2 - 1]$, then only the first case, $i \neq a$ arises. Therefore if $0 \leq a \leq p - 1$, then using the identity $\sum_{i = 0}^{p - 1}\lambda_i i^{j} = 0$ for $0 \leq j \leq p - 2$ and Lemma~\ref{Integers in the lattice}, we see that the $a^{\mathrm{th}}$ summand in $g_3(z) - g_2(z)$ is
    \begin{eqnarray}\label{0 leq a leq p - 1 summands in g3 - g2 in self-dual}
        \!\!\sum_{\alpha = 0}^{p - 1}\sum_{j = 0}^{\frac{r - 1}{2}}\!{\frac{r + 1}{2} \choose j}p^{-1}\lambda_a\!\!\left(2\cL - H_{\frac{r + 1}{2} - j}\right)\!\!(\alpha p^2)^{\frac{r + 1}{2} - j}(z - a - \alpha p^2)^{j}\mathbbm{1}_{a + \alpha p^2 + p^3\Zp} \!\!\! \mod \pi \latticeL{k}.
    \end{eqnarray}
    Next if $p \leq a \leq p^2 - 1$, then expression \eqref{jth summand in g3 - g2 in main self-dual} is
        \[
            {\frac{r + 1}{2} \choose j}p^{-1}\lambda_{i_0}(\alpha p^2)^{\frac{r + 1}{2} - j}\logL(a - i_0) \mod p^{r - 2j}\pi,
        \]
        where $0 \leq i_0 \leq p - 1$ is the unique integer such that $i_0 \equiv a \!\!\mod p$. Since $0 \leq i_0 \leq p - 1$ and $p \leq a \leq p^2 - 1$, the valuation of the difference $a - i_0$ is $1$. So $\logL(a - i_0) = \cL \mod p\Zp$. This means that $\logL(a - i_0)$ in the expression above can be replaced by $\cL$ modulo $p^{r/2 - j}\pi$. Using Lemma \ref{Integers in the lattice}, we therefore see that for $p \leq a \leq p^2 - 1$, the $a^{\mathrm{th}}$ summand in $g_3(z) - g_2(z)$ is
    \begin{eqnarray}\label{p leq a leq p^2 - 1 summands in g3 - g2 in self-dual}
        \sum_{\alpha = 0}^{p - 1}\sum_{j = 0}^{\frac{r - 1}{2}}{\frac{r + 1}{2} \choose j}p^{-1}\lambda_{i_0}\cL(\alpha p^2)^{\frac{r + 1}{2} - j}(z - a - \alpha p^2)^j\mathbbm{1}_{a + \alpha p^2 + p^3\Zp} \mod \pi \latticeL{k},
    \end{eqnarray}
    Combining expressions \eqref{0 leq a leq p - 1 summands in g3 - g2 in self-dual} and \eqref{p leq a leq p^2 - 1 summands in g3 - g2 in self-dual}, using the fact that $\lambda_{i} \equiv 1 \mod p$ for $0 \leq i \leq p - 1$ and separating out one $\cL$ from expression \eqref{0 leq a leq p - 1 summands in g3 - g2 in self-dual}, we see that
    \begin{eqnarray}\label{g3 - g2 in self-dual}
        g_3(z) - g_2(z) \!\! & \equiv & \!\! \sum_{a = 0}^{p - 1}\sum_{\alpha = 0}^{p - 1}\sum_{j = 0}^{\frac{r - 1}{2}}{\frac{r + 1}{2} \choose j}p^{-1}\!\!\left(\cL - H_{\frac{r + 1}{2} - j}\right)(\alpha p^2)^{\frac{r + 1}{2} - j} % \\
               %                 && \quad \quad \quad \quad \quad \quad \qquad
                                    (z - a - \alpha p^2)^j\mathbbm{1}_{a + \alpha p^2 + p^3\Zp} \\ %\nonumber\\ 
        && \!\! + \!\! \sum_{a = 0}^{p^2 - 1}\!\sum_{\alpha = 0}^{p - 1}\!\sum_{j = 0}^{\frac{r - 1}{2}}\!{\frac{r + 1}{2} \choose j}p^{-1}\cL(\alpha p^2)^{\frac{r + 1}{2} - j}(z - a - \alpha p^2)^j\mathbbm{1}_{a + \alpha p^2 + p^3\Zp} \!\!\! \mod \pi \latticeL{k}. \nonumber
    \end{eqnarray}

    Now $\nu \geq 0.5$ implies that $\nu > 1 + r/2 - n$ for all $k/2 < n \leq r$, so by
    Proposition \ref{inductive steps}, we get
    \[
        z^{r - n}\mathbbm{1}_{p \Zp} \equiv \sum_{b = 1}^{p - 1}\sum_{j = 1}^{\frac{r - 1}{2}}c(n, j)b^{r - j - n}(z - b)^j\mathbbm{1}_{b + p\Zp} \mod \pi \latticeL{k}.
    \]
    Applying the operator $\sum\limits_{\alpha = 0}^{p - 1}\alpha^{n - \frac{r - 1}{2}}\begin{pmatrix}1 & 0 \\ -a - \alpha p^2 & p^2\end{pmatrix}$ to both sides, 
    %we get
    %\begin{eqnarray*}
    %    && \sum_{\alpha = 0}^{p - 1}\alpha^{n - \frac{r - 1}{2}}p^r\left(\frac{z - a - \alpha p^2}{p^2}\right)^{r - n}\mathbbm{1}_{a + \alpha p^2 + p^3\Zp} \\
    %    && \equiv \sum_{\alpha = 0}^{p - 1}\sum_{b = 1}^{p - 1}\sum_{j = 0}^{\frac{r - 1}{2}}\alpha^{n - \frac{r - 1}{2}}p^r c(n, j) b^{r - j - n} \left(\frac{z - a - (\alpha + b)p^2}{p^2}\right)^j\mathbbm{1}_{a + (\alpha + b)p^2 + p^3\Zp} \!\!\!\mod \pi \latticeL{k}.
    %\end{eqnarray*}
    making the substitution $\alpha + b = \lambda$ on the right side and summing over $b$
    using equation \eqref{sum over b identity} with $i = \dfrac{r + 1}{2}$,
    %we get
    %\begin{eqnarray*}
    %    && \sum_{\alpha = 0}^{p - 1}\alpha^{n - \frac{r - 1}{2}}p^{2n - r}(z - a - \alpha p^2)^{r - n}\mathbbm{1}_{a + \alpha p^2 + p^3\Zp} \\
    %    && \equiv \sum_{\lambda = 0}^{p - 1} \sum_{j = r - n}^{\frac{r - 1}{2}}(-1)^{n - r + j + 1}{n - \frac{r - 1}{2} \choose n - r + j}\lambda^{\frac{r + 1}{2} - l}c(n, j)p^{r - 2j}(z - a - \lambda p^2)^j\mathbbm{1}_{a + \lambda p^2 + p^3\Zp} \\
    %    && \qquad \qquad \mod \pi \latticeL{k}.
    %\end{eqnarray*}
    replacing $\lambda$ by $\alpha$ and substituting the value of $c(n, j)$ obtained in Proposition \ref{inductive steps},
    % we see that the $-1$ part of $c(n, j)$ cancels with the term on the left side of the equation above. Therefore
    we get
    \begin{eqnarray}\label{Applied inductive steps for g3 - g2 in self-dual}
        0 & \equiv & \sum_{\alpha = 0}^{p - 1}\sum_{j = r - n}^{\frac{r - 1}{2}}{n - \frac{r - 1}{2} \choose n - r + j}\frac{(r - n)! n}{(r - j) \cdots (n - j)}p^{-1}(\alpha p^2)^{\frac{r + 1}{2} - j} \\
        && \quad \quad \quad \quad \quad \quad \qquad (z - a - \alpha p^2)^j \mathbbm{1}_{a + \alpha p^2 + p^3\Zp} \mod \pi \latticeL{k} \nonumber
    \end{eqnarray}
    for each $k/2 < n \leq r$.

    For a fixed $0 \leq a \leq p - 1$, we wish to write the $a^{\mathrm{th}}$ summand in the first term on the right side of equation \eqref{g3 - g2 in self-dual} as $x_n \in E$ times the right side of equation \eqref{Applied inductive steps for g3 - g2 in self-dual} for $k/2 < n \leq r$ and add them to $x_{\frac{r + 1}{2}} \in E$ times
    \[
        \sum_{\alpha = 0}^{p - 1}p^{-1}(\alpha p^2)(z - a - \alpha p^2)^{\frac{r - 1}{2}}\mathbbm{1}_{a + \alpha p^2 + p^3\Zp}.
    \]
    Comparing coefficients of like terms of the form $p^{-1}(\alpha p^2)^{\frac{r + 1}{2} - j}(z - a - \alpha p^2)^j\mathbbm{1}_{a + \alpha p^2 + p^3\Zp}$, we get the following matrix equation
    \[
        \begin{pmatrix}
            {\frac{r + 1}{2} \choose 0}\frac{r}{r} & 0 & 0 & \cdots & 0 \\
            {\frac{r + 1}{2}  \choose 1}\frac{r}{r - 1} & {\frac{r - 1}{2} \choose 0}\frac{r - 1}{(r - 1)(r - 2)} & 0 & \cdots & 0 \\
            {\frac{r + 1}{2} \choose 2}\frac{r}{r - 2} & {\frac{r - 1}{2} \choose 1}\frac{r - 1}{(r - 2)(r - 3)} & {\frac{r - 3}{2} \choose 0}\frac{2!(r - 2)}{(r - 2)(r - 3)(r - 4)} & \cdots & 0 \\
            \vdots & \vdots & \vdots & & \vdots \\
            {\frac{r + 1}{2} \choose \frac{r - 1}{2}}\frac{r}{\frac{r + 1}{2}} & {\frac{r - 1}{2} \choose \frac{r - 3}{2}}\frac{r - 1}{(\frac{r + 1}{2})(\frac{r - 1}{2})} & {\frac{r - 3}{2} \choose \frac{r - 5}{2}}\frac{2!(r - 2)}{(\frac{r + 1}{2})(\frac{r - 1}{2})(\frac{r - 3}{2})} & \cdots & 1
        \end{pmatrix}
        \begin{pmatrix}
            x_r \\ x_{r - 1} \\ x_{r - 2} \\ \vdots \\ x_{\frac{r + 1}{2}}
        \end{pmatrix}
        =
        \begin{pmatrix}
            {\frac{r + 1}{2} \choose 0}(\cL - H_{\frac{r + 1}{2}}) \\
            {\frac{r + 1}{2} \choose 1}(\cL - H_{\frac{r - 1}{2}}) \\
            {\frac{r + 1}{2} \choose 2}(\cL - H_{\frac{r - 3}{2}}) \\
            \vdots \\
            {\frac{r + 1}{2} \choose \frac{r - 1}{2}}(\cL - H_{1})
        \end{pmatrix}.
    \]
    It can be checked\footnote{\label{space-time 11}See Appendix~\ref{Footnote 11}.}
    that $x_{\frac{r + 1}{2}} = (-1)^{\frac{r - 1}{2}}\left(\cL - H_{-} - H_{+}\right)$.

    Therefore, we can write equation \eqref{g3 - g2 in self-dual} as
    \begin{eqnarray*}
        g_3(z) - g_2(z) & \equiv & \sum_{a = 0}^{p - 1}\sum_{\alpha = 0}^{p - 1}(-1)^{\frac{r - 1}{2}}\left(\cL - H_{-} - H_{+}\right)p^{-1}(\alpha p^2)(z - a - \alpha p^2)^{\frac{r - 1}{2}}\mathbbm{1}_{a + \alpha p^2 + p^3\Zp} \\
        && + \sum_{a = 0}^{p^2 - 1}\sum_{\alpha = 0}^{p - 1}\sum_{j = 0}^{\frac{r - 1}{2}}{\frac{r + 1}{2} \choose j}p^{-1}\cL(\alpha p^2)^{\frac{r + 1}{2} - j}(z - a - \alpha p^2)^j\mathbbm{1}_{a + \alpha p^2 + p^3\Zp} \\
        && \qquad \qquad \mod \pi \latticeL{k}.
    \end{eqnarray*}
    Since $\nu \geq 0.5$, the first term on the right side of the display above is $0$ modulo $\pi \latticeL{k}$ by Lemma \ref{Integers in the lattice}. Next, we simplify the second line on the right side of the display above using the binomial theorem to get
    \[
        g_3(z) - g_2(z) \equiv \sum_{a = 0}^{p^2 - 1}\sum_{\alpha = 0}^{p - 1}p^{-1}\cL\left[(z - a)^\frac{r + 1}{2} - (z - a - \alpha p^2)^{\frac{r + 1}{2}}\right]\mathbbm{1}_{a + \alpha p^2 + p^3\Zp} \mod \pi \latticeL{k}.
    \]
    Note that $\nu \geq 0.5 \geq 0$ implies $\cL \in \Zp$. Therefore using Lemma \ref{stronger bound for polynomials of large degree with varying radii} the second term on the right side of the display above is $0$ modulo $\pi \latticeL{k}$. Since the first term on the right side of the display above does not involve $\alpha$, the characteristic functions sum up to $\mathbbm{1}_{a + p^2\Zp}$. Therefore % we finally get
    \begin{eqnarray}\label{Final g3 - g2 in self-dual}
        g_3(z) - g_2(z) \equiv \sum_{a = 0}^{p^2 - 1}p^{-1}\cL(z - a)^{\frac{r + 1}{2}}\mathbbm{1}_{a + p^2\Zp} \mod \pi \latticeL{k}.
    \end{eqnarray}

    We now simplify $g_2(z) - g_1(z)$. %Analogous to the $g_3(z) - g_2(z)$ part treated above, we see that for $0 \leq j \leq \dfrac{r - 1}{2}$,
    The coefficient of $(z - a - \alpha p)^j\mathbbm{1}_{a + \alpha p + p^2\Zp}$ in $g_2(z) - g_1(z)$ is
    \begin{eqnarray}\label{jth summand in g2 - g1 in main self-dual}
        && \!\!\!\!\!\!\!\! {\frac{r + 1}{2} \choose j}\sum_{i = 0}^{p - 1}p^{-1}\lambda_i\Big[(a + \alpha p - i)^{\frac{r + 1}{2} - j}\logL(a + \alpha p - i)^{\frac{r + 1}{2} - j} - (a - i)^{\frac{r + 1}{2} - j}\logL(a - i) \\
        && \!\!\!\!\!\!\!\! - {\frac{r + 1}{2} - j \choose 1}(\alpha p)(a - i)^{\frac{r - 1}{2} - j}\logL(a - i) - \cdots - {\frac{r + 1}{2} - j \choose \frac{r - 1}{2} - j}(\alpha p)^{\frac{r - 1}{2} - j}(a - i)\logL(a - i)\Big]. \nonumber
    \end{eqnarray}
    Analyzing the $i \neq a$ and $i = a$ terms separately and
  using $\sum_{i = 0}^{p - 1}\lambda_i i^{j} = 0$ for $0 \leq j \leq p - 2$,
  % and noting $\lambda_a \equiv 1 \mod p$ for $1 \leq a \leq p - 1$,
  we get as usual 
    \begin{eqnarray}\label{g2 - g1 in self-dual}
        g_2(z) - g_1(z) & \equiv & \sum_{a = 0}^{p - 1}\sum_{\alpha = 0}^{p - 1}\sum_{j = 0}^{\frac{r - 1}{2}}{\frac{r + 1}{2} \choose j}p^{-1}\left(\cL - H_{\frac{r + 1}{2} - j}\right)(\alpha p)^{\frac{r + 1}{2} - j} \\
        && \quad \quad \quad \quad \quad \quad \qquad (z - a - \alpha p)^{j}\mathbbm{1}_{a + \alpha p + p^2 \Zp} \mod \pi \latticeL{k}. \nonumber
    \end{eqnarray}
    Assume momentarily that $r > 1$. Proposition \ref{Refined hard inductive steps for self-dual} gives us
    \begin{eqnarray}\label{Applied hard inductive steps in g2 - g1 in self-dual}
        0 & \equiv & \sum_{\alpha = 0}^{p - 1}\sum_{j = r-n}^{{\frac{r+1}{2}}}p^{\frac{r-1}{2} - j}{n - \frac{r-1}{2} \choose n - r + j}\frac{(r-n)! n}{(r - j)\cdots (n - j)}\alpha^{\frac{r+1}{2} - j} \\
        && \quad \quad \quad \quad \quad \quad \qquad (z - a - \alpha p)^j \mathbbm{1}_{a + \alpha p + p^2\Zp} \mod \pi \latticeL{k} \nonumber
    \end{eqnarray}
    for every $\dfrac{r + 3}{2} \leq n \leq r$. For each $a$ and $\alpha$ in $[0, p - 1]$, we wish to write the coefficient of $p^{-1}(\alpha p)^{\frac{r + 1}{2} - j}(z - a - \alpha p)^j\mathbbm{1}_{a + \alpha p + p^2\Zp}$ in equation \eqref{g2 - g1 in self-dual} as a sum of $x_n \in E$ times the corresponding coefficient in equation \eqref{Applied hard inductive steps in g2 - g1 in self-dual}, $x_{\frac{r + 1}{2}} \in E$ times
    \[
        \sum_{\alpha = 0}^{p - 1}\alpha (z - a - \alpha p)^{\frac{r - 1}{2}}\mathbbm{1}_{a + \alpha p + p^2 \Zp}
    \]
    and $x_{\frac{r - 1}{2}} \in E$ times
    \[
        \sum_{\alpha = 0}^{p - 1}p^{-1}(z - a - \alpha p)^{\frac{r + 1}{2}}\mathbbm{1}_{a + \alpha p + p^2\Zp}.
    \]
    %We write the $a^{\mathrm{th}}$ summand on the right side of equation \eqref{g2 - g1 in self-dual} as $x_n \in E$ times the right side of equation \eqref{Applied hard inductive steps in g2 - g1 in self-dual} for $\dfrac{r + 3}{2} \leq n \leq r$ and add them with $x_{\frac{r + 1}{2}} \in E$ times
    %\[
    %    \sum_{\alpha = 0}^{p - 1}\alpha (z - a - \alpha p)^{\frac{r - 1}{2}}\mathbbm{1}_{a + \alpha p + p^2 \Zp}
    %\]
    %and $x_{\frac{r - 1}{2}} \in E$ times
    %\[
    %    \sum_{\alpha = 0}^{p - 1}p^{-1}(z - a - \alpha p)^{\frac{r + 1}{2}}{\color{blue}\mathbbm{1}_{a + \alpha p + p^2\Zp}}.
    %\]
    %Comparing coefficients of like terms of the form $\sum\limits_{\alpha = 0}^{p - 1}p^{-1}(\alpha p)^{\frac{r + 1}{2} - j}(z - a - \alpha p)^j\mathbbm{1}_{a + \alpha p + p^2\Zp}$, 
    Therefore we get the following matrix equation
    \begin{eqnarray}\label{First matrix equation in g2 - g1 in self-dual}
        \!\!\!\! \tiny\begin{pmatrix}
            {\frac{r + 1}{2} \choose 0}\frac{r}{r} & 0 & 0 & \cdots & 0 & 0\\
            {\frac{r + 1}{2}  \choose 1}\frac{r}{r - 1} & {\frac{r - 1}{2} \choose 0}\frac{r - 1}{(r - 1)(r - 2)} & 0 & \cdots & 0 & 0\\
            {\frac{r + 1}{2} \choose 2}\frac{r}{r - 2} & {\frac{r - 1}{2} \choose 1}\frac{r - 1}{(r - 2)(r - 3)} & {\frac{r - 3}{2} \choose 0}\frac{2!(r - 2)}{(r - 2)(r - 3)(r - 4)} & \cdots & 0 & 0\\
            \vdots & \vdots & \vdots & & \vdots & \vdots\\
            {\frac{r + 1}{2} \choose \frac{r - 1}{2}}\frac{r}{\frac{r + 1}{2}} & {\frac{r - 1}{2} \choose \frac{r - 3}{2}}\frac{r - 1}{(\frac{r + 1}{2})(\frac{r - 1}{2})} & {\frac{r - 3}{2} \choose \frac{r - 5}{2}}\frac{2!(r - 2)}{(\frac{r + 1}{2})(\frac{r - 1}{2})(\frac{r - 3}{2})} & \cdots & 1 & 0\\
            {\frac{r + 1}{2} \choose \frac{r + 1}{2}}\frac{r}{\frac{r - 1}{2}} & {\frac{r - 1}{2} \choose \frac{r - 1}{2}}\frac{r - 1}{(\frac{r - 1}{2})(\frac{r - 3}{2})} & {\frac{r - 3}{2} \choose \frac{r - 3}{2}}\frac{2!(r - 2)}{(\frac{r - 1}{2})(\frac{r - 3}{2})(\frac{r - 5}{2})} & \cdots & 0 & 1
        \end{pmatrix} \!\!
        \begin{pmatrix}
            x_r \\ x_{r - 1} \\ x_{r - 2} \\ \vdots \\ x_{\frac{r + 1}{2}} \\ x_{\frac{r - 1}{2}}
        \end{pmatrix} \!\! 
        = \!\! 
        \begin{pmatrix}
            {\frac{r + 1}{2} \choose 0}(\cL - H_{\frac{r + 1}{2}}) \\
            {\frac{r + 1}{2} \choose 1}(\cL - H_{\frac{r - 1}{2}}) \\
            {\frac{r + 1}{2} \choose 2}(\cL - H_{\frac{r - 3}{2}}) \\
            \vdots \\
            {\frac{r + 1}{2} \choose \frac{r - 1}{2}}(\cL - H_{1}) \\
            0
        \end{pmatrix}.
    \end{eqnarray}
    It can be checked\footnote{\label{space-time 12}See Appendix~\ref{Footnote 12}.} that
    \[
        x_{\frac{r + 1}{2}} = (-1)^{\frac{r - 1}{2}}\left(\cL - H_{-} - H_{+}\right) \text{ and } x_{\frac{r - 1}{2}} \equiv -\cL + (-1)^{\frac{r - 1}{2}}\frac{1}{\frac{r + 1}{2}} \mod \pi.
    \]
    Using Lemma~\ref{stronger bound for polynomials of large degree with varying radii},
    we can therefore write equation \eqref{g2 - g1 in self-dual} as
    \begin{eqnarray}\label{Intermediate g2 - g1 in self-dual}
        \!\!\!\!\!\! g_2(z) - g_1(z) \!\! & \equiv & \!\! \sum_{a = 0}^{p - 1}\sum_{\alpha = 0}^{p - 1}(-1)^{\frac{r - 1}{2}}\!\left(\frac{\cL - H_{-} - H_{+}}{p^{0.5}}\right)\!p^{0.5}\alpha (z - a - \alpha p)^{\frac{r - 1}{2}}\mathbbm{1}_{a + \alpha p + p^2\Zp} \\
        && \!\! + \sum_{a = 0}^{p - 1}\sum_{\alpha = 0}^{p - 1}\left(-\cL + (-1)^{\frac{r - 1}{2}}\frac{1}{\frac{r + 1}{2}}\right)p^{-1}(z - a - \alpha p)^{\frac{r + 1}{2}}\mathbbm{1}_{a + \alpha p + p^2\Zp} \nonumber \\
        && \!\! \qquad \qquad \mod \pi \latticeL{k}. \nonumber
    \end{eqnarray}
    The expressions \eqref{g2 - g1 in self-dual} and \eqref{Intermediate g2 - g1 in self-dual} coincide
    if $r = 1$ as well using Lemma~\ref{stronger bound for polynomials of large degree with varying radii}.
    As $0 \leq a, \> \alpha \leq p - 1$, we see that $a + \alpha p$ varies from $0$ to $p^2 - 1$. Therefore we replace $a + \alpha p$ by $a$ varying between $0$ and $p^2 - 1$ in the second term on the right side of the equation \eqref{Intermediate g2 - g1 in self-dual}.

    Fixing $0 \leq a \leq p - 1$, we write the $a^{\mathrm{th}}$ summand in the first term on the right side of the equation \eqref{Intermediate g2 - g1 in self-dual} as a multiple of
    \[
        p^{-0.5}(z - a)^{\frac{r + 1}{2}}\mathbbm{1}_{a + p\Zp}.
    \]
    We first expand
    \begin{eqnarray}\label{Binomial equation in inductive steps for g2 - g1 in self-dual}
        \!\!\!\!\!\! p^{-0.5}(z - a)^{\frac{r + 1}{2}}\mathbbm{1}_{a + p\Zp} & \equiv & \sum_{\alpha = 0}^{p - 1}\sum_{j = 0}^{\frac{r - 1}{2}}{\frac{r + 1}{2} \choose j}p^{-0.5}(\alpha p)^{\frac{r + 1}{2} - j}(z - a - \alpha p)^j \mathbbm{1}_{a + \alpha p + p^2\Zp} \\
        && \qquad \qquad \mod \pi \latticeL{k}. \nonumber
    \end{eqnarray}
    Note that we have dropped the $j = \dfrac{r + 1}{2}$ term using Lemma \ref{stronger bound for polynomials of large degree with varying radii}. Since $\nu \geq 0.5$, using Proposition \ref{inductive steps} we have the following shallow inductive steps
    \[
        z^{r - n}\mathbbm{1}_{p\Zp} \equiv \sum_{b = 1}^{p - 1}\sum_{j = 1}^{\frac{r - 1}{2}}c(n, j)b^{r - j - n}(z - b)^j\mathbbm{1}_{b + p\Zp} \mod \pi \latticeL{k}
    \]
    for each $\dfrac{r + 3}{2} \leq n \leq r$. Applying $\sum\limits_{\alpha = 0}^{p - 1}\alpha^{n - \frac{r - 1}{2}}\begin{pmatrix}1 & 0 \\ - a - \alpha p & p\end{pmatrix}$ to both sides, %we get
    %\begin{eqnarray*}
    %    && \sum_{\alpha = 0}^{p - 1}\alpha^{n - \frac{r - 1}{2}}p^{r/2}\left(\frac{z - a - \alpha p}{p}\right)^{r - n}\mathbbm{1}_{a + \alpha p + p^2\Zp} \\
    %    && \equiv \sum_{\alpha = 0}^{p - 1}\sum_{b = 1}^{p - 1}\sum_{j = 1}^{\frac{r - 1}{2}}\alpha^{n - \frac{r - 1}{2}}p^{r/2}c(n, j)b^{r - j - n}\left(\frac{z - a - (\alpha + b)p}{p}\right)^{j} \mathbbm{1}_{a + (\alpha + b)p + p^2\Zp} \!\!\! \mod \pi \latticeL{k}.
    %\end{eqnarray*}
    substituting $\lambda = \alpha + b$ on the right side and summing over $b$ using equation \eqref{sum over b identity}
    with $i = \dfrac{r + 1}{2}$,
    %we get
    %\begin{eqnarray*}
    %    && \!\!\!\!\!\!\!\! \sum_{\alpha = 0}^{p - 1}\alpha^{n - \frac{r - 1}{2}}p^{n - r/2}(z - a - \alpha p)^{r - n}\mathbbm{1}_{a + \alpha p + p^2\Zp} \\
    %    && \!\!\!\!\!\!\!\! \equiv \sum_{\lambda = 0}^{p - 1}\sum_{j = 1}^{\frac{r - 1}{2}}\!p^{r/2 - j}\!(-1)^{n - r + j + 1}\lambda^{\frac{r + 1}{2} - j}\!{n - \frac{r - 1}{2} \choose n - r + j}c(n, j)(z - a - \lambda p)^{j} \mathbbm{1}_{a + \lambda p + p^2\Zp} \!\!\! \mod \pi \latticeL{k}.
    %\end{eqnarray*}
    replacing $\lambda$ by $\alpha$, and substituting the value of $c(n, j)$ from Proposition \ref{inductive steps},
    % Note that the $-1$ part of $c(n, j)$ cancels with the term on the left side of the equation above. Therefore
    we get
    \begin{eqnarray}\label{Applied shallow inductive steps in g2 - g1 in self-dual}
        0 & \equiv & \sum_{\alpha = 0}^{p - 1}\sum_{j = r - n}^{\frac{r - 1}{2}}{n - \frac{r - 1}{2} \choose n - r + j}\frac{(r - n)!n}{(r - j) \cdots (n - j)}p^{-0.5}(\alpha p)^{\frac{r + 1}{2} - j} \\
        && \qquad \qquad \qquad \qquad (z - a - \alpha p)^j\mathbbm{1}_{a + \alpha p + p^2\Zp} \mod \pi \latticeL{k}. \nonumber
    \end{eqnarray}
    Now write the coefficient of $p^{-0.5}(\alpha p)^{\frac{r + 1}{2} - j}(z - a - \alpha p)^j\mathbbm{1}_{a + \alpha p + p^2\Zp}$ in the first summand on the right side of equation \eqref{Intermediate g2 - g1 in self-dual} as a sum of $x_n \in E$ times the corresponding coefficient in equation \eqref{Applied shallow inductive steps in g2 - g1 in self-dual} and $x_{\frac{r + 1}{2}} \in E$ times the corresponding coefficient in \eqref{Binomial equation in inductive steps for g2 - g1 in self-dual}. Therefore we get the following matrix equation
    %we form the following matrix equation the same way as we formed the matrix equation \eqref{Matrix equation in geq}
    \[
        {\tiny\begin{pmatrix}
            {\frac{r + 1}{2} \choose 0}\frac{r}{r} & 0 & 0 & \cdots & {\frac{r + 1}{2} \choose 0} \\
            {\frac{r + 1}{2}  \choose 1}\frac{r}{r - 1} & {\frac{r - 1}{2} \choose 0}\frac{r - 1}{(r - 1)(r - 2)} & 0 & \cdots & {\frac{r + 1}{2} \choose 1} \\
            {\frac{r + 1}{2} \choose 2}\frac{r}{r - 2} & {\frac{r - 1}{2} \choose 1}\frac{r - 1}{(r - 2)(r - 3)} & {\frac{r - 3}{2} \choose 0}\frac{2!(r - 2)}{(r - 2)(r - 3)(r - 4)} & \cdots & {\frac{r + 1}{2} \choose 2} \\
            \vdots & \vdots & \vdots & & \vdots \\
            {\frac{r + 1}{2} \choose \frac{r - 1}{2}}\frac{r}{\frac{r + 1}{2}} & {\frac{r - 1}{2} \choose \frac{r - 3}{2}}\frac{r - 1}{(\frac{r + 1}{2})(\frac{r - 1}{2})} & {\frac{r - 3}{2} \choose \frac{r - 5}{2}}\frac{2!(r - 2)}{(\frac{r + 1}{2})(\frac{r - 1}{2})(\frac{r - 3}{2})} & \cdots & {\frac{r + 1}{2} \choose \frac{r - 1}{2}}
        \end{pmatrix}
        \begin{pmatrix}
            x_r \\ x_{r - 1} \\ x_{r - 2} \\ \vdots \\ x_{\frac{r + 1}{2}}
        \end{pmatrix}
        =
        \begin{pmatrix}
            0 \\ 0 \\ 0 \\ \vdots \\ (-1)^{\frac{r - 1}{2}}\left(\frac{\cL - H_{-} - H_{+}}{p^{0.5}}\right)
        \end{pmatrix}.}
    \]
    The determinant of this matrix is the same as that of the matrix $A$ in the matrix equation
    \eqref{Matrix equation in geq} with $i$ there equal to $\dfrac{r+1}{2}$.
    By Cramer's rule, we see $x_{\frac{r + 1}{2}} = \dfrac{\cL - H_{-} - H_{+}}{p^{0.5}}$. We finally get
    \begin{eqnarray}\label{Final g2 - g1 in self-dual}
        g_2(z) - g_1(z) & \equiv & \sum_{a = 0}^{p - 1}\left(\frac{\cL - H_{-} - H_{+}}{p^{0.5}}\right)p^{-0.5}(z - a)^{\frac{r + 1}{2}}\mathbbm{1}_{a + p\Zp} \\
        && + \sum_{a = 0}^{p^2 - 1}\left(-\cL + (-1)^{\frac{r - 1}{2}}\frac{1}{\frac{r + 1}{2}}\right)p^{-1}(z - a)^{\frac{r + 1}{2}}\mathbbm{1}_{a + p^2\Zp} \mod \pi \latticeL{k}. \nonumber
    \end{eqnarray}

    Next we simplify $g_1(z)$. Recall that
    \begin{equation}\label{g1 in self-dual}
        g_1(z) = \sum_{a = 0}^{p - 1}\sum_{j = 0}^{\frac{r - 1}{2}}\frac{g^{(j)}(a)}{j!}(z - a)^j\mathbbm{1}_{a + p\Zp},
    \end{equation}
    where
    \begin{eqnarray}\label{jth summand in g1 in self-dual}
        \frac{g^{(j)}(a)}{j!} = {\frac{r + 1}{2} \choose j}\sum_{i = 0}^{p - 1}\frac{\lambda_i}{p}(a - i)^{\frac{r + 1}{2} - j}\logL(a - i).
    \end{eqnarray}
    The $i = a$ summand is $0$. For $i \neq a$, we write $a - i = [a - i] + pa_i$ for some $a_i \in \Zp$. Since $[a - i]$ is a root of unity, $\logL(a - i) = \logL(1 + [a - i]^{-1}pa_i)$. Expanding this logarithm, we get
    \begin{eqnarray*}
        \frac{g^{(j)}(a)}{j!} & \equiv & {\frac{r + 1}{2} \choose j}\sum_{\substack{i = 0 \\ i \neq a}}^{p - 1}\frac{\lambda_i}{p}[a - i]^{\frac{r - 1}{2} - j}pa_i \mod \pi \\
        & \equiv & {\frac{r + 1}{2} \choose j}\sum_{\substack{i = 0 \\ i \neq a}}^{p - 1}\frac{\lambda_i}{p}\left(\frac{(a - i)^{\frac{r + 1}{2} - j} - [a - i]^{\frac{r + 1}{2} - j}}{\frac{r + 1}{2} - j}\right) \mod \pi.
    \end{eqnarray*}
    The expression inside the parentheses is divisible by $p$. Since $\lambda_i \equiv 1 \mod p$, we may replace $\lambda_i$ in the last line above by $1$. Moreover, since $0 \leq j \leq \dfrac{r - 1}{2}$, the power $\dfrac{r + 1}{2} - j$ is in the range $[1, p - 2]$. Using equation \eqref{sum of powers of roots of unity}, we see that the equation above simplifies to
    \[
        \frac{g^{(j)}(a)}{j!} \equiv {\frac{r + 1}{2} \choose j}\sum_{i = 0}^{p - 1}\frac{1}{p}\frac{(a - i)^{\frac{r + 1}{2} - j}}{\frac{r + 1}{2} - j} \equiv {\frac{r + 1}{2} \choose j}\sum_{m = 0}^{\frac{r + 1}{2} - j}{\frac{r + 1}{2} - j \choose m}(-1)^m \frac{S_m}{p}\frac{a^{\frac{r + 1}{2} - j - m}}{\frac{r + 1}{2} - j} \mod \pi.
    \]
    Putting this in equation \eqref{g1 in self-dual}, and using Lemma~\ref{Integers in the lattice}, we get
    \begin{eqnarray*}
        g_1(z) \equiv \sum_{a = 0}^{p - 1}\sum_{j = 0}^{\frac{r - 1}{2}}\sum_{m = 0}^{\frac{r + 1}{2} - j}{\frac{r + 1}{2} \choose j}{\frac{r + 1}{2} - j \choose m}(-1)^m\frac{S_m}{p}\frac{a^{\frac{r + 1}{2} - j - m}}{\frac{r + 1}{2} - j}(z - a)^j\mathbbm{1}_{a + p\Zp} \mod \pi \latticeL{k}.
    \end{eqnarray*}
    The $m = 0$ part of this sum will be the important part of $g_1(z)$. We separate out the %$m = 0$ and
    $m \neq 0$ terms %Moreover in the $m \neq 0$ part, we
    and interchange the order of the $m$ and $j$ sums to get
    \begin{eqnarray}\label{Intermediate g1 in self-dual}
        g_1(z) \!\! & \equiv & \!\! \sum_{a = 0}^{p - 1}\sum_{j = 0}^{\frac{r - 1}{2}}{\frac{r + 1}{2} \choose j}\frac{a^{\frac{r + 1}{2} - j}}{\frac{r + 1}{2} - j}(z - a)^j\mathbbm{1}_{a + p\Zp} \\
        && \!\! + \sum_{a = 0}^{p - 1}\sum_{m = 1}^{\frac{r + 1}{2}}\sum_{j = 0}^{\frac{r + 1}{2} - m}\!\!{\frac{r + 1}{2} \choose j}{\frac{r + 1}{2} - j \choose m}(-1)^m \frac{S_m}{p}\frac{a^{\frac{r + 1}{2} - j - m}}{\frac{r + 1}{2} - j}(z - a)^j \mathbbm{1}_{a + p\Zp} \!\!\! \mod \pi \latticeL{k}. \nonumber
    \end{eqnarray}

    We show the second sum on the right side of \eqref{Intermediate g1 in self-dual} is $0$ mod $\pi \latticeL{k}$. Fix an $m$ in $\left[1, \dfrac{r - 1}{2}\right]$. Using the shallow inductive steps
    Proposition~\ref{inductive steps} for $n = r, \> r - 1, \ldots, \dfrac{r + 1}{2} + m$, we get
    \[
        z^{r - n}\mathbbm{1}_{p\Zp} \equiv \sum_{b = 1}^{p - 1}\sum_{j = 1}^{\frac{r - 1}{2}}c(n, j)b^{r - j - n}(z - b)^j\mathbbm{1}_{b + p\Zp} \mod \pi \latticeL{k}.
    \]
    Applying the operator $\sum\limits_{a = 0}^{p - 1}a^{n - \frac{r - 1}{2} - m}\begin{pmatrix}1 & 0 \\ -a & 1\end{pmatrix}$, we get
    \begin{eqnarray*}
        \sum_{a = 0}^{p - 1}a^{n - \frac{r - 1}{2} - m}(z - a)^{r - n}\mathbbm{1}_{a + p\Zp} \!\! & \equiv & \!\! \sum_{a = 0}^{p - 1}\sum_{b = 1}^{p - 1}\sum_{j = 1}^{\frac{r - 1}{2}}a^{n - \frac{r - 1}{2} - m}b^{r - j - n}c(n, j)(z - (a + b))^j\mathbbm{1}_{(a + b) + p\Zp} \\
        && \!\! \qquad \qquad \mod \pi \latticeL{k}.
    \end{eqnarray*}
    Substituting $\lambda = a + b$ and summing over $b$ using equation \eqref{sum over b identity} with
    $i = \dfrac{r+1}{2} + m$, we get
    \begin{eqnarray*}
        \sum_{a = 0}^{p - 1}a^{n - \frac{r - 1}{2} - m}(z - a)^{r - n}\mathbbm{1}_{a + p\Zp} & \equiv & \sum_{\lambda = 0}^{p - 1}\sum_{j = r - n}^{\frac{r - 1}{2}}(-1)^{n - r + j + 1}{n - \frac{r - 1}{2} - m \choose n - r + j}c(n, j)\lambda^{\frac{r + 1}{2} - m - j} \\
        && \quad \quad \quad \quad \quad \quad (z - \lambda)^j\mathbbm{1}_{\lambda + p\Zp} \mod \pi \latticeL{k}.
    \end{eqnarray*}
    For $j > \dfrac{r + 1}{2} - m$, the binomial coefficient is $0$. Therefore we reduce the upper limit on $j$ to $\dfrac{r + 1}{2} - m$. Note that $m \geq 1$ implies that $\dfrac{r + 1}{2} - m \leq \dfrac{r - 1}{2}$. We replace $\lambda$ in the equation above by $a$ and substitute the value of $c(n, j)$ obtained in Proposition \ref{inductive steps}. The $-1$ part of $c(n, l)$ cancels with the term on the left side. Therefore we get
    \begin{eqnarray}\label{Refined inductive steps to bring the power to (r+1)/2 - m}
        \!\!\! 0 \equiv \! \sum_{a = 0}^{p - 1}\sum_{j = r - n}^{\frac{r + 1}{2} - m}\!\!\!{n - \frac{r - 1}{2} - m \choose n - r + j}\!\frac{(r - n)! n}{(r - j) \cdots (n - j)}a^{\frac{r + 1}{2} - m - j}(z - a)^j\mathbbm{1}_{a + p\Zp} \!\!\!\! \mod \pi \latticeL{k}.
    \end{eqnarray}
    We wish to write the term $a^{\frac{r + 1}{2} - j - m}(z - a)^j\mathbbm{1}_{a + p\Zp}$ in the second sum on the right side of \eqref{Intermediate g1 in self-dual} as a sum of $x_n \in E$ times the corresponding term in equation \eqref{Refined inductive steps to bring the power to (r+1)/2 - m} for $\dfrac{r+1}{2} + m \leq n \leq r$
    and $x_{\frac{r - 1}{2} + m} \in E$ times
    \[
        (z - a)^{\frac{r + 1}{2} - m}\mathbbm{1}_{a + p\Zp}.
    \]
    To this end, consider the matrix equation
    \[
        \begin{pmatrix}
            {\frac{r + 1}{2} - m \choose 0}\frac{r}{r} & 0 & \cdots & 0 \\
            {\frac{r + 1}{2} - m \choose 1}\frac{r}{r - 1} & {\frac{r - 1}{2} - m \choose 0}\frac{r - 1}{(r - 1)(r - 2)} & \cdots & 0 \\
            {\frac{r + 1}{2} - m \choose 2}\frac{r}{r - 2} & {\frac{r - 1}{2} - m \choose 1}\frac{r - 1}{(r - 2)(r - 3)} & \cdots & 0 \\
            \vdots & \vdots & & \vdots \\
            {\frac{r + 1}{2} - m \choose \frac{r + 1}{2} - m}\frac{r}{\frac{r - 1}{2} + m} & {\frac{r - 1}{2} + m \choose \frac{r - 1}{2} + m}\frac{r - 1}{(\frac{r - 1}{2} + m)(\frac{r - 3}{2} + m)} & \cdots & 1
        \end{pmatrix} \!\! 
        \begin{pmatrix}
            x_r \\ x_{r - 1} \\ \vdots \\ x_{\frac{r - 1}{2} + m}
        \end{pmatrix} \!\!
        = \!\! \begin{pmatrix}
            {\frac{r + 1}{2} \choose 0}{\frac{r + 1}{2} \choose m}(-1)^m\frac{S_m}{p}\frac{1}{\frac{r + 1}{2}} \\
            {\frac{r + 1}{2} \choose 1}{\frac{r - 1}{2} \choose m}(-1)^m\frac{S_m}{p}\frac{1}{\frac{r - 1}{2}} \\
            {\frac{r + 1}{2} \choose 2}{\frac{r - 3}{2} \choose m}(-1)^m\frac{S_m}{p}\frac{1}{\frac{r - 3}{2}} \\ \vdots \\ {\frac{r + 1}{2} \choose \frac{r + 1}{2} - m}{m \choose m}(-1)^m\frac{S_m}{p}\frac{1}{m}
        \end{pmatrix}\!\!.
    \]
    Since the determinant of the coefficient matrix is a $p$-adic unit, there are solutions $x_{n} \in \Zp$ for $\dfrac{r - 1}{2} + m \leq n \leq r$. This shows that for $m \in \left[1, \dfrac{r - 1}{2}\right]$, the $m^{\mathrm{th}}$ summand in equation \eqref{Intermediate g1 in self-dual} is
    \begin{eqnarray}\label{First step in killing nonzero m terms}
        x_{\frac{r - 1}{2} + m}\sum_{a = 0}^{p - 1}(z - a)^{\frac{r + 1}{2} - m}\mathbbm{1}_{a + p\Zp} \mod \pi \latticeL{k} \nonumber
    \end{eqnarray}
    for an integer $x_{\frac{r - 1}{2} + m}$. This statement is clearly also true for $m = \dfrac{r + 1}{2}$ since then $j$ can only take the value $0 = \dfrac{r + 1}{2} - m$.

    We claim that 
    %\[
    %    \begin{pmatrix}
    %        {\frac{r + 1}{2} - m \choose 0}\frac{r}{r} & 0 & 0 & \cdots & 0 \\
    %        {\frac{r + 1}{2} - m \choose 1}\frac{r}{r - 1} & {\frac{r - 1}{2} - m \choose 0}\frac{r - 1}{(r - 1)(r - 2)} & 0 & \cdots & 0 \\
    %        {\frac{r + 1}{2} - m \choose 2}\frac{r}{r - 2} & {\frac{r - 1}{2} - m \choose 1}\frac{r - 1}{(r - 2)(r - 3)} & {\frac{r - 3}{2} - m \choose 0}\frac{2!(r - 2)}{(r - 2)(r - 3)(r - 4)} & \cdots & 0 \\
    %        \vdots & \vdots & \vdots & & \vdots \\
    %        {\frac{r + 1}{2} - m \choose \frac{r + 1}{2} - m}\frac{r}{\frac{r - 1}{2} + m} & {\frac{r - 1}{2} + m \choose \frac{r - 1}{2} + m}\frac{r - 1}{(\frac{r - 1}{2} + m)(\frac{r - 3}{2} + m)} & {\frac{r - 3}{2} - m \choose \frac{r - 3}{2} - m}\frac{2!(r - 2)}{(\frac{r - 1}{2} + m)(\frac{r - 3}{2} + m)(\frac{r - 5}{2} + m)} & \cdots & 1
    %    \end{pmatrix}
    %    \begin{pmatrix}
    %        x_r \\ x_{r - 1} \\ x_{r - 2} \\ \vdots \\ x_{\frac{r - 1}{2} + m}
    %    \end{pmatrix}
    %    = \begin{pmatrix}
    %        {\frac{r + 1}{2} \choose 0}{\frac{r + 1}{2} \choose m}(-1)^m\frac{S_m}{p}\frac{1}{\frac{r + 1}{2}} \\
    %        {\frac{r + 1}{2} \choose 1}{\frac{r - 1}{2} \choose m}(-1)^m\frac{S_m}{p}\frac{1}{\frac{r - 1}{2}} \\
    %        {\frac{r + 1}{2} \choose 2}{\frac{r - 3}{2} \choose m}(-1)^m\frac{S_m}{p}\frac{1}{\frac{r - 3}{2}} \\ \vdots \\ {\frac{r + 1}{2} \choose \frac{r + 1}{2} - m}{m \choose m}(-1)^m\frac{S_m}{p}\frac{1}{m}
    %    \end{pmatrix}.
    %\]
    \[
        \sum_{a = 0}^{p - 1} (z - a)^{\frac{r + 1}{2} - m}\mathbbm{1}_{a + p\Zp} \equiv 0 \mod \pi \latticeL{k}.
    \]
    This is clear for $m = 1$ by Lemma \ref{T12 is 0 in self-dual}. So assume that $m$ belongs to
    $\left[2, \dfrac{r + 1}{2}\right]$. Again using the shallow inductive step in
    Proposition~\ref{inductive steps}
    for $n = \frac{r - 1}{2} + m$, we get
        \[
            z^{r - n}\mathbbm{1}_{p\Zp} \equiv \sum_{b = 1}^{p - 1}\sum_{j = 1}^{\frac{r - 1}{2}}c(n, j)b^{r - j - n}(z - b)^j\mathbbm{1}_{b + p\Zp} \mod \pi \latticeL{k}.
        \]
        Applying the operator $\sum\limits_{a = 0}^{p - 1}a^{n - \frac{r - 1}{2} - m}\begin{pmatrix}1 & 0 \\ - a & 1\end{pmatrix}$ on both sides,
        %we get
        %\begin{eqnarray*}
        %    \sum_{a = 0}^{p - 1}a^{n - \frac{r - 1}{2} - m}(z - a)^{r - n}\mathbbm{1}_{a + p\Zp} \!\!\! & \equiv & \!\!\! \sum_{a = 0}^{p - 1}\sum_{b = 1}^{p - 1}\sum_{j = 1}^{\frac{r - 1}{2}}a^{n - \frac{r - 1}{2} - m}b^{r - j - n}c(n, j)(z - (a + b))^j\mathbbm{1}_{(a + b) + p\Zp} \\
        %    && \qquad \qquad \mod \pi \latticeL{k}.
        %\end{eqnarray*}
        substituting $\lambda = a + b$ and summing over $b$ on the right side
        using equation \eqref{sum over b identity}
        with $i = \dfrac{r+1}{2}$, we get as just seen above
        \begin{eqnarray*}
            \sum_{a = 0}^{p - 1}a^{n - \frac{r - 1}{2} - m}(z - a)^{r - n}\mathbbm{1}_{a + p\Zp} & \equiv & \sum_{\lambda = 0}^{p - 1}\sum_{j = r - n}^{\frac{r - 1}{2}}(-1)^{n - r + j + 1}{n - \frac{r - 1}{2} - m \choose n - r + j}c(n, j)\lambda^{\frac{r + 1}{2} - m - j} \\
            && \quad \quad \quad \quad \quad \quad(z - \lambda)^j\mathbbm{1}_{\lambda + p\Zp} \mod \pi \latticeL{k}.
        \end{eqnarray*}
        If $j > \dfrac{r + 1}{2} - m$, then the binomial coefficient is $0$. So we reduce the upper limit on $j$ to $\dfrac{r + 1}{2} - m$. Moreover, the lower limit on $j$ is $r - n = \dfrac{r + 1}{2} - m$. Therefore we get
        \begin{eqnarray*}
            \sum_{a = 0}^{p - 1}a^{n - \frac{r - 1}{2} - m}(z - a)^{r - n}\mathbbm{1}_{a + p\Zp} & \equiv & \sum_{\lambda = 0}^{p - 1}-c\left(\frac{r - 1}{2} + m, \frac{r + 1}{2} - m\right)(z - \lambda)^{\frac{r + 1}{2} - m}\mathbbm{1}_{\lambda + p\Zp} \\
            && \qquad \qquad \mod \pi \latticeL{k}.
        \end{eqnarray*}
        We replace $\lambda$ by $a$. Putting the value of $c\left(\frac{r - 1}{2} + m, \frac{r + 1}{2} - m\right)$ and cancelling the $-1$ part with the left side, we get
        \[
            0 \equiv \sum_{a = 0}^{p - 1}\frac{\left(\frac{r + 1}{2} - m\right)!\left(\frac{r - 1}{2} + m\right)}{\left(\frac{r - 1}{2} + m\right)\cdots\left(2m - 1\right)}(z - a)^{\frac{r + 1}{2} - m}\mathbbm{1}_{a + p\Zp} \mod \pi \latticeL{k}.
        \]
        Now $2 \leq m \leq \dfrac{r + 1}{2}$ implies that the coefficient in the equation above is a unit.
        This proves the claim for such $m$ as well.
        %Therefore we see that
        %\[
        %    0 \equiv \sum_{a = 0}^{p - 1}(z - a)^{\frac{r + 1}{2} - m}\mathbbm{1}_{a + p\Zp} \mod \pi \latticeL{k}.
        %\]
               %                This equation with Lemma \ref{T12 is 0 in self-dual} and equation \eqref{First step in killing nonzero m terms}

        Thus, the second sum on the right side of \eqref{Intermediate g1 in self-dual}
        is $0$ mod $\pi \latticeL{k}$.
    %Similarly using the inductive step \ref{inductive steps} for $n = r, \> r - 1, \> \ldots, \> \dfrac{r + 3}{2}$, we can write the $m = 1$ summand as an integer multiple of
    %\[
    %    \sum_{a = 0}^{p - 1}(z - a)^{\frac{r - 1}{2}}\mathbbm{1}_{a + p\Zp}
    %\]
    %modulo $\pi \latticeL{k}$. Since there is no inductive step for $n = \dfrac{r + 1}{2}$, we have to use Proposition \ref{T12 is 0 in self-dual} to kill the $m = 1$ part of the second sum on the right side of equation %\eqref{Intermediate g1 in self-dual}. 
    We get
    \begin{eqnarray}
      \label{Penultimate g1 in self-dual}
        g_1(z) \equiv \sum_{a = 0}^{p - 1}\sum_{j = 0}^{\frac{r - 1}{2}}{\frac{r + 1}{2} \choose j}\frac{a^{\frac{r + 1}{2} - j}}{\frac{r + 1}{2} - j}(z - a)^j\mathbbm{1}_{a + p\Zp} \mod \pi \latticeL{k}.
    \end{eqnarray}
    %This can be simplified using exactly the same method that we used in Theorem \ref{Final theorem for geq} to simplify the second summand on the right side of equation \eqref{Main equation in geq} (put $i = (r + 1)/2$ and $n = (r + 1)/2$ there). Therefore using the solution for the matrix equation \eqref{Matrix equation in geq}, we get
    %\begin{eqnarray}\label{Final g1 in self-dual}
    %    g_1(z) \equiv \frac{(-1)^{\frac{r - 1}{2}}}{\frac{r + 1}{2}}z^{\frac{r + 1}{2}}\mathbbm{1}_{\Zp} \mod \pi \latticeL{k}.
    %\end{eqnarray}
    We wish to write all the powers of $(z - a)$ appearing on the right side of the equation above in terms of $(z - a)^{\frac{r + 1}{2}}$. So for $\dfrac{r + 3}{2} \leq n \leq r$, consider again
    the shallow inductive steps
    \[
        z^{r - n}\mathbbm{1}_{p\Zp} \equiv \sum_{b = 1}^{p - 1}\sum_{j = 1}^{\frac{r - 1}{2}}c(n, j)b^{r - j - n}(z - b)^j\mathbbm{1}_{b + p\Zp} \mod \pi \latticeL{k}
    \]
    obtained in Proposition \ref{inductive steps}. Applying the operator $\sum\limits_{a = 0}^{p - 1}a^{n - \frac{r - 1}{2}}\begin{pmatrix}1 & 0 \\ -a & 1\end{pmatrix}$ to both sides, %we get
    %\begin{eqnarray*}
    %    \sum_{a = 0}^{p - 1}a^{n - \frac{r - 1}{2}}(z - a)^{r - n}\mathbbm{1}_{a + p\Zp} & \equiv & \sum_{a = 0}^{p - 1}\sum_{b = 1}^{p - 1}\sum_{j = 1}^{\frac{r - 1}{2}}a^{n - \frac{r - 1}{2}}b^{r - j - n}c(n, j)(z - (a + b))^j\mathbbm{1}_{(a + b) + p\Zp} \\
    %    && \qquad \qquad \mod \pi \latticeL{k}.
    %\end{eqnarray*}
    substituting $\lambda = a + b$ on the right side and summing over $b$ using
    equation \eqref{sum over b identity} with $i = \dfrac{r+1}{2}$, %we get
    %\begin{eqnarray*}
    %    \sum_{a = 0}^{p - 1}a^{n - \frac{r - 1}{2}}(z - a)^{r - n}\mathbbm{1}_{a + p\Zp} \!\!\! & \equiv & \!\!\! \sum_{a = 0}^{p - 1}\sum_{j = r - n}^{\frac{r - 1}{2}}(-1)^{n - r + j + 1}{n - \frac{r - 1}{2} \choose n - r + j}c(n, j)\lambda^{\frac{r + 1}{2} - j}(z - \lambda)^j\mathbbm{1}_{\lambda + p\Zp} \\
    %    && \qquad \qquad \mod \pi \latticeL{k}.
    %\end{eqnarray*}
    replacing $\lambda$ by $a$, and substituting the value of $c(n, j)$ obtained in
    Proposition \ref{inductive steps},
    % and cancelling the $-1$ part of $c(n, l)$ with the term on the left side,
    we get
    \begin{eqnarray}\label{Applied inductive steps for g1 in self-dual}
        0 \equiv \sum_{a = 0}^{p - 1}\sum_{j = r - n}^{\frac{r - 1}{2}}{n - \frac{r - 1}{2} \choose n - r + j}\frac{(r - n)!n}{(r - j) \cdots (n - j)}a^{\frac{r + 1}{2} - j}(z - a)^j\mathbbm{1}_{a + p\Zp} \mod \pi \latticeL{k}.
    \end{eqnarray}
    Next, consider the following equation obtained from the binomial expansion
    \begin{eqnarray}\label{Trivial equation in g1 in self-dual}
        z^{\frac{r + 1}{2}}\mathbbm{1}_{\Zp} \equiv \sum_{a = 0}^{p - 1}\sum_{j = 0}^{\frac{r - 1}{2}}{\frac{r + 1}{2} \choose j}a^{\frac{r + 1}{2} - j}(z - a)^j\mathbbm{1}_{a + p\Zp} \mod \pi \latticeL{k}.
    \end{eqnarray}
    We have dropped the $j = \dfrac{r + 1}{2}$ term on the right side of the equation above using Lemma \ref{stronger bound for polynomials of large degree with varying radii}.

    We wish to write the coefficient of $a^{\frac{r + 1}{2} - j}(z - a)^j\mathbbm{1}_{a + p\Zp}$ in equation
    \eqref{Penultimate g1 in self-dual} as a sum of $x_{r+1-n} \in E$ times the corresponding coefficient in equation \eqref{Applied inductive steps for g1 in self-dual} for $n = r, r-1, \ldots, \dfrac{r+3}{2}$ and $x_{\frac{r + 1}{2}}$ times
    the corresponding coefficient in equation \eqref{Trivial equation in g1 in self-dual}. We get following matrix equation
    \[
        {\begin{pmatrix}
                {\frac{r + 1}{2} \choose 0}\frac{r}{r} & 0 & 0 & \cdots & {\frac{r + 1}{2} \choose 0} \\
                {\frac{r + 1}{2} \choose 1}\frac{r}{r - 1} & {\frac{r - 1}{2} \choose 0}\frac{r - 1}{(r - 1)(r - 2)} & 0 & \cdots & {\frac{r + 1}{2} \choose 1} \\
                {\frac{r + 1}{2} \choose 2}\frac{r}{r - 2} & {\frac{r - 1}{2} \choose 1}\frac{r - 1}{(r - 2)(r - 3)} & {\frac{r - 3}{2} \choose 0}\frac{2!(r - 2)}{(r - 2)(r - 3)(r - 4)} & \cdots & {\frac{r + 1}{2} \choose 2} \\
                \vdots & & & & \vdots \\
                {\frac{r + 1}{2} \choose \frac{r - 1}{2}}\frac{r}{\frac{r + 1}{2}} & {\frac{r - 1}{2} \choose \frac{r - 3}{2}}\frac{r - 1}{(\frac{r + 1}{2})(\frac{r - 1}{2})} & {\frac{r - 3}{2} \choose \frac{r - 5}{2}}\frac{2!(r - 2)}{(\frac{r + 1}{2})(\frac{r - 1}{2})(\frac{r - 3}{2})} & \cdots & {\frac{r + 1}{2} \choose \frac{r - 1}{2}}
            \end{pmatrix}
            \begin{pmatrix}
                x_1 \\ x_2 \\ x_3 \\ \vdots \\ x_{\frac{r + 1}{2}}
            \end{pmatrix}
            = \begin{pmatrix}
                {\frac{r + 1}{2} \choose 0}\frac{1}{\frac{r + 1}{2}} \\ {\frac{r + 1}{2} \choose 1}\frac{1}{\frac{r - 1}{2}} \\ {\frac{r + 1}{2} \choose 2}\frac{1}{\frac{r - 3}{2}} \\ \vdots \\ {\frac{r + 1}{2} \choose \frac{r - 1}{2}}\frac{1}{1}
            \end{pmatrix}.}
    \]
    We have already solved this in the proof of Theorem~\ref{Final theorem for geq}: if
    $i = \dfrac{r + 1}{2}$, then \eqref{value for c in geq} gives
    \[
        x_{\frac{r + 1}{2}} = (-1)^{\frac{r - 1}{2}}\frac{1}{\frac{r + 1}{2}}.
    \]
    This shows that \eqref{Penultimate g1 in self-dual} becomes
    \begin{equation}\label{Final g1 in self-dual}
        g_1(z) \equiv (-1)^{\frac{r - 1}{2}}\frac{1}{\frac{r + 1}{2}}z^{\frac{r + 1}{2}}\mathbbm{1}_{\Zp} \mod \pi \latticeL{k}.
    \end{equation}

    Adding up equations \eqref{Final g3 - g2 in self-dual}, \eqref{Final g2 - g1 in self-dual} and \eqref{Final g1 in self-dual}, we get
    \begin{eqnarray*}
        g(z) & \equiv & \sum_{a = 0}^{p^2 - 1}\left(\frac{(-1)^{\frac{r - 1}{2}}}{\frac{r + 1}{2}}\right)p^{-1}(z - a)^{\frac{r + 1}{2}}\mathbbm{1}_{z + p^2\Zp} 
        + \sum_{a = 0}^{p - 1}\left(\frac{\cL - H_{-} - H_{+}}{p^{0.5}}\right)p^{-0.5}(z - a)^{\frac{r + 1}{2}}\mathbbm{1}_{a + p\Zp} \\
                                &&
        + \frac{(-1)^{\frac{r - 1}{2}}}{\frac{r + 1}{2}}z^{\frac{r + 1}{2}}\mathbbm{1}_{\Zp} \mod \pi \latticeL{k}.
    \end{eqnarray*}
    Multiplying by $(-1)^{\frac{r - 1}{2}}\left(\dfrac{r + 1}{2}\right)$ throughout, we get the claim made in the proposition.
\end{proof}

\begin{theorem}\label{Final theorem for geq 0.5 for odd weights}
    Let $1 \leq r \leq p - 2$ be odd. If $\nu \geq 0.5$, then the map $\IZind a^{\frac{r - 1}{2}}d^{\frac{r + 1}{2}} \twoheadrightarrow F_{r - 1, \> r}$ factors as
    \[
        \IZind a^{\frac{r - 1}{2}}d^{\frac{r + 1}{2}} \twoheadrightarrow \frac{\IZind a^{\frac{r - 1}{2}}d^{\frac{r + 1}{2}}}{\im(T_{-1, 0}^2 - cT_{-1, 0} + 1)} \twoheadrightarrow F_{r - 1, \> r},
    \]
    where
    \[
        c = (-1)^{\frac{r + 1}{2}}\frac{r + 1}{2}\left(\frac{\cL - H_{-} - H_{+}}{p^{0.5}}\right).
    \]
        Moreover, there is a surjection
        \[
            \pi(p - 2, \lambda_{\frac{r + 1}{2}}, \omega^{\frac{r + 1}{2}}) \oplus \pi(p - 2, \lambda_{\frac{r + 1}{2}}^{-1}, \omega^{\frac{r + 1}{2}}) \twoheadrightarrow F_{r - 1, \> r},
          \]
       where  $\lambda_{\frac{r + 1}{2}} + \lambda_{\frac{r + 1}{2}}^{-1} = c$.
\end{theorem}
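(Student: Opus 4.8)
The plan is to translate the congruence supplied by Proposition~\ref{Final theorem for the hardest case in self-dual} into a statement about the Iwahori--Hecke operator $T_{-1,0}$ acting on $\IZind a^{\frac{r-1}{2}}d^{\frac{r+1}{2}}$, in exactly the style of the proofs of Theorems~\ref{Final theorem for geq} and~\ref{Final theorem for leq}, and then read off the $\pi(p-2,\cdot,\cdot)$--decomposition from the Chinese Remainder identity~\eqref{pi quadratic}. First I would record that the function $g(z)$ of Proposition~\ref{Final theorem for the hardest case in self-dual}, whose coefficients $\lambda_i$ are chosen as in Lemma~\ref{Main coefficient identites}~(3), satisfies $\sum_{i=0}^{p-1}\lambda_i i^{\,j}=0$ for $0\le j\le\frac{r+1}{2}+1\le p-2$ (using $p\ge 5$, cf.\ Remark~\ref{r=1,2}), so that $\deg\sum_i\lambda_i(z-i)^{\frac{r+1}{2}}\le 0<\frac{k-2}{2}$ and hence $g(z)=0$ in $\tB(k,\cL)$ by Section~\ref{Section for log functions} and \cite[Lemme~3.3.2]{Bre10}. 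Cancelling the unit $(-1)^{\frac{r-1}{2}}\tfrac{r+1}{2}$ and using $(-1)^{\frac{r-1}{2}}=-(-1)^{\frac{r+1}{2}}$, the conclusion of Proposition~\ref{Final theorem for the hardest case in self-dual} then reads
\[
0\;\equiv\;\sum_{a=0}^{p^{2}-1}p^{-1}(z-a)^{\frac{r+1}{2}}\mathbbm{1}_{a+p^{2}\Zp}\;-\;c\sum_{a=0}^{p-1}p^{-1/2}(z-a)^{\frac{r+1}{2}}\mathbbm{1}_{a+p\Zp}\;+\;z^{\frac{r+1}{2}}\mathbbm{1}_{\Zp}\mod\pi\latticeL{k},
\]
with $c=(-1)^{\frac{r+1}{2}}\tfrac{r+1}{2}\bigl(\tfrac{\cL-H_{-}-H_{+}}{p^{1/2}}\bigr)$ as in the statement.

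Next I would identify this right-hand side with the image of $-w\,(T_{-1,0}^{2}-c\,T_{-1,0}+1)\llbracket\id,X^{\frac{r-1}{2}}Y^{\frac{r+1}{2}}\rrbracket$ under the surjection $\IZind a^{\frac{r-1}{2}}d^{\frac{r+1}{2}}\twoheadrightarrow F_{r-1,r}$, the generator $\llbracket\id,X^{\frac{r-1}{2}}Y^{\frac{r+1}{2}}\rrbracket$ mapping to $z^{\frac{r-1}{2}}\mathbbm{1}_{p\Zp}$. As in the proof of Theorem~\ref{Final theorem for geq}, one has $-w\llbracket\id,X^{\frac{r-1}{2}}Y^{\frac{r+1}{2}}\rrbracket\mapsto z^{\frac{r+1}{2}}\mathbbm{1}_{\Zp}$ and $-w\,T_{-1,0}\llbracket\id,X^{\frac{r-1}{2}}Y^{\frac{r+1}{2}}\rrbracket\mapsto\sum_{\lambda\in I_1}p^{-1/2}(z-\lambda)^{\frac{r+1}{2}}\mathbbm{1}_{\lambda+p\Zp}$; and since $T_{-1,0}^{2}\llbracket\id,\cdot\rrbracket$ is supported on the matrices $\bigl(\begin{smallmatrix}p^{2}&b\\0&1\end{smallmatrix}\bigr)$ with $b$ running over $[\mu]p+[\lambda]$ ($\lambda,\mu\in I_1$), a short computation with~\eqref{Formulae for T-10 and T12 in the commutative case} and the action~\eqref{G-action} gives that $-w\,T_{-1,0}^{2}\llbracket\id,X^{\frac{r-1}{2}}Y^{\frac{r+1}{2}}\rrbracket$ maps to $\sum_{a}p^{-1}(z-a)^{\frac{r+1}{2}}\mathbbm{1}_{a+p^{2}\Zp}$, the sum running over a set of representatives of $\Zp/p^{2}\Zp$ and the purely polynomial contribution vanishing in $\tB(k)$. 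Using Lemma~\ref{Integers in the lattice} (with $h=1,2$) together with Lemma~\ref{stronger bound for polynomials of large degree with varying radii}, one checks that replacing the Teichm\"uller representatives by the integers $0,\dots,p-1$, resp.\ $0,\dots,p^{2}-1$, changes these sums only by elements of $\pi\latticeL{k}$ — the error terms pick up coefficients of $p$-valuation $\ge\frac r2-j$ with $j\le\frac{r-1}{2}<\frac r2$, hence $\ge\frac12$, so they lie in $\pi\latticeL{k}$ since $\sqrt p\equiv 0\bmod\pi$. Comparison with the congruence above then shows the image of $-w\,(T_{-1,0}^{2}-c\,T_{-1,0}+1)\llbracket\id,X^{\frac{r-1}{2}}Y^{\frac{r+1}{2}}\rrbracket$ in $F_{r-1,r}$ is $0$; as this generator generates $\IZind a^{\frac{r-1}{2}}d^{\frac{r+1}{2}}$ over $G$, as $T_{-1,0}^{2}-c\,T_{-1,0}+1$ is $G$-linear, and as $-w$ acts invertibly, all of $\im(T_{-1,0}^{2}-c\,T_{-1,0}+1)$ dies, which is the first assertion.

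For the second assertion, I would observe that in the (commutative, since $\frac{r+1}{2}-\frac{r-1}{2}=1\not\equiv 0\bmod p-1$) Iwahori--Hecke algebra one has $T_{-1,0}T_{1,2}=0$, whence $(T_{-1,0}^{2}-c\,T_{-1,0}+1)T_{1,2}=T_{1,2}$ and therefore $\im T_{1,2}\subseteq\im(T_{-1,0}^{2}-c\,T_{-1,0}+1)$. Thus the quotient appearing in the first assertion equals $\IZind a^{\frac{r-1}{2}}d^{\frac{r+1}{2}}/\bigl(\im T_{1,2}+\im(T_{-1,0}^{2}-c\,T_{-1,0}+1)\bigr)$, which by~\eqref{pi quadratic} (using the symmetric presentation~\eqref{alt def of pi}), on writing $c=\lambda_{\frac{r+1}{2}}+\lambda_{\frac{r+1}{2}}^{-1}$, is isomorphic to $\pi(p-2,\lambda_{\frac{r+1}{2}},\omega^{\frac{r+1}{2}})\oplus\pi(p-2,\lambda_{\frac{r+1}{2}}^{-1},\omega^{\frac{r+1}{2}})$ — and to this up to semisimplification in the degenerate case $\lambda_{\frac{r+1}{2}}=\pm1$, by the remark following~\eqref{pi quadratic}. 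Composing with the surjection of the first assertion gives the claimed surjection. (Alternatively, the vanishing of $\im T_{1,2}$ in $F_{r-1,r}$ follows directly from Lemma~\ref{T12 is 0 in self-dual}, applicable since $\nu\ge\frac12>-\frac12$.) All of the analytically delicate input — the ``deep'' inductive steps of Proposition~\ref{Refined hard inductive steps for self-dual} with the extra $p^{1/2}$ in the denominator, and the binomial/harmonic-sum identities (Appendix) feeding Proposition~\ref{Final theorem for the hardest case in self-dual} — is already in place, so the main difficulty here is bookkeeping: tracking the signs (coming from $-w$, from the $(-1)^{\frac{r-1}{2}}\frac{r+1}{2}$ normalisation of Proposition~\ref{Final theorem for the hardest case in self-dual}, and from the definition of $c$) so that $c$ and not $-c$ emerges, and carrying out the $T_{-1,0}^{2}$ double-coset computation cleanly at level $p^{2}$, where the lattice estimates are only just sharp enough.
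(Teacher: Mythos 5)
Your treatment of the first assertion is essentially the paper's own proof: you invoke Proposition~\ref{Final theorem for the hardest case in self-dual} (correctly noting $g\in L(k,\cL)$ so $g=0$ in $\tB(k,\cL)$, and absorbing the sign $(-1)^{\frac{r-1}{2}}=-(-1)^{\frac{r+1}{2}}$ into $c$), and match its three terms with the images of $-wT_{-1,0}^{2}$, $-wT_{-1,0}$ and $-w$ applied to $\llbracket\id,X^{\frac{r-1}{2}}Y^{\frac{r+1}{2}}\rrbracket$; your explicit Lemma~\ref{Integers in the lattice} estimates for replacing Teichm\"uller by integer representatives are exactly the elided justification behind the paper's ``$\equiv \bmod \pi\latticeL{k}$'' in that computation.

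For the second assertion you take a genuinely different, and in fact shorter, route. The paper argues via the diagram as in \eqref{rectangle}: from the death of $\im(T_{-1,0}^{2}-cT_{-1,0}+1)$ it extracts a surjection $\im T_{-1,0}\twoheadrightarrow F_{r-1,\,r}$, then uses the first isomorphism theorem and \cite[Proposition 3.1]{AB15} to pre-compose with $T_{-1,0}$, landing on $\IZind a^{\frac{r-1}{2}}d^{\frac{r+1}{2}}/\bigl(\im T_{1,2}+\im(T_{-1,0}^{2}-cT_{-1,0}+1)\bigr)$ and then on $\pi\oplus\pi$ via \eqref{pi quadratic}. You instead observe that the Hecke algebra here is commutative (the inducing character is a determinant twist of $d^{1}$ and $0<1<p-1$), so $T_{-1,0}T_{1,2}=0$ gives $(T_{-1,0}^{2}-cT_{-1,0}+1)T_{1,2}=T_{1,2}$, hence $\im T_{1,2}\subseteq\im(T_{-1,0}^{2}-cT_{-1,0}+1)$, and the quotient already appearing in the first assertion \emph{is} the left-hand side of \eqref{pi quadratic}; the desired surjection is then literally the map of the first assertion, with no need for \cite[Proposition 3.1]{AB15} or the extra twist by $T_{-1,0}$. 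This containment argument is correct and yields a cleaner statement; the paper's route just keeps the argument uniform with the template of Theorems~\ref{Final theorem for geq} and \ref{Final theorem for leq}. The one caveat is shared with the paper: when $\lambda_{\frac{r+1}{2}}=\pm1$ the decomposition in \eqref{pi quadratic} holds only up to semisimplification, and you handle this by the same remark the paper relies on. Your parenthetical alternative is also fine: applying $\begin{pmatrix}1&0\\0&p\end{pmatrix}$ to the congruence of Lemma~\ref{T12 is 0 in self-dual} produces precisely the image of $T_{1,2}\llbracket\id,X^{\frac{r-1}{2}}Y^{\frac{r+1}{2}}\rrbracket$, so $\im T_{1,2}$ dies in $F_{r-1,\,r}$, though your main argument does not need this.
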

\begin{proof}
  We claim that under the map $\IZind a^{\frac{r - 1}{2}}d^{\frac{r + 1}{2}} \twoheadrightarrow F_{r - 1, \> r}$,
  the element
    \[
        -w\left[T_{-1, 0}^2 - cT_{-1, 0} + 1\right]\llbracket \id, X^{\frac{r - 1}{2}}Y^{\frac{r + 1}{2}}\rrbracket
    \]
    maps to $0$. The first part of the theorem then follows since $\llbracket \id, X^{\frac{r - 1}{2}}Y^{\frac{r + 1}{2}}\rrbracket$ generates $\IZind a^{\frac{r - 1}{2}}d^{\frac{r + 1}{2}}$. Indeed, 
    Proposition~\ref{Final theorem for the hardest case in self-dual} implies that
    \begin{eqnarray*}
        0  & \equiv & \sum_{a = 0}^{p^2 - 1}p^{-1}(z - a)^{\frac{r + 1}{2}}\mathbbm{1}_{z + p^2\Zp} % \\
                                % & &
                                      - \sum_{a = 0}^{p - 1}
                                      % (-1)^{\frac{r - 1}{2}}\frac{r + 1}{2}\left(\frac{\cL - H_{-} - H_{+}}{p^{0.5}}\right)
                                      c p^{-0.5}(z - a)^{\frac{r + 1}{2}}\mathbbm{1}_{a + p\Zp}  % \\
                                % & &
                                    + z^{\frac{r + 1}{2}}\mathbbm{1}_{\Zp} \mod \pi \latticeL{k},
    \end{eqnarray*}
    %To prove this theorem, we show that under the surjection
    %$\IZind a^{\frac{r - 1}{2}}d^{\frac{r + 1}{2}} \twoheadrightarrow F_{r-1, \> r}$
    %\begin{itemize}
    %    \item $-wT_{-1, 0}^2\llbracket \id, X^{\frac{r - 1}{2}}Y^{\frac{r + 1}{2}}\rrbracket$ maps to $\sum_{a = 0}^{p^2 - 1}p^{-1}(z - a)^{\frac{r + 1}{2}}\mathbbm{1}_{z + p^2\Zp}$,
    %    \item $-wT_{-1, 0}\llbracket \id, X^{\frac{r - 1}{2}}Y^{\frac{r + 1}{2}}\rrbracket$ maps to $p^{-0.5}(z - a)^{\frac{r + 1}{2}}\mathbbm{1}_{a + p\Zp}$, and
    %    \item $-w\llbracket \id, X^{\frac{r - 1}{2}}Y^{\frac{r + 1}{2}}\rrbracket$ maps to $z^{\frac{r + 1}{2}}\mathbbm{1}_{\Zp}$.
    %\end{itemize}
    %%Recall that
    %%\[
    %%    T_{-1, 0} = \sum_{\lambda \in I_1}\begin{pmatrix}p & \lambda \\ 0 & 1\end{pmatrix} = \sum_{\lambda \in I_1}\begin{pmatrix}p & -\lambda \\ 0 & 1\end{pmatrix}.
    %%\]
    and using the formula \eqref{Formulae for T-10 and T12 in the commutative case} for $T_{-1,0}$, we have
    \begin{itemize}
        \item $-wT_{-1, 0}^2\llbracket\id, X^{\frac{r - 1}{2}}Y^{\frac{r + 1}{2}}\rrbracket$ maps to
        \begin{eqnarray*}
            -\sum_{\lambda, \mu \in I_1}\begin{pmatrix}0 & 1 \\ p^2 & -\lambda - p\mu\end{pmatrix}z^{\frac{r - 1}{2}}\mathbbm{1}_{p \Zp}
            & = & -\sum_{\lambda, \mu \in I_1}\frac{1}{p^r}(z - \lambda - p\mu)^r\left(\frac{p^2}{z - \lambda -p\mu}\right)^{\frac{r - 1}{2}}\\
            && \quad \quad \quad \quad \quad \mathbbm{1}_{p\Zp}\left(\frac{p^2}{z - \lambda - p\mu}\right) \\
            & = & \sum_{a = 0}^{p^2 - 1}p^{-1}(z - a)^{\frac{r + 1}{2}}\mathbbm{1}_{a + p^2\Zp} \mod \pi \latticeL{k}
        \end{eqnarray*}

        \item $-wT_{-1, 0}\llbracket\id, X^{\frac{r - 1}{2}}Y^{\frac{r + 1}{2}}\rrbracket$ maps to
        \begin{eqnarray*}
            -\sum_{\lambda \in I_1}\begin{pmatrix}0 & 1 \\ p & -\lambda\end{pmatrix}z^{\frac{r - 1}{2}}\mathbbm{1}_{p\Zp} & = & -\sum_{\lambda \in I_1}\frac{1}{p^{r/2}}(z - \lambda)^r\left(\frac{p}{z - \lambda}\right)^{\frac{r - 1}{2}}\mathbbm{1}_{p\Zp}\left(\frac{p}{z - \lambda}\right) \\
            & = & \sum_{a = 0}^{p - 1}p^{-0.5}(z - a)^{\frac{r + 1}{2}}\mathbbm{1}_{a + p\Zp} \mod \pi \latticeL{k}
        \end{eqnarray*}

        \item $-w\llbracket \id, X^{\frac{r - 1}{2}}Y^{\frac{r + 1}{2}}\rrbracket$ maps to
        \begin{eqnarray*}
          -\begin{pmatrix}0 & 1 \\ 1 & 0\end{pmatrix}z^{\frac{r - 1}{2}}\mathbbm{1}_{p\Zp}
            & =  & - z^{r} \left(\frac{1}{z}\right)^{\frac{r - 1}{2}}\mathbbm{1}_{p\Zp}\left(\frac{1}{z}\right) \\
            & =  & z^{\frac{r + 1}{2}}\mathbbm{1}_{\Zp} \mod \pi \latticeL{k}.
        \end{eqnarray*}
    \end{itemize}
    Thus, the surjection $\IZind a^{\frac{r - 1}{2}}d^{\frac{r + 1}{2}} \twoheadrightarrow F_{r - 1, \> r}$ factors as
    \[
        \IZind a^{\frac{r - 1}{2}}d^{\frac{r + 1}{2}} \twoheadrightarrow \frac{\IZind a^{\frac{r - 1}{2}}d^{\frac{r + 1}{2}}}{\im(T_{-1, 0}^2 - cT_{-1, 0} + 1)} \twoheadrightarrow F_{r - 1, \> r}.
      \]
      
    Next consider the following exact sequences
    \[
        \begin{tikzcd}
            0 \ar[r] & \im T_{-1, 0} \ar[r]\ar[d, two heads] & \IZind a^{\frac{r - 1}{2}}d^{\frac{r + 1}{2}} \ar[r]\ar[d, two heads] & \dfrac{\IZind a^{\frac{r - 1}{2}}d^{\frac{r + 1}{2}}}{\im T_{-1, 0}} \ar[r]\ar[d, two heads] & 0 \\
            0 \ar[r] & S \ar[r] & F_{r - 1, \> r} \ar[r] & Q \ar[r] & 0,
        \end{tikzcd}
    \]
    where $S$ is the image of $\im T_{-1, 0}$ under the surjection $\IZind a^{\frac{r - 1}{2}}d^{\frac{r + 1}{2}} \twoheadrightarrow F_{r - 1, \> r}$ and $Q = F_{r - 1, \> r}/S$. We know that the subspace $\im (T_{-1, 0}^2 - cT_{-1, 0} + 1)$ maps to $0$ under the middle vertical map. Therefore the right vertical map is $0$. This shows that we have a surjection $\im T_{-1, 0} \twoheadrightarrow F_{r - 1, \> r}$. Using the first isomorphism theorem and \cite[Proposition 3.1]{AB15}, we get a second surjection 
    \[
        \frac{\IZind a^{\frac{r - 1}{2}}d^{\frac{r + 1}{2}}}{\im T_{1, 2}} \twoheadrightarrow F_{r - 1, \> r},
    \]
    by pre-composing the first one by $T_{-1,0}$. This map also factors through
    \[
        \frac{\IZind a^{\frac{r - 1}{2}}d^{\frac{r + 1}{2}}}{\im T_{1, 2} + \im (T_{-1, 0}^2 - cT_{-1, 0} + 1)} \twoheadrightarrow F_{r - 1, \> r}
    \]
    since $T_{-1,0}$ maps $\im (T_{-1, 0}^2 - cT_{-1, 0} + 1)$ into itself.
    By \eqref{pi quadratic}, the term on the left is
    \[
      \pi(p - 2, \lambda_{\frac{r+1}{2}}, \omega^{\frac{r + 1}{2}}) \oplus \pi(p - 2, \lambda_{\frac{r+1}{2}}^{-1}, \omega^{\frac{r + 1}{2}}),
    \]
    proving the second part of the theorem as well.
\end{proof}

\subsection{The $-0.5 < \nu < 0.5$ cases for odd weights}
\label{leq section for odd weights}

In this section, we continue to assume that $k$ and $r$ are odd. We show that if $-0.5 < \nu < 0.5$, then the map $\IZind a^{\frac{r - 1}{2}}d^{\frac{r + 1}{2}} \twoheadrightarrow F_{r-1, \> r}$ factors as
\[
    \IZind a^{\frac{r - 1}{2}}d^{\frac{r + 1}{2}} \twoheadrightarrow \frac{\IZind a^{\frac{r - 1}{2}}d^{\frac{r + 1}{2}}}{\im T_{-1, 0}} \twoheadrightarrow F_{r-1, \> r}.
\]
    %Moreover, we have a surjection
    %\[
    %    \KZind V_{1} \otimes \det{}^{\frac{r - 1}{2}} \twoheadrightarrow F_{r - 1, \> r}.
    %\]
We begin by proving
\begin{Proposition}
  \label{nu in between -0.5 and 0.5}
  Let $1 \leq r \leq p-2$ be odd.
  Let $-0.5< \nu < 0.5$. Set $x \in \bQ$ such that $x + \nu = -0.5$. Define
    \[
        g(z) = \sum_{i = 0}^{p - 1}p^x\lambda_i(z - i)^{\frac{r + 1}{2}}\logL(z - i),
    \]
    where the $\lambda_i \in \zp$ are as in Lemma~\ref{Main coefficient identites} (3).
    % $\lambda_0 = 1$ and $\lambda_i$ for $1 \leq i \leq p - 1$ are uniquely determined by $\sum\limits_{i = 0}^{p - 1}\lambda_i i^j = 0$ for $0 \leq j \leq p - 2$.
    Then
    \[
        g(z) \equiv \sum_{a = 0}^{p - 1}\left[p^{0.5 + x}(\cL - H_{-} - H_{+})\right]p^{-0.5}(z - a)^{\frac{r + 1}{2}}\mathbbm{1}_{a + p\Zp} \mod \pi \latticeL{k}.
    \]
\end{Proposition}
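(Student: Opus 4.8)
The plan is to mimic the structure of the proof of Proposition~\ref{Final theorem for the hardest case in self-dual} but now in the regime $-0.5 < \nu < 0.5$, where $x + \nu = -0.5$ forces $x > -1$, so the function is scaled more shallowly and we expect to descend only to $g_2(z)$ rather than $g_3(z)$. First I would write $g(z) = g(z)\mathbbm{1}_{\Zp} + g(z)\mathbbm{1}_{\Qp \setminus \Zp}$. For the part outside $\Zp$, applying $w$ and using the vanishing sums $\sum_i \lambda_i i^j = 0$ for $0 \leq j \leq \frac{r+1}{2}+1$ (available since $p \geq 5$, by Lemma~\ref{Main coefficient identites}~(3) and Remark~\ref{r=1,2}) kills both the $(1-zi)^n$-degree term and the stray $\logL(z)$ term, reducing it to a sum of functions of the form $p^x\lambda_i z^{r-n}(1-zz_i)^n\logL(1-zz_i)\mathbbm{1}_{p\Zp}$; since $x \geq -1$, Lemma~\ref{Telescoping Lemma in self-dual for qp-zp}~(2) gives that this is $\equiv 0 \bmod \pi\latticeL{k}$, hence $g(z) \equiv g(z)\mathbbm{1}_{\Zp} \bmod \pi\latticeL{k}$. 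Then, invoking Lemma~\ref{Telescoping Lemma in self-dual}~(3) (this is exactly the case $-0.5 < \nu < 0.5$, $n = \frac{r+1}{2}$, $x + \nu = -0.5$, $h \geq 3$), we get $g(z)\mathbbm{1}_{\Zp} \equiv g_2(z) \bmod \pi\latticeL{k}$.

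Next I would analyze $g_2(z) - g_1(z)$ and $g_1(z)$ separately, just as in Proposition~\ref{Final theorem for the hardest case in self-dual}. For $g_2(z) - g_1(z)$, the coefficient of $(z - a - \alpha p)^j\mathbbm{1}_{a+\alpha p + p^2\Zp}$ is computed exactly as in \eqref{jth summand in g2 - g1 in main self-dual}; splitting into $i \neq a$ and $i = a$ cases and using $\sum_i \lambda_i i^j = 0$ for $0 \leq j \leq p-2$ gives the coefficient $p^x\lambda_a(\cL - H_{\frac{r+1}{2}-j})(\alpha p)^{\frac{r+1}{2}-j}$ (note $\lambda_a \equiv 1 \bmod p$), so one reaches the analog of \eqref{g2 - g1 in self-dual} but with the extra factor $p^x$. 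Then, exactly as before, the shallow inductive steps of Proposition~\ref{inductive steps} (which apply since $\nu > 1 + r/2 - n$ for all $k/2 < n \leq r$, because $\nu > -0.5 \geq \frac{3-r}{2} - \frac{r+1}{2}+\dots$; more precisely $\nu > 1 + r/2 - n \iff n > 1 + r/2 - \nu$, and the worst case $n = \frac{r+3}{2}$ needs $\nu > \frac{r+3}{2}\cdot(-1)+\dots$ — one should check $\nu > -\frac{1}{2}$ suffices here since $1 + r/2 - n \leq 1 + r/2 - \frac{r+3}{2} = -\frac{1}{2}$) combined with the same matrix equation \eqref{First matrix equation in g2 - g1 in self-dual} and its solution $x_{\frac{r+1}{2}} = (-1)^{\frac{r-1}{2}}(\cL - H_- - H_+)$, $x_{\frac{r-1}{2}} \equiv -\cL + (-1)^{\frac{r-1}{2}}\frac{2}{r+1} \bmod \pi$ (Appendix~\ref{Footnote 12}) lets us rewrite $g_2(z) - g_1(z)$ in the form of the analog of \eqref{Intermediate g2 - g1 in self-dual}, now carrying $p^x$; the second term there has valuation $\geq x + (\frac{r+1}{2}) - (\frac{r+1}{2}) \cdot 0$, and since $x + \nu = -0.5$ with $\cL \in E$ of valuation $\nu$, the $\cL$-summand is integral enough to vanish while the $(-1)^{\frac{r-1}{2}}\frac{2}{r+1}$-summand over a full residue system sums to a polynomial and dies; the first term is handled by a second matrix equation identical in shape to the one in Proposition~\ref{Final theorem for the hardest case in self-dual} after \eqref{Binomial equation in inductive steps for g2 - g1 in self-dual}, giving, after Cramer's rule (same determinant as matrix $A$ in \eqref{Matrix equation in geq} with $i = \frac{r+1}{2}$), the contribution $\sum_a [p^{0.5+x}(\cL - H_- - H_+)]p^{-0.5}(z-a)^{\frac{r+1}{2}}\mathbbm{1}_{a+p\Zp}$. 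For $g_1(z)$, the same computation as in \eqref{jth summand in g1 in self-dual}--\eqref{Final g1 in self-dual} applies, but now every term carries an extra $p^x$ with $x > -1$, i.e.\ $x \geq -1 + (\text{something positive})$; since the output there was $(-1)^{\frac{r-1}{2}}\frac{2}{r+1}z^{\frac{r+1}{2}}\mathbbm{1}_{\Zp}$ times $p^{1+x}$ essentially, and $1 + x = 0.5 - \nu > 0$, Lemma~\ref{Integers in the lattice} (with $h = 1$, using $r/2 - \frac{r+1}{2} < 0$ but $1+x > 0$ compensating) kills it — so $g_1(z) \equiv 0 \bmod \pi\latticeL{k}$. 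Likewise the second ($\cL$-proportional) summand from $g_2(z)-g_1(z)$ involving $p^{-1}\cL(z-a)^{\frac{r+1}{2}}\mathbbm{1}_{a+p^2\Zp}$ now appears with $p^x$, i.e.\ $p^{x-1}\cL$ has valuation $x - 1 + \nu = -1.5$, but it is a polynomial of degree $\frac{r+1}{2} \leq r$ on each ball $a + p^2\Zp$ with radius $p^{-2}$ and by Lemma~\ref{stronger bound for polynomials of large degree with varying radii} (with $h = 2$, $j = \frac{r+1}{2} > r/2$) it requires valuation $\geq 2(r/2 - \frac{r+1}{2}) = -1$; since $-1.5 < -1$ it does \emph{not} obviously vanish, so I would instead keep it and note that summing over $\alpha$ collapses the characteristic functions to $\mathbbm{1}_{a+p\Zp}$ only up to a polynomial — here is where care is needed, and I expect the resolution (as in the $\nu \geq 0.5$ case) is that this term actually has the shape $p^{x-1}\cL[\,(z-a)^{\frac{r+1}{2}} - (z-a-\alpha p^2)^{\frac{r+1}{2}}\,]$ whose difference is divisible by $p^2$ hence integral, OR that it is absorbed into the final answer; I would track constants carefully to confirm it contributes nothing new.

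Adding up the three pieces yields $g(z) \equiv \sum_{a=0}^{p-1}[p^{0.5+x}(\cL - H_- - H_+)]p^{-0.5}(z-a)^{\frac{r+1}{2}}\mathbbm{1}_{a+p\Zp} \bmod \pi\latticeL{k}$, which is the claim. The main obstacle I anticipate is precisely the bookkeeping around which scaled-down polynomial terms survive: in the shallower regime $x > -1$ the valuations of the error terms shift, and one must re-verify every application of Lemma~\ref{Integers in the lattice} and Lemma~\ref{stronger bound for polynomials of large degree with varying radii} that in Proposition~\ref{Final theorem for the hardest case in self-dual} relied on $x = -1$ exactly; in particular the $g_3 \to g_2$ telescoping step is replaced by a single $g_2 \to g_1$ analysis, and the term $x_{\frac{r-1}{2}}$-contribution and the leftover $\cL$-term must be shown to be killed by the degree conditions rather than by the $\nu \geq 0.5$ integrality that was used before. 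A secondary technical point is confirming that the vanishing $\sum_i \lambda_i i^{\frac{r+1}{2}+1} = 0$ needed for Lemma~\ref{Telescoping Lemma in self-dual for qp-zp}~(2) holds, which is exactly the content of Remark~\ref{r=1,2} under $p \geq 5$ and $\frac{r+1}{2}+1 \leq p-2$.
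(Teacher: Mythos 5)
Your skeleton (splitting off $\Qp\setminus\Zp$ via Lemma~\ref{Telescoping Lemma in self-dual for qp-zp}~(2), descending to $g_2$ via Lemma~\ref{Telescoping Lemma in self-dual}~(3), computing the coefficients of $g_2-g_1$ to reach the analogue of \eqref{g2 - g1 in self-dual} with an extra $p^x$, and showing $g_1\equiv 0$) is exactly the paper's route; the conclusion $g_1\equiv 0$ is in fact easier than you make it, since every summand of $g^{(j)}(a)/j!$ already has valuation $\geq 1+x>0$ and $j\leq\frac{r-1}{2}$. But there is a genuine gap in your treatment of $g_2-g_1$: you propose to use only the shallow inductive steps (Proposition~\ref{inductive steps}) for the whole range $-0.5<\nu<0.5$, matching coefficients via the known solution of \eqref{First matrix equation in g2 - g1 in self-dual}. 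The multipliers needed to express the coefficients $p^{0.5+x}\bigl(\cL-H_{\frac{r+1}{2}-j}\bigr)$ of \eqref{g2 - g1 in between -0.5 and 0.5} in terms of the shallow-step coefficients are essentially $p^{0.5+x}(\cL-H_{\bullet})=p^{-\nu}(\cL-H_{\bullet})$; when $0<\nu<0.5$ one has $\cL\equiv H_-+H_+$ up to positive valuation, so $\cL-H_{\frac{r+1}{2}-j}$ is generically a unit and these multipliers have \emph{negative} valuation. A congruence modulo $\pi\latticeL{k}$ cannot be multiplied by a non-integral scalar, so your argument breaks down precisely on the subinterval $0<\nu<0.5$. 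This is why the paper splits into two cases: for $-0.5<\nu\leq 0$ the shallow steps suffice (the multipliers $p^{0.5+x}(\cL-H_\bullet)$ are then integral, cf.\ \eqref{Matrix equation in between -0.5 and 0}), while for $0<\nu<0.5$ one must invoke the \emph{deep} inductive steps of Proposition~\ref{Refined hard inductive steps for self-dual} (valid since $\nu>0$, $r>1$), multiplied by the integral element $p^{1+x}$, whose extra $p^{0.5}$ of depth absorbs the non-integrality (cf.\ \eqref{Matrix equation in between 0 and 0.5}). Missing this dichotomy is missing the main new difficulty of this proposition.

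A second, self-acknowledged gap comes from importing the six-variable system \eqref{First matrix equation in g2 - g1 in self-dual} wholesale: that system (and hence the $x_{\frac{r-1}{2}}$-column) was needed at $x=-1$ because the deep steps there carry an $l=\frac{r+1}{2}$ term, and its leftover $p^{-1}\cL(z-a-\alpha p)^{\frac{r+1}{2}}$-contribution was later cancelled against $g_3-g_2$ — a mechanism unavailable here, since there is no $g_3$. In the present regime the paper never creates this term: in the $\nu\leq 0$ case the shallow steps have no $j=\frac{r+1}{2}$ component at all, and in the $0<\nu<0.5$ case the $j=\frac{r+1}{2}$ summand of $p^{1+x}\times(\text{deep steps})$ is discarded by Lemma~\ref{stronger bound for polynomials of large degree with varying radii} using $x>-1$, so the system is the smaller $\frac{r+1}{2}\times\frac{r+1}{2}$ one and there is nothing left over. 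Your plan instead produces a term of valuation $x-1+\nu=-1.5$ on balls of radius $p^{-2}$ which, as you note yourself, you cannot dispose of; this is not a bookkeeping issue to be "confirmed" but an artifact of the wrong matrix system. The final conversion of the surviving $\alpha(z-a-\alpha p)^{\frac{r-1}{2}}$-term into $p^{-0.5}(z-a)^{\frac{r+1}{2}}\mathbbm{1}_{a+p\Zp}$ with constant $p^{0.5+x}(\cL-H_--H_+)$ via the \eqref{Matrix equation in geq}-determinant is correct and matches the paper.
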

\begin{proof}
  Recall that by Lemma~\ref{Main coefficient identites} (3), we have $\lambda_i \equiv 1 \mod p\>$ for
  $0 \leq i \leq p-1$ and $\sum_{i = 0}^{p - 1}\lambda_i i^j = 0$ for $0 \leq j \leq p - 2$.
  Note again that $p \geq 5$ implies $\sum_{i = 0}^{p - 1}\lambda_i i^j = 0$
  for $0 \leq j \leq \dfrac{r + 1}{2} + 1$. 

  Write $g(z) = g(z)\mathbbm{1}_{\Zp} + g(z)\mathbbm{1}_{\Qp \setminus \Zp}$.
  % Note that $p \geq 5$ implies $\sum_{i = 0}^{p - 1}\lambda_i i^j = 0$ for $0 \leq j \leq \dfrac{r + 1}{2} + 1$.
  As before, we have
    \[
        w \cdot g(z)\mathbbm{1}_{\Qp \setminus \Zp} = \sum_{i = 0}^{p - 1}p^x\lambda_iz^{\frac{r - 1}{2}}(1 - zi)^{\frac{r + 1}{2}}\logL(1 - zi)\mathbbm{1}_{p\Zp}.
    \]
    Since $-1 < x < 0$, Lemma \ref{Telescoping Lemma in self-dual for qp-zp} (2)
    implies that $g(z)\mathbbm{1}_{\Qp \setminus \Zp} \equiv 0 \mod \pi \latticeL{k}.$
    Therefore $g(z) \equiv g(z)\mathbbm{1}_{\Zp}$ modulo $\pi \latticeL{k}$.
    
    Applying Lemma~\ref{Telescoping Lemma in self-dual} (3), we see that $g(z)\mathbbm{1}_{\Zp} \equiv g_2(z) \mod \pi \latticeL{k}$. We analyze $g_2(z) - g_1(z)$ and $g_1(z)$ separately. 

    Recall that for $0 \leq j \leq \frac{r - 1}{2}$, the coefficient of $(z - a - \alpha p)^j\mathbbm{1}_{a + \alpha p + p^2\Zp}$ in $g_2(z) - g_1(z)$ is
    \begin{eqnarray}\label{jth summand in g2 - g1 in between -0.5 and 0.5}
        &&{\frac{r + 1}{2} \choose j}\sum_{i = 0}^{p - 1}p^x\lambda_i\Big[(a + \alpha p - i)^{\frac{r + 1}{2} - j}\logL(a + \alpha p - i) - (a - i)^{\frac{r + 1}{2} - j}\logL(a - i) \\
        &&- {\frac{r + 1}{2} - j \choose 1}(\alpha p)(a - i)^{\frac{r - 1}{2} - j}\logL(a - i) - \cdots - {\frac{r + 1}{2} - j \choose \frac{r - 1}{2} - j}(\alpha p)^{\frac{r - 1}{2} - j}(a - i)\logL(a - i)\Big]. \nonumber
    \end{eqnarray}

    First assume $i \neq a$. Write $\logL(a - i + \alpha p) = \logL(1 + (a - i)^{-1}\alpha p) + \logL(a - i)$. Therefore the $i \neq a$ summand in the display above is
    \[
        p^x\lambda_i\left[(a - i + \alpha p)^{\frac{r + 1}{2} - j}\logL(1 + (a - i)^{-1}\alpha p) + (\alpha p)^{\frac{r + 1}{2} - j}\logL(a - i)\right].
    \]
    % Since $0 \leq a, \> i \leq p - 1$, we see that $a \neq i$ implies $a - i$ is a unit. This means that $p \mid \logL(a - i)$. Therefore
    As usual the second term in the expression above is $0$ modulo $p^{r/2 - j}\pi$ as $x > -1$. Expanding both the factors in the first term, we see that the $i \neq a$ summand in expression \eqref{jth summand in g2 - g1 in between -0.5 and 0.5} is
    \begin{eqnarray}\label{i neq a summand in g2 - g1 in between -0.5 and 0.5}
        && p^x\lambda_i[(a - i)^{\frac{r - 1}{2} - j}(\alpha p) + c_{\frac{r - 3}{2} - j}(a - i)^{\frac{r - 3}{2} - j}(\alpha p)^2 + \cdots + c_1(a - i)(\alpha p)^{\frac{r - 1}{2} - j} + c_0 (\alpha p)^{\frac{r + 1}{2} - j}] \nonumber\\
        && \qquad \qquad \mod p^{r/2 - j}\pi,
    \end{eqnarray}
    where $c_0, \ldots, c_{\frac{r - 3}{2} - j} \in \Zp$ and $c_0 = H_{\frac{r + 1}{2} - j}$.
    Next, if $i = a$, then as usual one sees that
    the $i^{\mathrm{th}}$ summand in expression \eqref{jth summand in g2 - g1 in between -0.5 and 0.5} is
    %\[
    %    p^x\lambda_a(\alpha p)^{\frac{r + 1}{2} - j}\logL(\alpha p) = p^x\lambda_a(\alpha p)^{\frac{r + 1}{2} - j}\cL + p^x\lambda_a(\alpha p)^{\frac{r + 1}{2} - j}\logL(\alpha).
    %\]
    %The second term in the expression above is $0$ modulo $p^{r/2 - j}$. Indeed, this is clear if $\alpha = 0$ and if $\alpha \neq 0$, then $p \mid \logL(\alpha)$ since $\alpha$ is a unit. Therefore the $i = a$ summand in expression \eqref{jth summand in g2 - g1 in between -0.5 and 0.5} is
    \begin{eqnarray}\label{i = a summand in g2 - g1 in between -0.5 and 0.5}
        p^x\lambda_a(\alpha p)^{\frac{r + 1}{2} - j}\cL \mod p^{r/2 - j}\pi.
    \end{eqnarray}
    Putting expressions \eqref{i neq a summand in g2 - g1 in between -0.5 and 0.5} and \eqref{i = a summand in g2 - g1 in between -0.5 and 0.5} in \eqref{jth summand in g2 - g1 in between -0.5 and 0.5} and using the summation identities $\sum_{i = 0}^{p - 1}\lambda_i i^j = 0$ for $0 \leq j \leq p - 2$, we see that the coefficient of $(z - a - \alpha p)^{j}\mathbbm{1}_{a + \alpha p + p^2\Zp}$ in $g_2(z) - g_1(z)$ is
    \[
        {\frac{r + 1}{2} \choose j}p^x\lambda_a(\alpha p)^{\frac{r + 1}{2} - j}\left(\cL - H_{\frac{r + 1}{2} - j}\right) \mod p^{r/2 - j}\pi.
    \]
    Since $\lambda_i \equiv 1 \!\! \mod p$ for $0 \leq i \leq p - 1$, we see, after rearranging the
    power of $p$, that
    %\begin{eqnarray*}
    %    g_2(z) - g_1(z) & \equiv & \sum_{a = 0}^{p - 1}\sum_{\alpha = 0}^{p - 1}\sum_{j = 0}^{\frac{r - 1}{2}}{\frac{r + 1}{2} \choose j}p^x(\alpha p)^{\frac{r + 1}{2} - j}\left(\cL - H_{\frac{r + 1}{2} - j}\right)(z - a - \alpha p)^j\mathbbm{1}_{a + \alpha p + p^2\Zp} \\
    %    && \qquad \qquad \mod \pi \latticeL{k}.
    %\end{eqnarray*}
    %After rearranging, we get
    \begin{eqnarray}\label{g2 - g1 in between -0.5 and 0.5}
        g_2(z) - g_1(z) & \equiv & \sum_{a = 0}^{p - 1}\sum_{\alpha = 0}^{p - 1}\sum_{j = 0}^{\frac{r - 1}{2}}{\frac{r + 1}{2} \choose j}p^{0.5 + x}\left(\cL - H_{\frac{r + 1}{2} - j}\right)p^{r/2 - j}\alpha^{\frac{r + 1}{2} - j}(z - a - \alpha p)^j\mathbbm{1}_{a + \alpha p + p^2\Zp} \nonumber\\
        && \qquad \qquad \mod \pi \latticeL{k}.
    \end{eqnarray}

    We first wish to prove that 
    \begin{eqnarray}
        \label{wish to prove for g2 - g1}
        g_2(z) - g_1(z) & \equiv & \sum_{a = 0}^{p - 1}\sum_{\alpha = 0}^{p - 1}(-1)^{\frac{r - 1}{2}}\left(\cL - H_{-} - H_{+}\right)p^{0.5 + x}p^{0.5}\alpha(z - a - \alpha p)^{\frac{r - 1}{2}}\mathbbm{1}_{a + \alpha p + p^2\Zp} \nonumber \\
        && \qquad \qquad \mod \pi \latticeL{k}.
    \end{eqnarray}
    This is clear for $r = 1$. So we may assume that $1 < r \leq p-2$.
    We consider two cases $0 < \nu < 0.5$ and $-0.5 < \nu \leq 0$.

    First assume that $0 < \nu < 0.5$. We may use the deep inductive steps obtained in Proposition \ref{Refined hard inductive steps for self-dual} since $r \neq 1$ and $\nu > 0$. Multiplying them
    by the integral element $p^{1 + x} \in \Oe$
    %we get
    %\begin{eqnarray*}
    %    0 & \equiv & \sum_{\alpha = 0}^{p - 1}\sum_{j = r-n}^{{\frac{r+1}{2}}}p^{0.5 + x}{n - \frac{r-1}{2} \choose n - r + j}\frac{(r-n)! n}{(r - j)\cdots (n - j)}p^{r/2 - j}\alpha^{\frac{r+1}{2} - j}(z - a - \alpha p)^j \mathbbm{1}_{a + \alpha p + p^2\Zp} \\
    %    && \qquad \qquad \mod \pi \latticeL{k}
    %\end{eqnarray*}
    % for $\dfrac{r + 3}{2} \leq n \leq r$.
    and using Lemma \ref{stronger bound for polynomials of large degree with varying radii} to
    drop the $j = \dfrac{r + 1}{2}$ summand,
    % in the equation above to
    for $\dfrac{r + 3}{2} \leq n \leq r$ we get
    \begin{eqnarray}\label{Applied inductive steps in between 0 and 0.5}
        0 & \equiv & \sum_{\alpha = 0}^{p - 1}\sum_{j = r-n}^{{\frac{r-1}{2}}}{n - \frac{r-1}{2} \choose n - r + j}\frac{(r-n)! n}{(r - j)\cdots (n - j)}p^{0.5 + x}p^{r/2 - j}\alpha^{\frac{r+1}{2} - j}(z - a - \alpha p)^j \mathbbm{1}_{a + \alpha p + p^2\Zp} \nonumber\\
        && \qquad \qquad \mod \pi \latticeL{k}.
    \end{eqnarray}
    For a fixed $a$ and $\alpha$, we write the term
    \[
        \sum_{j = 0}^{\frac{r - 1}{2}}{\frac{r + 1}{2} \choose j}p^{0.5 + x}\left(\cL - H_{\frac{r + 1}{2} - j}\right) p^{r/2 - j}\alpha^{\frac{r + 1}{2} - j}(z - a - \alpha p)^j \mathbbm{1}_{a + \alpha p + p^2\Zp}
    \]
    appearing in \eqref{g2 - g1 in between -0.5 and 0.5} as the sum of $x_n \in E$ times the corresponding term in \eqref{Applied inductive steps in between 0 and 0.5} and $x_{\frac{r + 1}{2}}$ times
    \[
        p^{0.5 + x}p^{0.5}\alpha(z - a - \alpha p)^{\frac{r - 1}{2}}\mathbbm{1}_{a + \alpha p + p^2\Zp}.
    \]
    Comparing the coefficients of like terms of $p^{0.5 + x}p^{r/2 - j}\alpha^{\frac{r + 1}{2} - j}(z - a - \alpha p)^j\mathbbm{1}_{a + \alpha p + p^2\Zp}$, we have the following matrix equation
    \begin{eqnarray}\label{Matrix equation in between 0 and 0.5}
        {\tiny\begin{pmatrix}
            {\frac{r + 1}{2} \choose 0}\frac{r}{r} & 0 & 0 & \cdots & 0 \\
            {\frac{r + 1}{2} \choose 1}\frac{r}{r - 1} & {\frac{r - 1}{2} \choose 0}\frac{r - 1}{(r - 1)(r - 2)} & 0 & \cdots & 0 \\
            {\frac{r + 1}{2} \choose 2}\frac{r}{r - 2} & {\frac{r - 1}{2} \choose 1}\frac{r - 1}{(r - 2)(r - 3)} & {\frac{r - 3}{2} \choose 0}\frac{2!(r - 2)}{(r - 2)(r - 3)(r - 4)} & \cdots & 0 \\
            \vdots & \vdots & \vdots & & \vdots \\
            {\frac{r + 1}{2} \choose \frac{r - 1}{2}}\frac{r}{\frac{r + 1}{2}} & {\frac{r - 1}{2} \choose \frac{r - 3}{2}}\frac{r - 1}{(\frac{r + 1}{2})(\frac{r - 1}{2})} & {\frac{r - 3}{2} \choose \frac{r - 5}{2}}\frac{2!(r - 2)}{(\frac{r + 1}{2})(\frac{r - 1}{2})(\frac{r - 3}{2})} & \cdots & 1
        \end{pmatrix}\!\! 
        \begin{pmatrix}
            x_r \\ x_{r - 1} \\ x_{r - 2} \\ \vdots \\ x_{\frac{r + 1}{2}}
        \end{pmatrix} \!\! 
        = \!\!
        \begin{pmatrix}
            {\frac{r + 1}{2} \choose 0}\left(\cL - H_{\frac{r + 1}{2}}\right) \\ {\frac{r + 1}{2} \choose 1}\left(\cL - H_{\frac{r - 1}{2}}\right) \\
            {\frac{r + 1}{2} \choose 2}\left(\cL - H_{\frac{r - 3}{2}}\right) \\
            \vdots \\ 
            {\frac{r + 1}{2} \choose \frac{r - 1}{2}}\left(\cL - H_{1}\right)
        \end{pmatrix}\!.}
    \end{eqnarray}
    This is the same as
    \eqref{First matrix equation in g2 - g1 in self-dual} (dropping the last variable and equation there). 
    As seen there, we have $x_{\frac{r + 1}{2}} = (-1)^{\frac{r - 1}{2}}\left(\cL - H_{-} - H_{+}\right)$. Therefore
    we obtain \eqref{wish to prove for g2 - g1} if $0 < \nu < 0.5$.
    %then
    %\begin{eqnarray}\label{Penultimate g2 - g1 in between 0 and 0.5}
    %    g_2(z) - g_1(z) & \equiv & \sum_{a = 0}^{p - 1}\sum_{\alpha = 0}^{p - 1}(-1)^{\frac{r - 1}{2}}p^{0.5 + x}\left(\cL - H_{-} - H_{+}\right)p^{0.5}\alpha(z - a - \alpha p)^{\frac{r - 1}{2}}\mathbbm{1}_{a + \alpha p + p^2\Zp} \nonumber \\
    %    && \qquad \qquad \mod \pi \latticeL{k}.
    %\end{eqnarray}

    Next assume that $-0.5 < \nu \leq 0$. Then $\nu > 1 + r/2 - n$ for all $\dfrac{r + 3}{2} \leq n \leq r$.
    Using Proposition~\ref{inductive steps}, we have the following shallow inductive steps
    \[
        z^{r - n}\mathbbm{1}_{p\Zp} \equiv \sum_{b = 1}^{p - 1}\sum_{j = 1}^{\frac{r - 1}{2}}c(n, j)b^{r - j - n}(z - b)^j\mathbbm{1}_{b + p\Zp} \mod \pi \latticeL{k}
    \]
    for all $\dfrac{r + 3}{2} \leq n \leq r$. Applying the operator $\sum_{\alpha = 0}^{p - 1}\alpha^{n - \frac{r - 1}{2}}\begin{pmatrix}1 & 0 \\ - a - \alpha p & p\end{pmatrix}$ to both sides,
    % we get
    %\begin{eqnarray*}
    %    && \sum_{\alpha = 0}^{p - 1}\alpha^{n - \frac{r - 1}{2}}p^{n - r/2}(z - a - \alpha p)^{r - n}\mathbbm{1}_{a + \alpha p + p^2\Zp} \\
    %    && \equiv \sum_{\alpha = 0}^{p - 1}\sum_{b = 1}^{p - 1}\sum_{j = 1}^{\frac{r - 1}{2}}\alpha^{n - \frac{r - 1}{2}}p^{r/2 - j}c(n, j)b^{r - j - n}(z - a - (\alpha + b)p)^j\mathbbm{1}_{a + (\alpha + b)p + p^2\Zp} \mod \pi \latticeL{k}.
    %\end{eqnarray*}
    substituting $\lambda = \alpha + b$ and using \eqref{sum over b identity} with $i = \frac{r+1}{2}$
    to evaluate the sum over $b$,
    %in the equation above, we get
    %\begin{eqnarray*}
    %    && \!\!\!\!\!\!\!\!\!\! \sum_{\alpha = 0}^{p - 1}\alpha^{n - \frac{r - 1}{2}}p^{n - r/2}(z - a - \alpha p)^{r - n}\mathbbm{1}_{a + \alpha p + p^2\Zp} \\
    %    && \!\!\!\!\!\!\!\!\!\! \equiv \sum_{\lambda = 0}^{p - 1}\sum_{j = r - n}^{\frac{r - 1}{2}}\!\!p^{r/2 - j}(-1)^{n - r + j + 1}\!{n - \frac{r - 1}{2} \choose n - r + j}\!\lambda^{\frac{r + 1}{2} - j}c(n, j)(z - a - \lambda p)^j\mathbbm{1}_{a + \lambda p + p^2\Zp} \!\!\!\! \mod \pi \latticeL{k}.
    %\end{eqnarray*}
    replacing $\lambda$ by $\alpha$, and substituting the value of $c(n, j)$ obtained in Proposition \ref{inductive steps},
    % Note that the $-1$ part of $c(n, j)$ cancels with the expression on the left side. Therefore
    we get
    \begin{eqnarray}\label{Applied inductive steps in between -0.5 and 0}
        0 & \equiv & \sum_{\alpha = 0}^{p - 1}\sum_{j = r - n}^{\frac{r - 1}{2}}{n - \frac{r - 1}{2} \choose n - r + j}\frac{(r - n)!n}{(r - j) \cdots (n - j)}p^{r/2 - j}\alpha^{\frac{r+ 1}{2} - j}(z - a - \alpha p)^j\mathbbm{1}_{a + \alpha p + p^2\Zp} \nonumber \\
        && \qquad \qquad \mod \pi \latticeL{k}.
    \end{eqnarray}
    We remark that to get  the equations \eqref{Applied inductive steps in between -0.5 and 0} we have not used
    $\nu \leq 0$ and \eqref{Applied inductive steps in between -0.5 and 0} holds for $\nu > -0.5$.
    Again, for a fixed $a$ and $\alpha$, we write the term
    \[
        \sum_{j = 0}^{\frac{r - 1}{2}}{\frac{r + 1}{2} \choose j}p^{0.5 + x}\left(\cL - H_{\frac{r + 1}{2} - j}\right) p^{r/2 - j}\alpha^{\frac{r + 1}{2} - j}(z - a - \alpha p)^j \mathbbm{1}_{a + \alpha p + p^2\Zp}
    \]
    appearing in \eqref{g2 - g1 in between -0.5 and 0.5} as the sum of $x_n \in E$ times the corresponding term in \eqref{Applied inductive steps in between -0.5 and 0} and $x_{\frac{r + 1}{2}}$ times
    \[
        p^{0.5}\alpha(z - a - \alpha p)^{\frac{r - 1}{2}}\mathbbm{1}_{a + \alpha p + p^2\Zp}.
    \]
    Comparing the coefficients of like terms of\footnote{We drop the $p^{0.5 + x}$ here because we want the solutions $x_n$ to be integral. Indeed, the conditions $x + \nu = -0.5$ and $\nu \leq 0$ imply that the products $p^{0.5 + x}\left(\cL - H_{\frac{r + 1}{2} - j}\right)$ in \eqref{Matrix equation in between -0.5 and 0} are integers
      for all $0 \leq j \leq \dfrac{r - 1}{2}$.} $p^{r/2 - j}\alpha^{\frac{r + 1}{2} - j}(z - a - \alpha p)^j\mathbbm{1}_{a + \alpha p + p^2\Zp}$, we have the following matrix equation
    \begin{eqnarray}\label{Matrix equation in between -0.5 and 0}
        \!\!\!\!\!\!{\tiny\begin{pmatrix}
            {\frac{r + 1}{2} \choose 0}\frac{r}{r} & 0 & 0 & \cdots & 0 \\
            {\frac{r + 1}{2} \choose 1}\frac{r}{r - 1} & {\frac{r - 1}{2} \choose 0}\frac{r - 1}{(r - 1)(r - 2)} & 0 & \cdots & 0 \\
            {\frac{r + 1}{2} \choose 2}\frac{r}{r - 2} & {\frac{r - 1}{2} \choose 1}\frac{r - 1}{(r - 2)(r - 3)} & {\frac{r - 3}{2} \choose 0}\frac{2!(r - 2)}{(r - 2)(r - 3)(r - 4)} & \cdots & 0 \\
            \vdots & \vdots & \vdots & & \vdots \\
            {\frac{r + 1}{2} \choose \frac{r - 1}{2}}\frac{r}{\frac{r + 1}{2}} & {\frac{r - 1}{2} \choose \frac{r - 3}{2}}\frac{r - 1}{(\frac{r + 1}{2})(\frac{r - 1}{2})} & {\frac{r - 3}{2} \choose \frac{r - 5}{2}}\frac{2!(r - 2)}{(\frac{r + 1}{2})(\frac{r - 1}{2})(\frac{r - 3}{2})} & \cdots & 1
        \end{pmatrix} \!\!\!
        \begin{pmatrix}
            x_r \\ x_{r - 1} \\ x_{r - 2} \\ \vdots \\ x_{\frac{r + 1}{2}}
        \end{pmatrix} \!\!\! 
        = \!\!\!
        \begin{pmatrix}
            p^{0.5 + x}{\frac{r + 1}{2} \choose 0}\left(\cL - H_{\frac{r + 1}{2}}\right) \\ 
            p^{0.5 + x}{\frac{r + 1}{2} \choose 1}\left(\cL - H_{\frac{r - 1}{2}}\right) \\
            p^{0.5 + x}{\frac{r + 1}{2} \choose 2}\left(\cL - H_{\frac{r - 3}{2}}\right) \\
            \vdots \\ 
            p^{0.5 + x}{\frac{r + 1}{2} \choose \frac{r - 1}{2}}\left(\cL - H_{1}\right)
        \end{pmatrix}\!\!.}
    \end{eqnarray}
    Since the right side of this equation is a multiple of the right side of \eqref{Matrix equation in between 0 and 0.5}, we get
    \[
        x_{\frac{r + 1}{2}} = (-1)^{\frac{r - 1}{2}}p^{0.5 + x}(\cL - H_{-} - H_{+}).
    \]
    %The assumption $\nu \leq 0$ is used to see that $p^{0.5 + x}\left(\cL - H_{\frac{r + 1}{2} - j}\right)$ is an integer for all $0 \leq j \leq \dfrac{r - 1}{2}$ so that the solutions $x_n$ are all integers.
    Therefore, even if $-0.5 < \nu \leq 0$, we have \eqref{wish to prove for g2 - g1}.
    %\begin{eqnarray}\label{Penultimate g2 - g1 in between -0.5 and 0}
    %    g_2(z) - g_1(z) & \equiv & \sum_{a = 0}^{p - 1}\sum_{\alpha = 0}^{p - 1}(-1)^{\frac{r - 1}{2}}p^{0.5 + x}\left(\cL - H_{-} - H_{+}\right)p^{0.5}\alpha(z - a - \alpha p)^{\frac{r - 1}{2}}\mathbbm{1}_{a + \alpha p + p^2\Zp} \nonumber \\
    %    && \qquad \qquad \mod \pi \latticeL{k}.
    %\end{eqnarray}

    Summarizing, we have shown that for $-0.5 < \nu < 0.5$, 
    \begin{eqnarray}\label{Penultimate g2 - g1 in between -0.5 and 0.5}
        g_2(z) - g_1(z) & \equiv & \sum_{a = 0}^{p - 1}\sum_{\alpha = 0}^{p - 1}(-1)^{\frac{r - 1}{2}}p^{0.5 + x}\left(\cL - H_{-} - H_{+}\right)p^{0.5}\alpha(z - a - \alpha p)^{\frac{r - 1}{2}}\mathbbm{1}_{a + \alpha p + p^2\Zp} \nonumber \\
        && \qquad \qquad \mod \pi \latticeL{k}.
    \end{eqnarray}
    Next consider the binomial expansion
    \begin{eqnarray}\label{Trivial equation in between -0.5 and 0.5}
        p^{-0.5}(z - a)^{\frac{r + 1}{2}}\mathbbm{1}_{a + p\Zp} & \equiv & \sum_{\alpha = 0}^{p - 1}\sum_{j = 0}^{\frac{r - 1}{2}}{\frac{r + 1}{2} \choose j}p^{r/2 - j}\alpha^{\frac{r + 1}{2} - j}(z - a - \alpha p)^j\mathbbm{1}_{a + \alpha p + p^2\Zp} \nonumber \\
        && \qquad \qquad \mod \pi \latticeL{k}.
    \end{eqnarray}
    Again we have dropped the $j = \dfrac{r + 1}{2}$ term by Lemma \ref{stronger bound for polynomials of large degree with varying radii}. For fixed $a$, $\alpha$, we wish to write
    \[
        (-1)^{\frac{r - 1}{2}}p^{0.5 + x}\left(\cL - H_{-} - H_{+}\right)p^{0.5}\alpha(z - a - \alpha p)^{\frac{r - 1}{2}}\mathbbm{1}_{a + \alpha p + p^2\Zp}
    \]
    appearing in \eqref{Penultimate g2 - g1 in between -0.5 and 0.5} as $x_n \in E$ times the corresponding term in \eqref{Applied inductive steps in between -0.5 and 0} and $x_{\frac{r + 1}{2}} \in E$ times the corresponding term in \eqref{Trivial equation in between -0.5 and 0.5}. Comparing coefficients of like terms in $p^{r/2 - j}\alpha^{\frac{r + 1}{2} - j}(z - a - \alpha p)^j\mathbbm{1}_{a + \alpha p + p^2\Zp}$, we get the following matrix equation
    \[\tiny
        \!\!\begin{pmatrix}
            {\frac{r + 1}{2} \choose 0}\frac{r}{r} & 0 & 0 & \cdots & {\frac{r + 1}{2} \choose 0} \\
            {\frac{r + 1}{2}  \choose 1}\frac{r}{r - 1} & {\frac{r - 1}{2} \choose 0}\frac{r - 1}{(r - 1)(r - 2)} & 0 & \cdots & {\frac{r + 1}{2} \choose 1} \\
            {\frac{r + 1}{2} \choose 2}\frac{r}{r - 2} & {\frac{r - 1}{2} \choose 1}\frac{r - 1}{(r - 2)(r - 3)} & {\frac{r - 3}{2} \choose 0}\frac{2!(r - 2)}{(r - 2)(r - 3)(r - 4)} & \cdots & {\frac{r + 1}{2} \choose 2} \\
            \vdots & \vdots & \vdots & & \vdots \\
            {\frac{r + 1}{2} \choose \frac{r - 1}{2}}\frac{r}{\frac{r + 1}{2}} & {\frac{r - 1}{2} \choose \frac{r - 3}{2}}\frac{r - 1}{(\frac{r + 1}{2})(\frac{r - 1}{2})} & {\frac{r - 3}{2} \choose \frac{r - 5}{2}}\frac{2!(r - 2)}{(\frac{r + 1}{2})(\frac{r - 1}{2})(\frac{r - 3}{2})} & \cdots & {\frac{r + 1}{2} \choose \frac{r - 1}{2}}
        \end{pmatrix} \!\!\! 
        \begin{pmatrix}
            x_r \\ x_{r - 1} \\ x_{r - 2} \\ \vdots \\ x_{\frac{r + 1}{2}}
        \end{pmatrix} \!\!\! 
        = \!\!\! 
        \begin{pmatrix}
            0 \\ 0 \\ 0 \\ \vdots \\ (-1)^{\frac{r - 1}{2}}\left(\cL - H_{-} - H_{+}\right)p^{0.5 + x}
        \end{pmatrix}\!\!.
    \]
    We have computed the determinant of the coefficient matrix appearing above while solving \eqref{Matrix equation in geq}. Taking $i = \frac{r+1}{2}$ there and using Cramer's rule, we get
    \[
        x_{\frac{r + 1}{2}} = p^{0.5 + x}(\cL - H_{-} - H_{+}).
    \]
    Therefore summing over $a$ and $\alpha$, we get
    \begin{eqnarray*}\label{Final g2 - g1 in between -0.5 and 0.5}
        g_2(z) - g_1(z) \equiv \sum_{a = 0}^{p - 1}p^{0.5 + x}(\cL - H_{-} - H_{+})p^{-0.5}(z - a)^{\frac{r + 1}{2}}\mathbbm{1}_{a + p\Zp} \mod \pi \latticeL{k}.
    \end{eqnarray*}

    This in fact proves the proposition, since it is easy to see that
    $g_1(z) \equiv 0 \mod \pi \latticeL{k}$. Indeed, 
    \[
        g_1(z) = \sum_{a = 0}^{p - 1}\sum_{j = 0}^{\frac{r - 1}{2}}\frac{g^{(j)}(a)}{j!}(z - a)^j\mathbbm{1}_{a + p\Zp},
    \]
    where
    \begin{eqnarray*}\label{jth summand in g1 in between -0.5 and 0.5}
        \frac{g^{(j)}(a)}{j!} = {\frac{r + 1}{2} \choose j}\sum_{i = 0}^{p - 1}p^x\lambda_i(a - i)^{\frac{r + 1}{2} - j}\logL(a - i).
    \end{eqnarray*}
    For any $0 \leq a \leq p - 1$, each summand on the right side of the equation above is $0$ mod $\pi$.
    This is clear if $i = a$ and if $i \neq a$, then $a - i$ is a unit and the valuation of the $i^{\mathrm{th}}$ summand on the right side of \eqref{jth summand in g1 in between -0.5 and 0.5} is $x + 1 > 0$.
    Therefore by Lemma \ref{Integers in the lattice}, we get
    %\begin{eqnarray}\label{g1 in between -0.5 and 0.5}
        $g_1(z) \equiv 0 \mod \pi \latticeL{k}$.
    %\end{eqnarray}
    %
    %Since $g(z) \equiv g_2(z) - g_1(z) + g_1(z)\mod \pi \latticeL{k}$, adding equations \eqref{Final g2 - g1 in between -0.5 and 0.5} and \eqref{g1 in between -0.5 and 0.5}, we get
    %\[
    %    g(z) \equiv \sum_{a = 0}^{p - 1}(\cL - H_{-} - H_{+})p^{0.5 + x}p^{-0.5}(z - a)^{\frac{r + 1}{2}}\mathbbm{1}_{a + p\Zp} \mod \pi \latticeL{k}. \qedhere
    %\]
\end{proof}

\begin{theorem}\label{Final theorem for -0.5 < nu < 0.5 for odd weights}
    Let $1 \leq r \leq p - 2$ be odd. If $-0.5 < \nu < 0.5$, then the map $\IZind a^{\frac{r - 1}{2}}d^{\frac{r + 1}{2}} \twoheadrightarrow F_{r-1, \> r}$ factors as
    \[
        \IZind a^{\frac{r - 1}{2}}d^{\frac{r + 1}{2}} \twoheadrightarrow \frac{\IZind a^{\frac{r - 1}{2}}d^{\frac{r + 1}{2}}}{\im T_{-1, 0}} \twoheadrightarrow F_{r - 1, \> r}.
    \]
        %Moreover, the second map in the display above induces a surjection
        %\[
        %    \KZind V_{1} \otimes \det{}^{\frac{r - 1}{2}} \twoheadrightarrow F_{r - 1, \> r}.
        %\]
\end{theorem}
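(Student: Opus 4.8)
The plan is to deduce the factorization directly from Proposition~\ref{nu in between -0.5 and 0.5}, just as Theorem~\ref{Final theorem for geq 0.5 for odd weights} was deduced from Proposition~\ref{Final theorem for the hardest case in self-dual}; the only new ingredient is a short valuation count.

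First I would fix $x \in \bQ$ with $x + \nu = -0.5$ and take $g(z) = \sum_{i = 0}^{p - 1} p^x \lambda_i (z - i)^{\frac{r + 1}{2}} \logL(z - i)$ with $\lambda_i$ as in Lemma~\ref{Main coefficient identites}~(3). Since $1 \le r \le p - 2$ and $p \ge 5$, these $\lambda_i$ satisfy $\sum_i \lambda_i i^j = 0$ for $0 \le j \le \frac{r+1}{2}$ (indeed for all $0 \le j \le p - 2$), so $\sum_i \lambda_i (z - i)^{\frac{r+1}{2}}$ vanishes identically and the membership criterion of Section~\ref{Section for log functions} gives $g(z) \in L(k, \cL)$, hence $g(z) = 0$ in $\tB(k, \cL)$ and so in $\br{\latticeL{k}}$. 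Combining this with the congruence of Proposition~\ref{nu in between -0.5 and 0.5} yields
\[
0 \equiv \sum_{a = 0}^{p - 1} \bigl[p^{0.5 + x}(\cL - H_{-} - H_{+})\bigr]\, p^{-0.5}(z - a)^{\frac{r + 1}{2}} \mathbbm{1}_{a + p\Zp} \mod \pi \latticeL{k}.
\]
The key observation is that $v_p\bigl(p^{0.5 + x}(\cL - H_{-} - H_{+})\bigr) = 0.5 + x + \nu = 0$, so this scalar is a unit in $\co_E$; being a unit it may be cancelled modulo $\pi$, even though $\pi \mid p^{0.5}$, and we obtain
\[
0 \equiv \sum_{a = 0}^{p - 1} p^{-0.5}(z - a)^{\frac{r + 1}{2}} \mathbbm{1}_{a + p\Zp} \mod \pi \latticeL{k}.
\]

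Next I would invoke the Hecke-operator computation already carried out in the proof of Theorem~\ref{Final theorem for geq 0.5 for odd weights} (the bullet computing $-w\, T_{-1, 0}$): being a pure manipulation of matrices acting on characteristic functions inside $\latticeL{k}$, it is valid for any $\nu$ and shows that $\sum_{a = 0}^{p - 1} p^{-0.5}(z - a)^{\frac{r + 1}{2}} \mathbbm{1}_{a + p\Zp}$ is exactly the image of $-w\, T_{-1, 0} \llbracket \id, X^{\frac{r - 1}{2}} Y^{\frac{r + 1}{2}} \rrbracket$ under the surjection $\IZind a^{\frac{r - 1}{2}} d^{\frac{r + 1}{2}} \twoheadrightarrow F_{r - 1, \> r}$. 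Hence $-w\, T_{-1, 0} \llbracket \id, X^{\frac{r - 1}{2}} Y^{\frac{r + 1}{2}} \rrbracket$ maps to $0$. Since $w$ is invertible in $G$ and $T_{-1, 0}$ commutes with the $G$-action, while $\llbracket \id, X^{\frac{r - 1}{2}} Y^{\frac{r + 1}{2}} \rrbracket$ generates $\IZind a^{\frac{r - 1}{2}} d^{\frac{r + 1}{2}}$ as a $G$-module, it follows that the whole submodule $\im T_{-1, 0}$ maps to $0$ in $F_{r - 1, \> r}$, so the surjection factors through $\IZind a^{\frac{r - 1}{2}} d^{\frac{r + 1}{2}} / \im T_{-1, 0}$, as claimed.

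I do not expect a serious obstacle here: the substantive work (the analysis of $g_2(z) - g_1(z)$, the Cramer-rule determinant computations, and the vanishing $g_1(z) \equiv 0$) has already been absorbed into Proposition~\ref{nu in between -0.5 and 0.5}, and the passage to Hecke operators simply repeats the argument in Theorem~\ref{Final theorem for geq 0.5 for odd weights}. The only delicate point is the valuation count making $p^{0.5 + x}(\cL - H_{-} - H_{+})$ a unit; this is precisely why the \emph{open} interval $-0.5 < \nu < 0.5$ produces the clean quotient by $T_{-1, 0}$ alone, in contrast to the boundary case $\nu \ge 0.5$, where no unit could be extracted and the quadratic operator $T_{-1, 0}^2 - c T_{-1, 0} + 1$ appeared instead.
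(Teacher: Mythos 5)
Your proposal is correct and follows essentially the same route as the paper: invoke Proposition~\ref{nu in between -0.5 and 0.5} together with $g(z)=0$ in $\tB(k,\cL)$, cancel the unit $p^{0.5+x}(\cL-H_{-}-H_{+})$, identify $\sum_{a}p^{-0.5}(z-a)^{\frac{r+1}{2}}\mathbbm{1}_{a+p\Zp}$ with the image of $-wT_{-1,0}\llbracket \id, X^{\frac{r-1}{2}}Y^{\frac{r+1}{2}}\rrbracket$ via \eqref{Formulae for T-10 and T12 in the commutative case}, and conclude from the fact that this element generates the induced module. The only (harmless) cosmetic difference is that you cite the bullet computation in Theorem~\ref{Final theorem for geq 0.5 for odd weights} rather than redoing the one-line matrix computation, which is the same calculation.
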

\begin{proof}
    We claim that under the map $\IZind a^{\frac{r - 1}{2}}d^{\frac{r + 1}{2}} \twoheadrightarrow F_{r-1, \> r}$, the element
    \[
        -wT_{-1, 0}\llbracket \id, X^{\frac{r - 1}{2}}Y^{\frac{r + 1}{2}}\rrbracket
    \]
    maps to $0$. The theorem then follows since $\llbracket \id, X^{\frac{r - 1}{2}}Y^{\frac{r + 1}{2}}\rrbracket$ generates $\IZind a^{\frac{r - 1}{2}}d^{\frac{r + 1}{2}}$.
    Indeed, by Proposition \ref{nu in between -0.5 and 0.5}, we have
    \[
        \sum_{a = 0}^{p - 1}(\cL - H_{-} - H_{+})p^{0.5 + x}p^{-0.5}(z - a)^{\frac{r + 1}{2}}\mathbbm{1}_{a + p\Zp} \equiv 0 \mod \pi \latticeL{k}
      \]
    where $x + \nu = -0.5$. Note that $(\cL - H_{-} - H_{+})p^{0.5 + x}$ is a $p$-adic unit.
    %Since $(\cL - H_{-} - H_{+})p^{0.5 + x}$ is a $p$-adic unit, to prove the theorem it is enough to prove that under the surjection $\IZind a^{\frac{r - 1}{2}}d^{\frac{r + 1}{2}} \twoheadrightarrow F_{r-1, \> r}$, the element $-wT_{-1, 0}\llbracket \id, X^{\frac{r - 1}{2}}Y^{\frac{r + 1}{2}}\rrbracket$ maps to $\sum_{i = 0}^{p - 1}p^{-0.5}(z - a)^{\frac{r + 1}{2}}\mathbbm{1}_{a + p\Zp}$. 
    %Recall that
    %\[
    %    T_{-1, 0} = \sum_{\lambda \in I_1}\begin{pmatrix}p & \lambda \\ 0 & 1 \end{pmatrix} = \sum_{\lambda \in I_1}\begin{pmatrix}p & -\lambda \\ 0 & 1\end{pmatrix}.
    %\]
    And, using \eqref{Formulae for T-10 and T12 in the commutative case},  we
    see that $-wT_{-1, 0}\llbracket \id, X^{\frac{r - 1}{2}}Y^{\frac{r + 1}{2}}\rrbracket$ maps to
        \begin{eqnarray*}
        \sum_{\lambda \in I_1}-\begin{pmatrix}0 & 1 \\ p & -\lambda\end{pmatrix}z^{\frac{r - 1}{2}}\mathbbm{1}_{p\Zp}
        & = & \sum_{\lambda \in I_1} - \frac{1}{p^{r/2}}(z - \lambda)^r\left(\frac{p}{z - \lambda}\right)^{\frac{r - 1}{2}}\mathbbm{1}_{p\Zp}\left(\frac{p}{z - \lambda}\right) \\
        & = & \sum_{a = 0}^{p - 1} p^{-0.5}(z - a)^{\frac{r + 1}{2}}\mathbbm{1}_{a + p\Zp} \mod \pi \latticeL{k}.
    \end{eqnarray*}
    Therefore we have proved the theorem.
\end{proof}

\section{Analysis of $\br{\latticeL{k}}$ around the last point for even weights}\label{SpecEven}
    %\fancyhead{}
    %\fancyhead[LO]{Reductions of semi-stable representations using the Iwahori mod $p$ LLC}
    %\fancyhead[RE]{Analysis of $\br{\latticeL{k}}$ around the last point for even weights}

Finally, we assume that $k$ and $r$ are even and describe the behavior of $\br{\latticeL{k}}$
around the last point $\nu = 0$. Some results on Hecke operators in the non-commutative Hecke algebra
that were proved in Section~\ref{Comparison theorem section} are needed, making the analysis trickier.

\subsection{The $\nu \geq 0$ case for even weights}\label{geq section for even weights}
    In this section, we prove that if $\nu \geq 0$, then the map $\IZind a^{\frac{r - 2}{2}}d^{\frac{r + 2}{2}} \twoheadrightarrow F_{r - 2, \> r - 1}$ factors as
    \[
        \IZind a^{\frac{r - 2}{2}}d^{\frac{r + 2}{2}} \twoheadrightarrow \frac{\IZind a^{\frac{r - 2}{2}}d^{\frac{r + 2}{2}}}{\im(\lambda_{r/2}T_{-1, 0} - 1)} \twoheadrightarrow F_{r - 2, \> r - 1},
    \]
    where $\lambda_{r/2} = (-1)^{r/2}\left(\dfrac{r}{2}\right)\left(\dfrac{r + 2}{2}\right)(\cL - H_{-} - H_{+})$.
        Moreover if $\nu = 0$, then there is a surjection
        \[
            \pi(p - 3, \lambda_{r/2}^{-1}, \omega^{\frac{r + 2}{2}}) \twoheadrightarrow F_{r - 2, \> r - 1}.
        \]

    \begin{Proposition}\label{nu geq 0}
        Let $2 \leq r \leq p-1$ be even. Let $\nu \geq 0$. Set
        \[
            g(z) = \sum_{i = 0}^{p - 1}p^{-1}\lambda_i(z - i)^{\frac{r + 2}{2}}\logL(z - i),
        \]
        where the $\lambda_i \in \zp$ are as in Lemma~\ref{Main coefficient identites} (3).
                   %                    $\lambda_0 = 1$ and $\lambda_i$ for $1 \leq i \leq p - 1$ are uniquely determined by $\sum\limits_{i = 0}^{p - 1}\lambda_i i^j = 0$ for $0 \leq j \leq p - 2$.
        Then
        \begin{eqnarray*}
            g(z) & \equiv & \sum_{a = 0}^{p - 1}\sum_{\alpha = 0}^{p - 1}\sum_{j = 0}^{r/2}{\frac{r + 2}{2} \choose j}\left(\cL - H_{\frac{r + 2}{2} - j}\right)p^{-1}(\alpha p)^{\frac{r + 2}{2} - j} (z - a - \alpha p)^{j}\mathbbm{1}_{a + \alpha p + p^2\Zp} \\
            && + \sum_{a = 0}^{p - 1}\sum_{j = 0}^{r/2}{\frac{r + 2}{2} \choose j}\frac{a^{\frac{r + 2}{2} - j}}{\frac{r + 2}{2} - j}(z - a)^j\mathbbm{1}_{a + p\Zp} \\
            && + \sum_{a = 0}^{p - 1}\sum_{m = 1}^{\frac{r + 2}{2}}\sum_{j = 0}^{\frac{r + 2}{2} - m}{\frac{r + 2}{2} \choose j}{\frac{r + 2}{2} - j \choose m}(-1)^m\frac{S_m}{p}\frac{a^{\frac{r + 2}{2} - j - m}}{\frac{r + 2}{2} - j}(z - a)^j\mathbbm{1}_{a + p\Zp} \!\!\! \mod \pi \latticeL{k}.
        \end{eqnarray*}
    \end{Proposition}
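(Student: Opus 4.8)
The plan is to follow the template of Proposition~\ref{nu geq} together with the $g_2(z)-g_1(z)$ and $g_1(z)$ computations inside the proof of Proposition~\ref{Final theorem for the hardest case in self-dual}, now specialized to $n=\frac{r+2}{2}$ (so that $r/2-n=-1$); no matrix inversions are needed at this stage, since those enter only in the subsequent theorem.

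First I would write $g(z)=g(z)\mathbbm{1}_{\Zp}+g(z)\mathbbm{1}_{\Qp\setminus\Zp}$ and dispose of the second piece. Since $\lambda_0=1$ and $\sum_{i=0}^{p-1}\lambda_i i^j=0$ for $0\leq j\leq p-2$ by Lemma~\ref{Main coefficient identites}~(3), and since $n=\frac{r+2}{2}=\frac{k}{2}\leq\frac{p+1}{2}\leq p-2$ because $p\geq 5$, this vanishing holds in particular for $0\leq j\leq n$; hence $\sum_i\lambda_i(1-zi)^n=0$, the $\logL(z)$ term produced by applying $w$ disappears, and
\[
  w\cdot g(z)\mathbbm{1}_{\Qp\setminus\Zp}=\sum_{i=0}^{p-1}p^{-1}\lambda_i\,z^{r-n}(1-zi)^n\logL(1-zi)\mathbbm{1}_{p\Zp}\equiv 0\mod\pi\latticeL{k}
\]
by Lemma~\ref{qp-zp part is 0} (with $x=-1$ and $n=k/2\leq r$). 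Applying $w$ again gives $g(z)\equiv g(z)\mathbbm{1}_{\Zp}\mod\pi\latticeL{k}$. Next, observing that $\nu\geq 0$ is equivalent to $v_p(\cL)\geq 0$ (as $v_p(H_{\pm})\geq 0$), so that $x+v_p(\cL)=-1+v_p(\cL)\geq-1=r/2-n$, I would invoke Lemma~\ref{telescoping lemma}~(1), applied to each summand and summed over $i$, to obtain $g(z)\mathbbm{1}_{\Zp}\equiv g_2(z)\mod\pi\latticeL{k}$. It then remains to expand $g_2(z)=(g_2(z)-g_1(z))+g_1(z)$.

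For $g_2(z)-g_1(z)$ I would reproduce verbatim, with $n=\frac{r+2}{2}$, the computation in the proof of Proposition~\ref{Final theorem for the hardest case in self-dual} (equivalently the $n=\frac{r+3}{2}$ case of Proposition~\ref{Refined hard inductive steps for self-dual}): the coefficient of $(z-a-\alpha p)^j\mathbbm{1}_{a+\alpha p+p^2\Zp}$ is ${\frac{r+2}{2}\choose j}\sum_i p^{-1}\lambda_i[\cdots]$, and splitting the $i\neq a$ contribution (via $\logL(a-i+\alpha p)=\logL(1+(a-i)^{-1}\alpha p)+\logL(a-i)$, with $p\mid\logL(a-i)$) from the $i=a$ contribution (via $\logL(\alpha p)=\cL+\log\alpha$, the $\log\alpha$-part having valuation $>r/2-j$), then using $\sum_i\lambda_i i^j=0$, makes this coefficient congruent $\mod p^{r/2-j}\pi$ to ${\frac{r+2}{2}\choose j}p^{-1}\lambda_a(\alpha p)^{\frac{r+2}{2}-j}(\cL-H_{\frac{r+2}{2}-j})$, the harmonic sum coming from \eqref{Small derivative formula for polynomial times logs}. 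Replacing $\lambda_a$ by $1$ (the error has valuation $\geq\frac{r+2}{2}-j>r/2-j$) and invoking Lemma~\ref{Integers in the lattice} identifies $g_2(z)-g_1(z)$ with the first double sum in the statement. For $g_1(z)$, whose coefficients are $\frac{g^{(j)}(a)}{j!}={\frac{r+2}{2}\choose j}\sum_i p^{-1}\lambda_i(a-i)^{\frac{r+2}{2}-j}\logL(a-i)$ by \eqref{Simple derivative formula}, I would drop the $i=a$ term, and for $i\neq a$ write $a-i=[a-i]+pa_i$, expand $\logL(1+[a-i]^{-1}pa_i)$ to first order, rewrite $[a-i]^{r/2-j}pa_i\equiv\frac{(a-i)^{\frac{r+2}{2}-j}-[a-i]^{\frac{r+2}{2}-j}}{\frac{r+2}{2}-j}\mod p$, replace $\lambda_i$ by $1$, and kill the Teichm\"uller powers using \eqref{sum of powers of roots of unity} (valid since $\frac{r+2}{2}-j\leq\frac{r+2}{2}\leq p-2$). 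Expanding $(a-i)^N$ binomially then yields $\frac{g^{(j)}(a)}{j!}\equiv{\frac{r+2}{2}\choose j}\sum_{m=0}^{\frac{r+2}{2}-j}{\frac{r+2}{2}-j\choose m}(-1)^m\frac{S_m}{p}\frac{a^{\frac{r+2}{2}-j-m}}{\frac{r+2}{2}-j}\mod\pi$ with $S_m$ as in \eqref{Definition of S_l}; splitting the $m=0$ and $m\geq 1$ parts and using Lemma~\ref{Integers in the lattice} gives the second and third double sums. Summing the three contributions proves the proposition.

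The main obstacle is purely technical bookkeeping: in $g_2(z)-g_1(z)$ the overall factor $p^{-1}$ makes the valuation estimates for the $i=a$ contributions tight, so one must check that every discarded term genuinely has valuation exceeding $r/2-j$ (respectively exceeds $0$ for the $g_1(z)$ terms), and that the constant $H_{\frac{r+2}{2}-j}$ is extracted correctly from \eqref{Small derivative formula for polynomial times logs}. All of these are exactly the checks already carried out in the self-dual analysis, so the proof amounts to transcribing that argument with $n=\frac{r+2}{2}$.
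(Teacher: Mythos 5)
Your proposal is correct and follows essentially the same route as the paper: kill $g(z)\mathbbm{1}_{\Qp\setminus\Zp}$ via the vanishing sums and Lemma~\ref{qp-zp part is 0}, telescope to $g_2(z)$ by Lemma~\ref{telescoping lemma}~(1) (using $\nu\geq 0\Rightarrow v_p(\cL)\geq 0$ so $x+v_p(\cL)\geq r/2-n$ with $x=-1$, $n=\tfrac{r+2}{2}$), extract the $(\cL-H_{\frac{r+2}{2}-j})$ term from $g_2-g_1$ by the usual $i=a$ versus $i\neq a$ split, and obtain the $S_m$ sums in $g_1$ via the Teichm\"uller expansion and \eqref{sum of powers of roots of unity}. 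The paper's proof carries out exactly these steps (citing the earlier analyses for the routine valuation checks you flag), so there is nothing to add.
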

    \begin{proof}
        Recall that by Lemma~\ref{Main coefficient identites} (3), we have
        $\lambda_i = 1 \!\! \mod p$ for $0 \leq i \leq p - 1$ and
        %$\sum\limits_{i = 0}^{p - 1}\lambda_i i^j = 0$
        %for $0 \leq j \leq p - 2$.
               %                Using Lemma \ref{Main coefficient identites} part 3, we have the identities
        $\sum_{i = 0}^{p - 1}\lambda_i i^j = 0$ for $0 \leq j \leq \dfrac{r + 2}{2} \leq p - 2$ since $p \geq 5$.         % Moreover, for $r = 2$, these sums vanish even for $0 \leq j \leq \dfrac{r + 2}{2} + 1 \leq p - 2$.
        Write $g(z) = g(z)\mathbbm{1}_{\Zp} + g(z)\mathbbm{1}_{\Qp \setminus \Zp}$. As usual, we have
        \[
            w \cdot g(z)\mathbbm{1}_{\Qp \setminus \Zp} = \sum_{i = 0}^{p - 1}p^{-1}\lambda_iz^{\frac{r - 2}{2}}(1 - zi)^{\frac{r + 2}{2}}\logL(1 - zi)\mathbbm{1}_{p\Zp}.
        \]
        Using Lemma \ref{qp-zp part is 0}, we see $g(z)\mathbbm{1}_{\Qp \setminus \Zp} \equiv 0 \mod \pi \latticeL{k}$. Therefore $g(z) \equiv g(z)\mathbbm{1}_{\Zp} \! \! \mod \pi \latticeL{k}$.
        
        Now using Lemma \ref{telescoping lemma} (1), we see that $g(z)\mathbbm{1}_{\Zp} \equiv g_2(z) \mod \pi \latticeL{k}$. As usual, we analyze $g_2(z) - g_1(z)$ and $g_1(z)$ separately.

        First consider $g_2(z) - g_1(z)$. For $0 \leq j \leq r/2$, the coefficient of $(z - a - \alpha p)^j\mathbbm{1}_{a + \alpha p + p^2\Zp}$ in $g_2(z) - g_1(z)$ is
        \begin{eqnarray}\label{jth summand in g2 - g1 in geq 0}
            &&{\frac{r + 2}{2} \choose j}\sum_{i = 0}^{p - 1}p^{-1}\lambda_i\Big[(a + \alpha p - i)^{\frac{r + 2}{2} - j}\logL(a + \alpha p - i) - (a - i)^{\frac{r + 2}{2} - j}\logL(a - i) \\
            && {\frac{r + 2}{2} - j \choose 1}(\alpha p)(a - i)^{r/2 - j}\logL(a - i) - \cdots - {\frac{r + 2}{2} - j \choose r/2 - j}(\alpha p)^{r/2 - j}(a - i)\logL(a - i)\Big].\nonumber
        \end{eqnarray}
      Arguing as in previous proofs by analyzing the $i = a$ terms and $i \neq a$ separately
      and using $\sum_{i = 0}^{p - 1}\lambda_i i^j = 0$ for $0 \leq j \leq p - 2$,
      one see that the above coefficient is % of $(z - a - \alpha p)^{j}\mathbbm{1}_{a + \alpha p + p^2\Zp}$ is
        \[
            {\frac{r + 2}{2} \choose j}p^{-1}\lambda_a(\alpha p)^{\frac{r + 2}{2} - j}\left(\cL - H_{\frac{r + 2}{2} - j}\right) \mod p^{r/2 - j}\pi.
        \]
        Since $\lambda_a \equiv 1 \!\! \mod p$ for $0 \leq a \leq p - 1$,
        using Lemma~\ref{Integers in the lattice}, we see that
        \begin{eqnarray}\label{Penultimate g2 - g1 in geq 0}
            g_2(z) - g_1(z) & \equiv & \sum_{a = 0}^{p - 1}\sum_{\alpha = 0}^{p - 1}\sum_{j = 0}^{r/2}{\frac{r + 2}{2} \choose j}\left(\cL - H_{\frac{r + 2}{2} - j}\right)p^{-1}(\alpha p)^{\frac{r + 2}{2} - j} \\
            && \quad \quad \quad \quad \quad \quad \qquad (z - a - \alpha p)^{j}\mathbbm{1}_{a + \alpha p + p^2\Zp} \mod \pi \latticeL{k}. \nonumber
        \end{eqnarray}

        Next, we simplify $g_1(z)$. Recall that
        \begin{eqnarray}\label{g1 in geq 0}
            g_1(z) = \sum_{a = 0}^{p - 1}\sum_{j = 0}^{r/2}\frac{g^{(j)}(a)}{j!}(z - a)^j\mathbbm{1}_{a + p\Zp},
        \end{eqnarray}
        where
        \begin{eqnarray}\label{jth summand in g1 in geq 0}
            \frac{g^{(j)}(a)}{j!} = {\frac{r + 2}{2} \choose j}\sum_{i = 0}^{p - 1}p^{-1}\lambda_i(a - i)^{\frac{r + 2}{2} - j}\logL(a - i).
        \end{eqnarray}

        The $i = a$ summand is $0$. For $i \neq a$, we write $a - i = [a - i] + pa_i$ for some $a_i \in \Zp$. Then $\logL(a - i) = \logL(1 + [a - i]^{-1}pa_i)$ since $[a - i]$ is a root of unity. Expanding the logarithm,
        %we get
        \begin{eqnarray*}
            \frac{g^{(j)}(a)}{j!} & \equiv & {\frac{r + 2}{2} \choose j}\sum_{\substack{i = 0 \\ i \neq a}}^{p - 1}p^{-1}\lambda_i[a - i]^{r/2 - j}pa_i \mod \pi \\
            & \equiv & {\frac{r + 2}{2} \choose j}\sum_{\substack{i = 0 \\ i \neq a}}^{p - 1}p^{-1}\lambda_i\left(\frac{(a - i)^{\frac{r + 2}{2} - j} - [a - i]^{\frac{r + 2}{2} - j}}{\frac{r + 2}{2} - j}\right) \mod \pi.
        \end{eqnarray*}
        Recall that $\lambda_i \equiv 1 \!\! \mod p$. Moreover, $0 \leq j \leq r/2$ implies that the power of $[a - i]$ in the last line above is positive. Using \eqref{sum of powers of roots of unity}, we get\footnote{Since $\sum_{i = 0}^{p - 1}[a-i]^{p - 1} = p-1$, the quotient $\dfrac{S_m}{p}$ might possibly
          be replaced by $\dfrac{S_m - (p-1)}{p}$ when $m = p - 1$. However this case only
          arises for odd primes $p$ when $p = 3$, $r = 2$ and $j = 0$ and so is excluded since $p \geq 5$.}
        \[
            \frac{g^{(j)}(a)}{j!} \equiv {\frac{r + 2}{2} \choose j}\sum_{i = 0}^{p - 1}p^{-1}\frac{(a - i)^{\frac{r + 2}{2} - j}}{\frac{r + 2}{2} - j} \equiv {\frac{r + 2}{2} \choose j}\sum_{m = 0}^{\frac{r + 2}{2} - j}{\frac{r + 2}{2} - j \choose m}(-1)^m\frac{S_m}{p}\frac{a^{\frac{r + 2}{2} - j - m}}{\frac{r + 2}{2} - j} \mod \pi.
        \]
        Substituting this in \eqref{g1 in geq 0}, we get
        \[
            g_1(z) \equiv \sum_{a = 0}^{p - 1}\sum_{j = 0}^{r/2}\sum_{m = 0}^{\frac{r + 2}{2} - j}{\frac{r + 2}{2} \choose j}{\frac{r + 2}{2} - j \choose m}(-1)^m\frac{S_m}{p}\frac{a^{\frac{r + 2}{2} - j - m}}{\frac{r + 2}{2} - j}(z - a)^j\mathbbm{1}_{a + p\Zp} \mod \pi \latticeL{k}.
        \]
        We separate the $m = 0$ part from this sum since it will be the important summand. In the $m \neq 0$ part, we interchange the order of the $j$ and $m$ sums to get
        \begin{eqnarray}\label{Intermediate g1 in geq 0}
            g_1(z) \!\! & \equiv & \!\! \sum_{a = 0}^{p - 1}\sum_{j = 0}^{r/2}{\frac{r + 2}{2} \choose j}\frac{a^{\frac{r + 2}{2} - j}}{\frac{r + 2}{2} - j}(z - a)^j\mathbbm{1}_{a + p\Zp} \\
            && \!\! + \sum_{a = 0}^{p - 1}\sum_{m = 1}^{\frac{r + 2}{2}}\sum_{j = 0}^{\frac{r + 2}{2} - m}\!\!{\frac{r + 2}{2} \choose j}\!{\frac{r + 2}{2} - j \choose m}\!(-1)^m\frac{S_m}{p}\frac{a^{\frac{r + 2}{2} - j - m}}{\frac{r + 2}{2} - j}(z - a)^j\mathbbm{1}_{a + p\Zp} \!\!\! \mod \pi \latticeL{k}. \nonumber
        \end{eqnarray}
Adding equations \eqref{Intermediate g1 in geq 0} and \eqref{Penultimate g2 - g1 in geq 0} yields
the proposition.
%        \begin{eqnarray*}
%            g(z) \!\! & \equiv & \!\! \sum_{a = 0}^{p - 1}\sum_{\alpha = 0}^{p - 1}\sum_{j = 0}^{r/2}{\frac{r + 2}{2} \choose j}\left(\cL - H_{\frac{r + 2}{2} - j}\right)p^{-1}(\alpha p)^{\frac{r + 2}{2} - j} (z - a - \alpha p)^{j}\mathbbm{1}_{a + \alpha p + p^2\Zp} \\
%            && \!\! + \sum_{a = 0}^{p - 1}\sum_{j = 0}^{r/2}{\frac{r + 2}{2} \choose j}\frac{a^{\frac{r + 2}{2} - j}}{\frac{r + 2}{2} - j}(z - a)^j\mathbbm{1}_{a + p\Zp} \\
%            && \!\! + \sum_{a = 0}^{p - 1}\sum_{m = 1}^{\frac{r + 2}{2}}\sum_{j = 0}^{\frac{r + 2}{2} - m}\!\!{\frac{r + 2}{2} \choose j}{\frac{r + 2}{2} - j \choose m}(-1)^m\frac{S_m}{p}\frac{a^{\frac{r + 2}{2} - j - m}}{\frac{r + 2}{2} - j}(z - a)^j\mathbbm{1}_{a + p\Zp} \!\!\! \mod \pi \latticeL{k}.
%        \end{eqnarray*}   
    \end{proof}

    The next lemma is used in the proof of Theorem~\ref{Final theorem for geq 0}.
    
    \begin{lemma}\label{T12 is 0 in even self-dual}
        Let $2 \leq r \leq p-1$ be even. Let $\nu > -1$. For $0 \leq l \leq \dfrac{r - 2}{2}$, we have
        \[
            \sum_{a = 0}^{p - 1}(z - a)^{l}\mathbbm{1}_{a + p\Zp} \equiv  0 \mod \pi \latticeL{k}.
        \]
    \end{lemma}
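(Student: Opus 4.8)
The claim is the analogue, for even $r$ and around the point $\nu = 0$, of Lemma~\ref{T12 is 0 in self-dual}. For $l = 0$ (and more generally for small $l$ where the powers involved are too low) the statement follows immediately from Lemma~\ref{Integers in the lattice} (take $j = 0$, $h = 1$ there: $\mathbbm{1}_{a + p\Zp}$ lies in $\latticeL{k}$ and $\sum_a \mathbbm{1}_{a+p\Zp} = \mathbbm{1}_{\Zp}$ is a polynomial of degree $0 \leq r$, hence $0$ in $\tB(k)$). So the real content is the range $1 \leq l \leq \dfrac{r-2}{2}$, and the plan is to imitate the proof of Lemma~\ref{T12 is 0 in self-dual} / Proposition~\ref{nu leq}.

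First I would choose the logarithmic function
\[
    g(z) = p^{-1}\sum_{i \in I}\lambda_i (z - i)^{\frac{r+2}{2}+l'}\logL(z - i),
\]
where $I = \left\{0, 1, \ldots, \dfrac{r+2}{2}+l', \, p\right\}$ for a suitable shift (so that $\dfrac{r+2}{2}+l' > r/2$, i.e.\ $n := \dfrac{r+2}{2}+l'$ is in the admissible range $r/2 < n \leq r$, which forces $l' \leq \dfrac{r-2}{2}$), and the $\lambda_i \in \zp$ are chosen as in Lemma~\ref{Main coefficient identites} (2), so $\sum_{i \in I}\lambda_i i^j = 0$ for $0 \leq j \leq n$, $\lambda_0 \equiv 1$, $\lambda_p = -1$, $\lambda_i \equiv 0 \bmod p$ for $1 \leq i \leq n$. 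Writing $g = g\mathbbm{1}_{\Zp} + g\mathbbm{1}_{\Qp \setminus \Zp}$, the $\qp \setminus \zp$ part vanishes mod $\pi\latticeL{k}$ by Lemma~\ref{qp-zp part is 0} (using $\nu > -1 \Rightarrow x = -1 \geq -1$ and the vanishing sums up to degree $n$), and by Lemma~\ref{telescoping lemma} (1) we have $g(z)\mathbbm{1}_{\Zp} \equiv g_2(z) \bmod \pi\latticeL{k}$. Then I would compute $g_2(z) - g_1(z)$ and $g_1(z)$ exactly as in Proposition~\ref{nu leq}: the difference $g_2 - g_1$ vanishes mod $\pi\latticeL{k}$ (same case analysis on $a = 0$ vs $a \neq 0$, $i = a$ vs $i \neq a$, using $x = -1$, $p \mid \lambda_i$ for $1 \leq i \leq n$, and Lemma~\ref{Integers in the lattice}), and $g_1(z)$ reduces, via the coefficient computations in Proposition~\ref{nu leq}, to a sum $\sum_{a=1}^{p-1} p^{1+x} a^{-1} z^n \mathbbm{1}_{a+p\Zp}$ plus a tail supported on $p\zp$. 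Since $g(z) = 0$ in $\tB(k,\cL)$ and $1+x = 0$, one lands at a congruence $0 \equiv (\text{unit})\, z^{\frac{r-2}{2}+\text{something}}\mathbbm{1}_{p\Zp} \bmod \pi\latticeL{k}$ after applying $\beta$ and cancelling the $p$-adic unit coefficient $-(n)p^{\,(\cdots)}\cL$-type term (or, as in Lemma~\ref{T12 is 0 in self-dual}, after a Cramer's-rule matrix manipulation combining these with the shallow inductive steps of Proposition~\ref{inductive steps}), which upon applying $\begin{pmatrix}p & 0 \\ 0 & 1\end{pmatrix}$ gives $\sum_a (z-a)^l \mathbbm{1}_{a+p\Zp} \equiv 0$ for the desired $l$.

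The main obstacle I anticipate is bookkeeping rather than a new idea: one must choose, for each target exponent $l \in \left[1, \dfrac{r-2}{2}\right]$, the right function $g$ (equivalently the right $n$) and then push the resulting congruence down to the correct power of $(z-a)$. This is precisely where the proof of Lemma~\ref{T12 is 0 in self-dual} invokes a matrix equation and Cramer's rule to isolate a single coefficient $x_{\frac{r+1}{2}}$; here the analogue will be an $(\text{something} \times \text{something})$ almost-lower-triangular matrix whose determinant one must check is a $p$-adic unit (using that $r$ is even, $p$ is odd, and the relevant entries like $\dfrac{r+2}{r}$, etc., are units mod $p$ because $p \geq 5$). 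There may be one or two sporadic small-$r$ or $p = 3$ exceptions to watch for — but $p \geq 5$ is assumed throughout and these are routinely dispatched, just as in the footnotes of Lemma~\ref{qp-zp part is 0} and Lemma~\ref{Telescoping Lemma in self-dual for qp-zp}. Since this lemma is only invoked as an auxiliary step (to kill the $m \neq 0$ tail in Proposition~\ref{nu geq 0}), I would present it compactly, referring back to the proof of Lemma~\ref{T12 is 0 in self-dual} for the repeated parts and spelling out only the changes forced by the parity of $r$.
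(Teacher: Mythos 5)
Your overall strategy (a logarithmic function with coefficients from Lemma~\ref{Main coefficient identites}~(2), killing the $\Qp\setminus\Zp$ part, telescoping to $g_2$, analysing $g_2-g_1$ and $g_1$, then a Cramer's-rule combination with the shallow inductive steps of Proposition~\ref{inductive steps}) is the right one, and it is what the paper does for the hardest exponent. But two points are genuinely off. First, the low range is not "immediate": $\mathbbm{1}_{\Zp}$ is \emph{not} a polynomial on $\Qp$, so it is not $0$ in $\tB(k)$, and your cited parameters ($j=0$, $h=1$) in Lemma~\ref{Integers in the lattice} give membership in $\latticeL{k}$, not in $\pi\latticeL{k}$. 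The $l=0$ case can be rescued (Lemma~\ref{Integers in the lattice} with $h=0$, or the paper's $\begin{pmatrix}p&0\\0&1\end{pmatrix}p\mathbbm{1}_{p\Zp}=\mathbbm{1}_{\Zp}$ trick for $r=2$), but for $1\leq l\leq \frac{r-4}{2}$ the paper gives a real argument: apply Proposition~\ref{inductive steps} with $n=r-l$ (legitimate since $\nu>-1\geq 1+r/2-n$), average by $\sum_a a^{n-r+l}\left(\begin{smallmatrix}1&0\\-a&1\end{smallmatrix}\right)$, use the sum-over-$b$ identity, and observe the resulting coefficient $\frac{l!(r-l)}{(r-l)\cdots(r-2l)}$ is a unit. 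No logarithmic function is needed there.

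Second, and more seriously, your exponent choice breaks down exactly at the case the lemma exists for, $l=\frac{r-2}{2}$. The natural choice $n=r-l=\frac{r+2}{2}$ with $x=-1$ fails on three counts: the hypothesis $x+v_p(\cL)\geq r/2-n$ of Lemma~\ref{telescoping lemma} then demands $v_p(\cL)\geq 0$, which $\nu>-1$ does not give; the $j=r/2$ tail term becomes $\cL z^{r/2}\mathbbm{1}_{p\Zp}$, which is \emph{not} killed mod $\pi\latticeL{k}$ when $\nu=0$; and the leading term $\sum_{a\neq 0}a^{-1}z^{\frac{r+2}{2}}\mathbbm{1}_{a+p\Zp}$ does not die since $1+x=0$. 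What you would recover is precisely the nontrivial relation of Proposition~\ref{leq 0}/Theorem~\ref{Final theorem for leq 0} (the Hecke relation at $\nu=0$), not the asserted vanishing; relatedly, your stated endgame "$0\equiv(\text{unit})\,z^{m}\mathbbm{1}_{p\Zp}$" is not equivalent to the lemma's conclusion $\sum_a(z-a)^l\mathbbm{1}_{a+p\Zp}\equiv 0$, and such a vanishing of $z^m\mathbbm{1}_{p\Zp}$ is false in general in this range of $\nu$. The paper's proof instead takes the \emph{shifted} exponent $\frac{r+4}{2}$ (with $I=\{0,1,\ldots,\frac{r+4}{2},p\}$): the extra power of $p$ makes every $\cL$-contribution negligible under only $\nu>-1$, yielding the clean congruence $0\equiv\sum_{a\neq 0}a^{-1}z^{\frac{r+4}{2}}\mathbbm{1}_{a+p\Zp}$; one then applies $\beta$, expands, and climbs from power $\frac{r-4}{2}$ back up to $\frac{r-2}{2}$ via a Cramer's-rule matrix built from Proposition~\ref{inductive steps}, where a nontrivial check shows the key entry $(-1)^{\frac{r-2}{2}}\frac{4(r+4)}{r(r+2)}$ is a unit (this is where $p\geq 5$ and the parity of $r$ enter), before finishing with $\mathrm{diag}(p,1)$. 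Identifying this exponent shift, and why it is forced by the weak hypothesis $\nu>-1$, is the missing idea in your proposal.
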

    \begin{proof}
    If $r = 2$, then $l$ can only be $0$ and 
        $\sum_{a = 0}^{p - 1} \mathbbm{1}_{a + p\Zp} = \mathbbm{1}_{\Zp}$.
    The equation
    \[
        \begin{pmatrix}p & 0 \\ 0 & 1\end{pmatrix} p\mathbbm{1}_{p\Zp} = \mathbbm{1}_{\Zp}
    \]
    implies that $\mathbbm{1}_{\Zp} \equiv 0 \mod \pi \latticeL{k}$ by Lemma \ref{Integers in the lattice}.
    Therefore the lemma is true for $r = 2$. So we may assume that $r \geq 4$.
    
    First assume $0 \leq l \leq \dfrac{r - 4}{2}$. By Proposition \ref{inductive steps}, we have the shallow inductive step
    \[
        z^{r - n}\mathbbm{1}_{p\Zp} \equiv \sum_{b = 1}^{p - 1}\sum_{j = 1}^{r/2}c(n, j)b^{r - j - n}(z - b)^j\mathbbm{1}_{b + p\Zp} \mod \pi \latticeL{k}
    \]
    for $n = r - l \geq \dfrac{r + 4}{2}$. Applying the operator $\sum\limits_{a = 0}^{p - 1}a^{n - r + l}\begin{pmatrix}1 & 0 \\ -a & 1\end{pmatrix}$ to both sides, %we get
    %\[
    %    \sum_{a = 0}^{p - 1}\!a^{n - r + l}(z - a)^{r - n}\mathbbm{1}_{a + p\Zp} \! \equiv \! \sum_{a = 0}^{p - 1}\sum_{b = 1}^{p - 1}\sum_{j = 1}^{r/2}\!a^{n - r + l}b^{r - j - n}c(n, j)(z - (a + b))^j\mathbbm{1}_{(a + b) + p\Zp} \!\!\!\! \mod \pi \latticeL{k}.
    %\]
    substituting $\lambda = a + b$ and summing over $b$ on the right side
    using \eqref{sum over b identity}, %we get
    %\begin{eqnarray*}
    %    \sum_{a = 0}^{p - 1}a^{n - r + l}(z - a)^{r - n}\mathbbm{1}_{a + p\Zp} & \equiv & \sum_{\lambda = 0}^{p - 1}\sum_{j = r - n}^{l}(-1)^{n - r + j + 1}{n - r + l \choose n - r + j}\lambda^{l - j}c(n, j)(z - \lambda)^j\mathbbm{1}_{\lambda + p\Zp} \\
    %    && \qquad \qquad \mod \pi \latticeL{k}.
    %\end{eqnarray*}
            %             Note that we have reduced the upper bound on $j$ to $l$ as the binomial coefficient is $0$ when $j > l$.
    replacing $\lambda$ by $a$, % and $n$ by $r - l$,
    and substituting
    the value of $c(r-l, l)$ obtained in Proposition \ref{inductive steps},
            %             we see that the $-1$ part of $c(n, j)$ cancels with the term on the left and
    we get
    \[
        0 \equiv \sum_{a = 0}^{p - 1}\frac{l!(r - l)}{(r - l) \cdots (r - 2l)}(z - a)^l\mathbbm{1}_{a + p\Zp} \mod \pi \latticeL{k}.
    \]
    Since $l \leq \dfrac{r - 4}{2} \leq p - 1$ and $r \leq p - 1$, the coefficient of the expression above is a unit so that cancelling it we obtain the lemma for $0 \leq l \leq \dfrac{r - 4}{2}$.

    Next assume that $l = \dfrac{r - 2}{2}$.  The proof of the lemma in this case is similar
    to the proof of Proposition \ref{nu leq}. Set
    \[
            g(z) = p^{-1}\left[\sum_{i \in I} \lambda_i (z - i)^{\frac{r + 4}{2}}\logL(z - i)\right],
        \]
        where $I = \{1,2, \ldots, \dfrac{r+4}{2}, p\}$ and $\lambda_i \in \zp$ for $i \in I$
        are as in Lemma~\ref{Main coefficient identites} (2). Thus
        %uniquely determined by the conditions $\sum\limits_{i = 0}^{\frac{r + 4}{2}}\lambda_i i^j = p^j$, for $0 \leq j \leq \dfrac{r + 4}{2}$ with $\lambda_{p} = -1$. Note that we have
    \begin{eqnarray}
          \label{Sum identity in killing T12 in geq 0}
        \lambda_0 \equiv 1 \!\!\!\!\mod p, \lambda_i \equiv 0 \!\!\!\!\mod p \text{ for } 1 \leq i \leq \dfrac{r + 4}{2}, \lambda_{p} = - 1,  \sum_{i \in I}\lambda_i z_i^j = 0 \text{ for } 0 \leq j \leq \frac{r + 4}{2}.
    \end{eqnarray}
    %and
    %\begin{eqnarray}\label{Congruence identity for coefficients in killing T12 in geq 0}
    %    \lambda_0 \equiv 1 \!\!\!\!\mod p, \> \lambda_i \equiv 0 \!\!\!\!\mod p \text{ for } 1 \leq i \leq \dfrac{r + 4}{2} \text{ and } \lambda_{p} = - 1.
    %\end{eqnarray}
    Write $g(z) = g(z)\mathbbm{1}_{\Zp} + g(z)\mathbbm{1}_{\Qp \setminus \Zp}$. As usual
    \[
        w \cdot g(z)\mathbbm{1}_{\Qp \setminus \Zp} = \sum_{i \in I}p^{-1}\lambda_iz^{\frac{r - 4}{2}}(1 - zi)^{\frac{r + 4}{2}}\logL(1 - zi)\mathbbm{1}_{p\Zp}.
    \]
    Lemma \ref{qp-zp part is 0} shows that $g(z)\mathbbm{1}_{\Qp \setminus \Zp} \equiv 0 \mod \pi \latticeL{k}$. Thus, $g(z) \equiv g(z)\mathbbm{1}_{\Zp} \!\!\! \mod \pi \latticeL{k}$.

    Using Lemma \ref{telescoping lemma} (3), we see $g(z)\mathbbm{1}_{\Zp} \equiv g_2(z) \mod \pi \latticeL{k}$. We analyze $g_2(z) - g_1(z)$ and $g_1(z)$ separately.

    Recall that for $0 \leq j \leq \dfrac{r + 2}{2}$, the coefficient of $(z - a - \alpha p)^j\mathbbm{1}_{a + \alpha p + p^2\Zp}$ in $g_2(z) - g_1(z)$ is
    \begin{eqnarray}\label{jth summand in g2 - g1 in killing T12 in geq 0}
            && {\frac{r + 4}{2} \choose j}\sum_{i \in I}p^{-1}\lambda_i\Big[(a + \alpha p - i)^{\frac{r + 4}{2} - j}\logL(a + \alpha p - i) - (a - i)^{\frac{r + 4}{2} - j}\logL(a - i) \\
            && - {\frac{r + 4}{2} - j \choose 1}(\alpha p)(a - i)^{\frac{r + 2}{2} - j}\logL(a - i) - \cdots - {\frac{r + 4}{2} - j \choose \frac{r + 2}{2} - j}(\alpha p)^{\frac{r + 2}{2} - j}(a - i)\logL(a - i)\Big]. \nonumber
        \end{eqnarray}
    We prove that this coefficient is congruent to $0$ modulo $p^{r/2 - j}\pi$. 
    It follows, by Lemma~\ref{Integers in the lattice}, that
    $g_2(z) - g_1(z) \equiv 0$ modulo $\pi \latticeL{k}$.

    First assume that $a \neq 0$. For $i \neq a$, the $i^{\mathrm{th}}$ summand in expression \eqref{jth summand in g2 - g1 in killing T12 in geq 0} can be written as
        \[
            p^{-1}\lambda_i\big[(a - z_i + \alpha p)^{\frac{r + 4}{2} - j}\logL(1 + (a - z_i)^{-1}\alpha p) + (\alpha p)^{\frac{r + 4}{2} - j}\logL(a - z_i)\big].
        \]
        Since $i \neq a$ and $1 \leq a \leq p - 1$, we see that $a - i$ is a $p$-adic unit. Therefore the valuation of the last term in the expression above is greater than $r/2 - j$. So we drop the last term modulo $p^{r/2 - j}\pi$. Moreover, expanding $(a - i + \alpha p)^{\frac{r + 4}{2} - j}$ and the logarithm, the expression above becomes
        \begin{eqnarray}\label{a neq 0 and zi neq a term in killing T12 in geq 0}
            &&p^{-1}\lambda_i\big[(a - i)^{\frac{r + 2}{2} - j}\alpha p + c_{r/2 - j}(a - i)^{r/2 - j}(\alpha p)^2 + \cdots + c_1(a - i)(\alpha p)^{\frac{r + 2}{2} - j}\big] \\
            && \qquad \qquad \mod p^{r/2 - j}\pi, \nonumber
        \end{eqnarray}
        for some $c_1, \ldots, \> c_{r/2 - j} \in \Zp$.
        Next assume that $i = a$. Then the $i^{\mathrm{th}}$ summand in expression \eqref{jth summand in g2 - g1 in killing T12 in geq 0} can be written as
        \[
            p^{-1}\lambda_i(\alpha p)^{\frac{r + 4}{2} - j}\logL(\alpha p).
        \]
        Since $v_p(\cL) > -1$, the valuation of this term is greater than $r/2 - j$. Therefore the $i = a$ term in expression \eqref{jth summand in g2 - g1 in killing T12 in geq 0} is congruent to $0$ modulo $p^{r/2 - j}\pi$.
        Plugging \eqref{a neq 0 and zi neq a term in killing T12 in geq 0} into \eqref{jth summand in g2 - g1 in killing T12 in geq 0} and using the summation identities in \eqref{Sum identity in killing T12 in geq 0}, we see that the coefficient of $(z - a - \alpha p)^j\mathbbm{1}_{a + \alpha p + p^2 \Zp}$ in $g_2(z) - g_1(z)$ is $0$
        modulo $p^{r/2 - j}\pi$ if $a \neq 0$.

        Now assume that $a = 0$. Using $v_p(\cL) > -1$, we see that each term in expression \eqref{jth summand in g2 - g1 in killing T12 in geq 0} for $i = 0$ and $p$ has valuation greater than $r/2 - j$. Therefore for $a = 0$, the $i = 0$ and $p$ summands in expression \eqref{jth summand in g2 - g1 in killing T12 in geq 0} are $0$ modulo $p^{r/2 - j}\pi$. For $i \neq 0, p$, the $i^{\mathrm{th}}$ summand in expression \eqref{jth summand in g2 - g1 in killing T12 in geq 0} is also given by \eqref{a neq 0 and zi neq a term in killing T12 in geq 0}.
        
        Using the sums in \eqref{Sum identity in killing T12 in geq 0}, we see the coefficient of $(z - a - \alpha p)^j\mathbbm{1}_{a + \alpha p + p^2 \Zp}$ in $g_2(z) - g_1(z)$ is
        \[
            -{\frac{r + 4}{2} \choose j}p^{-1}\lambda_{p}\left[(-p)^{\frac{r + 2}{2} - j}\alpha p + c_{r/2 - j}(-p)^{r/2 - j}(\alpha p)^2 + \cdots + c_1(-p)(\alpha p)^{\frac{r + 2}{2} - j}\right] \mod p^{r/2 - j}\pi.
        \]
        Clearly, each term in the expression above has valuation greater than $r/2 - j$, as desired.
        %Therefore
        %by Lemma~\ref{Integers in the lattice}, the coefficient of
        %$(z - a - \alpha p)^j\mathbbm{1}_{a + \alpha p + p^2\Zp}$
        %in $g_2(z) - g_1(z)$ is $0$
        %modulo $\pi \latticeL{k}$. % $p^{r/2 - j}$ if $a = 0$.

        Next consider
        \begin{eqnarray}\label{g1 in killing T12 in geq 0}
            g_1(z) = \sum_{a = 0}^{p - 1}\sum_{j = 0}^{\frac{r + 2}{2}}\frac{g^{(j)}(a)}{j!}(z - a)^j\mathbbm{1}_{a + p\Zp},
        \end{eqnarray}
        where
        \begin{eqnarray}\label{jth summand in g1 in killing T12 in geq 0}
            \frac{g^{(j)}(a)}{j!} = {\frac{r + 4}{2} \choose j}\sum_{i \in I}p^{-1}\lambda_i(a - i)^{\frac{r + 4}{2} - j}\logL(a - i).
        \end{eqnarray}
        
        First assume that $a \neq 0$. The $i \neq 0, a, p$ terms on the right side of equation \eqref{jth summand in g1 in killing T12 in geq 0} are congruent to $0$ modulo $\pi$ since $a \not \equiv i \!\! \mod p$ and $p \mid \lambda_i$ for $i \neq 0, \> p$ by \eqref{Sum identity in killing T12 in geq 0}. The $i = a$ term is equal to $0$. The sum of the $i = 0$ and $i = p$ summands on the right side of equation \eqref{jth summand in g1 in killing T12 in geq 0} is
        \[
            p^{-1} \left[\lambda_0a^{\frac{r + 4}{2} - j}\logL(a) + \lambda_{p}(a - p)^{\frac{r + 4}{2} - j}\logL(a - p)\right].
        \]
        By \eqref{Sum identity in killing T12 in geq 0}, we have $\lambda_0 = 1 \!\! \mod p$ and $\lambda_{p} = -1$. Therefore the display above is congruent to 
        \[
            p^{-1}\left[a^{\frac{r + 4}{2} - j}\logL(a) - (a - p)^{\frac{r + 4}{2} - j}\logL(a - p)\right] \mod \pi.
        \]
        We expand $(a - p)^{\frac{r + 4}{2} - j}$ and drop the terms divisible by $p$ since $p \> \vert \> \logL(a - p)$ to get
        \[
            p^{-1}(-a^{\frac{r + 4}{2} - j})\logL(1 - a^{-1}p) \mod \pi.
        \]
        Expanding $\logL(1 - a^{-1}p)$ using the usual Taylor series and dropping the terms that are congruent to $0$ mod $\pi$, we see the sum of the $i = 0, \> p$ summands in \eqref{jth summand in g1 in killing T12 in geq 0} is
        %\[
            $a^{\frac{r + 2}{2} - j} \mod \pi$.
        %\]
        Therefore for $a \neq 0$, we get
        \begin{eqnarray}\label{jth term in a neq 0 in g1 in killing T12 in geq 0}
            \frac{g^{(j)}(a)}{j!} \equiv {\frac{r + 4}{2} \choose j}a^{\frac{r + 2}{2} - j} \mod \pi.
        \end{eqnarray}

        Next, assume that $a = 0$. Then again the $i \neq 0, \> p$ terms on the right side of equation \eqref{jth summand in g1 in killing T12 in geq 0} are congruent to $0$ modulo $\pi$ by the same reasoning as in the $a \neq 0$ case above. The $i = 0$ term is $0$. The $i = p$ summand in equation \eqref{jth summand in g1 in killing T12 in geq 0} is
        %\[
         $   -p^{-1}(-p)^{\frac{r + 4}{2} - j}\cL$.
        %\]
        Therefore for $a = 0$, we have
        \begin{eqnarray}\label{jth term in a = 0 in g1 in killing T12 in geq 0}
            \frac{g^{(j)}(0)}{j!} \equiv (-1)^{\frac{r + 6}{2} - j}{\frac{r + 4}{2} \choose j}p^{\frac{r + 2}{2} - j}\cL \mod \pi.
        \end{eqnarray}
        Putting equations \eqref{jth term in a neq 0 in g1 in killing T12 in geq 0} and \eqref{jth term in a = 0 in g1 in killing T12 in geq 0} in equation \eqref{g1 in killing T12 in geq 0} and using Lemma \ref{Integers in the lattice}, we get
        \begin{eqnarray*}
            g_1(z) & \equiv & \sum_{a = 1}^{p - 1}\left[\sum_{j = 0}^{\frac{r + 2}{2}}{\frac{r + 4}{2} \choose j}a^{\frac{r + 2}{2} - j}(z - a)^j\right]\mathbbm{1}_{a + p\Zp} + \left[\sum_{j = 0}^{\frac{r + 2}{2}}(-1)^{\frac{r + 6}{2} - j}{\frac{r + 4}{2} \choose j}p^{\frac{r + 2}{2} - j}\cL z^j\right]\mathbbm{1}_{p\Zp} \\
            & \equiv & \sum_{a = 1}^{p - 1}a^{-1}\left[z^{\frac{r + 4}{2}} - (z - a)^{\frac{r + 4}{2}}\right]\mathbbm{1}_{a + p\Zp} + \left[\sum_{j = 0}^{\frac{r + 2}{2}}(-1)^{\frac{r + 6}{2} - j}{\frac{r + 4}{2} \choose j}p^{\frac{r + 2}{2} - j}\cL z^j\right]\mathbbm{1}_{p\Zp} \\
            && \qquad \qquad \mod \pi \latticeL{k}.
        \end{eqnarray*}
        Using Lemma \ref{stronger bound for polynomials of large degree with varying radii} for the first sum and for the $j = \dfrac{r + 2}{2}$ summand in the second term, we get
        \[
            g_1(z) \equiv \sum_{a = 1}^{p - 1}a^{-1}z^{\frac{r + 4}{2}}\mathbbm{1}_{a + p\Zp} + \left[\sum_{j = 0}^{r/2}(-1)^{\frac{r + 6}{2} - j}{\frac{r + 4}{2} \choose j}p^{\frac{r + 2}{2} - j}\cL z^j\right]\mathbbm{1}_{p\Zp} \mod \pi \latticeL{k}.
        \]
        Finally, Lemma \ref{Integers in the lattice} implies that the second sum above is $0$ modulo $\pi \latticeL{k}$. Therefore
        \[
            g_1(z) \equiv \sum_{a = 1}^{p - 1}a^{-1}z^{\frac{r + 4}{2}}\mathbbm{1}_{a + p\Zp} \mod \pi \latticeL{k}.
          \]
          
        Since $g(z) \equiv g_1(z) \mod \pi \latticeL{k}$, we obtain
        \[
            0 \equiv \sum_{a = 1}^{p - 1}a^{-1}z^{\frac{r + 4}{2}}\mathbbm{1}_{a + p\Zp} \mod \pi \latticeL{k}.
        \]
        Applying $\beta$, %= \begin{pmatrix}0 & 1 \\ p & 0\end{pmatrix}$,
        we get
        \[
            0 \equiv \sum_{a = 1}^{p - 1} a^{-1}p^{2}z^{\frac{r - 4}{2}}\mathbbm{1}_{a^{-1}p + p^2\Zp} \mod \pi \latticeL{k}.
        \]
        Expanding $z^{\frac{r - 4}{2}} = \sum_{j = 0}^{\frac{r - 4}{2}}{\frac{r - 4}{2} \choose j}(a^{-1}p)^{\frac{r - 4}{2} - j}(z - a^{-1}p)^j$, replacing $a^{-1}$ by $a$, and including the superfluous $a = 0$ term, we get
        %\[
        %    0 \equiv \sum_{a = 1}^{p - 1}\sum_{j = 0}^{\frac{r - 4}{2}}{\frac{r - 4}{2} \choose j}p^{r/2 - j}a^{\frac{r - 2}{2} - j}(z - ap)^j\mathbbm{1}_{ap + p^2\Zp} \mod \pi \latticeL{k}.
        %\]
        %Since $j \leq \dfrac{r - 4}{2} < \dfrac{r - 2}{2}$, we include the $a = 0$ term in this sum to get
        \begin{eqnarray}\label{Main congruence in killing T12 in geq 0}
            0 \equiv \sum_{a = 0}^{p - 1}\sum_{j = 0}^{\frac{r - 4}{2}}{\frac{r - 4}{2} \choose j}p^{r/2 - j}a^{\frac{r - 2}{2} - j}(z - ap)^j\mathbbm{1}_{ap + p^2\Zp} \mod \pi \latticeL{k}.
        \end{eqnarray}

        Next, the shallow inductive steps given in Proposition \ref{inductive steps} are
        \[
            z^{r - n}\mathbbm{1}_{p\Zp} \equiv \sum_{b = 1}^{p - 1}\sum_{j = 1}^{r/2}c(n, j)b^{r - j - n}(z - b)^j\mathbbm{1}_{b + p\Zp} \mod \pi \latticeL{k},
        \]
        for $\dfrac{r + 4}{2} \leq n \leq r$. Applying $\sum\limits_{a = 0}^{p - 1}a^{n - \frac{r + 2}{2}}\begin{pmatrix}1 & 0 \\ -pa & p\end{pmatrix}$ to both sides, %we get
        %\begin{eqnarray*}
        %    \sum_{a = 0}^{p - 1}p^{n - r/2}(z - ap)^{r - n}\mathbbm{1}_{ap + p^2\Zp} & \equiv & \sum_{a = 0}^{p - 1}\sum_{b = 1}^{p - 1}\sum_{j = 1}^{r/2}p^{r/2 - j}c(n, j)a^{n - \frac{r + 2}{2}}b^{r - j - n} \\
        %    && \quad \quad \quad \quad \quad \quad (z - (a + b)p)^j\mathbbm{1}_{(a + b)p + p^2\Zp} \!\! \mod \pi \latticeL{k}.
        %\end{eqnarray*}
        substituting $\lambda = a + b$ and evaluating the sum over $b$ using the identity \eqref{sum over b identity} with $i = \dfrac{r + 4}{2}$, %we get
        %\begin{eqnarray*}
        %    \sum_{a = 0}^{p - 1}p^{n - r/2}(z - ap)^{r - n}\mathbbm{1}_{ap + p^2\Zp} & \equiv & \sum_{\lambda = 0}^{p - 1}\sum_{j = r - n}^{r/2}(-1)^{n - r + j + 1}p^{r/2 - j}{n - \frac{r + 2}{2} \choose n - r + j}c(n, j)\lambda^{\frac{r - 2}{2} - j} \\
        %    && \quad \quad \quad \quad \quad \quad (z - \lambda p)^j\mathbbm{1}_{\lambda p + p^2\Zp} \mod \pi \latticeL{k}.
        %\end{eqnarray*}
        replacing $\lambda$ by $a$, and substituting the value of $c(n, j)$ obtained in Proposition \ref{inductive steps}, % in the equation above, we see that the $-1$ part of $c(n, j)$ cancels with the left side. Therefore
        we get
        \begin{eqnarray}\label{Refined inductive steps in killing T12 in geq 0}
            0 & \equiv & \sum_{a = 0}^{p - 1}\sum_{j = r - n}^{\frac{r-2}{2}}{n - \frac{r + 2}{2} \choose n - r + j}\frac{(r - n)! n}{(r - j)\cdots (n - j)}p^{r/2 - j}a^{\frac{r - 2}{2} - j}(z - ap)^j\mathbbm{1}_{ap + p^2\Zp} \\ 
            && \qquad \qquad \mod \pi \latticeL{k} \nonumber
        \end{eqnarray}
        for $\dfrac{r + 4}{2} \leq n \leq r$. We have dropped the $j = \dfrac{r}{2}$ term since
        the binomial coefficient vanishes.
        
        We wish to write $p(z - ap)^{\frac{r - 2}{2}}\mathbbm{1}_{ap + p^2\Zp}$ as a sum of $x_n \in E$ times the $a^{\mathrm{th}}$ summand on the right side of equation \eqref{Refined inductive steps in killing T12 in geq 0} for $\dfrac{r + 4}{2} \leq n \leq r$ and $x_{\frac{r + 2}{2}} \in E$ times the corresponding summand in equation \eqref{Main congruence in killing T12 in geq 0}. To this end, we compare the coefficients of $p^{r/2 - j}a^{\frac{r - 2}{2} - j}(z - ap)^{j}\mathbbm{1}_{ap + p^2\Zp}$ for $0 \leq j \leq \dfrac{r-2}{2}$ to get the following matrix equation
        \begin{eqnarray}
            \begin{pmatrix}
                {\frac{r - 2}{2} \choose 0}\frac{r}{r} & 0 & 0 & \cdots & {\frac{r - 4}{2} \choose 0} \\
                {\frac{r - 2}{2} \choose 1}\frac{r}{r - 1} & {\frac{r - 4}{2} \choose 0}\frac{r - 1}{(r - 1)(r - 2)} & 0 & \cdots & {\frac{r - 4}{2} \choose 1} \\
                {\frac{r - 2}{2} \choose 2}\frac{r}{r - 2} & {\frac{r - 4}{2} \choose 1}\frac{r - 1}{(r - 2)(r - 3)} & {\frac{r - 6}{2} \choose 0}\frac{2!(r - 2)}{(r - 2)(r - 3)(r - 4)} & \cdots & {\frac{r - 4}{2} \choose 2} \\
                \vdots & \vdots & \vdots & & \vdots \\
                {\frac{r - 2}{2} \choose \frac{r - 4}{2}}\frac{r}{\frac{r + 4}{2}} & {\frac{r - 4}{2} \choose \frac{r - 6}{2}}\frac{r - 1}{(\frac{r + 4}{2})(\frac{r + 2}{2})} & {\frac{r - 6}{2} \choose \frac{r - 8}{2}}\frac{2!(r - 2)}{(\frac{r + 4}{2})(\frac{r + 2}{2})(r/2)} & \cdots & {\frac{r - 4}{2} \choose \frac{r - 4}{2}} \\
                {\frac{r - 2}{2} \choose \frac{r - 2}{2}}\frac{r}{\frac{r + 2}{2}} & {\frac{r - 4}{2} \choose \frac{r - 4}{2}}\frac{r - 1}{(\frac{r + 2}{2})(r/2)} & {\frac{r - 6}{2} \choose \frac{r - 6}{2}}\frac{2!(r - 2)}{(\frac{r + 2}{2})(r/2)(\frac{r - 2}{2})} & \cdots & 0
            \end{pmatrix}
            \begin{pmatrix}
                x_r \\ x_{r - 1} \\ x_{r - 2} \\ \vdots \\ x_{\frac{r + 2}{2}}
            \end{pmatrix}
            = \begin{pmatrix}
                0 \\ 0 \\ 0 \\ \vdots \\ 0 \\ 1
            \end{pmatrix}.
        \end{eqnarray}
        Let $A$ be the coefficient matrix above. We claim that the solution to this equation $x_n \in \Zp$
        are integral for all $\dfrac{r + 2}{2} \leq n \leq r$. To show this, it is enough to prove that $\det A \in \Zp^*$. Note that $A$ is almost lower triangular with all but the last diagonal entry belonging to $\Zp^*$. We show that for $0 \leq j \leq \dfrac{r - 4}{2}$ after making the $\left(j, \dfrac{r - 2}{2}\right)^{\mathrm{th}}$ entries $0$ using column operations, the $\left(\dfrac{r - 2}{2}, \dfrac{r - 2}{2}\right)^{\mathrm{th}}$ entry is a $p$-adic unit.

        First perform the operation $C_{\frac{r - 2}{2}} \to C_{\frac{r - 2}{2}} - C_0$ to make the $\left(0, \dfrac{r - 2}{2}\right)^{\mathrm{th}}$ entry in the last column $0$. For $j \geq 1$, the $\left(j, \dfrac{r - 2}{2}\right)^{\mathrm{th}}$ entry becomes
        \[
            {\frac{r - 4}{2} \choose j} - {\frac{r - 2}{2} \choose j}\frac{r}{r - j} = -{\frac{r - 4}{2} \choose j - 1}\frac{r + \frac{r - 2}{2} - j}{r - j}.
        \]
        Next, perform the operation $C_{\frac{r - 2}{2}} \to C_{\frac{r - 2}{2}} + \frac{(r - 2)\left(r + \frac{r - 2}{2} - 1\right)}{r - 1}C_1$ to make the $\left(1, \dfrac{r - 2}{2}\right)^{\mathrm{th}}$ entry in the last column $0$. Therefore for $j \geq 2$, the $\left(j, \dfrac{r - 2}{2}\right)^{\mathrm{th}}$ entry becomes
        \begin{eqnarray*}
            &&-{\frac{r - 4}{2} \choose j - 1}\frac{r + \frac{r - 2}{2} - j}{r - j} + \frac{(r - 2)\left(r + \frac{r - 2}{2} - 1\right)}{r - 1}{\frac{r - 4}{2} \choose j - 1}\frac{r - 1}{(r - j)(r - j - 1)} \\
            && = {\frac{r - 6}{2} \choose j - 2}\left(\frac{r - 4}{2}\right)\frac{2(r - 1) + \frac{r - 2}{2} - j}{(r - j)(r - j - 1)}.
        \end{eqnarray*}
        Continuing this process, after making the $\left(\dfrac{r - 4}{2}, \dfrac{r - 2}{2}\right)^{\mathrm{th}}$ entry $0$, we see that for $j \geq \dfrac{r - 2}{2}$ the $\left(j, \dfrac{r - 2}{2}\right)^{\mathrm{th}}$ entry in the last column is
        \[
            (-1)^{\frac{r - 2}{2}}{0 \choose j - \frac{r - 2}{2}}\left(\frac{r - 4}{2}\right)!\frac{\left(\frac{r - 2}{2}\right)\left(r - \frac{r - 4}{2}\right) + \left(\frac{r - 2}{2} - j\right)}{(r - j)(r - j - 1) \cdots \left(r - j - \frac{r - 4}{2}\right)}.
        \]
        Therefore the $\left(\dfrac{r - 2}{2}, \dfrac{r - 2}{2}\right)^{\mathrm{th}}$ entry is
        \[
            (-1)^{\frac{r - 2}{2}}\frac{4(r + 4)}{r(r + 2)}
        \]
        For odd primes this fraction fails to be a $p$-adic unit only when $p = 3$ and $r = 2$.
        However, at the beginning of this part of the proof we assumed that $r \geq 4$. Alternatively,
        we note our blanket assumption $p \geq 5$. This shows that $\det A \in \Zp^*$ and
        the $x_n$ are integral for $\dfrac{r+2}{2} \leq n \leq r$, proving the claim.

        %Though the exact formula is not needed, Cramer's rule shows that
        %\[
        %    x_{\frac{r + 2}{2}} = (-1)^{\frac{r - 2}{2}}\frac{r(r + 2)}{4(r + 4)}.
        %\]
        Multiplying \eqref{Refined inductive steps in killing T12 in geq 0} by $x_{n}$ and adding
        them up with $x_{\frac{r + 2}{2}}$ times \eqref{Main congruence in killing T12 in geq 0}, we get
        \[
            0 \equiv \sum_{a = 0}^{p - 1}p(z - ap)^{\frac{r - 2}{2}}\mathbbm{1}_{ap + p^2\Zp} \mod \pi \latticeL{k}.
        \]
        Applying the matrix $\begin{pmatrix}p & 0 \\ 0 & 1\end{pmatrix}$, we get the lemma for
        $l = \dfrac{r-2}{2}$ as well.
        %\[
        %    0 \equiv \sum_{a = 0}^{p - 1}(z - a)^{\frac{r - 2}{2}}\mathbbm{1}_{a + p\Zp} \mod \pi \latticeL{k}. \qedhere
        %\]
    \end{proof}

    Now we prove the claim made at the beginning of this section.

    \begin{theorem}\label{Final theorem for geq 0}
        Let $2 \leq r \leq p - 1$ be even. If $\nu \geq 0$, then the map $\IZind a^{\frac{r - 2}{2}}d^{\frac{r + 2}{2}} \twoheadrightarrow F_{r - 2, \> r - 1}$ factors as
        \[
            \IZind a^{\frac{r - 2}{2}}d^{\frac{r + 2}{2}} \twoheadrightarrow \frac{\IZind a^{\frac{r - 2}{2}}d^{\frac{r + 2}{2}}}{\im(\lambda_{r/2}T_{-1, 0} - 1)} \twoheadrightarrow F_{r - 2, \> r - 1}
          \]
        for $\lambda_{r/2} = (-1)^{r/2}\left(\frac{r}{2}\right)\left(\frac{r + 2}{2}\right)(\cL - H_{-} - H_{+})$.
        Moreover if $\nu = 0$, then we get a surjection
        \[
            \pi(p - 3, \lambda_{r/2}^{-1}, \omega^{\frac{r + 2}{2}}) \twoheadrightarrow F_{r - 2, \> r - 1}.
        \]
    \end{theorem}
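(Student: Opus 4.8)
The plan is to run the template of Theorem~\ref{Final theorem for geq} and Theorem~\ref{Final theorem for geq 0.5 for odd weights}. All congruences are taken in $\br{\latticeL{k}}$ modulo the image of $\IZind\oplus_{j<\frac{r+2}{2}}\Fq X^{r-j}Y^j$ under \eqref{Main surjection on mod p representations}, i.e.\ modulo the $G$-translates of $z^{r/2}\mathbbm{1}_{p\Zp}\in\latticeL{k}$. First I would compute, using \eqref{G-action} and \eqref{Formulae for T-10 and T12 in the commutative case} exactly as in those two theorems, that under $\IZind a^{\frac{r-2}{2}}d^{\frac{r+2}{2}}\twoheadrightarrow F_{r-2,\,r-1}$ the elements $-w\llbracket\id,X^{\frac{r-2}{2}}Y^{\frac{r+2}{2}}\rrbracket$ and $-wT_{-1,0}\llbracket\id,X^{\frac{r-2}{2}}Y^{\frac{r+2}{2}}\rrbracket$ map to $z^{\frac{r+2}{2}}\mathbbm{1}_{\Zp}$ and $\sum_{a=0}^{p-1}p^{-1}(z-a)^{\frac{r+2}{2}}\mathbbm{1}_{a+p\Zp}$ respectively (mod $\pi\latticeL{k}$). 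Since $\llbracket\id,X^{\frac{r-2}{2}}Y^{\frac{r+2}{2}}\rrbracket$ generates $\IZind a^{\frac{r-2}{2}}d^{\frac{r+2}{2}}$, it therefore suffices to prove the single congruence
\[
0\ \equiv\ \lambda_{r/2}\sum_{a=0}^{p-1}p^{-1}(z-a)^{\frac{r+2}{2}}\mathbbm{1}_{a+p\Zp}\ -\ z^{\frac{r+2}{2}}\mathbbm{1}_{\Zp}\pmod{\pi\latticeL{k}}.
\]

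The input is Proposition~\ref{nu geq 0}: since $g(z)=0$ in $\tB(k,\cL)$, its right-hand side is $\equiv0$. In the first sum there I would split off the $\cL$-part, which telescopes binomially and, after using Lemma~\ref{stronger bound for polynomials of large degree with varying radii} to discard the depth-$p^2$ polynomial of degree $\frac{r+2}{2}$, collapses to $\cL\sum_a p^{-1}(z-a)^{\frac{r+2}{2}}\mathbbm{1}_{a+p\Zp}$. The harmonic-sum-weighted remainder of the first sum I would rewrite by deepening the shallow inductive steps of Proposition~\ref{inductive steps} (applying operators $\sum_{\alpha}\alpha^{?}\begin{pmatrix}1&0\\-a-\alpha p&p\end{pmatrix}$ as in \eqref{Applied inductive steps in the hard inductive steps for g2-g1}), evaluating the ensuing $b$-sums by \eqref{sum over b identity}, and then solving a Vandermonde system by Cramer's rule; together with the $m=1$ component of the third sum of Proposition~\ref{nu geq 0} this should produce $-(H_-+H_+)\sum_a p^{-1}(z-a)^{\frac{r+2}{2}}\mathbbm{1}_{a+p\Zp}$ up to terms that die. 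The second (harmonic-sum-free) sum of Proposition~\ref{nu geq 0} is, precisely as in the proof of Theorem~\ref{Final theorem for geq}, folded into $x_{r/2}\,z^{\frac{r+2}{2}}\mathbbm{1}_{\Zp}$ with $x_{r/2}=(-1)^{\frac{r-2}{2}}/\big((r/2)(\tfrac{r+2}{2})\big)$ by the Cramer computation of \eqref{value for c in geq} with $i=r/2$, while the $m\ge2$ components of the third sum reduce (as in the reduction sketched inside the proof of Proposition~\ref{nu geq 0}) to multiples of $\sum_a(z-a)^{\frac{r+2}{2}-m}\mathbbm{1}_{a+p\Zp}$, which vanish by Lemma~\ref{T12 is 0 in even self-dual} plus one further inductive step for $3\le m\le\frac{r+2}{2}$. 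Collecting the surviving contributions gives $0\equiv(\cL-H_--H_+)\sum_a p^{-1}(z-a)^{\frac{r+2}{2}}\mathbbm{1}_{a+p\Zp}+x_{r/2}\,z^{\frac{r+2}{2}}\mathbbm{1}_{\Zp}$, and multiplying through by $(-1)^{r/2}(\tfrac r2)(\tfrac{r+2}{2})$ yields the required congruence with $\lambda_{r/2}=(-1)^{r/2}(\tfrac r2)(\tfrac{r+2}{2})(\cL-H_--H_+)$.

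For the $\nu=0$ refinement I would argue as in the second halves of Theorems~\ref{Final theorem for geq} and \ref{Final theorem for leq}. At $\nu=0$ the scalar $\lambda_{r/2}$ is a $p$-adic unit (here one uses $2\le r\le p-1$, so that $r/2$ and $\frac{r+2}{2}$ are units mod $p$, together with $v_p(\cL-H_--H_+)=0$), so chasing
\[
\begin{tikzcd}
0\ar[r]&\im T_{-1,0}\ar[r]\ar[d,two heads]&\IZind a^{\frac{r-2}{2}}d^{\frac{r+2}{2}}\ar[r]\ar[d,two heads]&\dfrac{\IZind a^{\frac{r-2}{2}}d^{\frac{r+2}{2}}}{\im T_{-1,0}}\ar[r]\ar[d,two heads]&0\\
0\ar[r]&S\ar[r]&F_{r-2,\,r-1}\ar[r]&Q\ar[r]&0
\end{tikzcd}
\]
shows the right vertical arrow is $0$, whence a surjection $\im T_{-1,0}\twoheadrightarrow F_{r-2,\,r-1}$; precomposing with $T_{-1,0}$ and using \cite[Proposition 3.1]{AB15} gives a surjection from $\IZind a^{\frac{r-2}{2}}d^{\frac{r+2}{2}}/\im T_{1,2}$ which factors through $\IZind a^{\frac{r-2}{2}}d^{\frac{r+2}{2}}/\big(\im T_{1,2}+\im(T_{-1,0}-\lambda_{r/2}^{-1})\big)$ since $T_{-1,0}$ preserves $\im(\lambda_{r/2}T_{-1,0}-1)$. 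Writing $a^{\frac{r-2}{2}}d^{\frac{r+2}{2}}=a^{-2}\otimes(ad)^{\frac{r+2}{2}}$ and applying \eqref{alt def of pi} with $r=p-3$ (which is $\neq0,p-1$ as $p\ge5$) identifies this quotient with $\pi(p-3,\lambda_{r/2}^{-1},\omega^{\frac{r+2}{2}})$. The main obstacle is the bookkeeping in the middle paragraph: arranging the deepened-inductive-step and Cramer computations so that the harmonic-sum-weighted part of Proposition~\ref{nu geq 0} together with the $S_1/p$ term collapses exactly to $\cL-H_--H_+$; as in Section~\ref{SpecOdd} this is where one needs the binomial and harmonic-sum identities deferred to the Appendix, and where care is needed about which terms are annihilated by Lemma~\ref{stronger bound for polynomials of large degree with varying radii} versus Lemma~\ref{T12 is 0 in even self-dual}.
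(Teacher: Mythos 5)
Your proposal is correct and follows essentially the same route as the paper's proof: start from Proposition~\ref{nu geq 0}, kill the $S_m/p$-weighted (third) sum via integral Cramer systems and Lemma~\ref{T12 is 0 in even self-dual}, fold the first sum into $(\cL-H_{-}-H_{+})\sum_{a}p^{-1}(z-a)^{\frac{r+2}{2}}\mathbbm{1}_{a+p\Zp}$ using the massaged shallow inductive steps together with the harmonic-sum identity of Appendix~\ref{Footnote 16}, treat the second sum exactly as in Theorem~\ref{Final theorem for geq} with $i=r/2$, identify the result with the image of $-w(\lambda_{r/2}T_{-1,0}-1)\llbracket \id, X^{\frac{r-2}{2}}Y^{\frac{r+2}{2}}\rrbracket$, and for $\nu=0$ run the diagram chase with \cite[Proposition 3.1]{AB15} and \eqref{alt def of pi} at $r=p-3$. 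The only cosmetic deviations are that you split off the $\cL$-part by binomial telescoping (equivalent, by linearity, to keeping it inside the paper's matrix equation), that the $m=1$ piece of the third sum in fact dies on its own once the $j=r/2$ term is dropped (the paper's $\delta_{m,1}$ device, so it need not be folded into the harmonic total), and that no ``deep'' $p^{1/2}$-refined inductive steps are needed in the even-weight case --- the congruences you invoke already follow from the shallow steps of Proposition~\ref{inductive steps}.
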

    \begin{proof}
        Unless specified, all congruences in this proof are in the space $\br{\latticeL{k}}$ modulo the image of the subspace $\IZind \oplus_{j < \frac{r + 2}{2}}\Fq X^{r - j}Y^j$ under $\IZind \SymF{k - 2} \twoheadrightarrow \br{\latticeL{k}}$.
        
        Using Proposition \ref{nu geq 0}, we have\footnote{In the last sum $S_m$ should be replaced
        by $S_m - (p- 1)$ if $m = p - 1$ but since for odd primes this 
        happens only for $p = 3$ it will not concern us by our blanket assumption $p \geq 5$.}
        \begin{eqnarray}\label{Main congruence in geq 0}
            0 & \equiv & \sum_{a = 0}^{p - 1}\sum_{\alpha = 0}^{p - 1}\sum_{j = 0}^{\frac{r - 2}{2}}{\frac{r + 2}{2} \choose j}\left(\cL - H_{\frac{r + 2}{2} - j}\right)p^{-1}(\alpha p)^{\frac{r + 2}{2} - j} (z - a - \alpha p)^{j}\mathbbm{1}_{a + \alpha p + p^2\Zp} \\
            && + \sum_{a = 0}^{p - 1}\sum_{j = 0}^{\frac{r - 2}{2}}{\frac{r + 2}{2} \choose j}\frac{a^{\frac{r + 2}{2} - j}}{\frac{r + 2}{2} - j}(z - a)^j\mathbbm{1}_{a + p\Zp} \nonumber\\
            && + \sum_{a = 0}^{p - 1}\sum_{m = 1}^{\frac{r + 2}{2}}\sum_{j = 0}^{\frac{r + 2}{2} - m - \delta_{m, 1}}{\frac{r + 2}{2} \choose j}{\frac{r + 2}{2} - j \choose m}(-1)^m\frac{S_m}{p}\frac{a^{\frac{r + 2}{2} - j - m}}{\frac{r + 2}{2} - j}(z - a)^j\mathbbm{1}_{a + p\Zp}. \nonumber
        \end{eqnarray}
        We have dropped the $j = r/2$ terms in the equation above because of the statement made at the beginning of this proof (in third sum we need to use the Kronecker delta function $\delta_{m,1}$).
        
        We prove the third term on the right side of the equation above is $0$. Indeed, we
        claim that for each $1 \leq m \leq \dfrac{r + 2}{2}$, the $m^{\mathrm{th}}$ summand in
        this term is an integer multiple of
        \begin{eqnarray*}
            \sum_{a = 0}^{p - 1}(z - a)^{\frac{r + 2}{2} - m - \delta_{m, \> 1}}\mathbbm{1}_{a + p\Zp}.
        \end{eqnarray*}
        This sum is $0$ modulo $\pi \latticeL{k}$ by Lemma \ref{T12 is 0 in even self-dual}. 
        The claim is clearly true if $m = \dfrac{r + 2}{2}$. So assume $1 \leq m \leq r/2$.
        Fixing $a$ and $m$, we write the $(a, \> m)^{\mathrm{th}}$ summand in this sum as
        \begin{eqnarray}\label{Third sum in geq 0}
            \sum_{j = 0}^{\frac{r + 2}{2} - m - \delta_{m, 1}}d_{m, j}a^{\frac{r + 2}{2} - j - m}(z - a)^j\mathbbm{1}_{a + p\Zp},
        \end{eqnarray}
        for $d_{m, j} \in \Zp$. Next, using Proposition \ref{inductive steps} we have the
        following shallow inductive steps
        \[
            z^{r - n}\mathbbm{1}_{p\Zp} \equiv \sum_{b = 1}^{p - 1}\sum_{j = 1}^{\frac{r - 2}{2}}c(n, j)b^{r - j - n}(z - b)^j\mathbbm{1}_{b + p\Zp},
        \]
        for $n = r, \> r - 1, \> \ldots, \> \dfrac{r - 2}{2} + m + \delta_{m, \>1} + 1$. Applying the operator $\sum\limits_{a = 0}^{p - 1}a^{n - \frac{r - 2}{2} - m}\begin{pmatrix}1 & 0 \\ -a & 1\end{pmatrix}$ to both sides, %we get
        %\[
        %    \sum_{a = 0}^{p - 1}a^{n - \frac{r - 2}{2} - m}(z - a)^{r - n}\mathbbm{1}_{a + p\Zp} \equiv \sum_{a = 0}^{p - 1}\sum_{b = 1}^{p - 1}\sum_{j = 1}^{\frac{r - 2}{2}}a^{n - \frac{r - 2}{2} - m}b^{r - j - n}c(n, j)(z - (a + b))^j\mathbbm{1}_{(a + b) + p\Zp}.
        %\]
        substituting $\lambda = a + b$ and summing over $b$ on the right
        side using \eqref{sum over b identity} with $m = \frac{r}{2} + m$, %we get
        %\begin{eqnarray*}
        %    \sum_{a = 0}^{p - 1}a^{n - \frac{r - 2}{2} - m}(z - a)^{r - n}\mathbbm{1}_{a + p\Zp} \!\! & \equiv & \!\! \sum_{\lambda = 0}^{p - 1}\sum_{j = r - n}^{\frac{r + 2}{2} - m - \delta_{m, \> 1}}\!\!\!\!(-1)^{n - r + j + 1}{n - \frac{r - 2}{2} - m \choose n - r + j}\lambda^{\frac{r + 2}{2} - j - m}c(n, j) \\
        %    && \quad \quad \quad \quad \quad \quad \quad \quad (z - \lambda)^j\mathbbm{1}_{\lambda + p\Zp}.
        %\end{eqnarray*}
        replacing $\lambda$ by $a$, and substituting
        for $c(n, j)$ from Proposition \ref{inductive steps},
                %                 The $-1$ part cancels with the left side and
        we get
        \begin{eqnarray}\label{Refined inductive steps for third sum in geq 0}
            0 \equiv \sum_{a = 0}^{p - 1}\sum_{j = r - n}^{\frac{r + 2}{2} - m - \delta_{m, \> 1}}{n - \frac{r - 2}{2} - m \choose n - r + j}\frac{(r - n)!n}{(r - j)\cdots (n - j)}a^{\frac{r + 2}{2} - j - m}(z - a)^j\mathbbm{1}_{a + p\Zp}.
        \end{eqnarray}
        We wish to write \eqref{Third sum in geq 0} as a sum of $x_n \in E$ times the corresponding summand in \eqref{Refined inductive steps for third sum in geq 0} and $x_{\frac{r - 2}{2} + m + \delta_{m, 1}} \in E$ times
        \[
            (z - a)^{\frac{r + 2}{2} - m - \delta_{m, \> 1}}\mathbbm{1}_{a + p\Zp}.
        \]
        Comparing the coefficients of like terms in $a^{\frac{r + 2}{2} - j - m}(z - a)^j\mathbbm{1}_{a + p\Zp}$, we get the following matrix equation
        \[
            \begin{pmatrix}
                {\frac{r + 2}{2} - m \choose 0}\frac{r}{r} & 0 & \cdots & 0 \\
                {\frac{r + 2}{2} - m \choose 1}\frac{r}{r - 1} & {r/2 - m \choose 0}\frac{r - 1}{(r - 1)(r - 2)} & \cdots & 0 \\
                \vdots & \vdots & & \vdots \\
                {\frac{r + 2}{2} - m \choose \frac{r + 2}{2} - m - \delta_{m, 1}}\frac{r}{\frac{r - 2}{2} + m + \delta_{m, 1}} & \cdots & \cdots & 1
            \end{pmatrix}\!\! 
            \begin{pmatrix}
                x_r \\ x_{r - 1} \\ \vdots \\ x_{\frac{r - 2}{2} + m + \delta_{m, \> 1}}
            \end{pmatrix} \! 
            = \! 
            \begin{pmatrix}
                d_{m, 0} \\ d_{m, 1} \\ \vdots \\ d_{m, \frac{r + 2}{2} - m - \delta_{m, 1}}
            \end{pmatrix} \! .
        \]
        Since the determinant of the coefficient matrix above is a $p$-adic unit, the $x_n$ satisfying the equation above are all integral. Hence, as claimed, we can write \eqref{Third sum in geq 0} as an integer
        multiple of
        \[
            (z - a)^{\frac{r + 2}{2} - m - \delta_{m, \> 1}}\mathbbm{1}_{a + p\Zp}.
        \]
        
        Next, we fix $a$ and $\alpha$. We wish to write
        \begin{eqnarray}\label{Crude g2 - g1 in geq 0}
            \sum_{j = 0}^{\frac{r - 2}{2}}{\frac{r + 2}{2} \choose j}\left(\cL - H_{\frac{r + 2}{2} - j}\right)p^{-1}(\alpha p)^{\frac{r + 2}{2} - j} (z - a - \alpha p)^{j}\mathbbm{1}_{a + \alpha p + p^2\Zp}
        \end{eqnarray}
        appearing in the first line on the right side of \eqref{Main congruence in geq 0} as a multiple of
        \[
            p^{-1}(z - a)^{\frac{r + 2}{2}}\mathbbm{1}_{a + p\Zp}.
        \]
        To this end, recall the shallow inductive steps obtained by
        massaging Proposition~\ref{inductive steps} (done many times before)
        %obtain \eqref{Refined inductive steps in geq}) 
        \begin{eqnarray}\label{Applied inductive steps for g2 - g1 in geq 0}
            0 \equiv \sum_{\alpha = 0}^{p - 1}\sum_{j = r - n}^{\frac{r - 2}{2}}{n - \frac{r - 2}{2} \choose n - r + j}\frac{(r - n)!n}{(r - j) \cdots (n - j)}p^{-1}(\alpha p)^{\frac{r + 2}{2} - j}(z - a - \alpha p)^j\mathbbm{1}_{a + \alpha p + p^2\Zp}
        \end{eqnarray}
        for $\dfrac{r+4}{2} \leq n \leq r$.
        Note that we have dropped the $j = r/2$ term  because of the statement made at the beginning of this proof.
        Also consider the binomial expansion
        \begin{eqnarray}\label{Trivial equation in g2 - g1 in geq 0}
            p^{-1}(z - a)^{\frac{r + 2}{2}}\mathbbm{1}_{a + p\Zp} \equiv \sum_{\alpha = 0}^{p - 1}\sum_{j = 0}^{\frac{r - 2}{2}}{\frac{r + 2}{2} \choose j}p^{-1}(\alpha p)^{\frac{r + 2}{2} - j}(z - a - \alpha p)^j\mathbbm{1}_{a + \alpha p + p^2\Zp}.
        \end{eqnarray}
        We have dropped the $j = \dfrac{r + 2}{2}$ term by Lemma \ref{stronger bound for polynomials of large degree with varying radii} and the $j = r/2$ term by the statement made at the beginning of this proof.
        We wish to write \eqref{Crude g2 - g1 in geq 0} as a sum of $x_n \in E$ times the corresponding term in \eqref{Applied inductive steps for g2 - g1 in geq 0} and $x_{\frac{r + 2}{2}}$ times the corresponding term in \eqref{Trivial equation in g2 - g1 in geq 0}. Comparing the coefficients of like terms in $p^{-1}(\alpha p)^{\frac{r + 2}{2} - j}(z - a - \alpha p)^{j}\mathbbm{1}_{a + \alpha p + p^2\Zp}$, we get the following matrix equation
        \[
            \!\begin{pmatrix}
                {\frac{r + 2}{2} \choose 0}\frac{r}{r} & 0 & 0 & \cdots & {\frac{r + 2}{2} \choose 0} \\
                {\frac{r + 2}{2} \choose 1}\frac{r}{r - 1} & {r/2 \choose 0}\frac{r - 1}{(r - 1)(r - 2)} & 0 & \cdots & {\frac{r + 2}{2} \choose 1} \\
                {\frac{r + 2}{2} \choose 2}\frac{r}{r - 2} & {r/2 \choose 1}\frac{r - 1}{(r - 2)(r - 3)} & {\frac{r - 2}{2} \choose 0}\frac{2!(r - 2)}{(r - 2)(r - 3)(r - 4)} & \cdots & {\frac{r + 2}{2} \choose 2} \\
                \vdots & \vdots & \vdots & & \vdots \\
                {\frac{r + 2}{2} \choose \frac{r - 2}{2}}\frac{r}{\frac{r + 2}{2}} & {r/2 \choose \frac{r - 4}{2}}\frac{r - 1}{(\frac{r + 2}{2})(r/2)} & {\frac{r - 2}{2} \choose \frac{r - 6}{2}}\frac{2!(r - 2)}{(\frac{r + 2}{2})(r/2)(\frac{r - 2}{2})} & \cdots & {\frac{r + 2}{2} \choose \frac{r - 2}{2}}
            \end{pmatrix} \!\!\! 
            \begin{pmatrix}
                x_r \\ x_{r - 1} \\ x_{r - 2} \\ \vdots \\ x_{\frac{r + 2}{2}}
            \end{pmatrix} \!\!\! 
            = \!\!\! \begin{pmatrix}
                {\frac{r + 2}{2} \choose 0}\left(\cL - H_{\frac{r + 2}{2}}\right) \\
                {\frac{r + 2}{2} \choose 1}\left(\cL - H_{r/2}\right) \\
                {\frac{r + 2}{2} \choose 2}\left(\cL - H_{\frac{r - 2}{2}}\right) \\
                \vdots \\
                {\frac{r + 2}{2} \choose \frac{r - 2}{2}}\left(\cL - H_{2}\right)
            \end{pmatrix}\!\!.
        \]
        It can be checked\footnote{\label{space-time 16}See Appendix~\ref{Footnote 16}.} that $x_{\frac{r + 2}{2}} = \cL - H_{-} - H_{+}$. This shows that \eqref{Main congruence in geq 0} becomes
        \begin{eqnarray*}
            0 & \equiv & \sum_{a = 0}^{p - 1}\left(\cL - H_{-} - H_{+}\right)p^{-1}(z - a)^{\frac{r + 2}{2}}\mathbbm{1}_{a + p\Zp} + \sum_{a = 0}^{p - 1}\sum_{j = 0}^{\frac{r - 2}{2}}{\frac{r + 2}{2} \choose j}\frac{a^{\frac{r + 2}{2} - j}}{\frac{r + 2}{2} - j}(z - a)^j\mathbbm{1}_{a + p\Zp}.
        \end{eqnarray*}
        Proceeding exactly as in the proof of Theorem \ref{Final theorem for geq} with $i = r/2$, the equation above becomes
        \[
            0 \equiv (-1)^{r/2}\left(\frac{r}{2}\right)\left(\frac{r + 2}{2}\right)\left(\cL - H_{-} - H_{+}\right)\left[\sum_{a = 0}^{p - 1}p^{-1}(z - a)^{\frac{r + 2}{2}}\mathbbm{1}_{a + p\Zp}\right] - z^{\frac{r + 2}{2}}\mathbbm{1}_{\Zp}.
        \]

                         %                          To prove the theorem, it is enough to prove that
        Using the formula \eqref{Formulae for T-10 and T12 in the commutative case}, 
        we see that
        under the surjection $\IZind a^{\frac{r - 2}{2}}d^{\frac{r + 2}{2}} \twoheadrightarrow F_{r - 2, \> r - 1}$ 
        %\begin{itemize}
        %    \item $-wT_{-1, 0}\llbracket \id, X^{\frac{r - 2}{2}}Y^{\frac{r + 2}{2}}\rrbracket$ maps to $\sum\limits_{a = 0}^{p - 1}p^{-1}(z - a)^{\frac{r + 2}{2}}\mathbbm{1}_{a + p\Zp}$, and
        %    \item $-w\llbracket \id, X^{\frac{r - 2}{2}}Y^{\frac{r + 2}{2}}\rrbracket$ maps to $z^{\frac{r + 2}{2}}\mathbbm{1}_{\Zp}$.
        %\end{itemize}
        %Recall that
        %\[
        %    T_{-1, 0} = \sum_{\lambda \in I_1}\begin{pmatrix}p & \lambda \\ 0 & 1\end{pmatrix} = \sum_{\lambda \in I_1}\begin{pmatrix}p & -\lambda \\ 0 & 1\end{pmatrix}.
        %\]
        %Therefore
        \begin{itemize}
            \item $-wT_{-1, 0}\llbracket \id, X^{\frac{r - 2}{2}}Y^{\frac{r + 2}{2}}\rrbracket$ maps to
            \begin{eqnarray*}
                \sum_{\lambda \in I_1}-\begin{pmatrix}0 & 1 \\ p & -\lambda\end{pmatrix}z^{\frac{r - 2}{2}}\mathbbm{1}_{p\Zp} & = & \sum_{\lambda \in I_1}-\frac{1}{p^{r/2}}(z - \lambda)^r\left(\frac{p}{z - \lambda}\right)^{\frac{r - 2}{2}}\mathbbm{1}_{p\Zp}\left(\frac{p}{z - \lambda}\right) \\
                & = & \sum_{a = 0}^{p - 1}p^{-1}(z - a)^{\frac{r + 2}{2}}\mathbbm{1}_{a + p\Zp} \mod \latticeL{k}
            \end{eqnarray*}
            \item $-w\llbracket \id, X^{\frac{r - 2}{2}}Y^{\frac{r + 2}{2}}\rrbracket$ maps to
            \begin{eqnarray*}
                -\begin{pmatrix}0 & 1 \\ 1 & 0\end{pmatrix}z^{\frac{r - 2}{2}}\mathbbm{1}_{p\Zp} & = & -z^r\left(\frac{1}{z}\right)^{\frac{r - 2}{2}}\mathbbm{1}_{p\Zp}\left(\frac{1}{z}\right) \\
                & =  & z^{\frac{r + 2}{2}}\mathbbm{1}_{\Zp} \mod \pi \latticeL{k}.
            \end{eqnarray*} 
          \end{itemize}
          This proves the first claim in the theorem.
          The second claim follows as usual using arguments similar to those
          after diagram~\eqref{rectangle}, and \eqref{alt def of pi} (with $r = p-3$).  
    \end{proof}

    \subsection{The $\nu \leq 0$ case for even weights}
    \label{leq section for even weights}
    In this section, we continue to assume that $r$ and $k$ are even.
    We first prove that if $\nu \leq 0$, then the map
    $\IZind a^{r/2}d^{r/2} \twoheadrightarrow F_{r, \> r + 1}$ factors as
    \[
        \IZind a^{r/2}d^{r/2} \twoheadrightarrow \frac{\IZind a^{r/2}d^{r/2}}{\im(\lambda_{r/2}^{-1}T_{1, 2} - 1)} \twoheadrightarrow F_{r, \> r + 1},
    \]
    where $\lambda_{r/2} = (-1)^{r/2}\left(\frac{r}{2}\right)\left(\frac{r + 2}{2}\right)\left(\cL - H_{-} - H_{+}\right)$. To state our second result, we need some notation. Since the inducing character is
    (the restriction of) a power of the determinant character, the non-commutative Hecke algebra
    (Section~\ref{subsubsection non-commutative}) is involved.
    By the second relation in \eqref{Relations in the non-commutative Hecke algebra},
    the operator $-T_{1,2}T_{1,0}$ is a projector and hence induces a decomposition
    %As in the proof of
    %Theorem~\ref{Comparison theorem in the non-commutative Hecke algebra case}, we have
    \[
        \IZind a^{r/2}d^{r/2} = \im T_{1, 2}T_{1, 0} \oplus \im (1 + T_{1, 2}T_{1, 0}).
    \]
    We may therefore write $F_{r, \> r + 1} = F_{r} \oplus F_{r + 1}$ as a direct sum, where $F_{r}$ is the image of $\im(T_{1, 2}T_{1, 0})$ and $F_{r + 1}$ is the image of $\im (1 + T_{1, 2}T_{1, 0})$. Thus the map $\IZind a^{r/2}d^{r/2} \twoheadrightarrow F_{r, \> r + 1}$ induces
        \begin{eqnarray}\label{Two parts in the last slice}
            \frac{\IZind a^{r/2}d^{r/2}}{\im(1 + T_{1, 2}T_{1, 0})} \twoheadrightarrow \frac{F_{r, \> r + 1}}{F_{r + 1}} \simeq F_{r} \text{ and } \frac{\IZind a^{r/2}d^{r/2}}{\im T_{1, 2}T_{1, 0}} \twoheadrightarrow \frac{F_{r, \> r + 1}}{F_{r}} \simeq F_{r + 1}.
        \end{eqnarray}
        We prove that if $\nu = 0$, then we have a surjection
        %\[
        %    \pi(p - 1, \lambda_{r/2}, \omega^{r/2}) \twoheadrightarrow F_{r, \> r + 1}.
        %\]
        \[
            \pi(p - 1, \lambda_{r/2}, \omega^{r/2}) \twoheadrightarrow F_{r}
        \]
        and that $F_{r + 1}$ is not isomorphic to $\pi(0, 0, \omega^{r/2})$.

    \begin{Proposition}\label{leq 0}
        Let $2 \leq r \leq p-1$ be even. Let $\nu \leq 0$. Fix $x \in \bQ$ such that $x + \nu = -1$. Set
        \[
            g(z) = \sum_{i \in I}p^x\lambda_i(z - i)^{\frac{r + 2}{2}}\logL(z - i),
        \]
        where $I = \left\{0, 1, \ldots, \dfrac{r+2}{2},  p \right\}$ and the coefficients $\lambda_i \in \zp$
        are as in Lemma~\ref{Main coefficient identites} (2).
                       %                        are determined uniquely by $\lambda_p = -1$ and $\sum_{i \in I}\lambda_i i^j = 0$ for $0 \leq j \leq \dfrac{r+2}{2}$.
        Then
        \begin{eqnarray*} 
            g(z) & \equiv & \sum_{\alpha = 0}^{p - 1}\sum_{j = 0}^{\frac{r - 2}{2}}\sum_{m = 1}^{r/2 - j}{\frac{r + 2}{2} \choose j}p^{x + \frac{r + 2}{2} - j}t_{j, \> m} (-1)^m \alpha^{\frac{r + 2}{2} - j - m} (z - \alpha p)^{j} \mathbbm{1}_{\alpha p + p^2\Zp} \\
            && + \sum_{a = 1}^{p - 1}\sum_{j = 0}^{r/2}{\frac{r + 2}{2} \choose j}p^{1 + x}a^{r/2 - j}(z - a)^j\mathbbm{1}_{a + p\Zp} + \left(\frac{r + 2}{2}\right)p^{1 + x}\cL z^{r/2}\mathbbm{1}_{p\Zp} \!\!\! \mod \pi \latticeL{k},
        \end{eqnarray*}
        for some $t_{j, \> m} \in \Zp$.
    \end{Proposition}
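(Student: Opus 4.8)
The plan is to follow the template already used for Proposition~\ref{nu leq} and Proposition~\ref{nu geq 0}, the new feature being that here $n:=\frac{r+2}{2}=\lfloor r/2\rfloor+1$ sits exactly at the lower boundary, so $g_2(z)-g_1(z)$ no longer vanishes and its residual part must be computed explicitly; that residual is precisely the first term in the statement. First I would record from Lemma~\ref{Main coefficient identites}(2) that $\sum_{i\in I}\lambda_i i^j=0$ for $0\le j\le\frac{r+2}{2}$, that $\lambda_0\equiv 1$, $\lambda_i\equiv 0$ ($1\le i\le\frac{r+2}{2}$), $\lambda_p=-1\mod p$, and note that $x\ge-1$ (since $\nu\le 0$) while $x+v_p(\cL)\ge x+\nu=-1=r/2-n$ because $v_p(\cL)\ge\nu$ always. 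Since $\sum_{i\in I}\lambda_i(z-i)^n$ is the zero polynomial (all its coefficients are among the vanishing power sums) and $n=\lfloor\frac{k-2}{2}\rfloor+1$, the function $g$ lies in $L(k,\cL)\subset D(k)$ by \cite[Lemme 3.3.2]{Bre10}.

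Next I write $g=g\mathbbm{1}_{\Zp}+g\mathbbm{1}_{\Qp\setminus\Zp}$. Applying $w$ and using $\sum_{i\in I}\lambda_i i^n=0$ to kill the spurious $\logL(z)$ summand gives $w\cdot g\mathbbm{1}_{\Qp\setminus\Zp}=\sum_{i\in I}p^x\lambda_i z^{r-n}(1-zi)^n\logL(1-zi)\mathbbm{1}_{p\Zp}$ with $r-n=\frac{r-2}{2}\ge 0$; Lemma~\ref{qp-zp part is 0} applies (the hypotheses $k/2\le n\le r$, vanishing of the sums up to $n$, and $x\ge-1$ all hold) and yields $g\mathbbm{1}_{\Qp\setminus\Zp}\equiv 0\mod\pi\latticeL{k}$. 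Then Lemma~\ref{telescoping lemma}(1), applied to each summand of $g$ (with the coefficient $p^x\lambda_i$, of valuation $\ge x\ge-1$), gives $g\mathbbm{1}_{\Zp}\equiv g_2(z)\mod\pi\latticeL{k}$, so it remains to compute $g_2(z)=g_1(z)+(g_2(z)-g_1(z))$.

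For $g_2(z)-g_1(z)$: as in Lemma~\ref{Truncated Taylor expansion} and using \eqref{Simple derivative formula}, the coefficient of $(z-a-\alpha p)^j\mathbbm{1}_{a+\alpha p+p^2\Zp}$ ($0\le j\le r/2$) is $\binom{n}{j}\sum_{i\in I}p^x\lambda_i$ times a Taylor-remainder bracket. I would split on $a=0$ versus $a\neq 0$. For $a\neq 0$ every $a-i$ is a $p$-adic unit: the identity $(a-i+\alpha p)^{n-j}=\sum_m\binom{n-j}{m}(\alpha p)^m(a-i)^{n-j-m}$ collapses the $\logL(a-i)$-part of the bracket to $(\alpha p)^{n-j}\logL(a-i)$ (valuation $\ge x+(n-j)+1>r/2-j$), while $(a-i+\alpha p)^{n-j}\logL(1+(a-i)^{-1}\alpha p)$ expands as $\sum_{l\ge 0}c_l(a-i)^{n-j-1-l}(\alpha p)^{1+l}$ with $c_l$ the coefficient of $x^{1+l}$ in $(1+x)^{n-j}\log(1+x)$ (so $c_0=1$, $c_{n-j-1}=H_{n-j}$ by \eqref{Small derivative formula for polynomial times logs}), and the $i=a$ term (only if $a\le\frac{r+2}{2}$, where $p\mid\lambda_a$) has valuation $\ge r/2-j+1$; summing over $i$, the powers $(a-i)^s$ with $s\ge 1$ vanish via $\sum_{i\in I}\lambda_i(a-i)^s=0$ and the $s=0$ piece has coefficient a multiple of $\lambda_a$ (or of $\sum_{i\in I}\lambda_i=0$), so the whole $a\neq 0$ part is $\equiv 0\mod\pi\latticeL{k}$ by Lemma~\ref{Integers in the lattice}. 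For $a=0$ the points $0$ and $p$ of $I$ coincide mod $p$; using $\logL(-p)=\cL$ and $\logL((\alpha-1)p)=\cL+\log(\alpha-1)$ (with $0^{n-j}\logL(0)=0$ at $\alpha=1$) I would check that the $\cL$-parts of the $i=0$ and $i=p$ terms are each $(\alpha p)^{n-j}\cL$ up to valuation $>r/2-j$, hence cancel against one another since $\lambda_0+\lambda_p=\lambda_0-1\equiv 0\mod p$, while the $\log\alpha$, $\log(\alpha-1)$ pieces are negligible; the surviving contribution comes from $i\in\{1,\dots,\frac{r+2}{2}\}$, and $\sum_{i=1}^{(r+2)/2}\lambda_i i^s=p^s$ ($s\ge1$) turns the bracket into $\binom{n}{j}p^{x+n-j}\sum_{m=1}^{r/2-j}t_{j,m}(-1)^m\alpha^{n-j-m}$ with $t_{j,m}:=c_{n-j-1-m}\in\Zp$ (integral since $1\le n-j-m\le n-1\le r-1<p$), the $s=0$ term dropping because $1-\lambda_0\equiv 0\mod p$; this is exactly the first term of the statement.

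For $g_1(z)=\sum_a\sum_j\frac{g^{(j)}(a)}{j!}(z-a)^j\mathbbm{1}_{a+p\Zp}$ with $\frac{g^{(j)}(a)}{j!}=\binom{n}{j}\sum_{i\in I}p^x\lambda_i(a-i)^{n-j}\logL(a-i)$: for $a\neq 0$ the $i\in\{1,\dots,\frac{r+2}{2}\}$ terms die mod $\pi$ ($p\mid\lambda_i$, $a-i$ a unit, $\logL$ of valuation $\ge 1$), $i=a$ gives $0$, and $i=0$ together with $i=p$ give $p^x[a^{n-j}\logL(a)-(a-p)^{n-j}\logL(a-p)]\equiv p^{1+x}a^{n-1-j}\mod\pi$ after expanding $\logL(a-p)=\logL(a)+\logL(1-a^{-1}p)$, producing the second term; for $a=0$ only $i=p$ survives, giving $\binom{n}{j}(-1)^{n-j+1}p^{x+n-j}\cL$, whose coefficient has valuation $\ge r/2-j\ge 1$ for $j<r/2$ so it dies by Lemma~\ref{Integers in the lattice}, leaving only $j=r/2$, i.e.\ $\frac{r+2}{2}p^{1+x}\cL z^{r/2}\mathbbm{1}_{p\Zp}$, the third term. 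Adding the two computations gives the claim. The main obstacle is the $g_2(z)-g_1(z)$ step at $a=0$: unlike in Proposition~\ref{nu leq} it does not vanish, and one must track it exactly while correctly handling the coincidence $0\equiv p\mod p$ in $I$, the cancellation of the $\cL$-contributions of $i=0$ and $i=p$, the edge case $\alpha=1$, and the question of whether individual coefficients land at or strictly above the threshold $p^{r/2-j}\pi$ governing Lemmas~\ref{Integers in the lattice} and~\ref{stronger bound for polynomials of large degree with varying radii}.
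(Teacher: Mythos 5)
Your proposal is correct and follows essentially the same route as the paper's proof: kill the $\Qp\setminus\Zp$ part with Lemma~\ref{qp-zp part is 0}, telescope to $g_2$ via Lemma~\ref{telescoping lemma}, and treat $g_2-g_1$ (where only the $a=0$ classes survive, producing the first term through the power-sum identities, $p\mid\lambda_i$ for the middle indices and $\lambda_0+\lambda_p\equiv 0 \bmod p$) separately from $g_1$ (which yields the second and third terms). The only nitpicks --- ``every $a-i$ is a $p$-adic unit'' needs the exclusion $i\neq a$ (which you do handle separately), and ``$v_p(\cL)\ge\nu$ always'' is valid here only because $\nu\le 0$ --- do not affect the argument.
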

      
    \begin{proof}
      Recall from Lemma~\ref{Main coefficient identites}(2) that
      $\lambda_0 = 1 \mod p$, $\lambda_i = 0 \mod p$ for $1 \leq i \leq \dfrac{r+2}{2}$, $\lambda_p = -1$ and
      $\sum_{i \in I}\lambda_i i^j = 0$ for $0 \leq j \leq \dfrac{r+2}{2} \leq p-2$ since $p \geq 5$.
      %and even for $0 \leq j \leq \dfrac{r+2}{2} +1 \leq p-2$ for $r = 2$ since $p \geq 5$.
      Write $g(z) = g(z)\mathbbm{1}_{\Zp} + g(z)\mathbbm{1}_{\Qp \setminus \Zp}$. As usual, we have
        \[
            w \cdot g(z)\mathbbm{1}_{\Qp \setminus \Zp} = \sum_{i \in I}p^x\lambda_iz^{\frac{r - 2}{2}}(1 - zi)^{\frac{r + 2}{2}}\logL(1 - zi)\mathbbm{1}_{p\Zp}.
        \]
        Lemma \ref{qp-zp part is 0} implies that this function belongs to $\pi \latticeL{k}$.
        Hence $g(z) \equiv g(z)\mathbbm{1}_{\Zp} \!\! \mod\pi \latticeL{k}$.
        
        Next, using Lemma \ref{telescoping lemma} (3), we see that
        $g(z)\mathbbm{1}_{\Zp} \equiv g_2(z) \mod \pi \latticeL{k}$.
        We analyze $g_2(z) - g_1(z)$ and $g_1(z)$ separately.

        For $0 \leq j \leq r/2$, the coefficient of $(z - a - \alpha p)^j\mathbbm{1}_{a + \alpha p + p^2\Zp}$ in $g_2(z) - g_1(z)$ is
        \begin{eqnarray}\label{jth summand in g2 - g1 in leq 0}
            && \!\!\!\!\!\!\!\!\!\!\!\! {\frac{r + 2}{2} \choose j}\sum_{i \in I}p^{x}\lambda_i\Big[(a + \alpha p - i)^{\frac{r + 2}{2} - j}\logL(a + \alpha p - i) - (a - i)^{\frac{r + 2}{2} - j}\logL(a - i) \\
            && \!\!\!\!\!\!\!\!\!\!\!\! - {\frac{r + 2}{2} - j \choose 1}(a - i)^{r/2 - j}(\alpha p) - \ldots - {\frac{r + 2}{2} - j \choose r/2 - j}(a - i)(\alpha p)^{r/2 - j}\logL(a - i)\Big]. \nonumber
        \end{eqnarray}

        First assume $a \neq 0$. For the $i \neq a$ summands we write $\logL(a - i + \alpha p) = \logL(1 + (a - i)^{-1}\alpha p) + \logL(a - i)$. Therefore the $i^{\mathrm{th}}$ term in the display above becomes
        \[
            p^x\lambda_i\left[(a - i + \alpha p)^{\frac{r + 2}{2} - j}\logL(1 + (a - i)^{-1}\alpha p) + (\alpha p)^{\frac{r + 2}{2} - j}\logL(a - i)\right].
        \]
        The conditions $a \neq 0$ and $i \neq a$ imply that $a - i$ is a $p$-adic unit. Therefore $p \mid \logL(a - i)$. So we drop the second term in the display above modulo $p^{r/2 - j}\pi$. Expanding both the factors in the first term, we get
        \begin{eqnarray}\label{a neq 0 i neq a summand in g2 - g1 in leq 0}
            && \!\!\!\!\!\!\!\!\!\!\!\!\!\!\!\! p^x\lambda_i[t_{j, \ r/2 - j}(a - i)^{r/2 - j}(\alpha p) + t_{j, \> \frac{r - 2}{2} - j}(a - i)^{\frac{r - 2}{2} - j}(\alpha p)^2 + \cdots + t_{j, 0}(\alpha p)^{\frac{r + 2}{2} - j}] \\ 
            && \!\!\!\!\!\!\!\!\!\!\!\!\!\!\!\! \qquad \qquad \mod p^{r/2 - j} \pi, \nonumber
        \end{eqnarray}
        where $t_{j, \> 0}, \> \ldots, \> t_{j, \> \frac{r - 2}{2} - j} \in \Zp$ with $t_{j, \> r/2 - j} = 1$ and $t_{j, \> 0} = H_{\frac{r + 2}{2} - j}$.

        Next assume $i = a$. Then the $i^{\mathrm{th}}$ summand in \eqref{jth summand in g2 - g1 in leq 0} is
        \[
            p^x\lambda_a(\alpha p)^{\frac{r + 2}{2} - j}\logL(\alpha p) = p^x\lambda_a(\alpha p)^{\frac{r + 2}{2} - j}\cL + p^x\lambda_a(\alpha p)^{\frac{r + 2}{2} - j}\logL(\alpha).
        \]
        The second term in the display above is $0$ modulo $p^{r/2 - j}\pi$. Indeed, this is clear if $\alpha = 0$ and if $\alpha \neq 0$, then $p \mid \logL(\alpha)$. Therefore the $i = a$ summand in \eqref{jth summand in g2 - g1 in leq 0} is
        \begin{eqnarray}\label{a neq 0 i = a summand in g2 - g1 in leq 0}
            p^x\lambda_a(\alpha p)^{\frac{r + 2}{2} - j}\cL \mod p^{r/2 - j}\pi.
        \end{eqnarray}

        Plugging expressions \eqref{a neq 0 i neq a summand in g2 - g1 in leq 0} and \eqref{a neq 0 i = a summand in g2 - g1 in leq 0} in \eqref{jth summand in g2 - g1 in leq 0} and using the
        vanishing of the sums in the first line of the proof,
               %                identity $\sum\limits_{i \in I}\lambda_i i^j = 0$ for $0 \leq j \leq r/2$,
        we get
        \[
            p^x\lambda_a(\alpha p)^{\frac{r + 2}{2} - j}\left(\cL - H_{\frac{r + 2}{2} - j}\right) \mod p^{r/2 - j}\pi.
        \]
        Since $p \mid \lambda_a$ for $a \neq 0$, we see that the expression above is $0$ modulo $p^{r/2 - j}\pi$.

        Now assume that $a = 0$. Then the $i \neq a, \> p$ summands in \eqref{jth summand in g2 - g1 in leq 0} are given by \eqref{a neq 0 i neq a summand in g2 - g1 in leq 0}. The sum of the $i = 0$ and $i = p$ summands is
        \begin{eqnarray}\label{Sum of the i = 0 and i = p terms in g2 - g1 in leq 0}
            &&  \!\!\!\!\!\!\!\!\!\!\!\!\!\!\!\! p^x\lambda_0\Big[(\alpha p)^{\frac{r + 2}{2} - j}\logL(\alpha p)\Big] + p^x\lambda_p\Big[(\alpha p - p)^{\frac{r + 2}{2} - j}\logL(\alpha p - p) - (-p)^{\frac{r + 2}{2} - j}\cL \\
            && \!\!\!\!\!\!\!\!\!\!\!\!\!\!\!\!- {\frac{r + 2}{2} - j \choose 1}(-p)^{r/2 - j}(\alpha p) - \cdots - {\frac{r + 2}{2} - j \choose r/2 - j}(-p)(\alpha p)^{r/2 - j}\cL\Big]. \nonumber
        \end{eqnarray}
        Note that $0 \leq j \leq r/2$ implies that 
        \[
            p^x\lambda_p((\alpha - 1)p)^{\frac{r + 2}{2} - j}\logL((\alpha - 1)p) \equiv p^x\lambda_p((\alpha - 1)p)^{\frac{r + 2}{2} - j}\cL \mod p^{r/2 - j}\pi.
        \]
        Indeed, this is clear if $\alpha = 1$. If $\alpha \neq 1$, then $\alpha - 1$ is a unit. So $p \mid \logL(\alpha - 1)$. Similarly, 
        \[
            p^x\lambda_0(\alpha p)^{\frac{r + 2}{2} - j}\logL(\alpha p) \equiv p^x\lambda_0(\alpha p)^{\frac{r + 2}{2} - j}\cL \mod p^{r/2 - j}\pi.
        \]
        Therefore expanding $(\alpha p - p)^{\frac{r + 2}{2} - j}$, we see that \eqref{Sum of the i = 0 and i = p terms in g2 - g1 in leq 0} becomes
        \[
            p^x\lambda_0(\alpha p)^{\frac{r + 2}{2} - j}\cL + p^x\lambda_p(\alpha p)^{\frac{r + 2}{2} - j}\cL \mod p^{r/2 - j}\pi.
        \]
        Since $\lambda_0 \equiv 1 \!\! \mod p$ and $\lambda_p = -1$, the display above is $0$ modulo $p^{r/2 - j}\pi$.

        Using the vanishing of the sums in the first line of the proof, 
               %                $\sum_{i \in I} \lambda_i i^j = 0$ for $0 \leq j \leq p - 2$,
        we see that for $a = 0$ the coefficient of $(z - a - \alpha p)^{j}\mathbbm{1}_{a + \alpha p + p^2\Zp}$ in $g_2(z) - g_1(z)$ is
        \begin{eqnarray*}
            && {\frac{r + 2}{2} \choose j}\left[-p^x\lambda_p[t_{j, \> r/2 - j}(-p)^{r/2 - j}(\alpha p) + \ldots + t_{j, \> 0}(\alpha p)^{\frac{r + 2}{2} - j}] - p^x\lambda_0t_{j, \> 0}(\alpha p)^{\frac{r + 2}{2} - j}\right].
        \end{eqnarray*}
        Again using $\lambda_0 \equiv 1 \!\! \mod p$ and $\lambda_p = -1$, the display above becomes
        \begin{eqnarray*}
            &&{\frac{r + 2}{2} \choose j}p^{x + \frac{r + 2}{2} - j}\left[t_{j, \> r/2 - j}(-1)^{r/2 - j}\alpha + \cdots + t_{j, \> 1}(-1)^1\alpha^{r/2 - j}\right] \\
            && \equiv \sum_{m = 1}^{r/2 - j}{\frac{r + 2}{2} \choose j}p^{x + \frac{r + 2}{2} - j}t_{j, \> m}(-1)^m\alpha^{\frac{r + 2}{2} - j - m} \mod p^{r/2 - j}\pi.
        \end{eqnarray*}
        Since for $j = r/2$ there are no terms in the sum above, we see that
        \begin{eqnarray}\label{Crude g2 - g1 in leq 0}
            g_2(z) - g_1(z) & \equiv & \sum_{\alpha = 0}^{p - 1}\sum_{j = 0}^{\frac{r - 2}{2}}\sum_{m = 1}^{r/2 - j}{\frac{r + 2}{2} \choose j}p^{x + \frac{r + 2}{2} - j}t_{j, \> m} (-1)^m \alpha^{\frac{r + 2}{2} - j - m} \\
            && \quad \quad \quad \quad \quad \quad \quad \qquad (z - \alpha p)^{j} \mathbbm{1}_{\alpha p + p^2\Zp} \mod \pi \latticeL{k}. \nonumber
        \end{eqnarray}

        Next consider
        \begin{eqnarray}\label{g1 in leq 0}
            g_1(z) = \sum_{a = 0}^{p - 1}\sum_{j = 0}^{r/2}\frac{g^{(j)}(a)}{j!}(z - a)^j\mathbbm{1}_{a + p\Zp},
        \end{eqnarray}
        where
        \begin{eqnarray}\label{jth summand in g1 in leq 0}
            \frac{g^{(j)}(a)}{j!} = {\frac{r + 2}{2} \choose j}\sum_{i \in I}p^x\lambda_i(a - i)^{\frac{r + 2}{2} - j}\logL(a - i).
        \end{eqnarray}

        First assume $a \neq 0$. Suppose $i \neq 0, \> a, \> p$. These conditions imply that $a - i$ is a $p$-adic unit. Therefore $p \mid \logL(a - i)$. Since $p \mid \lambda_i$ for $1 \leq i \leq \dfrac{r+2}{2}$, we see that the $i^{\mathrm{th}}$
        summand in the equation above is $0$ modulo $\pi$. The $i = a$ summand is clearly $0$. Next, the sum of the $i = 0, \> p$ summands in \eqref{jth summand in g1 in leq 0} is
        \[
            p^x\lambda_0a^{\frac{r + 2}{2} - j}\logL(a) + p^x\lambda_p(a - p)^{\frac{r + 2}{2} - j}\logL(a - p).
        \]
        Recall that $\lambda_0 \equiv 1 \!\!\mod p$, $\lambda_p = -1$. Expanding $(a - p)^{\frac{r + 2}{2} - j}$ and using the fact that $a$ and $a - p$ are $p$-adic units, the display above becomes
        \begin{eqnarray*}
            p^xa^{\frac{r + 2}{2} - j}\logL(a) - p^xa^{\frac{r + 2}{2} - j}\logL(a - p) = 
            -p^xa^{\frac{r + 2}{2} - j}\logL(1 - a^{-1}p)  %\\
            & \equiv &  p^{1 + x}a^{r/2 - j} \!\! \mod \pi.
        \end{eqnarray*}
        Therefore for $a \neq 0$, we have
        \begin{eqnarray}\label{a neq 0 in g1 in leq 0}
            \frac{g^{(j)}(a)}{j!} \equiv {\frac{r + 2}{2} \choose j}p^{1 + x}a^{r/2 - j} \mod \pi.
        \end{eqnarray}

        Next assume that $a = 0$. Suppose $i \neq 0, \> p$. Then again $a - i$ is a $p$-adic unit and $p \mid\lambda_i$. Therefore the $i^{\mathrm{th}}$ summand in \eqref{jth summand in g1 in leq 0} is $0$ modulo $\pi$. The $i = 0$ summand is clearly $0$. The $i = p$ summand is
        \[
            p^x\lambda_p(-p)^{\frac{r + 2}{2} - j}\cL = (-1)^{\frac{r + 4}{2} - j}p^{x + \frac{r + 2}{2} - j}\cL.
        \]
        Therefore for $a = 0$, we have
        \begin{eqnarray}\label{a = 0 in g1 in leq 0}
            \frac{g^{(j)}(a)}{j!} \equiv {\frac{r + 2}{2} \choose j}(-1)^{\frac{r + 4}{2} - j}p^{x + \frac{r + 2}{2} - j}\cL \mod \pi.
        \end{eqnarray}

        Putting equations \eqref{a neq 0 in g1 in leq 0} and \eqref{a = 0 in g1 in leq 0} in \eqref{g1 in leq 0}, we get
        \begin{eqnarray*}
            g_1(z) \!\!\!\! & \equiv & \!\!\!\! \sum_{a = 1}^{p - 1}\left[\sum_{j = 0}^{r/2}{\frac{r + 2}{2} \choose j}p^{1 + x}a^{r/2 - j}(z - a)^j\right] \!\! \mathbbm{1}_{a + p\Zp} + \left[\sum_{j = 0}^{r/2}{\frac{r + 2}{2} \choose j}(-1)^{\frac{r + 4}{2} - j}p^{x + \frac{r + 2}{2} - j}\cL z^{j}\right] \!\! \mathbbm{1}_{p\Zp} \\
            && \qquad \qquad \mod \pi \latticeL{k}.
        \end{eqnarray*}
        Using Lemma \ref{Integers in the lattice}, we drop the $j < r/2$ summands in the second sum above. Therefore
        \begin{eqnarray}\label{Crude g1 in leq 0}
            g_1(z) & \equiv & \sum_{a = 1}^{p - 1}\sum_{j = 0}^{r/2}{\frac{r + 2}{2} \choose j}p^{1 + x}a^{r/2 - j}(z - a)^j\mathbbm{1}_{a + p\Zp} + \left(\frac{r + 2}{2}\right)p^{1 + x}\cL z^{r/2}\mathbbm{1}_{p\Zp} \\
            && \qquad \qquad \mod \pi \latticeL{k}. \nonumber
        \end{eqnarray}
        Since $g(z) \equiv g_2(z) \mod \pi \latticeL{k}$ we add \eqref{Crude g2 - g1 in leq 0} and \eqref{Crude g1 in leq 0} to obtain the proposition.
    \end{proof}

    Now we prove the claims made at the beginning of this section.
    \begin{theorem}\label{Final theorem for leq 0}
        Let $2 \leq r \leq p - 1$ be even. If $\nu \leq 0$, then the map $\IZind a^{r/2}d^{r/2} \twoheadrightarrow F_{r, \> r + 1}$ factors as
        \[
        \IZind a^{r/2}d^{r/2} \twoheadrightarrow \frac{\IZind a^{r/2}d^{r/2}}{\im(\lambda_{r/2}^{-1}T_{1, 2} - 1)} \twoheadrightarrow F_{r, \> r + 1},
        \]
        where
        \[
            \lambda_{r/2} = (-1)^{r/2}\left(\dfrac{r}{2}\right)\left(\dfrac{r + 2}{2}\right)\left(\cL - H_{-} - H_{+}\right).
        \]
            Moreover, if $\nu = 0$, then we have a surjection
            \[
                \pi(p - 1, \lambda_{r/2}, \omega^{r/2}) \twoheadrightarrow F_{r}.
            \]
            Also, $F_{r + 1}$ is not isomorphic to $\pi(0, 0, \omega^{r/2})$.
    \end{theorem}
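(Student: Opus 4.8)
The plan is to treat the three assertions in turn, imitating the proof of Theorem~\ref{Final theorem for leq} (in its boundary subcase, now with $i=r/2$) for the factorization, with Proposition~\ref{leq 0} playing the role of Proposition~\ref{nu leq}, and then using the non-commutative Iwahori--Hecke relations~\eqref{Relations in the non-commutative Hecke algebra}, the identity $T_{-1,0}=T_{1,0}T_{1,2}T_{1,0}$ of~\eqref{T-10 in the non-commutative case}, and the symmetric presentation~\eqref{alt def of pi} for the finer statements at $\nu=0$. Note that $a^{r/2}d^{r/2}$ is the restriction to $IZ$ of $\omega^{r/2}\circ\det$, hence is $w$-invariant, so the non-commutative Hecke algebra acts on $\IZind a^{r/2}d^{r/2}\cong\IZind 1\otimes\omega^{r/2}$. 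All congruences will be taken in $\br{\latticeL{k}}$ modulo the image of $\IZind\oplus_{j<r/2}\Fq X^{r-j}Y^j$ under~\eqref{Main surjection on mod p representations}, i.e. modulo the $G$-translates of $z^l\mathbbm{1}_{p\zp}$ for $0\le l<r/2$, as in the proofs of Theorems~\ref{Final theorem for leq} and~\ref{Final theorem for geq 0}.

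For the factorization, observe that the chosen $g(z)$ is $0$ in $\tB(k,\cL)$, so Proposition~\ref{leq 0} yields a congruence $0\equiv(\cdots)\bmod\pi\latticeL{k}$. The plan is to apply $\beta$, expand binomially, substitute the shallow inductive steps of Proposition~\ref{inductive steps} for $n=r,r-1,\dots,\frac{r+4}{2}$ together with the binomial expansion of $z^{(r+2)/2}\mathbbm{1}_{\zp}$, and discard all but the top power of $(z-a)$ by Lemma~\ref{Integers in the lattice} and Lemma~\ref{stronger bound for polynomials of large degree with varying radii}. Exactly as in the proof of Theorem~\ref{Final theorem for geq 0}, this leads to a matrix equation (the $i=r/2$ analogue of~\eqref{Matrix equation in leq}), which one solves by Cramer's rule using the binomial identities of the Appendix; the relevant constant is $\lambda_{r/2}=(-1)^{r/2}\frac{r}{2}\cdot\frac{r+2}{2}\,(\cL-H_--H_+)$, and the surviving congruence is precisely the image of $(\lambda_{r/2}^{-1}T_{1,2}-1)\llbracket\id,X^{r/2}Y^{r/2}\rrbracket$ under $\IZind a^{r/2}d^{r/2}\twoheadrightarrow F_{r,\,r+1}$, using~\eqref{Formulae for T-10 and T12 in the commutative case} for $T_{1,2}$ (the formula being unchanged in the non-commutative algebra, cf.~\eqref{Formulae for T10 and T12 in the non-commutative case}). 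This gives the factorization; when $\nu<0$ the same ingredients collapse the congruence to $z^{r/2}\mathbbm{1}_{p\zp}\equiv 0$, so $F_{r,\,r+1}=0$, consistent with $\lambda_{r/2}^{-1}\equiv 0\bmod\pi$, and the real content lies at $\nu=0$.

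Next, assume $\nu=0$, so $\lambda_{r/2}$ is a unit, and deduce the surjection onto $F_r$. Combining the surjection $\IZind a^{r/2}d^{r/2}/\im(1+T_{1,2}T_{1,0})\twoheadrightarrow F_r$ of~\eqref{Two parts in the last slice} with the vanishing of $\im(\lambda_{r/2}^{-1}T_{1,2}-1)=\im(T_{1,2}-\lambda_{r/2})$ in $F_{r,\,r+1}$ (hence in its quotient $F_r$) just established, one obtains a surjection from $\IZind a^{r/2}d^{r/2}\big/\big(\im(1+T_{1,2}T_{1,0})+\im(T_{1,2}-\lambda_{r/2})\big)$. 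Modulo $\im(T_{1,2}-\lambda_{r/2})$ one has $1+T_{1,2}T_{1,0}\equiv 1+\lambda_{r/2}T_{1,0}$ and, by $T_{1,0}^2=1$ and~\eqref{T-10 in the non-commutative case}, $T_{1,0}(1+\lambda_{r/2}T_{1,0})=\lambda_{r/2}+T_{1,0}\equiv T_{-1,0}+T_{1,0}$, so that $\im(1+T_{1,2}T_{1,0})$ and $\im(T_{-1,0}+T_{1,0})$ have the same image there; hence the displayed quotient equals $\IZind 1\big/\big((T_{-1,0}+T_{1,0})+(T_{1,2}-\lambda_{r/2})\big)\otimes\omega^{r/2}=\pi(p-1,\lambda_{r/2},\omega^{r/2})$, giving $\pi(p-1,\lambda_{r/2},\omega^{r/2})\twoheadrightarrow F_r$.

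Finally, to see $F_{r+1}\not\simeq\pi(0,0,\omega^{r/2})$: by~\eqref{alt def of pi} with $r=0$, $\lambda=0$ one has $\pi(0,0,\omega^{r/2})\simeq\IZind 1\big/\big(\im T_{1,2}+\im(T_{-1,0}+T_{1,0})\big)\otimes\omega^{r/2}$, a nonzero (supersingular, hence irreducible) representation on which $\im T_{1,2}$ maps to $0$; on the other hand, writing $q:\IZind a^{r/2}d^{r/2}\twoheadrightarrow F_{r,\,r+1}$, the factorization gives $q(v)=\lambda_{r/2}^{-1}q(T_{1,2}v)$ for all $v$, so $q(\im T_{1,2})=F_{r,\,r+1}$ and hence $\im T_{1,2}$ surjects onto the quotient $F_{r+1}$ of $F_{r,\,r+1}$. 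Since a supersingular representation occurs with multiplicity one as a subquotient of $\IZind a^{r/2}d^{r/2}$, any $G$-surjection onto $\pi(0,0,\omega^{r/2})$ has the same kernel as the presentation above and so kills $\im T_{1,2}$; this is incompatible with $F_{r+1}$ being a nonzero quotient of $\im T_{1,2}$, whence $F_{r+1}\not\simeq\pi(0,0,\omega^{r/2})$. I expect the main obstacle to be the factorization: as in Theorems~\ref{Final theorem for leq} and~\ref{Final theorem for geq 0}, the hard part is the careful bookkeeping of which terms vanish modulo $\pi\latticeL{k}$ and modulo the lower subquotient, and the large Cramer's-rule computation — with its auxiliary binomial identities — needed to pin down $\lambda_{r/2}$ with its harmonic-sum correction, here further complicated by the intervention of the non-commutative Hecke algebra.
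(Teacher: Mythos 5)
Your $\nu<0$ argument is fine and matches the paper, but the crux at $\nu=0$ --- your claim that after the inductive-step/Cramer manipulations the surviving congruence ``is precisely the image of $(\lambda_{r/2}^{-1}T_{1,2}-1)\llbracket \id, X^{r/2}Y^{r/2}\rrbracket$'' --- is exactly what fails here, and it is the reason this case needs the separate non-commutative treatment of Section~\ref{SpecEven}. In Proposition~\ref{leq 0} the middle term $\sum_{a=1}^{p-1}\sum_j {\frac{r+2}{2}\choose j}p^{1+x}a^{r/2-j}(z-a)^j\mathbbm{1}_{a+p\Zp}$ does \emph{not} die when $\nu=0$ (then $x=-1$, so $p^{1+x}=1$), and after the reductions it contributes the extra terms $-z^{r/2}\mathbbm{1}_{\Zp}+(-1)^{r/2}\sum_{a}(z-a)^{r/2}\mathbbm{1}_{a+p\Zp}$: the paper's final congruence \eqref{Final congruence in leq 0} has \emph{four} terms, and the last two are (up to a $G$-translate) the image of $(T_{1,0}+T_{1,2})\llbracket\id, X^{r/2}Y^{r/2}\rrbracket$, i.e.\ an element of the image of $\im(1+T_{1,2}T_{1,0})$. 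Your two-term congruence would force this piece to vanish in $F_{r,\,r+1}$, i.e.\ $F_{r+1}=0$ unconditionally, which is precisely what cannot be established in general (cf.\ Remark~\ref{constant neq +-1 improvement remark}, where $F_{r+1}=0$ is obtained only when $\lambda_{r/2}^2\not\equiv 1$). Accordingly the paper does not derive the $\nu=0$ assertions from any such factorization at all: it projects \eqref{Final congruence in leq 0} separately to $F_{r,\,r+1}/F_{r+1}$ and to $F_{r,\,r+1}/F_{r}$ and reads off Hecke identities in each quotient.

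The downstream steps inherit this gap and add new ones. For the surjection $\pi(p-1,\lambda_{r/2},\omega^{r/2})\twoheadrightarrow F_r$, your passage from $T_{1,2}-\lambda_{r/2}$ to $T_{-1,0}+T_{1,0}$ needs more than the relations \eqref{Relations in the non-commutative Hecke algebra}: since $T_{-1,0}-\lambda_{r/2}=T_{1,0}(T_{1,2}-\lambda_{r/2})T_{1,0}$, knowing that $q$ kills $\im(T_{1,2}-\lambda_{r/2})$ does not let you conclude it kills $T_{1,0}\,\im(T_{1,2}-\lambda_{r/2})$, because $q$ is only a $G$-map and the algebra is non-commutative. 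The paper avoids this by extracting directly from \eqref{Final congruence in leq 0}, in $F_{r,\,r+1}/F_{r+1}$, that $-w(T_{-1,0}-\lambda_{r/2})\llbracket\id, X^{r/2}Y^{r/2}\rrbracket$ dies, and then identifying $\IZind a^{r/2}d^{r/2}/(\im(1+T_{1,2}T_{1,0})+\im(T_{-1,0}-\lambda_{r/2}))$ with $\pi(p-1,\lambda_{r/2},\omega^{r/2})$ via $\im(1+T_{1,2}T_{1,0})=\im(T_{1,0}+T_{1,2})$ and \eqref{alt def of pi}. Finally, your proof that $F_{r+1}\not\simeq\pi(0,0,\omega^{r/2})$ rests on the unproved factorization \emph{and} on the assertion that the supersingular representation occurs with multiplicity one as a subquotient of $\IZind a^{r/2}d^{r/2}$; the latter is unjustified (Iwahori induction of the trivial character contains spherical pieces of both weights $V_0$ and $V_{p-1}$, and $\pi(0,0,\cdot)\simeq\pi(p-1,0,\cdot)$ arises as a quotient of each), so the ``same kernel'' conclusion does not follow. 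The paper instead shows that if $F_{r+1}\simeq\pi(0,0,\omega^{r/2})$ then the map $\IZind a^{r/2}d^{r/2}/\im T_{1,2}T_{1,0}\twoheadrightarrow F_{r+1}$ must factor through the spherical operator $T_{-1,0}+T_{1,0}$ (using \cite[Theorem 3.13]{Chi23} and \cite[Proposition 32]{BL94}), and then \eqref{Final congruence in leq 0} forces $\lambda_{r/2}z^{r/2}\mathbbm{1}_{p\Zp}=0$ in $F_{r,\,r+1}/F_r$, a contradiction.
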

    \begin{proof}
        Using Proposition \ref{leq 0}, we get
        \begin{eqnarray*}
          % g(z)
          0 \!\! & \equiv & \!\! \sum_{\alpha = 0}^{p - 1}\sum_{j = 0}^{\frac{r - 2}{2}}\sum_{m = 1}^{r/2 - j}{\frac{r + 2}{2} \choose j}p^{x + \frac{r + 2}{2} - j}t_{j, \> m} (-1)^m \alpha^{\frac{r + 2}{2} - j - m} (z - \alpha p)^{j} \mathbbm{1}_{\alpha p + p^2\Zp} \\
            && \!\! + \sum_{a = 1}^{p - 1}\sum_{j = 0}^{r/2}p^{1 + x}{\frac{r + 2}{2} \choose j}a^{r/2 - j}(z - a)^{j}\mathbbm{1}_{a + p\Zp} + \left(\frac{r + 2}{2}\right)p^{1 + x}\cL z^{r/2}\mathbbm{1}_{p\Zp} \mod \pi \latticeL{k}.
        \end{eqnarray*}
        First assume that $\nu < 0$. Then $x + \nu = -1$ implies that $x > -1$. Therefore the first two summands on the right side of the equation above are $0$ modulo $\pi \latticeL{k}$ by Lemma \ref{Integers in the lattice}. So
        \[
          % g(z)
          0 \equiv \left(\frac{r + 2}{2}\right)p^{1 + x}\cL z^{r/2}\mathbbm{1}_{p\Zp} \mod \pi \latticeL{k}.
        \]
        Since $\left(\dfrac{r + 2}{2}\right)p^{1 + x}\cL$ is a $p$-adic unit, we get
        \[
            0 \equiv z^{r/2}\mathbbm{1}_{p\Zp} \mod \pi \latticeL{k}.
        \]
        This proves the theorem when $\nu < 0$ because $v_p(\lambda_{r/2}^{-1}) > 0$.

        Next assume that $\nu = 0$. Therefore $\cL \in \Zp$ and $x + v_p(\cL) \geq -1$.
        %In this case, we have
        %\begin{eqnarray*}
        %    0 & \equiv & \sum_{\alpha = 0}^{p - 1}\sum_{j = 0}^{\frac{r - 2}{2}}\sum_{m = 1}^{r/2 - j}{\frac{r + 2}{2} \choose j}p^{r/2 - j}t_{j, \> m} (-1)^m \alpha^{\frac{r + 2}{2} - j - m} (z - \alpha p)^{j} \mathbbm{1}_{\alpha p + p^2\Zp} \\
        %    && + \sum_{a = 1}^{p - 1}\sum_{j = 0}^{r/2}{\frac{r + 2}{2} \choose j}a^{r/2 - j}(z - a)^{j}\mathbbm{1}_{a + p\Zp} + \left(\frac{r + 2}{2}\right)\cL z^{r/2}\mathbbm{1}_{p\Zp} \mod \pi \latticeL{k}.
        %\end{eqnarray*}
        Noting $p^{1+x} = 1$ and interchanging the sums over $j$ and $m$, we get
        \begin{eqnarray*}
            0 & \equiv & \sum_{\alpha = 0}^{p - 1}\sum_{m = 1}^{r/2}\sum_{j = 0}^{r/2 - m}{\frac{r + 2}{2} \choose j}p^{r/2 - j}t_{j, \> m} (-1)^m \alpha^{\frac{r + 2}{2} - j - m} (z - \alpha p)^{j} \mathbbm{1}_{\alpha p + p^2\Zp} \\
            && + \sum_{a = 1}^{p - 1}\sum_{j = 0}^{r/2}{\frac{r + 2}{2} \choose j}a^{r/2 - j}(z - a)^{j}\mathbbm{1}_{a + p\Zp} + \left(\frac{r + 2}{2}\right)\cL z^{r/2}\mathbbm{1}_{p\Zp} \mod \pi \latticeL{k}.
        \end{eqnarray*}
        As in the proof of Theorem \ref{Final theorem for geq 0}, we can prove that the $2 \leq m \leq r/2$ summands in the first sum on the right side of the equation above are $0$ modulo $\pi\latticeL{k}$. So we have
        \begin{eqnarray*}
            0 & \equiv & \sum_{\alpha = 0}^{p - 1}\sum_{j = 0}^{\frac{r - 2}{2}}{\frac{r + 2}{2} \choose j}p^{r/2 - j}t_{j, \> 1}(-1)\alpha^{r/2 - j}(z - \alpha p)^j\mathbbm{1}_{\alpha p + p^2\Zp} \\
            && + \sum_{a = 1}^{p - 1}\sum_{j = 0}^{r/2}{\frac{r + 2}{2} \choose j}a^{r/2 - j}(z - a)^{j}\mathbbm{1}_{a + p\Zp} + \left(\frac{r + 2}{2}\right)\cL z^{r/2}\mathbbm{1}_{p\Zp} \mod \pi \latticeL{k}.
        \end{eqnarray*}
            Adding and subtracting the $a = 0$ summand in the second sum on the right side, we get
        \begin{eqnarray}\label{Main congruence in leq 0}
            0 & \equiv & \sum_{\alpha = 0}^{p - 1}\sum_{j = 0}^{\frac{r - 2}{2}}{\frac{r + 2}{2} \choose j}p^{r/2 - j}t_{j, \> 1}(-1)\alpha^{r/2 - j}(z - \alpha p)^j\mathbbm{1}_{\alpha p + p^2\Zp} - \frac{r + 2}{2}z^{r/2}\mathbbm{1}_{p\Zp} \\
            && + \sum_{a = 0}^{p - 1}\sum_{j = 0}^{r/2}{\frac{r + 2}{2} \choose j}a^{r/2 - j}(z - a)^{j}\mathbbm{1}_{a + p\Zp} + \left(\frac{r + 2}{2}\right)\cL z^{r/2}\mathbbm{1}_{p\Zp} \mod \pi \latticeL{k}. \nonumber
        \end{eqnarray}

        Next massaging the shallow inductive steps in Proposition~\ref{inductive steps} as usual, we have
        \begin{eqnarray}\label{Refined inductive steps in leq 0}
            0 & \equiv & \sum_{\alpha = 0}^{p - 1}\sum_{j = r - n}^{r/2}{n - r/2 \choose n - r + j}\frac{(r - n)! n}{(r - j)\cdots (n - j)}p^{r/2 - j}\alpha^{r/2 - j}(z - \alpha p)^j\mathbbm{1}_{\alpha p + p^2\Zp} \\ 
            && \qquad \qquad \mod \pi \latticeL{k} \nonumber
        \end{eqnarray}
        for each $n = r, \> r - 1, \> \ldots, \> \dfrac{r + 4}{2}$. Also, the binomial expansion gives us
        \begin{eqnarray}\label{Binomial expansion in leq 0}
            z^{r/2}\mathbbm{1}_{p\Zp} = \sum_{\alpha = 0}^{p - 1}\sum_{j = 0}^{r/2}{r/2 \choose j}p^{r/2 - j}\alpha^{r/2 - j}(z - \alpha p)^{j}\mathbbm{1}_{\alpha p + p^2\Zp}.
        \end{eqnarray}

        For a fixed $0 \leq \alpha \leq p - 1$, we wish to write the first term on the right side of \eqref{Main congruence in leq 0} as a sum of $x_n \in E$ times the corresponding term in \eqref{Refined inductive steps in leq 0}, $x_{\frac{r + 2}{2}} \in E$ times the corresponding term in \eqref{Binomial expansion in leq 0} and $x_{r/2} \in E$ times
        \[
            (z - \alpha p)^{r/2}\mathbbm{1}_{\alpha p + p^2\Zp}.
        \]
        Comparing coefficients of like terms in $p^{r/2 - j}\alpha^{r/2 - j}(z - \alpha p)^j\mathbbm{1}_{\alpha p + p^2\Zp}$ and substituting the values of $t_{j, \> 1} =
        (\frac{r+2}{2} - j) (H_{\frac{r+2}{2}-j} - 1)$ using
        \eqref{Small derivative formula for polynomial times logs},
               %                Lemma \ref{Coefficients in polynomial times log}
        we get the following $\left(\dfrac{r + 2}{2}\right) \times \left(\dfrac{r + 2}{2}\right)$ matrix equation
        \[
            \begin{pmatrix}
                {r/2 \choose 0}\frac{r}{r} & 0 & \cdots & {r/2 \choose 0} & 0 \\
                {r/2 \choose 1}\frac{r}{r - 1} & {\frac{r - 2}{2} \choose 0}\frac{r - 1}{(r - 1)(r - 2)} & \cdots & {r/2 \choose 1} & 0 \\
                \vdots & \vdots & & \vdots & \vdots \\
                {r/2 \choose r/2}\frac{r}{r/2} & {\frac{r - 2}{2} \choose \frac{r - 2}{2}}\frac{r - 1}{(r/2)(\frac{r - 2}{2})} & \cdots & {r/2 \choose r/2} & 1
            \end{pmatrix}
            \begin{pmatrix}
                x_r \\ x_{r - 1} \\ \vdots \\ x_{\frac{r + 2}{2}} \\ x_{r/2}
            \end{pmatrix}
            =
            \begin{pmatrix}
                -\frac{r + 2}{2}{r/2 \choose 0}\left(H_{\frac{r + 2}{2}} - 1\right) \\
                - \frac{r + 2}{2}{r/2 \choose 1}\left(H_{r/2} - 1\right) \\
                \vdots \\ -\frac{r + 2}{2}{r/2 \choose \frac{r - 2}{2}}\left(H_{2} - 1\right) \\
                -\frac{r + 2}{2}{r/2 \choose r/2}\left(H_{1} - 1\right)
            \end{pmatrix}.
        \]
        It can be checked\footnote{\label{space-time 17}See Appendix~\ref{Footnote 17}.} that
        \[
            x_{\frac{r + 2}{2}} = \frac{r + 2}{2}\left(1 - H_- - H_+\right) \text{ and } x_{r/2} = -(-1)^{r/2}\frac{1}{r/2}.
        \]
        Therefore \eqref{Main congruence in leq 0} becomes
        \begin{eqnarray}\label{Half refined main congruence in leq 0}
            0 & \equiv & \frac{r + 2}{2}\left(1 - H_- - H_+\right)z^{r/2}\mathbbm{1}_{p\Zp} - (-1)^{r/2}\frac{1}{r/2}\sum_{\alpha = 0}^{p - 1}(z - \alpha p)^{r/2}\mathbbm{1}_{\alpha p + p^2\Zp} \\
            && - \frac{r + 2}{2}z^{r/2}\mathbbm{1}_{p\Zp} + \sum_{a = 0}^{p - 1}\sum_{j = 0}^{r/2}{\frac{r + 2}{2} \choose j}a^{r/2 - j}(z - a)^{j}\mathbbm{1}_{a + p\Zp} + \left(\frac{r + 2}{2}\right)\cL z^{r/2}\mathbbm{1}_{p\Zp} \nonumber \\
            && \qquad \qquad \mod \pi \latticeL{k}. \nonumber
        \end{eqnarray}

        Now applying $\begin{pmatrix}p & 0 \\ 0 & 1 \end{pmatrix}$ to \eqref{Refined inductive steps in leq 0} and replacing $\alpha$ by $a$, we get
        \begin{eqnarray}\label{Refined inductive steps in leq 0 for radius 0}
            0 \equiv \sum_{a = 0}^{p - 1}\sum_{j = r - n}^{r/2}{n - r/2 \choose n - r + j}\frac{(r - n)!n}{(r - j) \cdots (n - j)}a^{r/2 - j}(z - a)^j\mathbbm{1}_{a + p\Zp} \mod \pi \latticeL{k}.
        \end{eqnarray}

        Similarly applying $\begin{pmatrix}p & 0 \\ 0 & 1\end{pmatrix}$ to \eqref{Binomial expansion in leq 0} and replacing $\alpha$ by $a$, we get
        \begin{eqnarray}\label{Binomial expansion in leq 0 in radius 0}
            z^{r/2}\mathbbm{1}_{\Zp} \equiv \sum_{a = 0}^{p - 1}\sum_{j = 0}^{r/2}{r/2 \choose j}a^{r/2 - j}(z - a)^j\mathbbm{1}_{a + p\Zp} \mod \pi \latticeL{k}.
        \end{eqnarray}
        We write the fourth term on the right side of \eqref{Half refined main congruence in leq 0} as a sum of $x_n \in E$ times the corresponding term in \eqref{Refined inductive steps in leq 0 for radius 0}, $x_{\frac{r + 2}{2}} \in E$ times the corresponding term in \eqref{Binomial expansion in leq 0 in radius 0} and $x_{r/2} \in E$ times
        \[
            (z - a)^{r/2}\mathbbm{1}_{a + p\Zp}.
        \]
        Comparing coefficients of like terms in $a^{r/2 - j}(z - a)^j\mathbbm{1}_{a + p\Zp}$, we get the following matrix equation
        \[
            \begin{pmatrix}
                {r/2 \choose 0}\frac{r}{r} & 0 & \cdots & {r/2 \choose 0} & 0 \\
                {r/2 \choose 1}\frac{r}{r - 1} & {\frac{r - 2}{2} \choose 0}\frac{r - 1}{(r - 1)(r - 2)} & \cdots & {r/2 \choose 1} & 0 \\
                \vdots & \vdots & & \vdots & \vdots \\
                {r/2 \choose r/2}\frac{r}{r/2} & {\frac{r - 2}{2} \choose \frac{r - 2}{2}}\frac{r - 1}{(r/2)(\frac{r - 2}{2})} & \cdots & {r/2 \choose r/2} & 1
            \end{pmatrix}
            \begin{pmatrix}
                x_r \\ x_{r - 1} \\ \vdots \\ x_{\frac{r + 2}{2}} \\ x_{r/2}
            \end{pmatrix}
            =
            \begin{pmatrix}
                {\frac{r + 2}{2} \choose 0} \\ {\frac{r + 2}{2} \choose 1} \\ \vdots \\ {\frac{r + 2}{2} \choose \frac{r - 2}{2}} \\ {\frac{r + 2}{2} \choose r/2}
            \end{pmatrix}.
        \]
        We solve this matrix equation using Cramer's rule. Let $A$ be the coefficient matrix above, $A_{\frac{r - 2}{2}}$ be the matrix $A$ with the penultimate column replaced by the column vector on the right and $A_{r/2}$ be the matrix $A$ with the last column replaced by the column vector on the right. Then
        \[
            x_{\frac{r + 2}{2}} = \frac{\det A_{\frac{r - 2}{2}}}{\det A} \text{ and } x_{r/2} = \frac{\det A_{r/2}}{\det A}.
        \]
        
        Putting $i = r/2$ in \eqref{Matrix equation in leq}, we see that after making $A$ lower triangular using column operations the $\left(j, \dfrac{r - 2}{2}\right)^{\mathrm{th}}$ entry for $j \geq \dfrac{r - 2}{2}$ becomes
        \[
            (-1)^{\frac{r - 2}{2}}{1 \choose j - \frac{r - 2}{2}}\frac{(r/2)!}{(r - j) \cdots (r/2 - j + 2)}.
        \]
        Next, we reduce $A_{\frac{r - 2}{2}}$ using column operations. Note that
        \[
            A_{\frac{r - 2}{2}} = \begin{pmatrix}
                {r/2 \choose 0}\frac{r}{r} & 0 & \cdots & {r/2 \choose 0}\frac{r + 2}{2}\frac{1}{\frac{r + 2}{2}} & 0 \\
                {r/2 \choose 1}\frac{r}{r - 1} & {\frac{r - 2}{2} \choose 0}\frac{r - 1}{(r - 1)(r - 2)} & \cdots & {r/2 \choose 1}\frac{r + 2}{2}\frac{1}{r/2} & 0 \\
                \vdots & \vdots & & \vdots & \vdots \\
                {r/2 \choose r/2}\frac{r}{r/2} & {\frac{r - 2}{2} \choose \frac{r - 2}{2}}\frac{r - 1}{(r/2)(\frac{r - 2}{2})} & \cdots & {r/2 \choose r/2}\frac{r + 2}{2}\frac{1}{1} & 1
            \end{pmatrix}.
        \]
        Perform the operation $C_{\frac{r - 2}{2}} \to C_{\frac{r - 2}{2}} - C_0$ to make the $\left(0, \dfrac{r - 2}{2}\right)^{\mathrm{th}}$ entry $0$. Then for $j \geq 1$, the $\left(j, \dfrac{r - 2}{2}\right)^{\mathrm{th}}$ entry becomes
        \[
            {r/2 \choose j}\frac{\frac{r + 2}{2}}{\frac{r + 2}{2} - j} - {r/2 \choose j}\frac{r}{r - j} = {\frac{r - 2}{2} \choose j - 1}\left(\frac{r/2}{\frac{r + 2}{2} - j}\right)\left(\frac{r - 2}{2}\right)\frac{1}{r - j}.
        \]
        The $\left(1, \dfrac{r - 2}{2}\right)^{\mathrm{th}}$ entry is
        \[
            \frac{r - 2}{2}\frac{1}{r - 1}.
        \]
        Now perform the operation $C_{\frac{r - 2}{2}} \to C_{\frac{r - 2}{2}} - (r - 2)\left(\dfrac{r - 2}{2}\right)\dfrac{1}{r - 1}C_1$ to make the $\left(1, \dfrac{r - 2}{2}\right)^{\mathrm{th}}$ entry $0$. Then for $j \geq 2$, the $\left(j, \dfrac{r - 2}{2}\right)^{\mathrm{th}}$ entry becomes
        \begin{eqnarray*}
            && {\frac{r - 2}{2} \choose j - 1}\left(\frac{r/2}{\frac{r + 2}{2} - j}\right)\left(\frac{r - 2}{2}\right)\frac{1}{r - j} - \left(\frac{r - 2}{2}\right){\frac{r - 2}{2} \choose j - 1}\frac{r - 2}{(r - j)(r - j - 1)} \\
            && = {\frac{r - 4}{2} \choose j - 2}\left(\frac{\frac{r - 2}{2}}{\frac{r + 2}{2} - j}\right)\left(\frac{r - 2}{2}\right)\left(\frac{r - 4}{2}\right)\frac{1}{(r - j)(r - j - 1)}.
        \end{eqnarray*}
        Continuing this process we see that after making the first $r/2 - 1$ entries in the penultimate
        column $0$, the $\left(j, \dfrac{r - 2}{2}\right)^{\mathrm{th}}$ entry for $j \geq \dfrac{r - 2}{2}$ is
        \[
            {1 \choose j - \frac{r - 2}{2}}\left(\frac{2}{\frac{r + 2}{2} - j}\right)\left(\frac{r - 2}{2}\right)!\frac{1}{(r - j) \cdots (r - j - \frac{r - 4}{2})}.
        \]
        Therefore by Cramer's rule, we see that
        \[
            x_{\frac{r + 2}{2}} = (-1)^{\frac{r - 2}{2}}\frac{1}{r/2}.
        \]
        Using the same column reductions, we also see that
        \[
            x_{r/2} = \frac{1}{r/2}.
        \]

        Therefore \eqref{Half refined main congruence in leq 0} becomes
        \begin{eqnarray*}
            0 & \equiv & \frac{r + 2}{2}(\cL - H_{-} - H_{+})z^{r/2}\mathbbm{1}_{p\Zp} - (-1)^{r/2}\frac{1}{r/2}\sum_{\alpha = 0}^{p - 1}(z - \alpha p)^{r/2}\mathbbm{1}_{\alpha p + p^2\Zp} \\
            && + (-1)^{\frac{r - 2}{2}}\frac{1}{r/2}z^{r/2}\mathbbm{1}_{\Zp} + \frac{1}{r/2}\sum_{a = 0}^{p - 1}(z - a)^{r/2}\mathbbm{1}_{a + p\Zp} \mod \pi \latticeL{k}.
        \end{eqnarray*}
        Multiplying by $(-1)^{r/2}(r/2)$, we get
        \begin{eqnarray}\label{Final congruence in leq 0}
            0 & \equiv & \lambda_{r/2}z^{r/2}\mathbbm{1}_{p\Zp} - \sum_{\alpha = 0}^{p - 1}(z - \alpha p)^{r/2}\mathbbm{1}_{\alpha p + p^2\Zp} \\
            && - z^{r/2}\mathbbm{1}_{\Zp} + (-1)^{r/2}\sum_{a = 0}^{p - 1}(z - a)^{r/2}\mathbbm{1}_{a + p\Zp} \mod \pi \latticeL{k}. \nonumber
        \end{eqnarray}
    
        We first claim that $F_{r}$ is a quotient of $\pi(p - 1, \lambda_{r/2}, \omega^{r/2})$. Note that
        by \eqref{Formulae for T10 and T12 in the non-commutative case},
        under the surjection $\IZind a^{r/2}d^{r/2} \twoheadrightarrow F_{r, \> r + 1}$, the function
        \begin{eqnarray*}
            (1 + T_{1, 2}T_{1, 0})T_{1, 0}\llbracket \id, X^{r/2}Y^{r/2}\rrbracket & = & \llbracket \beta, (-1)^{r/2}X^{r/2}Y^{r/2}\rrbracket + \sum_{\lambda \in I_1}\left\llbracket \begin{pmatrix}1 & 0 \\ -p\lambda & p\end{pmatrix}, X^{r/2}Y^{r/2}\right\rrbracket
        \end{eqnarray*}
        maps to
        \[
            -(-1)^{r/2}z^{r/2}\mathbbm{1}_{p\Zp} + \sum_{a = 0}^{p - 1}(z - ap)^{r/2}\mathbbm{1}_{ap + p^2\Zp}.
        \]
        Applying $(-1)^{r/2}\begin{pmatrix}p & 0 \\ 0 & 1\end{pmatrix}$, we get
        \[
            -z^{r/2}\mathbbm{1}_{\Zp} + (-1)^{r/2}\sum_{a = 0}^{p - 1}(z - a)^{r/2}\mathbbm{1}_{a + p\Zp}.
        \]
        Therefore using \eqref{Final congruence in leq 0}, we have the identity
        \[
            \lambda_{r/2}z^{r/2}\mathbbm{1}_{p\Zp} - \sum_{\alpha = 0}^{p - 1}(z - \alpha p)^{r/2}\mathbbm{1}_{\alpha p + p^2\Zp} = 0
        \]
        in the quotient $F_{r, \> r + 1}/F_{r + 1}$. Applying $-\begin{pmatrix}p & 0 \\ 0 & 1\end{pmatrix}$, we get
        \[
            \sum_{\alpha = 0}^{p - 1}(z - \alpha)^{r/2}\mathbbm{1}_{\alpha + p\Zp} - \lambda_{r/2}z^{r/2}\mathbbm{1}_{\Zp} = 0.
        \]
        As in the proof of Theorem \ref{Final theorem for geq}, we see that the left side of the equation above is the image of $-w(T_{-1, 0} - \lambda_{r/2})\llbracket \id, X^{r/2}Y^{r/2}\rrbracket$ under the first surjection in \eqref{Two parts in the last slice}. Therefore we get a surjection
        \[
          \pi(p - 1, \lambda_{r/2}, \omega^{r/2}) \simeq
          % \stackrel{\substack{\mathrm{Thm} \\ \ref{Comparison theorem in the non-commutative Hecke algebra case}}}{\simeq}
          \frac{\IZind a^{r/2}d^{r/2}}{\im(1 + T_{1, 2}T_{1, 0}) + \im (T_{-1, 0} - \lambda_{r/2})} \twoheadrightarrow \frac{F_{r, \> r + 1}}{F_{r + 1}} \simeq F_{r}.
        \]
        Here, the isomorphism follows from the two following facts:
        $\im(1 + T_{1, 2}T_{1, 0}) =  \im ((1 + T_{1, 2}T_{1, 0}) T_{1,0}) = \im(T_{1,0} + T_{1, 2})$ 
        since $T_{1,0}$ is an automorphism and it satisfies the first relation in
        \eqref{Relations in the non-commutative Hecke algebra}, and \eqref{alt def of pi} (with
        $r = p-1$).

               %                Next, we claim that if $F_{r + 1}$ is a quotient of $\pi(0, 0, \omega^{r/2})$, then $F_{r + 1} = 0$.
        Next, we prove the final claim of the theorem. 
        By  \eqref{Formulae for T10 and T12 in the non-commutative case},
        under the surjection $\IZind a^{r/2}d^{r/2} \twoheadrightarrow F_{r, \> r + 1}$, the function
        \[
            T_{1, 2}T_{1, 0}\llbracket \beta, (-1)^{r/2}X^{r/2}Y^{r/2}\rrbracket = \sum_{\lambda \in I_1} \left\llbracket \begin{pmatrix}1 & 0 \\ -p\lambda & p\end{pmatrix}, X^{r/2}Y^{r/2}\right\rrbracket
        \]
        maps to
        \[
            \sum_{\alpha = 0}^{p - 1}(z - \alpha p)^{r/2}\mathbbm{1}_{\alpha p + p^2\Zp}.
        \]
        Therefore using \eqref{Final congruence in leq 0}, we have the identity
        \[
            (-1)^{r/2}\sum_{a = 0}^{p - 1}(z - a)^{r/2}\mathbbm{1}_{a + p\Zp} - z^{r/2}\mathbbm{1}_{\Zp} + \lambda_{r/2}z^{r/2}\mathbbm{1}_{p\Zp} = 0
        \]
        in the quotient $F_{r, \> r + 1}/F_{r}$. Now under $\IZind a^{r/2}d^{r/2} \twoheadrightarrow F_{r, \> r + 1}$,
        by  \eqref{Formulae for T-10 and T12 in the commutative case},
        \eqref{Formulae for T10 and T12 in the non-commutative case} and the
        $G$-equivariance of $T_{-1,0}$ and $T_{1,0}$, we have
        \begin{eqnarray*}
            (T_{-1, 0} + T_{1, 0})\llbracket w, X^{r/2}Y^{r/2}\rrbracket = \sum_{\lambda \in I_1}\left\llbracket\begin{pmatrix}0 & 1 \\ p & -\lambda\end{pmatrix}, X^{r/2}Y^{r/2}\right\rrbracket + (-1)^{r/2}\left\llbracket \begin{pmatrix}p & 0 \\ 0 & 1\end{pmatrix}, X^{r/2}Y^{r/2}\right\rrbracket
        \end{eqnarray*}
        maps to
        \[
            -\sum_{a = 0}^{p - 1}(z - a)^{r/2}\mathbbm{1}_{a + p\Zp} + (-1)^{r/2}z^{r/2}\mathbbm{1}_{\Zp}.
          \]
          Assume towards a contraction that $F_{r+1}$ is isomorphic to $\pi(0, 0, \omega^{r/2})$.
          We claim that 
        the second map in \eqref{Two parts in the last slice} factors as
        %(we are using Theorem \ref{Comparison theorem in the non-commutative Hecke algebra case} here)
        \[
            \frac{\IZind a^{r/2}d^{r/2}}{\im T_{1, 2}T_{1, 0}} \twoheadrightarrow \frac{\IZind a^{r/2}d^{r/2}}{\im T_{1, 2}T_{1, 0} + \im (T_{-1, 0} + T_{1, 0})} \twoheadrightarrow \frac{F_{r, \> r + 1}}{F_{r}} \simeq F_{r + 1}.
          \]
          Indeed, by \cite[Theorem 3.13]{Chi23}, the left hand side is secretly
          spherical induction of the weight $V_0$ twisted
          by $\omega^{r/2}$. By \cite[Proposition 32]{BL94}, any irreducible quotient of
          such a representation is of
          the form $\pi(0,\lambda',\omega^{r/2})$ for some $\lambda'$. Clearly, we must
          have $\lambda' = 0$. So the second map in  \eqref{Two parts in the last slice}
          factors through the spherical
          Hecke operator, which in the Iwahori setting is known to be $T_{-1,0} + T_{1,0}$,
          again by \cite[Theorem 3.13]{Chi23}. This proves the claim. 
          Putting things together, we see that if $F_{r+1} \simeq \pi(0,0,\omega^{r/2})$, then
          $\lambda_{r/2}z^{r/2}\mathbbm{1}_{p\Zp} = 0$
          in $F_{r, \> r + 1}/F_{r}$. Therefore $F_{r, \> r + 1}/F_{r} = 0$,
          a contradiction. 
    \end{proof}

    We end this section with a remark.
        \begin{remark}\label{constant neq +-1 improvement remark}
          If $\lambda_{r/2}^2 \not \equiv 1 \mod \pi$, then we can
          % even
          prove that $F_{r + 1} = 0$. It suffices to show that under the surjection $\IZind a^{r/2}d^{r/2} \twoheadrightarrow F_{r, \> r + 1}$, the subspace $\im (1 + T_{1, 2}T_{1, 0})$ maps to $0$. Since $\im T_{1, 2}T_{1, 0} = \im T_{1, 2}$ maps to $0$ in $F_{r, \> r + 1}/F_{r} \simeq F_{r + 1}$ we see that, the function
        \[
            T_{1, 2}\left(-\llbracket \id, X^{r/2}Y^{r/2}\rrbracket + (-1)^{r/2}\begin{pmatrix}p & 0 \\ 0 & 1\end{pmatrix}\llbracket \id, X^{r/2}Y^{r/2}\rrbracket\right)
        \]
        maps to $0$ in $F_{r, \> r + 1}/F_{r}$. But, by \eqref{Formulae for T10 and T12 in the non-commutative case}, this function maps to
        \[
            -\sum_{\alpha = 0}^{p - 1}(z - \alpha p)^{r/2}\mathbbm{1}_{\alpha p + p^2\Zp} + (-1)^{r/2}\sum_{a = 0}^{p - 1}(z - a)^{r/2}\mathbbm{1}_{a + p\Zp}.
        \]
        Using \eqref{Final congruence in leq 0}, we see that
        \[
            (\lambda_{r/2} + w)z^{r/2}\mathbbm{1}_{p\Zp} = 0
        \]
        in $F_{r, \> r + 1}/F_{r}$. Applying the operator $(\lambda_{r/2} - w)$, we get
        \[
            (\lambda_{r/2}^2 - 1)z^{r/2}\mathbbm{1}_{p\Zp} = 0.
        \]
        Since $\lambda_{r/2}^2 \not \equiv 1 \!\! \mod \pi$, we see that $z^{r/2}\mathbbm{1}_{p\Zp} = 0$ in $F_{r, \> r + 1}/F_{r}$. This shows that $F_{r + 1} = 0$.
        \end{remark}

        \section{Reduction mod $p$ of $\latticeL{k}$}
        \label{Section containing the proof of the main theorem}
    %\fancyhead{}
    %\fancyhead[LO]{Reductions of semi-stable representations using the Iwahori mod $p$ LLC}
    %\fancyhead[RE]{Reduction mod $p$ of $\latticeL{k}$}
    In this section, we summarize all the results proved so far and use them to finally prove Theorem \ref{Main theorem in the second part of my thesis}. We show that for $3 \leq k \leq p+1$ (i.e., $1 \leq r \leq p - 1$) and odd primes $p \geq 5$, the reduction mod $p$ of the semi-stable representation $V_{k, \cL}$ is given in terms of the parameter $\nu = v_p(\cL - H_{-} - H_{+})$ by
%\[
%    \br{V}_{k, \cL} \sim
%    \begin{cases}
%        \ind \omega_2^{k - 1}, & \text{if $\nu < 1 - (k - 2)/2$} \\[2pt]
%        \mu_{\lambda_1}\omega^{k - 2} \oplus \mu_{\lambda_2^{-1}}\omega, & \text{if $\nu = 1 - (k - 2)/2$} \\[2pt]
%        \ind \omega_2^{k-2 + p}, & \text{if $1 - (k - 2)/2 < \nu < 2 - (k - 2)/2$} \\[2pt]
%        \mu_{\lambda_2}\omega^{k - 3} \oplus \mu_{\lambda_2^{-1}}\omega^2, & \text{if $\nu = 2 - (k - 2)/2$} \\[2pt]
%        \qquad \vdots & \qquad \qquad \qquad \vdots \\[2pt]
%        \lambda_{\frac{k-1}{2}}\omega^{(k - 1)/2} \oplus \lambda_{\frac{k-1}{2}}^{-1}\omega^{(k - 1)/2}, & \text{if $\nu \geq 1/2$, and $k$ is odd,} \\[2pt]
%        \qquad \text{or} & \\[2pt]
%        \lambda_{\frac{k-2}{2}}\omega^{k/2} \oplus \lambda_{\frac{k-2}{2}}^{-1}\omega^{(k - 2)/2}, & \text{if $\nu = 0$} \\[-5pt]
%        & \qquad \qquad \text{and k is even.}\\[-5pt]
%        \ind \omega_2^{k/2 + p(k/2-1)}, & \text{if $\nu > 0$}.
%  \end{cases}
%\]
\[
    \br{V}_{k, \cL} \sim
    \begin{cases}
        \ind (\omega_2^{r+1 + (i-1)(p-1)}), & \text{ if $(i-1) - r/2 < \nu < i - r/2$} \\
        \mu_{\lambda_i}\omega^{r+1-i} \oplus \mu_{\lambda_i^{-1}}\omega^{i}, & \text{ if $\nu = i - r/2$},
    \end{cases}
\]
where $1 \leq i \leq \dfrac{r+1}{2}$ if $r$ is odd and $1 \leq i \leq \dfrac{r+2}{2}$ if $r$ is even and for certain constants $\lambda_i$. We adopt the following conventions:
\begin{itemize}
    \item The first interval $- r/2 < \nu < 1 - r/2$ should be interpreted as $\nu < 1 - r/2$.
    \item If $r$ is odd, then the last case $\nu = 1/2$ should be interpreted as $\nu \geq 1/2$. If $r$ is even, then the interval $0 < \nu < 1$ should be interpreted as $\nu > 0$ and we drop the case $\nu = 1$.
\end{itemize}

\begin{theorem}\label{Main final theorem in the second part of my thesis}
    For $3 \leq k \leq p + 1$ and $p \geq 5$, we have
    \[
    \br{V}_{k, \cL} \sim
    \begin{cases}
        \ind (\omega_2^{r+1 + (i-1)(p-1)}), & \text{ if $(i-1) - r/2 < \nu < i -r/2$} \\
        \mu_{\lambda_i}\omega^{r+1-i} \oplus \mu_{\lambda_i^{-1}}\omega^{i}, & \text{ if $\nu = i -r/2$},
    \end{cases}
    \]
    where $1 \leq i \leq \dfrac{r+1}{2}$ if $r$ is odd and $1 \leq i \leq \dfrac{r+2}{2}$ if $r$ is even. The constants $\lambda_i$ are determined by
    \begin{eqnarray*}
        \lambda_i & = & \br{(-1)^i \> i {r+1-i \choose i}p^{r/2-i}(\cL - H_{-} - H_{+})}, \quad \text{ if } 1 \leq i < \dfrac{r + 1}{2} \\
        \lambda_{i} + \lambda_i^{-1} & = & \br{(-1)^i \> i{r + 1 - i \choose i}p^{r/2 - i}(\cL - H_{-} - H_{+})}, \quad \text{ if } i = \dfrac{r + 1}{2} \text{ and } r \text{ is odd}.
    \end{eqnarray*}
    %\[
    %    \begin{cases}
    %        \lambda_i = (-1)^i \> i {r+1-i \choose i}p^{r/2-i}(\cL - H_{-} - H_{+}) & \text{ if } 1 \leq i < \dfrac{r + 1}{2} \\
    %        \lambda_{i} + \lambda_i^{-1} = (-1)^i \> i{r + 1 - i \choose i}p^{r/2 - i}(\cL - H_{-} - H_{+}) & \text{ if } i = \dfrac{r + 1}{2}.
    %    \end{cases}
    %\]
    We follow the conventions stated before this theorem.
\end{theorem}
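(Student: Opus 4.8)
\emph{Strategy.} The plan is to assemble the structural results of Sections~\ref{Common section},~\ref{SpecOdd} and~\ref{SpecEven}. By the Iwahori mod $p$ LLC (Theorem~\ref{Iwahori mod p LLC}), the semisimplification $\br{V}_{k,\cL}$ is determined by the $G$-representation $\br{\latticeL{k}}$, so the whole task is to identify $\br{\latticeL{k}}$ as a function of $\nu$. Recall from the end of Section~\ref{Section defining the lattices} that $\latticeL{k}$ carries a $G$-stable filtration with graded pieces $F_{0,1},F_{2,3},\dots,F_{2\lfloor r/2\rfloor,\,2\lfloor r/2\rfloor+1}$, and that each $F_{2l,\,2l+1}$ is a nonzero-or-zero quotient of a compactly induced representation $\IZind a^{l}d^{r-l}$. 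I would organise the proof by the position of $\nu$ relative to the critical points $i-r/2$, and in each regime read off $\br{V}_{k,\cL}$ from the Iwahori mod $p$ LLC.

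\emph{Vanishing of the ``wrong'' graded pieces.} The heart of the bookkeeping is that, in any regime, at most one or two of the $F_{2l,\,2l+1}$ survive. Concretely, if $\nu>(j+1)-r/2$ then the constant $\lambda_{j+1}$ appearing in Theorem~\ref{Final theorem for geq} (or Theorem~\ref{Final theorem for geq 0} if $j+1$ is the last index and $r$ is even) satisfies $\lambda_{j+1}\equiv 0\bmod\pi$, so $F_{2j,\,2j+1}$ is a quotient of $\IZind a^{j}d^{r-j}/\im(\lambda_{j+1}T_{-1,0}-1)=\IZind a^{j}d^{r-j}/\im(-1)=0$; dually, if $\nu<j-r/2$ then $\lambda_j^{-1}\equiv 0\bmod\pi$ in Theorem~\ref{Final theorem for leq} (or Theorem~\ref{Final theorem for leq 0}, or the odd self-dual Theorem~\ref{Final theorem for -0.5 < nu < 0.5 for odd weights} when $j$ is the middle index), and again $F_{2j,\,2j+1}=0$. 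Running this over all indices, for $(i-1)-r/2<\nu<i-r/2$ only $F_{2i-2,\,2i-1}$ can survive; for $\nu=i-r/2$ with $i$ not the last index only $F_{2i-2,\,2i-1}$ and $F_{2i,\,2i+1}$ can survive; and around the two last points the surviving piece is the self-dual one $F_{r-1,r}$ for $r$ odd, respectively the pair $F_{r-2,r-1}$ and $F_{r,r+1}=F_r\oplus F_{r+1}$ for $r$ even.

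\emph{Reading off $\br{V}_{k,\cL}$.} In the open-interval regime $\br{\latticeL{k}}=F_{2i-2,\,2i-1}$ is a nonzero quotient of $\IZind a^{i-1}d^{r-i+1}$ lying in the image of the mod $p$ LLC and attached to a two-dimensional Galois representation; since the companion block of a (generic) reducible two-dimensional Galois representation has a different $IZ$-weight and so cannot occur in a quotient of a single twisted $\IZind d^{r'}$, the representation $\br{V}_{k,\cL}$ must be irreducible, and matching parameters with the $\lambda=0$ clause of Theorem~\ref{Iwahori mod p LLC} gives $\br{V}_{k,\cL}\sim\ind(\omega_2^{r+1+(i-1)(p-1)})$. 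In the critical-point regime, Theorems~\ref{Final theorem for geq} and~\ref{Final theorem for leq} give surjections onto $F_{2i-2,\,2i-1}$ and $F_{2i,\,2i+1}$ from $\pi([2i-2-r],\lambda_i^{-1},\omega^{r-i+1})$ and $\pi(r-2i,\lambda_i,\omega^i)$ with $\lambda_i$ a unit; the identities $[p-3-(r-2i)]\equiv[2i-2-r]$ and $\omega^i\cdot\omega^{r-2i+1}=\omega^{r-i+1}$ show this is precisely the companion pair of $\mu_{\lambda_i}\omega^{r+1-i}\oplus\mu_{\lambda_i^{-1}}\omega^{i}$ in the $\lambda\neq 0$ clause of Theorem~\ref{Iwahori mod p LLC}, whence $\br{V}_{k,\cL}\sim\mu_{\lambda_i}\omega^{r+1-i}\oplus\mu_{\lambda_i^{-1}}\omega^{i}$, and the constant is read off as $(-1)^i i\binom{r+1-i}{i}p^{r/2-i}\cL\equiv(-1)^i i\binom{r+1-i}{i}p^{r/2-i}(\cL-H_--H_+)\bmod\pi$. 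Around the last point I would invoke Theorem~\ref{Final theorem for geq 0.5 for odd weights} together with \eqref{pi quadratic} (which identifies the quotient by $T_{-1,0}^2-cT_{-1,0}+1$ with $\pi(p-2,\lambda,\omega^{(r+1)/2})\oplus\pi(p-2,\lambda^{-1},\omega^{(r+1)/2})$, $\lambda+\lambda^{-1}=c$, even when $\lambda=\pm 1$ up to semisimplification) for $r$ odd and $\nu\geq 1/2$, and Theorems~\ref{Final theorem for geq 0} and~\ref{Final theorem for leq 0} for $r$ even and $\nu=0$; in the latter case the assertion $F_{r+1}\not\simeq\pi(0,0,\omega^{r/2})$ together with the dimension constraint (and Remark~\ref{constant neq +-1 improvement remark} in the generic subcase $\lambda_{r/2}^2\not\equiv 1$) forces $F_{r+1}=0$, after which $F_{r-2,r-1}$ and $F_r$ supply the companion pair of $\mu_{\lambda_{r/2}}\omega^{r+1-r/2}\oplus\mu_{\lambda_{r/2}^{-1}}\omega^{r/2}$.

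\emph{Main obstacle.} The genuinely hard work lies in the inputs rather than the assembly: the self-dual case for odd weights near $\nu=1/2$ (the quadratic Hecke relation and the constant $c=\lambda+\lambda^{-1}$, with $\lambda=\pm1$ needing separate care) and the non-commutative Hecke algebra case for even weights near $\nu=0$ (the splitting $F_{r,r+1}=F_r\oplus F_{r+1}$ and the vanishing of $F_{r+1}$), both already carried out in Sections~\ref{SpecOdd} and~\ref{SpecEven}. For the present theorem the remaining delicate point is purely combinatorial: verifying, uniformly in $i$ and across the three types of regime, that the parameters of the one or two surviving $\pi$'s match the companion pair in Theorem~\ref{Iwahori mod p LLC}, including the boundary coincidences at $k=p$ and $k=p+1$ (for instance $(i,r)=(1,p-1)$, where $T_{1,0}$ already enters in Theorem~\ref{Final theorem for geq}).
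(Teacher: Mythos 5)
Your proposal follows the paper's own route: assemble Theorems~\ref{Final theorem for geq}, \ref{Final theorem for leq}, \ref{Final theorem for geq 0.5 for odd weights}, \ref{Final theorem for -0.5 < nu < 0.5 for odd weights}, \ref{Final theorem for geq 0}, \ref{Final theorem for leq 0}, do the $\nu$-bookkeeping on the graded pieces $F_{2l,\,2l+1}$ of $\br{\latticeL{k}}$, and read off $\br{V}_{k,\cL}$ from the compatibility with Theorem~\ref{Iwahori mod p LLC}; your companion-pair checks (that $\pi([2i-2-r],\lambda_i^{-1},\omega^{r-i+1})$ and $\pi(r-2i,\lambda_i,\omega^{i})$ pair under the correspondence, and the use of \eqref{pi quadratic} at the odd self-dual point) are exactly the matching steps the paper carries out or subsumes under ``similar and easier''.

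The one step that does not hold as written is your treatment of $\nu=0$ for even $r$: you claim that $F_{r+1}=0$ is ``forced by the dimension constraint''. There is no such constraint (everything in sight is an infinite-dimensional smooth $G$-representation), Remark~\ref{constant neq +-1 improvement remark} gives $F_{r+1}=0$ only when $\lambda_{r/2}^2\not\equiv 1\bmod\pi$, and Theorem~\ref{Final theorem for leq 0} only asserts $F_{r+1}\not\simeq\pi(0,0,\omega^{r/2})$; indeed the paper explicitly warns that for $\lambda_{r/2}=\pm 1$ both summands of $\pi(p-1,\lambda_{r/2},\omega^{r/2})^{\mathrm{ss}}$ can be nonzero, i.e.\ $F_{r+1}$ need not vanish. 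The correct argument is the paper's dichotomy: if $F_{r+1}$ is a quotient of a supersingular representation, then by \cite[Proposition 32]{BL94} it is a quotient of the irreducible $\pi(0,0,\omega^{r/2})$, so it is either zero or isomorphic to it, and the latter is excluded by Theorem~\ref{Final theorem for leq 0}; if instead $F_{r+1}$ is a quotient of a principal series, then, since $\br{\latticeL{k}}^{\mathrm{ss}}$ must have one of the two shapes of Theorem~\ref{Iwahori mod p LLC} and $F_{r-2,\,r-1}$, $F_r$ already realize the companion pair $\pi(p-3,\lambda_{r/2}^{-1},\omega^{\frac{r+2}{2}})$, $\pi(p-1,\lambda_{r/2},\omega^{r/2})$, it must be a quotient of $\pi(0,\lambda_{r/2},\omega^{r/2})$, whence $(F_r\oplus F_{r+1})^{\mathrm{ss}}\simeq\pi(p-1,\lambda_{r/2},\omega^{r/2})^{\mathrm{ss}}$ and the theorem follows without knowing $F_{r+1}=0$. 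A milder caveat of the same flavour: in the open intervals your exclusion of the reducible shape (``the companion block cannot occur in a quotient of a single $\IZind a^{i-1}d^{r-i+1}$'') only controls quotients, while semisimplification records Jordan--H\"older factors; the clean formulation is again that $\br{\latticeL{k}}^{\mathrm{ss}}$ lies in the image of the Iwahori mod $p$ LLC, and the reducible shape is incompatible with a single surviving inducing character --- this is the content the paper compresses into its final sentence, and your write-up should argue it at that level rather than via quotients.
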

\begin{proof}
  After the hard work done in Sections \ref{Common section}, \ref{SpecOdd}, \ref{SpecEven},
  the proof of this theorem is a standard application of the
  compatibility with respect to mod $p$ reduction between the $p$-adic and the
  Iwahori mod $p$ LLC (stated in Theorem~\ref{Iwahori mod p LLC}).

  We collect the necessary results from these sections here.
  We first state the common results for odd and even weights from Section \ref{Common section} (cf. Theorem \ref{Final theorem for geq} and Theorem \ref{Final theorem for leq}).
    \begin{enumerate}
        \item For $i = 1, 2, \ldots, \lceil r/2 \rceil - 1$ if $\nu > i - r/2$, then $F_{2i - 2, \> 2i - 1} = 0$.
        \item For $i = 1, 2, \ldots, \lceil r/2 \rceil - 1$ if $\nu = i - r/2$, then $\pi([2i - 2 - r], \lambda_i^{-1}, \omega^{r - i + 1}) \twoheadrightarrow F_{2i - 2, \> 2i - 1}$.
        \item For $i = 1, 2, \ldots, \lceil r/2 \rceil - 1$ if $\nu < i - r/2$, then $F_{2i, \> 2i + 1} = 0$.
        \item For $i = 1, 2, \ldots, \lceil r/2 \rceil - 1$ if $\nu = i - r/2$, then $\pi(r - 2i, \lambda_i, \omega^{i}) \twoheadrightarrow F_{2i, \> 2i + 1}$.
    \end{enumerate}
    Next, we state the extra results for odd weights from Section \ref{SpecOdd} (cf. Theorem \ref{Final theorem for geq 0.5 for odd weights} and Theorem \ref{Final theorem for -0.5 < nu < 0.5 for odd weights}).
    \begin{enumerate}
        \item[5.] If $\nu \geq 0.5$, then $\pi(p - 2, \lambda_{\frac{r + 1}{2}}, \omega^{\frac{r + 1}{2}}) \oplus \pi(p - 2, \lambda_{\frac{r + 1}{2}}^{-1}, \omega^{\frac{r + 1}{2}}) \twoheadrightarrow F_{r - 1, \> r}$.
        \item[6.] If $-0.5 < \nu < 0.5$, then
          % $\KZind V_{1} \otimes \omega^{\frac{r - 1}{2}} \twoheadrightarrow F_{r - 1, \> r}$.
          $\dfrac{\IZind a^{\frac{r - 1}{2}}d^{\frac{r + 1}{2}}}{\im T_{-1, 0}} \twoheadrightarrow F_{r-1, \> r}$.
          
    \end{enumerate}
    Finally, we state the extra results for even weights from Section \ref{SpecEven} (cf. Theorem \ref{Final theorem for geq 0} and Theorem \ref{Final theorem for leq 0}).
    \begin{enumerate}
        \item[5.] If $\nu > 0$, then $F_{r - 2, \> r - 1} = 0$.
        \item[6.] If $\nu = 0$, then $\pi(p - 3, \lambda_{r/2}^{-1}, \omega^{\frac{r + 2}{2}}) \twoheadrightarrow F_{r - 2, \> r - 1}$.
        \item[7.] If $\nu < 0$, then $F_{r, \> r + 1} = 0$.
        \item[8.] If $\nu = 0$, then $\pi(p - 1, \lambda_{r/2}, \omega^{r/2}) \twoheadrightarrow F_{r}$ and $F_{r + 1}$ is not isomorphic to $\pi(0, 0, \omega^{r/2})$.
    \end{enumerate}

    We complete the proof for $\nu = 0$ and even weights $k$. By $1.$, we see that $F_{0, \> 1}, \ldots, F_{r - 4, \> r - 3}$ $ = 0$. Therefore $\br{\latticeL{k}} = (F_{r - 2, \> r - 1} \oplus F_{r} \oplus F_{r + 1})^{\mathrm{ss}}$. By $6.$ and $8.$, the first two are quotients of principal series (which pair under the Iwahori mod $p$ LLC). If $F_{r + 1}$ is a quotient of a supersingular representation, then by \cite[Proposition 32]{BL94} it must be a quotient of
    $\pi(0, 0, \omega^{r/2})$ and hence by $8.$, it must vanish. Therefore, we are done in this case. On the other hand, if $F_{r + 1}$ is a quotient of a principal series, then by the
    Iwahori mod $p$ LLC, it must be a quotient of $\pi(0, \lambda_{r/2}, \omega^{r/2})$. This forces $(F_{r} \oplus F_{r + 1})^{\mathrm{ss}}$ to be $\pi(0, \lambda_{r/2}, \omega^{r/2})^{\mathrm{ss}}\> \simeq \pi(p - 1, \lambda_{r/2}, \omega^{r/2})^{\mathrm{ss}}$ (note that if $\lambda_{r/2} = \pm 1$, then both the summands can be non-zero, cf. Remark \ref{constant neq +-1 improvement remark})). Again, we are done by the Iwahori mod $p$ LLC.

    The argument for other $\nu$ is similar and easier.
\end{proof}

\vspace{0.3cm}

{\noindent \bf Acknowledgements.} We are grateful to C. Breuil for many encouraging
conversations about the material in this paper over the past few years.
As will be evident to the reader, this paper leans on foundational material developed
by P. Colmez. We also thank A. Jana for useful discussions on
Iwahori-Hecke operators and P. Paule for introducing us to the computer packages
used in the appendix. The second author thanks A. Abbes and E. Ullmo for invitations
to visit IHES in the summers of 2022 and 2023, and S. Morra to visit Sorbonne Paris Nord
in the summer of 2024.

\vspace{.2cm}

%\title{On some determinants in the semi-stable $p$-adic Local Langlands Program}
%\author{Anand Chitrao and Eknath Ghate}
%\date{May 2024}

%\begin{document}

%\maketitle

\newpage

\appendix

\section*{Appendix}

In the following appendix, we solve for the variables in the 5 matrix equations referred to in footnotes \ref{space-time 10}, \ref{space-time 11}, \ref{space-time 12} (for $r$ odd) and \ref{space-time 16}, \ref{space-time 17} (for $r$ even) in the main text.
We use Cramer's rule. There are two parts to the argument:

\begin{itemize}
  \item A column reduction argument to reduce determinants to 
           certain binomial identities (which also involve partial harmonic sums). 
  \item A verification of these identities using the {\tt Sigma} or {\tt fastZeil} packages in mathematica.
           These packages were developed at the Research Institute of Symbolic Computation (RISC) at 
           Johannes Kepler University in Linz, Austria. 
           We remark that {\tt fastZeil} provides a computer proof of the main
           recursive step in the proof of the identities which can then be verified by hand. So the proofs of these identities are
           ultimately not based on any computer program.
\end{itemize}           
           
            The appendix is divided into $5$ sections, one for each matrix equation. 

\section{Footnote 10}
  \label{Footnote 10}
The first matrix equation we treat arises in footnote~\ref{space-time 10}. We wish to show that in the solution to
\[
    \! \begin{pmatrix}
        {\frac{r + 3}{2} \choose 0}\frac{r}{r} & 0 & 0 & \cdots & 0 \\
        {\frac{r + 3}{2} \choose 1}\frac{r}{r - 1} & {\frac{r + 1}{2} \choose 0}\frac{r - 1}{(r - 1)(r - 2)} & 0 & \cdots & 0 \\
        {\frac{r + 3}{2} \choose 2}\frac{r}{r - 2} & {\frac{r + 1}{2} \choose 1}\frac{r - 1}{(r - 2)(r - 3)} & {\frac{r - 1}{2} \choose 0}\frac{2!(r - 2)}{(r - 2)(r - 3)(r - 4)} & \cdots & 0 \\
        \vdots & \vdots & \vdots & & \vdots \\
        \binom{\frac{r + 3}{2}}{\frac{r - 1}{2}}\frac{r}{\frac{r + 1}{2}} & {\frac{r + 1}{2} \choose \frac{r - 3}{2}}\frac{r - 1}{(\frac{r + 1}{2})(\frac{r - 1}{2})} & {\frac{r - 1}{2} \choose \frac{r - 5}{2}}\frac{2!(r - 2)}{(\frac{r + 1}{2})(\frac{r - 1}{2})(\frac{r - 3}{2})} & \cdots & 1
        \end{pmatrix} \!\! 
        \begin{pmatrix}x_r \\ x_{r - 1} \\ x_{r - 2} \\ \vdots \\ x_{\frac{r+1}{2}} \end{pmatrix} \!\! = \!\! \begin{pmatrix}{\frac{r + 3}{2} \choose 0}\left(\cL - H_{\frac{r + 3}{2}}\right) \\ {\frac{r + 3}{2} \choose 1}\left(\cL - H_{\frac{r + 1}{2}}\right) \\ {\frac{r + 3}{2} \choose 2}\left(\cL - H_{\frac{r - 1}{2}}\right) \\ \vdots \\ {\frac{r + 3}{2} \choose \frac{r - 1}{2}}(\cL - H_2) \end{pmatrix},
\]
we have $x_{\frac{r + 1}{2}} = (-1)^{\frac{r - 1}{2}}\dfrac{r + 3}{4}\left(\cL - H_{\frac{r - 1}{2}} - H_{\frac{r + 1}{2}}\right)$.

Let $A$ be the coefficient matrix, $X$ be the matrix of the indeterminates and write the matrix on the right as $B_1 + B_2 + B_3$, where
\[
    B_1 = \begin{pmatrix}{\frac{r + 3}{2}\choose 0}\cL \\ {\frac{r + 3}{2}\choose 1}\cL \\ {\frac{r + 3}{2} \choose 2}\cL \\ \vdots \\ {\frac{r + 3}{2}\choose \frac{r - 1}{2}}\cL\end{pmatrix}, B_2 = \begin{pmatrix}-{\frac{r + 3}{2}\choose 0}H_{\frac{r + 3}{2}} \\ -{\frac{r + 3}{2}\choose 1}H_{\frac{r + 3}{2}} \\ -{\frac{r + 3}{2} \choose 2}H_{\frac{r + 3}{2}} \\ \vdots \\ -{\frac{r + 3}{2}\choose \frac{r - 1}{2}}H_{\frac{r + 3}{2}}\end{pmatrix} \text{ and } B_3 = \begin{pmatrix}{\frac{r + 3}{2}\choose 0}(0) \\ {\frac{r + 3}{2}\choose 1}(1/\frac{r + 3}{2}) \\ {\frac{r + 3}{2} \choose 2}(1/\frac{r + 1}{2} + 1/\frac{r + 3}{2}) \\ \vdots \\ {\frac{r + 3}{2}\choose \frac{r - 1}{2}}(1/3 + 1/4 + \cdots + 1/\frac{r + 3}{2})\end{pmatrix}.
\]

Let $A_i$ be the matrix $A$ with last column replaced by $B_i$ for $i = 1, 2,$ and $3$. We first claim that
\[
    \frac{\det A_1}{\det A} = (-1)^{\frac{r - 1}{2}}\frac{r + 3}{4}\cL \quad 
    \text{and} \quad
    \frac{\det A_2}{\det A} = (-1)^{\frac{r - 1}{2}}\frac{r + 3}{4}\left(-H_{\frac{r + 3}{2}}\right).
\]
Indeed, this follows immediately  by showing that after making all but the last entry in the last column of the matrix

\[
    \begin{pmatrix}
    {\frac{r + 3}{2} \choose 0}\frac{r}{r} & 0 & 0 & \cdots & {\frac{r + 3}{2} \choose 0} \\
    {\frac{r + 3}{2} \choose 1}\frac{r}{r - 1} & {\frac{r + 1}{2} \choose 0}\frac{r - 1}{(r - 1)(r - 2)} & 0 & \cdots & {\frac{r + 3}{2} \choose 1} \\
    {\frac{r + 3}{2} \choose 2}\frac{r}{r - 2} & {\frac{r + 1}{2} \choose 1}\frac{r - 1}{(r - 2)(r - 3)} & {\frac{r - 1}{2} \choose 0}\frac{2!(r - 2)}{(r - 2)(r - 3)(r - 4)} & \cdots & {\frac{r + 3}{2}\choose 2} \\
    \vdots & \vdots & \vdots & & \vdots \\
    \binom{\frac{r + 3}{2}}{\frac{r - 1}{2}}\frac{r}{\frac{r + 1}{2}} & {\frac{r + 1}{2} \choose \frac{r - 3}{2}}\frac{r - 1}{(\frac{r + 1}{2})(\frac{r - 1}{2})} & {\frac{r - 1}{2} \choose \frac{r - 5}{2}}\frac{2!(r - 2)}{(\frac{r + 1}{2})(\frac{r - 1}{2})(\frac{r - 3}{2})} & \cdots & {\frac{r + 3}{2} \choose \frac{r - 1}{2}}
    \end{pmatrix},
\]
zero using column operations, the last entry becomes $(-1)^{\frac{r - 1}{2}}\dfrac{r + 3}{4}$.

Here are the column operations. The matrix above is a square matrix of size $\dfrac{r + 1}{2}$. Number its columns using the labels $0$ to $\dfrac{r - 1}{2}$.  First perform $C_{\frac{r - 1}{2}} \to C_{\frac{r - 1}{2}} - C_0$ to make the $(0, 0)^{\mth}$ entry $0$. For $j \geq 1$, the $(j, \frac{r - 1}{2})^{\mth}$ entry becomes
\[
    {\frac{r + 3}{2} \choose j} - {\frac{r + 3}{2} \choose j}\frac{r}{r - j} = -\left(\frac{r + 3}{2}\right){\frac{r + 1}{2} \choose j - 1}\frac{1}{r - j}.
\]
So after this operation, the $(1, \frac{r - 1}{2})^{\mth}$ entry is $-\left(\frac{r + 3}{2}\right)\frac{1}{r - 1}$.

Now perform the operation $C_{\frac{r - 1}{2}} \to C_{\frac{r - 1}{2}} + \frac{r - 2}{r - 1}\left(\frac{r + 3}{2}\right)C_1$ to make the $(1, \frac{r - 1}{2})^{\mth}$ entry $0$. For $j \geq 2$, the $(j, \frac{r - 1}{2})^{\mth}$ entry becomes
\begin{eqnarray*}
    && -\left(\frac{r + 3}{2}\right){\frac{r + 1}{2} \choose j - 1}\frac{1}{r - j} + \frac{r - 2}{(r - j)(r - j - 1)}\left(\frac{r + 3}{2}\right){\frac{r + 1}{2} \choose j - 1} \\ 
    && \>\quad = \left(\frac{r + 3}{2}\right)\left(\frac{r + 1}{2}\right){\frac{r - 1}{2} \choose j - 2}\frac{1}{(r - j)(r - j - 1)}.
\end{eqnarray*}

Continuing this process, after making the first $\frac{r - 1}{2}$ entries in the last column $0$, for $j \geq \frac{r - 1}{2}$, the $(j, \frac{r - 1}{2})^{\mth}$ entry becomes
\[
    (-1)^{\frac{r - 1}{2}}\left(\frac{r + 3}{2}\right)\left(\frac{r + 1}{2}\right) \cdots (3) {2 \choose j - \left(\frac{r - 1}{2}\right)}\frac{1}{(r - j)(r - j - 1) \cdots (\frac{r + 3}{2} - j)}.
\]
So the $\left(\frac{r - 1}{2}, \frac{r - 1}{2}\right)^\mth$ entry becomes $(-1)^{\frac{r - 1}{2}}\frac{r + 3}{4}$.

Next, we solve the equation $AX = B_3$ by solving for each fraction $1/(n + 1)$ for $n = 2, 3, \ldots, \frac{r + 1}{2}$ separately. We claim that the $1/(n + 1)$ part of $x_{\frac{r + 1}{2}}$ without $1/(n + 1)$ is 
\[
    \left(\frac{r + 3}{4}\right)\left[\sum_{j = 1}^{n - 1}(-1)^{j - 1}{2j - 1 \choose j}{\frac{r - 1}{2} + j \choose 2j - 1}\right].
\]

This is obvious for $n = 2$ using Cramer's rule since only the last entry in the last column is non-zero. For $n = 3$, the last two entries are non-zero. So we perform the operation $C_{\frac{r - 1}{2}} \to C_{\frac{r - 1}{2}} - {\frac{r + 3}{2} \choose \frac{r - 3}{2}}\frac{\left(\frac{r + 1}{2}\right)\left(\frac{r - 1}{2}\right)}{(2)(1)}C_{\frac{r - 3}{2}}$ to make the second-to-last entry $0$. Therefore the last entry becomes
\begin{eqnarray*}
    && \frac{1}{2}\left(\frac{r + 3}{2}\right)\left(\frac{r + 1}{2}\right) - \frac{1}{6}\left(\frac{r + 3}{2}\right)\left(\frac{r + 1}{2}\right)^2\left(\frac{r - 1}{2}\right)^2\frac{1}{2}3\left(\frac{r + 3}{2}\right)\frac{1}{\left(\frac{r + 1}{2}\right)\left(\frac{r - 1}{2}\right)} \\
    && \>\quad = {\frac{r + 3}{2} \choose 2}\left[1 - \frac{1}{2}\left(\frac{r + 3}{2}\right)\left(\frac{r - 1}{2}\right)\right],
\end{eqnarray*}
verifying the claim in this case. The general case is based on an intelligent guess whose verification we
leave to the reader.

In any case, to show that $x_{\frac{r + 1}{2}} = (-1)^{\frac{r - 1}{2}}\dfrac{r + 3}{4}\left(\cL - H_{\frac{r - 1}{2}} - H_{\frac{r + 1}{2}}\right)$, it suffices to prove the identity
\begin{eqnarray}
  \label{main identity 10}
    \sum_{n = 2}^{\frac{r + 1}{2}}\frac{1}{n + 1}\sum_{j = 1}^{n - 1}(-1)^{j - 1}{2j - 1 \choose j}{\frac{r - 1}{2} + j \choose 2j - 1} = (-1)^{\frac{r - 1}{2}}\left(H_{\frac{r +3}{2}} -H_{\frac{r -1}{2}} -H_{\frac{r +1}{2}} \right)%. + \frac{2}{r + 3}\right).
\end{eqnarray}
Switching the order of $j$ and $n$, the double sum on the left is
\begin{eqnarray}
   \label{reorder n and j}
    \sum_{j = 1}^{\frac{r - 1}{2}} (-1)^{j - 1}{2j - 1 \choose j}{\frac{r - 1}{2} + j \choose 2j - 1} \sum_{n = j+1}^{\frac{r+1}{2}} 
    \frac{1}{n +1} 
    \>\quad & = & \sum_{j = 1}^{\frac{r - 1}{2}} (-1)^{j - 1}{2j - 1 \choose j}{\frac{r - 1}{2} + j \choose 2j - 1} (H_{\frac{r+3}{2}} - H_{j+1}) \nonumber \\
    = \quad H_{\frac{r+3}{2}}  \cdot \sum_{j = 1}^{\frac{r - 1}{2}} (-1)^{j - 1}{2j - 1 \choose j}{\frac{r - 1}{2} + j \choose 2j - 1}  
    &   +  & \quad    \sum_{j = 1}^{\frac{r - 1}{2}} (-1)^{j} {2j - 1 \choose j}{\frac{r - 1}{2} + j \choose 2j - 1} H_{j+1}. 
\end{eqnarray}

The first sum on the right of \eqref{reorder n and j} is easy to compute. Rearranging the binomial coefficients using the identity
${n \choose m} {m \choose j} = {n \choose j} {n-j \choose m-j}$, it is just
\begin{eqnarray*}
  %\sum_{j = 1}^{\frac{r - 1}{2}} (-1)^{j - 1}{2j - 1 \choose j}{\frac{r - 1}{2} + j \choose 2j - 1}  
     \sum_{j = 1}^{\frac{r - 1}{2}} (-1)^{j - 1} {\frac{r - 1}{2} + j \choose j}{\frac{r - 1}{2}  \choose j-1} & = &  (-1)^\frac{r - 1}{2}  \left(1- {r \choose \frac{r +1}{2}} \right).
\end{eqnarray*}
This can be seen by manipulating some identities in Gould \cite[volumes 4, 5]{Gou10} However,  alternatively,
and more conceptually, we may use the Gauss' summation formula
\begin{eqnarray}
  \label{GSF}
  \prescript{}{2} {F_1} \left({-n, b \atop c}, 1 \right) = \frac{(c-b)_n}{(c)_n}
\end{eqnarray} 
for the Gauss hypergeometric function $\prescript{}{2} {F_1} \left({a, b \atop c}, z \right) = \sum_{k=0}^\infty \frac{(a)_k (b)_k}{(c)_k} \frac{z^k}{k!}$ noting that the series has finitely many terms when $a = -n$ is a negative integer since the rising factorial symbol
$(a)_k = a \cdot (a+1) \cdots (a+k-1)$ eventually dies.
This approach was shown to us by Peter Paule (in the context of Appendix~\ref{Footnote 17}  but we explain all the details in the present context and only recall some details there). 
Changing notation a bit
we must evaluate
\begin{eqnarray*}
\sum_{k = 1}^{n} (-1)^{k- 1}{n+k \choose k}{n \choose k - 1}  
           & =  & \sum_{k' = 0}^{n-1} (-1)^{k'}{n+k'+1 \choose k'+1}{n \choose k'}  \\
           & = & \sum_{k = 0}^{\infty} (-1)^{k}{n+k+1 \choose k+1}{n \choose k} - (-1)^n {2n+1 \choose n+1} \\
           & = & (n+1) \sum_{k = 0}^{\infty} (-1)^{k}{n+k+1 \choose k+1}{n \choose k} / (n+1) - (-1)^n {2n+1 \choose n+1}
\end{eqnarray*}        
for $n = \frac{r-1}{2}$.  
Write $f(n,j) =     (-1)^{j} {n+j+1 \choose j+1}{n \choose j} / (n+1)$. Clearly $f(n,0) = 1$, and by telescoping
\[ f(n,k) = \prod_{j=0}^{k-1} \frac{f(n,j+1)}{f(n,j)}  = \prod_{j=0}^{k-1}  \frac{(-n+j)(n+2+j)}{(j+2)} \cdot \frac{1}{j+1} = \frac{(-n)_k (n+2)_k}{(2)_k} \cdot \frac{1}{k!}. \]
Thus the last sum on the right above is just
$\prescript{}{2}{F_1}\left({-n,n+2 \atop 2}, 1 \right) = \frac{(-n)_n}{(2)_n}$, by \eqref{GSF}.
Thus the first term on the right above is 
\[ (n+1) \cdot \frac{(-n)_n}{(2)_n} = (n+1) \cdot  \frac{(-n)\cdot (-n+1)\cdots (-n+n-1)}{2 \cdot 3 \cdots (2 + n-1)} = (-1)^n \]
as desired.    

The second sum on the right of \eqref{reorder n and j} is trickier. Changing notation we may write it is
as 
\[ \sum_{k = 1}^{n} (-1)^{k}{n+k \choose k}{n \choose k - 1}  H_{k+1} \]
for $n = \frac{r-1}{2}$.
The embedded partial Harmonic sum throws one off. 
However, this sum may be evaluated using the package {\tt Sigma} in mathematica
(developed by Carsten Schneider). Remarkably, the sum satisfies a second order recurrence relation which has a
solution that is an appropriate linear combination of a general solution and a particular solution, much as 
in the theory of $2^{nd}$ order ordinary differential equations. Here are the commands in mathematica
(in text mode):
{\tiny
\begin{eqnarray*}
  \text{In[1]}     & :=  & <<\text{Sigma.m}   \\     
                       &      & \text{Sigma - A summation package by Carsten Schneider -- ? RISC --  
                                           V 2.89 (November 10, 2021)}  \\             
  \text{In[2]}     & :=  & \text{mySum = SigmaSum[SigmaPower[-1,k] SigmaBinomial[n+k,k] SigmaBinomial[n,k-1]     
                                           SigmaHNumber[k+1], \textbraceleft k,1,n\textbraceright]}       \\                                           
  \text{Out[2]}  &  =  & \text{SigmaSum[(1/(k+1) + SigmaHNumber[1, k]) SigmaBinomial[n, -1 + k] 
                                        SigmaBinomial[k + n, k] SigmaPower[-1, k], \textbraceleft k, 1, n\textbraceright]}   \\
  \text{In[3]}     & :=  & \text{rec = GenerateRecurrence[mySum][[1]]}        \\                            
  \text{Out[3]}  &  =  & \text{(1 + n) (5 + 2 n) SUM[n] + 2 (7 + 8 n + 2 n\textsuperscript{2} ) SUM[1 + n] 
                                              + (3 + n) (3 + 2 n)  SUM[2 + n]}   \\
                       &       & \text{   ==  ((1 + 2 n) ((3 + 2 n) (944 + 1847 n + 1298 n\textsuperscript{2}  
                                              + 389 n\textsuperscript{3}  + 42 n\textsuperscript{4} ) 
                                              + (1 + n) (2 + n) (3 + n) (4 + n) (53 + 63 n + 18 n\textsuperscript{2}) 
                                              }   \\
                       &       &  \text{   SigmaHNumber[1, n]) SigmaBinomial[2 n, n] SigmaPower[-1, n]) / 
                                                       ((1 + n)\textsuperscript{2}  (2 + n) (3 + n) (4 + n))}. \\
  \text{In[4]}    &  :=  &  \text{recSol = SolveRecurrence[rec, SUM[n]]}   \\                                   
  \text{Out[4]} &   =  &  \text{\textbraceleft \textbraceleft0, SigmaPower[-1, n]\textbraceright,  
                                               \textbraceleft0, (-1/(n+1) - 2 SigmaHNumber[1, n]) 
                                                                                            SigmaPower[-1, n]\textbraceright,} \\
                      &       &  \text{    \textbraceleft1, ( ((1 + 2 n) (3 + 2 n)) / ((1 + n)\textsuperscript{2}  (2 + n))
                                      +  (1 + 2 n)  SigmaHNumber[1, n] / (1 + n) )} \\
                      &       &  \text{      SigmaBinomial[2 n, n]  SigmaPower[-1, n]\textbraceright  \textbraceright}    \\
  \text{In[5]}    & :=  & \text{FindLinearCombination[recSol, mySum, n, 2]  }           \\       
  \text{Out[5]} &  =  & \text{SigmaPower[-1, n]/(-1-n) - 2 SigmaHNumber[1, n] SigmaPower[-1, n]} \\  
                      &      & \text{   +  ((1 + 2 n) (3 + 2 n)/ ((1 + n)\textsuperscript{2}(2 + n)) SigmaBinomial[2 n, n]         
                                           SigmaPower[-1, n]}   \\
                      &      & \text{   + ((1 + 2 n)/(1+n)) SigmaHNumber[1, n] SigmaBinomial[2 n, n] SigmaPower[-1, n] }
\end{eqnarray*}
}                                                                   
Thus within a few steps {\tt Sigma} yields that the tricky sum above is
\[ 
  (-1)^n (-H_n-H_{n+1}) + (-1)^n \frac{(2n+1)(2n+3)}{(n+1)^2(n+2)}{2n \choose n} + (-1)^n \frac{(2n+1)}{(n+1)} {2n \choose n} H_n 
\]
for $n = \frac{r-1}{2}$. We remark that once the recurrence relation has been produced by {\tt Sigma}, 
it can be checked that the sum above satisfies it (as an example we show how this can be done
for a similar sum and recurrence relation in Appendix~\ref{Footnote 17}). Since the purported answer above 
also satisfies the recurrence relation (a shorter by hand check), and both the sum and the answer above 
agree for small values of $n$,  they must match for all values of $n$. This reasoning makes the 
evaluation of the tricky sum entirely theoretical!

In any case, plugging the evaluations of the easy sum and the tricky sum into \eqref{reorder n and j}, and simplifying noting $H_{\frac{r+3}{2}} = H_n + 1/(n+1) + 1/(n+2)$, we obtain the identity \eqref{main identity 10}.

\section{Footnote 11}\label{Footnote 11}
    To solve
\[
        \begin{pmatrix}
            {\frac{r + 1}{2} \choose 0}\frac{r}{r} & 0 & 0 & \cdots & 0 \\
            {\frac{r + 1}{2}  \choose 1}\frac{r}{r - 1} & {\frac{r - 1}{2} \choose 0}\frac{r - 1}{(r - 1)(r - 2)} & 0 & \cdots & 0 \\
            {\frac{r + 1}{2} \choose 2}\frac{r}{r - 2} & {\frac{r - 1}{2} \choose 1}\frac{r - 1}{(r - 2)(r - 3)} & {\frac{r - 3}{2} \choose 0}\frac{2!(r - 2)}{(r - 2)(r - 3)(r - 4)} & \cdots & 0 \\
            \vdots & \vdots & \vdots & & \vdots \\
            {\frac{r + 1}{2} \choose \frac{r - 1}{2}}\frac{r}{\frac{r + 1}{2}} & {\frac{r - 1}{2} \choose \frac{r - 3}{2}}\frac{r - 1}{(\frac{r + 1}{2})(\frac{r - 1}{2})} & {\frac{r - 3}{2} \choose \frac{r - 5}{2}}\frac{2!(r - 2)}{(\frac{r + 1}{2})(\frac{r - 1}{2})(\frac{r - 3}{2})} & \cdots & 1
        \end{pmatrix}
        \begin{pmatrix}
            x_r \\ x_{r - 1} \\ x_{r - 2} \\ \vdots \\ x_{\frac{r + 1}{2}}
        \end{pmatrix}
        =
        \begin{pmatrix}
            {\frac{r + 1}{2} \choose 0}(\cL - H_{\frac{r + 1}{2}}) \\
            {\frac{r + 1}{2} \choose 1}(\cL - H_{\frac{r - 1}{2}}) \\
            {\frac{r + 1}{2} \choose 2}(\cL - H_{\frac{r - 3}{2}}) \\
            \vdots \\
            {\frac{r + 1}{2} \choose \frac{r - 1}{2}}(\cL - H_{1})
        \end{pmatrix}.
    \]
We have to prove that 
 \begin{eqnarray}
   \label{solution 11}
   x_{\frac{r + 1}{2}} = (-1)^{\frac{r - 1}{2}}(\cL - H_{-} - H_{+}).
\end{eqnarray}
As in Appendix~\ref{Footnote 10}, we separate the matrix on the right as $B_1 + B_2 + B_3$, where
    \[
        B_1 = \begin{pmatrix}{\frac{r + 1}{2} \choose 0}\cL \\ {\frac{r + 1}{2} \choose 1}\cL \\ {\frac{r + 1}{2} \choose 2} \cL \\ \vdots \\ {\frac{r + 1}{2} \choose \frac{r - 1}{2}}\cL\end{pmatrix}, B_2 = \begin{pmatrix}-{\frac{r + 1}{2} \choose 0}H_{\frac{r + 1}{2}} \\ -{\frac{r + 1}{2} \choose 1}H_{\frac{r + 1}{2}} \\ -{\frac{r + 1}{2} \choose 2}H_{\frac{r + 1}{2}} \\ \vdots \\ -{\frac{r + 1}{2} \choose \frac{r - 1}{2}}H_{\frac{r + 1}{2}}\end{pmatrix}, \text{ and } B_3 = \begin{pmatrix}{\frac{r + 1}{2} \choose 0}(0) \\ {\frac{r + 1}{2} \choose 0} (1/\frac{r - 1}{2}) \\ {\frac{r + 1}{2} \choose 0}(1/\frac{r - 1}{2} + 1/\frac{r + 1}{2}) \\ \vdots \\ {\frac{r + 1}{2} \choose \frac{r - 1}{2}}(1/2 + 1/3 + \cdots + 1/\frac{r + 1}{2})\end{pmatrix}.
    \]
As before, we show that
    \[
        \frac{\det A_1}{\det A} = (-1)^{\frac{r - 1}{2}} \cL \quad \text{ and } \quad \frac{\det A_2}{\det A} = (-1)^{\frac{r - 1}{2}} (- H_{\frac{r+1}{2}}).
    \]

    Consider the following column operations on the matrix $A_1$ (with the common $\cL$ term in every entry of the last column dropped and added in at the end). First perform $C_{\frac{r - 1}{2}} \to C_{\frac{r - 1}{2}} - C_0$ to make the $(0, \frac{r - 1}{2})^{\mth}$ entry $0$. For $j \geq 1$, the $(j, \frac{r - 1}{2})^\mth$ entry becomes
    \[
        {\frac{r + 1}{2} \choose j} - {\frac{r + 1}{2} \choose j}\frac{r}{r - j} = -\left(\frac{r + 1}{2}\right)\frac{1}{r - j}{\frac{r - 1}{2} \choose j - 1}.
    \]
    So the $(1, \frac{r - 1}{2})^\mth$ entry is $-\left(\frac{r + 1}{2}\right)\frac{1}{r - 1}$.

    Now perform $C_{\frac{r - 1}{2}} \to C_{\frac{r - 1}{2}} + (r - 2)\left(\frac{r + 1}{2}\right)\frac{1}{r - 1}C_1$. For $j \geq 2$, the $(j, \frac{r - 1}{2})^\mth$ entry becomes
    \begin{eqnarray*}
        && -\left(\frac{r + 1}{2}\right)\frac{1}{r - j}{\frac{r - 1}{2} \choose j - 1} + \left(\frac{r + 1}{2}\right)\frac{r - 2}{(r - j)(r - j - 1)}{\frac{r - 1}{2} \choose j - 1} \\
        && \>\quad = \left(\frac{r + 1}{2}\right)\left(\frac{r - 1}{2}\right)\frac{1}{(r - j)(r - j - 1)}{\frac{r - 3}{2} \choose j - 2}.
    \end{eqnarray*}

    Continuing this process, after making the first $\frac{r - 1}{2}$ entries $0$, for $j \geq \frac{r - 1}{2}$, the $(j, \frac{r - 1}{2})^\mth$ entry becomes
    \[
        (-1)^{\frac{r - 1}{2}}\left(\frac{r + 1}{2}\right)\left(\frac{r - 1}{2}\right) \cdots (2) \frac{1}{(r - j)(r - j - 1)\cdots (\frac{r + 3}{2} - j)}{1 \choose j - \frac{r - 1}{2}}.
    \]
So the $\left(\frac{r - 1}{2}, \frac{r - 1}{2}\right)^\mth$ entry is just $(-1)^{\frac{r - 1}{2}}$. 
Putting the $\cL$ back in we get the first ratio of determinants above is as claimed. The second ratio
follows identically.

    Next, we solve $AX = B_3$ by solving for $1/(n + 1)$ for $n = 1, 2, \ldots, \frac{r - 1}{2}$ separately. We claim that the solution to the $1/(n + 1)$ part is
    \[
        \sum_{j = 1}^{n} (-1)^{j - 1} {2j - 1 \choose j}{\frac{r - 1}{2} + j \choose 2j - 1}.
    \]

    This is easy to check for $n = 1$ as before. For $n = 2$, we use column operations to reduce the last column to its last entry. We perform $C_{\frac{r - 1}{2}} \to C_{\frac{r - 1}{2}} - {\frac{r + 1}{2} \choose \frac{r - 3}{2}}\frac{\left(\frac{r + 1}{2}\right)\left(\frac{r - 1}{2}\right)}{(2)(1)}C_{\frac{r - 3}{2}}$ to make the second-to-last entry in the last column $0$. As a consequence, the last entry becomes
    \[
        {\frac{r + 1}{2}} - \frac{1}{2}\left(\frac{r + 1}{2}\right)\left(\frac{r - 1}{2}\right)\left(\frac{r + 3}{2}\right) = {\frac{r + 1}{2} \choose 1} - 3{\frac{r + 3}{2} \choose 3},
    \]
    verifying the claim when $n = 2$. The case of general $n$ is an intelligent guess left to the reader as
    an exercise.

    To show \eqref{solution 11}, %$x_{\frac{r + 1}{2}} = (-1)^{\frac{r - 1}{2}}(\cL - H_{-} - H_{+})$, 
    we have to prove the identity
    \[
        \sum_{n = 1}^{\frac{r - 1}{2}} \frac{1}{n + 1} \sum_{j = 1}^{n}(-1)^{j - 1}{2j - 1 \choose j}{\frac{r - 1}{2} + j \choose 2j - 1} = (-1)^{\frac{r - 1}{2}}(-H_{-}).
    \]
As before (cf. \eqref{reorder n and j} above), the sum on the left can be broken into two sums and equals
\[
 H_{\frac{r+1}{2}}  \cdot \sum_{j = 1}^{\frac{r - 1}{2}} (-1)^{j - 1}{2j - 1 \choose j}{\frac{r - 1}{2} + j \choose 2j - 1}  
     \quad  +   \quad    \sum_{j = 1}^{\frac{r - 1}{2}} (-1)^{j} {2j - 1 \choose j}{\frac{r - 1}{2} + j \choose 2j - 1} H_{j+1}. 
\]
Changing notation (and binomial coefficients) a bit this equals
\[
 H_{\frac{r+1}{2}}  \cdot \sum_{k = 1}^{n} (-1)^{k - 1}{n+k \choose k}{n \choose k - 1}  
     \quad  +   \quad    \sum_{k = 1}^{n} (-1)^{k} {n+k \choose k}{n \choose k - 1} H_{k}
\]
with $n = \frac{r-1}{2}$. The first (easy) sum was evaluated above. Using {\tt Sigma} for the
second, the above expression  equals:
\[
  H_{n+1} \cdot (-1)^n \left(1 - {2n+1 \choose n +1} \right) \quad +
  \quad (-1)^n (-H_n - H_{n+1})  % + (-1)^n \frac{1}{n+1} {2n+1 \choose n +1} 
  + (-1)^n H_{n+1} {2n+1 \choose n +1}
\]
which clearly equals  $(-1)^n (-H_n)$, as desired putting $n = \frac{r-1}{2}$.

\section{Footnote 12}
 \label{Footnote 12}

We show that in the matrix equation
\[
\!\!\!\! \begin{pmatrix}
	{\frac{r + 1}{2} \choose 0}\frac{r}{r} & 0 & 0 & \cdots & 0 & 0\\
	{\frac{r + 1}{2}  \choose 1}\frac{r}{r - 1} & {\frac{r - 1}{2} \choose 0}\frac{r - 1}{(r - 1)(r - 2)} & 0 & \cdots & 0 & 0\\
	{\frac{r + 1}{2} \choose 2}\frac{r}{r - 2} & {\frac{r - 1}{2} \choose 1}\frac{r - 1}{(r - 2)(r - 3)} & {\frac{r - 3}{2} \choose 0}\frac{2!(r - 2)}{(r - 2)(r - 3)(r - 4)} & \cdots & 0 & 0\\
	\vdots & \vdots & \vdots & & \vdots & \vdots\\
	{\frac{r + 1}{2} \choose \frac{r - 1}{2}}\frac{r}{\frac{r + 1}{2}} & {\frac{r - 1}{2} \choose \frac{r - 3}{2}}\frac{r - 1}{(\frac{r + 1}{2})(\frac{r - 1}{2})} & {\frac{r - 3}{2} \choose \frac{r - 5}{2}}\frac{2!(r - 2)}{(\frac{r + 1}{2})(\frac{r - 1}{2})(\frac{r - 3}{2})} & \cdots & 1 & 0\\
	{\frac{r + 1}{2} \choose \frac{r + 1}{2}}\frac{r}{\frac{r - 1}{2}} & {\frac{r - 1}{2} \choose \frac{r - 1}{2}}\frac{r - 1}{(\frac{r - 1}{2})(\frac{r - 3}{2})} & {\frac{r - 3}{2} \choose \frac{r - 3}{2}}\frac{2!(r - 2)}{(\frac{r - 1}{2})(\frac{r - 3}{2})(\frac{r - 5}{2})} & \cdots & 0 & 1
\end{pmatrix} \!\!
\begin{pmatrix}
	x_r \\ x_{r - 1} \\ x_{r - 2} \\ \vdots \\ x_{\frac{r + 1}{2}} \\ x_{\frac{r - 1}{2}}
\end{pmatrix} \!\! 
= \!\! 
\begin{pmatrix}
	{\frac{r + 1}{2} \choose 0}(\cL - H_{\frac{r + 1}{2}}) \\
	{\frac{r + 1}{2} \choose 1}(\cL - H_{\frac{r - 1}{2}}) \\
	{\frac{r + 1}{2} \choose 2}(\cL - H_{\frac{r - 3}{2}}) \\
	\vdots \\
	{\frac{r + 1}{2} \choose \frac{r - 1}{2}}(\cL - H_{1}) \\
	0
\end{pmatrix},
\]
we have
$$x_{\frac{r + 1}{2}} = (-1)^{\frac{r - 1}{2}}\left(\cL - H_{-} - H_{+}\right).$$ 

Since the coefficient matrix above is invertible, there is a unique solution. Moreover, 
since the matrix equation above 
contains exactly the same equations as the equations in the matrix 
equation in Appendix~\ref{Footnote 11}  (except for the last equation which may be used to solve for the last variable
in terms of the previous ones),
%and the extra equation is a  `sub-matrix equation' of the above matrix equation, 
the formula for  $x_{\frac{r + 1}{2}}$ % = (-1)^{\frac{r - 1}{2}}(\cL - H_{-} - H_{+})$ 
follows immediately from \eqref{solution 11}.

Next, we show that 
 $$x_{\frac{r - 1}{2}} = -\cL + (-1)^{\frac{r - 1}{2}}\frac{2}{r + 1} + (-1)^{\frac{r - 1}{2}}\frac{r + 1}{2}(\cL - H_{-} - H_{+}).$$ 
 This implies the check in footnote~\ref{space-time 12} since this expression reduces mod $\pi$ to the one in that footnote,
noting that $\nu = v_p(\cL-H_{-}-H_{+}) \geq 0.5 > 0$ there.

Write the matrix on the right as $B_1 + B_2 + B_3 + B_4$, where
\[
B_1 = \begin{pmatrix} {\frac{r + 1}{2} \choose 0}\cL \\ {\frac{r + 1}{2} \choose 1}\cL \\ {\frac{r + 1}{2} \choose 2}\cL \\ \vdots \\ {\frac{r + 1}{2} \choose \frac{r + 1}{2}}\cL\end{pmatrix}, B_2 = \begin{pmatrix}0 \\ 0 \\ 0 \\ \vdots \\ -\cL\end{pmatrix}, B_3 = \begin{pmatrix}{\frac{r + 1}{2} \choose 0}(-H_{+}) \\ {\frac{r + 1}{2} \choose 1}(-H_{+}) \\ {\frac{r + 1}{2} \choose 2}(-H_{+}) \\ \vdots \\ {\frac{r + 1}{2} \choose \frac{r + 1}{2}}(-H_{+})\end{pmatrix} \text{ and } B_4 = \begin{pmatrix}{\frac{r + 1}{2} \choose 0}(0) \\ {\frac{r + 1}{2} \choose 1}(1/\frac{r + 1}{2}) \\ {\frac{r + 1}{2} \choose 2}(1/\frac{r - 1}{2} + 1/\frac{r + 1}{2}) \\ \vdots \\ {\frac{r + 1}{2} \choose \frac{r + 1}{2}}(1 + 1/2 + \cdots + 1/ \frac{r + 1}{2})\end{pmatrix}.
\]

We solve the $B_1$ and $B_3$ parts of the matrix equation above together as usual. Consider the matrix obtained by replacing the last column in the coefficient matrix with $B_1$ but without the common $\cL$ term. We perform the operation $C_{\frac{r - 1}{2}} \to C_{\frac{r - 1}{2}} - C_0$ to make the $(0, \frac{r + 1}{2})^\mth$ entry $0$. For $j \geq 1$, the $(j, \frac{r + 1}{2})^\mth$ entry becomes
\[
{\frac{r + 1}{2} \choose j} - {\frac{r + 1}{2} \choose j}\frac{r}{r - j} = {\frac{r + 1}{2} \choose j}\left(\frac{-j}{r - j}\right) = -\left(\frac{r + 1}{2}\right)\frac{1}{r - j}{\frac{r - 1}{2} \choose j - 1}.
\]
So the $(1, \frac{r + 1}{2})^\mth$ entry is $-\left(\frac{r + 1}{2}\right)\frac{1}{r - 1}$. 

Next, we perform $C_{\frac{r + 1}{2}} \to C_{\frac{r + 1}{2}} + (r - 2)\left(\frac{r + 1}{2}\right)\frac{1}{r - 1}C_1$
to make the $(1, \frac{r+1}{2})^\mth$ entry $0$. For $j \geq 2$, the $(j, \frac{r + 1}{2})^\mth$ entry becomes
\begin{eqnarray*}
	&& -\left(\frac{r + 1}{2}\right)\frac{1}{r - j}{\frac{r - 1}{2} \choose j - 1} + (r - 2)\left(\frac{r + 1}{2}\right){\frac{r - 1}{2} \choose j - 1}\frac{1}{(r - j)(r - j - 1)} \\
	&& \>\quad = \left(\frac{r + 1}{2}\right)\left(\frac{r - 1}{2}\right)\frac{1}{(r - j)(r - j - 1)}{\frac{r - 3}{2} \choose j - 2}.
\end{eqnarray*}

Continuing as usual, and after making the first $\frac{r - 1}{2}$ entries in the last column $0$, for $j \geq \frac{r -1}{2}$, the $(j, \frac{r + 1}{2})^\mth$ entry becomes
\[
(-1)^{\frac{r - 1}{2}} \left(\frac{r + 1}{2}\right)\left(\frac{r - 1}{2}\right) \cdots (2)\frac{1}{(r - j)(r - j - 1)\cdots (\frac{r + 3}{2} - j)}{1 \choose j - \frac{r - 1}{2}}.
\]
The above column operations reduce the lower right 2 by 2 matrix to an upper triangular matrix 
with $(\frac{r + 1}{2}, \frac{r + 1}{2})^\mth$ entry equal to $(-1)^{\frac{r - 1}{2}}\frac{r + 1}{2}$. Note that 
the $(\frac{r - 1}{2}, \frac{r - 1}{2})^\mth$ entry remains unchanged and is 1 and the $(\frac{r-1}{2}, \frac{r + 1}{2})^\mth$ entry is irrelevant when computing its determinant. By Cramer's rule (and putting $\cL$ back in)
we see that $B_1$ contributes $(-1)^{\frac{r - 1}{2}} \frac{r + 1}{2} \cL$ to $x_{\frac{r-1}{2}}$.

Arguing similarly, we see that $B_3$ contributes $(-1)^{\frac{r - 1}{2}}\frac{r + 1}{2} (-H_{+})$ to $x_{\frac{r-1}{2}}$. On the other hand, the $B_2$ part of the matrix equation clearly contributes just $-\cL$ to $x_{\frac{r - 1}{2}}$. 
So we have to show that $B_4$ contributes $(-1)^{\frac{r - 1}{2}}\frac{2}{r + 1} + (-1)^{\frac{r - 1}{2}}\frac{r + 1}{2}(-H_{-})$ to $x_{\frac{r-1}{2}}$.

We solve $AX = B_4$ by solving as usual the $1/(n + 1)$ parts separately and adding them up. The claim is that the $1/(n + 1)$ part of $AX = B_4$ without $1/(n + 1)$ is
\[
    1 + \left(\frac{r + 1}{2}\right)\sum_{j = 2}^{n}(-1)^{j - 1}\frac{j - 1}{j}{2j - 1 \choose j}{\frac{r - 1}{2} + j \choose 2j - 1}.
\]
This can easily be seen for $n = 0, 1$ and we do not need to perform any column operations. For $n \geq 2$ however, we have to reduce the last column as follows. Assume $n = 2$. Note that the $(\frac{r - 3}{2}, \frac{r + 1}{2})^\mth$ entry without $1/3$ is ${\frac{r + 1}{2} \choose \frac{r - 3}{2}}$. We have to kill this entry using the $(\frac{r - 3}{2}, \frac{r - 3}{2})^\mth$ entry, which is ${2 \choose 0}\dfrac{(\frac{r - 3}{2})!(\frac{r + 3}{2})}{(\frac{r + 3}{2})(\frac{r + 1}{2}) \cdots (3)} = \frac{2}{(\frac{r + 1}{2})(\frac{r - 1}{2})}$.

We perform the operation $C_{\frac{r + 1}{2}} \to C_{\frac{r + 1}{2}} - (1/2)(\frac{r + 1}{2})(\frac{r - 1}{2})\frac{(\frac{r + 1}{2})(\frac{r - 1}{2})}{2}C_{\frac{r - 3}{2}}$. This makes the matrix into a block lower triangular matrix whose lower right block is upper triangular with diagonal entries $1$ and
\begin{eqnarray*}
    1 - (1/2)\left(\frac{r + 1}{2}\right)\left(\frac{r - 1}{2}\right)\frac{(\frac{r + 1}{2})(\frac{r - 1}{2})}{2}\frac{(\frac{r + 3}{2})}{(\frac{r - 1}{2})} 
    & =  & 1 - \left(\frac{r + 1}{2}\right)\frac{3}{2}{\frac{r + 3}{2} \choose 3},
\end{eqnarray*}
which verifies the identity for $n = 2$. As usual, the verification of the claim for arbitrary $n$ is left to the reader.

The identity that we have to prove is
\begin{eqnarray}
\label{main identity 12}
\sum_{n = 0}^{\frac{r - 1}{2}}\frac{1}{n + 1}\left(1 + \left(\frac{r + 1}{2}\right)\sum_{j = 2}^{n}(-1)^{j - 1}\frac{j - 1}{j}{2j - 1 \choose j}{\frac{r - 1}{2} + j \choose 2j - 1}\right) = (-1)^{\frac{r - 1}{2}}\frac{2}{r + 1} - (-1)^{\frac{r - 1}{2}}\frac{r + 1}{2}H_{-}.
\end{eqnarray}
As before (cf. \eqref{reorder n and j}) we switch the order of $n$ and $j$ (noting that $n = 0, 1$ do not
contribute to the sum on $j$) and break the sum on the left into three parts:
\begin{eqnarray*}
  H_{\frac{r+1}{2}} + \frac{r+1}{2} H_{\frac{r+1}{2}} \sum_{j = 2}^{\frac{r-1}{2}} (-1)^{j - 1}\frac{j - 1}{j}{2j - 1 \choose j}{\frac{r - 1}{2} + j \choose 2j - 1} + \frac{r+1}{2} \sum_{j = 2}^{\frac{r-1}{2}} (-1)^{j}\frac{j - 1}{j}{2j - 1 \choose j}{\frac{r - 1}{2} + j \choose 2j - 1}  H_{j}.
\end{eqnarray*} 
Changing notation (and binomial coefficients) this may be rewritten as
\begin{eqnarray}
  \label{changing notation 12}
  H_{\frac{r+1}{2}} + \frac{r+1}{2} H_{\frac{r+1}{2}} \sum_{k = 2}^{n} (-1)^{k - 1}\frac{k - 1}{k}{n+k \choose k}{n \choose k - 1} + \frac{r+1}{2} \sum_{k = 2}^{n} (-1)^{k}\frac{k - 1}{k}{n+k  \choose k}{n \choose k - 1}  H_{k}
\end{eqnarray} 
with $n = \frac{r - 1}{2}$.
{\tt Sigma} shows that the first sum above is $$-\frac{1}{n+1} + (-1)^n - (-1)^n \frac{n(2n+1)}{(n+1)^2} {2n \choose n}$$ whereas the second is
$$-(-1)^n \frac{n}{(1+n)^2} - 2 (-1)^n H_n + (-1)^n\frac{n(2n+1)}{(n+1)^3} {2n \choose n} + (-1)^n\frac{n(2n+1)}{(n+1)^2} {2n \choose n} H_n. $$
Plugging these into \eqref{changing notation 12}, noting that $H_{n+1} = H_n + \frac{1}{n+1}$, and simplifying,   \eqref{changing notation 12} becomes
$$ (-1)^n \frac{1}{n+1} - (-1)^n (n+1) H_n, $$
which gives the right hand side of \eqref{main identity 12} since $n = \frac{r-1}{2}$.

\section{Footnote 16}
  \label{Footnote 16}

We show that in
\[
\!\begin{pmatrix}
	{\frac{r + 2}{2} \choose 0}\frac{r}{r} & 0 & 0 & \cdots & {\frac{r + 2}{2} \choose 0} \\
	{\frac{r + 2}{2} \choose 1}\frac{r}{r - 1} & {r/2 \choose 0}\frac{r - 1}{(r - 1)(r - 2)} & 0 & \cdots & {\frac{r + 2}{2} \choose 1} \\
	{\frac{r + 2}{2} \choose 2}\frac{r}{r - 2} & {r/2 \choose 1}\frac{r - 1}{(r - 2)(r - 3)} & {\frac{r - 2}{2} \choose 0}\frac{2!(r - 2)}{(r - 2)(r - 3)(r - 4)} & \cdots & {\frac{r + 2}{2} \choose 2} \\
	\vdots & \vdots & \vdots & & \vdots \\
	{\frac{r + 2}{2} \choose \frac{r - 2}{2}}\frac{r}{\frac{r + 2}{2}} & {r/2 \choose \frac{r - 4}{2}}\frac{r - 1}{(\frac{r + 2}{2})(r/2)} & {\frac{r - 2}{2} \choose \frac{r - 6}{2}}\frac{2!(r - 2)}{(\frac{r + 2}{2})(r/2)(\frac{r - 2}{2})} & \cdots & {\frac{r + 2}{2} \choose \frac{r - 2}{2}}
\end{pmatrix} \!\!\! 
\begin{pmatrix}
	x_r \\ x_{r - 1} \\ x_{r - 2} \\ \vdots \\ x_{\frac{r + 2}{2}}
\end{pmatrix} \!\!\! 
= \!\!\! \begin{pmatrix}
	{\frac{r + 2}{2} \choose 0}\left(\cL - H_{\frac{r + 2}{2}}\right) \\
	{\frac{r + 2}{2} \choose 1}\left(\cL - H_{r/2}\right) \\
	{\frac{r + 2}{2} \choose 2}\left(\cL - H_{\frac{r - 2}{2}}\right) \\
	\vdots \\
	{\frac{r + 2}{2} \choose \frac{r - 2}{2}}\left(\cL - H_{2}\right)
\end{pmatrix}\!\!,
\]
we have $x_{\frac{r + 2}{2}} = \cL - H_{-} - H_{+}$. As usual, we separate the matrix on the right as $B_1 + B_2$, where
\[
B_1 = \begin{pmatrix}{\frac{r + 2}{2} \choose 0}(\cL - H_{+}) \\ {\frac{r + 2}{2} \choose 1}(\cL - H_{+}) \\ {\frac{r + 2}{2} \choose 2}(\cL - H_{+}) \\ \vdots \\ {\frac{r + 2}{2} \choose \frac{r - 2}{2}}(\cL - H_{+})\end{pmatrix} \text{ and } B_2 = \begin{pmatrix}{\frac{r + 2}{2} \choose 0}(0) \\ {\frac{r + 2}{2} \choose 1}(1/\frac{r + 2}{2}) \\ {\frac{r + 2}{2} \choose 2}(1/\frac{r}{2} + 1/\frac{r + 2}{2}) \\ \vdots \\ {\frac{r + 2}{2} \choose \frac{r - 2}{2}}(1/3 + 1/4 + \cdots + 1/\frac{r + 2}{2})\end{pmatrix}.
\]

We first claim that the $B_1$ part of $x_{\frac{r + 2}{2}}$ is $\cL - H_{+}$. Indeed, this follows directly since the matrices $A_1$ and $A$ differ only in the last column, and since the last column of $A_1$ is $(\cL - H_{+})$ times the last column of $A$.

Next, we show that the $B_2$ part of $x_{\frac{r + 2}{2}}$ is $-H_{-}$. Firstly, we reduce $A$ to a lower triangular matrix using the following column operations and see that the $(\frac{r - 2}{2}, \frac{r - 2}{2})^\mth$ entry is $(-1)^{\frac{r - 2}{2}}$.
We perform $C_{\frac{r - 2}{2}} \to C_{\frac{r - 2}{2}} - C_0$ to make the $(0, \frac{r - 2}{2})^\mth$ entry $0$. For $j \geq 1$, the $(j, \frac{r - 2}{2})^\mth$ entry becomes
\[
    {\frac{r + 2}{2} \choose j} - {\frac{r + 2}{2} \choose j}\frac{r}{r - j} = -\left(\frac{r + 2}{2}\right){\frac{r}{2} \choose j - 1}\frac{1}{r - j}.
\]
Therefore the $(1, \frac{r - 2}{2})^\mth$ entry becomes
\[
    -\left(\frac{r + 2}{2}\right)\frac{1}{r - 1}.
\]
Now we perform $C_{\frac{r - 2}{2}} \to C_{\frac{r - 2}{2}} + (r - 2)\left(\frac{r + 2}{2}\right)\frac{1}{r - 1}C_1$ to make the $(1, \frac{r - 2}{2})^\mth$ entry $0$. For $j \geq 2$, the $(j, \frac{r - 2}{2})^\mth$ entry becomes
\begin{align*}
    & -\left(\frac{r + 2}{2}\right){\frac{r}{2} \choose j - 1}\frac{1}{r - j} + \left(\frac{r + 2}{2}\right){\frac{r}{2} \choose j - 1}\frac{r - 2}{(r - j)(r - j - 1)} \\
    & \>\quad = \left(\frac{r + 2}{2}\right)\left(\frac{r}{2}\right)\frac{1}{(r - j)(r - j - 1)}{\frac{r - 2}{2} \choose j - 2}.
\end{align*}
Continuing this and making the first $\frac{r - 2}{2}$ entries $0$, for $j \geq \frac{r - 2}{2}$, the $(j, \frac{r - 2}{2})^\mth$ entry becomes
\[
    (-1)^{\frac{r - 2}{2}}\left(\frac{r + 2}{2}\right)\left(\frac{r}{2}\right) \cdots (3)\frac{1}{(r - j)(r - j - 1) \cdots (\frac{r + 4}{2} - j)}{2 \choose j - \frac{r - 2}{2}}.
\]
Therefore the $(\frac{r - 2}{2}, \frac{r - 2}{2})^\mth$ entry is $(-1)^{\frac{r - 2}{2}}$.

Next, we solve the $\frac{1}{n + 1}$ parts of the equation $AX = B_2$ separately. We claim that for $2 \leq n \leq r/2$ the $1/(n + 1)$ part without $1/(n + 1)$ is
\[
    {(-1)^{\frac{r - 2}{2}}}\sum_{j = 1}^{n - 1}(-1)^{j - 1}{2j \choose j - 1}{r/2 + j \choose 2j}.
\]
For $n = 2$, this is easy to see since the corresponding part of $A_2$ and the column reduced form of $A$ found above differ only in the last column and so by Cramer's rule contributes $\frac{{\frac{r + 2}{2} \choose \frac{r - 2}{2}}}{(-1)^{\frac{r - 2}{2}}}$. For $n = 3$, we perform $C_{\frac{r - 2}{2}} \to C_{\frac{r - 2}{2}} - {\frac{r + 2}{2} \choose \frac{r - 4}{2}}\frac{(\frac{r + 2}{2})(\frac{r}{2})(\frac{r - 2}{2})}{6}C_{\frac{r - 4}{2}}$ on the corresponding part of $A_2$. Therefore the $(\frac{r - 2}{2}, \frac{r - 2}{2})^\mth$ entry becomes
\begin{eqnarray*}
    \frac{1}{2}\left(\frac{r + 2}{2}\right)\left(\frac{r}{2}\right) - \frac{1}{6}\left(\frac{r + 2}{2}\right)\left(\frac{r}{2}\right)\left(\frac{r - 2}{2}\right)\left(\frac{r + 4}{2}\right) 
    & = & {\frac{r + 2}{2} \choose 2} - 4{\frac{r + 4}{2} \choose 4},
\end{eqnarray*}
verifying the claim for $n = 3$. The verification of the general claim is left to the reader as usual.

To conclude, we have to prove the identity
\[
(-1)^{\frac{r - 2}{2}}\sum_{n = 2}^{r/2}\frac{1}{n + 1}\sum_{j = 1}^{n - 1}(-1)^{j - 1}{2j \choose j - 1}{r/2 + j \choose 2j} = -H_{-}.
\]
Changing the order of $n$ and $j$ (cf. \eqref{reorder n and j}), and changing notation (and binomial
coefficients) slightly, the left hand side is
\[
H_{n + 1} (-1)^{n} \cdot \sum_{k = 1}^{n-1} (-1)^{k}{n+k \choose k-1}{n + 1 \choose k + 1}  
     \quad  +   \quad    (-1)^{n - 1} \cdot \sum_{k = 1}^{n-1} (-1)^{k} {n+k \choose k-1}{n+1 \choose k + 1} H_{k+1}
\]
with $n = \frac{r}{2}$. As we shall see in the next section, the first sum is $(-1)^{n} - (-1)^{n} {2n \choose n-1}$ (replace $n+1$ by $n$ in \eqref{GS 17}),
and the second sum equals $(-1)^n (H_{n-1}+H_{n+1}) - (-1)^n {2n \choose n-1} H_{n+1}$ 
(by \eqref{main identity 17}). So the 
expression above equals $- H_{n-1} = - H_{-}$.

\section{Footnote 17}
  \label{Footnote 17}

We show that in
\[
\begin{pmatrix}
	{r/2 \choose 0}\frac{r}{r} & 0 & \cdots & {r/2 \choose 0} & 0 \\
	{r/2 \choose 1}\frac{r}{r - 1} & {\frac{r - 2}{2} \choose 0}\frac{r - 1}{(r - 1)(r - 2)} & \cdots & {r/2 \choose 1} & 0 \\
	\vdots & \vdots & & \vdots & \vdots \\
	{r/2 \choose r/2}\frac{r}{r/2} & {\frac{r - 2}{2} \choose \frac{r - 2}{2}}\frac{r - 1}{(r/2)(\frac{r - 2}{2})} & \cdots & {r/2 \choose r/2} & 1
\end{pmatrix}
\begin{pmatrix}
	x_r \\ x_{r - 1} \\ \vdots \\ x_{\frac{r + 2}{2}} \\ x_{r/2}
\end{pmatrix}
=
\begin{pmatrix}
	-\frac{r + 2}{2}{r/2 \choose 0}\left(H_{\frac{r + 2}{2}} - 1\right) \\
	- \frac{r + 2}{2}{r/2 \choose 1}\left(H_{r/2} - 1\right) \\
	\vdots \\ -\frac{r + 2}{2}{r/2 \choose \frac{r - 2}{2}}\left(H_{2} - 1\right) \\
	-\frac{r + 2}{2}{r/2 \choose r/2}\left(H_{1} - 1\right)
\end{pmatrix},
\]
we have
\[
x_{\frac{r + 2}{2}} = \frac{r + 2}{2}\left(1 - H_- - H_+\right) \text{ and } x_{r/2} = -(-1)^{r/2}\frac{1}{r/2}.
\]

We separate the matrix on the right as $B_1 + B_2$, where
\[
B_1 = \begin{pmatrix}\frac{r + 2}{2}{r/2 \choose 0}(1 - H_{+}) \\ \frac{r + 2}{2}{r/2 \choose 1}(1 - H_{+}) \\ \frac{r + 2}{2}{r/2 \choose 2}(1 - H_{+}) \\ \vdots \\ \frac{r + 2}{2}{r/2 \choose r/2}(1 - H_{+})\end{pmatrix} \text{ and } B_2 = \begin{pmatrix}\frac{r + 2}{2}{r/2 \choose 0}(0) \\ \frac{r + 2}{2}{r/2 \choose 1}(1/\frac{r + 2}{2}) \\ \frac{r + 2}{2}{r/2 \choose 2}(1/\frac{r}{2} + 1/\frac{r + 2}{2}) \\ \vdots \\ \frac{r + 2}{2}{r/2 \choose r/2}(1/2 + 1/3 + \cdots + 1/\frac{r + 2}{2})\end{pmatrix}.
\]

First, we solve for $x_{r/2}$. Let $A_1$ and $A_2$ be the matrices obtained by replacing the last column of $A$ with $B_1$ and $B_2$, respectively.   Note that the last two columns of $A_1$ are linearly dependent. Therefore $B_1$ does not contribute towards $x_{r/2}$. We now compute the determinant of $A_2$. Note that $\det A_2$ is $\frac{r + 2}{2}$ times the determinant of

\[
    \begin{pmatrix}
        {r/2 \choose 0}\frac{r}{r} & 0 & \cdots & {r/2 \choose 0} &{r/2 \choose 0}(0) \\
    	{r/2 \choose 1}\frac{r}{r - 1} & {\frac{r - 2}{2} \choose 0}\frac{r - 1}{(r - 1)(r - 2)} & \cdots & {r/2 \choose 1} &{r/2 \choose 1}(1/\frac{r + 2}{2}) \\
	  \vdots & \vdots & & \vdots & \vdots \\
	  {r/2 \choose r/2}\frac{r}{r/2} & {\frac{r - 2}{2} \choose \frac{r - 2}{2}}\frac{r - 1}{(r/2)(\frac{r - 2}{2})} & \cdots & {r/2 \choose r/2} & {r/2 \choose r/2}(1/2 + 1/3 + \cdots + 1/\frac{r + 2}{2})
    \end{pmatrix}.
\]
We first reduce the second-to-last column using the following column operations so that all but the last two entries are $0$. We first perform $C_{\frac{r - 2}{2}} \to C_{\frac{r - 2}{2}} - C_0$ to make the $(0, \frac{r - 2}{2})^\mth$ entry $0$. For $j \geq 1$, the $(j, \frac{r - 2}{2})^\mth$ entry becomes
\[
    {r/2 \choose j} - {r/2 \choose j}\frac{r}{r - j} = -\left(\frac{r}{2}\right){\frac{r - 2}{2} \choose j - 1}\frac{1}{r - j}.
\]
Therefore the $(1, \frac{r - 2}{2})^\mth$ entry is
\[
    -\left(\frac{r}{2}\right)\frac{1}{r - 1}.
\]
Next, we perform $C_{\frac{r - 2}{2}} \to C_{\frac{r - 2}{2}} + (r - 2)\left(\frac{r}{2}\right)\frac{1}{r - 1}C_1$ to make the $(1, \frac{r - 2}{2})^\mth$ entry $0$. For $j \geq 2$, the $(j, \frac{r - 2}{2})^\mth$ entry becomes
\begin{align*}
    & -\left(\frac{r}{2}\right){\frac{r - 2}{2} \choose j - 1}\frac{1}{r - j} + \left(\frac{r}{2}\right){\frac{r - 2}{2} \choose j - 1}\frac{r - 2}{(r - j)(r - j - 1)} \\
    & \>\quad = \left(\frac{r}{2}\right)\left(\frac{r - 2}{2}\right){\frac{r - 4}{2} \choose j - 2}\frac{1}{(r - j)(r - j - 1)}.
\end{align*}
Continuing this and making the first $\frac{r - 2}{2}$ entries $0$, for $j \geq \frac{r - 2}{2}$, the $(j, \frac{r - 2}{2})^\mth$ entry becomes
\[
    (-1)^{\frac{r - 2}{2}}\left(\frac{r}{2}\right)\left(\frac{r - 2}{2}\right)\cdots\left(2\right){1 \choose j - \frac{r - 2}{2}}\frac{1}{(r - j)(r - j - 1) \cdots (\frac{r + 4}{2} - j)}.
\]
Therefore the matrix becomes
\[
    \begin{pmatrix}
        {r/2 \choose 0}\frac{r}{r} & 0 & \cdots & 0 & {r/2 \choose 0}(0) \\
    	{r/2 \choose 1}\frac{r}{r - 1} & {\frac{r - 2}{2} \choose 0}\frac{r - 1}{(r - 1)(r - 2)} & \cdots & 0 & {r/2 \choose 1}(1/\frac{r + 2}{2}) \\
	  \vdots & \vdots & & \vdots & \vdots \\

        {r/2 \choose \frac{r - 2}{2}}\frac{r}{\frac{r + 2}{2}} & {\frac{r - 2}{2} \choose \frac{r - 4}{2}}\frac{r - 1}{(\frac{r + 2}{2})(r/2)} & \cdots & (-1)^{\frac{r - 2}{2}}\frac{2}{\frac{r + 2}{2}} & {r/2 \choose \frac{r - 2}{2}}(1/3 + 1/4 + \cdots + 1/\frac{r + 2}{2})\\
	  {r/2 \choose r/2}\frac{r}{r/2} & {\frac{r - 2}{2} \choose \frac{r - 2}{2}}\frac{r - 1}{(r/2)(\frac{r - 2}{2})} & \cdots & {(-1)^{\frac{r - 2}{2}}} & {r/2 \choose r/2}(1/2 + 1/3 + \cdots + 1/\frac{r + 2}{2})
    \end{pmatrix}.
\]

Next, we claim that for $1 \leq n \leq r/2$, the $\dfrac{1}{n + 1}$ part of $AX = B_2$ {without $1/(n + 1)$} and up to the factor of $\frac{r + 2}{2}$ is 
\[(-1)^{n - 1}n{2n - 1 \choose n - 1}\frac{{r/2 + n \choose r/2 - n}}{{r/2 + 1 \choose r/2 - 1}}.\]
This is easy to see for $n = 1, 2$ using Cramer's rule, noting only the bottom right $2\times 2$ blocks are relevant. For $n = 3$, we kill the $(\frac{r - 4}{2}, \frac{r}{2})^\mth$ entry above using the $(\frac{r - 4}{2}, \frac{r - 4}{2})^\mth$ entry, which is ${2 \choose 0}\frac{(\frac{r - 4}{2})!(\frac{r + 4}{2})}{(\frac{r + 4}{2})(\frac{r + 2}{2}) \cdots (4)} = \frac{6}{(\frac{r + 2}{2})(\frac{r}{2})(\frac{r - 2}{2})}$.
We perform $C_{r/2} \to C_{r/2} - \frac{(\frac{r + 2}{2})(\frac{r}{2})(\frac{r - 2}{2})}{6}\frac{(\frac{r}{2})(\frac{r - 2}{2})}{2}C_{\frac{r - 4}{2}}$. This reduces the $1/4$ part of the matrix above to a block lower triangular matrix whose lower right block is
\[
    \begin{pmatrix}
        (-1)^{\frac{r - 2}{2}}\frac{2}{\frac{r + 2}{2}} & -(r/2)\frac{1}{12}(r^2 + 2r - 20) \\
        (-1)^{\frac{r - 2}{2}} & 1 - 2{\frac{r + 4}{2} \choose 4}
    \end{pmatrix}.
\]
This verifies the claim for $n = 3$ using Cramer's rule. As usual, the verification of the claim for general $n$ is left to the reader.

Therefore, we have to prove the identity
\[
    \sum_{n = 1}^{r/2}\left(\frac{r + 2}{2}\right)(-1)^{n - 1}\frac{n}{n + 1}{2n - 1 \choose n - 1}\frac{{r/2 + n \choose r/2 - n}}{{r/2 + 1 \choose r/2 - 1}} = (-1)^{\frac{r + 2}{2}}\frac{1}{r/2}.
\]

Changing notation slightly, this reduces to proving
\[
    \sum_{k = 1}^{n}(-1)^{k}\frac{2k}{k + 1}{2k - 1 \choose k - 1}{n + k \choose 2k} = (-1)^{n}
\]
for $n = r/2$. Noting that $2{2k - 1 \choose k - 1} = {2k \choose k}$, the left hand side is
\[
    \sum_{k = 1}^{n}(-1)^{k}{2k \choose k}{n + k \choose 2k} - \sum_{k = 1}^{n}(-1)^{k}\frac{1}{k + 1}{2k \choose k}{n + k \choose 2k}.
\]
The first sum is $(-1)^n$ and the second sum is $0$ by \cite[vol 4, Equation 1.4]{Gou10} and \cite[vol 5, Equation 1.22]{Gou10} (taking $y = 1$), respectively.

Next we solve for $x_{\frac{r+2}{2}}$.
Let now $A_1', A_2'$ be the matrices obtained by replacing the second-to-last column in $A$ with $B_1, B_2$, respectively. It is clear that $A_1'$ and $A$ only differ in the second-to-last column, and the second-to-last column in $A_1'$ is $\frac{r + 2}{2}(1 - H_{+})$ times that in $A$. This shows that the $B_1$ part of $x_{\frac{r + 2}{2}}$ is $\frac{r + 2}{2}(1 - H_{+})$. We claim that for $1 \leq n \leq r/2$, the $1/(n + 1)$ part of $x_{\frac{r + 2}{2}}$ in $AX = B_2$ without $(\frac{r + 2}{2})(1/(n + 1))$ is
\[
    (-1)^{\frac{r - 2}{2}}\sum_{j = 1}^{n - 1}(-1)^{j - 1}\frac{j}{j + 1}{r/2 + j \choose 2j}{2j \choose j}.
\]
This is clear for $n = 1, 2$. For $n = 3$, we kill the $(\frac{r - 4}{2}, \frac{r - 2}{2})^\mth$ entry of $A_2'$ with the $(\frac{r - 4}{2}, \frac{r - 4}{2})^\mth$ entry, which is ${2 \choose 0}\frac{(\frac{r - 4}{2})!(\frac{r + 4}{2})}{(\frac{r + 4}{2})(\frac{r + 2}{2}) \cdots (4)} = \frac{6}{(\frac{r + 2}{2})(\frac{r}{2})(\frac{r - 2}{2})}$. We perform $C_{\frac{r - 2}{2}} \to C_{\frac{r - 2}{2}} - \frac{(\frac{r + 2}{2})(\frac{r}{2})(\frac{r - 2}{2})}{6}\frac{(\frac{r}{2})(\frac{r - 2}{2})}{2}C_{\frac{r - 4}{2}}$. Therefore the $(\frac{r - 2}{2}, \frac{r - 2}{2})^\mth$ entry becomes
\[
    \frac{r}{2} - \frac{(\frac{r + 2}{2})(\frac{r}{2})(\frac{r - 2}{2})}{6}\frac{(\frac{r}{2})(\frac{r - 2}{2})}{2}4\frac{{\frac{r + 4}{2}}}{(\frac{r + 2}{2})(\frac{r}{2})(\frac{r - 2}{2})} = \frac{r}{2} - \frac{1}{3}\left(\frac{r + 4}{2}\right)\left(\frac{r}{2}\right)\left(\frac{r - 2}{2}\right).
\]
Using Cramer's rule, we immediately get
\[
    (-1)^{\frac{r - 2}{2}}\left({\frac{r + 2}{2} \choose 2} - 4{\frac{r + 4}{2} \choose 4}\right),
\]
which is the claim for $n = 3$. As usual, the claim for general $n$ is left to the reader.

To find the $B_2$ part of $x_{\frac{r + 2}{2}}$, we have to prove the identity
\[
(-1)^{\frac{r - 2}{2}}\left(\frac{r + 2}{2}\right)\sum_{n = 2}^{r/2}\frac{1}{n + 1}\sum_{j = 1}^{n - 1}(-1)^{j - 1}\frac{j}{j + 1}{r/2 + j \choose 2j}{2j \choose j} = -\left(\frac{r + 2}{2}\right)H_{-}.
\]
Cancel $\frac{r+2}{2}$ from both sides and move the sign on the left to the right side. Changing the order of $n$ and $j$ (cf. \eqref{reorder n and j}) and changing notation a bit we have to show that 
\[ 
  H_{n+1}  \cdot \sum_{k = 1}^{n-1} (-1)^{k - 1}{n+k \choose k-1}{n+1 \choose k + 1}  
     \quad  +   \quad    \sum_{k = 1}^{n-1} (-1)^{k} {n+k \choose k-1}{n+1 \choose k + 1} H_{k+1} = (-1)^n H_{n-1},
\]
for $n = \frac{r}{2}$. The sum on the left is $-(-1)^n + (-1)^n \frac{n}{n+1} {2n \choose n}$. This
follows from \eqref{GS 17} below (replacing $n+1$ with $n$). Plugging this into the first sum and noticing
that the second part of this is the $k = n$ term of the second sum, we are reduced to showing that
\begin{eqnarray}
  \label{main identity 17}
  S(n) := \sum_{k = 1}^{n} (-1)^{k} {n+k \choose k-1}{n+1 \choose k + 1} H_{k+1} = (-1)^n (H_{n-1} + H_{n+1}).
\end{eqnarray}
We can evaluate the `tricky' sum $S(n)$ with {\tt Sigma} but let us use instead {\tt fastZeil}, 
since it also
provides a theoretical proof (and it is perhaps good to give at least one example in this appendix
of how one can verify any given identity theoretically). We thank Peter Paule for explaining
the entire argument below.

Let $L$ be the operator which evaluates functions $f(x)$ at $x=0$, i.e., $Lf(x) = f(0)$. 
Let $D$ be the differentiation operator with respect to $x$, i.e., $Df(x) = f'(x)$. Clearly
\[
  LD {x+k \choose k} = H_k.
\]
Using this, one sees that $S(n) = LD T(x,n)$
where 
\[
  T(x,n) = \sum_{k=0}^n (-1)^k {n \choose k-1} {n + 1 \choose k+1} {x+k+1 \choose k+1}.
\]

The following commands using the package {\tt fastZeil} in mathematica produces a 
recurrence relation for $T(x,n)$:
{\tiny
\begin{eqnarray*}
  \text{In[1]}     & :=  & <<\text{RISC`fastZeil'}   \\     
                       &      & \text{Fast Zeilberger Package version 3.61 written by Peter Paule, Markus Schorn and Axel Riese} \\
                       &       &  \text{Copyright Research Institute for Symbolic Computation (RISC),   
                                           Johannes Kepler University, Linz, Austria}  \\             
  \text{In[2]}     & :=  & \text{Zb}[(-1)^\mathrm{k} \text{ Binomial[n+k, k-1] Binomial[n+1, k+1]     
                                           Binomial[x+k+1, k+1],  \textbraceleft k,1,n\textbraceright, n, 2]}       \\                                                                   
  \text{Out[2]}  &  =  & \text{n\textsuperscript{2} (2 + n)\textsuperscript{2} SUM[n] 
                 - (3 + 2n) (3 + 6 n + 2 n\textsuperscript{2} + 4 x + 6 n x + 2 n\textsuperscript{2}x) SUM[1 + n] 
                                              - (1 + n)\textsuperscript{2} (3 + n)\textsuperscript{2}  SUM[2 + n]}  = 0  \\
  \text{In[3]}     & := & \text{Prove[]}                                              
\end{eqnarray*}
}
\!\!\!\!\! Here Out[2] means that:
\[
  n^2(n+2)^2T(x,n) - (3+2n) (3+6n+2n^2 +(4+6n+2n^2)x) T(x,n+1) - (1+n)^2(3+n)^2 T(x,n+2) = 0
\]
for $n \geq 1$. Applying the operator $LD$ we obtain
\[
  n^2(2+n)^2 S(n) -   (3+2n) (4+6n+2n^2) T(0,n+1) -    (3+2n) (3+6n+2n^2) S(n+1)- (1+n)^2(3+n)^2 S(n+1) = 0.
\]
Now we may evaluate the `easy' sum $T(0,n+1)$ with {\tt Sigma} or by hand using $\prescript{}{2} {F_1}$ as
in the  computation in footnote 10. 
We have:
\begin{eqnarray}
  \label{GS 17}
  T(0,n+1) & = & \sum_{k = 0}^{n+1} (-1)^k {n+1+k \choose k-1} {n+2 \choose k+1}  \nonumber  \\
                 & = & -\frac{1}{2} (1+n)(2+n) \cdot \prescript{}{2} {F_1}\left({-n, 3+n \atop 3}; 1\right) \nonumber \\
                 & {\eqref{GSF} \atop =} & -\frac{1}{2} (1+n)(2+n) \frac{(3-(3+n))_n}{(3)_n}  \nonumber \\
                 & = & -\frac{1}{2} (1+n)(2+n) \frac{(-1)^n  n!}{{\frac{1}{2}}(n+2)!} \nonumber \\
                 & = & (-1)^{n+1}.
\end{eqnarray}
Plugging this into the recursion formula we obtain:
%\begin{eqnarray*}
\[
  n^2 (2+n)^2 S(n) - (3+2n) (4+6n+2n^2) (-1)^{n+1}  - (3+2n) (3+6n+2n^2) S(n+1)- (1+n)^2 (3+n)^2 S(n+2) = 0.
\]
%\end{eqnarray*} 
This gives a recurrence relation for the left hand side $S(n)$ of \eqref{main identity 17}.
Since an easy by hand check shows that the right hand side of \eqref{main identity 17} also satisfies the 
above recurrence relation and both sides of \eqref{main identity 17} agree for small values 
of $n$ (the common 
values are $-3/2, 17/6, -43/12, \ldots$ for $n = 1,2,3, \ldots$), we see that \eqref{main identity 17}
holds for all $n \geq 1$.

It remains to give a theoretical proof of the recurrence relation for $T(x,n)$ above. This is produced
by the command in `In[3]' above. The output is a `Computer Theorem' with proof which we reproduce
more or less identically below.

\begin{theorem}[Computer Theorem]
  Let $$F(k,n) = (-1)^k {n \choose k-1} {n + 1 \choose k+1} {x+k+1 \choose k+1}$$ and
  $$\mathrm{SUM}(n) = \sum_{k=1}^n F(k,n).$$ Then
  $$n^2 (2 + n)^{2} \mathrm{SUM}[n] 
                 - (3 + 2n) (3 + 6 n + 2 n^{2} + 4 x + 6 n x + 2 n^{2}x) \mathrm{SUM}[1 + n] 
                                              - (1 + n)^{2} (3 + n)^{2} \mathrm{SUM}[2 + n] = 0.$$ 
\end{theorem}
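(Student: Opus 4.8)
The plan is to prove this recurrence exactly the way \texttt{fastZeil} does: by exhibiting a Gosper--Zeilberger certificate and reducing the claim to a single polynomial identity. Write $F(k,n) = (-1)^k \binom{n}{k-1}\binom{n+1}{k+1}\binom{x+k+1}{k+1}$. The first observation, which trivializes all the boundary bookkeeping, is that $F(\cdot,m)$ has finite support for every $m$: $\binom{n}{k-1}=0$ for $k\le 0$ and $\binom{n+1}{k+1}=0$ for $k\ge n+1$, so $F(k,n)=0$ unless $1\le k\le n$, and hence $\mathrm{SUM}(n)=\sum_{k=1}^n F(k,n)=\sum_{k\in\mathbb{Z}}F(k,n)$. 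The same applies to $F(\cdot,n+1)$ and $F(\cdot,n+2)$, whose supports lie in $[1,n+2]$.

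\textbf{Step 1: the certificate.} I would look for a relation of the shape
\[
  p_0(n)F(k,n) + p_1(n)F(k,n+1) + p_2(n)F(k,n+2) = G(k+1,n) - G(k,n),
\]
with $p_0(n)=n^2(n+2)^2$, $p_1(n)=-(2n+3)\bigl(2n^2+6n+3+(2n^2+6n+4)x\bigr)$, $p_2(n)=-(n+1)^2(n+3)^2$, and $G(k,n)=R(k,n)F(k,n)$ for a rational function $R(k,n)$ in $k,n,x$ (the certificate output by the package). To verify such a relation one divides through by $F(k,n)$, which turns it into an identity of rational functions via the elementary shift ratios
\[
  \frac{F(k,n+1)}{F(k,n)} = \frac{(n+1)(n+2)}{(n-k+2)(n-k+1)}, \qquad
  \frac{F(k+1,n)}{F(k,n)} = -\,\frac{(n-k+1)(n-k)(x+k+2)}{k\,(k+2)^2},
\]
and the corresponding formula for $F(k,n+2)/F(k,n)$ obtained by iterating the first. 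Clearing denominators reduces the displayed relation to the vanishing of one explicit polynomial in $k,n,x$; this is the routine finite check I would not carry out here, but it constitutes a complete, computer-free proof of the certificate.

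\textbf{Step 2: sum over $k$.} Summing the telescoping relation over all $k\in\mathbb{Z}$, the right-hand side $\sum_k\bigl(G(k+1,n)-G(k,n)\bigr)$ vanishes because $G(\cdot,n)=R(\cdot,n)F(\cdot,n)$ has finite support, while the left-hand side becomes $p_0(n)\,\mathrm{SUM}(n)+p_1(n)\,\mathrm{SUM}(n+1)+p_2(n)\,\mathrm{SUM}(n+2)$, using precisely the support remark to identify $\sum_k F(k,n+j)$ with $\mathrm{SUM}(n+j)$ for $j=1,2$. This is exactly the asserted recurrence.

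\textbf{Main obstacle.} The only real work is producing and checking the certificate $R(k,n)$: Gosper's algorithm guarantees its existence since $F(k,n)$ is proper hypergeometric in $k$ for fixed $n$, but writing it explicitly and confirming the resulting polynomial identity is where essentially all the bookkeeping sits. The potential worry about degenerate ranges of $k$ (where some binomial coefficients vanish and the rational-function manipulations are formally undefined) is neutralized by the finite-support observation, so no case analysis at the summation endpoints is needed.
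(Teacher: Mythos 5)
Your strategy is exactly the paper's: a Wilf--Zeilberger certificate $G(k,n)=R(k,n)F(k,n)$ with the stated coefficients $p_0,p_1,p_2$, verified by dividing the telescoping relation by $F(k,n)$ and checking a rational identity, then summing in $k$ so the right side collapses. But as written there is a genuine gap: you never exhibit the certificate, and you lean on "Gosper's algorithm guarantees its existence." That guarantee is misapplied. Zeilberger's algorithm guarantees that \emph{some} recurrence of \emph{some} order with \emph{some} polynomial coefficients admits a certificate; it does not tell you that the particular combination $p_0(n)F(k,n)+p_1(n)F(k,n+1)+p_2(n)F(k,n+2)$, with the specific $x$-dependent coefficients asserted in the theorem, is Gosper-summable in $k$. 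Establishing that is precisely the content of the theorem, and it requires either running the algorithm or writing the certificate down. The paper does the latter: it gives
\[
R(k,n)=\frac{2(k-1)(k+1)^2(n+1)(n+2)(n+3)}{(n+1-k)(n+2-k)},
\]
and then the "routine finite check" you defer is the whole proof. Without $R$ (or an equivalent explicit computation) the proposal is a correct proof plan, not a proof.

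A secondary point to be careful about: your claim that summing over all $k\in\mathbb{Z}$ makes the telescoping side vanish "because $G$ has finite support" is too quick, since the certificate above has poles at $k=n+1$ and $k=n+2$, exactly where $F(k,n)$ vanishes, so $G(k,n)=R(k,n)F(k,n)$ is a priori a $0\cdot\infty$ expression there. One either rewrites $G$ as a bona fide hypergeometric term (absorbing the denominator $(n+1-k)(n+2-k)$ into the factorials) and checks it vanishes at the boundary, or, as the paper does, sums over a finite range and verifies the two boundary terms vanish (there $F(n+1,n)=0$ and $R(1,n)=0$). This is a small but real piece of bookkeeping that your finite-support remark does not by itself settle.
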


\begin{proof}
  Let $\Delta_k$ denote the forward difference operator in $k$ and define 
  $$R(k,n) = \frac{2(k-1)(k+1)^2(n+1)(n+2)(n+3)}{(-k+n+1)(-k+n+2)}.$$ 
  Then the theorem follows from summing the equation 
  \begin{eqnarray*}
    n^2 (2 + n)^{2} F(k,n)
                 - (3 + 2n) (3 + 6 n + 2 n^{2} + 4 x + 6 n x + 2 n^{2}x) F(k,1 + n) \qquad  \\
                                              - (1 + n)^{2} (3 + n)^{2} F(k,2 + n) \quad =   \quad 
                                              \Delta_k(F(k,n)R(k,n)) & 
   \end{eqnarray*}
  over $k$ from $1$ to $n+1$ (since the right hand side telescopes to $0$ since at the endpoints $F(n+1,n) = 0 = R(1,n)$). This equation is routine
  verifiable by dividing both sides by $F(k,n)$ and checking the resulting rational equation holds.
\end{proof}  

We did indeed carry out the last check by hand, thereby removing all
computer programs from the proof of the recurrence relation for $T(x,n)$ 
and hence the identity \eqref{main 
identity 17}. Though, of course, without the computer's help in producing the auxiliary rational
function $R(k,n)$ above, one would not even know where to start.

\vspace{.5 cm}

\end{document}